\numberwithin{equation}{section}
\newcommand\smallO{
  \mathchoice
    {{\scriptstyle\mathcal{O}}}
    {{\scriptstyle\mathcal{O}}}
    {{\scriptscriptstyle\mathcal{O}}}
    {\scalebox{.7}{$\scriptscriptstyle\mathcal{O}$}}
  }
\newcommand{\Z}{\mathbb Z}
\newcommand{\R}{\mathbb R}
\newcommand{\C}{\mathbb C}
\newcommand{\ri}{\mathrm{i}}
\newcommand{\re}{\mathrm{e}}
\newcommand{\de}{\mathrm{d}}
\newcommand{\El}{\mathcal{L}}
\newcommand{\N}{\mathbb{N}}
\newcommand{\T}{\mathcal{T}}
\newcommand{\f}{\mathrm{f}}
\newcommand{\bb}{\mathrm{b}}
\newcommand{\rr}{\mathrm{r}}
\newcommand{\lr}{\mathrm{l}}
\newcommand{\lf}{\mathrm{lf}}
\newcommand{\uf}{\mathrm{uf}}
\newcommand{\uu}{\mathrm{u}}
\newcommand{\su}{\mathrm{s}}
\newcommand{\cc}{\mathrm{c}}
\newcommand{\F}{\mathcal{F}}
\newcommand{\eps}{\varepsilon}
\let\epsilon\varepsilon
\newtheorem{theorem}{Theorem}[section]
\newtheorem{proposition}[theorem]{Proposition}
\newtheorem{lemma}[theorem]{Lemma}
\theoremstyle{definition}
\newtheorem{definition}[theorem]{Definition}
\title{Diffusive synchronization of phase waves in the FitzHugh--Nagumo system
}
\author{Montie Avery\thanks{Department of Mathematics, Emory University, 400 Dowman Drive, Atlanta, Georgia 30322, USA, \texttt{msavery@emory.edu}}\and Paul Carter\thanks{Department of Mathematics, University of California, Irvine, 340 Rowland Hall, Irvine, CA 92697, USA, \texttt{pacarter@uci.edu}}\and  Bj\"orn de Rijk\thanks{Department of Mathematics, Karlsruhe Institute of Technology, Englerstra{\ss}e 2, 76131 Karlsruhe, Germany, \texttt{bjoern.rijk@kit.edu}}\and  Arnd Scheel\thanks{School of Mathematics, University of Minnesota,  206 Church St. S.E., Minneapolis, MN 55455, USA, \texttt{scheel@umn.edu}}}
\date{\today}
\begin{document}

\maketitle

 \begin{abstract}
   We analyze synchronization of relaxation oscillations in multiple-timescale reaction-diffusion systems. Interpreting synchronization as convergence to frequency-synchronized wave-train solutions, we resolve for the first time the case of \emph{phase waves}. These waves are nearly phase-synchronized relaxation oscillations, featuring quasistationary plateaus of length $\eps^{-1}$ separated by fast transition layers, where $\eps\ll1$ is the timescale separation parameter. Tracking the decay of modulations via a Bloch-wave eigenfunction analysis, we find a remarkably weak interaction strength of order $\eps^{8/3}$. This weak layer interaction and many of the technical difficulties arise from repeated scattering of eigenfunctions through fold points at the ends of the quasistationary plateaus. We capture this by combining a novel geometric desingularization approach with Lin's method, exponential trichotomies, and the Riccati transform. While our spectral stability analysis yields diffusive synchronization of all phase waves in the FitzHugh--Nagumo system, it also identifies potential finite-wavelength instabilities, which we realize in a system variant.
\end{abstract}

Keywords: FitzHugh--Nagumo system, relaxation oscillations, phase waves, diffusive spectral stability, geometric desingularization, Lin’s method, synchronization

AMS Subject Classification: 35B10, 35B25, 35B35, 35K57, 35P15

\tableofcontents

\section{Introduction}\label{sec:introduction}

The collective behavior of oscillators has been at the center of many questions in nonlinear dynamics, with applications including synchronization in mechanical systems~\cite{strogatzhuygens}, in neuropathology~\cite{Hammond2007,epilepsy}, in the social sciences~\cite{millenium}, in electrical grids~\cite{electricgrid}, or in chemical experiments~\cite{bzdroplets}. Synchronization here means first of all that the temporal behavior  of any oscillator in the collection is periodic with one common period for all oscillators,  that is, the collective behavior is \emph{frequency synchronized}. In an even stronger form of synchronization, all individual oscillators go through  a particular fixed phase of the oscillation at the same time, that is, they are \emph{phase synchronized}. Predicting synchronization, and failure thereof, can then explain observations, guide treatments, and more theoretically elucidate our understanding of collective behavior between order and chaos. Our results are concerned with frequency synchronization and possible desynchronization in relaxation oscillators. We shall start in \S\ref{s:mos} with background on questions of  synchronization relevant to our results. We then set up the FitzHugh--Nagumo system with its wave-train solutions in \S\ref{s:setup} and state our main stability result and its implications in \S\ref{s:mr} and \S\ref{s:instability}. 

\subsection{Phenomenology of synchronization}\label{s:mos}
\paragraph{Models of synchrony.} The simplest descriptions of oscillators rely on modeling the phase $\Phi$ of the oscillation as through a differential equation $\Phi_t=g(\Phi)$ on the circle $\Phi\in S^1=\R/\Z$,  leading to phase models such as the celebrated Kuramoto model~\cite{kuramoto1975international,RevModPhys.77.137}, where the transition from synchronization to unlocked states is now well-understood in kinetic limits~\cite{chiba}. More realistic models include inertia~\cite{kuramotoinertia} or amplitudes~\cite{pikovsky2001synchronization}. The Landau oscillator, a universal normal form near Hopf bifurcations, incorporates amplitudes in the simplest form. In spatially extended oscillatory systems, it leads to the complex Ginzburg--Landau equation as a modulation equation~\cite{worldofcgl,mielkecgl}. We are concerned here with what are often more representative and relevant models of oscillators that describe relaxation oscillations in multiple timescale systems. Specifically, we are interested in the FitzHugh--Nagumo system, or the closely related van-der-Pol equations. In contrast to phase or Landau oscillators, these models offer more realistic phase-resetting curves, that is, the response of oscillations 
to external stimuli depends quite sensitively on the phase of the oscillation. The multiple timescale structure -- with fast relaxation on $\mathcal{O}(1)$ time scales between slow adaptation on $\mathcal{O}(1/\eps)$ time scales, where $\eps > 0$ is a small parameter -- also appears intrinsically in applications including ecosystem dynamics~\cite{klausmeier}, neuroscience~\cite{RevModPhys.78.1213}, heart arrhythmias~\cite{tyson1987spiral}, chemical reactions~\cite{fieldnoyes}, and even fluid flows and turbulence~\cite{barkleypipe}. From this broad phenomenological perspective, our work here addresses two questions:

\begin{enumerate}
\item What are time scales of synchronization?
\item When and how does synchronization fail?
\end{enumerate}

We explore these questions in the context of a continuous distribution of oscillators, modeled through reaction-diffusion systems on the line, with a specific focus on the FitzHugh--Nagumo system in the oscillatory regime; see \S\ref{s:setup}, below. The spatially continuous setting avoids additional complicating effects due to spatial discreteness. Moreover, the planar dynamics of the ODE, describing spatially independent solutions of the FitzHugh--Nagumo system, isolate key features of multiple timescale relaxation, thus making this equation and its variants a model of choice in many of the applications mentioned above. Indeed, much is known about the spatio-temporal dynamics of the FitzHugh--Nagumo equation, including existence, stability, and bifurcation results for traveling fronts~\cite{deng1991,liu2006}, pulses~\cite{carpenter1977geometric,CSosc, CSbanana, hastings1982,flores1991stability,jones1991construction}, and periodic wave trains~\cite{eszter1999evans,hastings1974,hastings1976,li2025nonlinearstabilitylargeperiodtraveling, soto2001geometric}. Our focus here is on these wave trains which are periodic in both space and time and propagate as traveling waves in this spatially continuous setting. The collective dynamics of wave trains are frequency synchronized -- that is, all points in space oscillate with the same frequency -- while the  phase of the oscillation has a constant spatial gradient; see Figure~\ref{fig:oscillations_spacetime}. Synchronization then corresponds to the convergence to one of those frequency-synchronized wave trains, from initial conditions that are close to the wave train -- that is, already almost synchronized -- or more globally. We shall focus on the former, local question, which translates (i) and (ii) into questions of stability of oscillations: can wave trains be unstable (ii), and what is the rate of convergence to wave trains when they are stable (i)? Our work is motivated by somewhat dramatic differences in frequency synchronization that are illustrated in Figure~\ref{f:random}, which shows how synchronization time scales differ within the same equation by only changing the wavelength, and by the possible failure of synchronization in related models, illustrated in Figure~\ref{fig:toy_wavetrain}. 

\begin{figure}
\centering
\includegraphics[width=0.45\linewidth]{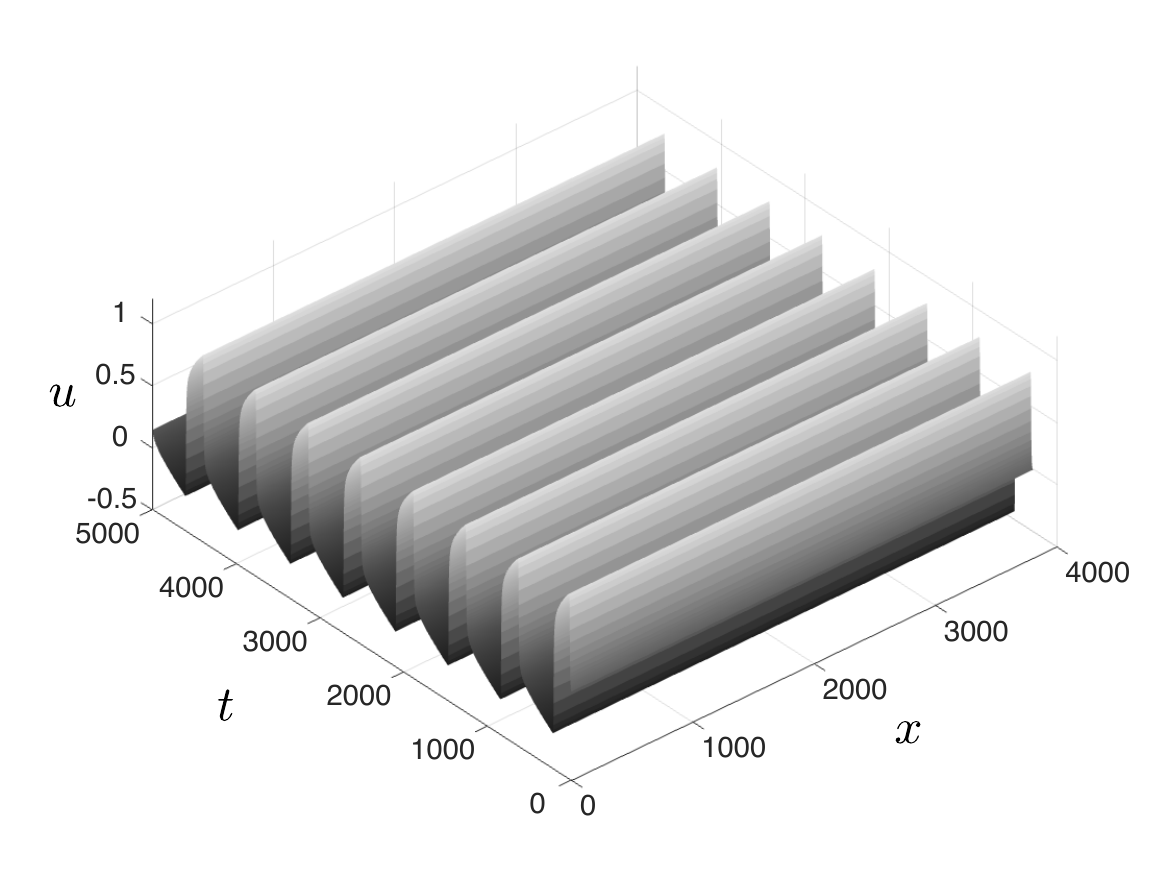}\hspace{0.05\textwidth}
\includegraphics[width=0.45\linewidth]{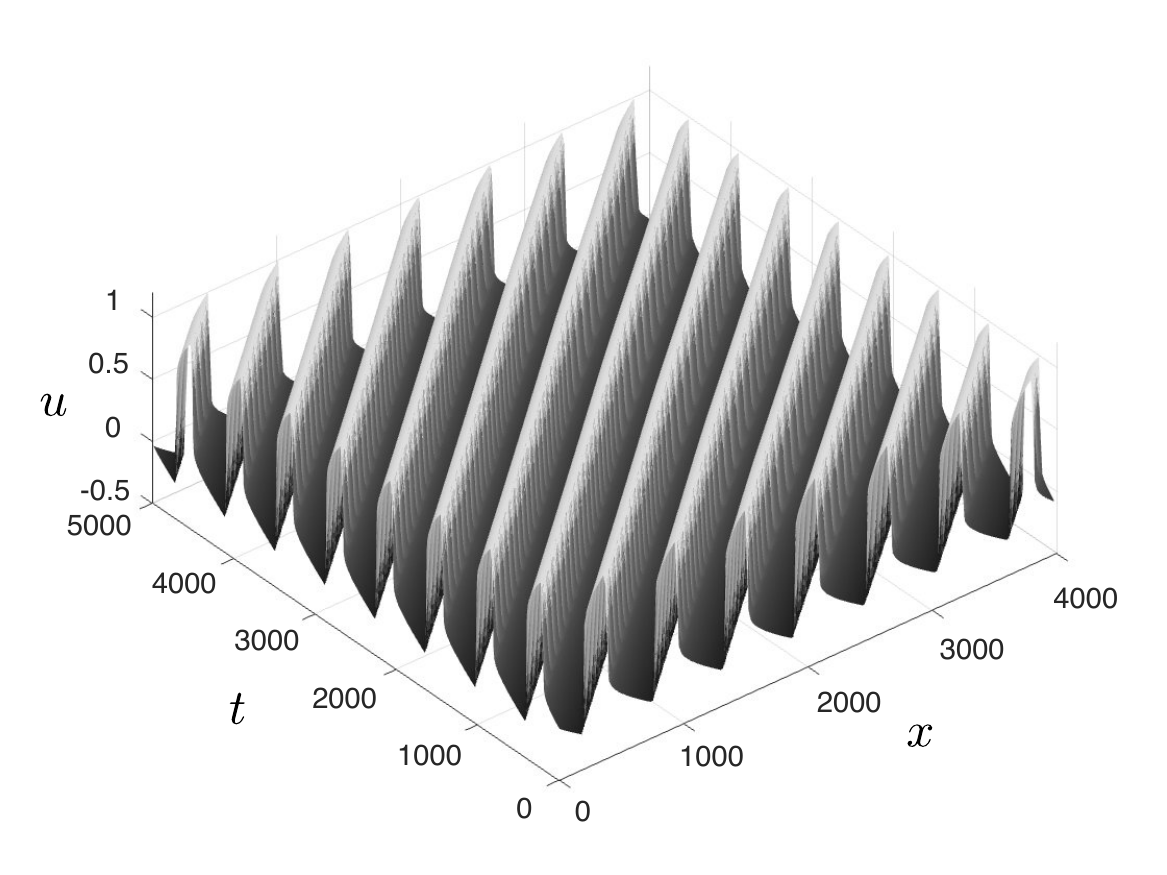}
\caption{Spacetime plot of profiles $u(x,t)$ corresponding to spatially homogeneous oscillations (left) and traveling wave trains (right) obtained numerically in~\eqref{eq:FHN_pde} for $a=0.2, \gamma=1, \eps = 0.001$. } 
\label{fig:oscillations_spacetime}
\end{figure}

\paragraph{Effective diffusivity and stability.} For single oscillators, or small groups of synchronized oscillators, stability and rates of convergence are determined by the real part of Floquet exponents $\lambda$ of the linearized equation: positive real parts yield instability and desynchronization, negative real parts $\Re(\lambda)=-\eta$ give exponential rates $\eta$ of synchronization. Perturbing a synchronized state will generally lead to a phase shift of the collective oscillation due to the trivial Floquet exponent $\lambda=0$ associated with time translations. In large or even moderately sized systems, Floquet exponents closest to the imaginary axis arise from modulating this phase response across oscillators. In other words, the phase of oscillators may be perturbed in a non-uniform fashion across the system and temporal dynamics only slowly heal the resulting  phase mismatch. The dynamics of phase modulations of oscillators in large, spatially extended systems can be described using long-wavelength modulation theory~\cite{DSSS}. In this regime, the phase dynamics are well-approximated by a viscous eikonal equation. Modulating the phase of a wave-train solution $u(x,t) = u_{\mathrm{wt}}(l x - \omega t)$ with wavenumber $l \in \R \setminus \{0\}$ and frequency $\omega \in \R$ via  
\[
u_\mathrm{wt}(\ell x-\omega t +\Phi(x,t))\sim u_\mathrm{wt}(\ell x-\omega t )+\Phi(x,t)\cdot u_\mathrm{wt}'(\ell x-\omega t),
\]
one finds in a long-wavelength limit
\[
\Phi_t+d_2 \Phi_x^2=d_\mathrm{eff}\Phi_{xx},
\]
where $d_\mathrm{eff} > 0$ is an effective diffusivity and the coefficient $d_2 \in \R$ encodes nonlinear dispersion. Wave-train solutions are now of the form $\Phi(x,t)=\ell_* x-\omega_* t$, with $\omega_*=d_2\ell_*^2$.  Small-amplitude perturbations of wave trains can be shown to behave according to solutions of the linearized equation, a convection-diffusion equation
\[
\Phi_t+2d_2 \ell_* \Phi_x =d_\mathrm{eff}\Phi_{xx}.
\]
That is, for $t\to\infty$ solutions are well-approximated by solutions to the heat equation, advected with the group velocity $2d_2\ell_*$. The time scales of synchronization correspond to the rate of decay in the diffusion equation, which in large systems is 
\[
\|\Phi(\cdot,t)\|_{L^\infty}\sim (d_\mathrm{eff}t)^{-1/2}\|\Phi(\cdot,0)\|_{L^1}, \qquad t > 0.
\]
We therefore refer to $T_\mathrm{eff}=1/d_\mathrm{eff}$ as the \emph{time scale of synchronization}, thus encoding how we shall answer question (i) in what follows. The effective diffusive behavior is illustrated in direct simulations in Figure~\ref{f:deff}, showing in particular the appearance of diffusive profiles $\Phi(x,t)\cdot u_\mathrm{wt}'(\ell_* x-\omega t)$.

The second question, from our point of view, asks for spectral instabilities of the synchronized state. In a simple dichotomy, this potential instability is either caused by  (A) a sign change in the effective diffusivity $d_\mathrm{eff}$, that is, ill-posedness of the phase approximation, or (B) by a finite-wavelength mode that is invisible in the phase approximation. 

Roughly speaking,  we answer the questions (i) and (ii) above for phase-wave trains in terms of the slow timescale parameter $\eps$ as follows:
\begin{enumerate}
\item in FitzHugh--Nagumo and variants, $d_\mathrm{eff}\sim \eps^{2/3}$ and $T_\mathrm{sync}\sim \eps^{-2/3}$, see Theorem~\ref{thm:spectral_stability};
\item in FitzHugh--Nagumo, no instabilities, see Theorem~\ref{thm:spectral_stability}; in variants of the FitzHugh--Nagumo system, $d_\mathrm{eff}>0$, but oscillatory Turing instabilities can occur at modulation wavenumber 
$k\sim \eps^{1/6}$, see \S\ref{s:instability}.
\end{enumerate}
In terms of the dichotomy mentioned above, the instability in (ii) is always of type (B), i.e.~not due to a change of sign of $d_\mathrm{eff}$. Therefore, it is not visible in a phase-reduction that fixes the small parameter $\eps$ in the relaxation oscillation, but rather caused by an instability of the neutral mode at a finite (for fixed $\eps$) wavenumber. We found the rigidity in (ii) in regard to the consistent stable effective diffusivity, $d_\mathrm{eff}>0$,
remarkable. It is strikingly different from the well-understood example of the complex Ginzburg--Landau equation, where the prevalent instability mechanism is a change in sign of $d_\mathrm{eff}$, known there as a sideband or Benjamin--Feir instability.
The sideband instability plays an outsized role in the transition from coherent spatio-temporal dynamics with ultimate phase-synchrony, to spatio-temporal chaos in its various forms~\cite{worldofcgl,CrossHohenberg}.  For frequency-synchronized states with a phase gradient, that is, for wave-train solutions, there is however in certain parameter regimes a finite-wavelength instability in the complex Ginzburg--Landau equation preceding the sideband instability~\cite{vH95}, somewhat reminiscent of the potential instability we observe here.

\begin{figure}
\begin{minipage}{0.48\textwidth}
\begin{center}  trigger waves\\[0.1in]
\includegraphics[height=2.1in]{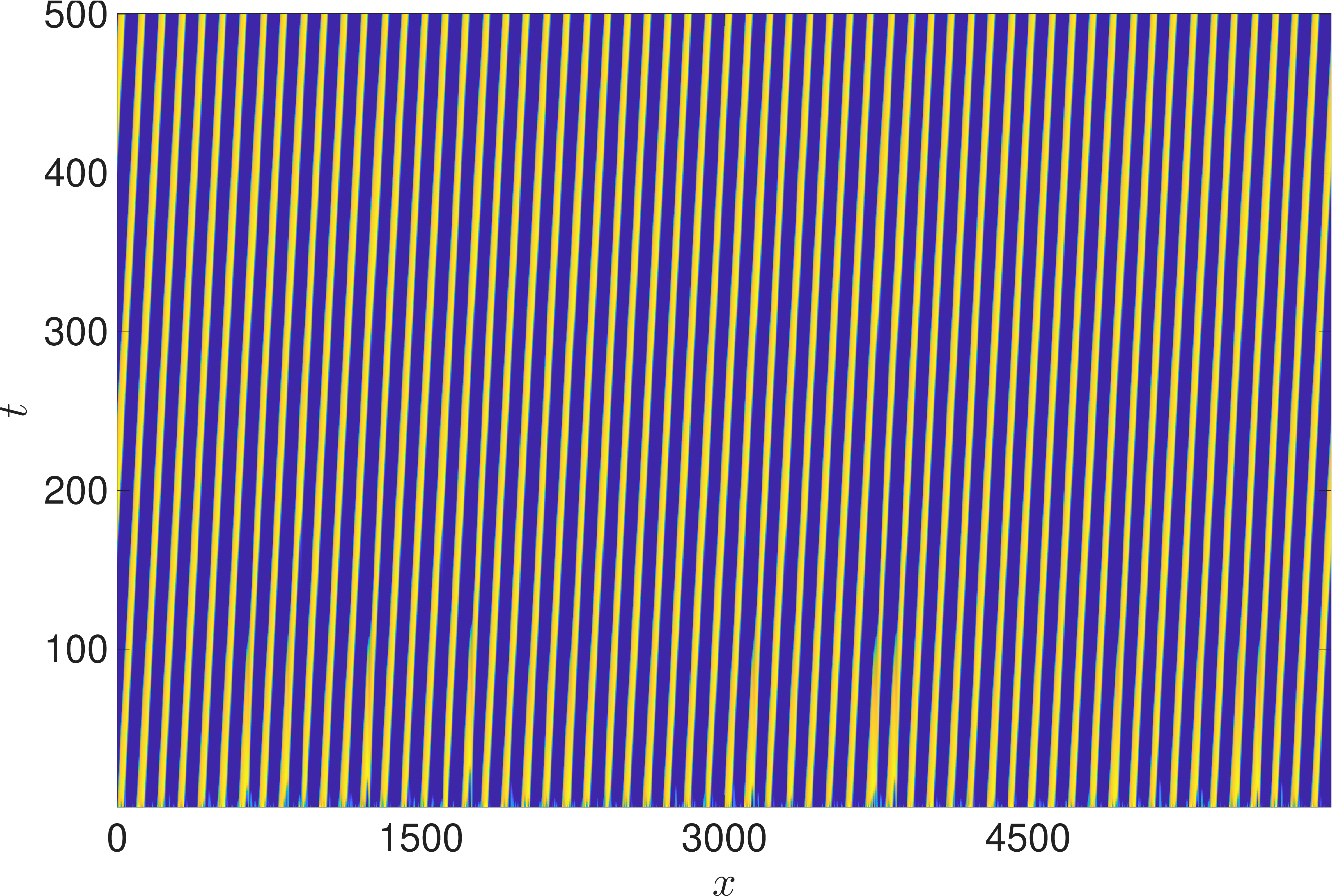}
    \end{center}
\end{minipage}\hfill
\begin{minipage}{0.48\textwidth}
 \begin{center}   phase waves\\[0.1in]
    \includegraphics[height=2.09in]{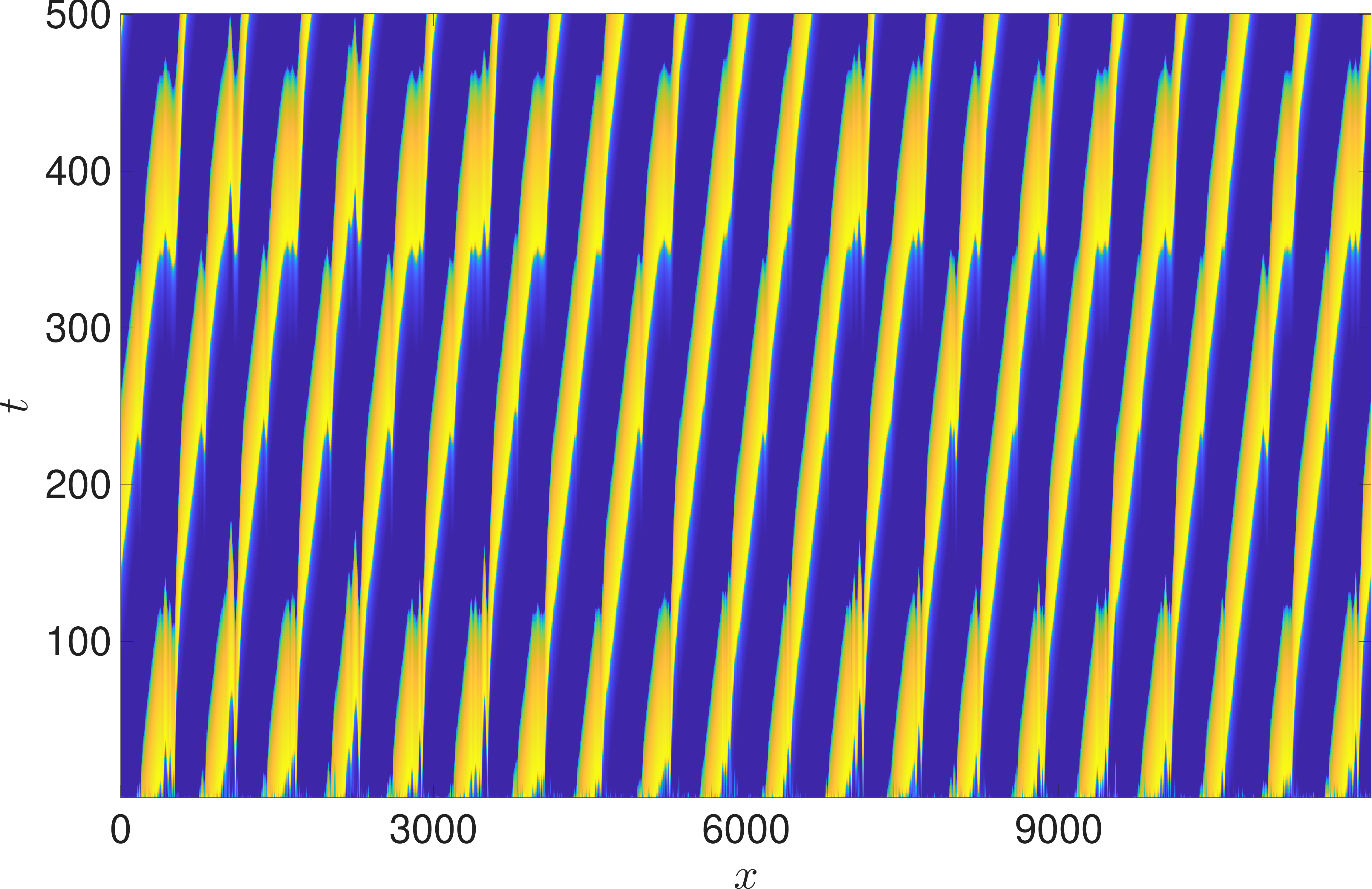}\end{center}
    \end{minipage}
        \caption{Illustration of the fast decay of perturbations of  trigger waves (left) compared to the weak relaxation of random perturbations of phase waves (right); see Appendix~\ref{s:dns} for details on implementation. }\label{f:random}
\end{figure}

\begin{figure}
\begin{minipage}{0.48\textwidth}
\begin{center}  trigger waves\\[0.1in]  \includegraphics[height=2.05in]{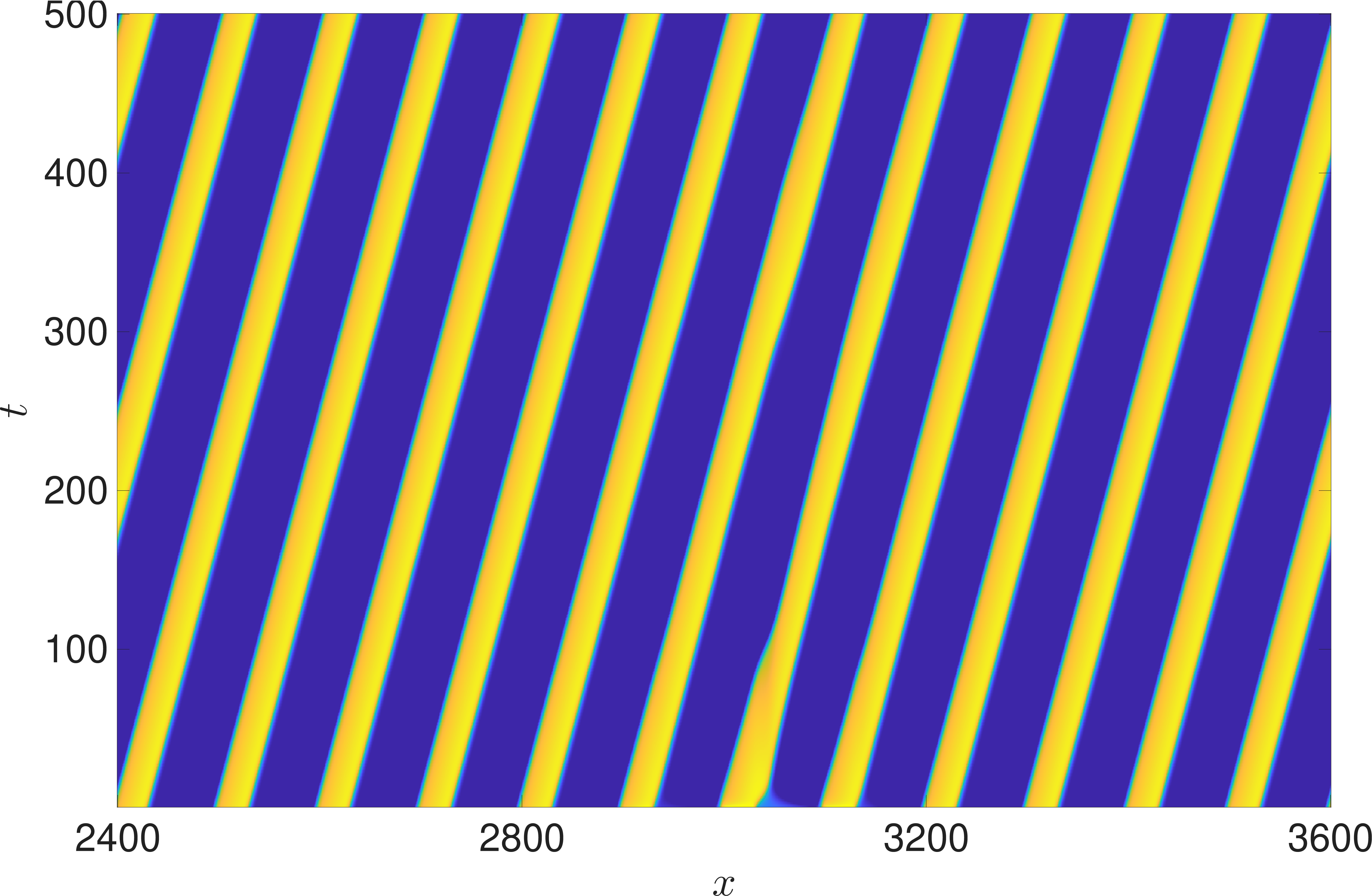}\\[0.2in] 
    \includegraphics[height=2in]{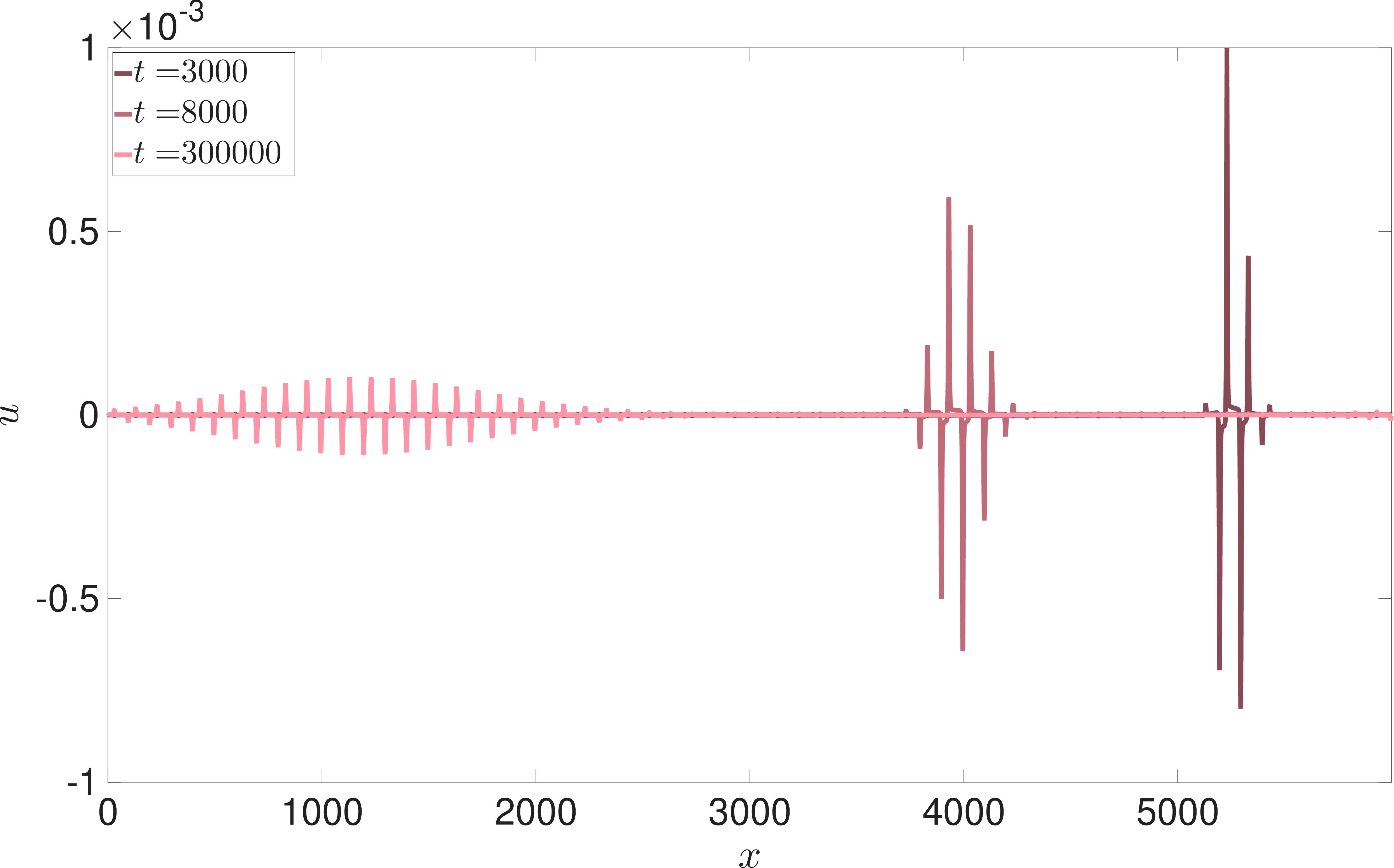}\\[0.2in] 
    \includegraphics[height=2.in]{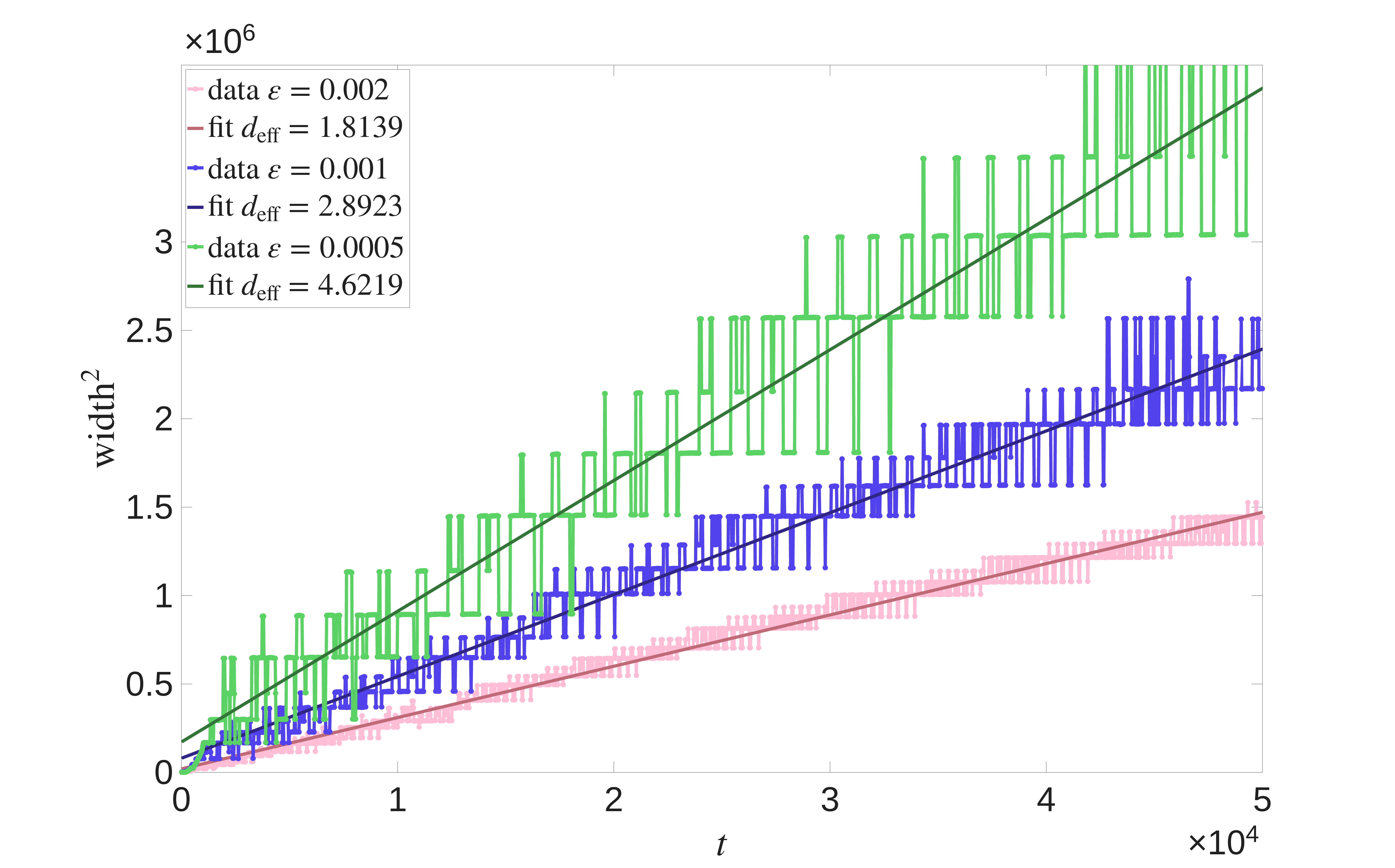}
    \end{center}
\end{minipage}\hfill
\begin{minipage}{0.48\textwidth}
 \begin{center}   phase waves\\[0.1in]
 \includegraphics[height=2.0in]{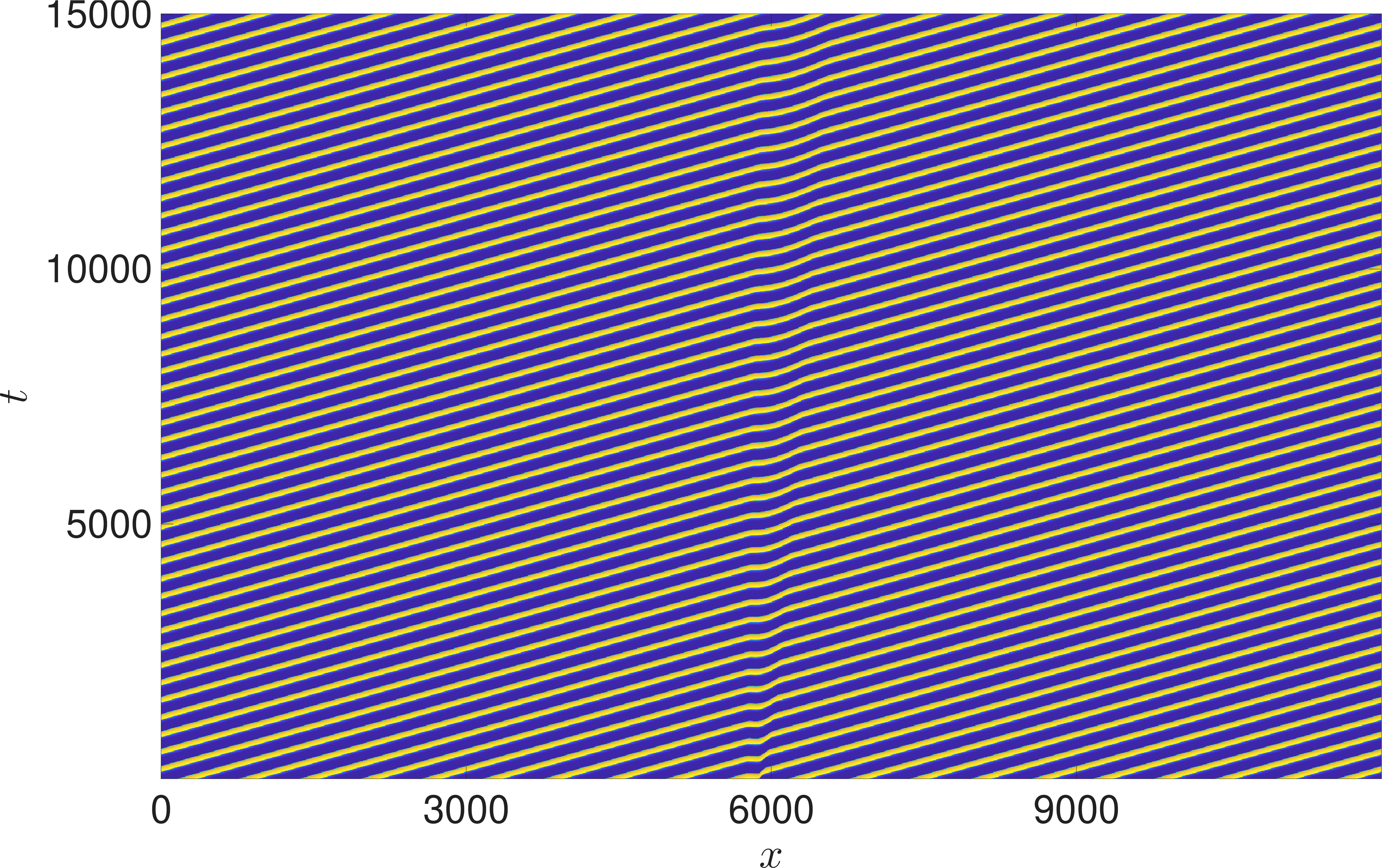}\\[0.2in] 
    \includegraphics[height=2in]{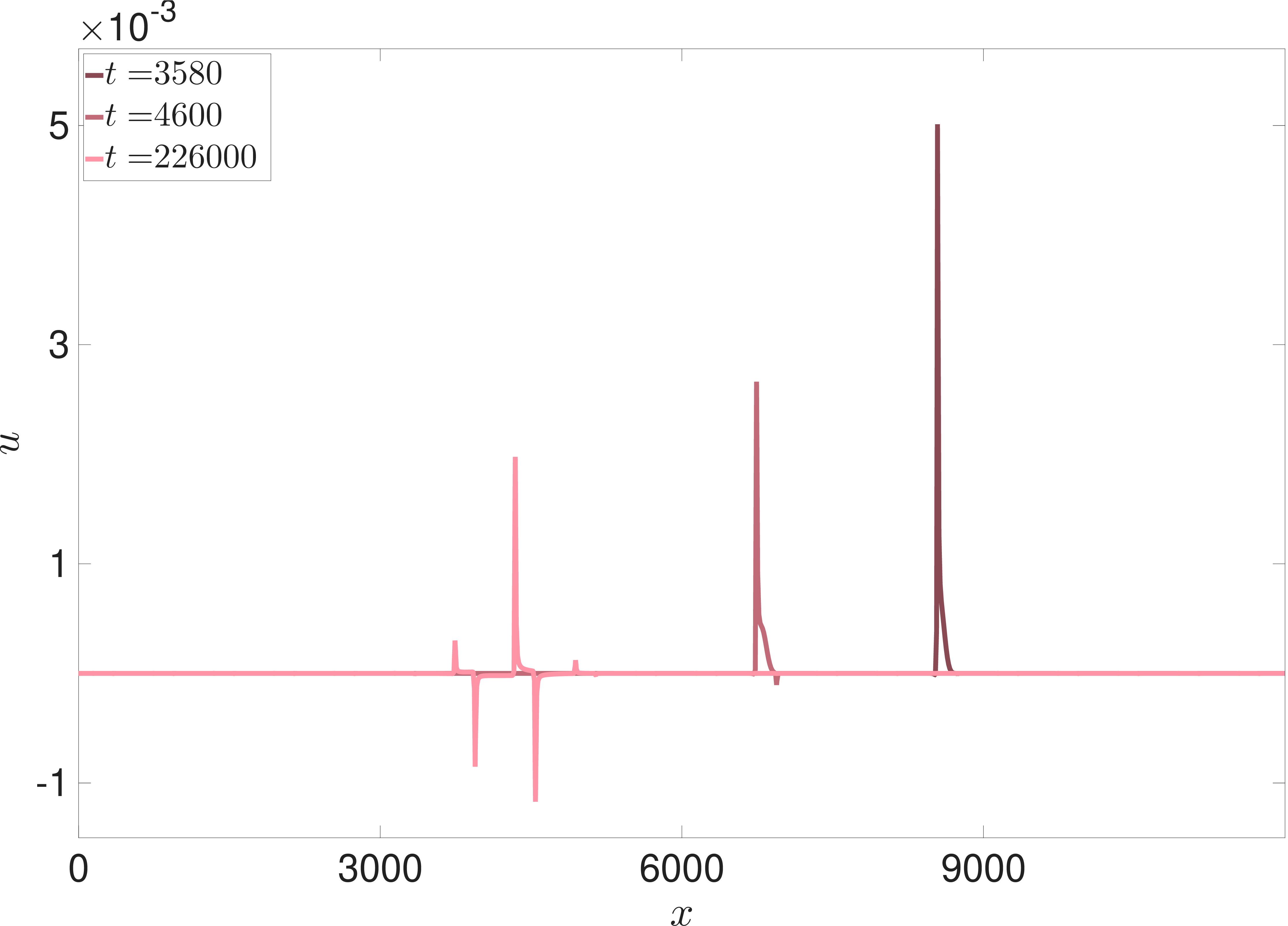}\\[0.2in] 
    \includegraphics[height=2in]{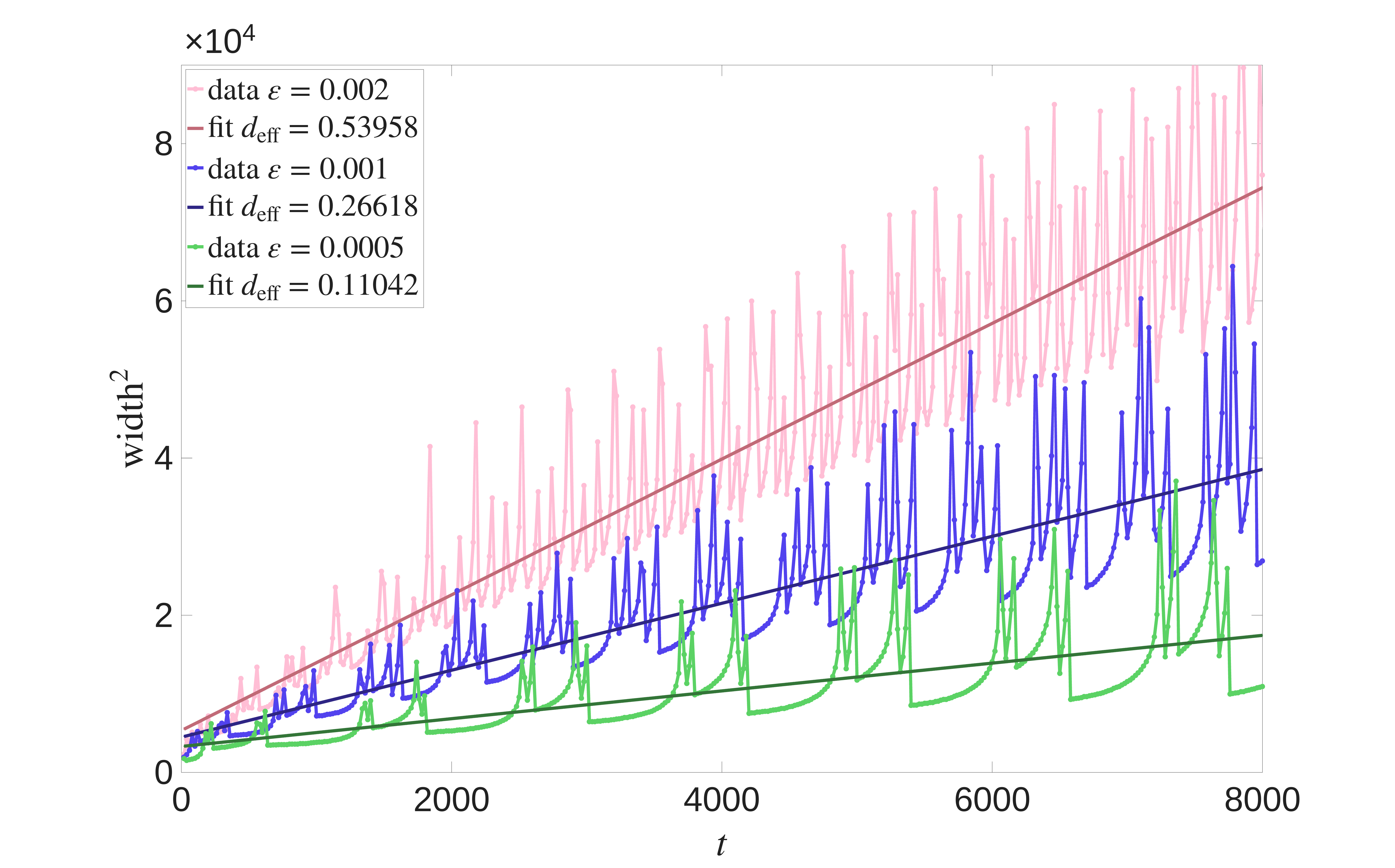}\end{center}
    \end{minipage}
\caption{Localized perturbation of trigger waves (left column) and phase waves (right column) in the first component by $10^{-2}\exp{(-x^2/100)}$. Top: space-time plot of $u$-component (only partial domain on left) shows quick relaxation in the trigger-wave case, with perturbation only visible up to $t=200$, and a persistent defect in the phase wave case visible up to $t=15000$ (note the different spatial and temporal plot ranges). Middle: Snapshots of perturbation profiles, subtracting a closest perfect periodic wave train from the solution. Perturbations are modulated and large near interfaces. They slowly travel to the left with the group velocity and decay in amplitude as their width grows (again fast in the trigger- and slow in the phase-wave case). Bottom: The decay is diffusive, which is illustrated here by plotting the square of the width of the region where the perturbation exceeds $10^{-5}$ versus time. Data and linear fit with slope $4d_\mathrm{eff}$ shows, as $\eps$ decreases,  the increase $d_\mathrm{eff}\sim 1/\eps$ in the trigger case, and the decrease  $d_\mathrm{eff}\sim \eps^{2/3}$ in the phase wave case; see Appendix~\ref{s:dns} for details on implementation.  }\label{f:deff}
\end{figure}

\paragraph{Synchronization via transition layer interaction.} 
Relaxation oscillations in FitzHugh--Nagumo- or van-der-Pol-type oscillators consist of two rapid switches between states of slowly varying amplitudes. 
In the spatially homogeneous oscillation, the rapid switches are initiated by the slow evolution hitting a fold point of a slow manifold. This slow passage through the fold contributes a characteristic $\mathcal{O}(\eps^{-1/3})$-correction to the leading-order $\mathcal{O}(\eps^{-1})$-part in the period of oscillations, which stems from the maximal time spent drifting along the slow branch; see Figure~\ref{fig:singular_slow}. 
Modulating homogeneous oscillations spatially, the time instances of the rapid switches vary in space: for wave trains, the switches occur along characteristics $\omega (\ell)t = \ell x$, with $\omega(0)$ the frequency of the spatially homogeneous oscillation; see Figure~\ref{fig:oscillations_spacetime}. Dispersion, that is, the dependence of $\omega$ on the spatial wavenumber $\ell$ encoded in the coefficient $d_2$ in the modulation equation above, can be thought of as reflecting the interaction of these rapid switches. 
When $\ell$ increases, i.e. for spatially more narrowly spaced transition layers, the drift along the slow manifold is cut short by an early transition triggered by the interaction of layers, rather than induced by the phase of the oscillation reaching its final state. The associated periodic wave trains for larger $\ell$ are thus called \emph{trigger waves}, while the waves for small $\ell$ are called \emph{phase waves}; see Figure~\ref{fig:continuation} for a depiction of these waves in the FitzHugh--Nagumo system.

Trigger waves are related to waves in the excitable regime~\cite{bordiougov2006trigger}. Here, in the absence of spatially homogeneous oscillations, oscillatory behavior is organized by excitation pulses, each consisting of a rapid jump at the front and a second relaxation at the back, and their interaction. Interaction of layers in this excitable regime, or more generally the regime of trigger waves, is fairly well-understood through a perturbative analysis; see~\cite{sslongwavelength} for large separation at fixed $\eps$ and~\cite{eszter1999evans,li2025nonlinearstabilitylargeperiodtraveling} for $\mathcal{O}(\eps^{-1})$-separation and layers away from the fold points of the slow manifold. In both cases, the position of transition layers is naturally associated with a zero eigenvalue in the linearization and therefore a ``soft mode'', leading to effective reduced descriptions. 

In contrast, the transition layers in phase waves do not possess such a natural zero eigenvalue, a fact that was noticed when analyzing the stability of pulses in a modified FitzHugh--Nagumo system~\cite{beckjonesschaefferwechselberger}. 
Therefore, while the stability analysis features a characteristic neutral mode associated with translations, it does not possess individual modes associated with translations of the two distinct transition layers, and there does not appear an obvious way to cast the dynamics as reduced weak-interaction dynamics.
Our main result, that characterizes in particular $d_\mathrm{eff}$, can be thought of in this language of transition layer interaction  as, for the first time, quantitatively characterizing the interaction of fast transition layers resulting from the passage through a fold. 

Quantitatively, suppose that modulations of layers at distance $\mathcal{O}(\eps^{-1})$ relax on a time scale $\eps^{-\beta}$. We can predict this time scale by substituting modulations on a spatial scale $\eps^{-1}$ into the effective diffusive eikonal approximation equation $\Phi_t+d_2 \Phi_x^2=d_\mathrm{eff}\Phi_{xx}$ and thereby predict layer dynamics on time scales much longer than the previously defined synchronization time scale, $(d_\mathrm{eff}\eps^2)^{-1}=\eps^{-2}T_\mathrm{sync}$. Equating $\eps^{-\beta}=(d_\mathrm{eff}\eps^2)^{-1}$ then also gives a comparative interpretation of our results and analogous results on  trigger waves in~\cite{eszter1999evans}:
\begin{itemize}
    \item \emph{trigger waves:} $d_\mathrm{eff}\sim \eps^{-1} \longrightarrow $ layer interaction strength $\eps^\beta$, relaxation time scale $\eps^{-\beta}$, $\beta=1$;
    \item \emph{phase waves:} $d_\mathrm{eff}\sim \eps^{2/3} \longrightarrow $ layer interaction strength $\eps^\beta$, relaxation time scale  $\eps^{-\beta}$, $\beta=8/3$. 
\end{itemize}
We refer to Figure~\ref{f:deff} for an illustration of the relaxation near both trigger and phase waves. We remark that the relaxation time scale of trigger waves also appears as a weak interaction eigenvalue of order $\eps$ in the stability of  pulses~\cite{PBR,HS, van2008pulse}.

In summary, our results exhibit an extraordinarily weak interaction of transition layers of phase waves, contrasting a wealth of results on layers in excitable media.  Direct simulations in Figure~\ref{f:random} illustrate how the predictions manifest themselves in the extremely slow synchronization of phase waves when subjected to random perturbation, as opposed to the rapid synchronization of trigger waves. On the other hand, we believe that the mathematical techniques developed here will be useful in singularly perturbed spectral stability problems exhibiting fold dynamics far beyond the specific setting that we focus on.

\paragraph{Defect-mediated frequency synchronization.}
In addition to these natural questions of stability and synchronization, our work is strongly motivated by defect-mediated  synchronization phenomena. A simple intuitive example of such synchronization is the presence of ``pacemakers'', such as localized regions that oscillate at a different frequency which in turn propagates through the medium~\cite{stichmikhailov, kollarscheel}. As a result, a coherent state spreads with finite speed, rather than  only diffusively, with synchronization in a system of size $L$ achieved after time $T\sim L$ rather than $T\sim L^2$. The coherent state is not phase synchronized but rather frequency synchronized: the phase exhibits a constant gradient while information propagates away from the pacemaker. The constant-gradient state thus corresponds to a wave train, periodic in time and space while rigidly propagating. In the absence of  external pacemakers, self-organized pacemakers such as spiral waves~\cite{ssspiral} and sources~\cite{SandstedeScheelDefects,dodsonlewis} can have a similar effect, establishing domains of frequency synchronization surrounding individual pacemakers in a glassy state. 

A similar mechanism of frequency synchronization is related to the growth of the region where oscillations are observed,  induced either by an external quench~\cite{gohscheel} or, again in a self-organized fashion, by the spreading of the oscillatory instability through an invasion front~\cite{vanSaarloos}. The latter was studied in the example of the FitzHugh--Nagumo system in~\cite{CASCH,FHNpulled}; see also Figure~\ref{fig:invasion_front}. 
Such invasion fronts generate wave trains, i.e.~frequency-synchronized states, in their wake in the sense that their group velocity is smaller than the propagation speed of the front interface~\cite{SandstedeScheelDefects}.

\subsection{Wave trains in the FitzHugh--Nagumo system}\label{s:setup}

We introduce the FitzHugh--Nagumo system in the oscillatory regime and present existence results on phase and trigger waves. 

\paragraph{The oscillatory regime.}
We consider the FitzHugh--Nagumo system 
\begin{align}\label{eq:FHN_pde}
\begin{split}
u_t &= u_{xx} + f(u) - w,\\
w_t &=\epsilon(u-\gamma w - a),
\end{split}
\qquad x \in\R, \, t \geq 0, \, (u,w)\in\R^2,
\end{align}
with cubic nonlinearity $f(u) = u(u-a)(1-u)$, parameters $0 < \varepsilon \ll 1$, $0 < a < \frac{1}{2}$ and 
\begin{align} 0<\gamma<\gamma_*(a) \coloneqq  {9}\left(1+2a-2a^2+(1-2a)\sqrt{a^2-a+1}\right)^{-1}.\label{e:gammacond}\end{align}
The planar system governing $x$-independent solutions,
\begin{align}\label{eq:FHN_kinetics}
\begin{split}
u_t &= f(u) - w,\\
w_t &=\epsilon(u-\gamma w - a),
\end{split}
\end{align}
behaves much like the classical van-der-Pol equation, with an unstable equilibrium $(u,w)=(a,0)$ and large-amplitude relaxation oscillations, which arise as periodic orbits in~\eqref{eq:FHN_kinetics} when $\eps>0$ is sufficiently small. The key feature in the geometric construction of the periodic orbits is the S-shaped critical slow manifold $\mathcal{M}_0 = \{(u,w) \in \R^2 : w=f(u)\}$. The cubic $w = f(u)$ attains a local minimum value at 
\begin{align} \label{defu1}
u_1 = \frac{1}{3} \left(1 + a - \sqrt{1 - a + a^2}\right) 
\end{align}
and a local maximum value at 
\begin{align} \label{defu1s}
\bar{u}_1 = \frac{1}{3} \left(1 + a + \sqrt{1 - a + a^2}\right),
\end{align}
splitting $\mathcal{M}_0$ into three normally hyperbolic branches $\mathcal{M}^{\mathrm{l}, \mathrm{m}, \mathrm{r}}_0$ and two fold points $(u_1,f(u_1))$, $(\bar{u}_1,f(\bar{u}_1))$; see Figure~\ref{fig:singular_slow}. The nullcline $u-\gamma w - a = 0$ intersects the cubic $w = f(u)$ in the point $(a,0)$ in the $(u,w)$-plane, and~\eqref{e:gammacond} ensures that there are no  intersections with the right and left branches  $\mathcal{M}^{\mathrm{l,r}}_0$ which would lead to stable equilibria; see again Figure~\ref{fig:singular_slow}. One can then construct relaxation oscillations by concatenating portions of the left and right branches $\mathcal{M}^{\mathrm{l},\mathrm{r}}_0$ of the critical manifold with fast orbits that originate at each fold point and jump to the opposing normally hyperbolic branch at points $(u_2,f(u_1))$ and $(\bar{u}_2,f(\bar{u}_1))$ with
\begin{align} \label{defu2}
    u_2 = \frac{1}{3} \left(1 + a + 2\sqrt{1 - a + a^2}\right), \qquad  \bar{u}_2 = \frac{1}{3} \left(1 + a - 2\sqrt{1 - a + a^2}\right),
\end{align}
see Figure~\ref{fig:singular_slow}. We hence refer to the parameter regime $0<a<\frac{1}{2}, 0<\gamma<\gamma_*(a)$ where~\eqref{eq:FHN_kinetics} exhibits this oscillatory behavior as the \emph{oscillatory regime}.

 \begin{figure}
\centering
\includegraphics[width=0.65\linewidth]{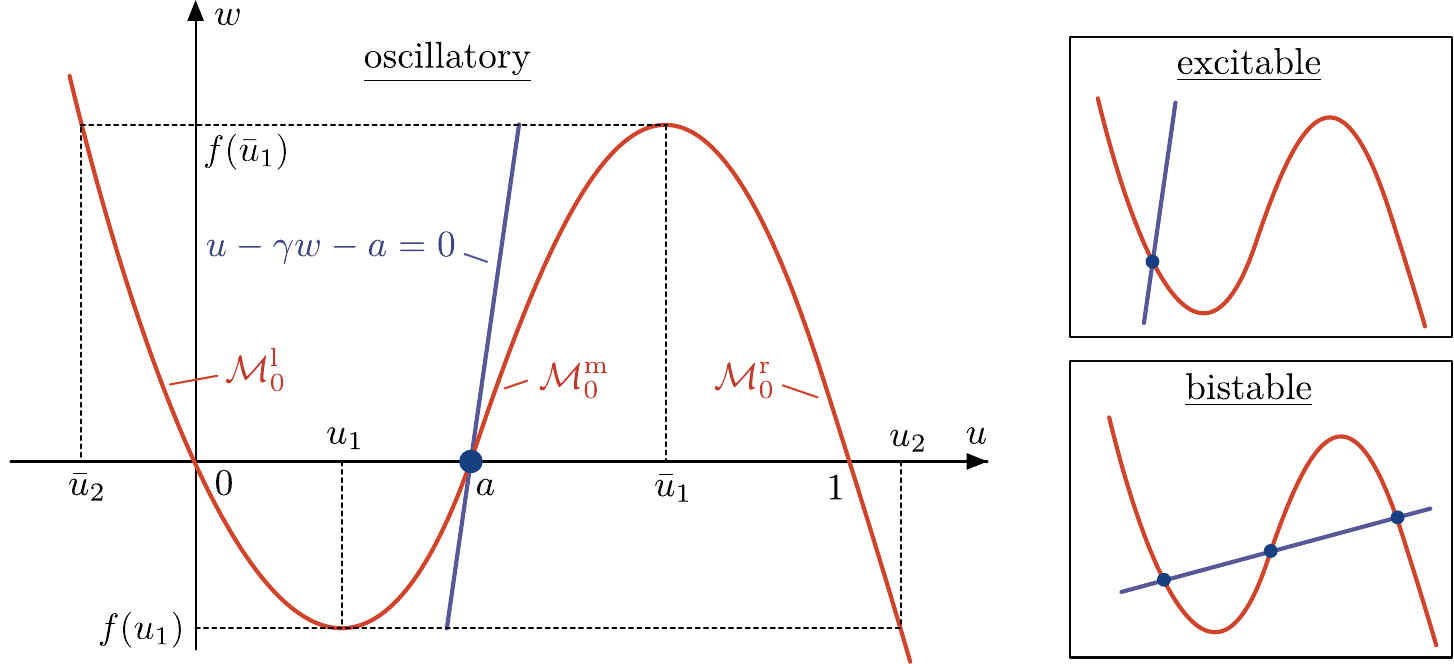}
\caption{The left schematic diagram depicts the left, middle, and right branches $\mathcal{M}^\mathrm{l,m,r}_0$ of the critical manifold $\mathcal{M}_0$, the locations of the fold points $(u,w)=(u_1,f(u_1))$ and $(u,w)=(\bar{u}_1,f(\bar{u}_1))$, and the nullcline $u-\gamma w-a=0$, under the conditions $0<a<1/2$ and $0<\gamma<\gamma_*(a)$. These conditions ensure that~\eqref{eq:FHN_kinetics} is in the oscillatory regime and exhibits relaxation oscillations. In particular, they exclude configurations in which equilibria lie on the outer branches $\mathcal{M}^\mathrm{l,r}_0$ of the critical manifold, such as the excitable and bistable regimes depicted in the top right and bottom right insets, respectively. }
\label{fig:singular_slow}
\end{figure}

\paragraph{Periodic wave trains.} 
In addition to the spatially homogeneous oscillations in~\eqref{eq:FHN_kinetics},  the PDE~\eqref{eq:FHN_pde} admits large-amplitude spatially periodic  wave trains, parameterized for instance by their wave speed. They arise as solutions of an associated traveling-wave equation as follows. Passing to a co-moving frame $(u,w)(\xi,t) = (u,w)(x-ct,t)$ with wave speed $c$, we rewrite~\eqref{eq:FHN_pde} as
\begin{align}\label{FHN}
\begin{split}
u_t &= u_{\xi\xi} + f(u) - w + cu_\xi,\\
w_t &=\epsilon(u-\gamma w - a) + cw_\xi,
\end{split}
\qquad \xi \in\R, \, t \geq 0, \, (u,w)\in\R^2,
\end{align}
where $\xi=x-ct$. Stationary solutions $(u,w)(x,t) = (u,w)(\xi)$ satisfy the traveling wave ODE
\begin{align}\label{eq:FHN_twode}
\begin{split}
0 &= u_{\xi\xi} + f(u) - w + cu_\xi,\\
0 &=\epsilon(u-\gamma w - a) + cw_\xi,
\end{split}
\end{align}
which, upon setting $v = u_\xi$, can be written as a singularly perturbed first-order system
\begin{align}\label{TW}
\begin{split}
u_\xi &= v,\\
v_\xi &= -cv - f(u) + w,\\
w_\xi &= -\frac{\epsilon}{c}(u-\gamma w - a).
\end{split}
\end{align}
Wave trains then correspond to periodic orbits in~\eqref{TW}. For each $0<a<\frac{1}{2}$, $0<\gamma<\gamma_*(a)$, and each sufficiently small $\eps>0$,~\eqref{TW} admits a family of wave trains parameterized by the speed $c$~\cite{CASCH,soto2001geometric}. This family naturally splits into two sub-families, namely the trigger waves for $c<c_*(a)$ and the phase waves for $c>c_*(a)$, where 
\begin{align*}
    c_*(a) \coloneqq  \sqrt{\frac{1-a+a^2}{2}} > 0.
\end{align*}
Trigger and phase waves are distinguished by the location of their fast transitions relative to the fold points at the local extrema of the cubic $w=f(u)$. For trigger waves, the fast jumps occur away from the fold points, whereas for phase waves they occur near the fold points; see Figure~\ref{fig:continuation}. In both cases, the period (or spatial wavelength) is an increasing function of the wave speed $c$. Moreover, the amplitude of phase waves remains nearly constant as $c$ increases, while the amplitude of trigger waves increases with $c$.

The existence of the family of trigger waves was established in~\cite{soto2001geometric}, while the family of phase waves was constructed more recently in~\cite{CASCH}. Technically, constructing  phase waves is significantly more subtle, due to the loss of normal hyperbolicity near the extrema of the cubic $w=f(u)$, requiring the use of geometric desingularization techniques to complete the construction. The following theorem summarizes existence results for both trigger and phase waves.
\begin{theorem}{\cite[Theorem 1.1]{CASCH}}\label{thm:existence}
Fix $0<a<\frac{1}{2}, 0<\gamma<\gamma_*(a)$ and $c>0$. Then, for all sufficiently small $\eps>0$, the system~\eqref{TW} admits a periodic orbit $\Gamma_\eps(c)$ with period $L_\eps(c)$. The function $L_\eps(c)$ is monotonically increasing in $c$, and satisfies $\lim_{\eps\to0}\eps L_\eps(c)=L_0(c)$ for a monotonically increasing function $L_0(c)$. For fixed $c<c_*(a)$ and $\eps>0$ sufficiently small, $\Gamma_\eps(c)$ is a trigger wave, while for fixed $c>c_*(a)$ and $\eps>0$ sufficiently small, $\Gamma_\eps(c)$ is a phase wave.
\end{theorem}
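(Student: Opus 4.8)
The plan is to construct the wave trains directly as periodic orbits of the singularly perturbed traveling-wave system~\eqref{TW}, regarded as a fast--slow system with fast variables $(u,v)$ and slow variable $w$, assembling them from slow passages along the critical manifold joined by fast jumps, with standard geometric singular perturbation theory (GSPT) away from the folds and geometric desingularization at the folds. In the singular limit $\eps=0$ one has $w_\xi=0$ and the layer equation $u_{\xi\xi}+cu_\xi+f(u)-w=0$; its equilibrium set is the critical manifold $\mathcal{M}_0=\{v=0,\ w=f(u)\}$, a copy of the cubic graph splitting into the normally hyperbolic branches $\mathcal{M}_0^{\lr,\mathrm{m},\rr}$ --- saddle type on the two outer branches, node/focus type on the middle one --- joined at the non-hyperbolic folds $(u_1,0,f(u_1))$ and $(\bar u_1,0,f(\bar u_1))$. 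In slow time $\tau=\eps\xi$ the reduced flow on $\mathcal{M}_0$ is $w_\tau=-\tfrac1c(u-\gamma w-a)$; by~\eqref{e:gammacond} its only equilibrium sits on the middle branch, so along each outer branch the reduced flow is strictly monotone in $w$ and transports trajectories between the two fold values.

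First I would assemble the singular periodic orbit $\Gamma_0(c)$ from two slow segments, one on $\mathcal{M}_0^\lr$ and one on $\mathcal{M}_0^\rr$, connected by two fast heteroclinic fronts of the layer equation between the outer zeros $u_-(w)<u_+(w)$ of $f(\cdot)-w$. By the classical potential/area argument such a bistable front of prescribed speed $c$ exists for exactly one value $w=w_\mathrm{j}(c)$, the return front for exactly one value $w=\bar w_\mathrm{j}(c)$, and both move monotonically toward the two fold values as $c$ grows, reaching them precisely at $c=c_*(a)$. The threshold $c_*$ is pinned down by the explicit ansatz $u_\xi=\tfrac1{\sqrt2}(u-u_1)(u-u_2)$ for the degenerate front from a fold to the opposite branch (and its analogue $u_\xi=-\tfrac1{\sqrt2}(u-\bar u_1)(u-\bar u_2)$ at the other fold), which forces $c=(u_2-u_1)/\sqrt2=\sqrt{(1-a+a^2)/2}$. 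For $c<c_*$ the jump values $w_\mathrm{j}(c),\bar w_\mathrm{j}(c)$ stay strictly inside the fold interval, so both slow segments lie in the normally hyperbolic region; Fenichel's theorems produce slow manifolds $\mathcal{M}_\eps^{\lr,\rr}$ perturbing from the outer branches, and the Exchange Lemma at each saddle-type slow manifold shows that the loop ``fast jump $\to$ track the slow manifold $\to$ exit along the unstable fiber $\to$ fast jump'' closes up transversally, giving the periodic orbit $\Gamma_\eps(c)$ for all small $\eps>0$; this recovers the trigger-wave construction of~\cite{soto2001geometric}.

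The crux is the range $c>c_*$, where the singular jumps are pinned at the folds, normal hyperbolicity is lost there, and no genuine singular connection from a fold to the opposite branch exists at speed $c$. Here I would desingularize each fold by a blow-up, $(u-u_1,v,w-f(u_1),\eps)=\rho\,(\bar u,\bar v,\bar w,\bar\eps)$ with $(\bar u,\bar v,\bar w,\bar\eps)$ on a sphere, and analyze the induced flow chart by chart: in the entry chart the Fenichel slow manifold over $\mathcal{M}_0^\lr$ is continued onto the blow-up sphere; in the central rescaling chart the dynamics reduce, via the Riccati transform, to an equation of Airy type whose relevant solution provides the connection through the fold --- the spatial-dynamics incarnation of the slow passage through a fold responsible for the $\mathcal{O}(\eps^{-1/3})$ correction to the period --- and in the exit chart the orbit is passed to the strong-unstable fiber carrying it across to the opposite branch. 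Doing the symmetric analysis at the upper fold yields two well-defined fold-passage maps, and composing the two slow flows, the two fold-passage maps and the two fast excursions gives a Poincar\'e return map on a section transverse to $\Gamma_0(c)$; a transversality/contraction argument then produces a fixed point close to $\Gamma_0(c)$, i.e.\ the phase wave $\Gamma_\eps(c)$. I expect the uniform-in-$\eps$ control of this fold passage --- matching the connection simultaneously to the incoming slow manifold and to the outgoing fast fiber --- to be the principal obstacle, precisely the difficulty overcome in~\cite{CASCH}.

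Finally, the stated properties follow from the construction. Writing $L_\eps(c)=(\text{time on the two slow segments})+(\text{time in the two jumps and fold regions})$, the first term is $\eps^{-1}$ times the reduced travel time while the second is $\mathcal{O}(1)$ for $c<c_*$ and $\mathcal{O}(\eps^{-1/3})$ for $c>c_*$, hence $o(\eps^{-1})$ in both cases, so $\lim_{\eps\to0}\eps L_\eps(c)=L_0(c)$ with
\[
L_0(c)=c\int\left(\frac{1}{|u_-(w)-\gamma w-a|}+\frac{1}{|u_+(w)-\gamma w-a|}\right)\de w
\]
over the $w$-interval swept out by $\Gamma_0(c)$, namely $[f(u_1),f(\bar u_1)]$ when $c\ge c_*$. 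For $c\ge c_*$ this interval is fixed, so $L_0(c)=c\,C_0(a,\gamma)$ is linear and strictly increasing; for $c<c_*$, differentiating in $c$ and using that the two jump values move monotonically toward the folds as $c$ increases gives $L_0'(c)>0$, and monotonicity of $L_\eps$ follows by uniform convergence (or a direct perturbative estimate). The trigger/phase dichotomy is then immediate --- jumps strictly inside the fold interval versus pinned at the folds --- as are the amplitude statements: the $u$-range of $\Gamma_0(c)$ equals the distance between the two jump-off points, which increases with $c$ for $c<c_*$ and equals the fold-determined value $u_2-\bar u_2$ for all $c\ge c_*$, so the amplitude of phase waves is nearly constant in $c$.
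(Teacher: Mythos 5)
Your overall architecture --- a singular orbit assembled from slow segments and layer heteroclinics, Fenichel theory plus the Exchange Lemma for $c<c_*$, blow-up with a Riccati/Airy rescaling chart at the folds for $c>c_*$, and the period asymptotics $L_0(c)=c\int(\cdots)\,\de w$ with the $w$-interval frozen at $[f(u_1),f(\bar u_1)]$ once $c\ge c_*$ --- is the approach of~\cite{CASCH} as summarized in \S\ref{sec:existence_overview}, and your identification of $c_*(a)$ via the quadratic ansatz is correct. However, there is a genuine error in your description of the phase-wave geometry at the folds, and it is not cosmetic: you have the orientation of the fast jumps reversed. You assert that for $c>c_*$ ``no singular connection from a fold to the opposite branch exists'' and that the blow-up must manufacture one, with the orbit ``passed to the strong-unstable fiber carrying it across to the opposite branch'' in the exit chart. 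In fact the singular layer connections for $c\ge c_*(a)$ are Fisher--KPP-type fronts that run \emph{into} the degenerate fold equilibria: $\phi_\f$ connects the hyperbolic saddle $(u_2,0)$ to $(u_1,0)$ and $\phi_\bb$ connects $(\bar u_2,0)$ to $(\bar u_1,0)$, approaching the fold algebraically along the center direction when $c>c_*$; indeed your own ansatz $u_\xi=\tfrac1{\sqrt2}(u-u_1)(u-u_2)$ is negative on $(u_1,u_2)$, so it runs from $u_2$ at $\xi=-\infty$ to $u_1$ at $\xi=+\infty$. The jump-off points from the slow manifolds are the normally hyperbolic saddles $(\bar u_2,f(\bar u_1))$ and $(u_2,f(u_1))$, where the standard Exchange Lemma applies; hyperbolicity is lost only at the landing points of the jumps.

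Consequently each fold region is traversed fast-in/slow-out --- the time reverse of the Krupa--Szmolyan scenario you describe --- and the exit you propose cannot occur: the layer linearization at the fold has eigenvalues $0$ and $-c<0$, so there is no strong-unstable fiber to carry the orbit to the opposite branch. What the blow-up actually delivers in~\cite{CASCH} is the continuation of the Fenichel manifold $\mathcal{M}^\lr_\eps$ and its stable foliation \emph{backward} through the fold, producing the extended manifold $\mathcal{M}^{\lr,+}_\eps$ (the left branch with the positive $x$-axis of the layer flow appended); the periodic orbit is then obtained from the transverse intersection of $\mathcal{W}^{\mathrm{u}}(\mathcal{M}^{\rr}_\eps)$ with $\mathcal{W}^{\mathrm{s}}(\mathcal{M}^{\lr,+}_\eps)$ along $\phi_\f$ (and its analogue at the upper fold) together with a fixed point of the return map on a section transverse to the front. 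As written, your transversality setup pairs the wrong invariant manifolds and the loop would not close; the remaining parts of your argument (period asymptotics, monotonicity, the trigger/phase dichotomy and the amplitude statements) do go through once the geometry is corrected.
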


\begin{figure}
\centering
\includegraphics[width=0.315\linewidth]{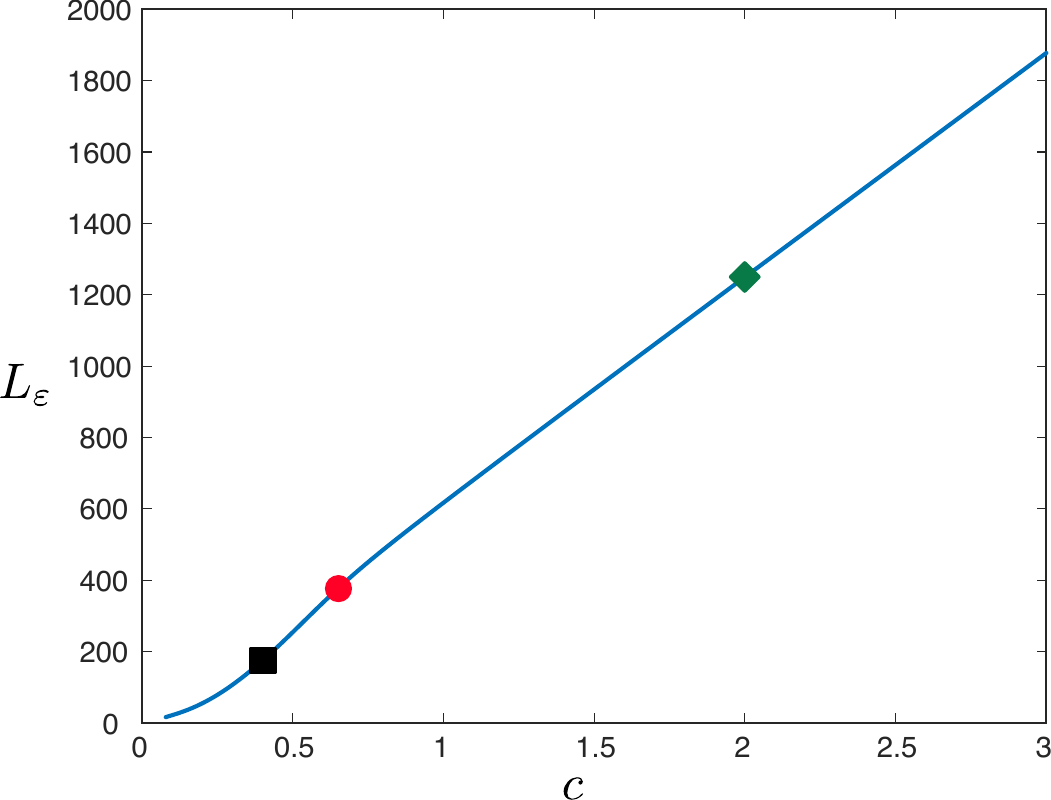}\hspace{0.035\textwidth}
\includegraphics[width=0.29\linewidth]{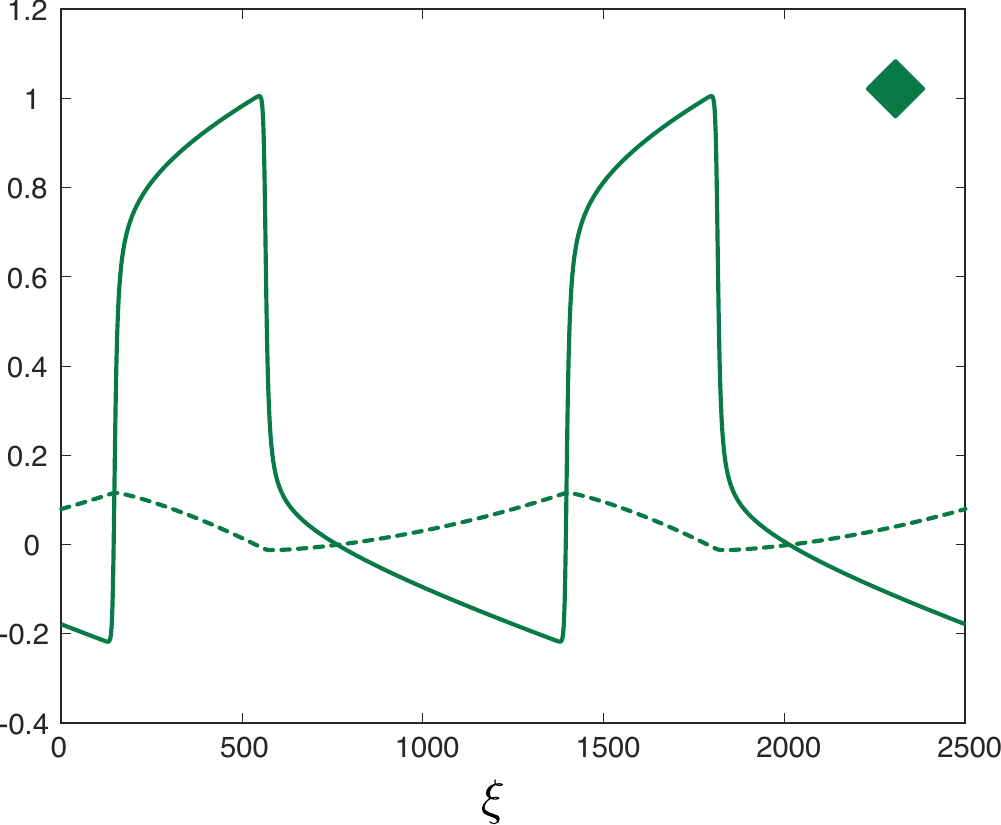}\hspace{0.025\textwidth}
\includegraphics[width=0.31\linewidth]{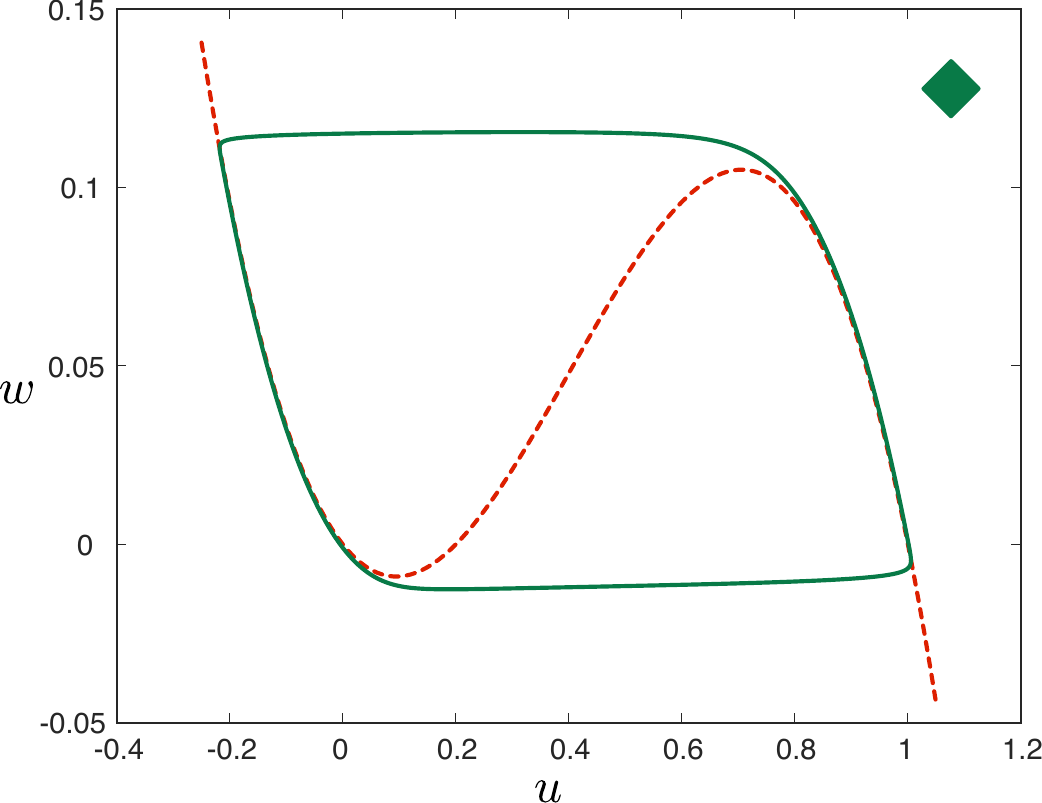}\\
\vspace{10pt}
\hspace{0.005\textwidth}
\includegraphics[width=0.315\linewidth]{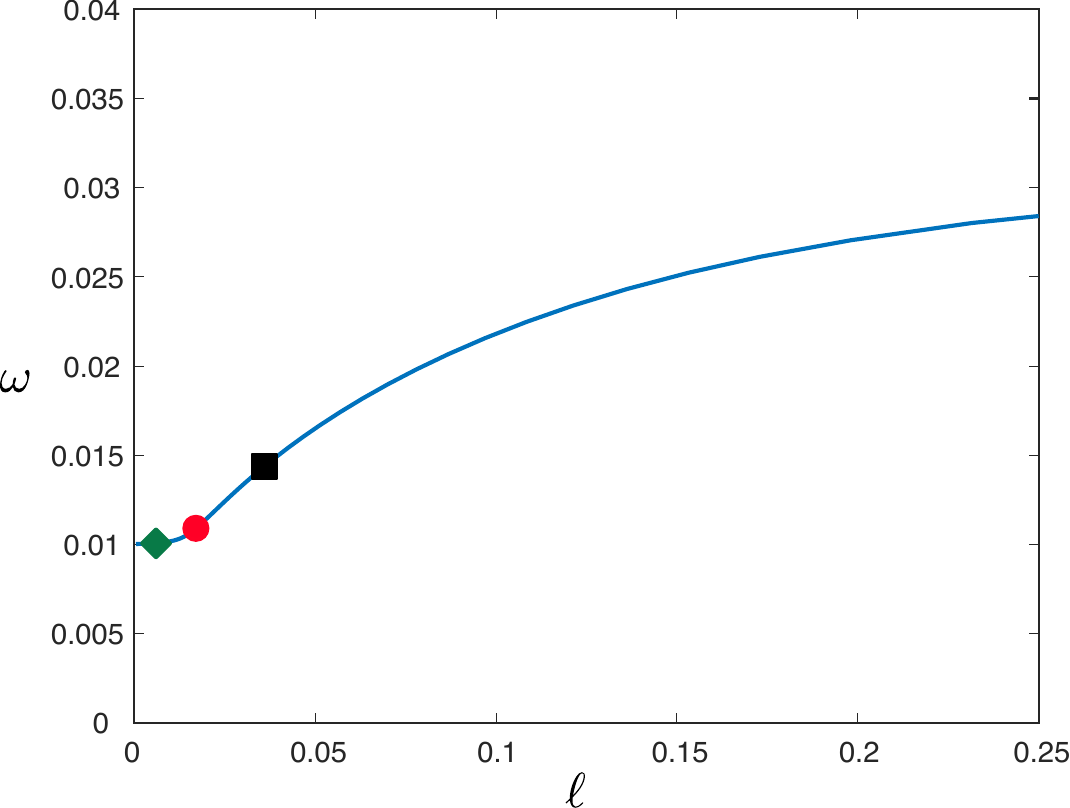}
\hspace{0.02\textwidth}
\includegraphics[width=0.29\linewidth]{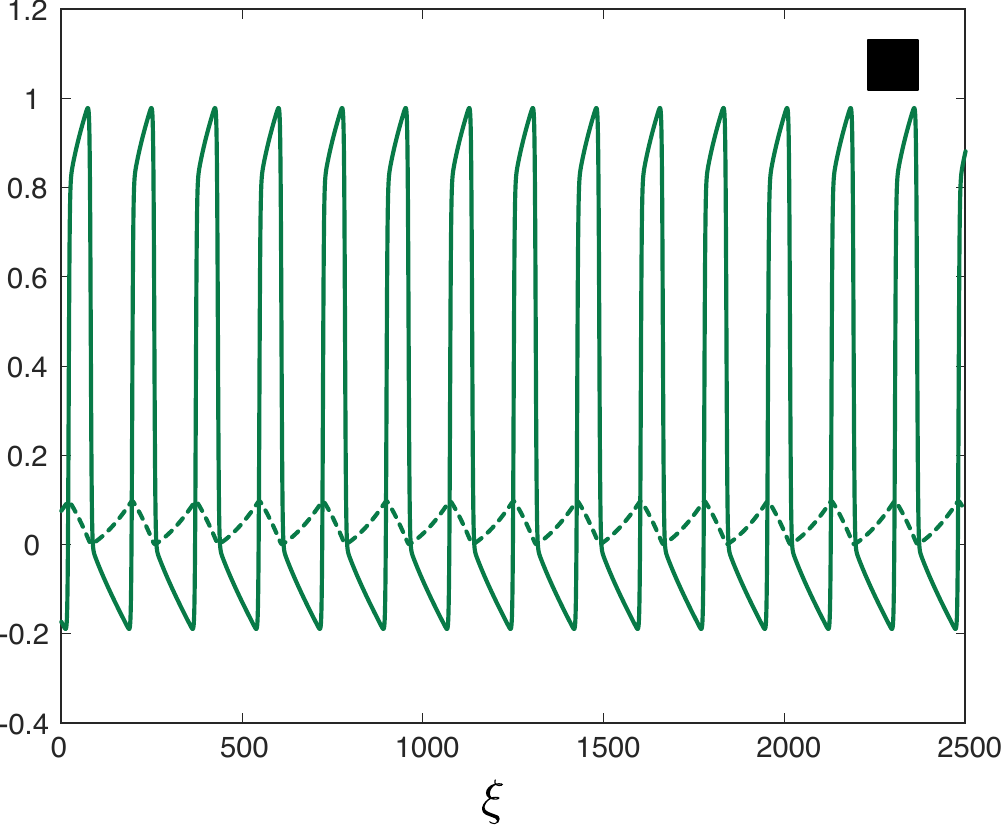}\hspace{0.025\textwidth} 
\includegraphics[width=0.31\linewidth]{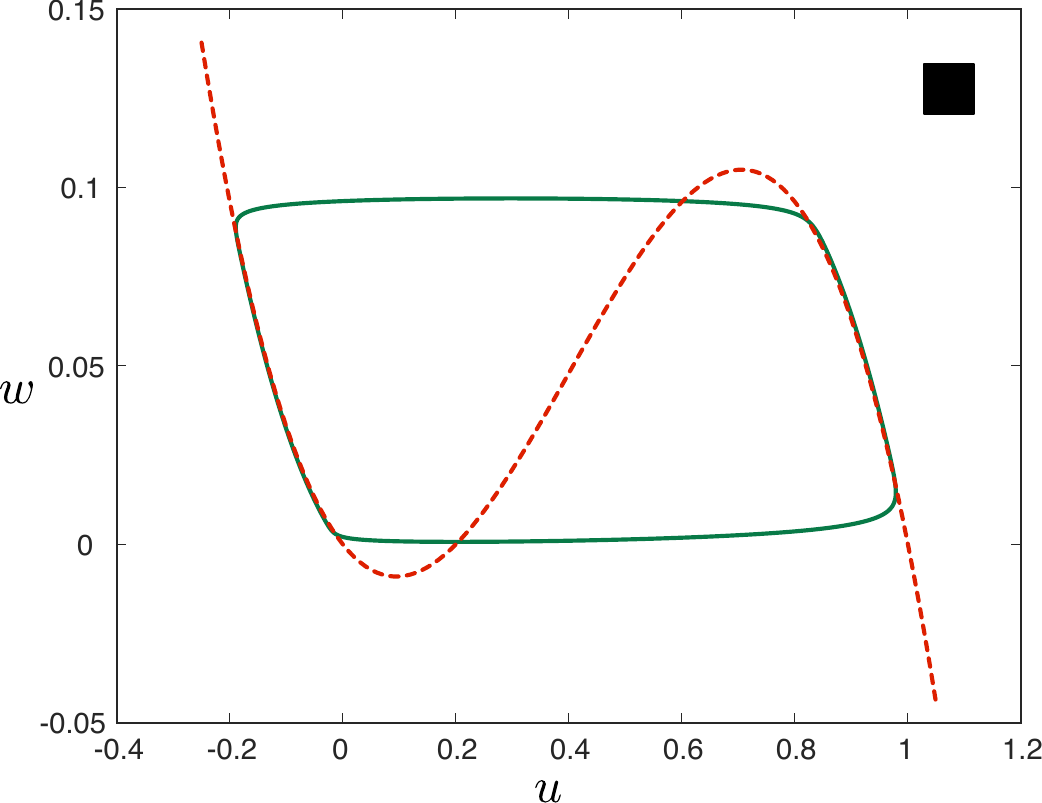}
\caption{Left column: (Top panel) Speed-period relation from numerical continuation of~\eqref{eq:FHN_twode} for fixed $a=0.2, \gamma=1, \eps=0.001$. (Bottom panel) Plot of the temporal frequency $\omega(\ell)\coloneqq \ell c(\ell)$, where $\ell\coloneqq \tfrac{2\pi}{L_\eps}$ is the spatial wavenumber. The value $c_*(a)$ denoting the transition from trigger to phase waves in the singular limit is marked by a red circle in both plots. Middle column: Profiles $u$ (solid) and $w$ (dashed) for a phase wave at $c=2$ (top panel) and a trigger wave at $c=0.4$ (bottom panel), marked by a green diamond and a black square, respectively, in the left panels. Note that the spatial period scales with the speed $c$, so that the slower trigger waves are much more narrowly spaced compared with the phase waves when plotted on the same spatial scale.  Right column: phase space plots of the phase- and trigger-wave trains (solid green) from middle panels along with the cubic nullcline  $w=f(u)$ (dashed red). } 
\label{fig:continuation}
\end{figure}

Figure~\ref{fig:continuation} depicts the results of numerical continuation of wave trains in the traveling-wave equation~\eqref{eq:FHN_twode} in the wave speed parameter $c$ and period $L_\eps$ for fixed $a=0.2, \gamma=1, \eps=0.001$. We see that the period $L_\eps$ increases monotonically in $c$, reflecting a negative group velocity $c_\mathrm{g}-c=-L\frac{dc}{dL}$ in the comoving frame propagating with speed $c$. In other words, the apparent phase velocity $\omega/\ell$ is always larger than the group velocity describing the speed of propagation of disturbances, $d\omega/d\ell$; see~\cite[Remark 1.6]{CASCH}.  Figure~\ref{fig:continuation} also depicts a phase-wave train profile obtained for $c=2>c_*(a) \approx 0.648$ and a trigger wave profile obtained for $c=0.4<c_*(a)$. We note the apparent change in concavity of $\omega(\ell)$ near the transition between trigger and phase waves (see Figure~\ref{fig:continuation}, bottom left panel). An examination of the relation between speed and period~\cite[\S4.4]{CASCH} suggests that a change in sign of $\omega''(\ell)$ occurs within the family of trigger waves at a speed somewhat close to -- but in fact $\mathcal{O}(1)$ in $\eps$ away from -- the trigger/phase wave transition.

\subsection{Main result and consequences: spectral stability of phase-wave trains}\label{s:mr}

Our main result establishes spectral properties of the linearization at phase-wave trains which translate into frequency synchronization near phase waves, time scales of synchronization, and the possibility of desynchronization. 

\paragraph{Stability and effective diffusivity near phase-wave trains --- main result.} A periodic orbit $\Gamma_\eps$ of Theorem~\ref{thm:existence} corresponds to a stationary, $L_\eps$-periodic wave-train solution $\phi_\eps(\xi) = (u_\eps(\xi),w_\eps(\xi))$ to~\eqref{FHN}. Linearizing~\eqref{FHN} about this solution, we obtain the $L_\eps$-periodic differential operator
\begin{align*}
\El_\eps \begin{pmatrix} u \\ w\end{pmatrix} = \begin{pmatrix} u_{\xi\xi} + f'(u_\epsilon)u - w + c u_\xi\\ \epsilon(u-\gamma w) + cw_\xi\end{pmatrix},
\end{align*}
acting on $L^2(\R,\C) \times L^2(\R,\C)$ with domain $H^2(\R,\C) \times H^1(\R,\C)$. The spectrum of $\El_\eps$ is characterized by
Floquet--Bloch theory~\cite{Gardner1993,ReedSimon}. We set $e_\rho(\xi) = \re^{-\ri \rho \xi}$ for $\rho \in \R$ and define the family of Bloch operators
\begin{align*}
    \El_{\rho,\eps} \begin{pmatrix} u \\ w\end{pmatrix} =  e_\rho^{-1} \El_\eps \left(e_\rho \begin{pmatrix} u \\ w\end{pmatrix}\right) = \begin{pmatrix} \big(\partial_\xi+\ri\rho\big)^2+ c\big(\partial_\xi+\ri\rho\big) + f'(u_\epsilon) & - 1 \\ \epsilon & c\big(\partial_\xi+\ri\rho\big) - \eps \gamma \end{pmatrix}\begin{pmatrix} u \\ w\end{pmatrix},
\end{align*}
acting on $L^2(\R / L_\eps \Z, \C) \times L^2 (\R / L_\eps \Z, \C)$ with domain $H^2(\R / L_\eps \Z, \C) \times H^1 (\R / L_\eps \Z, \C)$. Since $\El_{\rho,\eps}$ has compact resolvent by the Rellich--Kondrachov theorem, its spectrum consists of isolated eigenvalues of finite algebraic multiplicity only. Floquet--Bloch theory then asserts that the spectrum of $\El_\eps$ is given by the union
\begin{align*}
    \Sigma \left( \El_\eps \right) = \bigcup_{\rho \in \left[-\tfrac{\pi}{L_\eps},\tfrac{\pi}{L_\eps} \right)} \Sigma(\El_{\rho,\eps}).
\end{align*}
Consequently, $\lambda \in \C$ lies in the spectrum of $\El_\eps$ if and only if there exists $\rho \in \R$ such that the eigenvalue problem 
\begin{align}\label{eq:Floquet_eigenvalue_problem}
    \El_{\rho,\eps} \begin{pmatrix} u\\ w\end{pmatrix} = \lambda \begin{pmatrix} u\\ w\end{pmatrix}
\end{align} 
admits a nontrivial solution $(u,w)^\top \in H^2(\R / L_\eps \Z, \C) \times H^1 (\R / L_\eps \Z, \C)$. 

Our main result shows that the spectrum of $\El_\eps$ is confined to the left-half plane, except for the simple translational eigenvalue of $\El_{0,\eps}$ at the origin, and is uniformly bounded away from the imaginary axis outside a small neighborhood of the origin. Moreover, it establishes that the critical spectrum near the origin  is determined through an implicit transcendental equation -- referred to as the \emph{main formula} -- which relates $\lambda$ to the Floquet--Bloch frequency variable $\rho$. 
The leading-order coefficients of this equation are fully explicit in terms of $a$, $c$, and $\gamma$. In a neighborhood of $\lambda=0$, the solution to the main formula is given by a smooth curve $\lambda_\eps(\rho)$, called the \emph{critical spectral curve} or \emph{linear dispersion relation}, which touches the origin in a quadratic tangency at $\rho = 0$. Following~\cite[\S4]{DSSS}, we find that the first derivative $\ri \lambda_\eps'(0) = c_g - c$ yields the group velocity in the co-moving frame with speed $c$, while the second derivative $\lambda_\eps''(0) = -d_{\mathrm{eff}}$ provides the effective diffusivity.

\begin{theorem}[Main result]\label{thm:spectral_stability} Let $0<a<\frac{1}{2}$, $0<\gamma<\gamma_*(a)$, and $c>c_*(a)$. Fix $\delta > 0$ arbitrarily small. Then, there exist constants $C,\mu>0$ such that, for all $\eps>0$ sufficiently small, the linearization $\El_\eps$ of~\eqref{FHN} about the $L_\eps$-periodic wave train $\phi_\eps(\xi) = (u_\eps(\xi),w_\eps(\xi))$, established in Theorem~\ref{thm:existence}, satisfies the following properties:
\begin{enumerate}
    \item  \label{thm:spectral_stability_i} \emph{Spectral stability:} We have $\Sigma(\El_\eps) \subset \{ \lambda \in \C : \Re(\lambda) < 0 \} \cup \{ 0 \}$. Furthermore, there exists $\eta(\eps)>0$ such that any $\lambda\in \Sigma(\El_\eps) $ with $|\lambda|\geq  \mu\eps^{1/6}$ satisfies $\Re(\lambda)\leq -\eta(\eps)$.
    \item \label{thm:spectral_stability_ii} \emph{Main formula:} A point $\lambda \in \C$ with $|\Im(\lambda)| \leq \mu$ and $|\Re(\lambda)| \leq \mu \eps^{1/6}$ lies in the spectrum $\Sigma(\El_\eps)$ if and only if it obeys the formula
    \begin{align} \label{eq:mainformapprox}
    \re^{\left(\frac{\lambda}{c} -\ri\rho\right)L_\eps} = \left(1 +  \Upsilon_\mathrm{lf}\left(\frac{\lambda}{\eps^{\frac16}}\right) \frac{u_1-u_2  + \mathcal{E}_{\mathrm{lf},\eps}(\lambda)}{u_2 - \gamma f(u_1) - a}\right)\!\left(1 +  \Upsilon_\mathrm{uf}\left(\frac{\lambda}{\eps^{\frac16}}\right) \frac{\bar{u}_1-\bar{u}_2  + \mathcal{E}_{\mathrm{uf},\eps}(\lambda)}{\bar{u}_2 - \gamma f(\bar{u}_1) - a}\right) + \mathcal{E}_{\eps}(\lambda) 
    \end{align}
    for some $\rho \in \R$, where the residual terms satisfy
    \begin{align*}
    \left|\mathcal{E}_{\mathrm{lf},\eps}(\lambda)\right|, \left|\mathcal{E}_{\mathrm{uf},\eps}(\lambda)\right| \leq \delta, \qquad \left|\mathcal{E}_{\eps}(\lambda)\right| \leq C \left(\eps^{\frac13} + \left|\lambda \log|\lambda|\right|\right)
    \end{align*}
    and where $\Upsilon_{\mathrm{lf}},\Upsilon_{\mathrm{uf}} \colon \C \to \C$ are explicit entire functions, defined by~\eqref{eq:deflfuf} below. In this case, $\lambda$ is an eigenvalue of the Bloch operator $\El_{\rho,\eps}$. In particular, $0$ is an algebraically simple eigenvalue of $\El_{0,\eps}$.
    \item \label{thm:spectral_stability_iii} \emph{Critical spectral curve:} Locally near $(0,0)$, the set of $(\lambda,\rho) \in \C \times \R$ solving~\eqref{eq:mainformapprox} is given by a smooth curve $\lambda_\eps \colon I_\eps \to \C$ with $\lambda_\eps(0) = 0$, where $I_\eps \subset \R$ is a neighborhood of $0$.  The group velocity $c_{\mathrm{g}} = \ri\lambda_\eps'(0) + c$ and the effective diffusivity $d_\mathrm{eff} = -\lambda_\eps''(0) > 0$ obey the estimates    \begin{equation*}
    \left|c_{\mathrm{g}}\right| \leq \delta, \qquad \left|d_{\mathrm{eff}} - \frac{2\kappa c^3}{L_0} \, \eps^{\frac23}\right| \leq \delta \eps^{\frac23},
    \end{equation*}
    where the coefficient $\kappa >0$ is given explicitly by~\eqref{eq:quad_coeff}, and
    $L_0 =\lim_{\eps\to 0}\eps L_\eps=L_{\rr}+L_{\lr}$ is given by~\eqref{periodexpr1} and~\eqref{periodexpr2}, below.
\end{enumerate}
\end{theorem}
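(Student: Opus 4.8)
The plan is to convert the Bloch eigenvalue problem~\eqref{eq:Floquet_eigenvalue_problem} into a nonlinear matching problem along the periodic orbit $\Gamma_\eps$ and then to evaluate the resulting characteristic equation by tracking solutions through the fold points. Writing $\El_{\rho,\eps}(u,w)^\top=\lambda(u,w)^\top$ in first-order form with $v=u_\xi$ turns it into a three-dimensional $L_\eps$-periodic linear system, and $\lambda\in\Sigma(\El_{\rho,\eps})$ precisely when $\re^{\ri\rho L_\eps}$ is an eigenvalue of its monodromy matrix. I would cut one period of $\Gamma_\eps$ into six segments --- the two quasistationary plateaus along $\mathcal{M}^{\mathrm{l}}_0$ and $\mathcal{M}^{\mathrm{r}}_0$, the two fast transition layers, and the two fold regions near $(u_1,f(u_1))$ and $(\bar{u}_1,f(\bar{u}_1))$ --- solve the variational equation on each segment with estimates uniform in $\eps$ and, for $|\lambda|\le\mu\eps^{1/6}$, in $\lambda$, and glue the pieces by Lin's method. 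On the outer segments the hyperbolic structure is an exponential trichotomy with one-dimensional fast unstable, fast stable, and slow/center subspaces; along a plateau the fast directions are amplified and damped by factors $\re^{\pm\mathcal{O}(\eps^{-1})}$, so only the slow/center direction survives in the monodromy, and its transfer over the whole period is a single scalar $m_\eps(\lambda)$, computed through the reduced linearized flow on the Fenichel slow manifolds together with the Riccati transform, the eigenvalue condition becoming $m_\eps(\lambda)=\re^{\ri\rho L_\eps}$. Two of the three contributions to $m_\eps(\lambda)$ are comparatively routine: integrating the term $\tfrac{\lambda}{c}w$ over each plateau produces the factor $\re^{\lambda L_\eps/c}$ on the left-hand side of~\eqref{eq:mainformapprox}, while normalizing the reduced slow flow by the drift rate $u-\gamma w-a$ along the branch yields the denominators $u_2-\gamma f(u_1)-a$ and $\bar{u}_2-\gamma f(\bar{u}_1)-a$, evaluated at the landing points $(u_2,f(u_1))$ and $(\bar{u}_2,f(\bar{u}_1))$ of the two jumps; and the linearization about each fast layer carries an exponential dichotomy between the hyperbolic fast equilibria, a Melnikov-type computation relating the slow/center direction on the two sides contributing the numerators $u_1-u_2$ and $\bar{u}_1-\bar{u}_2$, i.e.~the jump distances in $u$.

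The heart of the argument --- and the step I expect to be the main obstacle --- is the passage of the slow/center direction through each fold point, where normal hyperbolicity fails and Fenichel theory no longer applies. I would resolve this by blowing up the fold, in the spirit of the geometric desingularization used in~\cite{CASCH} to construct the profiles, but now adapted to the variational equation with the spectral parameter present. In the rescaling chart the inner eigenvalue problem reduces to a non-autonomous linear system governed by a model (special-function) equation in which $\lambda$ enters only through the scaled variable $\lambda\eps^{-1/6}$, so that the transfer of the relevant one-dimensional subspace across the fold is an entire function of $\lambda\eps^{-1/6}$, to be identified with $\Upsilon_{\mathrm{lf}}$, respectively $\Upsilon_{\mathrm{uf}}$, of~\eqref{eq:deflfuf}. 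The delicate point is the repeated scattering of this subspace: in the blow-up the solution we must follow runs close to several turning points of the desingularized flow, so a single hyperbolic estimate does not suffice, and I would instead track it through the successive blow-up charts using exponential trichotomies that persist under desingularization, reducing the subspace evolution to scalar Riccati equations controlled by the model equation and, in the overlap with the outer plateau solutions, by matched asymptotics --- the latter matching producing the $\lambda\log|\lambda|$ contribution to $\mathcal{E}_\eps$. Patching the two fold transfers between the plateau and jump transfers then assembles $m_\eps(\lambda)$ into the form $\re^{\lambda L_\eps/c}/R_\eps(\lambda)$, where $R_\eps(\lambda)$ is the right-hand side of~\eqref{eq:mainformapprox} with its two factors $1+\Upsilon_{\mathrm{lf}}(\lambda\eps^{-1/6})\tfrac{u_1-u_2+\mathcal{E}_{\mathrm{lf},\eps}(\lambda)}{u_2-\gamma f(u_1)-a}$ and $1+\Upsilon_{\mathrm{uf}}(\lambda\eps^{-1/6})\tfrac{\bar{u}_1-\bar{u}_2+\mathcal{E}_{\mathrm{uf},\eps}(\lambda)}{\bar{u}_2-\gamma f(\bar{u}_1)-a}$; collecting the $\mathcal{O}(\eps^{1/3})$-corrections to the slow reduction and jumps, together with the above logarithm, gives the stated bounds on $\mathcal{E}_{\mathrm{lf},\eps},\mathcal{E}_{\mathrm{uf},\eps},\mathcal{E}_\eps$, and Lin's method simultaneously produces the eigenfunction, establishing part~\ref{thm:spectral_stability_ii}.

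For part~\ref{thm:spectral_stability_i}, in the window $|\lambda|\le\mu\eps^{1/6}$ the main formula shows that, since $\rho$ is free and only $|R_\eps(\lambda)|$ constrains $\lambda$, the spectrum coincides locally with the curve $\{\lambda:\Re\lambda=\tfrac{c}{L_\eps}\log|R_\eps(\lambda)|\}$; spectral stability then reduces to the estimate $|R_\eps(\lambda)|<1$ for $\Re\lambda\ge0$, $\lambda\ne0$, which I would verify from the explicit form of $\Upsilon_{\mathrm{lf}},\Upsilon_{\mathrm{uf}}$ together with the signs of the geometric ratios $\tfrac{u_1-u_2}{u_2-\gamma f(u_1)-a}$ and $\tfrac{\bar{u}_1-\bar{u}_2}{\bar{u}_2-\gamma f(\bar{u}_1)-a}$ forced by $0<a<\tfrac12$, $0<\gamma<\gamma_*(a)$ and $c>c_*(a)$ --- this being precisely the step that can fail for the system variants of~\S\ref{s:instability}, where a finite-wavelength instability appears at $\lambda\eps^{-1/6}=\mathcal{O}(1)$. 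For $|\lambda|\ge\mu\eps^{1/6}$ (and, for large $|\lambda|$, using standard parabolic resolvent bounds), the fold passage no longer has a near-neutral direction, so the monodromy is dominated by exponentially large and small factors and $\re^{\ri\rho L_\eps}$, of modulus one, cannot be a Floquet multiplier unless $\Re\lambda\le-\eta(\eps)$; this is again a Lin's method argument, but in an easier regime.

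Finally, for part~\ref{thm:spectral_stability_iii}, I would use a refinement of the expansion in a smaller neighborhood of $(\lambda,\rho)=(0,0)$ in which $R_\eps$ is $C^2$ with its first two derivatives of size $o(\eps^{2/3})$ beyond the explicit $\Upsilon$-terms, note that $(\lambda,\rho)=(0,0)$ is an exact solution of~\eqref{eq:mainformapprox} --- the translational mode $\partial_\xi\phi_\eps$ --- from which the normalization $\Upsilon_{\mathrm{lf}}(0)=\Upsilon_{\mathrm{uf}}(0)=0$ follows, and apply the implicit function theorem: the $\rho$-derivative of the left-hand side of~\eqref{eq:mainformapprox} at $(0,0)$ equals $-\ri L_\eps\ne0$ while the right-hand side is $\rho$-independent, so the solution set is locally a smooth curve $\lambda_\eps(\rho)$ with $\lambda_\eps(0)=0$; since moreover $\tfrac{\partial}{\partial\lambda}$ of the left-hand side, of size $\tfrac{L_\eps}{c}\sim\eps^{-1}$, dominates that of $R_\eps$, the zero is simple and $0$ is an algebraically simple eigenvalue of $\El_{0,\eps}$. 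A second-order Taylor expansion of~\eqref{eq:mainformapprox} along the curve then yields $\ri\lambda_\eps'(0)=c_{\mathrm g}-c$ with $|c_{\mathrm g}|\le\delta$ --- because in the singular limit $L_0(c)=L_{\rr}+L_{\lr}$ is exactly linear in $c$, so that the comoving group velocity $c_{\mathrm g}=c-L\,\tfrac{dc}{dL}$ tends to $0$ --- and $d_{\mathrm{eff}}=-\lambda_\eps''(0)>0$, whose dominant $\mathcal{O}(\eps^{2/3})$ part comes from the $\tfrac12\Upsilon_{\mathrm{lf}}''(0)(\lambda\eps^{-1/6})^2$ and $\tfrac12\Upsilon_{\mathrm{uf}}''(0)(\lambda\eps^{-1/6})^2$ terms in $R_\eps$ and the cross term $\Upsilon_{\mathrm{lf}}'(0)\Upsilon_{\mathrm{uf}}'(0)(\lambda\eps^{-1/6})^2$, divided by $L_\eps\sim L_0/\eps$; this produces $d_{\mathrm{eff}}=\tfrac{2\kappa c^3}{L_0}\eps^{2/3}$ up to an error bounded by $\delta\eps^{2/3}$, with $\kappa$ the explicit combination~\eqref{eq:quad_coeff} of $\Upsilon_{\mathrm{lf}}'(0),\Upsilon_{\mathrm{lf}}''(0),\Upsilon_{\mathrm{uf}}'(0),\Upsilon_{\mathrm{uf}}''(0)$ and the geometric ratios, whose positivity one checks for the FitzHugh--Nagumo parameter range, giving $d_{\mathrm{eff}}>0$.
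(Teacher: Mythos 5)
Your derivation of the main formula and your treatment of the critical spectral curve follow essentially the same route as the paper: a first-order reformulation, Lin's method with matching across plateau, layer, and fold segments, exponential trichotomies and the Riccati transform for the slow--fast separation, and a blow-up of the eigenvalue problem coupled to the existence problem at each fold, with $\lambda$ entering through $\lambda\eps^{-1/6}$ and the transfer of the center direction producing $\Upsilon_{\mathrm{lf}}$, $\Upsilon_{\mathrm{uf}}$. The subsequent analysis of the formula (absolute values for $|\lambda|\sim\eps^{1/6}$ and beyond; implicit function theorem and a second-order expansion at $(\lambda,\rho)=(0,0)$ for the curve, with the quadratic coefficient of $\Upsilon_{\mathrm{lf/uf}}$ supplying $\kappa\eps^{2/3}$) is also the paper's strategy, up to minor slips: since $\Upsilon_j(z)=-\tfrac{2\Omega_0}{3\theta_jc^3}z^2+\mathcal{O}(z^4)$ one has $\Upsilon_j'(0)=0$, so there is no cross term $\Upsilon_{\mathrm{lf}}'(0)\Upsilon_{\mathrm{uf}}'(0)$ at order $\eps^{2/3}$, and the claim that spectral stability near the origin ``reduces to $|R_\eps(\lambda)|<1$'' cannot be taken literally there --- $|R_\eps(\lambda)|-1$ and $\re^{\Re(\lambda)L_\eps/c}-1$ are both of size $o(1)$ and must be compared via the analytic expansion (Cauchy estimates on the residual and strict convexity of $\Re\rho(\ri\omega)$ in the paper), which your part~(iii) in effect supplies.

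The genuine gaps are in the regimes where the main formula is not available. First, for $\mu\le|\lambda|\le\varrho$ with $\Re(\lambda)\ge 0$ your argument that ``the fold passage no longer has a near-neutral direction, so the monodromy is dominated by exponentially large and small factors'' misses the essential obstruction: at $\mathcal{O}(1)$ values of $\lambda$ the danger is not the folds but possible point spectrum of the fast reduced (layer) eigenvalue problems $u_{\xi\xi}+cu_\xi+f'(u_{\f,\bb})u=\lambda u$ along the front and back. Ruling this out is a Sturm--Liouville argument (the derivatives $u_{\f}'$, $u_{\bb}'$ have no zeros, hence no eigenvalues with $\Re\lambda\ge 0$ away from the origin), and only then do the fast subsystems acquire exponential dichotomies on all of $\R$ that can be pasted together and fed into the Riccati reduction; without this step the claimed dichotomy of the full monodromy simply is not known. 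Second, for unbounded $\lambda$ you invoke ``standard parabolic resolvent bounds,'' but the system is not strictly parabolic: the $w$-equation has no diffusion, so sectoriality fails and the strip $\{\Re\lambda\in[-\tfrac34\eps\gamma,\varrho_1],\ |\Im\lambda|\ \text{large}\}$ is not covered by such bounds. One needs the separate rescaling/Neumann-series argument (treating $c\partial_\xi-\lambda$ in the $w$-component via the unitary group generated by $c\partial_\xi$) to close this region. Finally, the algebraic simplicity of the eigenvalue $0$ does not follow merely from the $\lambda$-derivative of the left-hand side dominating that of $R_\eps$; one must know that the relevant Floquet multiplier of the monodromy matrix is analytically simple in $\lambda$, which the paper obtains from the hyperbolicity of the periodic orbit $\Gamma_\eps$ in the traveling-wave ODE (equivalently, that the monodromy at $\lambda=0$ has three distinct eigenvalues). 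These three points need to be added for the proof to be complete.
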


The full setup and strategy for our analysis of the eigenvalue problem~\eqref{eq:Floquet_eigenvalue_problem}, leading to the proof of Theorem~\ref{thm:spectral_stability}, are presented in~\S\ref{S:setup}. Figure~\ref{fig:lambda_coeff_cont} illustrates the result by displaying the typical shape of the critical spectral curve $\lambda_\eps(\rho)$ for fixed small $\eps > 0$, together with the dependence of $\lambda_\eps''(0)$ on $\eps$; see Appendix~\ref{s:num} for details on computations. We now outline several implications of Theorem~\ref{thm:spectral_stability}. 

\begin{figure}
\centering
\includegraphics[width=0.3\linewidth]{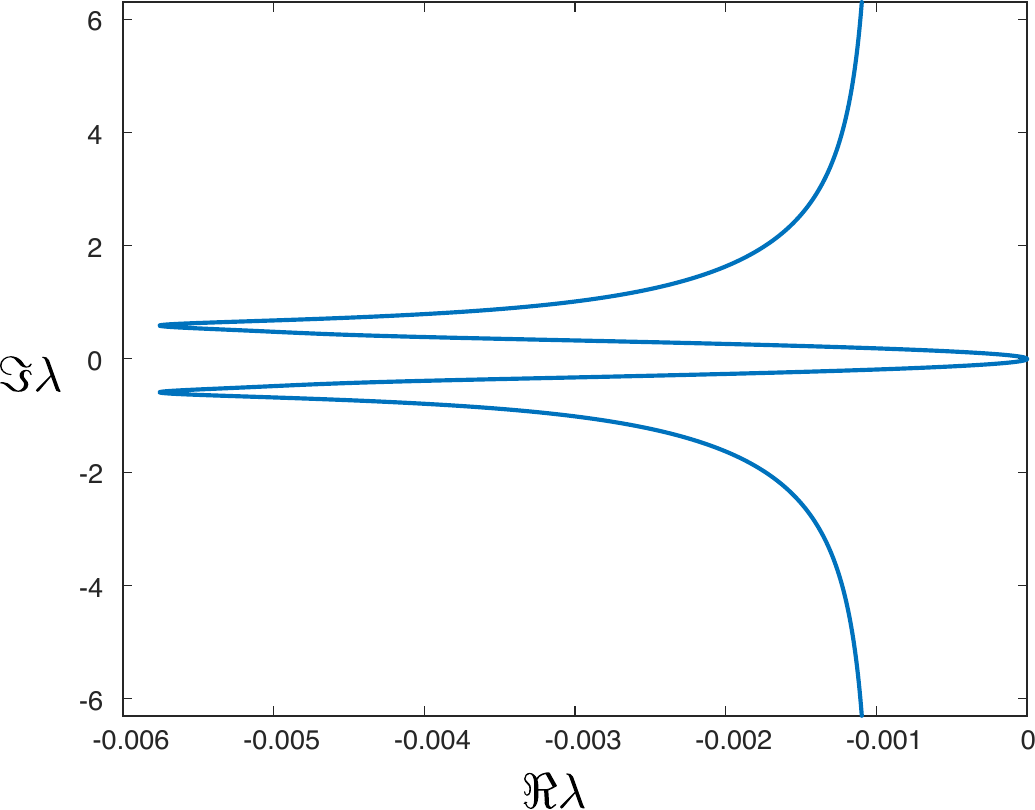}
\includegraphics[width=0.32\linewidth]{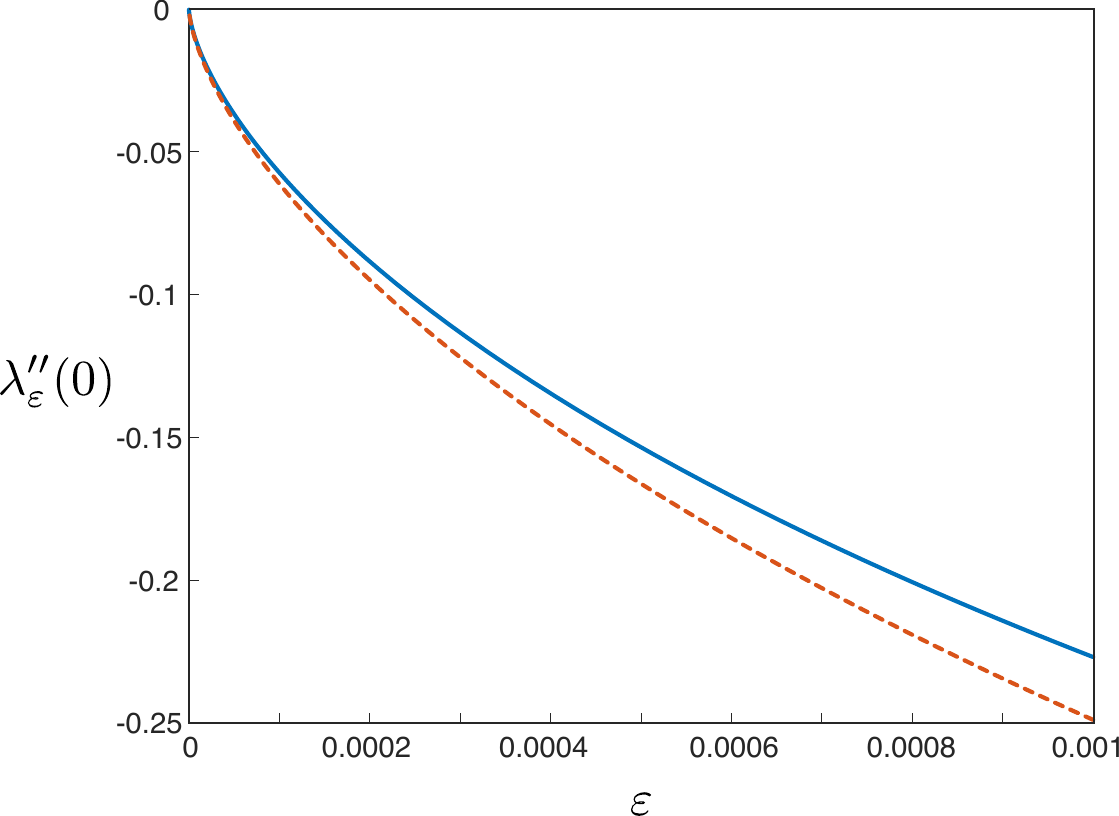}
\includegraphics[width=0.31\linewidth]{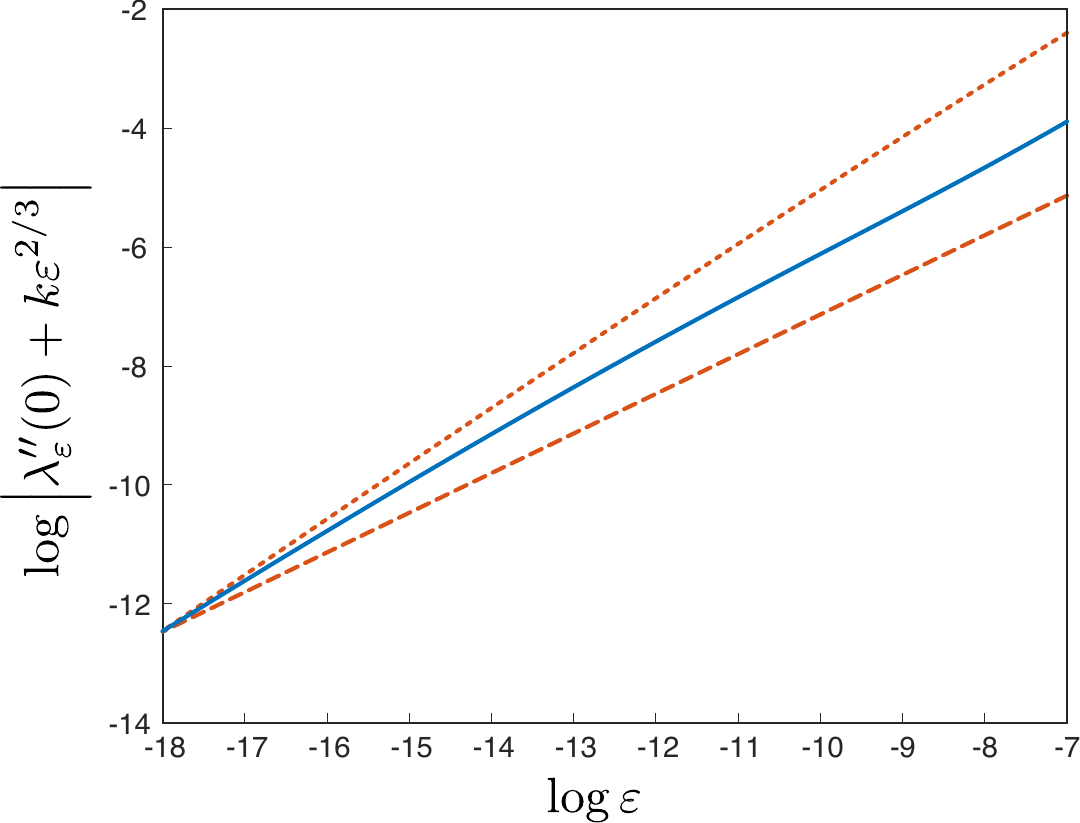}
\caption{Left: Critical spectral curve $\lambda_\eps(\rho)$ of Theorem~\ref{thm:spectral_stability} for fixed $a=0.2, \gamma=1, \eps=0.001, c=2$ associated with the phase-wave train depicted in Figure~\ref{fig:continuation}. Plotted is $\Im(\lambda_\eps(\rho))$ versus $\Re(\lambda_\eps(\rho))$. Center and right: Numerical continuation of the coefficient $\lambda''_\eps(0) = -d_{\mathrm{eff}}$ versus $\eps$ for $a=0.2, \gamma=1, c=2$: The center plot depicts the numerically computed expression for $\lambda_\eps''(0)$ for values of $\eps \in (10^{-8},10^{-3})$ (blue) alongside the leading-order analytical expression $-k\eps^{2/3}$ (dashed red), where $k = 2\kappa c^3/L_0$. The right plot depicts a log-log plot of the difference between these two expressions (blue). Also shown is a line of slope $2/3$ (dashed red), as well as a curve of the form $\log \eps+\log|\log\eps|$ (dotted red), which suggests that the error between the two expressions is of $\mathcal{O}(\eps|\log \eps|)$. We refer to Appendix~\ref{s:num} for details on how these computations were performed. }
\label{fig:lambda_coeff_cont}
\label{fig:critical_curve}

\end{figure}

\paragraph{Comparison: trigger waves.} The results for trigger waves in~\cite{eszter1999evans} are stated somewhat differently but are in many ways equivalent to Theorem~\ref{thm:spectral_stability}. As mentioned previously, the analysis in the case of trigger waves is significantly simpler since these waves avoid the fold points on the critical manifold. Calculations in~\cite{eszter1999evans} are further simplified by setting $\gamma=0$, though we expect the argument presented there to hold for nonzero $\gamma$ after  appropriate modifications. Translated into the formulation~\eqref{eq:FHN_pde} of the FitzHugh--Nagumo system, in~\cite{eszter1999evans} it is shown for fixed $0<a<\tfrac{1}{2}$, $0<\gamma<\gamma_*(a)$, $\delta > 0$, and $0<c<c_*(a)$ (rather than $c>c_*(a)$ for phase waves) that, provided $\eps > 0$ is sufficiently small, the operator $\mathcal{L}_{\eps}$ satisfies
\begin{enumerate}
    \item \emph{Spectral stability:} We have $\Sigma(\El_\eps) \subset \{ \lambda \in \C : \Re(\lambda) < 0 \} \cup \{ 0 \}$; 
    \item \emph{Critical spectrum:} There exist an open interval $I_\eps \subset \R$ containing $0$ and a smooth curve $\lambda_{\eps} \colon I_\eps \to \C$ such that $\lambda_{\eps}(\rho)$ is an eigenvalue of $\El_{\rho,\eps}$ for all $\rho \in I_\eps$. In particular, $\lambda_\eps(0) = 0$ is an algebraically simple eigenvalue of $\El_{0,\eps}$. Moreover, the group velocity $c_{\mathrm{g}} = \ri\lambda_\eps'(0) + c$ and the effective diffusivity $d_\mathrm{eff} = -\lambda_\eps''(0) > 0$ obey the estimates    \begin{equation*}
    \left|c_{\mathrm{g}} - c\right| \leq \delta, \qquad \left|d_{\mathrm{eff}} - k \,\eps^{-1}\right| \leq \delta \eps^{-1},
    \end{equation*}
    where the coefficient $k>0$ depends only on $a,\gamma$, and $c$.
\end{enumerate}
In particular, effective diffusivites $\eps^{-1}$ are much larger compared to the $\eps^{2/3}$-diffusivities for phase waves in Theorem~\ref{thm:spectral_stability}. In addition, group velocities in the steady frame agree with phase velocities at leading order.

\paragraph{Comparison: spatially homogeneous relaxation oscillations.} The limiting case $c\to\infty$, which is not covered by Theorem~\ref{thm:spectral_stability}, corresponds to waves near spatially homogeneous oscillations; see Figure~\ref{fig:oscillations_spacetime}. Frequency-synchronization properties similar to those described in Theorem~\ref{thm:spectral_stability} and examples of instability similar to our example in~\S\ref{s:instability} below, can be analyzed using an approach based on the techniques developed here. We refer to~\cite{homoscACRS} for details of the adapted analysis and associated phenomena, as well as to~\cite{AEKV} for related observations in the case of just two coupled oscillators. 

\paragraph{Consequences: nonlinear stability of phase-wave trains.} The diffusive spectral stability established in Theorem~\ref{thm:spectral_stability} is sufficient to guarantee nonlinear stability of wave trains for reaction-diffusion systems which are strictly parabolic~\cite{schn96,GS1,JZ,JNRZ,SSSU}. Systems such as~\eqref{FHN} with degenerate diffusion introduce additional challenges as one must control high-frequency modes to obtain a spectral mapping estimate, and nonlinear stability of wave trains in such systems is thus not immediately guaranteed by existing results for general reaction-diffusion systems. Nevertheless, we show in the companion paper~\cite[\S2.6]{FHNpulled}, that the spectral stability result, Theorem~\ref{thm:spectral_stability}, is sufficient to obtain nonlinear stability of the phase-wave trains of Theorem~\ref{thm:existence} against spatially localized perturbations, with asymptotically diffusive decay on the time scale $\smash{d_\mathrm{eff}^{-1/2}}$. This result was later extended to fully nonlocalized perturbations in~\cite{alexopoulos2024nonlinear}. We note however that the estimates in~\cite{FHNpulled,alexopoulos2024nonlinear} apply for fixed $\eps > 0$ only, that is, constants are not uniform in $\eps$. 

\paragraph{Consequences: frequency synchronization through fronts.}  In the PDE~\eqref{eq:FHN_pde}, in the oscillatory regime $0 < a < \frac12$, $0 < \gamma < \gamma_*(a)$, perturbations of the unstable homogeneous rest state $(u,w)=(a,0)$ can lead to large-amplitude spatial patterns which spread into this unstable state; see Figure~\ref{fig:invasion_front}. The invading spatial patterns are selected from the family of wave trains of Theorem~\ref{thm:existence}, parameterized by the  wave speed $c>0$. The selected speed is determined by an invasion front in the leading edge of the spreading process. The marginal stability conjecture~\cite{deelanger,vanSaarloos,AveryScheelSelection},  which has not yet been rigorously verified for any pattern-forming front, states that the selected front should be marginally spectrally stable in an optimal weighted space. The front is categorized as pushed or pulled depending on whether the marginally stable spectrum lies in the point spectrum or essential spectrum. In~\cite{CASCH}, both pushed and pulled pattern-forming invasion fronts were constructed in~\eqref{eq:FHN_pde} as traveling waves which arise as connections between the unstable rest state $(u,w)=(a,0)$ and a periodic wave train of Theorem~\ref{thm:existence} in the wake. In the pulled case, the corresponding wave train can be a trigger wave or a phase wave, depending on the choice of the parameter $a$, while in the pushed case, all selected wave trains are phase waves. Diffusive spectral stability of the associated wave train, as guaranteed by Theorem~\ref{thm:spectral_stability}, is an essential ingredient in the nonlinear stability argument for both pulled and pushed pattern-forming fronts, as detailed in~\cite{FHNPushed,FHNpulled}, and hence represents an important step towards resolving the marginal stability conjecture for pattern-forming fronts in~\eqref{eq:FHN_pde}. 
\begin{figure}
\centering
\includegraphics[width=0.31\linewidth]{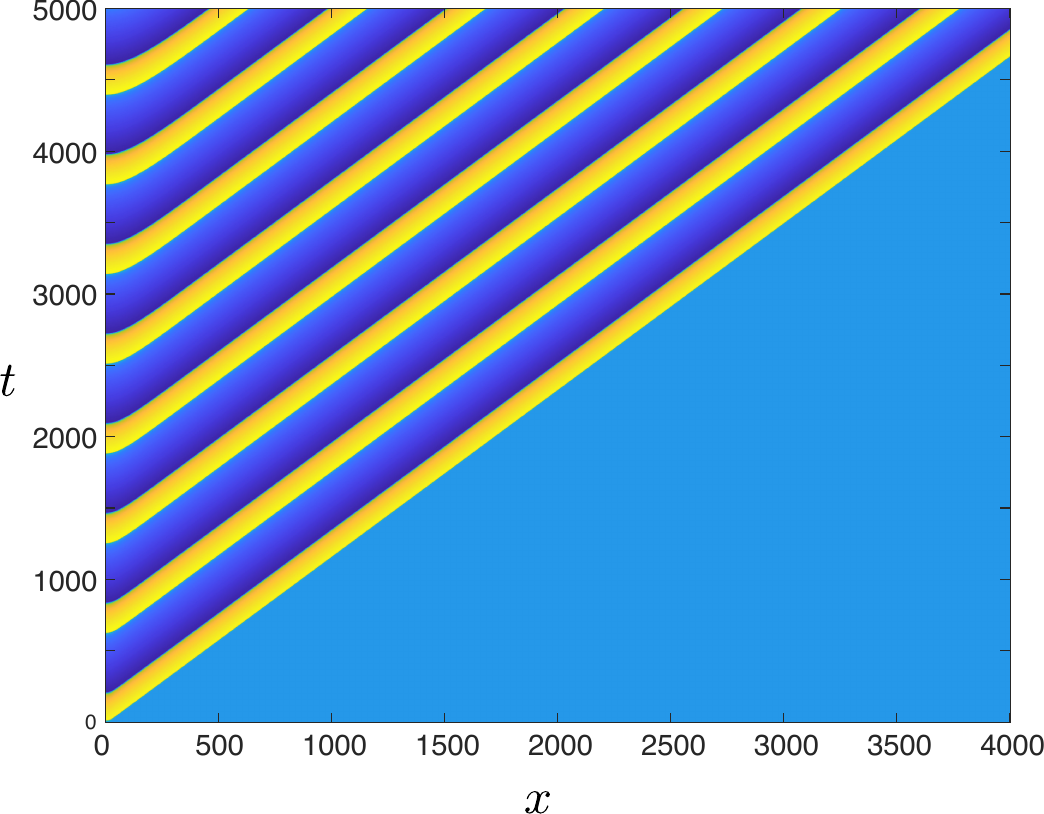}\hspace{0.05\linewidth}\includegraphics[width=0.3\linewidth]{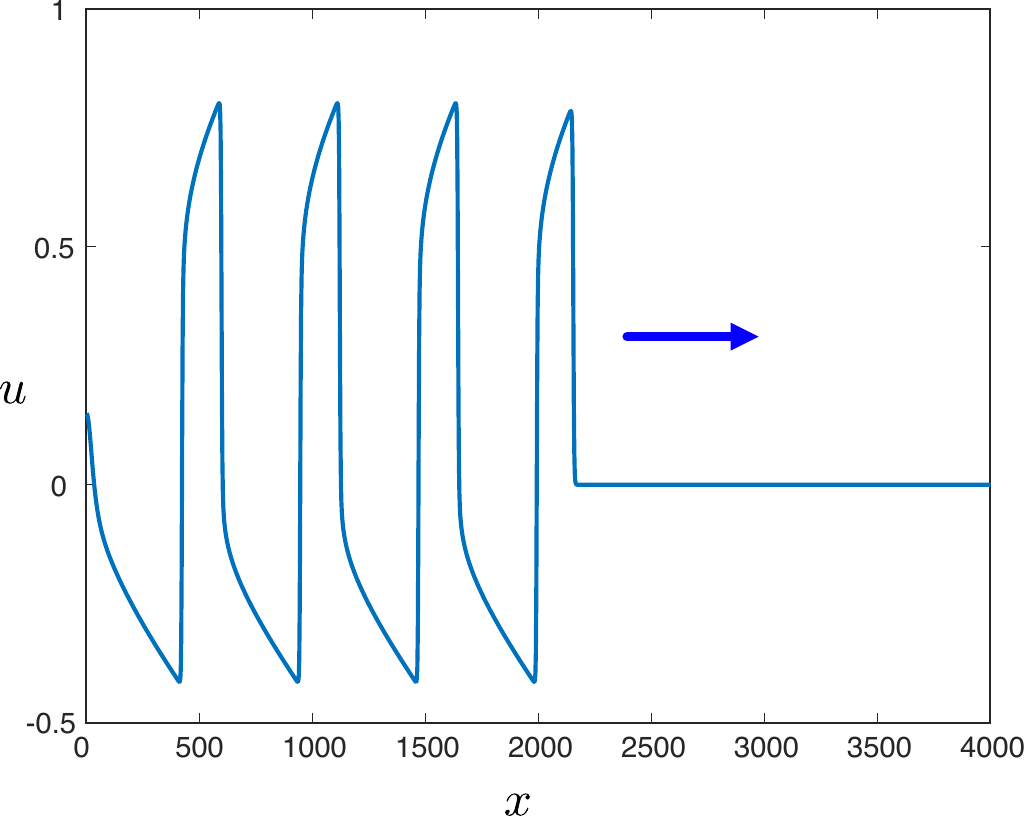}
\caption{(Left) Spacetime plot of the $u$-profile of a pushed pattern-forming invasion front in~\eqref{eq:FHN_pde} which selects a phase-wave train, obtained by direct numerical simulation for the parameter values $a=0.2, \gamma=1, \eps = 0.001$. (Right) Snapshot of the $u$-profile at time $t=2500$. }
\label{fig:invasion_front}
\end{figure}

\paragraph{Technical approach.}

In the context of singularly perturbed PDEs, a common approach to analyzing spectra utilizes the Evans function~\cite{evans1972nerve}, which can admit a decomposition, or factorization, into slow and fast components based on the geometric singular perturbation structure of the underlying solution~\cite{Alexander_1990,BDR}. This approach has been used to prove the (in)stability of traveling pulses and fronts in singularly perturbed reaction-diffusion systems such as the FitzHugh--Nagumo, Gray--Scott, and Gierer--Meinhardt systems, among others~\cite{doelman1998stability,jones1984stability, veerman2013pulses}. In those cases, one is typically most concerned with the location of one or more isolated eigenvalues in the point spectrum, which can be studied using, for instance, winding number arguments.
In the case of wave trains, Evans-function techniques were used to locate spectral curves parameterized by the Floquet--Bloch frequency variable  in~\cite{Gardner1993} and then later in the singularly perturbed setting in~\cite{eszter1999evans,vdPloeg_2005,BDR,BDR2} for the FitzHugh--Nagumo and Gierer--Meinhardt systems, as well as for more general reaction-diffusion models. As mentioned previously, these latter works focus on trigger waves or periodic pulse/spike patterns which avoid challenges associated with loss of hyperbolicity at fold points. 

The case of phase waves involves unfolding the critical Floquet--Bloch spectral curves in the presence of a loss of hyperbolicity, a combination that poses unique challenges. Our contribution here is, to our knowledge, the first analysis which treats this case. To prove Theorem~\ref{thm:spectral_stability}, we adopt an approach based on Lin’s method~\cite{lin1990}, in which we directly construct piecewise continuous potential eigenfunctions. A matching procedure using  Melnikov theory then leads to a reduced algebraic equation relating the spectral parameter $\lambda$ to the Floquet--Bloch frequency $\rho$. This strategy has proven effective in the spectral stability analysis of wave trains in slow-fast reaction-diffusion systems~\cite{BDR2}, where exponential trichotomies are used to separate slow from fast behavior and to transfer Fredholm properties from reduced eigenvalue problems to the full system. In the present setting, however, new challenges arise due to the passage through nonhyperbolic fold points, which rules out uniform exponential trichotomies. To overcome these difficulties, we employ geometric desingularization, or blow-up, techniques~\cite{krupaszmolyan2001}. Inspired by previous work concerning stability of traveling-pulse solutions in the FitzHugh--Nagumo system~\cite{PBR}, we develop a novel application of these techniques by blowing up the eigenvalue problem alongside the existence problem, which involves including $\lambda$ in the blow-up transformation and analyzing the resulting eigenvalue problem in several scaling regimes. By interweaving this construction with Lin's method, while employing the Riccati transformation~\cite{BDR} to separate slow from fast dynamics near the wave train along hyperbolic portions of the critical manifolds, we are able to precisely characterize the critical spectrum in Theorem~\ref{thm:spectral_stability}. In particular, in contrast to prior analyses of traveling pulses~\cite{PBR, holzer2013existence}, we must keep track of the dynamics in the center space along the wave train and solve a series of boundary value problems to derive the reduced algebraic equation~\eqref{eq:mainformapprox} which fully describes the nature of the critical spectrum near the origin. This construction, and its interplay with the characteristic scalings associated with slow passage through fold bifurcations, leads to the somewhat unexpected $\eps^{1/6}$-scaling for the critical spectral region in Theorem~\ref{thm:spectral_stability}\ref{thm:spectral_stability_ii} and the $\eps^{2/3}$-scaling for the effective diffusivity in Theorem~\ref{thm:spectral_stability}\ref{thm:spectral_stability_iii}.  

We expect these tools to be broadly applicable in singularly perturbed eigenvalue problems, in particular those arising in the stability analysis of traveling waves in systems exhibiting loss of hyperbolicity through folds and other nonhyperbolic singularities. Such singularities are common in the study of traveling waves in reaction-diffusion systems, where folded singularities are associated with relaxation oscillations as well as complex spatio-temporal oscillatory phenomena such as canards and mixed mode oscillations~\cite{avitabile2020local,carter2022wiggly,kaper2021new, guckenheimer2010homoclinic, zhu2024existence}, and diffusion-induced instabilities~\cite{buchholtz1995diffusion}. Moreover, recent studies~\cite{jencks2025stable,vo2025canards} highlight the role of folded singularities in organizing spatially periodic waves emerging at a singular Turing bifurcation. To our knowledge, the spectral and nonlinear stability of these classes of solutions has not been explored analytically, and we believe the approach developed here presents a framework to address these and related problems.

\subsection{Rigidity at large scales and instability at intermediate scales}\label{s:instability}

Our spectral analysis identifies the regime $\lambda \sim \eps^{1/6}$ as a potential source for instabilities. Theorem~\ref{thm:spectral_stability}\ref{thm:spectral_stability_iii} shows that the critical spectrum touching $0$ is diffusive, which follows from the leading-order description of the critical spectral curve $\lambda_\eps(\rho)$ as a solution to the main formula~\eqref{eq:mainformapprox}. Based on the analysis of this curve near $\rho = 0$ in~\S\ref{sec:mainformula}, we expect that $d_{\mathrm{eff}} = -\lambda_\eps''(0) >0$ holds much more generally for reaction-diffusion systems with relaxation oscillations. The negative sign $\lambda_\eps''(0) < 0$, together with the $\eps^{1/6}$-scaling  in~\eqref{eq:mainformapprox} stemming from the slow passage through the fold points, indicates that no instability occurs for $|\lambda| \ll \eps^{1/6}$. On the other hand, the analysis of the main formula~\eqref{eq:mainformapprox} for $|\lambda| \gg \eps^{1/6}$, in combination with standard Sturm--Liouville arguments, precludes unstable spectrum for $|\lambda| \gg \eps^{1/6}$; see Proposition~\ref{prop:region_lambda3},~\S\ref{sec:R2}, and Appendix~\ref{app:r3}. However, we find that for $\lambda/\eps^{1/6}$ of intermediate size, the left- and right-hand sides of~\eqref{eq:mainformapprox} can be balanced to produce potential instabilities. 

To investigate this possibility, we note that the entire functions $\Upsilon_\mathrm{lf}, \Upsilon_\mathrm{uf} \colon \C \to \C$ in~\eqref{eq:mainformapprox} are given by
\begin{align} \label{eq:deflfuf}
\begin{split}
     \Upsilon_\mathrm{lf}(z)\coloneqq \frac{z^2}{\theta_\mathrm{lf}c^3\mathrm{Ai}'(-\Omega_0)^2 } \int_{-\Omega_0}^{\infty} \re^{\frac{z^2}{\theta_\mathrm{lf}c^3 }\left(s+\Omega_0\right) }\left(s\mathrm{Ai}(s)^2-\mathrm{Ai}'(s)^2\right)\mathrm{d}s,\\
     \Upsilon_\mathrm{uf}(z)\coloneqq \frac{z^2}{\theta_\mathrm{uf}c^3\mathrm{Ai}'(-\Omega_0)^2 } \int_{-\Omega_0}^{\infty} \re^{\frac{z^2}{\theta_\mathrm{uf}c^3 }\left(s+\Omega_0\right) }\left(s\mathrm{Ai}(s)^2-\mathrm{Ai}'(s)^2\right)\mathrm{d}s,
\end{split}
\end{align}
where $-\Omega_0 < 0$ denotes the largest zero of the Airy function $\mathrm{Ai}(z)$ (see Appendix~\ref{app:airy}), and
\begin{align*} 
\begin{split}
    \theta_\mathrm{lf} &\coloneqq  -\frac{(a^2-a+1)^{1/6}(u_1-\gamma f(u_1)-a)^{1/3}}{c}>0,\\
    \theta_\mathrm{uf} &\coloneqq  \frac{(a^2-a+1)^{1/6}(\bar{u}_1-\gamma f(\bar{u}_1)-a)^{1/3}}{c}>0.
    \end{split}
\end{align*}
In the oscillatory parameter regime $0 < a < \frac12$, $0 < \gamma < \gamma_*(a)$ in~\eqref{eq:FHN_pde}, we have $\theta_{\mathrm{uf}} > \theta_{\mathrm{lf}}$, which rules out the existence of unstable spectrum in the intermediate regime $|\lambda| \sim \eps^{1/6}$; see the proof of Proposition~\ref{prop:region_lambda2}.

These observations then motivate considering the following FitzHugh--Nagumo-type system with modified nonlinearities
\begin{align}\label{eq:FHN_mod_pde}
\begin{split}
u_t &= u_{\xi\xi} + F(u,w),\\
w_t &=\epsilon(u-\gamma w - a),
\end{split} \qquad\qquad     F(u,w)=\frac{u(u-a)\left(\tfrac{3}{2}-u\right)}{\tfrac{2}{5}+u-a}-w\left(\tfrac{5}{4}-r(u-a)\right),
\end{align}
and parameters $a=0.25, \gamma=0.01, \eps=0.002$, and $r=0.998$; see Figure~\ref{fig:toy_wavetrain} for nullclines. Analogous to the FitzHugh--Nagumo system,~\eqref{eq:FHN_mod_pde} admits a family of wave-train solutions passing near nonhyperbolic fold points, resembling relaxation oscillations. The nonlinearities are chosen in such a way that the corresponding quantities $\theta_\mathrm{uf},\theta_\mathrm{lf}$ introduced above  
satisfy $\theta_\mathrm{uf} < \theta_\mathrm{lf}$, allowing for the possibility of an instability of the wave-train solution to~\eqref{eq:FHN_mod_pde}. Examining~\eqref{eq:mainformapprox}, we expect the regime $|\rho|\sim |\lambda| \sim \eps^{1/6}$ to be relevant for instabilities. Figure~\ref{fig:toy_wavetrain} depicts a numerically computed wave-train solution of~\eqref{eq:FHN_mod_pde} as well as the associated unstable critical spectral curve, which crosses into the right half plane for a range of intermediate values of the Floquet--Bloch frequency parameter $\rho$.

\begin{figure}
\hspace{0.05\textwidth}
\includegraphics[width=0.35\textwidth]{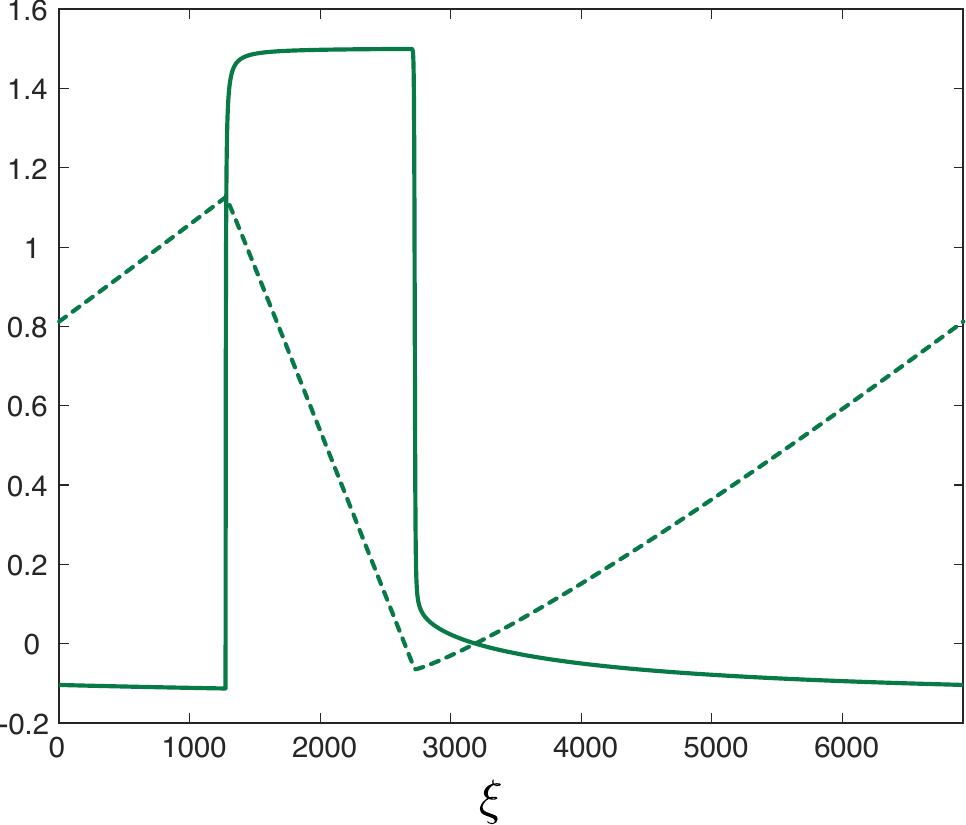}\hspace{0.09\textwidth}
\includegraphics[width=0.37\textwidth]{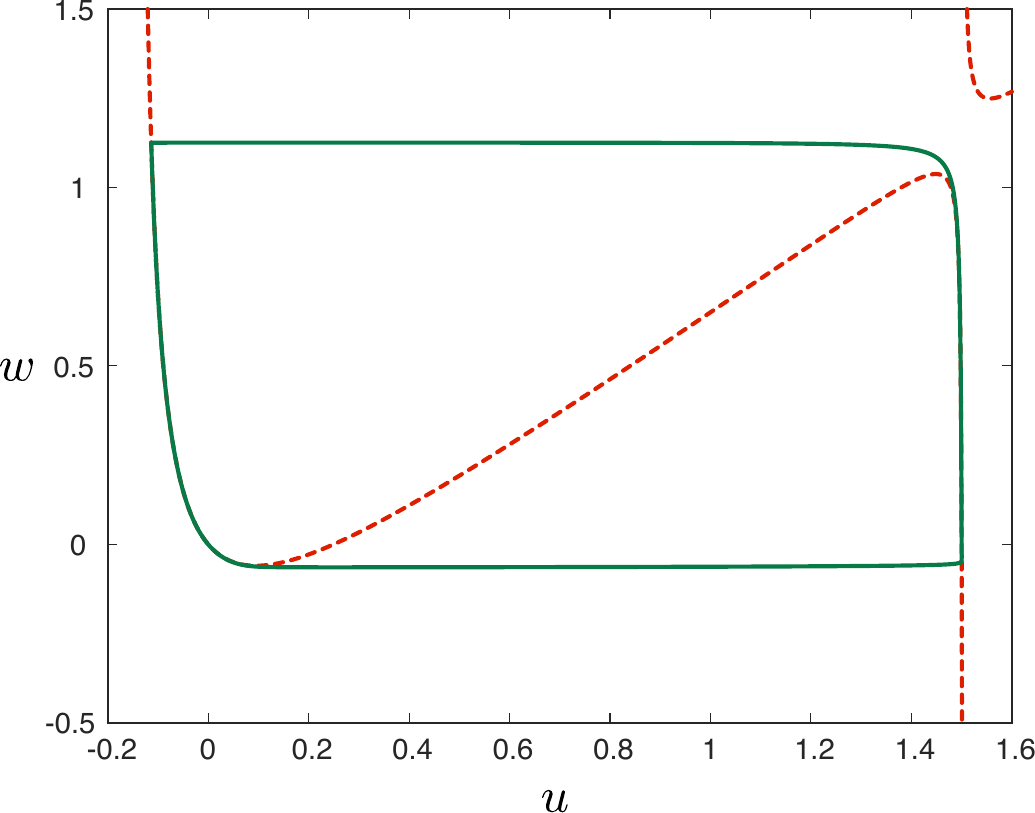}\\

\hspace{0.025\textwidth}
\includegraphics[width=0.39\textwidth]{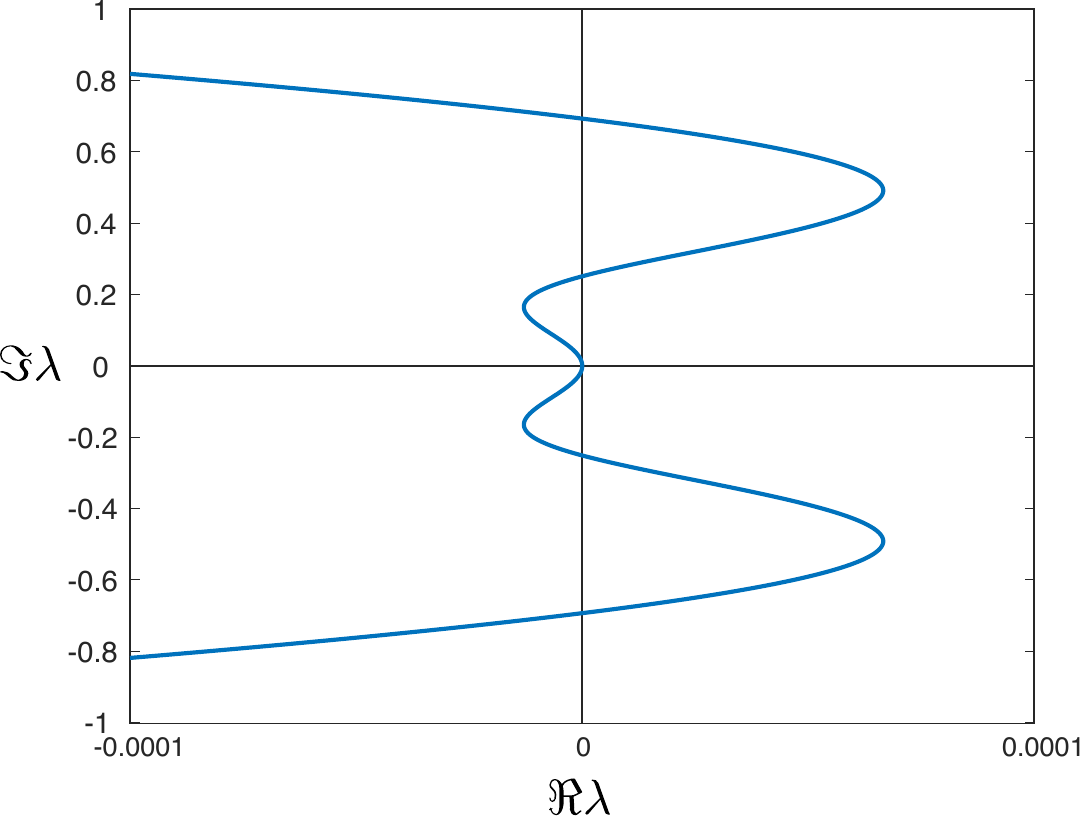}
\hspace{0.075\textwidth}
\includegraphics[width=0.36\textwidth]{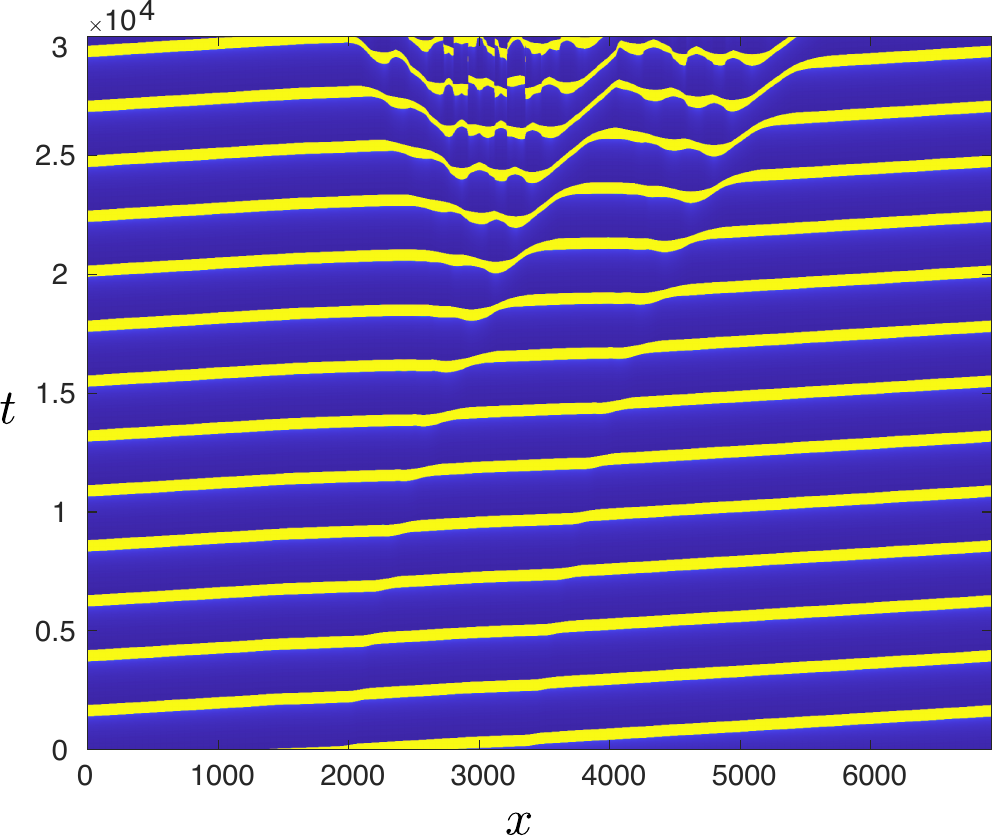}
\caption{(Top left) Shown are $u$ (solid) and $w$ (dashed) profiles of a wave-train solution of~\eqref{eq:FHN_mod_pde} with wave speed $c=3$ and parameters $a=0.25, \gamma=0.01, \eps=0.002$, and $r=0.998$. (Top right) Plot of the wave train from the left panel in $(u,w)$-space (solid green) along with the nullcline $F(u,w)=0$ (dashed red). 
(Bottom left) Results of numerical continuation of the critical spectral curve $\lambda_\eps(\rho)$ associated with the traveling wave train depicted in the top-left panel (compare with Figure~\ref{fig:critical_curve}). Plotted is $\Im (\lambda_\eps(\rho))$ versus $\Re(\lambda_\eps(\rho))$.
(Bottom right) Space-time plot of the $u$-component of the solution to~\eqref{eq:FHN_mod_pde} initiated with the (unstable) wave train at $c=3$ plus small noise. }
\label{fig:toy_wavetrain}
\end{figure}

\subsection{Outline of the paper} The remainder of this paper is devoted to the proof of Theorem~\ref{thm:spectral_stability}. In~\S\ref{S:setup}, we introduce our setup for analyzing the eigenvalue problem~\eqref{eq:Floquet_eigenvalue_problem} and we present the proof of Theorem~\ref{thm:spectral_stability}, which relies on combining three technical propositions concerning the behavior of~\eqref{eq:Floquet_eigenvalue_problem} for $\lambda$ in different regions of the complex plane. The region of small $\lambda$, which is the most delicate, is treated in~\S\ref{sec:R1}. This analysis hinges on the geometric desingularization of the system obtained by coupling the existence and eigenvalue problems near the fold points, which is carried out in~\S\ref{sec:folds}. The regime of intermediate $|\lambda|$ is discussed in~\S\ref{sec:R2}, while the regime of large $|\lambda|$ is addressed in Appendix~\ref{app:r3}. Appendices~\ref{appexpdi} and~\ref{app:airy} introduce exponential di- and trichotomies and the Airy function, respectively. Finally, Appendices~\ref{s:num} and~\ref{s:dns} provide details of the implementation of the numerical continuation and the direct numerical simulations presented in the introduction.

\subsection{Acknowledgments} \label{ssec:acknlowedgments}
The authors gratefully acknowledge support from the National Science Foundation through grants DMS-2510541 \& DMS-2202714 (M.~Avery), DMS-2238127 (P.~Carter), and DMS-2506837 \& DMS-2205663 (A.~Scheel). The work of B.~de Rijk is funded by the Deutsche Forschungsgemeinschaft (DFG, German Research Foundation) - Project-ID 491897824 and Project-ID 258734477 - SFB 1173.

The authors would like to thank the Collaborative Research Center ``Wave Phenomena'' at the Karlsruhe Institute of Technology for hosting M.~Avery and P.~Carter while working on this project in August 2022, as well as the Institute for Computational and Experimental Research in Mathematics in Providence, RI, USA, for hosting all authors in January 2023 through the Collaborate@ICERM program, and the Mathematical Institute at Leiden University for hosting M.~Avery, P.~Carter, and B.~de Rijk while working on this project during July 2024.

\section{Setup and strategy of proof}\label{S:setup}
Our analysis of the eigenvalue problem~\eqref{eq:Floquet_eigenvalue_problem} relies on the fast-slow structure of the traveling-wave equation~\eqref{TW}, which was used in the proof of Theorem~\ref{thm:existence} in~\cite{CASCH}, where phase-wave trains were constructed using geometric singular perturbation theory. We briefly outline this construction in~\S\ref{sec:existence_overview}, and we derive pointwise estimates for the proximity of the wave train to its singular limit. In~\S\ref{sec:spectral_setup}, we describe the setup and strategy for the analysis of the spectral problem~\eqref{eq:Floquet_eigenvalue_problem}. Based on this strategy, we present in~\S\ref{sec:proofmaintheorem} a break down of the proof of Theorem~\ref{thm:spectral_stability} into three propositions, which will be proved in the subsequent sections.
\subsection{Overview of existence analysis}\label{sec:existence_overview}
The construction of phase-wave trains is based on a fast-slow analysis of the traveling-wave equation~\eqref{TW}, which we repeat here for convenience
\begin{align}\label{eq:fast}
\begin{split}
u_\xi &= v,\\
v_\xi &= -cv - f(u) + w,\\
w_\xi &= -\frac{\epsilon}{c}(u-\gamma w - a).
\end{split}
\end{align}
We refer to~\eqref{eq:fast} as the fast system. Rescaling $y=\eps \xi$, we obtain the equivalent slow system 
\begin{align}\label{eq:slow}
\begin{split}
\eps u_y &= v,\\
\eps v_y &= -cv - f(u) + w,\\
w_y &= -\frac{1}{c}(u-\gamma w - a).
\end{split}
\end{align}
To construct phase-wave trains, we separately analyze~\eqref{eq:fast} and~\eqref{eq:slow} in the limit $\eps=0$; by concatenating orbits from the limiting systems, we obtain a singular periodic orbit, which can be shown to perturb to an actual periodic solution of the full system for all sufficiently small $\eps>0$. 

\subsubsection{Slow subsystem}
Setting $\eps=0$ in~\eqref{eq:slow}, we obtain 
\begin{align*}
\begin{split}
0 &= v,\\
0 &= -cv - f(u) + w,\\
w_y &= -\frac{1}{c}(u-\gamma w - a),
\end{split}
\end{align*}
for which the flow is restricted to the set $\mathcal{M}_0 = \{(u,v,w) \in \R^3 : v = 0, w = f(u)\}$, called the critical manifold. Away from points where $f'(u)=0$, the reduced flow on $\mathcal{M}_0$ is given by
\begin{align}\label{eq:reduced}
\begin{split}
cf'(u) u_y &= \gamma f(u) + a - u.
\end{split}
\end{align}
We recall from~\S\ref{sec:introduction} that $u_1$ and $\bar{u}_2$, given by~\eqref{defu1} and~\eqref{defu1s}, denote the local minimum and maximum of the cubic $w=f(u)$, respectively, at which $f'(u)=0$. The critical manifold therefore decomposes into three normally hyperbolic branches
\begin{align*}
    \mathcal{M}_0^\lr&\coloneqq  \{(u,v,w) \in \R^3 : v = 0, w = f(u), u\in(-\infty, u_1)\}\\
    \mathcal{M}_0^\mathrm{m}&\coloneqq  \{(u,v,w) \in \R^3 : v = 0, w = f(u), u\in(u_1,\bar{u}_1)\}\\
    \mathcal{M}_0^\rr&\coloneqq  \{(u,v,w) \in \R^3 : v = 0, w = f(u), u\in(\bar{u}_1,\infty)\}
\end{align*}
and two fold points $(u_1,0,f(u_1))$ and $(\bar{u}_1,0,f(\bar{u}_1))$; see Figure~\ref{fig:singular_slow}. We refer to these as the lower and upper fold points, respectively. In the parameter regime $0<\gamma<\gamma_*(a)$, the flow of~\eqref{eq:reduced} points upward on the left branch $\mathcal{M}^\lr_0$ and downward on the right branch $\mathcal{M}^\rr_0$. On the left branch $\mathcal{M}^\lr_0$ the dynamics in the $w$-variable is given by
\begin{align} \label{eq:slowl}
w_y = -\frac{1}{c}\left(f_{\lr}^{-1}(w) - \gamma w - a\right),
\end{align}
where $u = f_{\lr}^{-1}(w)$ denotes the smallest root of the cubic equation $f(u) = w$. Similarly, the dynamics in the $w$-variable on the right branch $\mathcal{M}^\rr_0$ is given by
\begin{align*} 
w_y = -\frac{1}{c}\left(f_{\rr}^{-1}(w) - \gamma w - a\right),
\end{align*}
where $u = f_{\rr}^{-1}(w)$ denotes the largest root of $f(u) = w$. 

\subsubsection{Layer problem}
The layer problem is obtained by setting $\eps=0$ in~\eqref{eq:fast}
\begin{align}\label{eq:layer}
\begin{split}
u_\xi &= v,\\
v_\xi &= -cv - f(u) + w.
\end{split}
\end{align}
Here $w \in \R$ acts as a parameter. This system admits a family of equilibria, given by the critical manifold $\mathcal{M}_0$. We focus on the behavior of the system for $w=f(u_1)$, that is, the layer containing the lower left fold point. The layer problem~\eqref{eq:layer} admits two fixed points: $(u_1,0)$, corresponding to the nonhyperbolic fold point, and a second saddle fixed point at $(u_2,0)$, where $u_2$ is given by~\eqref{defu2}; see also Figure~\ref{fig:singular_slow}. For $w=f(u_1)$,~\eqref{eq:layer} is a Fisher--KPP-type equation which, for values of $c\geq c_*(a)$, admits a traveling front solution $\phi_\f(\xi)=(u_{\f}(\xi),v_{\f}(\xi)) $ arising as a heteroclinic connection between the hyperbolic saddle $(u_2,0)$ and the nonhyperbolic fold $(u_1,0)$ where $u_{\f}(\xi)$ is monotonically decreasing~\cite{AronsonWeinberger,CASCH}. The linearization of~\eqref{eq:layer} at $w = f(u_1)$ about the fold point $(u_1,0)$ admits the eigenvalues $0$ and $-c$ with corresponding eigenvectors $(1,0)^\top$ and $(-1,c)^\top$, respectively. For $c>c_*(a)$, the front $\phi_\f$ approaches $(u_1,0)$ with algebraic decay along a center manifold associated with the zero eigenvalue, while in the critical case, $c=c_*(a)$, this fixed point is approached along its strong stable manifold with exponential decay. 

By symmetry, analogous results hold for $w=f(\bar{u}_1)$. In particular,~\eqref{eq:layer} again admits two fixed points: $(\bar{u}_1,0)$, corresponding to the nonhyperbolic fold point, and a saddle fixed point at $(\bar{u}_2,0)$, where $\bar{u}_2$ is given by~\eqref{defu2}. We note that
\begin{align} \label{eq:differenceu1u2}
\bar{u}_1 - \bar{u}_2 = u_2 - u_1 = \sqrt{1-a+a^2}.
\end{align}
For $w=f(\bar{u}_1)$, there exists a monotone traveling back solution $\phi_\bb(\xi)=(u_{\bb}(\xi),v_{\bb}(\xi))$ arising as a heteroclinic connection between the equilibria $\bar{u}_2$ and $\bar{u}_1$ where $u_{\bb}(\xi)$ is monotonically increasing. Again, for $c>c_*(a)$, $\phi_\bb$ approaches $(\bar{u}_1,0)$ with algebraic decay along a center manifold.

In the case $c>c_*(a)$, the above front and back solution of~\eqref{eq:layer} for $w=f(u_1), f(\bar{u}_1)$, respectively, satisfy the following pointwise estimates.

\begin{proposition}  \label{prop:frontback} 
Let $0 < a < \frac12$ and take $c > c^*(a)$. Then, there exist constants $C, \upsilon, \omega > 0$ such that 
\begin{align*}
\begin{split}
\left|u_{\f}(\xi) - u_2\right|, \left|v_{\f}(\xi)\right|, \left|u_{\bb}(\xi) - \bar{u}_2\right|, \left|v_{\bb}(\xi)\right| &\leq C\re^{\upsilon \xi}, \qquad \xi \leq 0,\\
\left|u_{\f}(\xi) - u_1 - \frac{\omega}{1+\xi}\right|, \left|v_{\f}(\xi)\right|, \left|u_{\bb}(\xi) - \bar{u}_1 + \frac{\omega}{1+\xi}\right|, \left|v_{\bb}(\xi)\right| &\leq \frac{C}{1+\xi^2} \qquad \xi \geq 0.
\end{split}
\end{align*}
In addition, $u_{\f}$ is monotonically decreasing and $u_{\bb}$ is monotonically increasing. 
\end{proposition}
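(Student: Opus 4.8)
The plan is to deduce all four sets of estimates from the statements for the front $\phi_\f$ alone, using a reflection symmetry of the layer problem, and then to analyze the two tails of $\phi_\f$ separately: the tail at $\xi\to-\infty$, where $\phi_\f$ approaches the hyperbolic saddle $(u_2,0)$, is exponential and routine, while the tail at $\xi\to+\infty$, where it approaches the nonhyperbolic fold $(u_1,0)$, is the delicate part and is handled by a center-manifold reduction. \emph{Reduction to the front.} Since the cubic $f$ is point-symmetric about its inflection point $u=\tfrac{1+a}{3}$, which is the midpoint of the folds $u_1,\bar{u}_1$, one checks directly that the affine involution $R(u,v,w)=\bigl(\tfrac{2(1+a)}{3}-u,\,-v,\,2f(\tfrac{1+a}{3})-w\bigr)$ is a symmetry of the layer problem~\eqref{eq:layer} that carries the level $w=f(u_1)$ to $w=f(\bar{u}_1)$, fixes the speed $c$, and sends $u_1\mapsto\bar{u}_1$, $u_2\mapsto\bar{u}_2$, and monotonically decreasing orbits to monotonically increasing ones (using also~\eqref{eq:differenceu1u2}). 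Hence $\phi_\bb=R\circ\phi_\f$ up to a shift in $\xi$, and every bound claimed for $\phi_\bb$ follows from the corresponding bound for $\phi_\f$ with the same constants; it therefore suffices to prove the front estimates.

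\emph{Exponential tail at $-\infty$.} Linearizing~\eqref{eq:layer} at $w=f(u_1)$ about $(u_2,0)$ yields the characteristic roots $\mu_\pm=\tfrac12\bigl(-c\pm\sqrt{c^2-4f'(u_2)}\,\bigr)$, and from the exact factorization $f(u)-f(u_1)=-(u-u_1)^2(u-u_2)$ we obtain $f'(u_2)=-(u_2-u_1)^2=-(1-a+a^2)<0$, so that $\mu_-<0<\mu_+$. Since $\phi_\f(\xi)\to(u_2,0)$ as $\xi\to-\infty$, the front lies on the one-dimensional smooth unstable manifold $W^\uu(u_2,0)$, along which solutions decay like $\re^{\mu_+\xi}$ as $\xi\to-\infty$; the (un)stable manifold theorem then gives $|u_\f(\xi)-u_2|,|v_\f(\xi)|\le C\re^{\upsilon\xi}$ for $\xi\le0$ for any fixed $\upsilon\in(0,\mu_+]$. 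The same exponent works for $\phi_\bb$ near $(\bar{u}_2,0)$ since $R$ forces $f'(\bar{u}_2)=f'(u_2)$.

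\emph{Algebraic tail at $+\infty$.} The linearization of~\eqref{eq:layer} at $w=f(u_1)$ about the fold $(u_1,0)$ has eigenvalues $0$ and $-c$; writing $p=u-u_1$, $q=v$, the system reads $p'=q$, $q'=-cq-\kappa_0 p^2+p^3$ with $\kappa_0=u_2-u_1=\sqrt{1-a+a^2}>0$. Center-manifold theory supplies a locally invariant curve $\{q=h_\cc(p)\}$ tangent to $q=0$ at the origin, with $h_\cc(p)=-\tfrac{\kappa_0}{c}p^2+\mathcal{O}(p^3)$, on which the reduced flow is
\begin{align*}
p'=-\tfrac{\kappa_0}{c}\,p^2\bigl(1+\mathcal{O}(p)\bigr).
\end{align*}
By the construction of the phase-wave front for $c>c_*(a)$ in~\cite{CASCH} (recalled in the discussion preceding the statement), $\phi_\f$ approaches $(u_1,0)$ along this center manifold rather than along its strong stable manifold, so $p_\f(\xi)=u_\f(\xi)-u_1>0$ agrees, up to an $\mathcal{O}(\re^{-c\xi})$ correction, with a solution of the reduced equation that decreases to $0$ as $\xi\to+\infty$. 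Passing to $s=1/p_\f$ turns the reduced equation into $s'=\tfrac{\kappa_0}{c}+\mathcal{O}(1/s)$, whence $s(\xi)=\tfrac{\kappa_0}{c}\,\xi\,(1+o(1))$ and, after a short bootstrap of the remainder, $u_\f(\xi)-u_1=\tfrac{\omega}{\xi}+\mathcal{O}(\xi^{-2})$ with $\omega:=c/\kappa_0=c/\sqrt{1-a+a^2}>0$; replacing $\tfrac{\omega}{\xi}$ by $\tfrac{\omega}{1+\xi}$ alters the remainder only by $\mathcal{O}(\xi^{-2})$, and $v_\f=p_\f'=-\tfrac{\kappa_0}{c}p_\f^2(1+o(1))=\mathcal{O}(\xi^{-2})$. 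Combining with the $-\infty$ bound and boundedness of $(u_\f,v_\f)$ on compacta yields the claimed estimates on all of $\R$. Monotonicity of $u_\f$ is part of the construction in~\cite{AronsonWeinberger,CASCH}; alternatively, $v_\f$ cannot vanish while $u_\f\in(u_1,u_2)$, since there $v_\f'=-(f(u_\f)-f(u_1))<0$, so $v_\f<0$ throughout, and the monotonicity of $u_\bb$ then follows via $R$.

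\emph{Main obstacle.} The only nontrivial content sits in the $+\infty$ analysis: one must (i) know that $\phi_\f$ rides the center manifold, and not the exponentially decaying strong stable manifold — this is exactly where the hypothesis $c>c_*(a)$ enters, and it is provided by the existence theory of~\cite{CASCH}; and (ii) extract the algebraic leading term $\omega/(1+\xi)$ with an $\mathcal{O}((1+\xi)^{-2})$ remainder by carefully integrating the scalar reduced flow $p'=-\tfrac{\kappa_0}{c}p^2(1+\mathcal{O}(p))$ and bootstrapping the error against $1/\xi$. The reflection reduction, the exponential tail at the saddle, and the monotonicity are straightforward by comparison.
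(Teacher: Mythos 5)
The paper states Proposition~\ref{prop:frontback} without proof, so there is no argument of the authors to compare against; your strategy --- reducing the back to the front via the point symmetry of the cubic, invoking the unstable manifold theorem at the hyperbolic saddle $(u_2,0)$, and performing a center-manifold reduction at the fold $(u_1,0)$ --- is the natural one, and most of it is sound. The involution $R$ is indeed a symmetry exchanging the two layers, the factorization $f(u)-f(u_1)=-(u-u_1)^2(u-u_2)$ gives $f'(u_2)=-(1-a+a^2)<0$ and hence the exponential tail, the leading coefficient $\omega=c/\sqrt{1-a+a^2}$ is correct, and the monotonicity argument is fine.

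The gap is in the step you yourself single out as the crux: the claim that ``a short bootstrap'' upgrades $s(\xi)=\tfrac{\kappa_0}{c}\xi(1+o(1))$ to $u_\f(\xi)-u_1=\tfrac{\omega}{\xi}+\mathcal{O}(\xi^{-2})$. Carrying the reduction one order further, the invariance equation $h_\cc'(p)h_\cc(p)=-ch_\cc(p)-\kappa_0p^2+p^3$ gives $h_\cc(p)=-\tfrac{\kappa_0}{c}p^2+\bigl(\tfrac1c-\tfrac{2\kappa_0^2}{c^3}\bigr)p^3+\mathcal{O}(p^4)$, so the reduced flow is $p'=-Ap^2+Bp^3+\mathcal{O}(p^4)$ with $A=\kappa_0/c$ and $B=(1-2\kappa_0^2/c^2)/c$. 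Separating variables yields $\tfrac{1}{Ap}-\tfrac{B}{A^2}\log p=\xi+\mathcal{O}(1)$, and inverting gives $p(\xi)=\tfrac1{A\xi}+\tfrac{B}{A^3}\tfrac{\log\xi}{\xi^2}+\mathcal{O}(\xi^{-2})$. The coefficient $B$ vanishes only at the isolated speed $c=\sqrt{2(1-a+a^2)}=2c_*(a)$, so for generic $c>c_*(a)$ the second-order term is of size $\log\xi/\xi^2$, not $\xi^{-2}$: your bootstrap cannot deliver the stated remainder, and the argument as written needs either a logarithmic loss in the error bound or a proof that the log coefficient vanishes (it does not). What your method actually proves is $|u_\f(\xi)-u_1-\tfrac{\omega}{1+\xi}|\le C\log(2+\xi)/(1+\xi)^2$; this weaker form suffices for every later use of the proposition in the paper (only $|u_\f-u_1|\le C/(1+\xi)$ and $|v_\f|\le C/(1+\xi^2)$ are invoked, e.g.\ in Proposition~\ref{prop:varred}), so you should either prove that version or exhibit the missing cancellation explicitly.
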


\subsubsection{Construction of singular periodic orbit} It is readily seen that equation~\eqref{eq:slowl} can be solved by separation of variables, yielding a solution $w_{\lr}(y)$ with initial value $w_{\lr}(0) = f(u_1)$, which satisfies $w_{\lr}(L_\lr) = f(\bar{u}_2)$ for 
\begin{align} \label{periodexpr1}
L_\lr = \int_{f(u_1)}^{f(\bar{u}_2)} \frac{-c}{f_{\lr}^{-1}(w) - \gamma w - a} \de w = \int_{u_1}^{\bar{u}_2} \frac{-cf'(u)}{u - \gamma f(u) - a} \de u.\end{align}
Consequently, $u_{\lr}(y) = f_{\lr}^{-1}(w_{\lr}(y))$ solves~\eqref{eq:reduced} for $y > 0$ and satisfies $u_{\lr}(0) = u_1$ and $u_{\lr}(L_\lr) = \bar{u}_2$. Hence, the orbit $(u_{\lr}(y),0,f(u_{\lr}(y))$ lies on the left branch of the critical manifold $\mathcal{M}_0$ and connects the fold point $(u_1,0,f(u_1))$ to the point $(\bar{u}_2,0,f(\bar{u}_2))$. Similarly, there exists a solution $u_{\rr}(y)$ to~\eqref{eq:reduced} with boundary values $u_{\rr}(0) = \bar{u}_1$ and $u_{\rr}(L_\rr) = u_2$, where we have
\begin{align} \label{periodexpr2}
L_\rr = \int_{f(\bar{u}_1)}^{f(u_2)} \frac{-c}{f_{\rr}^{-1}(w) - \gamma w - a} \de w = \int_{\bar{u}_1}^{u_2} \frac{-cf'(u)}{u - \gamma f(u) - a} \de u.\end{align}
The orbit $(u_{\rr}(y),0,f(u_{\rr}(y))$ connects the fold point $(\bar{u}_1,0,f(\bar{u}_1))$ to the point $(u_2,0,f(u_2))$ on the right branch of $\mathcal{M}_0$. We define a singular periodic orbit $\Gamma_0(c)$ by concatenating these orbit segments on the left and right branches of $\mathcal{M}_0$ with the front and back solutions $(u_{\f}(\xi),v_{\f}(\xi),f(u_1))$ and $(u_{\bb}(\xi),v_{\bb}(\xi),f(\bar{u}_1))$; see Figure~\ref{fig:PO_nonhyp}. 

\begin{figure}
\centering
\includegraphics[width=0.6\linewidth]{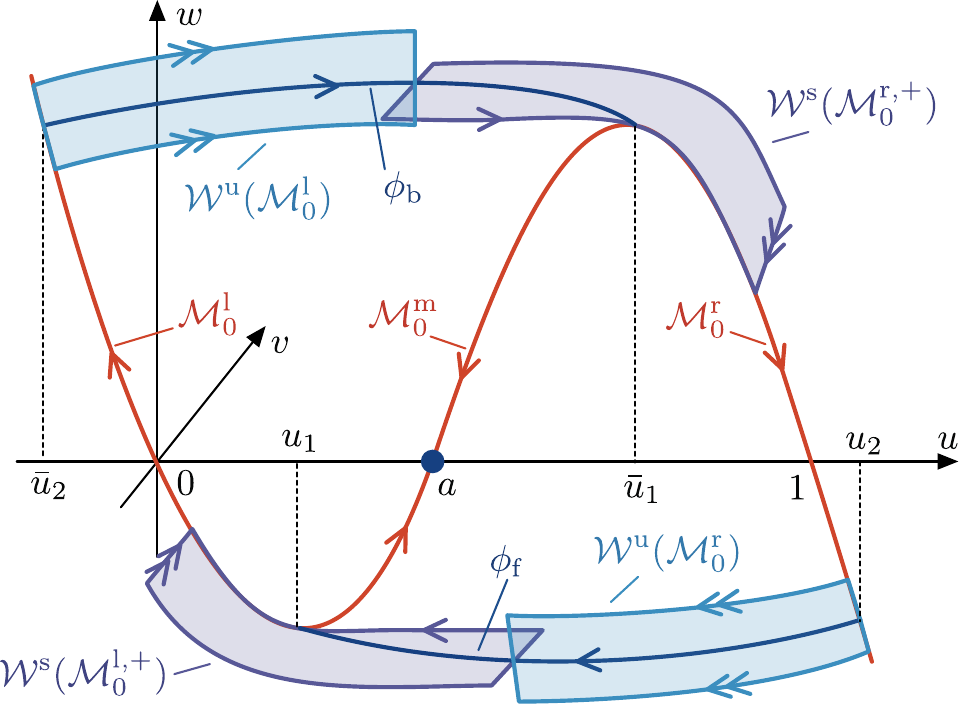}
\caption{Shown is the singular periodic orbit $\Gamma_0(c)$ formed by concatenating segments of the branches $\mathcal{M}^\mathrm{l,r}_0$ of the critical manifold $\mathcal{M}_0$ with the fast front and back solutions $\phi_\mathrm{f,b}$. }
\label{fig:PO_nonhyp}
\end{figure}

\subsubsection{Existence of nearby periodic orbit and pointwise estimates}\label{sec:existence}
In~\cite{CASCH} it is shown that the singular periodic orbit $\Gamma_0(c)$ perturbs to a nearby periodic solution $\Gamma_\eps(c)$ of~\eqref{eq:fast} for sufficiently small $\eps>0$. The proof is based on a fixed-point argument, using estimates which follow from results of geometric singular perturbation theory. In the forthcoming spectral stability analysis, we require more detailed pointwise estimates on the solution, which in part rely on the nature of the passage of the periodic orbit near the nonhyperbolic fold points. Hence we briefly review this aspect of the construction here.

Away from the fold points, the manifolds $\mathcal{M}^{\lr,\rr}_0$ are normally hyperbolic saddle-type critical manifolds. Therefore, by standard results of geometric singular perturbation theory~\cite{fenichel1979geometric}, for any $k>0$, (compact portions of) these critical manifolds, as well as their stable and unstable manifolds $\mathcal{W}^{\mathrm{s},\mathrm{u}}(\mathcal{M}^{\lr,\rr}_0)$, perturb to locally invariant manifolds $\mathcal{M}^{\lr,\rr}_\eps$ and $\mathcal{W}^{\mathrm{s},\mathrm{u}}(\mathcal{M}^{\lr,\rr}_\eps)$ for sufficiently small $\eps>0$, which are $\mathcal{O}(\eps)$-close in the $C^k$ sense to their singular counterparts. In~\cite{CASCH}, the persistence of the singular periodic orbit $\Gamma_0(c)$ relies on the transverse intersections of the manifolds $\mathcal{W}^{\mathrm{u}}(\mathcal{M}^{\rr}_\eps)$ and $\mathcal{W}^{\mathrm{s}}(\mathcal{M}^{\lr}_\eps)$ along the front $\phi_\f$ and of $\mathcal{W}^{\mathrm{u}}(\mathcal{M}^{\lr}_\eps)$ and $\mathcal{W}^{\mathrm{s}}(\mathcal{M}^{\rr}_\eps)$ along the back $\phi_\bb$. However, this transversality is not guaranteed by standard geometric singular perturbation theory, as the intersections between these manifolds along the orbits $\phi_\f, \phi_\bb$ occur in the layers $w=f(u_1),f(\bar{u}_1)$, in which the manifold $\mathcal{M}^{\lr}_0$ loses hyperbolicity at the lower fold point, while $\mathcal{M}^{\rr}_0$ loses hyperbolicity at the upper fold point, and the perturbed stable manifolds $\mathcal{W}^{\mathrm{s}}(\mathcal{M}^{\rr,\lr}_\eps)$ are not well defined near $\phi_\f, \phi_\bb$. 

To complete the construction, it is necessary to control the perturbed flow near the fold points, which requires the use of geometric desingularization techniques. These methods will also play an important role in the forthcoming spectral stability analysis. We consider the flow near the lower fold point $(u,v,w)=(u_1,0,f(u_1))$; the upper fold is similar. Following~\cite[\S4]{CASCH}, for any $k>0$, there exist a neighborhood of the origin $\mathcal{V}\subset \mathbb{R}^3$ and a $C^k$-change of coordinates $\mathcal{N}_\eps:\mathcal{V}\rightarrow \mathbb{R}^3$ such that the map $U= (u_1,0,f(u_1))^\top+\mathcal{N}_\eps(V)$ transforms~\eqref{eq:fast} to the system
\begin{align}
\begin{split}\label{eq:fold_normalform}
x_\zeta &= g_1(x,y;\eps),\\
y_\zeta &= \eps g_2(x,y;\eps),\\
z_\zeta &= zg_3(x,y,z;\eps)
\end{split}
\end{align}
in a neighborhood of the fold point, where
\begin{align*}
    g_1(x,y;\eps) &= -x^2+y+\mathcal{O}(xy, y^2, x^3,\eps),\\
    g_2(x,y;\eps) &=1+\mathcal{O}(x,y,\eps),\\
    g_3(x,y,z;\eps)&= -\frac{c}{\theta_\mathrm{lf}}+\mathcal{O}(x,y,z,\eps).
\end{align*}
 Here the traveling-wave coordinate has been rescaled as $\zeta=\theta_\mathrm{lf} \xi$, where the positive constant $\theta_\mathrm{lf}$ is given by
\begin{align*}
    \theta_\mathrm{lf} \coloneqq  -\frac{(a^2-a+1)^{1/6}(u_1-\gamma f(u_1)-a)^{1/3}}{c}>0.
\end{align*}
This coordinate transformation  explicitly distinguishes between the flow on a two-dimensional center manifold near the fold and the strongly attracting dynamics in the hyperbolic $z$-direction; see~\S\ref{sec:folds} for further details on the coordinate transformation $\mathcal{N}_\eps$. We note that a similar coordinate transformation exists near the upper fold point with corresponding rescaling
\begin{align*}
    \theta_\mathrm{uf} \coloneqq  \frac{(a^2-a+1)^{1/6}(\bar{u}_1-\gamma f(\bar{u}_1)-a)^{1/3}}{c}>0.
\end{align*}

\begin{figure}
\centering
\includegraphics[width=0.45\linewidth]{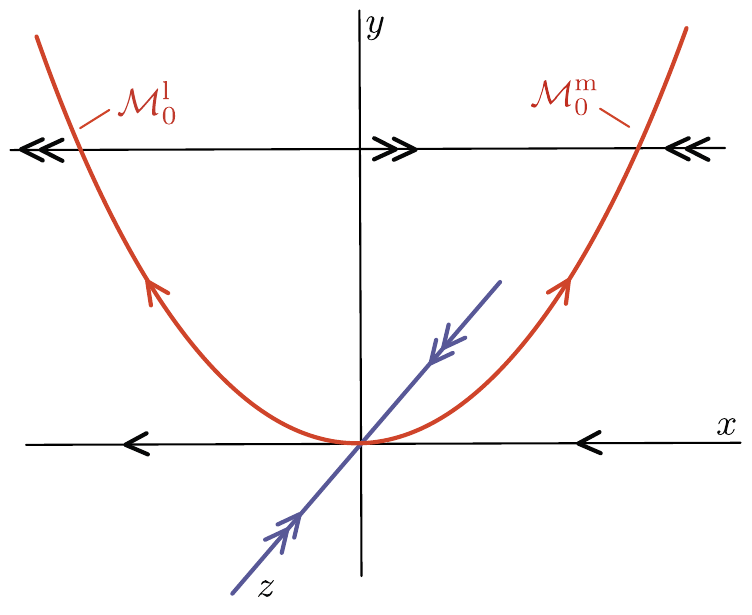}
\caption{Shown are the dynamics~\eqref{eq:fold_normalform} in the local $xyz$-coordinates near the lower fold point in the singular limit $\eps=0$.}
\label{fig:fold_singular}
\end{figure}
In the above transformed system~\eqref{eq:fold_normalform}, the critical manifold $\mathcal{M}_0$ is determined by the conditions $z=0$ and $g_1(x,y;0)=0$, taking the form of (approximately) an upward facing parabola centered at the origin $(x,y)=(0,0)$ in the subspace $z=0$, with the left and right branches of the parabola representing $\mathcal{M}^\lr_0$ and $\mathcal{M}^\mathrm{m}_0$, respectively; see Figure~\ref{fig:fold_singular}. Within the subspace $z=0$, we define the following continuation of the critical manifold $\mathcal{M}^{\lr}_0$ by
\begin{align}\label{eq:ml_plus}
    \mathcal{M}^{\lr,+}_0\coloneqq \mathcal{M}^{\lr}_0\cup\left\{ (x,y,z): y=z=0, x\geq0  \right\}
\end{align}
that is, we append the positive $x$-axis to $\mathcal{M}^{\lr}_0$. Away from the fold, using standard geometric singular perturbation theory as stated above, the manifold $\mathcal{M}^{\lr}_0$ perturbs to a locally invariant slow manifold $\mathcal{M}^\lr_\eps$ which is $C^k-\mathcal{O}(\eps)$-close to $\mathcal{M}^{\lr}_0$. Using blow-up desingularization techniques, in~\cite[\S4]{CSosc} it was shown that the \emph{extended} manifold $\mathcal{M}^{\lr,+}_0$ perturbs to a locally invariant manifold $\mathcal{M}^{\lr,+}_\eps$ which is $\mathcal{O}(\eps^{2/3})$-close in $C^0$ to $\mathcal{M}^{\lr,+}_0$ and $\mathcal{O}(\eps^{1/3})$-close in $C^1$ to $\mathcal{M}^{\lr,+}_0$. The family of strong stable fibers $\mathcal{W}^\mathrm{s}(\mathcal{M}^{\lr,+}_0)$ of $\mathcal{M}^{\lr,+}_0$ also perturbs to a two-dimensional locally invariant manifold $\mathcal{W}^\mathrm{s}(\mathcal{M}^{\lr,+}_\eps)$ which is similarly $\mathcal{O}(\eps^{2/3})$-close in $C^0$ and $\mathcal{O}(\eps^{1/3})$-close in $C^1$ to $\mathcal{W}^\mathrm{s}(\mathcal{M}^{\lr,+}_0)$. Following~\cite{CASCH}, one can then utilise the transverse intersection of $\mathcal{W}^{\mathrm{u}}(\mathcal{M}^{\rr}_\eps)$ and the extended manifold $\mathcal{W}^{\mathrm{s}}(\mathcal{M}^{\lr,+}_\eps)$ (and analogously for an extended manifold $\mathcal{M}^{\rr,+}_\eps$ near the upper fold) to complete the existence argument; see Figure~\ref{fig:fold_existence}.

\begin{figure}
\centering
\includegraphics[width=0.55\linewidth]{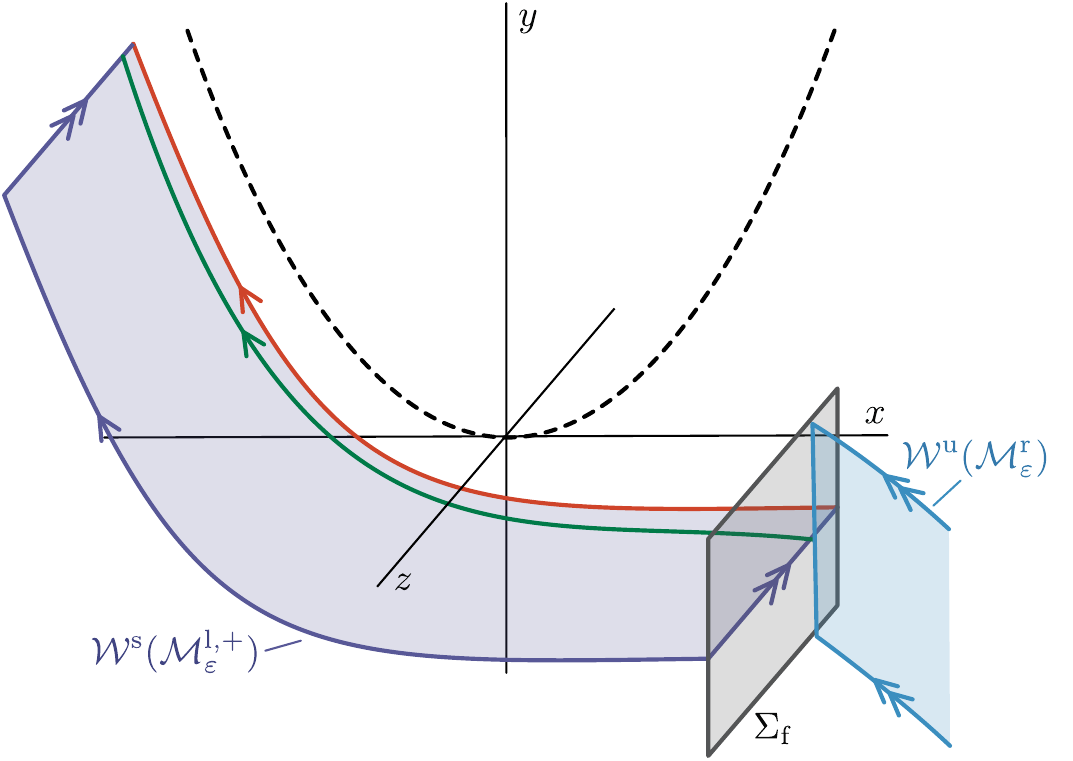}
\caption{Shown are the dynamics of~\eqref{eq:fold_normalform} near the lower fold point for $0<\eps\ll1$ as well as the manifolds $\mathcal{W}^{\mathrm{u}}(\mathcal{M}^{\rr}_\eps)$ and  $\mathcal{W}^{\mathrm{s}}(\mathcal{M}^{\lr,+}_\eps)$. The behavior near the upper fold point is similar. The existence of periodic orbits corresponding to traveling wave-train solutions is obtained in~\cite{CASCH} via a fixed point argument applied to the Poincar\'e map associated with a transverse section $\Sigma_\mathrm{f}$ near the fold. }
\label{fig:fold_existence}
\end{figure}

We have the following.

\begin{proposition}\label{prop:pointwise}
Let $0 < a < \frac12$. Fix $c > c_*(a)$ and $0 < \gamma < \gamma_*(a)$. Then, there exist a constant $C \geq 1$ such that, provided $0 < \epsilon \ll 1$,~\eqref{TW} admits a periodic orbit $\Gamma_\eps(c)$ corresponding to a  stationary solution $(u_{\epsilon},w_{\epsilon})(\xi)$~\eqref{FHN} of period $L_\eps = L_{\lr,\eps} + L_{\rr,\eps}$ with
\begin{align} \label{period}
 |\epsilon L_{\rr,\eps} - L_\rr|, |\epsilon L_{\lr,\eps} - L_\lr| \leq C\epsilon^{\frac{1}{3}}.
\end{align}
Furthermore, there exists a continuous map $\delta_0 \colon [0,\infty) \to [0,\infty)$ with $\delta_0(0) = 0$ such that for $0<\eps\ll\nu\ll1$, the following pointwise estimates hold
\begin{align} \label{pointwise:front/back}
\begin{split}
\left|u_{\epsilon}(\xi) - u_{\f}(\xi)\right|, \left|u_\eps'(\xi) - u_{\f}'(\xi)\right|, \left|w_\eps(\xi) - f(u_1)\right| &\leq C_\nu \epsilon^{\frac{2}{3}},  
\qquad \xi \in \left[\frac{\log(\epsilon)}{\nu},\frac{1}{\nu}\right],\\
\left|u_{\epsilon}(L_{\lr,\eps} + \xi) - u_{\bb}(\xi)\right|,\left|u_{\epsilon}'(L_{\lr,\eps} + \xi) - u_{\bb}'(\xi)\right|, \left|w_\eps(\xi) - f(\bar{u}_1)\right|  &\leq C_\nu \epsilon^{\frac{2}{3}},  
\qquad \xi \in \left[\frac{\log(\epsilon)}{\nu},\frac{1}{\nu}\right],\\
\left|u_\epsilon(\xi) - u_{\lr}(\epsilon \xi)\right|, \left|u_\epsilon'(\xi)\right| &\leq C_\nu \epsilon^{\frac{2}{3}}, \qquad \xi \in \left[\frac{\nu}{\epsilon}, L_{\lr,\eps} + \frac{\log(\epsilon)}{\nu}\right],\\
\left|u_\epsilon(L_{\lr,\eps} + \xi) - u_{\rr}(\epsilon \xi)\right|, \left|u_\epsilon'(L_{\lr,\eps} + \xi)\right| &\leq C_\nu \epsilon^{\frac{2}{3}}, \qquad \xi \in \left[\frac{\nu}{\epsilon}, L_{\rr,\eps} + \frac{\log(\epsilon)}{\nu}\right],\\
\left|u_\epsilon(\xi) - u_1\right|, \left|u_\eps'(\xi)\right|, \left|w_\eps(\xi) - f(u_1)\right| &\leq \delta_0(\nu), \qquad  \ \, \xi \in \left[\frac{1}{\nu},\frac{\nu}{\epsilon}\right],\\
\left|u_\epsilon(L_{\lr,\eps}+\xi) - \bar{u}_1\right|, \left|u_\eps'(L_{\lr,\eps}+\xi)\right|, \left|w_\eps(L_{\lr,\eps}+\xi) - f(\bar{u}_1)\right| &\leq \delta_0(\nu), \qquad  \ \, \xi \in \left[\frac{1}{\nu},\frac{\nu}{\epsilon}\right],\\
\left|u_\epsilon(\xi) - u_\lr(\eps \xi)\right|, \left|u_\epsilon(L_{\lr,\eps} + \xi) - u_\rr(\eps \xi)\right| &\leq \delta_0(\nu), \qquad \ \, \xi \in \left[\frac{1}{\nu},\frac{\nu}{\epsilon}\right],
\end{split}
\end{align} 
where $C_\nu \geq 1$ is an $\epsilon$-independent constant.
\end{proposition}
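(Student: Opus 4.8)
The plan is to recast the construction of $\Gamma_\eps(c)$ from~\cite{CASCH} in a way that tracks the wave train pointwise against its singular limit in the various charts. I would split the periodic orbit into overlapping segments matching the six lines of~\eqref{pointwise:front/back}: two fast-layer segments near $w = f(u_1)$ and $w = f(\bar{u}_1)$ where the dynamics is governed by~\eqref{eq:layer} at leading order; two slow segments along the Fenichel manifolds $\mathcal{M}^{\lr}_\eps$ and $\mathcal{M}^{\rr}_\eps$ away from the folds; and two fold-passage segments near the lower and upper fold points, handled in the blow-up coordinates of~\eqref{eq:fold_normalform}. On each segment I would prove the corresponding estimate, then show that the Poincaré map compositions used in~\cite{CASCH} to close the fixed-point argument can be chosen to respect these estimates. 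The period estimate~\eqref{period} follows from integrating the reduced flow~\eqref{eq:slowl} along $\mathcal{M}^{\lr,+}_\eps$ (and its upper analogue) and comparing with~\eqref{periodexpr1}--\eqref{periodexpr2}: the $\mathcal{O}(\eps^{1/3})$ correction is exactly the time the orbit spends in the fold region on the $\eps^{-1/3}$ timescale, which one reads off from the blow-up analysis.

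First I would treat the two slow segments. On the normally hyperbolic branches, Fenichel theory gives that $\mathcal{M}^{\lr,\rr}_\eps$ are $C^1$-$\mathcal{O}(\eps)$-close to $\mathcal{M}^{\lr,\rr}_0$, that the orbit is exponentially attracted to these manifolds, and that on them $u_\eps(\xi)$ is governed by an $\mathcal{O}(\eps)$-perturbation of~\eqref{eq:reduced}; Gronwall on the slow timescale $y = \eps\xi$ over an $\mathcal{O}(1)$-interval yields the $\mathcal{O}(\eps^{2/3})$ bounds of lines three and four (the crude $\mathcal{O}(\eps)$ would suffice here, but $\eps^{2/3}$ is weaker), together with $|u_\eps'(\xi)| = \mathcal{O}(\eps)$ since $v = \eps u_y + \mathcal{O}(\eps)$ on the slow manifold. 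For the fast-layer segments, the key point is that on $\xi \in [\log(\eps)/\nu, 1/\nu]$ the $w$-coordinate stays within $\mathcal{O}(\eps)$ of $f(u_1)$ (respectively $f(\bar{u}_1)$) because $w_\xi = \mathcal{O}(\eps)$, so~\eqref{eq:fast} is an $\mathcal{O}(\eps)$-perturbation of~\eqref{eq:layer} at $w = f(u_1)$; the exponential convergence $u_\eps \to \mathcal{M}^{\rr}_\eps$ as $\xi \to -\infty$ together with Proposition~\ref{prop:frontback} pins the orbit to the front/back solution $\phi_\f,\phi_\bb$ up to $\mathcal{O}(\eps^{2/3})$ via a regular perturbation/shadowing argument, where the loss from $\eps$ to $\eps^{2/3}$ is the price of matching into the fold region at $\xi \sim 1/\nu$.

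The fold-passage segments, the intervals $[1/\nu, \nu/\eps]$ in lines five through seven, are the main obstacle. Here I would work in the coordinates~\eqref{eq:fold_normalform} and quote the blow-up results cited in the excerpt: $\mathcal{M}^{\lr,+}_\eps$ is $\mathcal{O}(\eps^{2/3})$-$C^0$ and $\mathcal{O}(\eps^{1/3})$-$C^1$-close to $\mathcal{M}^{\lr,+}_0$, and its strong stable foliation perturbs with the same rates. The orbit enters a small neighborhood $\mathcal{V}$ of the fold exponentially close to $\mathcal{W}^{\su}(\mathcal{M}^{\lr,+}_\eps)$ (by the $z$-equation $z_\zeta = z g_3$ with $g_3 < 0$), then shadows $\mathcal{M}^{\lr,+}_\eps$ through the fold; since $\mathcal{M}^{\lr,+}_\eps$ itself is within $\mathcal{O}(\eps^{2/3})$ of the singular branch-plus-$x$-axis, and near the fold point $u_\eps$ is within $\delta_0(\nu)$ of $u_1$ simply because the whole neighborhood $\mathcal{V}$ has diameter controlled by $\nu$, one obtains the $\delta_0(\nu)$ bounds for $u_\eps - u_1$ and $w_\eps - f(u_1)$, while $u_\eps'(\xi) = \mathcal{O}(\eps^{2/3}/\theta_\lf) + $ (layer contribution) gives the derivative bound. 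The delicate bookkeeping is the matching at the two ends of this interval: at $\xi \sim 1/\nu$ one must check that the $\mathcal{O}(\eps^{2/3})$ estimate from the layer segment is consistent with entry into the blow-up chart, and at $\xi \sim \nu/\eps$ that exit onto $\mathcal{M}^{\lr}_\eps$ matches the slow-segment estimate; this requires choosing the section $\Sigma_\f$ of Figure~\ref{fig:fold_existence} and the constants $\nu, C_\nu, \delta_0$ in the right order ($\eps \ll \nu \ll 1$, $C_\nu$ depending on $\nu$ but not $\eps$, $\delta_0(\nu) \to 0$), and invoking the uniform-in-$\eps$ estimates on the blow-up transition maps from~\cite[\S4]{CASCH} and~\cite[\S4]{CSosc}. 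I expect no genuinely new phenomenon here beyond~\cite{CASCH}, but the proposition demands that all constants be tracked uniformly in $\eps$ and that the $C^1$ (derivative) estimates be extracted, which the existence proof did not need in this form.
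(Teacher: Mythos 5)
Your proposal follows essentially the same route as the paper: revisit the existence construction of~\cite{CASCH}, anchor the orbit at the transverse intersection of $\mathcal{W}^{\mathrm{u}}(\mathcal{M}^{\rr}_\eps)$ with $\mathcal{W}^{\mathrm{s}}(\mathcal{M}^{\lr,+}_\eps)$ in a section through the front, propagate pointwise closeness by regular perturbation along the layers (with corner estimates to reach $\xi = \log(\eps)/\nu$), Fenichel theory along the slow branches, and the blow-up estimates through the folds, with the constants ordered as $\eps \ll \nu \ll 1$. This is the paper's argument.

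Two corrections are worth making. First, on the slow segments your remark that ``the crude $\mathcal{O}(\eps)$ would suffice here, but $\eps^{2/3}$ is weaker'' misattributes the source of the exponent: the comparison in lines three and four is with the time-parameterized singular solution $u_{\lr}(\eps\xi)$, and the orbit enters (and leaves) the slow branch with an $\mathcal{O}(\eps^{2/3})$ offset inherited from the fold passage and the layer matching --- precisely the $C^0$-accuracy of $\mathcal{M}^{\lr,+}_\eps$ and of the intersection point in $\Sigma_{\f}$. The paper pins this on the fact that the jump point for the back is only located to $\mathcal{O}(\eps^{2/3})$; an $\mathcal{O}(\eps)$ bound for these lines is not available, so $\eps^{2/3}$ is not a gratuitous weakening. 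Your Gronwall step still closes provided you feed it the $\eps^{2/3}$-accurate entry data, so this does not break the proof, but the reasoning as written is wrong. Second, the last line of~\eqref{pointwise:front/back} is not purely a ``small neighborhood'' statement: besides $|u_\eps(\xi) - u_1| \leq \delta_0(\nu)$ you need that the singular slow solution itself satisfies $|u_{\lr}(y) - u_1| \leq \delta_0(\nu)$ for $y \in (0,\nu]$. Since $f'(u_1) = 0$, the reduced equation~\eqref{eq:reduced} is singular at $y = 0$ and $u_{\lr}(y) = u_1 - C\sqrt{y} + \smallO(\sqrt{y})$; the paper verifies this by separation of variables. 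It is a one-line check, but it should appear explicitly.
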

\begin{proof}
The existence result and the estimate~\eqref{period} follow directly from~\cite[Theorem 1.1]{CASCH}. It remains to obtain the pointwise estimates~\eqref{pointwise:front/back}. To do this, we return to  the existence construction~\cite[Proposition~4.3]{CASCH} for the phase-wave trains. The wave trains are constructed in the three-dimensional traveling-wave equation~\eqref{TW} using geometric singular perturbation theory and a fixed-point argument to obtain a periodic orbit near the singular orbit described above. The analysis in~\cite[Proposition~4.3]{CASCH} shows that the perturbed periodic orbit is $\mathcal{O}(\epsilon^{2/3})$-close to the singular orbit. However, to verify the pointwise estimates~\eqref{pointwise:front/back}, slightly more care is needed.

The periodic orbit is obtained as a fixed point of the return map to a section $\Sigma_\mathrm{f}$ transverse to the front $(u_{\f}(\xi),v_{\f}(\xi),f(u_1))$. Without loss of generality, we assume this intersection occurs at $\xi=0$, so that $(u_{\f}(0),v_{\f}(0),f(u_1))\in\Sigma_\mathrm{f}$. Within this section, the unstable manifold $\mathcal{W}^\mathrm{u}(M^\mathrm{r}_\epsilon)$ transversely intersects the stable manifold $\mathcal{W}^\mathrm{s}(M^{\mathrm{l},+}_\epsilon)$ of the trajectory $M^{\mathrm{l},+}_\epsilon$, which is the continuation of the slow manifold $M^\mathrm{l}_\epsilon$ through the fold. This intersection occurs at a point $(u,v,w)$ satisfying $\left|(u,v,w)-(u_{\f}(0),v_{\f}(0),f(u_1))\right| \leq C\epsilon^{2/3}$. The fixed point of this map, corresponding to the periodic orbit, is obtained exponentially close (in $\epsilon^{-1}$) to this intersection. Hence by a regular perturbation argument, we have that for any $\nu>0$, $\left|u_{\epsilon}(\xi) - u_{\f}(\xi)\right|\leq C_\nu \epsilon^{2/3}$ for $\xi \in \left[-\frac{1}{\nu},\frac{1}{\nu}\right]$. By taking $\nu>0$ sufficiently small, we can ensure that the periodic orbit is within a small neighborhood of the lower left fold point $(u,v,w)=(u_1,0,f(u_1))$ at $\xi=1/\nu$. To extend the left side of the interval to $\xi =\frac{\log(\epsilon)}{\nu}$, we apply standard corner estimates (see, e.g.~\cite[Theorem 4.5]{PBR}). A similar argument holds across the back, which completes the proof of the first two estimates.

Fixing $\nu > 0$ sufficiently small, we can ensure that at $\xi =\frac{\nu}{\epsilon}$, the wave train is exponentially close to a point on $\mathcal{M}^\mathrm{l}_\epsilon$ outside a small neighborhood of the lower left fold point, that is, in the region where $\mathcal{M}^\mathrm{l}_\epsilon$ is normally hyperbolic. Therefore, the third estimate follows from standard geometric singular perturbation theory, noting that the weaker $\epsilon^{2/3}$ estimate (as opposed to $\epsilon$) is due to the fact that the jump point from $\mathcal{M}^\mathrm{l}_\epsilon$ along the back at $\xi=L_{\lr,\eps} + \log(\epsilon)/\nu$ occurs at a location which is $\mathcal{O}(\epsilon^{2/3})$-close to the singular orbit by the first estimate. The fourth estimate concerning the passage near $\mathcal{M}^\mathrm{r}_\epsilon$ is obtained similarly.

Fixing $\nu>0$ sufficiently small in the first three estimates, by construction, the wave train is within a small neighborhood of the lower left fold point $(u,v) = (u_1,0)$ on the interval $\xi\in  \left[\frac{1}{\nu},\frac{\nu}{\epsilon}\right]$, and similarly for the upper right fold point $(u,v) = (\bar{u}_1,0)$, from which we obtain the fifth and sixth estimates. The final remaining estimate concerns the proximity of the wave train to the reduced slow solutions $u_\lr(\eps \xi), u_\rr(\eps \xi)$ near the folds. To obtain this, we show that
\begin{align*}
    |u_{\lr}(y) - u_1|, |u_{\rr}(y) - u_1|\leq \delta_0(\nu)
\end{align*}
for $y\in (0, \nu]$ and then use the fifth estimate. We focus on $u_\lr(y)$; the argument for $u_\rr(y)$ is similar. Recall that $u_\lr(y)$ is the solution of~\eqref{eq:reduced} satisfying $u_\lr(0)=u_1, u_\lr(L_\lr)=\bar{u}_2$. Solving~\eqref{eq:reduced} by separation of variables, we find that near $y=0$, 
\begin{align*}
    u_\lr(y) = u_1-\sqrt{\frac{2(a-u_1-\gamma f(u_1))y}{cf''(u_1)}}+\smallO(\sqrt{y})
\end{align*}
from which the result follows.
\end{proof}

\subsection{Spectral problem setup}\label{sec:spectral_setup}

Let $\rho \in \R$. Setting $(u,w) = \re^{-\ri\rho \xi}(\check{u}, \check{w})$, the Floquet--Bloch eigenvalue problem~\eqref{eq:Floquet_eigenvalue_problem} can be reformulated as a first-order boundary value problem in $\check{\Psi} = (\check{u},\check{u}_\xi,\check{w})$, which reads
\begin{align} \label{fulleigenvalueproblem_unscaled}
\check{\Psi}_\xi &= \check{A}(\xi;\epsilon,\lambda) \check{\Psi}, \qquad \check{A}(\xi;\epsilon,\lambda) = \begin{pmatrix} 0 & 1 & 0 \\ \lambda - f'(u_\epsilon(\xi)) & -c & 1 \\ -\frac{\epsilon}{c} & 0 & \frac{1}{c}\left(\epsilon\gamma + \lambda\right)\end{pmatrix}\\
\check{\Psi}(L_\eps) &= \re^{\ri \rho L_\eps} \check{\Psi}(0). \label{Floquet_BC}
\end{align}
To preserve the explicit fast-slow structure present in the existence problem~\eqref{TW}, we perform a rescaling to remove the $\eps$-independent $\lambda$ term from the equation for the third component. In the new coordinate $\Psi(\xi) = \smash{\re^{-\frac{\lambda}{c}\xi} \check{\Psi}(\xi)}$ the boundary value problem~\eqref{fulleigenvalueproblem_unscaled}-\eqref{Floquet_BC} transforms into the fast-slow eigenvalue problem
\begin{align} \label{eigenvalueproblem}
\begin{split}
\Psi_\xi &= A(\xi;\epsilon,\lambda) \Psi, \qquad A(\xi;\epsilon,\lambda) = \begin{pmatrix} A_{\f}(\xi;\epsilon,\lambda) & B_0 \\ \epsilon B_1 & \epsilon A_s\end{pmatrix},
\end{split}
\end{align}
supplemented with the Floquet boundary condition
\begin{align} \label{eigenvalueproblemBC}
\Psi(L_\eps) &= \re^{\left(\ri\rho - \frac{\lambda }{c}\right)L_\eps} \Psi(0),
\end{align}
where we denote
\begin{align*}
A_{\f}(\xi;\epsilon,\lambda) = \begin{pmatrix} -\frac{\lambda}{c} & 1 \\ \lambda - f'(u_\epsilon(\xi)) & -c-\frac{\lambda}{c} \end{pmatrix}, \qquad B_0 = \begin{pmatrix}0 \\ 1 \end{pmatrix}, \qquad B_1 = \begin{pmatrix} -\frac{1}{c} & 0 \end{pmatrix}, \qquad A_s = \frac{\gamma}{c}.\end{align*}
We note that in this formulation, the spectral parameter $\lambda$ appears both in the matrix $A(\xi;\eps, \lambda)$ as well as the boundary condition, while the Floquet parameter $\rho$ appears only in the boundary condition.

Our approach for analyzing the eigenvalue problem~\eqref{eigenvalueproblem}-\eqref{eigenvalueproblemBC} is multifaceted. In order to prove Theorem~\ref{thm:spectral_stability}, we must rule out the possibility of spectrum in the open right-half plane, aside from a simple eigenvalue of $\mathcal{L}_{0,\eps}$ at the origin due to translation invariance. To achieve this, inspired by~\cite{PBR}, we define three primary regions of the complex plane
\begin{align}
\begin{split}\label{eq:regions}
R_1(\mu) &= \{\lambda \in \C : |\lambda| < \mu\}\\
R_2(\mu,\varpi,\varrho) &= \{\lambda \in \C : \Re(\lambda) \geq -\varpi, \mu \leq |\lambda| \leq \varrho\}\\
R_{3,\eps}(\varrho) &= \{\lambda \in \C : \Re(\lambda) \geq -\tfrac{3}{4} \eps\gamma, |\Im(\lambda)| > \varrho\},
\end{split}
\end{align}
where $0 < \varpi \ll \mu \ll 1 \ll \varrho$ are $\eps$-independent constants, so that the union of these three sets covers the closed right half plane; see Figure~\ref{fig:regions}. Due to the presence of essential spectrum near the imaginary axis, each region presents unique challenges and requires different techniques to either preclude spectrum satisfying $\Re(\lambda)\geq0$ or, in the case of $R_1(\mu)$, to describe in detail the nature of the critical curve of spectrum containing the translation eigenvalue $\lambda=0$, which must be shown to satisfy the diffusive spectral stability condition of Theorem~\ref{thm:spectral_stability}~\ref{thm:spectral_stability_iii}. 

In~\S\ref{sec:proofmaintheorem} below, we present the proof of Theorem~\ref{thm:spectral_stability}, which is based on three technical propositions describing the behavior of the eigenvalue problem~\eqref{eigenvalueproblem}-\eqref{eigenvalueproblemBC} in each of the three regions~\eqref{eq:regions}. First, we preclude spectrum in the region $R_{3,\eps}(\varrho)$, which determines $\varrho > 0$. Then, we perform the spectral analysis in the ball $R_1(\mu)$. This then determines $\mu$. As highlighted above, the spectrum of the wave train necessarily passes through $\lambda=0$ when $\rho=0$, due to translation invariance of the wave. Hence the region $R_1(\mu)$ requires careful estimates as the critical spectral curve $\lambda_\eps(\rho)$ satisfying $\lambda_\eps(0)=0$ must be carefully expanded at $\rho = 0$ to rule out the possibility of instability for small $|\lambda|$; see Figure~\ref{fig:critical_curve} for a numerically computed example. Finally, given $\mu,\varrho > 0$, we show that all spectrum in $R_2(\mu,\varpi,\varrho)$ must lie in the open left-half plane. The remainder of the paper is then concerned with the proof of these propositions.

\begin{figure}
\centering
\includegraphics[width=0.4\linewidth]{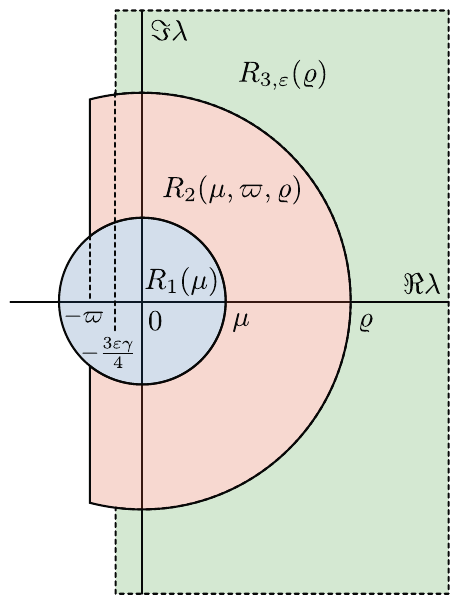}
\caption{Shown are the regions $R_1(\mu), R_2(\mu,\varpi,\varrho)$, and $R_{3,\eps}(\rho)$~\eqref{eq:regions}, of the complex plane in which analyze the eigenvalue problem~\eqref{fulleigenvalueproblem_unscaled}-\eqref{Floquet_BC}.}
\label{fig:regions}
\end{figure}

\subsection{Proof of Theorem~\ref{thm:spectral_stability}}\label{sec:proofmaintheorem}

Following the strategy outlined in~\S\ref{sec:spectral_setup}, we first preclude the existence of spectrum of the linearization $\El_{\eps}$ in the region $R_{3,\eps}(\varrho)$, where $\varrho > 0$ is a sufficiently large $\eps$-independent constant. We exploit that $\El_\eps$ can be written as the sum of a principal diagonal diffusion-advection operator, which is independent of $\eps$ and generates a strongly continuous semigroup, and an $\eps$-dependent  remainder operator, which obeys an $\eps$-independent bound. Consequently, standard resolvent bounds yield the invertibility of $\El_{\eps} - \lambda$ for $\Re(\lambda) > \varrho_1$, where $\varrho_1 > 0$ is a sufficiently large $\eps$-independent constant. The half-plane $\{\lambda \in \C : \Re(\lambda) > \varrho_1\}$ covers a large part of the region $R_{3,\eps}(\varrho)$. In the remaining part of $R_{3,\eps}(\varrho)$, which is characterized by large imaginary part and bounded real part, we proceed as in~\cite{FHNpulled} and rescale the eigenvalue problem. We observe that the rescaled linear operator $\El_\eps - \lambda$ can be inverted using a Neumann series expansion for $|\Im(\lambda)| > \varrho_2$ and $\Re(\lambda) \in [-\frac34 \eps\gamma,\varrho_1]$, where $\varrho_2 > 0$ is a sufficiently large $\eps$-independent constant. All in all, we arrive at the following result, which is proved in Appendix~\ref{app:r3}.

\begin{proposition} \label{prop: regionR3}
Let $0 < a < \frac12$. Fix $0 < \gamma < \gamma_*(a)$ and $c > c_*(a)$. There exists a constant $\varrho > 0$ such that, provided $0 < \eps \ll 1$, the linearization $\El_\eps$ of~\eqref{FHN} about $\phi_\eps(\xi)$ possesses no spectrum in the region $R_{3,\eps}(\varrho)$.
\end{proposition}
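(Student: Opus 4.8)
The plan is to split $R_{3,\eps}(\varrho)$ into a half-plane $\{\Re(\lambda) > \varrho_1\}$ of large real part, on which $\El_\eps - \lambda$ is invertible by an abstract perturbation argument, and the remaining strip $\{-\tfrac34\eps\gamma \le \Re(\lambda) \le \varrho_1,\ |\Im(\lambda)| > \varrho\}$, on which invertibility follows from the explicit fast--slow block structure of the resolvent equation. In both steps the point is to produce bounds that are uniform in $\eps$, so that the resulting $\varrho$ is $\eps$-independent.

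For the first step, decompose $\El_\eps = \mathcal{D} + \mathcal{B}_\eps$, where $\mathcal{D} = \mathrm{diag}\big(\partial_\xi^2 + c\partial_\xi,\ c\partial_\xi\big)$ with domain $H^2(\R,\C) \times H^1(\R,\C)$ is the principal diffusion--advection part, and $\mathcal{B}_\eps$ is multiplication by the bounded matrix-valued function $\left(\begin{smallmatrix} f'(u_\eps) & -1 \\ \eps & -\eps\gamma \end{smallmatrix}\right)$. Since all wave trains $\phi_\eps$ of Proposition~\ref{prop:pointwise} lie in a fixed compact subset of the $(u,w)$-plane and $0 < \eps \ll 1$, the operator $\mathcal{B}_\eps$ is bounded on $L^2 \times L^2$ with $\|\mathcal{B}_\eps\| \le M$ for an $\eps$-independent constant $M$. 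The Fourier symbol of $\mathcal{D}$ is $\mathrm{diag}(-k^2 + \ri ck,\ \ri ck)$, so $\mathcal{D}$ generates a contraction semigroup (the product of a heat and a transport semigroup) and $\|(\mathcal{D} - \lambda)^{-1}\| \le 1/\Re(\lambda)$ for $\Re(\lambda) > 0$; hence $\El_\eps$ generates a $C_0$-semigroup with growth bound $\le M$, and $\El_\eps - \lambda$ is boundedly invertible whenever $\Re(\lambda) > \varrho_1 := M$.

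For the strip, I would solve the resolvent equation $(\El_\eps - \lambda)(u,w)^\top = (g_1,g_2)^\top$ block by block. Its second line reads $c w_\xi - (\lambda + \eps\gamma) w = g_2 - \eps u$; since $\Re(\lambda + \eps\gamma) \ge \tfrac14\eps\gamma > 0$ on $R_{3,\eps}(\varrho)$, the transport resolvent $T_\lambda := (c\partial_\xi - (\lambda + \eps\gamma))^{-1}$ is well defined on $L^2$ with $\|T_\lambda\| \le 4/(\eps\gamma)$, and $w = T_\lambda(g_2 - \eps u)$. Substituting into the first line yields the scalar equation $\big(\partial_\xi^2 + c\partial_\xi - \lambda + f'(u_\eps) + \eps T_\lambda\big)u = g_1 + T_\lambda g_2$. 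The leading operator $\partial_\xi^2 + c\partial_\xi - \lambda$ has spectrum on the leftward parabola $\{-k^2 + \ri ck : k \in \R\}$, which, for $|\Im(\lambda)| > \varrho$ and $\Re(\lambda) \ge -\tfrac34\eps\gamma$ with $\eps$ small, stays at distance bounded below by a fixed multiple of $\varrho$ from $\lambda$; hence its resolvent has norm $\le C/\varrho$ with $C$ depending only on $c$. The key cancellation is that the dangerous factor $\eps^{-1}$ in $\|T_\lambda\|$ multiplies the $\mathcal{O}(\eps)$ coupling coefficient, so
\[
\big\|(\partial_\xi^2 + c\partial_\xi - \lambda)^{-1}\big(f'(u_\eps) + \eps T_\lambda\big)\big\| \le \frac{C}{\varrho}\Big(\|f'(u_\eps)\|_{L^\infty} + \tfrac{4}{\gamma}\Big)
\]
is $\eps$-independent. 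Choosing $\varrho$ larger than twice this bracket makes this norm less than $\tfrac12$; a Neumann series then inverts the scalar operator in $H^2$, and reassembling $w = T_\lambda(g_2 - \eps u) \in H^1$ produces $(u,w) \in \mathrm{dom}(\El_\eps)$ solving the resolvent equation, with triviality of the kernel following from the same computation. (Equivalently, following~\cite{FHNpulled}, one first rescales $\xi$ by a power of $|\lambda|$ and runs the Neumann argument directly on the first-order system~\eqref{fulleigenvalueproblem_unscaled}.)

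The only genuine subtlety --- and it is mild, consistent with this region being the least delicate of the three --- is the bookkeeping of $\eps$: the degenerate diffusion in the $w$-equation rules out treating $\El_\eps$ as sectorial, and the transport resolvent $T_\lambda$ is only $\mathcal{O}(\eps^{-1})$, so $\eps$-uniform bounds require exploiting both the $\mathcal{O}(\eps)$ coupling and the margin $\Re(\lambda + \eps\gamma) \ge \tfrac14\eps\gamma$ built into the definition of $R_{3,\eps}(\varrho)$. Everything else --- the parabola distance estimate, the mapping properties of $T_\lambda$, and the verification that the reassembled pair lies in $H^2 \times H^1$ --- is routine. The constant $\varrho$ obtained this way depends only on $a, \gamma, c$, as required, and fixes the value of $\varrho$ used throughout the decomposition~\eqref{eq:regions}.
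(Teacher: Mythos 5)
Your proposal is correct, and the overall two-region strategy (large real part via abstract perturbation; bounded real part and large imaginary part via a Neumann-type argument) is exactly the paper's. The execution differs in both halves, though. For the half-plane, you invoke the contraction property of $\mathcal{D}$ directly from its Fourier symbol and apply the bounded perturbation theorem to get a growth bound $M$ for the semigroup generated by $\El_\eps$; the paper instead establishes sectoriality of $\partial_{\xi\xi}$, relative boundedness of $c\partial_\xi$ with arbitrarily small relative bound, and then invokes Kato's stability theorem for bounded invertibility. These are essentially interchangeable, and yours is arguably the more economical route. For the strip, the difference is more substantive: the paper performs an anisotropic rescaling ($X=\sqrt{\Omega}\,\xi$, $W=\eps^{-1}w$, left-multiplication by $\mathrm{diag}(\Omega^{-1},\eps^{-1}\Omega^{-1/2})$) and runs a Neumann series on the full $2\times2$ system, whereas you eliminate $w$ via the transport resolvent $T_\lambda$ and invert the resulting scalar Schur complement. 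Your version isolates the key cancellation ($\|T_\lambda\|\lesssim \eps^{-1}$ against the $\mathcal{O}(\eps)$ coupling, using the margin $\Re(\lambda+\eps\gamma)\geq\tfrac14\eps\gamma$ built into $R_{3,\eps}(\varrho)$) more transparently; the paper's rescaling achieves the same balance by distributing the powers of $\eps$ and $\Omega$ so that the off-diagonal block is uniformly $\mathcal{O}(\mu)$. One minor bookkeeping point: when you set up the Neumann series it is cleanest to factor as $(I+B A^{-1})A$ with $B=f'(u_\eps)+\eps T_\lambda$ and measure $\|BA^{-1}\|_{L^2\to L^2}\leq C\varrho^{-1}(\|f'(u_\eps)\|_{L^\infty}+4/\gamma)$, since the $L^2\to H^2$ norm of $A^{-1}$ is not uniformly $\mathcal{O}(\varrho^{-1})$; this does not affect the conclusion.
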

Next, for suitably small $\mu>0$, we consider the spectrum in the small ball $R_1(\mu)$, which is described in the following proposition. 

\begin{proposition}\label{prop:region_r1} Let $0<a<\frac{1}{2}$, $0<\gamma<\gamma_*(a)$, and $c>c_*(a)$. Fix $\delta > 0$. There exists $\mu>0$ such that, provided $0<\eps\ll1$, the linearization $\El_\eps$ of~\eqref{FHN} about $\phi_\eps(\xi)$ possesses no spectrum of nonnegative real part in $R_1(\mu) \setminus \{0\}$. Furthermore, a point $\lambda \in R_1(\mu)$ with $|\Re(\lambda)| \leq \mu \eps^{1/6}$ lies in the spectrum $\Sigma(\El_{\eps})$ if and only if it obeys the main formula~\eqref{eq:mainformapprox} for some $\rho \in \R$. In this case, $\lambda = \lambda_\eps(\rho)$ is an eigenvalue of the Bloch operator $\El_{\rho,\eps}$. Finally, locally near $(0,0)$ the set of $(\lambda,\rho) \in \C \times \R$ solving~\eqref{eq:mainformapprox} is given by a  smooth curve $\lambda_\eps \colon I_\eps \to \C$, where $I_\eps \subset \R$ is an interval containing $0$. It holds
\begin{align}\label{eq:r1_critical_curve_estimates}
\lambda_\eps(0) = \Re(\lambda_\eps'(0)) = 0, \qquad \lambda_\eps''(0) \in \R, \qquad       \left|\lambda_{\eps}'(0)-\ri c\right|\leq\delta , \qquad \left|\lambda_{\eps}''(0)+ \frac{2\kappa c^3}{L_0} \, \eps^{\frac23}\right| \leq \delta \eps^{\frac23},
\end{align}
where $\kappa > 0$ is given by~\eqref{eq:quad_coeff} and $L_0 = L_\rr + L_\lr > 0$ is defined by~\eqref{periodexpr1} and~\eqref{periodexpr2}.
\end{proposition}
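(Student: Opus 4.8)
The plan is to reduce the Floquet--Bloch eigenvalue problem \eqref{eigenvalueproblem}--\eqref{eigenvalueproblemBC} for $\lambda$ in the small ball $R_1(\mu)$ to the scalar transcendental equation \eqref{eq:mainformapprox} by constructing piecewise-defined candidate eigenfunctions on the hyperbolic slow segments and the fold regions, and matching them via Lin's method. First I would decompose one period $[0,L_\eps)$ into: two fast/slow fold passages near the lower and upper fold points, where I use the geometric desingularization / blow-up developed in \S\ref{sec:folds} (coupling $\lambda$ into the blow-up, as in \cite{PBR}) to track the center dynamics, and two long normally hyperbolic slow passages along $\mathcal{M}^{\lr}_\eps$ and $\mathcal{M}^{\rr}_\eps$, where the Riccati transform \cite{BDR} together with exponential trichotomies (Appendix~\ref{appexpdi}) splits off the fast stable/unstable directions and leaves an effectively scalar slow flow. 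On each hyperbolic segment, the transfer of the (one-dimensional) slow subspace contributes a factor governed by $\re^{\lambda/c}$ up to corrections, which is exactly the source of the exponential $\re^{(\lambda/c - \ri\rho)L_\eps}$ on the left of \eqref{eq:mainformapprox}; the two fold passages contribute the two parenthesized factors on the right. The explicit entire functions $\Upsilon_\mathrm{lf},\Upsilon_\mathrm{uf}$ of \eqref{eq:deflfuf} arise from solving the linearized fold equation in the blown-up chart, where the center dynamics is a (rescaled) Weber/Airy-type problem — hence the appearance of $\mathrm{Ai}$, $\mathrm{Ai}'$ and their largest zero $-\Omega_0$ — and the $\eps^{1/6}$ rescaling of $\lambda$ is forced by matching the $\lambda$-blow-up weight against the $\eps^{2/3}$-scale of the extended slow manifold $\mathcal{M}^{\lr,+}_\eps$ described after \eqref{eq:ml_plus}.

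The second step is the matching: solutions of \eqref{eigenvalueproblem} form a one-dimensional space on each segment modulo the strongly (un)stable directions, and Lin's method provides, for each $\rho$, a unique piecewise solution with prescribed jumps in the complementary (stable) directions at the matching sections $\Sigma_\mathrm{f}$ (and its upper-fold analogue). The Floquet condition \eqref{eigenvalueproblemBC} then closes the loop and equates the product of the four transfer factors with $\re^{(\ri\rho - \lambda/c)L_\eps}$, which after rearrangement is exactly \eqref{eq:mainformapprox}; the denominators $u_2 - \gamma f(u_1) - a$ and $\bar u_2 - \gamma f(\bar u_1) - a$ come from the Melnikov-type integrals measuring the splitting of $\mathcal{W}^{\mathrm u}(\mathcal{M}^{\rr}_\eps)$ and $\mathcal{W}^{\mathrm s}(\mathcal{M}^{\lr,+}_\eps)$ (and their mirror images) along $\phi_\f$, $\phi_\bb$, while $u_1 - u_2$, $\bar u_1 - \bar u_2$ are their leading values, using \eqref{eq:differenceu1u2}. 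Error control: the residuals $\mathcal{E}_{\mathrm{lf},\eps}$, $\mathcal{E}_{\mathrm{uf},\eps}$ absorb $\mathcal{O}(\nu)$ corrections from the fold passage (bounded by $\delta$ after fixing $\nu$ small using Proposition~\ref{prop:pointwise}), and $\mathcal{E}_\eps$ collects $\mathcal{O}(\eps^{1/3})$ contributions from \eqref{period} and from the $C^1$-$\mathcal{O}(\eps^{1/3})$ proximity of $\mathcal{M}^{\lr,+}_\eps$, plus a $\lambda\log|\lambda|$ term originating from the algebraic (rather than exponential) approach of the front/back to the fold along the center manifold, cf.\ Proposition~\ref{prop:frontback}.

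The third step extracts the critical curve. Since $\lambda = 0$, $\rho = 0$ solves \eqref{eq:mainformapprox} (both sides equal $1$, using $\Upsilon_\mathrm{lf}(0) = \Upsilon_\mathrm{uf}(0) = 0$, which follows from the $z^2$ prefactor in \eqref{eq:deflfuf}), and the relevant partial derivatives in $\lambda$ are nonzero, the implicit function theorem yields a smooth $\lambda_\eps \colon I_\eps \to \C$ with $\lambda_\eps(0) = 0$; the reality/reflection symmetry $\overline{A(\xi;\eps,\bar\lambda)}$ of the problem and of the boundary condition under $\rho \mapsto -\rho$ forces $\lambda_\eps(-\rho) = \overline{\lambda_\eps(\rho)}$, hence $\Re(\lambda_\eps'(0)) = 0$ and $\lambda_\eps''(0) \in \R$. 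Differentiating \eqref{eq:mainformapprox} twice at $\rho = 0$ and expanding $\Upsilon_\mathrm{lf},\Upsilon_\mathrm{uf}$ near $0$ (the quadratic term in $z$ being what survives), the first derivative gives $\lambda_\eps'(0) = \ri c + \mathcal{O}(\delta)$, i.e.\ $c_{\mathrm g} = \mathcal{O}(\delta)$, and the second derivative gives $\lambda_\eps''(0) = -\tfrac{2\kappa c^3}{L_0}\eps^{2/3} + \mathcal{O}(\delta\eps^{2/3})$, where $\kappa$ \eqref{eq:quad_coeff} packages the $z^2$-coefficients of $\Upsilon_\mathrm{lf},\Upsilon_\mathrm{uf}$ together with the geometric constants; the $\eps^{2/3}$ weight is precisely $(\eps^{1/6})^2$ from the argument rescaling in \eqref{eq:mainformapprox}. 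Positivity $d_\mathrm{eff} = -\lambda_\eps''(0) > 0$ reduces to checking the sign of the limiting Airy integral $\int_{-\Omega_0}^\infty(s\,\mathrm{Ai}(s)^2 - \mathrm{Ai}'(s)^2)\,\de s$, which can be done using properties of $\mathrm{Ai}$ collected in Appendix~\ref{app:airy}. The remaining claim — no nonnegative-real-part spectrum in $R_1(\mu)\setminus\{0\}$ — follows because, off the critical curve, \eqref{eq:mainformapprox} has no solutions with $\Re(\lambda) \geq 0$, $|\Re(\lambda)| \leq \mu\eps^{1/6}$: the right-hand side has modulus $\leq 1 + \mathcal{O}(\delta)$ while $|\re^{(\lambda/c - \ri\rho)L_\eps}| = \re^{\Re(\lambda)L_\eps/c} > 1$ unless $\Re(\lambda) \le 0$, and the remaining sliver $|\Re(\lambda)|$ small but $|\lambda|$ not small compared to $\eps^{1/6}$ is handled by the same transcendental estimate combined with the concavity/monotonicity of the two fold factors in $|\lambda|/\eps^{1/6}$, using $\theta_\mathrm{uf} > \theta_\mathrm{lf}$.

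I expect the main obstacle to be the fold-passage analysis: constructing the linearized transfer map across the nonhyperbolic fold point uniformly in $\eps$, with $\lambda$ of size $\eps^{1/6}$ blown up alongside the existence problem, so that the center dynamics genuinely produces the Airy functions in \eqref{eq:deflfuf} and the $\lambda\log|\lambda|$ residual rather than spurious $\mathcal{O}(1)$ terms. This requires carefully interleaving several blow-up charts (entry, central Airy-scale, exit) with Lin's method inside each, matching the exponentially small/large behavior at the chart interfaces, and controlling how the algebraic decay of $\phi_\f,\phi_\bb$ along the center manifold (Proposition~\ref{prop:frontback}) feeds into the eigenfunction — this is the novel technical core and the reason the interaction strength is $\eps^{8/3}$ rather than the $\eps$ seen for trigger waves.
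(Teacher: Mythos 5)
Your overall architecture --- Lin's method matching across two slow hyperbolic segments (Riccati transform plus exponential trichotomies) and two fold passages (geometric desingularization with $\lambda$ appended to the blow-up, producing the Airy-type factors $\Upsilon_{\mathrm{lf}},\Upsilon_{\mathrm{uf}}$), closed by the Floquet condition to obtain~\eqref{eq:mainformapprox} and then analyzed in subregions of $R_1(\mu)$ --- is the same as the paper's. Two steps, however, contain genuine gaps.

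First, ``differentiating~\eqref{eq:mainformapprox} twice at $\rho=0$'' is not legitimate as stated: the residual $\mathcal{E}_\eps(\lambda)$ is controlled only in sup-norm, by $C(\eps^{1/3}+|\lambda\log|\lambda||)$, which is neither differentiable at the origin nor small compared with the target coefficient of size $\eps^{2/3}$. The paper circumvents this by (i) noting that $(\lambda,\rho)=(0,0)$ solves the formula exactly by translation invariance, so the residual vanishes there; (ii) proving that the Floquet exponent $\rho(\lambda)$ is \emph{analytic} on the ball of radius $\mu\eps^{1/6}$, via analytic dependence of the monodromy matrix on $\lambda$ and algebraic simplicity of the relevant eigenvalue; and (iii) applying Cauchy estimates on that ball to convert the sup-norm residual bound into bounds on $\rho'(0)$ and $\rho''(0)$, which after inversion give~\eqref{eq:r1_critical_curve_estimates}. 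The exclusion of unstable spectrum near the origin is then obtained from strict convexity of $\omega\mapsto\Re\,\rho(\ri\omega)$, not from a modulus comparison. Without the analyticity-plus-Cauchy step, the $\eps^{2/3}$ asymptotics of $\lambda_\eps''(0)$ cannot be extracted from a formula whose error is only bounded by $C\eps^{1/3}$.

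Second, the main formula is derived only for $|\Re(\lambda)|\leq\mu\eps^{1/6}$, because the blow-up representation of the fold transfer is valid only for $\lambda$ in the rectangle/cone regions $\Lambda_{\mathrm{r},\eps}(\mu,M)\cup\Lambda_{\mathrm{c},\eps}(\mu,M)$. Your argument ``$|\re^{(\lambda/c-\ri\rho)L_\eps}|>1$ unless $\Re(\lambda)\leq 0$'' therefore does not cover the part of $R_1(\mu)$ with $\Re(\lambda)>\mu\eps^{1/6}$. The paper handles this regime separately ($\Re(\lambda)\geq\eps^{1/5}$) using a tame bound on the backward growth of the center-unstable fold dynamics (Proposition~\ref{prop:tameestimatecenterunstable}), which shows the Lin matching system is a near-identity $4\times 4$ linear system admitting only the trivial solution; some substitute for this is needed. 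A smaller point: in the regime $|\lambda|\sim\eps^{1/6}$ the bound ``right-hand side modulus $\leq 1+\mathcal{O}(\delta)$'' does not suffice --- one needs the modulus strictly bounded away from $1$, which is exactly where the monotonicity of $I_0$, the explicit quadratic minimization of the product $A_{\mathrm{lf}}A_{\mathrm{uf}}$, and the inequality $\theta_{\mathrm{uf}}>\theta_{\mathrm{lf}}$ enter; likewise $\kappa>0$ follows from the signs of the geometric quantities in~\eqref{eq:quad_coeff} together with $\Omega_0>0$, not merely from the sign of the Airy integral.
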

The proof of Proposition~\ref{prop:region_r1} will be given in~\S\ref{sec:R1}. In the region $R_1(\mu)$ the spectrum necessarily contains a curve which meets the origin in a quadratic tangency due to translation invariance (see Figure~\ref{fig:critical_curve}). The $\eps^{2/3}$-scaling present in the quadratic coefficient arises due to interaction with the nonhyperbolic fold points; this scaling is also corroborated numerically; see Figure~\ref{fig:lambda_coeff_cont}. To prove Proposition~\ref{prop:region_r1}, our strategy is to derive a formula for this critical spectral curve to preclude any spectrum in the region $R_1(\mu)$ of nonnegative real part, except for the translation eigenvalue at the origin. To achieve this, we solve the eigenvalue problem~\eqref{eigenvalueproblemBC} using exponential trichotomies and Lin's method to construct potential eigenfunctions along subintervals of $[0,L_\eps]$ and then match these solutions together. Blow-up desingularization methods are needed to solve~\eqref{eigenvalueproblemBC} in regions where the wave train passes near the nonhyperbolic fold points. This procedure results in an implicit transcendental equation -- the main formula~\eqref{eq:mainformapprox}, see also Proposition~\ref{prop:mainformula} -- relating $\lambda$ to the Floquet parameter $\rho$, as well as the system parameters $(c,a,\gamma,\eps)$. Several different scaling regimes are needed to capture the behavior of the main formula in the ball $R_1(\mu)$, which we further divide into four sub-regions containing the relevant spectral information. In particular, we emphasize that while Proposition~\ref{prop:region_r1} guarantees that the phase-wave trains of Theorem~\ref{thm:existence} admit no unstable spectrum in the region $R_1(\mu)$, our analysis suggests that instability mechanisms may manifest in other systems with slightly modified nonlinearities; see~\S\ref{s:instability}.

Finally, fixing $\varrho$ as in Proposition~\ref{prop: regionR3} and $\mu$ as in Proposition~\ref{prop:region_r1} we have the following concerning spectrum in the region $R_2(\mu,\varpi,\varrho)$.

\begin{proposition}\label{prop:region_r2}
Let $0 < a < \frac12$. Fix $0 < \gamma < \gamma_*(a)$ and $c > c_*(a)$. Fix $\varrho$ as in Proposition~\ref{prop: regionR3} and $\mu$ as in Proposition~\ref{prop:region_r1}. Then, provided $0 < \eps \ll \varpi \ll 1$, the linearization $\El_\eps$ of~\eqref{FHN} about $\phi_\eps(\xi)$ possesses no spectrum of nonnegative real part in the compact set $R_2(\mu,\varpi,\varrho)$. 
\end{proposition}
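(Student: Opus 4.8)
The plan is to exploit that the region $R_2(\mu,\varpi,\varrho)$ is a \emph{compact} set bounded away from the origin, so that the spectral parameter $\lambda$ is of order one relative to $\eps$. In this regime the eigenvalue problem~\eqref{eigenvalueproblem}--\eqref{eigenvalueproblemBC} has a genuinely hyperbolic fast-slow structure: the fast block $A_{\f}(\xi;\eps,\lambda)$ is uniformly hyperbolic on the fast front/back segments (away from the folds) and, crucially, the loss of hyperbolicity at the fold points no longer produces small eigenvalues because $\lambda$ is bounded below. Concretely, near the lower fold the fast subsystem linearized along the wave train, in the desingularized coordinates~\eqref{eq:fold_normalform}, has a $z$-direction contracting at rate $\mathcal{O}(1)$ and a two-dimensional center direction along which the relevant eigenvalue of the full operator is governed by the slow passage; with $|\lambda|\ge\mu$ the associated Evans-type matching cannot close unless $\Re(\lambda)<0$. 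I would therefore set up the analysis to show that for $\lambda\in R_2$ with $\Re(\lambda)\ge 0$, the $L_\eps$-periodic boundary-value problem~\eqref{eigenvalueproblemBC} has only the trivial solution.

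First I would establish, using exponential dichotomies (Appendix~\ref{appexpdi}) and the Riccati transform, that along each normally hyperbolic slow segment $\mathcal{M}^{\lr}_\eps$, $\mathcal{M}^{\rr}_\eps$ the transfer matrix for~\eqref{eigenvalueproblem} decomposes into a strongly expanding/contracting fast part, contributing exponential growth $\re^{\pm\mathcal{O}(1)\cdot\xi}$ over the $\mathcal{O}(\eps^{-1})$-long plateau, and a slow one-dimensional reduced flow which, after the rescaling $\Psi=\re^{-\frac{\lambda}{c}\xi}\check\Psi$, is exponentially small. The key point: over a plateau of length $\mathcal{O}(\eps^{-1})$ the unstable fast direction amplifies by a factor $\exp(\mathcal{O}(\eps^{-1}))$, which for $\Re(\lambda)\ge 0$ cannot be cancelled by the Floquet factor $\re^{(\ri\rho-\lambda/c)L_\eps}$ since $|\re^{-\lambda L_\eps/c}|\le 1$. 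Second, near each fold point I would use the desingularized system~\eqref{eq:fold_normalform} together with Proposition~\ref{prop:pointwise} to track the eigenfunction through the fold: the transition map across the fold is, for $|\lambda|\ge\mu$, a bounded perturbation of the singular passage map and in particular does not introduce a near-zero multiplier along the center direction — here one uses that the Airy-type resonances captured by $\Upsilon_\mathrm{lf},\Upsilon_\mathrm{uf}$ in~\eqref{eq:deflfuf} only become relevant when $\lambda\sim\eps^{1/6}$, hence are irrelevant once $|\lambda|\ge\mu$. Third, matching the pieces along the front $\phi_\f$ and back $\phi_\bb$ via the Melnikov/Lin's-method splitting gives a reduced determinant whose leading term contains the large factor $\exp(\mathcal{O}(\eps^{-1}))$; setting it to zero forces $\Re(\lambda)<0$, a contradiction. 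Since $R_2$ is compact, all implied constants can be taken uniform, so finitely many such local arguments (one per fast segment, one per fold) cover the whole region.

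The main obstacle I anticipate is the uniform control of the passage through the fold points for $\lambda$ ranging over all of $R_2$ — in particular verifying that the reduced passage map in the desingularized coordinates has no spurious small eigenvalue for \emph{any} $\lambda$ with $\mu\le|\lambda|\le\varrho$ and $\Re(\lambda)\ge-\varpi$, uniformly as $\eps\to0$. This requires combining the blow-up estimates used for the existence problem in~\cite{CASCH} with the $\lambda$-dependent perturbation of the eigenvalue equation, and checking that the correction terms $\mathcal{O}(x,y,z,\eps)$ in~\eqref{eq:fold_normalform} do not conspire to destroy hyperbolicity; I would handle this by a Gr\"onwall estimate on the desingularized flow showing the $z$-component remains strongly contracted while the $xy$-dynamics stay $\mathcal{O}(1)$-transverse. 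A secondary technical point is that near $\Re(\lambda)\to -\varpi$ the fast block $A_{\f}$ can momentarily lose hyperbolicity where $\lambda=f'(u_\eps(\xi))$ along the slow manifold; taking $\varpi$ small compared to $\mu$ (as in the statement, $0<\eps\ll\varpi\ll 1$, and recalling $|\lambda|\ge\mu$) keeps $\lambda$ bounded away from the relevant part of the spectrum of the asymptotic fast systems, so this does not occur, but it must be tracked carefully in the estimates. Standard Sturm--Liouville/winding-number arguments, as alluded to after Proposition~\ref{prop:region_lambda3}, then close the remaining gaps.
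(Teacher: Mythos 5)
Your overall toolkit (exponential dichotomies, Riccati diagonalization, compactness of $R_2$) matches the paper's, but the proposal has two genuine gaps, one of which is fatal as written.

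The fatal gap is that you never supply a correct mechanism forcing $\Re(\lambda)<0$. Your central heuristic — that the unstable fast direction amplifies by $\exp(\mathcal{O}(\eps^{-1}))$ over a plateau and "cannot be cancelled" by the Floquet factor — does not preclude eigenvalues: in a Lin's-method construction the exponentially growing directions are absorbed into the boundary conditions, and eigenvalues correspond to zeros of a matching determinant that is $1+\text{(corrections)}$, not something containing a free factor $\exp(\mathcal{O}(\eps^{-1}))$. In particular your argument says nothing about $\lambda$ on the imaginary axis with $\mu\le|\lambda|\le\varrho$, where $|\re^{-\lambda L_\eps/c}|=1$ and the obstruction must come from the one-dimensional \emph{slow} (center) direction, not the fast one. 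The paper's proof has two separate ingredients you are missing. First, the reason the \emph{fast} subsystem carries no spectrum in $R_2$ is a Sturm--Liouville argument: the reduced layer problems~\eqref{SturmLiouville} along $\phi_\f,\phi_\bb$ are linearizations of Fisher--KPP fronts, $u_\f'$ and $u_\bb'$ are sign-definite by Proposition~\ref{prop:frontback}, hence these problems admit no bounded nontrivial solutions for $\lambda\in R_2$; this is what yields the exponential dichotomy on all of $\R$ for~\eqref{fastsubsystemR2} (Proposition~\ref{prop:expdi fast R2}) and is the heart of the matter, not a "remaining gap" to be closed at the end. Second, on the strip $|\Re(\lambda)|\le\varpi$ the paper computes the slow scalar Floquet multiplier explicitly after the Riccati diagonalization: equating real parts of its logarithm with the Floquet condition gives an identity whose terms involve $\gamma/c>0$ and integrals of $f'(u_{\lr/\rr})\le 0$, and this sign structure is what forces $\Re(\lambda)<0$. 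Nothing in your proposal produces this computation.

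The second (non-fatal, but misdirected) issue is your treatment of the folds. You propose to rerun the blow-up analysis of~\S\ref{sec:folds} with $\lambda\in R_2$ and flag it as the main obstacle. This is unnecessary: for $|\lambda|\ge\mu$ and $\Re(\lambda)\ge-\varpi$ with $\varpi\ll\mu$, the asymptotic fast matrices at the fold values (where $f'(u_1)=f'(\bar u_1)=0$) are already hyperbolic — nonhyperbolicity of $\bigl(\begin{smallmatrix}0&1\\ \lambda-f'(u)&-c\end{smallmatrix}\bigr)$ with $f'(u)\le 0$ and $\Re(\lambda)\ge-\varpi$ forces $|\lambda|\lesssim\varpi+c\sqrt{\varpi}<\mu$. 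The paper therefore pastes dichotomies across the folds using only roughness and the algebraic convergence rates of Proposition~\ref{prop:frontback}; no geometric desingularization enters the $R_2$ argument at all. You correctly observe that the Airy resonances are irrelevant for $|\lambda|\ge\mu$, but you do not draw the consequence that the entire fold machinery can be discarded here.
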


The proof of Proposition~\ref{prop:region_r2} will be presented in~\S\ref{sec:R2}. It relies on a further decomposition of the region $R_2(\mu,\varpi,\varrho)$ into two parts: one where $|\Re(\lambda)| \leq \varpi$ and its complement. In the first part, the Riccati transform is employed to achieve a separation between slow and fast dynamics in the eigenvalue problem, while in the complementary region, this separation is obtained via exponential dichotomies. The slow-fast decomposition reveals that eigenvalues with nonnegative real part cannot occur, since the fast reduced eigenvalue problems along the front $\phi_{\f}$ and the back $\phi_{\bb}$ are of Fisher--KPP type and therefore admit no eigenvalues within $R_2(\mu,\varpi,\varrho)$.

We are now able to complete the proof of Theorem~\ref{thm:spectral_stability}.

\begin{proof}[Proof of Theorem~\ref{thm:spectral_stability}]
    Statements~\ref{thm:spectral_stability_i}-\ref{thm:spectral_stability_iii} follow immediately from Propositions~\ref{prop: regionR3},~\ref{prop:region_r1}, and~\ref{prop:region_r2}. The statement in~\ref{thm:spectral_stability_ii} concerning the algebraic simplicity of the translation eigenvalue $\lambda=0$ follows from the hyperbolicity of the periodic orbit in~\eqref{TW} corresponding to the wave train; see the discussion in~\cite[\S4.4]{CASCH}.
\end{proof}

\section{The region \texorpdfstring{$R_1(\mu)$}{R1}}\label{sec:R1}

To prove Proposition~\ref{prop:region_r1}, we directly solve the eigenvalue problem~\eqref{eigenvalueproblem} by constructing solutions on four subintervals of the shifted interval $\mathcal{I} =\left[\xi^0_{\eps,\nu}, \xi^L_{\eps,\nu}\right] \coloneqq \left[\tfrac{\nu}{\eps}, L_\eps+\tfrac{\nu}{\eps}\right]$ with the modified Floquet boundary condition
\begin{align}\label{eigenvalueproblemBCshift}
\Psi\left(\xi^L_{\eps,\nu}\right) = \re^{\left(\ri\rho-\frac{\lambda }{c}\right)L_\eps}\Psi\left(\xi^0_{\eps,\nu}\right).
\end{align}
We note that solutions to the shifted boundary-value problem~\eqref{eigenvalueproblem}/\eqref{eigenvalueproblemBCshift} are in one-to-one correspondence to those of the original one~\eqref{eigenvalueproblem}-\eqref{eigenvalueproblemBC}. Indeed, if $\Psi(\xi)$ is a solution to~\eqref{eigenvalueproblem}/\eqref{eigenvalueproblemBCshift}, then setting $\Psi(\xi) = \re^{-(\ri\rho-\lambda/c) L_\eps} \Psi(L_\eps + \xi)$ for $\xi \in [0,\xi^0_{\eps,\nu}]$ yields a solution to~\eqref{eigenvalueproblem}-\eqref{eigenvalueproblemBC} on $[0,L_\eps]$. Similarly, any solution to the boundary-value problem~\eqref{eigenvalueproblem}-\eqref{eigenvalueproblemBC} yields a solution to~\eqref{eigenvalueproblem}/\eqref{eigenvalueproblemBCshift}.  The reason for introducing the shift is that it turns out to be most convenient to apply the Floquet boundary condition along the slow manifold $\mathcal{M}^\lr_\eps$ just after the wave train has passed the lower left fold point. 

\begin{figure}
\centering
\includegraphics[width=0.7\linewidth]{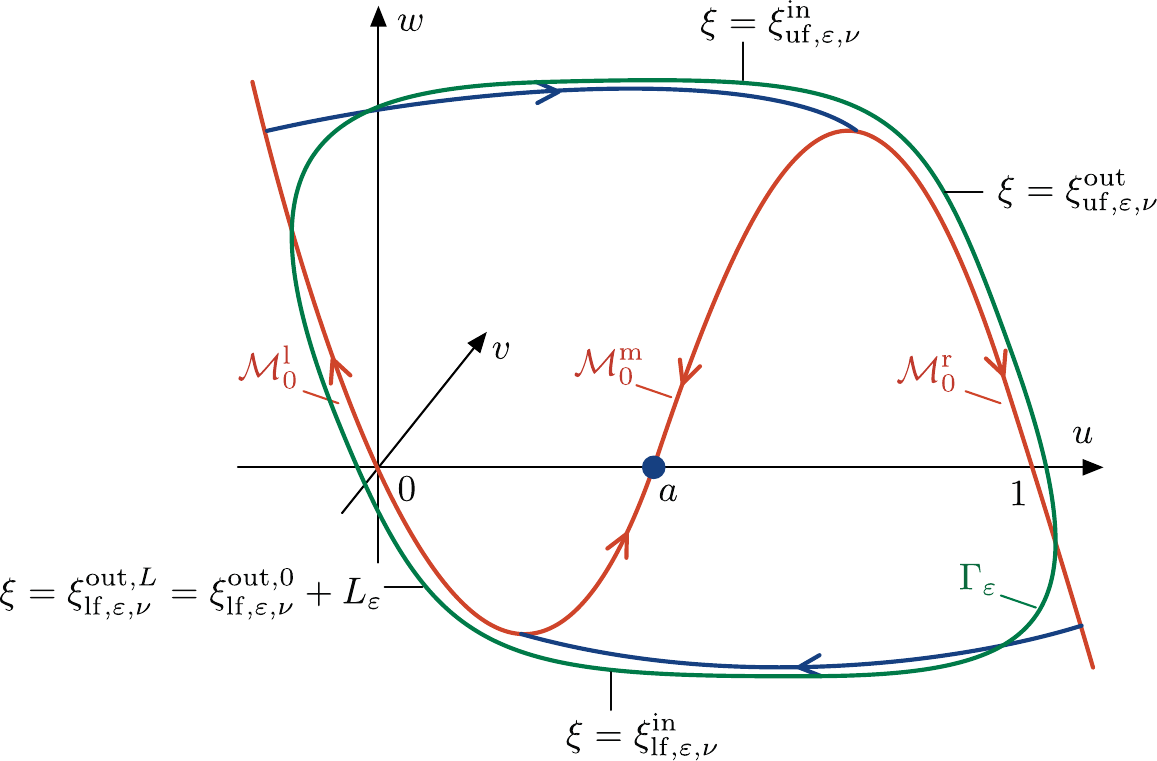}
\caption{Shown is the setup for Lin's method in the region $R_1(\mu)$: solutions are constructed along the four intervals $\mathcal{I}_\lr, \mathcal{I}_\uf, \mathcal{I}_\rr, \mathcal{I}_\lf$ and matched at the endpoints $\xi=\xi_{\mathrm{uf},\eps,\nu}^\mathrm{in}$ and $\xi=\xi_{\mathrm{uf},\eps,\nu}^\mathrm{out}$ near the upper fold point, and at $\xi=\xi_{\mathrm{lf},\eps,\nu}^\mathrm{in}$ near the lower fold point. Finally, the Floquet boundary condition~\eqref{eigenvalueproblemBCshift} is applied at $\xi=\xi_{\mathrm{lf},\eps,\nu}^{\mathrm{out},L}=\xi_{\mathrm{lf},\eps,\nu}^{\mathrm{out},0}+L_\eps$ near the lower point point.}
\label{fig:lin_setup}
\end{figure}

We recall that the period of the wave train is given by $L_\eps = L_{\lr,\eps}+L_{\rr,\eps}$, where $L_{\lr,\eps}$ measures the time spent along the left slow manifold $\mathcal{M}^\lr_\eps$, and $L_{\rr,\eps}$ denotes the time spent along the right slow manifold $\mathcal{M}^\rr_\eps$, so that $\xi = L_{\rr,\eps}$ occurs along the back $\phi_\mathrm{b}$, and $\xi = L_{\lr,\eps}+L_{\rr,\eps}$ is identified with $\xi=0$, occurring along the front $\phi_\f$. We split the interval $\mathcal{I}$ into four sub-intervals
\begin{align*}
\mathcal{I}_\lr& = \left[\xi_{\mathrm{lf},\eps,\nu}^{\mathrm{out},0},\xi_{\mathrm{uf},\eps,\nu}^\mathrm{in}\right]\coloneqq \left[\tfrac{\nu}{\eps},L_{\lr,\eps}+\tfrac{1}{\nu}\right],\\
\mathcal{I}_\uf& = \left[\xi_{\mathrm{uf},\eps,\nu}^\mathrm{in},\xi_{\mathrm{uf},\eps,\nu}^\mathrm{out}\right]\coloneqq \left[L_{\lr,\eps}+\tfrac{1}{\nu}, L_{\lr,\eps}+\tfrac{\nu}{\eps}\right],\\
\mathcal{I}_\rr& =\left[\xi_{\mathrm{uf},\eps,\nu}^\mathrm{out},\xi_{\mathrm{lf},\eps,\nu}^\mathrm{in}\right]\coloneqq   \left[L_{\lr,\eps}+\tfrac{\nu}{\eps}, L_{\lr,\eps}+L_{\rr,\eps}+\tfrac{1}{\nu}\right],\\
\mathcal{I}_\lf& =\left[\xi_{\mathrm{lf},\eps,\nu}^\mathrm{in},\xi_{\mathrm{lf},\eps,\nu}^{\mathrm{out},L}\right]\coloneqq  \left[L_{\lr,\eps}+L_{\rr,\eps}+\tfrac{1}{\nu}, L_{\lr,\eps}+L_{\rr,\eps}+\tfrac{\nu}{\eps}\right],
\end{align*}
for $0 < \eps \ll \nu \ll 1$, so that the intervals $\mathcal{I}_{\lr,\rr}$ describe the wave-train solution \emph{away} from the nonhyperbolic fold points, while the intervals $\mathcal{I}_\lf$ and $\mathcal{I}_\uf$ describe passage near the lower fold and upper fold, respectively; see Figure~\ref{fig:lin_setup}. We note that
\begin{align*}
   \xi_{\mathrm{lf},\eps,\nu}^{\mathrm{out},L} = \xi^L_{\eps,\nu}=  \xi^0_{\eps,\nu} +L_\eps = \xi_{\mathrm{lf},\eps,\nu}^{\mathrm{out},0} +L_\eps.
\end{align*}
Solving the eigenvalue problem~\eqref{eigenvalueproblem} then amounts to solving boundary value problems on each of these four intervals, matching the resulting solutions at the end points of each of the intervals, and applying the Floquet condition~\eqref{eigenvalueproblemBCshift}. Eliminating all free variables results in an implicit equation, which we call the ``main formula", which relates $\lambda, \eps$, and the Floquet parameter $\rho$ and whose solutions correspond to spectrum of the linearization $\El_\eps$ in the region $R_1(\mu)$. We then split the region $R_1(\mu)$ into several smaller subregions in order to obtain a leading-order expression for the critical spectral curve at the origin, and to rule out the possibility of spectrum in the part of $R_1(\mu)$ which lies in the right half plane.

We begin in~\S\ref{sec:reduced_variational}-\ref{sec:slow_manifolds} with some preliminary results concerning the existence of exponential trichotomies along the fast jumps and near the slow manifolds $\mathcal{M}^{\lr,\rr}_\eps$. This allows us to solve the resulting boundary value problems on the intervals $\mathcal{I}_{\lr,\rr}$ (``between" the fold points) in~\S\ref{sec:bvp_between_folds}. We then turn to the boundary value problems on the intervals $\mathcal{I}_{\lf,\uf}$ (``near" the fold points) in~\S\ref{sec:bvp_near_folds}; the analysis in these intervals requires the use of blow-up desingularization techniques to track the linearized problem alongside the existence problem, the technical details of which are presented in~\S\ref{sec:folds}. In~\S\ref{sec:criticalcurve}-\ref{sec:mainformula}, we solve the overall boundary value problem~\eqref{eigenvalueproblem} along with the Floquet condition~\eqref{eigenvalueproblemBCshift}, and we derive an expression for the critical spectral curve and analyze its behavior near the origin, allowing us to complete the proof of Proposition~\ref{prop:region_r1} in~\S\ref{sec:region_r1_proof}.

\subsection{The reduced variational problem}\label{sec:reduced_variational} 

The variational equation of the layer problem~\eqref{eq:layer} about the front and the back solution reads
\begin{align} \Psi_\xi = \widehat A_i(\xi) \Psi, \qquad \widehat A_i(\xi) = \begin{pmatrix} 0     & 1 \\ -f'(u_i(\xi)) &  -c \end{pmatrix} \label{varred}\end{align}
for $i = \f,\bb$. By Proposition~\ref{prop:frontback} the coefficient matrices $\widehat A_{\f}(\xi)$ and $\widehat A_{\bb}(\xi)$ converge exponentially to the asymptotic matrices 
$$\widehat A_{\f,-\infty} = \begin{pmatrix} 0 & 1 \\ -f'(u_2) & -c \end{pmatrix}, \qquad \widehat A_{\bb,-\infty} = \begin{pmatrix} 0 & 1 \\ -f'(\bar{u}_2) & -c \end{pmatrix},$$
respectively, as $\xi \to -\infty$. In addition, $\widehat A_{\f}(\xi)$ and $\widehat A_{\bb}(\xi)$ converge algebraically to the asymptotic matrix
\begin{align*}
\widehat A_{\infty} = \begin{pmatrix} 0 & 1 \\ 0 &  -c \end{pmatrix}
\end{align*}
as $\xi \to \infty$. The hyperbolic matrices $\widehat A_{\f,-\infty}$ and $\widehat A_{\bb,-\infty}$ possess one positive and one negative eigenvalue. Therefore,~\cite[Lemma~3.4]{PAL} yields the existence of an exponential dichotomy.

\begin{proposition} \label{prop:varred}
There exist $K, \alpha > 0$ such that for each $\nu > 0$ there exists a constant $C_\nu > 0$ such that system~\eqref{varred} has an exponential dichotomy on $(-\infty,\frac{1}{\nu}]$ with constants $C_\nu, \alpha > 0$. The associated projections $\smash{\widehat P_{i}(\xi)}$ have rank $1$ and satisfy
\begin{align*} \ker(\widehat P_{i}(\xi)) = \mathrm{Sp}\left\{\begin{pmatrix} u_i'(\xi) \\ v_i'(\xi)\end{pmatrix}\right\},\end{align*}
and
\begin{align*} 
\begin{split}
\left\|\widehat P_i(\xi) - \widehat{\mathcal P}_{i,-\infty}\right\| &\leq K\re^{\alpha \xi}, \qquad \xi \leq 0,\\
\left\|\widehat P_i(\xi) - \widehat{\mathcal P}_\infty\right\| &\leq \frac{K}{1+\xi}, \qquad \xi \in \left[0,\frac{1}{\nu}\right],
\end{split}
\end{align*}
where $\widehat{\mathcal P}_{i,-\infty}$ is the spectral projection onto the stable eigenspace of $\widehat{A}_{i,-\infty}$ for $i = \f,\bb$, and where $\widehat{\mathcal P}_\infty$ is the spectral projection onto the stable eigenspace of $\widehat{A}_\infty$. 
\end{proposition}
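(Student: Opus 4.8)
The plan is to build the dichotomy in two pieces: first obtain it on $(-\infty,0]$ from the asymptotic hyperbolicity of $\widehat A_{i,-\infty}$, then extend it across the compact interval $[0,\tfrac1\nu]$ (where everything is free of charge except that the dichotomy constant degrades to a $\nu$-dependent $C_\nu$), and finally identify the projections at the two ends using Proposition~\ref{prop:frontback}. To begin I would record that the characteristic polynomial of $\widehat A_{i,-\infty}$ is $\mu^2+c\mu+f'(u_\#)$ with $u_\#=u_2$ for $i=\f$ and $u_\#=\bar u_2$ for $i=\bb$, and that since $u_2>\bar u_1$ lies on the right branch and $\bar u_2<u_1$ on the left branch of the cubic, on both of which $f'<0$, one has $\det\widehat A_{i,-\infty}=f'(u_\#)<0$; hence $\widehat A_{i,-\infty}$ is a hyperbolic matrix with a simple positive eigenvalue $\mu_+^{(i)}$ and a simple negative eigenvalue $\mu_-^{(i)}$, and trace $-c<0$. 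By Proposition~\ref{prop:frontback}, $\|\widehat A_i(\xi)-\widehat A_{i,-\infty}\|\leq Ce^{\upsilon\xi}$ for $\xi\leq0$, so the constant-coefficient dichotomy of $\widehat A_{i,-\infty}$ is robust; invoking \cite[Lemma~3.4]{PAL} then yields an exponential dichotomy of~\eqref{varred} on $(-\infty,0]$ with a uniform constant $K$ and any exponent $\alpha\in\bigl(0,\min\{\upsilon,\mu_+^{(i)},|\mu_-^{(i)}|\}\bigr)$, and the same roughness estimate gives $\|\widehat P_i(\xi)-\widehat{\mathcal P}_{i,-\infty}\|\leq Ke^{\alpha\xi}$ for $\xi\leq0$, where $\widehat{\mathcal P}_{i,-\infty}$ is the spectral projection of $\widehat A_{i,-\infty}$ onto its stable eigenspace. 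One takes $K,\alpha$ uniform over $i\in\{\f,\bb\}$.

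Next I would identify the kernel. Differentiating the layer equation~\eqref{eq:layer} shows that $\Psi_i(\xi)\coloneqq(u_i'(\xi),v_i'(\xi))^\top$ solves~\eqref{varred}; by Proposition~\ref{prop:frontback} it is bounded on $\mathbb R$ and decays like $e^{\upsilon\xi}$ as $\xi\to-\infty$, so it cannot lie in $\mathrm{Rg}\,\widehat P_i(\xi)$ (those solutions blow up backward) and must therefore span the one-dimensional $\ker\widehat P_i(\xi)$ on $(-\infty,0]$. Moreover $\Psi_i$ never vanishes since $u_i$ is strictly monotone, and any $\psi\in\mathrm{Rg}\,\widehat P_i$ can never be a scalar multiple of $\Psi_i$ (otherwise uniqueness would force $\psi\equiv\Psi_i$ up to scaling, contradicting its unbounded backward behaviour). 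Hence $\mathrm{Rg}\,\widehat P_i(\xi)$ and $\mathrm{Sp}\{\Psi_i(\xi)\}$ are complementary for \emph{every} $\xi\in\mathbb R$, and I can extend the dichotomy to $[0,\tfrac1\nu]$ by declaring $\mathrm{Rg}\,\widehat P_i(\xi)\coloneqq\Phi_i(\xi,0)\mathrm{Rg}\,\widehat P_i(0)$ (forced by invariance) and $\ker\widehat P_i(\xi)\coloneqq\mathrm{Sp}\{\Psi_i(\xi)\}$; combining the dichotomy estimates on $(-\infty,0]$ with the bounded evolution on the compact interval $[0,\tfrac1\nu]$ produces an exponential dichotomy on $(-\infty,\tfrac1\nu]$ with the same rate $\alpha$ and a constant $C_\nu$ depending on $\nu$.

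The remaining, delicate point is the estimate $\|\widehat P_i(\xi)-\widehat{\mathcal P}_\infty\|\leq K/(1+\xi)$ on $[0,\tfrac1\nu]$ with $K$ \emph{independent} of $\nu$, which amounts to showing $\ker\widehat P_i(\xi)\to\mathrm{Sp}\{(1,0)^\top\}$ and $\mathrm{Rg}\,\widehat P_i(\xi)\to\mathrm{Sp}\{(1,-c)^\top\}$ at rate $O(1/\xi)$, these spanning $\ker\widehat{\mathcal P}_\infty$ and $\mathrm{Rg}\,\widehat{\mathcal P}_\infty$. On a fixed compact $[0,T_0]$ the bound is trivial, so I would focus on $\xi\geq T_0$ with $T_0$ large, where the wave lies near the fold, and pass to the scalar Riccati variable $s=v/u$, which satisfies $s'=-s^2-cs-f'(u_i(\xi))$. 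For the kernel, $\Psi_i$ has slope $s_i=v_i'/v_i=-c-\bigl(f(u_i)-f(u_1)\bigr)/v_i$, and feeding in the center-manifold asymptotics $u_i(\xi)-u_1=\omega/(1+\xi)+O\bigl((1+\xi)^{-2}\bigr)$, $v_i(\xi)=-\omega(1+\xi)^{-2}+O\bigl((1+\xi)^{-3}\bigr)$ from Proposition~\ref{prop:frontback} together with $f'(u_1)=0$ gives $s_i(\xi)=O(1/\xi)$, hence $\ker\widehat P_i(\xi)\to\mathrm{Sp}\{(1,0)^\top\}$ at rate $O(1/\xi)$. For the range, note that as $\xi\to\infty$ the coefficient $f'(u_i(\xi))\to0$ at rate $O(1/\xi)$, and the limiting autonomous Riccati flow $s'=-s^2-cs$ on $\mathbb{RP}^1$ has precisely the two equilibria $s=0$ (attracting) and $s=-c$ (repelling), so every bounded Riccati orbit converges to slope $0$ or $-c$; since $\mathrm{Rg}\,\widehat P_i(\xi)$ is transverse to $\ker\widehat P_i(\xi)$, which converges to slope $0$, its slope cannot converge to $0$ and therefore converges to $-c$, and because $s=-c$ is a hyperbolic equilibrium with forcing $O(1/\xi)$, a variation-of-constants estimate yields the rate $O(1/\xi)$. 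Combining the two estimates gives $\|\widehat P_i(\xi)-\widehat{\mathcal P}_\infty\|\leq K/(1+\xi)$ uniformly in $\nu$. The case $i=\bb$ is identical by the symmetry noted in~\S\ref{sec:existence_overview}.

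I expect the main obstacle to be exactly this last step: the asymptotic matrix $\widehat A_\infty$ at $+\infty$ is non-hyperbolic (eigenvalue $0$), and $\widehat A_i(\xi)$ approaches it only algebraically, so neither the persistence of the dichotomy past $\xi=0$ nor the convergence of the stable subspace follows from standard roughness. The two ideas that make it work are (i) bounding the right endpoint by $\tfrac1\nu<\infty$, which renders the compact-interval extension automatic, and (ii) exploiting the enforced transversality of $\mathrm{Rg}\,\widehat P_i$ and $\ker\widehat P_i$ to determine which of the two Riccati equilibria the stable subspace selects at $+\infty$, thereby upgrading a mere convergence statement to the quantitative $1/\xi$ rate that is needed downstream in the Lin's-method matching.
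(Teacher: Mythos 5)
Your construction on $(-\infty,0]$ (hyperbolicity of $\widehat A_{i,-\infty}$, roughness via \cite[Lemma~3.4]{PAL}, identification of the kernel with $\mathrm{Sp}\{(u_i',v_i')^\top\}$ from the backward exponential decay of that solution) matches the paper. The genuine gap is in the extension to $[0,\tfrac1\nu]$ and the $\nu$-uniform estimate $\|\widehat P_i(\xi)-\widehat{\mathcal P}_\infty\|\leq K/(1+\xi)$. You define $\mathrm{Rg}\,\widehat P_i(\xi)$ for $\xi>0$ as the forward propagation of whatever complement the half-line roughness theorem hands you at $\xi=0$, and then argue that its Riccati slope must converge to $-c$ "since it is transverse to the kernel, whose slope converges to $0$." That inference is false: pointwise transversality for finite $\xi$ does not prevent both slopes from converging to the same limit. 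In fact, for the forward Riccati flow $s'=-s^2-cs-f'(u_i(\xi))$ the equilibrium $s=0$ is attracting and $s=-c$ is repelling, so \emph{every} forward-propagated direction except the one lying exactly in the strong-stable subspace has slope converging to $0$. Since the roughness theorem on $(-\infty,0]$ only determines the kernel and leaves the range free (cf.\ \cite[Lemma~1.2(ii)]{SAN1993}), a generic admissible choice of $\mathrm{Rg}\,\widehat P_i(0)$ makes both range and kernel align with $(1,0)^\top$ as $\xi\to\infty$, the angle between them collapses, and the projections blow up on $[0,\tfrac1\nu]$ as $\nu\to0$ — exactly what the proposition must rule out. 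Your argument is missing the selection principle that pins the range to the strong-stable direction, together with a proof that this particular direction is complementary to $\Psi_i$ at $\xi=0$.

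The paper closes this gap by working with the exponentially weighted system~\eqref{varredweighted}: the shift by $\kappa_0\in(0,c)$ makes the asymptotic matrix at $+\infty$ hyperbolic, so \cite[Lemma~3.4]{PAL} applies on \emph{both} half-lines and automatically selects the strong-stable subspace as the range; the transversality $\ker(\widehat P_{i,-}(0))\cap\widehat P_{i,+}(0)[\C^2]=\{0\}$ is then obtained precisely from the fact that $(u_i',v_i')^\top \re^{\kappa_0\xi}$ grows forward, and the $1/(1+\xi)$ estimate with $\nu$-independent $K$ comes out of the roughness lemma for the weighted problem on all of $\R$. If you want to keep your Riccati language, the fix is to define $\mathrm{Rg}\,\widehat P_i(0)$ as the unique direction whose forward Riccati orbit shadows the perturbed equilibrium near $-c$ (equivalently, the range of the weighted dichotomy at $0$) — but establishing existence of that direction and its complementarity to $\Psi_i$ essentially reproduces the weighted-dichotomy argument. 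A secondary, repairable imprecision: your claim $s_i(\xi)=v_i'/u_i'=\mathcal{O}(1/\xi)$ for the kernel hinges on the cancellation $c=\tfrac12 f''(u_1)\omega$, and Proposition~\ref{prop:frontback} as stated gives only upper bounds on $|v_i|$ rather than the two-sided asymptotics your quotient needs; the clean route is to note that $v_i'/u_i'=\mathrm{d}v_i/\mathrm{d}u_i$ is the phase-plane slope of the orbit, which approaches the slope of the center manifold at the fold.
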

\begin{proof}
Let $i \in \{\f,\bb\}$. First, Proposition~\ref{prop:frontback} yields constants $C, \upsilon > 0$ such that
\begin{align} \label{asympmatrixest} 
\begin{split}
\left\|\widehat A_i(-\xi) - \widehat A_{i,-\infty}\right\| &\leq C\re^{-\upsilon \xi}, \qquad
\left\|\widehat A_i(\xi) - \widehat A_{\infty}\right\| \leq \frac{C}{1+\xi},\end{split}
\end{align}
for $\xi \geq 0$. Noting that $f'(u_2), f'(\bar{u}_2) < 0$ and $c > c_*(a) > 0$, we find that $\widehat A_{i,-\infty}$ is hyperbolic with one positive and one negative eigenvalue. Consequently, there exists a constant $\kappa_0 \in (0,c)$ such that for any $\kappa \in [0,\kappa_0]$ the constant-coefficient system
\begin{align} \label{asympsys} \Psi_\xi = \left(\widehat A_{i,-\infty} + \kappa I_2\right) \Psi, \end{align}
has an exponential dichotomy on $\R$ with rank-one projection $\widehat{\mathcal P}_{i,-\infty}$. On the other hand, since $\widehat A_{\infty}$ has the eigenvalues $0$ and $-c$, the constant coefficient system
\begin{align*} \Psi_\xi = \left(\widehat A_{\infty} + \kappa_0 I_2\right) \Psi,\end{align*}
possesses an exponential dichotomy on $\R$ with rank-one projection $\widehat{\mathcal P}_{\infty}$.

Hence, by estimate~\eqref{asympmatrixest} and~\cite[Lemma~3.4]{PAL} the weighted variational problem
\begin{align} \Psi_\xi = \left(\widehat A_i(\xi) + \kappa_0 I_2\right) \Psi, \label{varredweighted}
\end{align}
admits exponential dichotomies on both $(-\infty,0]$ and $[0,\infty)$ with associated rank-one projections $\widehat P_{i,\pm}(\pm \xi)$; see Appendix~\ref{appexpdi}. By Proposition~\ref{prop:frontback} the solution $(u_i'(\xi),v_i'(\xi))^\top \re^{\kappa_0 \xi}$ to~\eqref{varredweighted} decays exponentially as $\xi \to -\infty$ and increases exponentially as $\xi \to \infty$. Hence, $(u_i'(0),v_i'(0))$ must span the kernel of $\widehat P_{i,-}(0)$, and cannot lie in the range of $\widehat P_{i,+}(0)$. We conclude that $\ker(\widehat P_{i,-}(0)) \cap \widehat P_{i,+}(0)[\C^2] = \{0\}$, which implies by~\cite[Proposition~2.1]{PAL} that~\eqref{varredweighted} possesses an exponential dichotomy on $\R$ with projections $\widehat P_i(\xi)$. Combining estimate~\eqref{asympmatrixest} with~\cite[Lemma~3.4]{PAL} and its proof, we deduce that there exist constants $K,\alpha_0 > 0$ such that
\begin{align*} 
\begin{split}
\left\|\widehat P_i(-\xi) - \widehat{\mathcal P}_{i,-\infty}\right\| &\leq K\re^{-\alpha_0 \xi}, \qquad \left\|\widehat P_i(\xi) - \widehat{\mathcal P}_\infty\right\| \leq \frac{K}{1+\xi},
\end{split}
\end{align*}
for $\xi \geq 0$. 

On the other hand, by estimate~\eqref{asympmatrixest} and~\cite[Lemma 3.4]{PAL} the exponential dichotomy of~\eqref{asympsys} for $\kappa = 0$ carries over to an exponential dichotomy for the unweighted variational problem~\eqref{varred} on $(-\infty,0]$ with constants $C_1,\alpha_1 > 0$ and associated rank-one projections $\widehat Q_{i,-}(\xi)$. By Proposition~\ref{prop:frontback} the exponentially decaying solution $(u_i'(\xi),v_i'(\xi))^\top$ to~\eqref{varred} must span the kernel of $\widehat Q_{i,-}(\xi)$ for $\xi \leq 0$. On the other hand, there is freedom to choose the range of $\widehat Q_{i,-}(0)$ to be any subspace complementary to $\ker(\widehat Q_{i,-}(0))$, cf.~\cite[Lemma~1.2(ii)]{SAN1993}. Recalling $\ker(\widehat P_i(0)) = \ker(\widehat Q_{i,-}(0))$, we select $\widehat Q_{i,-}(0)[\C^2] = \widehat P_i(0)[\C^2]$. Since the unweighted problem~\eqref{varred} has the same action on subspaces as the weighted problem~\eqref{varredweighted}, we find $\widehat P_i(\xi) = \widehat Q_{i,-}(\xi)$ for all $\xi \leq 0$. By~\cite[p.~13]{COP} the exponential dichotomy of~\eqref{varred} on $(-\infty,0]$ can be extended to an exponential dichotomy on $(-\infty,\frac{1}{\nu}]$ with constants $C_\nu, \alpha_1 > 0$ and projections $\widehat P_i(\xi)$. Taking $\alpha = \min\{\alpha_0,\alpha_1\}$ the result follows.
\end{proof}

We consider the reduced variational problem
\begin{align} \label{varred2} \Psi_\xi = \widetilde A_i(\xi) \Psi, \qquad \widetilde A_i(\xi) = \begin{pmatrix} \widehat{A}_i(\xi) & B_0 \\ 0_{1 \times 2} & 0 \end{pmatrix}\end{align}
for $i = \f,\bb$, which arises by setting $\epsilon = \lambda = 0$ and replacing $u_\epsilon(\xi)$ by $u_i(\xi)$ in the eigenvalue problem~\eqref{eigenvalueproblem}. Denoting by $\widehat{\T}_i(\xi,y)$ the evolution of system~\eqref{varred}, the evolution $\widetilde{\T}_i(\xi,y)$ of the upper triangular system~\eqref{varred2} can be expressed as
\begin{align*}
\widetilde{\T}_i(\xi,y) = \begin{pmatrix} \widehat{\T}_i(\xi,y) & \displaystyle \int_y^\xi \widehat{\T}_i(\xi,z) B_0 \de z \\ 0_{1 \times 2} & 1\end{pmatrix}. 
\end{align*}
Thus, the exponential dichotomy of~\eqref{varred}, established in Proposition~\ref{prop:varred}, readily yields an exponential trichotomy for system~\eqref{varred2}.

\begin{proposition} \label{prop:varred2}
There exists $\widetilde{\alpha} > 0$ such that for each $\nu > 0$ there exists a constant $\widetilde{C}_\nu > 0$ such that system~\eqref{varred2} has an exponential trichotomy on $(-\infty,\frac{1}{\nu}]$ with constants $\smash{\widetilde{C}_\nu, \widetilde{\alpha}} > 0$. The associated projections are given by
\begin{align*}
\begin{split}
\widetilde P_{i,\nu}^{\su}(\xi) &= \begin{pmatrix} \widehat P_{i}(\xi) & \displaystyle -\int_{-\infty}^\xi \widehat{\T}_{i}^{\su}(\xi,y) B_0 \de y \\ 0_{1 \times 2} & 0 \end{pmatrix}, \qquad \widetilde P_{i,\nu}^{\uu}(\xi) = \begin{pmatrix} I_2 - \widehat P_{i}(\xi) & \displaystyle -\int_{\frac{1}{\nu}}^\xi \widehat{\T}_{i}^{\uu}(\xi,y) B_0 \de y \\ 0_{1 \times 2} & 0 \end{pmatrix},\\
\widetilde P_{i,\nu}^{\cc}(\xi) &= I_3 - \widetilde P_{i,\nu}^{\su}(\xi) - \widetilde P_{i,\nu}^{\uu}(\xi),
\end{split}
\end{align*}
where $\widehat{\T}_i^j(\xi,z)$, $j = u,s$, $i = \f,\bb$ denotes the (un)stable evolution of~\eqref{varred} under the exponential dichotomy established in proposition~\ref{prop:varred}. The projections have rank $1$ and satisfy
\begin{align} \label{projest}
\left\|\widetilde P_{i,\nu}^j(\xi) - \widetilde{\mathcal P}_{i,-\infty}^j\right\| \leq \widetilde{C}_\nu \re^{\widetilde{\alpha} \xi},
\end{align}
for $\xi \leq 0$, $i = \f,\bb$ and $j = \su,\uu,\cc$, where $\widetilde{\mathcal P}_{i,-\infty}^{\su},\widetilde{\mathcal P}_{i,-\infty}^{\uu}$ and $\widetilde{\mathcal P}_{i,-\infty}^{\cc}$ are the spectral projections of the asymptotic matrix
\begin{align*}
\widetilde A_{i,-\infty} = \begin{pmatrix} \widehat A_{i,-\infty} & B_0 \\ 0_{1 \times 2} & 0 \end{pmatrix},
\end{align*}
onto its stable, unstable and center eigenpace, respectively. Finally, we have
\begin{align} \label{projid}
\widetilde P_{i,\nu}^{\uu}(\xi)[\C^3] = \mathrm{Sp}\left\{\Phi_i(\xi)\right\}, \qquad \widetilde P_{i,\nu}^{\cc}(\xi)[\C^3] = \mathrm{Sp}\left\{\Psi_{i,\nu}(\xi)\right\},
\end{align}
where we denote
\begin{align*}
\Phi_i(\xi) = \begin{pmatrix} u_i'(\xi) \\ v_i'(\xi) \\ 0\end{pmatrix}, \qquad \Psi_{i,\nu}(\xi) = \begin{pmatrix} \displaystyle \int_{\frac{1}{\nu}}^\xi \widehat{\T}_{i}^{\uu}(\xi,y)B_0 \de y + \int_{-\infty}^\xi \widehat{\T}_{i}^{\su}(\xi,y)B_0 \de y  \\ 1\end{pmatrix},
\end{align*}
for $\xi \in (-\infty,\frac{1}{\nu}]$ and $i = \f,\bb$. 
\end{proposition}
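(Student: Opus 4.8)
The plan is to bootstrap the exponential dichotomy of the fast variational system~\eqref{varred}, furnished by Proposition~\ref{prop:varred}, into an exponential trichotomy for the block upper-triangular system~\eqref{varred2}, whose third component obeys the trivial equation $w_\xi = 0$ and couples into the first two components only through $B_0$. Since~\eqref{varred2} is block triangular, its transition operator arises from that of~\eqref{varred} by a single Duhamel integration, which gives precisely the stated block form of $\widetilde{\T}_i(\xi,y)$. The stable and unstable fibres of~\eqref{varred2} will be those of~\eqref{varred} embedded into the invariant hyperplane $\{w=0\}$, on which~\eqref{varred2} acts exactly as~\eqref{varred}, while the center fibre is the remaining one-dimensional complement carrying the $w$-component. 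I would first check that $\Phi_i$ solves~\eqref{varred2} --- immediate, since $(u_i',v_i')^\top$ solves~\eqref{varred} and the last slot is the constant $0$ --- and that $\Psi_{i,\nu}$ solves~\eqref{varred2}: differentiating under the integral sign and using $\widehat{\T}_i^{\su}(\xi,\xi)+\widehat{\T}_i^{\uu}(\xi,\xi)=I_2$ together with $\partial_\xi\widehat{\T}_i^{\su}(\xi,y)=\widehat{A}_i(\xi)\widehat{\T}_i^{\su}(\xi,y)$ and the analogous identity for $\widehat{\T}_i^{\uu}$, the boundary terms combine to $B_0$ and the interior terms reproduce $\widehat{A}_i(\xi)$ acting on the first two slots. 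The dichotomy bounds of Proposition~\ref{prop:varred} make both integrals defining $\Psi_{i,\nu}$ absolutely convergent and uniformly bounded on $(-\infty,\tfrac1\nu]$, so $\Psi_{i,\nu}$ is a bounded solution; its third component equals $1$, hence it is bounded below in norm and transverse to $\{w=0\}$.

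Next I would verify directly that the three matrices $\widetilde{P}_{i,\nu}^{\su},\widetilde{P}_{i,\nu}^{\uu},\widetilde{P}_{i,\nu}^{\cc}$ are mutually complementary projections summing to $I_3$, each of rank one, and commuting with $\widetilde{\T}_i(\xi,y)$. The bookkeeping rests on $\widehat{\T}_i^{\su}(\xi,\xi)=\widehat{P}_i(\xi)$, $\widehat{\T}_i^{\uu}(\xi,\xi)=I_2-\widehat{P}_i(\xi)$, the projection invariance of the (un)stable evolutions of~\eqref{varred}, and the observation $\widehat{\T}_i^{\su}(\xi,y)B_0\in\widehat{P}_i(\xi)[\C^2]$, which forces the top-right column of $\widetilde{P}_{i,\nu}^{\su}(\xi)$ into the one-dimensional column span of $\widehat{P}_i(\xi)$ and thereby pins the rank at one; the same reasoning applies to $\widetilde{P}_{i,\nu}^{\uu}$, and then $\widetilde{P}_{i,\nu}^{\cc}$ has rank one by complementarity. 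This computation also delivers the span identities~\eqref{projid}. Since the three ranges are thereby spanned by bona fide solutions --- $\mathrm{Sp}\{\Phi_i(\xi)\}$, $\mathrm{Sp}\{\Psi_{i,\nu}(\xi)\}$, and $\mathrm{Sp}\{(\eta_i(\xi),0)^\top\}$ with $\eta_i(\xi)$ spanning the stable fibre of~\eqref{varred} --- and complementary at every $\xi\in(-\infty,\tfrac1\nu]$, invariance of the splitting under $\widetilde{\T}_i$ is automatic.

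It remains to establish the exponential bounds. On $\mathrm{range}\,\widetilde{P}_{i,\nu}^{\su}$ and $\mathrm{range}\,\widetilde{P}_{i,\nu}^{\uu}$, after deleting the identically zero third coordinate, $\widetilde{\T}_i$ coincides with the stable, respectively unstable, evolution of~\eqref{varred}, so the forward, respectively backward, exponential decay transfers verbatim from Proposition~\ref{prop:varred}, with rate $\widetilde{\alpha}$ taken to be a suitable minimum of $\alpha$ and the convergence rate appearing below. On $\mathrm{range}\,\widetilde{P}_{i,\nu}^{\cc}$ one has $\widetilde{\T}_i(\xi,y)\widetilde{P}_{i,\nu}^{\cc}(y)=\Psi_{i,\nu}(\xi)\,m_{i,\nu}(y)^\top$ for the covector $m_{i,\nu}$ normalized by $m_{i,\nu}(y)^\top\Psi_{i,\nu}(y)=1$; since $\Psi_{i,\nu}$ and $\widetilde{P}_{i,\nu}^{\cc}$ are uniformly bounded on $(-\infty,\tfrac1\nu]$ and $\|\Psi_{i,\nu}\|\geq1$, the covector $m_{i,\nu}$ is uniformly bounded too, giving a uniform bound on the center part. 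Finally, for the convergence estimate~\eqref{projest} I would compare $\widehat{P}_i(\xi)$ and the Duhamel integral $\int_{-\infty}^\xi\widehat{\T}_i^{\su}(\xi,y)B_0\,\mathrm{d}y$ with their constant-coefficient analogues at $\widehat{A}_{i,-\infty}$: Proposition~\ref{prop:frontback} yields $\|\widehat{A}_i(\xi)-\widehat{A}_{i,-\infty}\|\leq C\re^{\upsilon\xi}$ for $\xi\leq0$, whence roughness of exponential dichotomies gives $\widehat{P}_i(\xi)\to\widehat{\mathcal P}_{i,-\infty}$ and $\int_{-\infty}^\xi\widehat{\T}_i^{\su}(\xi,y)B_0\,\mathrm{d}y\to\int_0^\infty\re^{s\widehat{A}_{i,-\infty}}\widehat{\mathcal P}_{i,-\infty}B_0\,\mathrm{d}s$ at an exponential rate as $\xi\to-\infty$, and a short linear-algebra check identifies these limits with the blocks of the spectral projection $\widetilde{\mathcal P}^{\su}_{i,-\infty}$ of $\widetilde{A}_{i,-\infty}$; the $\uu$- and $\cc$-cases are analogous. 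Quantifying this exponential convergence of the non-autonomous (un)stable evolutions and their $B_0$-integrals to the asymptotic ones near $-\infty$ is the only step demanding genuine care; the rest is routine bookkeeping built on Proposition~\ref{prop:varred}.
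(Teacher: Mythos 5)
Your proposal is correct and follows essentially the same route as the paper: explicit verification that the stated matrices are rank-one projections commuting with the block-triangular evolution, identification of their ranges with the solutions $\Phi_i$ and $\Psi_{i,\nu}$, and transfer of the exponential bounds from the dichotomy of Proposition~\ref{prop:varred} via the convergent Duhamel integrals $\int \widehat{\T}_i^{\su,\uu}(\xi,y)B_0\,\de y$. The only divergence is in the convergence estimate~\eqref{projest}: the paper introduces the weighted systems $\Psi_\xi = (\widetilde A_i(\xi) \pm \tfrac{\alpha}{2}I_3)\Psi$ to reduce the trichotomy to exponential dichotomies and then cites the roughness lemma of~\cite{PAL} to get exponential convergence to the spectral projections, whereas you propose a direct comparison of $\widehat P_i(\xi)$ and the $B_0$-integrals with their constant-coefficient analogues at $\widehat A_{i,-\infty}$ --- this works too (and your linear-algebra identification of the limiting blocks with $\widetilde{\mathcal P}^j_{i,-\infty}$ is correct), but it leaves you to carry out by hand the quantitative estimate that the paper outsources to the cited lemma.
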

\begin{proof}
Let $i \in \{\f,\bb\}$. Using Proposition~\ref{prop:varred}, one readily verifies that $\widetilde P_{i,\nu}^{\su}(\xi)$, $\widetilde P_{i,\nu}^{\uu}(\xi)$ and $\widetilde P_{i,\nu}^{\cc}(\xi)$ are projections satisfying~\eqref{projid}. We compute
\begin{align*} \widetilde P_{i,\nu}^{\su}(\xi)\widetilde{\T}_{i}(\xi,y) &= \begin{pmatrix} \widehat{\T}_i^{\su}(\xi,y) & \displaystyle-\int_{-\infty}^y \widehat{\T}_i^{\su}(\xi,z) B_0 \de z \\ 0_{1 \times 2} & 0 \end{pmatrix} = \widetilde{\T}_{i}(\xi,y)\widetilde P_{i,\nu}^{\su}(y),\\
\widetilde P_{i,\nu}^{\uu}(\xi)\widetilde{\T}_{i}(\xi,y) &= \begin{pmatrix} \widehat{\T}_i^{\uu}(\xi,y) & \displaystyle-\int_{\frac{1}{\nu}}^y \widehat{\T}_i^{\uu}(\xi,z) B_0 \de z \\ 0_{1 \times 2}  & 0 \end{pmatrix} = \widetilde{\T}_{i}(\xi,y)\widetilde P_{i,\nu}^{\uu}(y),\\
\widetilde P_{i,\nu}^{\cc}(\xi)\widetilde{\T}_{i}(\xi,y) &= \begin{pmatrix} 0_{2 \times 2} & \displaystyle\int_{-\infty}^\xi \widehat{\T}_i^{\su}(\xi,z) B_0 \de z + \int_{\frac{1}{\nu}}^\xi \widehat{\T}_i^{\uu}(\xi,z) B_0 \de z\\ 0_{1 \times 2} & 1\end{pmatrix} = \widetilde{\T}_{i}(\xi,y)\widetilde P_{i,\nu}^{\cc}(y).
\end{align*}
With the aid of Proposition~\ref{prop:varred} we estimate
\begin{align*}
\left\|\int_{-\infty}^y \widehat{\T}_i^{\su}(\xi,z) B_0 \de z\right\|
&\leq C_{\nu} \int_{-\infty}^y \re^{-\alpha(\xi - z)}\de z = \frac{C_{\nu}}{\alpha} \re^{-\alpha(\xi - y)}, & & y \leq \xi \leq \frac{1}{\nu},\\
\left\|\int_{\frac{1}{\nu}}^y \widehat{\T}_i^{\uu}(\xi,z) B_0 \de z\right\|
&\leq C_{\nu} \int_y^\infty \re^{\alpha(\xi - z)}\de z = \frac{C_{\nu}}{\alpha} \re^{\alpha(\xi - y)}, & & \xi \leq y \leq \frac{1}{\nu}.
\end{align*}
Hence, system~\eqref{varred2} has an exponential trichotomy on $(-\infty,\frac{1}{\nu}]$ with constants $\frac{2C_\nu}{\alpha}, \alpha > 0$ and projections $\smash{\widetilde P_{i,\nu}^j(\xi)}$, $j = \su,\uu,\cc$. 

Clearly, the weighted system
\begin{align*} \Psi_\xi = \left(\widetilde A_i(\xi) + \frac{\alpha}{2} I_3\right) \Psi,\end{align*}
possesses an exponential dichotomy on $(-\infty,\frac{1}{\nu}]$ with constants $\frac{2C_\nu}{\alpha}, \frac{\alpha}{2} > 0$ and projections $\widetilde P_{i,\nu}^{\su}(\xi)$. Similarly, the weighted system
\begin{align*} \Psi_\xi = \left(\widetilde A_i(\xi) - \frac{\alpha}{2} I_3\right) \Psi,\end{align*}
has an exponential dichotomy on $(-\infty,\frac{1}{\nu}]$ with constants $\frac{2C_\nu}{\alpha}, \frac{\alpha}{2} > 0$ and projections $I_3-\widetilde P_{i,\nu}^{\uu}(\xi)$. Hence, by combining estimate~\eqref{asympmatrixest} with~\cite[Lemma~3.4]{PAL} and its proof, we infer that there exist constants $\smash{\widetilde{M}_\nu}, \alpha_0 > 0$ such that
\begin{align*}
\left\|\widetilde P_{i,\nu}^j(\xi) - \widetilde{\mathcal P}_{i,-\infty}^j\right\| \leq \widetilde{M}_\nu \re^{\alpha_0 \xi},
\end{align*}
for $\xi \leq 0$ and $j = s,c$. So, taking $\widetilde{C}_\nu = \max\{\widetilde M_\nu, \frac{2C_\nu}{\alpha}\}$ and $\widetilde{\alpha} = \min\{\alpha,\alpha_0\}$ yields the desired result.
\end{proof}

\subsection{The eigenvalue problem along the left and right branch of the critical manifold} \label{sec:slow_manifolds} 
We establish exponential trichotomies for the eigenvalue problem~\eqref{eigenvalueproblem} along the left and the right branch of the critical manifold; see Appendix~\ref{appexpdi}. First of all, we show that the fast $(2 \times 2)$-subsystem
\begin{align}
\Psi_\xi = A_{\f}(\xi;\epsilon,\lambda) \Psi \label{fastsub}
\end{align}
of~\eqref{eigenvalueproblem} possesses an exponential dichotomy along these branches by using that its coefficient matrix is slowly varying and pointwise hyperbolic. Thus, we can diagonalize the full eigenvalue problem~\eqref{eigenvalueproblem} with the aid of the Riccati transformation~\cite{BDR,Chang_Riccati}, yielding the desired exponential trichotomy. The explicit diagonalization levaraged by the Riccati transformation allows us to determine the scalar dynamics in the one-dimensional center direction of the trichotomy to leading-order. All in all, we arrive at the following result.

\begin{proposition} \label{prop:slow}
Provided $0 < \epsilon,|\lambda| \ll \nu \ll 1$, system~\eqref{eigenvalueproblem} admits exponential trichotomies on $I_{\lr} = [\frac{\nu}{\epsilon},L_{\lr,\eps} + \frac{\log(\epsilon)}{\nu}]$ and on $I_{\rr} = [L_{\lr,\eps} + \frac{\nu}{\epsilon},L_{\lr,\eps}+L_{\rr,\eps}+\frac{\log(\epsilon)}{\nu}]$ with $\lambda$- and $\epsilon$-independent constants $C_\nu,\vartheta_\nu > 0$ and projections $P_{\lr,\epsilon,\lambda,\nu}^j(\xi)$ and $P_{\rr,\epsilon,\lambda,\nu}^j(\xi)$, $j = \su,\uu,\cc$, respectively. The projections have rank $1$ and satisfy
\begin{align} \label{projest2}
\left\|P_{i,\epsilon,\lambda,\nu}^j(\xi) - {\mathcal P}_{\epsilon,\lambda}^j(\xi)\right\| \leq C_\nu \epsilon^{\frac{2}{3}},
\end{align}
for $\xi \in I_i$, $i = \lr,\rr$ and $j = \su,\uu,\cc$, where $ {\mathcal P}_{\epsilon,\lambda}^{\su}(\xi), {\mathcal P}_{\epsilon,\lambda}^{\uu}(\xi)$ and $ {\mathcal P}_{\epsilon,\lambda}^{\cc}(\xi)$ are the spectral projections of the coefficient matrix $A(\xi;\epsilon,\lambda)$ of~\eqref{eigenvalueproblem} onto its stable, unstable and center eigenspace, respectively. Finally, the center evolutions $\T^\cc_{i,\epsilon,\lambda,\nu}(\xi,\zeta)$, $i = \lr,\rr$ under the exponential trichotomies satisfy
\begin{align} \label{approxid}
\begin{split}
\left| \left\langle \mathbf{e}_3, \T_{\lr,\epsilon,\lambda,\nu}^{\cc}\left(\frac{\nu}{\epsilon},L_{\lr,\eps} + \frac{\log(\epsilon)}{\nu}\right) \mathbf{e}_i\right\rangle\right| &\leq C_\nu \epsilon, \qquad i = 1,2,\\
\left| \left\langle \mathbf{e}_3, \T_{\lr,\epsilon,\lambda,\nu}^{\cc}\left(\frac{\nu}{\epsilon},L_{\lr,\eps} + \frac{\log(\epsilon)}{\nu}\right)\mathbf{e}_3\right\rangle - \frac{u_{\lr}(\nu) - \gamma f(u_\lr(\nu)) - a}{\bar{u}_2 - \gamma f(\bar{u}_2) - a}\right| &\leq C_\nu\left(|\lambda| + \epsilon^{\frac13}\right),
\end{split}
\end{align}
and
\begin{align*}
\begin{split}
\left| \left\langle \mathbf{e}_3, \T_{\rr,\epsilon,\lambda,\nu}^{\cc}\left(L_{\lr,\eps} + \frac{\nu}{\epsilon},L_{\lr,\eps}+L_{\rr,\eps}+\frac{\log(\epsilon)}{\nu}\right) \mathbf{e}_i\right\rangle\right| &\leq C_\nu \epsilon, \qquad i = 1,2\\
\left| \left\langle \mathbf{e}_3, \T_{\rr,\epsilon,\lambda,\nu}^{\cc}\left(L_{\lr,\eps} + \frac{\nu}{\epsilon},L_{\lr,\eps}+L_{\rr,\eps}+\frac{\log(\epsilon)}{\nu}\right) \mathbf{e}_3\right\rangle - \frac{u_{\rr}(\nu) - \gamma f(u_{\rr}(\nu)) - a}{u_2 - \gamma f(u_2) - a}\right| &\leq C_\nu\left(|\lambda| + \epsilon^{\frac13}\right),
\end{split}
\end{align*}
where $\{\mathbf{e}_1,\mathbf{e}_2,\mathbf{e}_3\}$ denotes the standard unit basis of $\C^3$.
\end{proposition}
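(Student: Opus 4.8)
The plan is to exploit the block structure $A(\xi;\epsilon,\lambda) = \left(\begin{smallmatrix} A_{\f} & B_0 \\ \epsilon B_1 & \epsilon A_s\end{smallmatrix}\right)$ of the coefficient matrix in~\eqref{eigenvalueproblem}: along the hyperbolic parts of the critical manifold the fast $(2\times2)$-block $A_{\f}(\xi;\epsilon,\lambda)$ is uniformly hyperbolic and slowly varying, while the slow $(1\times1)$-block $\epsilon A_s = \epsilon\gamma/c$ is of order $\epsilon$. The first step is to establish an exponential dichotomy for the fast subsystem~\eqref{fastsub} on $I_\lr$ and $I_\rr$. On these intervals Proposition~\ref{prop:pointwise} gives $u_\epsilon(\xi) = u_{\lr/\rr}(\epsilon\xi) + \mathcal{O}_\nu(\epsilon^{2/3})$ and $u_\epsilon'(\xi) = \mathcal{O}_\nu(\epsilon^{2/3})$, so that $u_\epsilon(\xi)$ stays in the region where $f' < 0$ is bounded away from $0$; hence $\det A_{\f}(\xi;\epsilon,\lambda) = \lambda^2/c^2 + f'(u_\epsilon(\xi)) < 0$ for $\epsilon,|\lambda| \ll \nu$, so $A_{\f}$ is pointwise hyperbolic with one positive and one negative eigenvalue, and $\|\partial_\xi A_{\f}(\xi;\epsilon,\lambda)\| = \mathcal{O}_\nu(\epsilon^{2/3})$. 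A standard Riccati/roughness argument for slowly varying hyperbolic systems (cf.~\cite{BDR,PAL}) then provides a dichotomy on $I_\lr, I_\rr$ with $\epsilon$- and $\lambda$-independent constants, whose rank-one projections differ from the pointwise spectral projections of $A_{\f}$ by $\mathcal{O}_\nu(\epsilon^{2/3})$.

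Next, following the Riccati approach of~\cite{BDR,Chang_Riccati}, I would block-diagonalize~\eqref{eigenvalueproblem} on $I_\lr$ and $I_\rr$. Representing the slow (center) invariant direction as a graph $\Psi_{\f} = h(\xi)\Psi_s$ gives the Riccati equation $h' = A_{\f}h + B_0 - \epsilon(A_s + B_1 h)h$, and the complementary fast subspace as a graph $\Psi_s = g(\xi)\Psi_{\f}$ gives $g' = -gA_{\f} - gB_0 g + \epsilon(B_1 + A_s g)$. Writing $h = -A_{\f}^{-1}B_0 + k$ and $g = \epsilon B_1 A_{\f}^{-1} + m$ and using the fast dichotomy, a contraction argument produces bounded solutions with $\|k\| = \mathcal{O}_\nu(\epsilon^{2/3})$ and $\|m\| = \mathcal{O}_\nu(\epsilon^{5/3})$; note $h$ is $\mathcal{O}(1)$ while $g$ is $\mathcal{O}(\epsilon)$, and both vary slowly. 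Conjugating by $T(\xi) = \left(\begin{smallmatrix} I_2 & h \\ g & 1\end{smallmatrix}\right)$ splits~\eqref{eigenvalueproblem} into a fast $(2\times2)$-block $A_{\f} + B_0 g + \mathcal{O}_\nu(\epsilon^{5/3})$ — again uniformly hyperbolic and slowly varying, hence carrying a dichotomy by the previous step — and a scalar slow block $a_s(\xi) = (T^{-1}AT)_{33} - (T^{-1}T')_{33} = \epsilon A_s + \epsilon B_1 h + \mathcal{O}_\nu(\epsilon^{5/3})$. Combining the two fast fibers with the one-dimensional slow fiber yields the exponential trichotomy, using that the scalar slow rate $\mathcal{O}(\epsilon)$ is dominated by the $\epsilon$-independent fast rate. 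Transporting the block-diagonal projections back through $T$ and comparing with the spectral projections of $A$ — which differ from those of $A_{\f}$ embedded in $\C^3$ by $\mathcal{O}(\epsilon)$ — gives estimate~\eqref{projest2}.

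For the quantitative center-evolution statements~\eqref{approxid}, I would use that in the block-diagonal coordinates the center evolution is the scalar flow $\exp\bigl(\int_\zeta^\xi a_s(y)\,\de y\bigr)$, and that with $v_c = (-A_{\f}^{-1}B_0, 1)^\top$ spanning the slow subspace one has $\mathbf{e}_3 = (1+\mathcal{O}(\epsilon))v_c + (\text{fast})$ while $\mathbf{e}_1,\mathbf{e}_2$ have center component of size $\mathcal{O}(\epsilon)$; since the scalar center flow over $I_\lr$ is $\mathcal{O}(1)$, the off-diagonal bounds in~\eqref{approxid} follow at once. For the diagonal entry, a direct computation using $h = -A_\f^{-1}B_0 + k$ gives $a_s(\xi) = -\frac{\epsilon}{c}\bigl(\frac{1}{f'(u_\lr(\epsilon\xi))} - \gamma\bigr) + \mathcal{O}_\nu(\epsilon^{5/3} + \epsilon|\lambda|)$, so substituting $s = \epsilon\xi$ and using $|\epsilon L_{\lr,\eps} - L_\lr| \leq C\epsilon^{1/3}$ from Proposition~\ref{prop:pointwise},
\[
\int_{\frac{\nu}{\epsilon}}^{L_{\lr,\eps}+\frac{\log\epsilon}{\nu}} a_s(y)\,\de y = -\int_{\nu}^{L_\lr}\frac{1}{c}\left(\frac{1}{f'(u_\lr(s))} - \gamma\right)\de s + \mathcal{O}_\nu\!\left(|\lambda| + \epsilon^{\frac13}\right).
\]
The integrand equals $\frac{\de}{\de s}\log W(s)$ for the variational equation of the reduced slow flow~\eqref{eq:slowl}: since $w' = G(w)$ with $G(w) = -\frac1c(f_\lr^{-1}(w) - \gamma w - a)$ implies that $W(s) := G(w_\lr(s))$ solves that variational equation, and $u_\lr(L_\lr) = \bar u_2$ lies on the left branch, the integral evaluates to $\log\frac{\bar u_2 - \gamma f(\bar u_2) - a}{u_\lr(\nu) - \gamma f(u_\lr(\nu)) - a}$. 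Because $\T^\cc_{\lr,\eps,\lambda,\nu}$ runs backwards in $\xi$ here, the exponential carries the opposite sign, producing exactly the ratio in~\eqref{approxid}; the right-branch estimates follow verbatim with $(u_\rr, \bar u_1, u_2)$ in place of $(u_\lr, u_1, \bar u_2)$.

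The main obstacle is the bookkeeping in the last step: one must control the scalar center rate $a_s(\xi)$ to accuracy $o(\epsilon)$ relative to its $\mathcal{O}(\epsilon)$ size over an interval of length $\mathcal{O}(\epsilon^{-1})$, so any unlabelled $\mathcal{O}(\epsilon)$ error would spoil the leading-order ratio. This forces care at two points — the conjugation correction $(T^{-1}T')_{33} = -gh' + \mathcal{O}(\epsilon^{7/3}) = \mathcal{O}_\nu(\epsilon^{5/3})$, acceptable only because $g = \mathcal{O}(\epsilon)$ absorbs the $\mathcal{O}(\epsilon^{2/3})$ slowness of $h$, and the replacement of $u_\epsilon$ by $u_\lr(\epsilon\cdot)$, whose $\mathcal{O}_\nu(\epsilon^{2/3})$ pointwise error must be shown to integrate to $\mathcal{O}_\nu(\epsilon^{2/3})$ rather than amplifying. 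The residual $\mathcal{O}_\nu(\epsilon^{1/3})$ in the stated error is genuinely inherited from the period estimate $|\epsilon L_{\lr,\eps} - L_\lr| \le C\epsilon^{1/3}$ of Proposition~\ref{prop:pointwise}, entering through the bounded, nonvanishing behavior of the integrand near $s = L_\lr$; a secondary technical point is verifying the trichotomy has an $\epsilon$-independent spectral gap separating the $\mathcal{O}(\epsilon)$ center rate from the $\mathcal{O}(1)$ fast rates.
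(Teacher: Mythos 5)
Your proposal follows essentially the same route as the paper: a dichotomy for the fast subsystem from pointwise hyperbolicity plus slow variation and roughness, a Riccati block-diagonalization separating the fast block from the scalar slow block $\eps A_s+\eps B_1 U$ with $U=-A_{\f}^{-1}B_0+\mathcal{O}_\nu(\eps^{2/3}+|\lambda|)$, and an explicit evaluation of the resulting scalar center evolution, which the paper computes by direct substitution and you repackage (equivalently) as the log-derivative of the variational solution of the reduced slow flow~\eqref{eq:slowl}. The error bookkeeping, including the $|\lambda|$-contribution from the Riccati graph and the $\eps^{1/3}$ from the period estimate~\eqref{period}, matches the paper's argument.
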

\begin{proof}
In this proof $C_\nu \geq 1$ denotes an $\epsilon$-, $\lambda$- and $\xi$-independent constant, which will be taken larger if necessary.

We prove the result for $\xi \in I_{\lr}$, i.e.~along the left branch of the critical manifold, only. The argument for $\xi \in I_{\rr}$, i.e.~along the right branch of the critical manifold, is similar. We aim to diagonalize the block system~\eqref{eigenvalueproblem} via the Riccati transformation, cf.~\cite[Theorem~5.1]{BDR}. To do so, we establish an exponential dichotomy for the fast subsystem~\eqref{fastsub} using roughness techniques. By Proposition~\ref{prop:pointwise} we have
\begin{align} \left\|A_{\f}(\xi;\epsilon,\lambda) - A_{\lr}(\epsilon \xi)\right\| \leq C_\nu \left(\epsilon^{\frac{2}{3}} + |\lambda|\right), \label{matrixapp}\end{align}
for $\xi \in I_{\lr}$, where we denote
\begin{align*}
A_{\lr}(y) = \begin{pmatrix} 0 & 1 \\ -f'(u_{\lr}(y)) & -c \end{pmatrix}.\end{align*}
For each $y \in [\nu, L_\lr+1]$ it holds $f'(u_{\lr}(y)) \leq -1/C_\nu$. Therefore, there exists a constant $\vartheta_\nu > 0$ such that the matrix $A_{\lr}(y)$ is hyperbolic with one positive and one negative eigenvalue lying at distance $\geq 5\vartheta_\nu$ from the imaginary axis for each $y \in \left[\nu, L_\lr+1\right]$. Hence, provided $0 < \epsilon, |\lambda| \ll \nu \ll 1$, estimate~\eqref{matrixapp} implies that $A_{\f}(\xi;\epsilon,\lambda)$ is also hyperbolic with one positive and one negative eigenvalue lying at distance $\geq 4\vartheta_\nu$ from the imaginary axis for each $\xi \in I_{\lr}$. 

By~\cite[Proposition~6.1]{COP} the slowly varying system
\begin{align*}
\Psi_\xi = A_{\lr}(\epsilon \xi) \Psi,
\end{align*}
has an exponential dichotomy on $[\frac{\nu}{\epsilon}, \frac{L_\lr+1}{\epsilon}]$ with $\epsilon$-independent constants $C_\nu, 4\vartheta_\nu > 0$. Recalling estimates~\eqref{period} and~\eqref{matrixapp}, we observe that roughness results, cf.~\cite[Proposition~5.1]{COP}, yield an exponential dichotomy for system~\eqref{fastsub} on $I_{\lr}$ with $\lambda$- and $\epsilon$-independent constants $C_\nu, 3\vartheta_\nu > 0$, provided $0 < \epsilon,|\lambda| \ll \nu \ll 1$.

By~\cite[Theorem~5.1]{BDR} there exist continuous matrix functions $U_{\epsilon,\lambda,\nu} \colon I_{\lr} \to \C^{2 \times 1}$ and $S_{\epsilon,\lambda,\nu} \colon I_{\lr} \to \C^{1 \times 2}$ such that, if $\Psi(\xi)$ is a solution to~\eqref{eigenvalueproblem}, then 
$$\Phi(\xi) = H_{\epsilon,\lambda,\nu}(\xi)\Psi(\xi), \qquad H_{\epsilon,\lambda,\nu}(\xi) =  \begin{pmatrix} I_2-\epsilon U_{\epsilon,\lambda,\nu}(\xi) S_{\epsilon,\lambda,\nu}(\xi) & U_{\epsilon,\lambda,\nu}(\xi) \\ -\epsilon S_{\epsilon,\lambda,\nu}(\xi) & 1\end{pmatrix},$$  satisfies the block diagonal system
\begin{align*}
\begin{split}
\Phi_\xi = \begin{pmatrix} A_{\f}(\xi;\epsilon,\lambda) - \epsilon U_{\epsilon,\lambda,\nu}(\xi)B_1 & 0_{2 \times 1} \\
            0_{1 \times 2} & \epsilon A_s + \epsilon B_1 U_{\epsilon,\lambda,\nu}(\xi)
           \end{pmatrix} \Phi,
\end{split}
\end{align*}
for $\xi \in I_{\lr}$. Moreover, $U_{\epsilon,\lambda,\nu}$ and $S_{\epsilon,\lambda,\nu}$ are bounded by $\lambda$- and $\epsilon$-independent constants. Hence, the evolution 
\begin{align} \label{evolsform} \T_{\su,\epsilon,\lambda,\nu}(\xi,z) = \exp\left(\epsilon\left(\frac{\gamma}{c} (\xi-z) + \int_z^\xi B_1 U_{\epsilon,\lambda,\nu}(y) \de y\right)\right),\end{align}
of the scalar slow subsystem
\begin{align*} \Psi_\xi = \epsilon \left(A_s + B_1 U_{\epsilon,\lambda,\nu}(\xi)\right) \Psi,
\end{align*}
is bounded on $I_{\lr} \times I_{\lr}$ by an $\epsilon$- and $\lambda$-independent constant. Moreover, by roughness,~\cite[Theorem~5.1]{COP}, the exponential dichotomy of~\eqref{fastsub} carries over to an exponential dichotomy of the fast subsystem
\begin{align*}
\Psi_\xi = \left(A_{\f}(\xi;\epsilon,\lambda) - \epsilon U_{\epsilon,\lambda,\nu}(\xi)B_0\right) \Psi,
\end{align*}
on $I_{\lr}$ with $\lambda$- and $\epsilon$-independent constants $C_\nu, 2\vartheta_\nu > 0$ and projections $Q_{\lr,\epsilon,\lambda,\nu}(\xi)$. 

We conclude that
\begin{align*}
P_{\lr,\epsilon,\lambda,\nu}^{\su}(\xi) &= H_{\epsilon,\lambda,\nu}(\xi) \begin{pmatrix}  Q_{\lr,\epsilon,\lambda,\nu}(\xi) & 0_{2 \times 1} \\ 0_{1 \times 2}  & 0 \end{pmatrix} H_{\epsilon,\lambda,\nu}(\xi)^{-1}, \\
P_{\lr,\epsilon,\lambda,\nu}^{\uu}(\xi) &= H_{\epsilon,\lambda,\nu}(\xi) \begin{pmatrix} I_2 - Q_{\lr,\epsilon,\lambda,\nu}(\xi) & 0_{2 \times 1} \\ 0_{1 \times 2}  & 0 \end{pmatrix} H_{\epsilon,\lambda,\nu}(\xi)^{-1}, \\
P_{\lr,\epsilon,\lambda,\nu}^{\cc}(\xi) &= H_{\epsilon,\lambda,\nu}(\xi) \begin{pmatrix} 0_{2 \times 2} & 0_{2 \times 1} \\ 0_{1 \times 2} & 1 \end{pmatrix} H_{\epsilon,\lambda,\nu}(\xi)^{-1},
\end{align*}
are projections of an exponential trichotomy of the eigenvalue problem~\eqref{eigenvalueproblem} on $I_{\lr}$ with $\lambda$- and $\epsilon$-independent constants $C_{\nu}, 2\vartheta_\nu > 0$, where we use that the matrix function $H_{\epsilon,\lambda,\nu}(\xi)$ and its inverse 
$$H_{\epsilon,\lambda,\nu}(\xi)^{-1} = \begin{pmatrix} I_2 & -U_{\epsilon,\lambda,\nu}(\xi) \\ \epsilon S_{\epsilon,\lambda,\nu}(\xi) & 1-\epsilon S_{\epsilon,\lambda,\nu}(\xi)U_{\epsilon,\lambda,\nu}(\xi)\end{pmatrix},$$
are bounded by $\lambda$- and $\epsilon$-independent constants on $I_{\lr}$. 

To prove the estimate~\eqref{projest2}, we consider the positively weighted eigenvalue problem
\begin{align} \label{weightedev}
 \Psi_\xi = \left(A(\xi;\epsilon,\lambda) + \vartheta_\nu I_3\right) \Psi.
\end{align}
Clearly,~\eqref{weightedev} has an exponential dichotomy on $I_{\lr}$ with $\lambda$- and $\epsilon$-independent constants $C_{\nu}, \vartheta_\nu > 0$ and projections $P_{\lr,\epsilon,\lambda,\nu}^{\su}(\xi)$. First, we note that the coefficient matrix $A(\xi;\epsilon,\lambda) + \vartheta_\nu I_3$ is hyperbolic with two positive and one negative eigenvalue lying at distance $\geq \vartheta_\nu$ from the imaginary axis for each $\xi \in I_{\lr}$. Second, Proposition~\ref{prop:pointwise} yields 
\begin{align*}
 \left\|\partial_\xi A(\xi;\epsilon,\lambda)\right\| \leq C_\nu \epsilon^{\frac{2}{3}},
\end{align*}
for $\xi \in I_{\lr}$. Hence, following the proof of~\cite[Proposition~A.3]{BDR2} verbatim, one establishes estimate~\eqref{projest2} for $j = s$. Similarly, by considering the negatively weighted eigenvalue problem (with weight $-\vartheta_\nu$) one obtains estimate~\eqref{projest2} for $j = u$. Finally, estimate~\eqref{projest2} for $j = c$ follows readily from the ones for $j = s$ and $j = u$ after applying the triangle inequality.

All that remains is to establish~\eqref{approxid}. First, we observe that the spectral projection of 
\begin{align*} \begin{pmatrix} A_{\lr}(y) & B_0 \\ 0_{1 \times 2} & 0 \end{pmatrix} \end{align*}
onto its center eigenspace is given by
\begin{align*}
\mathcal Q^{\cc}(y) = \begin{pmatrix} 0 & 0 & \left(f'(u_{\lr}(y))\right)^{-1} \\ 0 & 0 & 0 \\ 0 & 0 & 1\end{pmatrix}.
\end{align*}
for $y \in [\nu,L_\lr + 1]$. Hence, by estimate~\eqref{matrixapp}, we have
\begin{align} \label{specprojest}
\left\|\mathcal P^{\cc}_{\epsilon,\lambda}(\xi) - \mathcal Q^{\cc}(\epsilon \xi)\right\| \leq C_\nu \left(\epsilon^{\frac{2}{3}} + |\lambda|\right),
\end{align}
for $\xi \in I_{\lr}$. Next, we compute
\begin{align} \label{approxid0}
\begin{split}
&\T_{\lr,\epsilon,\lambda,\nu}^{\cc}(\xi,z) = H_{\epsilon,\lambda,\nu}(\xi) \begin{pmatrix} 0_{2 \times 2} & 0_{2 \times 1} \\ 0_{1 \times 2} &\T_{\su,\epsilon,\lambda,\nu}(\xi,z) \end{pmatrix} H_{\epsilon,\lambda,\nu}(z)^{-1} \\
&= \begin{pmatrix} \epsilon U_{\epsilon,\lambda,\nu}(\xi) \T_{\su,\epsilon,\lambda,\nu}(\xi,z) S_{\epsilon,\lambda,\nu}(z) & U_{\epsilon,\lambda,\nu}(\xi) \T_{\su,\epsilon,\lambda,\nu}(\xi,z) \left(1-\epsilon S_{\epsilon,\lambda,\nu}(z)U_{\epsilon,\lambda,\nu}(z)\right)\\
\epsilon \T_{\su,\epsilon,\lambda,\nu}(\xi,z) S_{\epsilon,\lambda,\nu}(z) & \T_{\su,\epsilon,\lambda,\nu}(\xi,z)\left(1-\epsilon S_{\epsilon,\lambda,\nu}(z)U_{\epsilon,\lambda,\nu}(z)\right)\end{pmatrix}.
\end{split}
\end{align}
for $\xi, z \in I_{\lr}$. First, upon recalling that $S_{\epsilon,\lambda,\nu}$, $U_{\epsilon,\lambda,\nu}$ and $ \T_{\su,\epsilon,\lambda,\nu}$ are bounded on $I_\lr$, on $I_\lr$ and on $I_\lr \times I_\lr$, respectively, by $\epsilon$- and $\lambda$-independent constants, the first estimate in~\eqref{approxid} readily follows from~\eqref{approxid0}. Second, setting $z = \xi$ in~\eqref{approxid0}, noting $\T_{\lr,\epsilon,\lambda,\nu}^{\cc}(\xi,\xi) = P_{\lr,\epsilon,\lambda,\nu}^{\cc}(\xi)$ and applying estimates~\eqref{projest2} and~\eqref{specprojest}, we arrive at
\begin{align} \label{Uapprox}
\left\| U_{\epsilon,\lambda,\nu}(\xi) - \begin{pmatrix} \left(f'(u_{\lr}(\epsilon \xi))\right)^{-1} \\ 0 \end{pmatrix}\right\| \leq C_\nu \left(\epsilon^{\frac{2}{3}} + |\lambda|\right),
\end{align}
for $\xi \in I_{\lr}$, where we recall that $S_{\epsilon,\lambda,\nu}$ and $U_{\epsilon,\lambda,\nu}$ are bounded on $I_\lr$ by $\epsilon$- and $\lambda$-independent constants. So, with the aid of~\eqref{period},~\eqref{evolsform} and~\eqref{Uapprox} we approximate
\begin{align} \label{approxid1}
\left|\T_{\su,\epsilon,\lambda,\nu}\left(\frac{\nu}{\epsilon},L_{\lr,\eps} + \frac{\log(\epsilon)}{\nu}\right) - \exp\left(-\frac{\gamma}{c}\left(L_\lr - \nu\right) + \int_{\nu}^{L_\lr} \frac{1}{cf'(u_{\lr}(y))} \de y\right)\right| \leq C_\nu\left(|\lambda| + \epsilon^{\frac13}\right),
\end{align}
provided $0 < \epsilon, |\lambda| \ll \nu \ll 1$. Next, we use~\eqref{eq:reduced} and recall $u_{\lr}(L_\lr) = \bar{u}_2$ and $u_{\lr}(0) = u_1$ to compute
\begin{align} \label{approxid2}
\begin{split}
-\frac{\gamma}{c}\left(L_\lr - \nu\right) + \int_{\nu}^{L_\lr} \frac{1}{cf'(u_{\lr}(y))} \de y &= 
-\int_{\nu}^{L_\lr} \frac{\gamma f'(u_{\lr}(y)) u_{\lr}'(y)}{\gamma f(u_{\lr}(y)) + a - u_{\lr}(y)} \de y + \int_{\nu}^{L_\lr} \frac{u_{\lr}'(y)}{\gamma f(u_{\lr}(y)) + a - u_{\lr}(y)} \de y \\
&= -\int_{u_{\lr}(\nu)}^{u_{\lr}(L_\lr)} \frac{\gamma f'(u) - 1}{\gamma f(u) + a - u} \de u = \log\left(\frac{u_{\lr}(\nu) - \gamma f(u_\lr(\nu)) - a}{\bar{u}_2 - \gamma f(\bar{u}_2) - a}\right),
\end{split}
\end{align}
provided $0 < \nu \ll 1$, where we note that $u_1 - \gamma f(u_1) - a, u_2 - \gamma f(u_2) - a \neq 0$ as $0<\gamma<\gamma_*(a)$. Finally, combining~\eqref{approxid0},~\eqref{approxid1}, and~\eqref{approxid2}, we arrive at the second inequality in~\eqref{approxid}, where we recall that $S_{\epsilon,\lambda,\nu}$, $U_{\epsilon,\lambda,\nu}$ and $\T_{\su,\epsilon,\lambda,\nu}$ are bounded on $I_\lr$, on $I_\lr$ and on $I_\lr \times I_\lr$, respectively, by $\epsilon$- and $\lambda$-independent constants.
\end{proof}

\subsection{Solving the eigenvalue problem away from the fold points}\label{sec:bvp_between_folds} 

Here, we construct a solution to the eigenvalue problem~\eqref{eigenvalueproblem} on the intervals 
\begin{align*}
    \mathcal{I}_\mathrm{r}&=\left[ \xi_{\mathrm{uf},\eps,\nu}^\mathrm{out}, \xi_{\mathrm{lf},\eps,\nu}^\mathrm{in}\right]=\left[L_{\lr,\eps}+ \frac{\nu}{\epsilon},L_{\lr,\eps}+L_{\rr,\eps}+\frac{1}{\nu}\right] = \left[L_{\lr,\eps}+ \frac{\nu}{\epsilon},L_\eps+\frac{1}{\nu}\right],\\
\mathcal{I}_\mathrm{l}&=\left[\xi_{\mathrm{lf},\eps,\nu}^{\mathrm{out},0},\xi_{\mathrm{uf},\eps,\nu}^\mathrm{in}\right]=\left[ \frac{\nu}{\epsilon},L_{\lr,\eps}+\frac{1}{\nu}\right]
\end{align*}
away from the fold points with the aid of Lin's method. We employ the exponential trichotomies, established in Propositions~\ref{prop:varred2} and~\ref{prop:slow}, to represent  solutions to~\eqref{eigenvalueproblem} along the front or back and along the right or left branch of the critical manifold, respectively. The result concerning the eigenvalue problem along the front and the right branch of the critical manifold reads as follows. 

\begin{proposition}\label{prop:bvp_right}
Provided $0 < \epsilon, |\lambda| \ll \nu \ll 1$, there exists for each 
$$\gamma_{\rr} \in P_{\rr,\epsilon,\lambda,\nu}^{\su}\left( \xi_{\mathrm{uf},\eps,\nu}^\mathrm{out}\right)[\C^3], \quad \text{ and } \quad \alpha_\f,\beta_\f \in \C,$$
a unique solution $\psi \colon \mathcal{I}_\mathrm{r} \to \C^3$ to the eigenvalue problem~\eqref{eigenvalueproblem} subject to the boundary conditions
\begin{align}
P_{\rr,\epsilon,\lambda,\nu}^{\su}\left(\xi_{\mathrm{uf},\eps,\nu}^\mathrm{out}\right)\psi\left(\xi_{\mathrm{uf},\eps,\nu}^\mathrm{out}\right) &= \gamma_{\rr}, \label{initcond2}\\ 
\widetilde{P}_{\f,\nu}^{\cc}\left(\tfrac{1}{\nu}\right)\psi\left(\xi_{\mathrm{lf},\eps,\nu}^\mathrm{in}\right) = \beta_\f \Psi_{\f,\nu}\left(\tfrac{1}{\nu}\right), \quad & \quad \widetilde{P}_{\f,\nu}^{\uu}\left(\tfrac{1}{\nu} \right) \psi\left(\xi_{\mathrm{lf},\eps,\nu}^\mathrm{in}\right) = \alpha_{\f} \Phi_{\f}\left(\tfrac{1}{\nu}\right). \label{initcond1}
\end{align}
Moreover, there exist $\epsilon$- and $\lambda$-independent constants $C_\nu,\vartheta_\nu > 0$ such that the solution $\psi$ enjoys the estimates
\begin{align}
    \label{matchingbetween}
\begin{split}
\left\|\widetilde P_{\f,\nu}^{\su}\left(\tfrac{1}{\nu}\right) \psi\left(\xi_{\mathrm{lf},\eps,\nu}^\mathrm{in}\right)\right\| &\leq C_\nu \left(\left(\epsilon^{\frac{2}{3}}  + |\lambda|\right)\left(|\alpha_{\f}| + |\beta_{\f}|\right) + \re^{-\vartheta_\nu/\eps}\|\gamma_{\rr}\|\right), \\
\left\|P_{\rr,\epsilon,\lambda,\nu}^{\uu}\left(\xi_{\mathrm{uf},\eps,\nu}^\mathrm{out}\right)\psi\left(\xi_{\mathrm{uf},\eps,\nu}^\mathrm{out}\right)\right\| &\leq C_\nu \re^{-\vartheta_\nu/\eps} \left(|\alpha_{\f}| + |\beta_{\f}| + \|\gamma_{\rr}\|\right),\\
\left\|P_{\rr,\epsilon,\lambda,\nu}^{\cc}\left(\xi_{\mathrm{uf},\eps,\nu}^\mathrm{out}\right)\psi\left(\xi_{\mathrm{uf},\eps,\nu}^\mathrm{out}\right)\right\| &\leq C_\nu \left(|\beta_{\f}| + \left(\epsilon^{\frac{2}{3}}  + |\lambda|\right) |\alpha_{\f}| + \re^{-\vartheta_\nu/\eps}\|\gamma_{\rr}\|\right),
\end{split}
\end{align}
and
\begin{align} \label{matchingbetweenrefined}
\begin{split}
&\left|\left\langle \mathbf{e}_3, P_{\rr,\epsilon,\lambda,\nu}^{\cc}\left(\xi_{\mathrm{uf},\eps,\nu}^\mathrm{out}\right)\psi\left(\xi_{\mathrm{uf},\eps,\nu}^\mathrm{out}\right)\right\rangle - \frac{u_{\rr}(\nu) - \gamma f(u_{\rr}(\nu)) - a}{u_2 - \gamma f(u_2) - a}\left(\beta_{\f} + \frac{\epsilon}{c} \left(u_{\f}(\tfrac{1}{\nu}) - u_2\right)\alpha_{\f}\right) \right|\\ 
&\qquad \leq C_\nu \left(\left(\epsilon^{\frac{1}{3}} + |\lambda|\right)\left(|\beta_{\f}| + \epsilon |\alpha_{\f}|\right) + \re^{-\vartheta_\nu/\eps}\|\gamma_{\rr}\|\right),
\end{split}
\end{align}
where $\mathbf{e}_3 = (0,0,1)^\top$ is the third standard basis vector.
\end{proposition}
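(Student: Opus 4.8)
The plan is a Lin's-method construction on $\mathcal{I}_\rr$, split at the jump-off point $\xi_* := L_{\lr,\eps}+L_{\rr,\eps}+\tfrac{\log\eps}{\nu}$, where the wave train leaves the right slow manifold $\mathcal{M}^\rr_\eps$ along the front $\phi_\f$, so that $I_\rr = [\xi_{\mathrm{uf},\eps,\nu}^\mathrm{out},\xi_*]$ and the front piece is $[\xi_*,\xi_{\mathrm{lf},\eps,\nu}^\mathrm{in}]$. On $I_\rr$ I use the exponential trichotomy of Proposition~\ref{prop:slow}. On the front piece I first note, via Proposition~\ref{prop:pointwise}, that $A(\xi;\eps,\lambda)$ differs from the reduced coefficient matrix $\widetilde A_\f(\xi-L_{\lr,\eps}-L_{\rr,\eps})$ of~\eqref{varred2} by $\mathcal{O}(\eps^{2/3}+|\lambda|)$, with the crucial feature that the third row of the difference stems only from the $\mathcal{O}(\eps)$ block $(\eps B_1\ \eps A_s)$ and carries no $\lambda$; roughness of exponential trichotomies then transfers the trichotomy of Proposition~\ref{prop:varred2} (read in the front variable) to~\eqref{eigenvalueproblem} on $[\xi_*,\xi_{\mathrm{lf},\eps,\nu}^\mathrm{in}]$, with projections $\mathcal{O}(\eps^{2/3}+|\lambda|)$-close to the reduced $\widetilde P^j_{\f,\nu}$, unstable (resp.\ center) range $\mathcal{O}(\eps^{2/3}+|\lambda|)$-close to $\mathrm{Sp}\{\Phi_\f\}$ (resp.\ $\mathrm{Sp}\{\Psi_{\f,\nu}\}$), and center evolution $\mathcal{O}(\eps^{2/3}+|\lambda|)$-close to that of~\eqref{varred2}. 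The long length $\sim\nu^{-1}|\log\eps|$ of the front piece is harmless for roughness, since the required smallness there is pointwise.

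I then represent solutions of~\eqref{eigenvalueproblem} on the two pieces through these trichotomies: on $I_\rr$ with stable datum $\gamma_\rr$ at $\xi_{\mathrm{uf},\eps,\nu}^\mathrm{out}$ and free unstable/center data $q^\uu,q^\cc$ at $\xi_*$; on the front piece with center/unstable data $\beta_\f\Psi_{\f,\nu}(\tfrac{1}{\nu})$, $\alpha_\f\Phi_\f(\tfrac{1}{\nu})$ at $\xi_{\mathrm{lf},\eps,\nu}^\mathrm{in}$ — so that the free center/unstable parts are the genuine reduced solutions $\beta_\f\Psi_{\f,\nu}(\cdot)$, $\alpha_\f\Phi_\f(\cdot)$ up to $\mathcal{O}(\eps^{2/3}+|\lambda|)$ corrections governed by a contraction — and free stable datum $p^\su$ at $\xi_*$. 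Continuity $\psi(\xi_*^-)=\psi(\xi_*^+)$ gives a $3\times 3$ linear system for $(q^\uu,q^\cc,p^\su)$, and solvability is the geometric heart of the argument: at $\xi_*$ the front lies exponentially (in $\nu^{-1}$) close to $(u_2,0)$ while $\mathcal{M}^\rr_\eps$ lies $\mathcal{O}(\eps^{1/3})$-close to $(u_2,0,f(u_2))$ by Proposition~\ref{prop:pointwise} and~\eqref{period}, so all six trichotomy projections at $\xi_*$ are within $\mathcal{O}(\eps^{1/3}+|\lambda|)$ of the spectral projections $\widetilde{\mathcal P}^{\su}_{\f,-\infty},\widetilde{\mathcal P}^{\uu}_{\f,-\infty},\widetilde{\mathcal P}^{\cc}_{\f,-\infty}$ of $\widetilde A_{\f,-\infty}$; hence the matching matrix is a near-identity perturbation once $0<\eps,|\lambda|\ll\nu\ll 1$, and a contraction solves the coupled variation-of-constants equations together with the matching, giving the unique $\psi$. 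In particular one reads off $\|p^\su\|\le C_\nu(\re^{-\vartheta_\nu/\eps}\|\gamma_\rr\|+(\eps^{2/3}+|\lambda|)(|\alpha_\f|+|\beta_\f|))$.

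The estimates~\eqref{matchingbetween} then follow by inspecting components: the stable part at $\xi_{\mathrm{lf},\eps,\nu}^\mathrm{in}$ is the strongly contracted forward image of $p^\su$ plus the $\mathcal{O}(\eps^{2/3}+|\lambda|)$ discrepancy between $\widetilde P^\su_{\f,\nu}(\tfrac{1}{\nu})$ and the full stable projection applied to $\alpha_\f\Phi_\f+\beta_\f\Psi_{\f,\nu}$; the unstable part at $\xi_{\mathrm{uf},\eps,\nu}^\mathrm{out}$ is the $\mathcal{O}(\re^{-\vartheta_\nu/\eps})$ backward image of $q^\uu$ over $I_\rr$; and the center part there is $\mathcal{O}(1)$ times $q^\cc$ with $\|q^\cc\|\le C_\nu(|\beta_\f|+(\eps^{2/3}+|\lambda|)|\alpha_\f|+\re^{-\vartheta_\nu/\eps}\|\gamma_\rr\|)$. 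For~\eqref{matchingbetweenrefined} I track the $w$-component: along the front piece the $w$-equation of~\eqref{eigenvalueproblem} is exactly $w_\xi=-\tfrac{\eps}{c} u+\tfrac{\eps\gamma}{c}w$, and integrating it from $\xi_{\mathrm{lf},\eps,\nu}^\mathrm{in}$ back to $\xi_*$, with the $u$-component on this piece equal to $\alpha_\f u_\f'$ modulo terms whose $\eps\,\mathrm{d}\xi$-integral over the piece stays within the stated error, with $u_\f(\tfrac{\log\eps}{\nu})=u_2+\mathcal{O}(\eps^{\upsilon/\nu})$, and with the integrability $\int_{\R}|u_\f'|<\infty$ (so the $\alpha_\f$-contribution $\int_{\xi_*}^{\xi_{\mathrm{lf},\eps,\nu}^\mathrm{in}}u_\f'=u_\f(\tfrac{1}{\nu})-u_2+\mathcal{O}(\eps^{\upsilon/\nu})$ does not inherit the length of the piece), gives that the $\mathbf{e}_3$-component of the center part at $\xi_*$ equals $\beta_\f+\tfrac{\eps}{c}(u_\f(\tfrac{1}{\nu})-u_2)\alpha_\f$ up to $\mathcal{O}((\eps^{1/3}+|\lambda|)(|\beta_\f|+\eps|\alpha_\f|)+\re^{-\vartheta_\nu/\eps}\|\gamma_\rr\|)$. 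Passing this through the leading-order center evolution along $\mathcal{M}^\rr_\eps$ of Proposition~\ref{prop:slow} — whose $\mathbf{e}_3\to\mathbf{e}_3$ entry is $\tfrac{u_\rr(\nu)-\gamma f(u_\rr(\nu))-a}{u_2-\gamma f(u_2)-a}+\mathcal{O}(\eps^{1/3}+|\lambda|)$ and whose $\mathbf{e}_{1,2}\to\mathbf{e}_3$ entries are $\mathcal{O}(\eps)$ — and using that $q^\cc$ is a near-$\mathbf{e}_3$ multiple, produces~\eqref{matchingbetweenrefined}.

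The step I expect to be most delicate is the last one: because the center direction is only $\mathcal{O}(1)$-bounded rather than contracting, errors threaten to accumulate over the front piece, whose length is $\sim\nu^{-1}|\log\eps|$. The remedy is to keep the $\mathcal{O}(\eps)$ slow-coupling perturbation — the only part that feeds the non-decaying center direction, and which carries no $\lambda$ or $\eps^{2/3}$ — strictly separate from the $\mathcal{O}(|\lambda|+\eps^{2/3})$ fast perturbation, which only feeds the exponentially decaying stable and unstable directions; together with $\int_{\R}|u_\f'|<\infty$ this confines the length factor to terms carrying $|\beta_\f|$, where it is absorbed by the $\eps^{1/3}$ or $|\lambda|$ prefactor, and keeps every $\alpha_\f$-error at the clean order $\eps(\eps^{1/3}+|\lambda|)|\alpha_\f|$. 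Systematically separating the genuine reduced solutions $\alpha_\f\Phi_\f$, $\beta_\f\Psi_{\f,\nu}$ from their $\mathcal{O}(\eps^{2/3}+|\lambda|)$ corrections throughout the bookkeeping is likewise essential.
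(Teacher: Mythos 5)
Your proposal is correct and follows essentially the same route as the paper: the same splitting of $\mathcal{I}_\rr$ at $L_\eps+\tfrac{\log\eps}{\nu}$, matching via the trichotomies of Propositions~\ref{prop:varred2} and~\ref{prop:slow}, and the same key structural point that only the $\lambda$-free $\mathcal{O}(\eps)$ coupling feeds the center direction (the paper encodes this as $\widetilde P^{\cc}_{\f,\nu}B_\f=0$ in its variation-of-constants formulation), with the $\alpha_\f$-contribution to the $w$-component computed from $\int u_\f'=u_\f(\tfrac1\nu)-u_2$ exactly as in the paper. The only cosmetic difference is that the paper works directly with Duhamel's formula relative to the reduced trichotomy rather than first transferring the trichotomy by roughness.
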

\begin{proof}
In this proof $C_\nu \geq 1$ denotes an $\epsilon$-, $\lambda$- and $\xi$-independent constant, which will be taken larger if necessary. Moreover, we introduce the short-cut notation $l_{\epsilon,\nu} = \smash{\frac{\log(\epsilon)}{\nu}}$.

We wish to express the solution $\psi$ on $\smash{[L_\eps+l_{\epsilon,\nu},L_\eps+\frac{1}{\nu}]=[L_\eps+l_{\epsilon,\nu},\xi_{\mathrm{lf},\eps,\nu}^\mathrm{in}]}$ using the variation of constants formula by regarding the eigenvalue problem~\eqref{eigenvalueproblem} as a perturbation of the reduced variational problem~\eqref{varred2}. Thus, we determine the perturbation matrix
\begin{align*}
A(\xi;\epsilon,\lambda) - \widetilde A_{\f}(\xi-L_\eps)= \epsilon \mathcal B_0 + B_{\f}(\xi;\epsilon,\lambda),
\end{align*}
where we denote
\begin{align*}
\mathcal B_0 = \frac{1}{c} \begin{pmatrix} 0 & 0 & 0 \\ 0 & 0 & 0 \\ -1 & 0 & \gamma\end{pmatrix}, \qquad B_{\f}(\xi;\epsilon,\lambda) = \begin{pmatrix} -\frac{\lambda}{c} & 0 & 0 \\ \lambda - f'(u_\epsilon(\xi)) + f'(u_{\f}(\xi-L_\eps)) & -\frac{\lambda}{c} & 0 \\ 0 & 0 & 0\end{pmatrix}.
\end{align*}
By Proposition~\ref{prop:varred2} any solution $\psi$ to~\eqref{eigenvalueproblem} with initial condition~\eqref{initcond1} must satisfy for $\xi \in \smash{[L_\eps+l_{\epsilon,\nu},\xi_{\mathrm{lf},\eps,\nu}^\mathrm{in}]}$ the variation of constants formula
\begin{align} \label{duhamel1}
\begin{split}
\psi(\xi) &= \alpha_{\f} \Phi_{\f}\left(\xi-L_\eps\right) + \beta_{\f} \Psi_{\f,\nu}(\xi-L_\eps) + \widetilde \T_{\f,\nu}^{\su}(\xi-L_\eps,l_{\epsilon,\nu}) \gamma_{\f}\\ 
&\qquad + \int_{\xi_{\mathrm{lf},\eps,\nu}^\mathrm{in}}^\xi \widetilde \T_{\f,\nu}^{\uu}(\xi-L_\eps,y-L_\eps) \left(\epsilon \mathcal B_0 + B_{\f}(y;\epsilon,\lambda)\right) \psi(y) \de y\\ 
&\qquad + \int_{L_\eps+l_{\epsilon,\nu}}^\xi \widetilde \T_{\f,\nu}^{\su}(\xi-L_\eps,y-L_\eps) \left(\epsilon \mathcal B_0 + B_{\f}(y;\epsilon,\lambda)\right) \psi(y) \de y\\
&\qquad + \epsilon \int_{\xi_{\mathrm{lf},\eps,\nu}^\mathrm{in}}^\xi \widetilde \T_{\f,\nu}^{\cc}(\xi-L_\eps,y-L_\eps) \mathcal B_0 \psi(y) \de y, 
\end{split}
\end{align}
for some $\gamma_{\f} \in \widetilde P_{\f,\nu}^{\su}(l_{\epsilon,\nu})[\C^3]$, where we use that $\widetilde{P}_{\f,\nu}^{\cc}(y-L_\eps) B_{\f}(y;\epsilon,\lambda) = 0$ for any $y \in \smash{[L_\eps+l_{\epsilon,\nu},\xi_{\mathrm{lf},\eps,\nu}^\mathrm{in}]}$. We note that Proposition~\ref{prop:pointwise} yields
\begin{align} \|B_{\f}(\xi;\epsilon,\lambda)\| \leq C_\nu \left(\epsilon^{\frac{2}{3}} + |\lambda|\right), \label{pertest} \end{align}
for $\xi \in \smash{[L_\eps+l_{\epsilon,\nu},\xi_{\mathrm{lf},\eps,\nu}^\mathrm{in}]}$. So, bounding the right-hand side of~\eqref{duhamel1} with the aid of estimate~\eqref{pertest} and Proposition~\ref{prop:varred2}, we find that the solution $\psi$ is linear in $\alpha_{\f},\beta_{\f}$ and $\gamma_{\f}$ and, provided $0 < \epsilon,|\lambda| \ll \nu \ll 1$, it enjoys the estimates
\begin{align} \sup_{\xi \in \left[L_\eps+l_{\epsilon,\nu},\xi_{\mathrm{lf},\eps,\nu}^\mathrm{in}\right]} \|\psi(\xi)\| \leq C_\nu\left(|\alpha_{\f}| + |\beta_{\f}| + \|\gamma_{\f}\|\right),\label{psiest1}\end{align}
and
\begin{align} \sup_{\xi \in \left[L_\eps+l_{\epsilon,\nu},\xi_{\mathrm{lf},\eps,\nu}^\mathrm{in}\right]} \|\psi(\xi) - \alpha_{\f} \Phi_{\f}(\xi-L_\eps)\| \leq C_\nu\left(|\beta_{\f}| + \|\gamma_{\f}\| + \left(\epsilon^{\frac{2}{3}} + |\lambda|\right)|\alpha_\f|\right).\label{psiest2}\end{align}

Next, we express $\psi$ on $\smash{[\xi_{\mathrm{uf},\eps,\nu}^\mathrm{out},L_\eps+l_{\epsilon,\nu}]}$ with the aid of the exponential trichotomy established in Proposition~\ref{prop:slow}. We find that any solution $\psi$ to~\eqref{eigenvalueproblem} with initial condition~\eqref{initcond2} must satisfy
\begin{align} \label{duhamel2}
\psi(\xi) &= \T_{\rr,\epsilon,\lambda,\nu}^{\uu}\left(\xi,L_\eps+l_{\epsilon,\nu}\right) \alpha_{\rr} +  \T_{\rr,\epsilon,\lambda,\nu}^{\cc}\left(\xi,\xi_{\mathrm{uf},\eps,\nu}^\mathrm{out}\right) \beta_{\rr} + \T_{\rr,\epsilon,\lambda,\nu}^{\su}\left(\xi,\xi_{\mathrm{uf},\eps,\nu}^\mathrm{out}\right) \gamma_{\rr}, \quad \xi \in \left[\xi_{\mathrm{uf},\eps,\nu}^\mathrm{out},L_\eps+l_{\epsilon,\nu}\right],
\end{align}
for some $\alpha_{\rr} \in P_{\rr,\epsilon,\lambda,\nu}^{\uu}(L_\eps+l_{\epsilon,\nu})[\C^3]$ and $\beta_{\rr} \in P_{\rr,\epsilon,\lambda,\nu}^{\cc}(L_\eps+l_{\epsilon,\nu})[\C^3]$. 

The next step is to equate the right-hand sides of~\eqref{duhamel1} and~\eqref{duhamel2} at the matching point $\xi = L_\eps+l_{\epsilon,\nu}$, which will yield unique expressions for $\alpha_{\rr}$, $\beta_{\rr}$, and $\gamma_{\f}$. We derive matching conditions by applying the complementary rank-one projections $P_{\rr,\epsilon,\lambda,\nu}^{\uu}(L_\eps+l_{\epsilon,\nu})$, $P_{\rr,\epsilon,\lambda,\nu}^{\su}(L_\eps+l_{\epsilon,\nu})$ and $P_{\rr,\epsilon,\lambda,\nu}^{\cc}(L_\eps+l_{\epsilon,\nu})$. However, we first show that these projections are close to the projections of the exponential trichotomy of the reduced variational problem~\eqref{varred2}, established in Proposition~\ref{prop:varred2}, at $\xi = L_\eps+l_{\epsilon,\nu}$. Propositions~\ref{prop:frontback} and~\ref{prop:pointwise} yield
\begin{align*} \left\|A(L_\eps+l_{\epsilon,\nu};\epsilon,\lambda) - \widetilde{A}_{\f,-\infty}\right\| \leq C_\nu \left(\epsilon^{\frac{2}{3}} + |\lambda|\right),\end{align*}
provided $0 < \epsilon,|\lambda| \ll \nu \ll 1$. Clearly, the same bound holds for the associated spectral projections
\begin{align*} \left\|\mathcal{P}_{\epsilon,\lambda}^j(L_\eps+l_{\epsilon,\nu}) - \widetilde{\mathcal P}_{\f,-\infty}^j\right\| \leq C_\nu \left(\epsilon^{\frac{2}{3}} + |\lambda|\right),\end{align*}
for $j = \su,\uu,\cc$. Hence, combining the latter with estimates~\eqref{projest} and~\eqref{projest2} we arrive at
\begin{align} \label{specprojest2}
\begin{split}
\left\|P_{\rr,\epsilon,\lambda,\nu}^j(L_\eps+l_{\epsilon,\nu}) - \widetilde{P}_{\f,\nu}^j(l_{\epsilon,\nu})\right\| 
\leq C_\nu \left(\epsilon^{\frac{2}{3}} + |\lambda|\right),
\end{split} 
\end{align}
for $j = \su,\uu,\cc$, provided $0 < \epsilon,|\lambda| \ll \nu \ll 1$.

We now apply the projection $P_{\rr,\epsilon,\lambda,\nu}^{\su}(L_\eps+l_{\epsilon,\nu})$ to~\eqref{duhamel1} and~\eqref{duhamel2}, equate the right-hand sides and use Propositions~\ref{prop:varred2} and~\ref{prop:slow} and estimates~\eqref{period},~\eqref{pertest},~\eqref{psiest1} and~\eqref{specprojest2} to obtain the matching condition
\begin{align} \label{match1}
\gamma_{\f} = \F_1(\alpha_{\f},\beta_{\f},\gamma_{\f},\gamma_{\rr}),
\end{align}
where the linear map $\F_1$ satisfies
\begin{align*}
\|\F_1(\alpha_{\f},\beta_{\f},\gamma_{\f},\gamma_{\rr})\| \leq C_{\nu} \left(\left(\epsilon^{\frac{2}{3}} + |\lambda|\right) \left(|\alpha_{\f}| + |\beta_{\f}| + \|\gamma_{\f}\|\right) + \re^{-\vartheta_\nu/\eps} \|\gamma_{\rr}\|\right),
\end{align*}
for some $\epsilon$- and $\lambda$-independent constant $\vartheta_\nu > 0$, provided $0 < \epsilon,|\lambda| \ll \nu \ll 1$. Similarly, applying the projection $P_{\rr,\epsilon,\lambda,\nu}^{\uu}(L_\eps+l_{\epsilon,\nu})$, we find
\begin{align} \label{match2}
\alpha_{\rr} = \F_2(\alpha_{\f},\beta_{\f},\gamma_{\f}),
\end{align}
where the linear map $\F_2$ satisfies
\begin{align*}
\|\F_2(\alpha_{\f},\beta_{\f},\gamma_{\f})\| \leq C_{\nu} \left(\epsilon^{\frac{2}{3}} + |\lambda|\right) \left(|\alpha_{\f}| + |\beta_{\f}| + \|\gamma_{\f}\|\right),
\end{align*}
provided $0 < \epsilon,|\lambda| \ll \nu \ll 1$. Finally, applying the projection 
$P_{\rr,\epsilon,\lambda,\nu}^{\cc}(L_\eps+l_{\epsilon,\nu})$, we arrive at
\begin{align}\label{match3}
\beta_{\rr} = \F_3(\alpha_{\f},\beta_{\f},\gamma_{\f})
\end{align}
where the linear map $\F_3$ satisfies
\begin{align*}
\|\F_3(\alpha_{\f},\beta_{\f},\gamma_{\f})\| \leq C_{\nu} \left(|\beta_\f| + \left(\epsilon^{\frac{2}{3}} + |\lambda|\right) \left(|\alpha_{\f}| + \|\gamma_{\f}\|\right)\right),
\end{align*}
provided $0 < \epsilon,|\lambda| \ll \nu \ll 1$. 

We observe that the matching conditions~\eqref{match1},~\eqref{match2}, and~\eqref{match3} constitute a linear system in the variables $\alpha_\f,\beta_\f,\gamma_\f,\alpha_\rr,\beta_\rr$, and $\gamma_\rr$, which can be uniquely solved for $\beta_\rr,\gamma_\f$ and $\alpha_\rr$ by the above estimates on the linear maps $\F_1,\F_2$ and $\F_3$. We find
\begin{align} \label{matchingbetweenfinal}
\gamma_{\f} = \widetilde{\F}_1(\alpha_\f,\beta_\f,\gamma_\rr),\qquad \alpha_{\rr} = \widetilde{\F}_2(\alpha_\f,\beta_\f,\gamma_\rr), \qquad \beta_{\rr} = \widetilde{\F}_3(\alpha_\f,\beta_\rr,\gamma_\rr),
\end{align}
where the linear maps $\widetilde{\F}_i$, $i = 1,2,3$ enjoy the bounds
\begin{align*}
\left\|\widetilde{\F}_i(\alpha_\f,\beta_\f,\gamma_\rr)\right\| &\leq C_{\nu} \left(\left(\epsilon^{\frac{2}{3}} + |\lambda|\right) \left(|\alpha_{\f}| + |\beta_{\f}|\right) + \re^{-\vartheta_\nu/\eps}\|\gamma_{\rr}\|\right), \qquad i = 1,2,\\
\left\|\widetilde{\F}_3(\alpha_\f,\beta_\f,\gamma_\rr)\right\| &\leq C_{\nu}\left(|\beta_{\f}| + \left(\epsilon^{\frac{2}{3}}  + |\lambda|\right) |\alpha_{\f}| + \re^{-\vartheta_\nu/\eps}\|\gamma_{\rr}\|\right),
\end{align*}
provided $0 < \epsilon,|\lambda| \ll \nu \ll 1$. The estimate~\eqref{matchingbetween} now follows readily by observing 
\begin{align*}
P_{\rr,\epsilon,\lambda,\nu}^{\uu}\left(\xi_{\mathrm{uf},\eps,\nu}^\mathrm{out}\right)\psi\left(\xi_{\mathrm{uf},\eps,\nu}^\mathrm{out}\right) &= \T_{\rr,\epsilon,\lambda,\nu}^{\uu}\left(\xi_{\mathrm{uf},\eps,\nu}^\mathrm{out},L_\eps+l_{\epsilon,\nu}\right)\alpha_{\rr},\\
P_{\rr,\epsilon,\lambda,\nu}^{\cc}\left(\xi_{\mathrm{uf},\eps,\nu}^\mathrm{out}\right)\psi\left(\xi_{\mathrm{uf},\eps,\nu}^\mathrm{out}\right) &= \T_{\rr,\epsilon,\lambda,\nu}^{\cc}\left(\xi_{\mathrm{uf},\eps,\nu}^\mathrm{out},L_\eps+l_{\epsilon,\nu}\right)\beta_{\rr},\\
\widetilde P_{\f,\nu}^{\su}\left(\tfrac{1}{\nu}\right) \psi\left(\xi_{\mathrm{lf},\eps,\nu}^\mathrm{in}\right) &= \widetilde \T_{\f,\nu}^{\su}\left(\tfrac{1}{\nu},l_{\epsilon,\nu}\right) 
\gamma_{\f} + \int_{L_\eps+l_{\epsilon,\nu}}^{\xi_{\mathrm{lf},\eps,\nu}^\mathrm{in}} \widetilde \T_{\f,\nu}^{\su}\left(\tfrac{1}{\nu},y-L_\eps\right) \left(\epsilon \mathcal B_0 + B_{\f}(y;\epsilon,\lambda)\right) \psi(y) \de y,
\end{align*}
and applying Propositions~\ref{prop:varred2} and~\ref{prop:slow} and estimates~\eqref{pertest} and~\eqref{psiest1}. 

For the refined estimate~\eqref{matchingbetweenrefined}, we first note that
\begin{align*}
\Pi_3 \Phi_\f(\xi) = 0, \qquad \Pi_3 \Psi_{\f,\nu}(\xi) = \begin{pmatrix} 0_{2 \times 1} \\ 1\end{pmatrix},\qquad 
\Pi_3 \widetilde{\T}_{\f,\nu}^{\cc}(\xi,y) = \Pi_3, \qquad \Pi_3 \widetilde{P}_{\f,\nu}^{\su}(\xi) = 0,  \qquad \Pi_3 \widetilde{P}_{\f,\nu}^{\uu}(\xi) = 0,
\end{align*}
holds for $\xi,y \in [l_{\epsilon,\nu},\frac{1}{\nu}]$, where 
$\Pi_3 \in \R^{3 \times 3}$ is the orthogonal projection on the third standard basis vector $\mathbf{e}_3$, cf.~Proposition~\ref{prop:varred2}. So, applying the complementary projections $\Pi_3$ and $I_3 - \Pi_3$ to~\eqref{duhamel1} at $\xi = L_\eps+l_{\epsilon,\nu}$, while using Proposition~\ref{prop:varred2} and estimates~\eqref{pertest},~\eqref{psiest1},~\eqref{psiest2} and~\eqref{matchingbetweenfinal}, we arrive at
\begin{align} \label{refinedest1}
\left\|(I_3 - \Pi_3)\psi(L_\eps+l_{\epsilon,\nu})\right\| \leq C_\nu\left(|\beta_\f| + \left(\epsilon^{\frac23}  + |\lambda|\right)|\alpha_f| +  \re^{-\vartheta_\nu/\eps}\|\gamma_{\rr}\|\right),
\end{align}
and
\begin{align}\label{refinedest2}
\begin{split}
&\left\|\Pi_3\psi(L_\eps+l_{\epsilon,\nu}) - \left(\beta_\f \begin{pmatrix} 0\\ 0 \\
1\end{pmatrix} - \alpha_{\f} \epsilon \Pi_3 \mathcal B_0 \int_{-\infty}^{\frac1\nu} \begin{pmatrix} u_{\f}'(\xi) \\ v_{\f}'(\xi)\\ 0\end{pmatrix} \de \xi\right)\right\|\\ 
&\qquad \leq C_\nu\left(\epsilon |\beta_\f| + \epsilon \left(\epsilon^{\frac23}  + |\lambda|\right)|\alpha_f| +  \re^{-\vartheta_\nu/\eps}\|\gamma_{\rr}\|\right),
\end{split}
\end{align}
provided $0 < \epsilon,|\lambda| \ll \nu \ll 1$. We compute
\begin{align*}
-\Pi_3 \mathcal B_0 \int_{-\infty}^{\frac1\nu} \begin{pmatrix} u_{\f}'(\xi) \\ v_{\f}'(\xi)\\ 0\end{pmatrix} \de \xi = \frac{\epsilon}{c}\left(u_{\f}(\tfrac{1}{\nu}) - u_2\right) \begin{pmatrix} 0\\ 0 \\ 1\end{pmatrix}\end{align*}
So, combining estimates~\eqref{refinedest1} and~\eqref{refinedest2} with  Proposition~\ref{prop:slow} yields
\begin{align*}
\left\langle \mathbf{e}_3, P_{\rr,\epsilon,\lambda,\nu}^{\cc}\left(\xi_{\mathrm{uf},\eps,\nu}^\mathrm{out}\right) \psi\left(\xi_{\mathrm{uf},\eps,\nu}^\mathrm{out}\right)\right\rangle  &= \left\langle \mathbf{e}_3, \T_{\rr,\epsilon,\lambda,\nu}^{\cc}\left(\xi_{\mathrm{uf},\eps,\nu}^\mathrm{out},L_\eps+l_{\epsilon,\nu}\right) \Pi_3 \psi(L_\eps+l_{\epsilon,\nu}) \right\rangle\\
&\qquad + \left\langle \mathbf{e}_3, \T_{\rr,\epsilon,\lambda,\nu}^{\cc}\left(\xi_{\mathrm{uf},\eps,\nu}^\mathrm{out},L_\eps+l_{\epsilon,\nu}\right) (I_3 - \Pi_3) \psi(L_\eps+l_{\epsilon,\nu}) \right\rangle\\
&= \frac{u_{\rr}(\nu) - \gamma f(u_{\rr}(\nu)) - a}{u_2 - \gamma f(u_2) - a}\left(\beta_{\f} +  \frac{\epsilon}{c}\left(u_{\f}(\tfrac{1}{\nu}) - u_2\right) \alpha_f\right) + \widetilde{\F}_{31}(\alpha_\f,\beta_\f,\gamma_\rr),
\end{align*}
where the linear map $\widetilde{\F}_{31}$ satisfies
\begin{align*}
|\widetilde{\F}_{31}(\alpha_{\f},\beta_{\f},\gamma_{\rr})| &\leq C_{\nu} \left(\left(\epsilon^{\frac{1}{3}} + |\lambda|\right)\left(|\beta_f| + \epsilon |\alpha_{\f}|\right) + \re^{-\vartheta_\nu/\eps}\|\gamma_{\rr}\|\right),\end{align*}
provided $0 < \epsilon,|\lambda| \ll \nu \ll 1$. This proves the final estimate~\eqref{matchingbetweenrefined}.
\end{proof}

Analogously, one obtains the following result concerning the eigenvalue problem along the back and the left branch of the critical manifold away from the fold points.

\begin{proposition}\label{prop:bvp_left}
Provided $0 < \epsilon, |\lambda| \ll \nu \ll 1$, there exists for each 
$$\gamma_{\lr} \in P_{\lr,\epsilon,\lambda,\nu}^{\su}\left(\xi_{\mathrm{lf},\eps,\nu}^{\mathrm{out},0}\right)[\C^3], \quad \text{ and } \quad \alpha_\bb,\beta_\bb \in \C,$$
a unique solution $\psi \colon\mathcal{I}_\lr \to \C^3$ to the eigenvalue problem~\eqref{eigenvalueproblem} subject to the boundary conditions
\begin{align*}
P_{\lr,\epsilon,\lambda,\nu}^{\su}\left(\xi_{\mathrm{lf},\eps,\nu}^{\mathrm{out},0}\right)\psi\left(\xi_{\mathrm{lf},\eps,\nu}^{\mathrm{out},0}\right) &= \gamma_{\lr}, \quad \ \widetilde{P}_{\bb,\nu}^{\cc}\left(\tfrac{1}{\nu}\right)\psi\left(\xi_{\mathrm{uf},\eps,\nu}^\mathrm{in}\right) = \beta_\bb \Psi_{\bb,\nu}\left(\tfrac{1}{\nu}\right), \quad \  \widetilde P_{\bb,\nu}^{\uu}\left(\tfrac{1}{\nu}\right) \psi\left(\xi_{\mathrm{uf},\eps,\nu}^\mathrm{in}\right) = \alpha_{\bb} \Phi_{\bb}\left(\tfrac{1}{\nu}\right).
\end{align*}
Moreover, there exist $\epsilon$- and $\lambda$-independent constants $C_\nu,\vartheta_\nu > 0$ such that the solution $\psi$ enjoys the estimates
\begin{align*}
\begin{split}
\left\|\widetilde P_{\bb,\nu}^{\su}\left(\tfrac{1}{\nu}\right) \psi\left(\xi_{\mathrm{uf},\eps,\nu}^\mathrm{in}\right)\right\| &\leq C_\nu \left(\left(\epsilon^{\frac{2}{3}}  + |\lambda|\right)\left(|\alpha_{\bb}| + |\beta_{\bb}|\right) + \re^{-\vartheta_\nu/\eps}\|\gamma_{\lr}\|\right), \\
\left\|P_{\lr,\epsilon,\lambda,\nu}^{\uu}\left(\xi_{\mathrm{lf},\eps,\nu}^{\mathrm{out},0}\right)\psi\left(\xi_{\mathrm{lf},\eps,\nu}^{\mathrm{out},0}\right)\right\| &\leq C_\nu \re^{-\vartheta_\nu/\eps} \left(|\alpha_{\bb}| + |\beta_{\bb}| + \|\gamma_{\lr}\|\right),\\
\left\|P_{\lr,\epsilon,\lambda,\nu}^{\cc}\left(\xi_{\mathrm{lf},\eps,\nu}^{\mathrm{out},0}\right)\psi\left(\xi_{\mathrm{lf},\eps,\nu}^{\mathrm{out},0}\right)\right\| &\leq C_\nu \left(|\beta_{\bb}| + \left(\epsilon^{\frac{2}{3}}  + |\lambda|\right) |\alpha_{\bb}| + \re^{-\vartheta_\nu/\eps}\|\gamma_{\lr}\|\right),
\end{split}
\end{align*}
and
\begin{align*}
\begin{split}
&\left|\left\langle \mathbf{e}_3, P_{\lr,\epsilon,\lambda,\nu}^{\cc}\left(\xi_{\mathrm{lf},\eps,\nu}^{\mathrm{out},0}\right)\psi\left(\xi_{\mathrm{lf},\eps,\nu}^{\mathrm{out},0}\right)\right\rangle - \frac{u_{\lr}(\nu) - \gamma f(u_{\lr}(\nu)) - a}{\bar{u}_2 - \gamma f(\bar{u}_2) - a}\left(\beta_{\bb} + \frac{\epsilon}{c} \left(u_{\bb}(\tfrac{1}{\nu}) - \bar{u}_2\right)\alpha_{\bb}\right) \right|\\ 
&\qquad \leq C_\nu \left(\left(\epsilon^{\frac{1}{3}} + |\lambda|\right)\left(|\beta_{\bb}| + \epsilon |\alpha_{\bb}|\right) + \re^{-\vartheta_\nu/\eps}\|\gamma_{\lr}\|\right),
\end{split}
\end{align*}
where $\mathbf{e}_3 = (0,0,1)^\top$ is the third standard basis vector.
\end{proposition}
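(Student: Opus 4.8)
The plan is to run the argument completely parallel to the proof of Proposition~\ref{prop:bvp_right}, replacing the front $\phi_\f$, the right branch $\mathcal{M}^\rr_\eps$, and the point $u_2$ by the back $\phi_\bb$, the left branch $\mathcal{M}^\lr_\eps$, and the point $\bar{u}_2$, respectively, and interchanging the two fold endpoints accordingly. Write $l_{\eps,\nu}=\log(\eps)/\nu$. First I would work on the interval $\left[L_{\lr,\eps}+l_{\eps,\nu},\xi_{\mathrm{uf},\eps,\nu}^\mathrm{in}\right]$, where the wave train lies near the back, and regard the eigenvalue problem~\eqref{eigenvalueproblem} as a perturbation of the reduced variational problem~\eqref{varred2} with $i=\bb$ in the shifted coordinate $\xi-L_{\lr,\eps}$. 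The perturbation matrix $A(\xi;\eps,\lambda)-\widetilde A_\bb(\xi-L_{\lr,\eps})$ splits as $\eps\mathcal B_0+B_\bb(\xi;\eps,\lambda)$, with $\mathcal B_0$ exactly as in the proof of Proposition~\ref{prop:bvp_right} and $B_\bb$ of the same block-triangular form, but with $f'(u_\f(\cdot))$ replaced by $f'(u_\bb(\xi-L_{\lr,\eps}))$; the second and sixth pointwise estimates in Proposition~\ref{prop:pointwise} then give $\|B_\bb(\xi;\eps,\lambda)\|\le C_\nu(\eps^{2/3}+|\lambda|)$ on this interval. Using the exponential trichotomy of~\eqref{varred2} from Proposition~\ref{prop:varred2}, I would write the analogue of the variation-of-constants formula~\eqref{duhamel1} with free data $\alpha_\bb\Phi_\bb$, $\beta_\bb\Psi_{\bb,\nu}$ and $\gamma_\bb\in\widetilde P_{\bb,\nu}^\su(l_{\eps,\nu})[\C^3]$, and bound its right-hand side to obtain the analogues of~\eqref{psiest1} and~\eqref{psiest2}.

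Next I would represent $\psi$ on $\left[\xi_{\mathrm{lf},\eps,\nu}^{\mathrm{out},0},L_{\lr,\eps}+l_{\eps,\nu}\right]$ via the exponential trichotomy of~\eqref{eigenvalueproblem} along the left branch supplied by Proposition~\ref{prop:slow}, splitting $\psi$ into stable, center and unstable parts, the stable part anchored at $\xi_{\mathrm{lf},\eps,\nu}^{\mathrm{out},0}=\nu/\eps$ and carrying $\gamma_\lr$, and the center and unstable parts anchored at the matching point $\xi=L_{\lr,\eps}+l_{\eps,\nu}$. Since Propositions~\ref{prop:frontback} and~\ref{prop:pointwise} give $\|A(L_{\lr,\eps}+l_{\eps,\nu};\eps,\lambda)-\widetilde A_{\bb,-\infty}\|\le C_\nu(\eps^{2/3}+|\lambda|)$, combining with~\eqref{projest} and~\eqref{projest2} yields the analogue of~\eqref{specprojest2}, i.e.\ the left-branch trichotomy projections at $\xi=L_{\lr,\eps}+l_{\eps,\nu}$ are $\mathcal{O}(\eps^{2/3}+|\lambda|)$-close to $\widetilde P_{\bb,\nu}^j(l_{\eps,\nu})$. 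Applying the three complementary rank-one projections at the matching point and equating the two representations produces three matching conditions of exactly the contraction structure of~\eqref{match1}--\eqref{match3} (governing $\gamma_\bb$ and the unstable and center coefficients along the left branch); the associated linear system is therefore uniquely solvable, which gives existence and uniqueness of $\psi$, and the first three estimates of the proposition follow from the resulting bounds together with the evolution estimates of Propositions~\ref{prop:varred2} and~\ref{prop:slow}.

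It remains to establish the refined estimate on $\langle\mathbf{e}_3,P_{\lr,\epsilon,\lambda,\nu}^{\cc}(\xi_{\mathrm{lf},\eps,\nu}^{\mathrm{out},0})\psi(\xi_{\mathrm{lf},\eps,\nu}^{\mathrm{out},0})\rangle$, which is the only place where a genuine (though still routine) computation enters. Here I would use the identities $\Pi_3\Phi_\bb=0$, $\Pi_3\Psi_{\bb,\nu}=(0,0,1)^\top$, $\Pi_3\widetilde{\T}_{\bb,\nu}^{\cc}=\Pi_3$ and $\Pi_3\widetilde P_{\bb,\nu}^{\su}=\Pi_3\widetilde P_{\bb,\nu}^{\uu}=0$ from Proposition~\ref{prop:varred2}, exactly as in the proof of Proposition~\ref{prop:bvp_right}, to isolate in $\Pi_3\psi(L_{\lr,\eps}+l_{\eps,\nu})$ the leading term $\beta_\bb(0,0,1)^\top-\eps\alpha_\bb\,\Pi_3\mathcal B_0\int_{-\infty}^{1/\nu}\Phi_\bb(\xi)\,\de\xi$ modulo $\mathcal{O}(\eps|\beta_\bb|+\eps(\eps^{2/3}+|\lambda|)|\alpha_\bb|+\re^{-\vartheta_\nu/\eps}\|\gamma_\lr\|)$. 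Using the monotone convergence $u_\bb(\xi)\to\bar{u}_2$ as $\xi\to-\infty$ from Proposition~\ref{prop:frontback}, this evaluates to $-\Pi_3\mathcal B_0\int_{-\infty}^{1/\nu}\Phi_\bb(\xi)\,\de\xi=\tfrac{\eps}{c}(u_\bb(1/\nu)-\bar{u}_2)(0,0,1)^\top$. Propagating the resulting third component along the left-branch center evolution and invoking the two inequalities of~\eqref{approxid} in Proposition~\ref{prop:slow} — the second of which supplies precisely the factor $(u_\lr(\nu)-\gamma f(u_\lr(\nu))-a)/(\bar{u}_2-\gamma f(\bar{u}_2)-a)$, while the first absorbs the $\mathbf{e}_1,\mathbf{e}_2$-components into the error — completes the estimate.

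The main obstacle I anticipate is not conceptual but organizational: correctly tracking the two distinct anchoring/matching points $\nu/\eps$ and $L_{\lr,\eps}+l_{\eps,\nu}$, the shift $\xi\mapsto\xi-L_{\lr,\eps}$ defining the back coordinate, and the sign conventions inherited from $\phi_\bb$ being monotonically increasing (as opposed to $\phi_\f$ being decreasing), so that the resulting formulas match those in the statement. Beyond the ingredients already assembled in Propositions~\ref{prop:frontback}, \ref{prop:pointwise}, \ref{prop:varred2}, \ref{prop:slow}, and~\ref{prop:bvp_right}, nothing new is needed.
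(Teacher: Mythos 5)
Your proposal is correct and follows exactly the route the paper intends: the paper gives no separate proof of Proposition~\ref{prop:bvp_left}, stating only that it is obtained ``analogously'' to Proposition~\ref{prop:bvp_right}, and your systematic substitution (front~$\to$~back, right branch~$\to$~left branch, $u_2\to\bar u_2$, matching point $L_\eps+l_{\eps,\nu}\to L_{\lr,\eps}+l_{\eps,\nu}$) is the faithful and complete realization of that analogy, including the correct identification of the prefactor $(u_\lr(\nu)-\gamma f(u_\lr(\nu))-a)/(\bar u_2-\gamma f(\bar u_2)-a)$ from the second inequality of~\eqref{approxid}. The only blemish is the spurious factor of $\eps$ in your displayed evaluation of $-\Pi_3\mathcal B_0\int_{-\infty}^{1/\nu}\Phi_\bb\,\de\xi$ (which should read $\tfrac{1}{c}(u_\bb(\tfrac1\nu)-\bar u_2)(0,0,1)^\top$, the $\eps$ already being supplied by the analogue of~\eqref{refinedest2}), but this merely reproduces the same typo in the paper's own proof of Proposition~\ref{prop:bvp_right} and does not affect the final estimate.
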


\subsection{Solving the eigenvalue problem near the fold points}\label{sec:bvp_near_folds}

Propositions~\ref{prop:bvp_right} and~\ref{prop:bvp_left} provide estimates on solutions to the eigenvalue problem~\eqref{eigenvalueproblem} between the fold points, on the intervals $\mathcal{I}_\mathrm{r}=\smash{[\xi_{\mathrm{uf},\eps,\nu}^\mathrm{out},\xi_{\mathrm{lf},\eps,\nu}^\mathrm{in}]}$ and $\mathcal{I}_\mathrm{l}=\smash{[\xi_{\mathrm{lf},\eps,\nu}^{\mathrm{out},0}, \xi_{\mathrm{uf},\eps,\nu}^\mathrm{in}]}$. It remains to obtain estimates on solutions near the fold points, on the intervals $\mathcal{I}_\mathrm{uf}=\smash{[\xi_{\mathrm{uf},\eps,\nu}^\mathrm{in},\xi_{\mathrm{uf},\eps,\nu}^\mathrm{out}]}$ and $\mathcal{I}_\mathrm{lf}=\smash{[\xi_{\mathrm{lf},\eps,\nu}^\mathrm{in},\xi_{\mathrm{lf},\eps,\nu}^{\mathrm{out},L}]}$.  This is a delicate problem due to the loss of normal hyperbolicity at each of the fold points; in particular, it is not possible to establish suitable exponential trichotomies on the intervals $\mathcal{I}_\mathrm{uf}, \mathcal{I}_\mathrm{lf}$ due to the lack of an $(\eps,\lambda)$-independent spectral gap for the center-unstable spatial eigenvalues of~\eqref{eigenvalueproblem}. Blow-up methods are needed in order to track the solution on these intervals -- this procedure is carried out in detail in~\S\ref{sec:folds}, and here we state the main results. In preparation, we define complex-valued functions $\Upsilon_\mathrm{lf}, \Upsilon_\mathrm{uf}$ by
\begin{align*}
     \Upsilon_\mathrm{lf}(z)\coloneqq \frac{z^2}{\theta_\mathrm{lf}c^3\mathrm{Ai}'(-\Omega_0)^2 } \int_{-\Omega_0}^{\infty} \re^{\frac{z^2}{\theta_\mathrm{lf}c^3 }\left(s+\Omega_0\right) }\left(s\mathrm{Ai}(s)^2-\mathrm{Ai}'(s)^2\right)\mathrm{d}s\\
     \Upsilon_\mathrm{uf}(z)\coloneqq \frac{z^2}{\theta_\mathrm{uf}c^3\mathrm{Ai}'(-\Omega_0)^2 } \int_{-\Omega_0}^{\infty} \re^{\frac{z^2}{\theta_\mathrm{uf}c^3 }\left(s+\Omega_0\right) }\left(s\mathrm{Ai}(s)^2-\mathrm{Ai}'(s)^2\right)\mathrm{d}s
\end{align*}
where $-\Omega_0 < 0$ denotes the largest zero of the Airy function $\mathrm{Ai}(z)$ (see Appendix~\ref{app:airy}), and $\theta_\mathrm{lf}, \theta_\mathrm{uf}$ are given by
\begin{align} \label{eq:defthetas}
\begin{split}
    \theta_\mathrm{lf} &\coloneqq  -\frac{(a^2-a+1)^{1/6}(u_1-\gamma f(u_1)-a)^{1/3}}{c}>0\\
    \theta_\mathrm{uf} &\coloneqq  \frac{(a^2-a+1)^{1/6}(\bar{u}_1-\gamma f(\bar{u}_1)-a)^{1/3}}{c}>0
\end{split}
\end{align}
as in~\S\ref{sec:existence}. We first consider the eigenvalue problem near the lower fold on the interval $\mathcal{I}_\mathrm{lf}$. The estimates on solutions at the endpoints of the interval are given in terms of dichotomy projections $P^\mathrm{cu,s}_{\mathrm{lf},\eps,\lambda,\nu}(\xi)$ obtained from a convenient local change of coordinates on the interval $\mathcal{I}_\mathrm{lf}$ which places the system into a normal form for slow passage through a fold. We refer to~\S\ref{sec:folds} for details of these projections. Similarly, near the upper fold, on the interval $\mathcal{I}_\mathrm{uf}$, it is convenient to state estimates on solutions in terms of dichotomy projections $P^\mathrm{cu,s}_{\mathrm{uf},\eps,\lambda,\nu}(\xi)$. For the purposes of the matching procedure, we require estimates which relate the dichotomy projections $P^\mathrm{cu,s}_{\mathrm{uf/lf},\eps,\lambda,\nu}(\xi)$ to those of Propositions~\ref{prop:bvp_right} and~\ref{prop:bvp_left} at the endpoints of the respective intervals $\mathcal{I}_\mathrm{uf/lf}$. All in all, we have the following.

\begin{proposition}\label{prop:fold_bvp_oc_lower}
There exists a continuous function $\eta \colon [0,\infty) \to [0,\infty)$ satisfying $\eta(0)=0$ such that, provided $0 < \epsilon \ll \mu\ll  \nu \ll 1$, there exist $\eps$- and $\mu$-independent constants $C_\nu,\vartheta_\nu > 0$ and complementary projections $P^\mathrm{cu}_{\mathrm{lf},\eps,\lambda,\nu}(\xi), P^\mathrm{s}_{\mathrm{lf},\eps,\lambda,\nu}(\xi) \in \C^{3 \times 3}$ for $\xi \in \mathcal{I}_\mathrm{lf}=\smash{[\xi_{\mathrm{lf},\eps,\nu}^\mathrm{in},\xi_{\mathrm{lf},\eps,\nu}^{\mathrm{out},L}]}$ such that the following holds for $\lambda\in R_1(\mu)$. 
\begin{itemize}
\item[(i)] The projections obey the estimates
\begin{align}
\left\|P^i_{\mathrm{lf},\eps,\lambda,\nu}\left( \xi_{\mathrm{lf},\eps,\nu}^\mathrm{in} \right) - \widetilde P_{\f,\nu}^i\left( \tfrac{1}{\nu} \right)\right\| & \leq C_\nu \left(\epsilon^{\frac{2}{3}}+|\lambda|\right),\label{eq:foldproj_lower_est1}\\
\left\|P^i_{\mathrm{lf},\eps,\lambda,\nu}\left(\xi_{\mathrm{lf},\eps,\nu}^{\mathrm{out},L}\right) - P_{\lr,\epsilon,\lambda,\nu}^i\left(\xi_{\mathrm{lf},\eps,\nu}^{\mathrm{out},0}\right)\right\| & \leq  C_\nu \re^{-\vartheta_\nu/\eps}\label{eq:foldproj_lower_est2}\\
\label{eq:foldprop_thirdrow}
\left\|\mathbf{e}_3^\top\widetilde{P}_{\f,\nu}^{\cc}\left(\tfrac{1}{\nu}\right) P^\mathrm{s}_{\mathrm{lf},\eps,\lambda,\nu}\left(\xi_{\mathrm{lf},\eps,\nu}^\mathrm{in}\right)\right\| &\leq C_\nu \eps,
\end{align}
for $i = \mathrm{s},\mathrm{cu}$, where we denote
\begin{align*}
\widetilde P_{\f,\nu}^\mathrm{cu}(\xi)=\widetilde P_{\f,\nu}^{\cc}(\xi)+\widetilde{P}_{\f,\nu}^{\uu}(\xi), \qquad
    P_{\lr,\epsilon,\lambda,\nu}^{\mathrm{cu}}(\xi) =  P_{\lr,\epsilon,\lambda,\nu}^{\cc}(\xi)+P_{\lr,\epsilon,\lambda,\nu}^{\uu}(\xi),
\end{align*}
and where $\mathbf{e}_3 = (0,0,1)^\top$ is the third standard basis vector.
\item[(ii)] For each
\begin{align*}
\gamma_\mathrm{uf}\in P^\mathrm{s}_{\mathrm{lf},\eps,\lambda,\nu}\left(\xi_{\mathrm{lf},\eps,\nu}^\mathrm{in}  \right) \left[\mathbb{C}^3\right], \qquad \beta_\mathrm{lf}=\begin{pmatrix}\beta_{1,\mathrm{lf}}\\ \beta_{2,\mathrm{lf}} \\ \eps \beta_{3,\mathrm{lf}}  \end{pmatrix}\in P^\mathrm{cu}_{\mathrm{lf},\eps,\lambda,\nu}\left(\xi_{\mathrm{lf},\eps,\nu}^{\mathrm{out},L}  \right)\left[ \mathbb{C}^3\right],
\end{align*}
there exists a solution $\psi \colon \mathcal{I}_\mathrm{lf} \to \C^3$ to the eigenvalue problem~\eqref{eigenvalueproblem} subject to the boundary conditions
\begin{align*}
P^\mathrm{s}_{\mathrm{lf},\eps,\lambda,\nu}\left(\xi_{\mathrm{lf},\eps,\nu}^\mathrm{in}  \right) \psi\left(\xi_{\mathrm{lf},\eps,\nu}^\mathrm{in} \right)&= \gamma_\mathrm{lf},\qquad
P^\mathrm{cu}_{\mathrm{lf},\eps,\lambda,\nu}\left(\xi_{\mathrm{lf},\eps,\nu}^{\mathrm{out},L}  \right)\psi\left(\xi_{\mathrm{lf},\eps,\nu}^{\mathrm{out},L}  \right) = \beta_\mathrm{lf},
\end{align*}
satisfying the estimate
\begin{align*}
\left\|P^\mathrm{s}_{\mathrm{lf},\eps,\lambda,\nu}\left(\xi_{\mathrm{lf},\eps,\nu}^{\mathrm{out},L} \right)\psi\left(\xi_{\mathrm{lf},\eps,\nu}^{\mathrm{out},L} \right) \right\|  &\leq  C_\nu \re^{-\frac{\vartheta_\nu}{\eps}}\|\gamma_\mathrm{lf}\|.
\end{align*}
Furthermore, if $\Re(\lambda) \geq \eps^{1/5}$, then we have 
\begin{align} \label{tame_estimate_lower}
\left\|P^\mathrm{cu}_{\mathrm{lf},\eps,\lambda,\nu}\left(\xi_{\mathrm{lf},\eps,\nu}^\mathrm{in}  \right) \psi\left(\xi_{\mathrm{lf},\eps,\nu}^\mathrm{in}\right)\right\|  &\leq  C_\nu \exp\left(\frac{\nu \Re(\lambda)}{c\eps}\right) \|\beta_\mathrm{lf}\|, 
\end{align}
while if $|\Re(\lambda)| \leq \mu \eps^{1/6}$, then it holds
\begin{align*}
P^\mathrm{cu}_{\mathrm{lf},\eps,\lambda,\nu}\left(\xi_{\mathrm{lf},\eps,\nu}^\mathrm{in}  \right) \psi\left(\xi_{\mathrm{lf},\eps,\nu}^\mathrm{in}\right)&= \begin{pmatrix} U_\mathrm{lf}\\ V_\mathrm{lf}\\ W_\mathrm{lf} \end{pmatrix}(\beta_{\mathrm{lf}}), 
\end{align*}
where $U_\mathrm{lf}, V_\mathrm{lf}$ and $W_\mathrm{lf}$ satisfy
\begin{align*}
\left\|\begin{pmatrix} U_\mathrm{lf}\\ V_\mathrm{lf} \end{pmatrix}(\beta_{\mathrm{lf}})-\alpha^u_\mathrm{lf}(\beta_\mathrm{lf}; \eps,\lambda)\begin{pmatrix} u_\eps'\\ v_\eps'\end{pmatrix}\left(\xi_{\mathrm{lf},\eps,\nu}^\mathrm{in}\right)\right\| &\leq  C_\nu\left(\|\beta_\mathrm{lf}\|+\left|\lambda \log |\lambda|\right|\cdot|\beta_{3,\mathrm{lf}}|\right), \\
\left|W_\mathrm{lf}(\beta_{\mathrm{lf}})-\eps \beta_{3,\mathrm{lf}} \frac{w_\eps'\left(\xi_{\mathrm{lf},\eps,\nu}^\mathrm{in} \right)}{w_\eps'\left(\xi_{\mathrm{lf},\eps,\nu}^{\mathrm{out},L} \right)}\right|  &\leq  C_\nu \eps \|\beta_\mathrm{lf}\|+\left(C_\nu\eps \left|\lambda \log |\lambda|\right|+\eps \eta(\nu)\left|\Upsilon_\mathrm{lf}(\lambda \eps^{-1/6})\right|\right)|\beta_{3,\mathrm{lf}}|
\end{align*}
with
\begin{align*}
\left|\alpha^u_\mathrm{lf}(\beta_\mathrm{lf}; \eps,\lambda)
-\frac{\eps \beta_{3,\mathrm{lf}}}{w_\eps'\left(\xi_{\mathrm{lf},\eps,\nu}^{\mathrm{out},L}\right)} \left(1- \Upsilon_\mathrm{lf}(\lambda \eps^{-1/6})\right)\right|&\leq \eta(\nu)\left|\Upsilon_\mathrm{lf}(\lambda \eps^{-1/6})\right| |\beta_{3,\mathrm{lf}}|.
\end{align*}
\end{itemize}
\end{proposition}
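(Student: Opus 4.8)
The plan is to analyze the eigenvalue problem~\eqref{eigenvalueproblem} on the interval $\mathcal{I}_\mathrm{lf}$ by coupling it to the existence problem and subjecting the coupled system to a geometric desingularization, extending the blow-up of~\S\ref{sec:existence} to include $\lambda$; the technical details are deferred to~\S\ref{sec:folds}. Concretely, one passes to the local $xyz$-coordinates of~\eqref{eq:fold_normalform}, in which the periodic orbit enters the neighborhood of the lower fold along the front $\phi_\f$ (with algebraic approach to the fold along the center manifold and exponential contraction in the $z$-direction) and exits along the continued slow manifold $\mathcal{M}^{\lr,+}_\eps$. The spectral parameter is rescaled consistently with the fold variables under the blow-up; this is the source of the characteristic combination $\lambda\eps^{-1/6}$ appearing in $\Upsilon_\mathrm{lf}$. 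One then works in the standard charts covering the blown-up fold: an \emph{entry chart} describing the normally hyperbolic passage toward the fold along $\mathcal{W}^{\mathrm{s}}(\mathcal{M}^{\lr}_\eps)$ together with the fast unstable direction inherited from the front; a \emph{central (rescaling) chart}, in which the linearized equation reduces at leading order to an Airy equation; and an \emph{exit chart} describing the recovery of normal hyperbolicity along $\mathcal{M}^{\lr,+}_\eps$.

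In the entry and exit charts the coefficient matrix is pointwise hyperbolic apart from a one-dimensional center direction, so the Riccati transform (exactly as in the proof of Proposition~\ref{prop:slow}) yields an exponential trichotomy with strongly contracting stable direction, strongly expanding unstable direction, and a scalar center evolution governed at leading order by the $w$-equation; this is where the factors $w_\eps'(\xi_{\mathrm{lf},\eps,\nu}^{\mathrm{in}})$, $w_\eps'(\xi_{\mathrm{lf},\eps,\nu}^{\mathrm{out},L})$ and the $\re^{-\vartheta_\nu/\eps}$ contraction rates originate. The projections $P^{\mathrm{cu},\mathrm{s}}_{\mathrm{lf},\eps,\lambda,\nu}$ are then \emph{defined} via these chart-wise dichotomies, transported to the endpoints of $\mathcal{I}_\mathrm{lf}$; estimates~\eqref{eq:foldproj_lower_est1}--\eqref{eq:foldprop_thirdrow} follow by combining the $C^1$-closeness of the blown-up invariant manifolds to their singular counterparts (so that $A(\xi;\eps,\lambda)$ is $\mathcal{O}(\eps^{2/3}+|\lambda|)$-close to the reduced matrix near $\xi_{\mathrm{lf},\eps,\nu}^{\mathrm{in}}$ and exponentially close to the slow matrix near $\xi_{\mathrm{lf},\eps,\nu}^{\mathrm{out},L}$) with roughness of exponential dichotomies, periodicity of $A$ in $\xi$, and the closeness estimate~\eqref{projest2}. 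The third-row estimate~\eqref{eq:foldprop_thirdrow} records that the stable fiber of the fold trichotomy is essentially aligned with the fast $(u,v)$-directions, as for the front trichotomy in~\eqref{projest}.

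In the central chart one sets up a variation-of-constants representation of the solution against the Airy evolution: the homogeneous solutions of the linearized equation are built from $\mathrm{Ai}$ and $\mathrm{Bi}$, and the center mode is forced by the slow drift. Solving the inhomogeneous Airy equation by reduction of order and integrating the Wronskian-type kernel $s\,\mathrm{Ai}(s)^2-\mathrm{Ai}'(s)^2$ produces exactly the integral defining $\Upsilon_\mathrm{lf}$, with the normalization by $\mathrm{Ai}'(-\Omega_0)^2$ reflecting that the front reaches the fold tangent to the center manifold at the largest Airy zero $-\Omega_0$, which plays the role of the buffer point for the slow passage. The two regimes for $\Re(\lambda)$ are then handled separately inside this chart. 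When $\Re(\lambda)\geq\eps^{1/5}$ — well above the threshold relative to the chart's time variable — the $\lambda$-term dominates the Airy part, the center direction becomes genuinely expanding with rate $\sim\Re(\lambda)/(c\eps)$, and a Gronwall estimate gives the tame bound~\eqref{tame_estimate_lower} without any fine Airy asymptotics. When $|\Re(\lambda)|\leq\mu\eps^{1/6}$, by contrast, $\lambda$ sits at or below the natural scale of the rescaling chart, the Airy structure is genuinely present, and one tracks the center mode through the connection problem: matching the incoming center data across the chart via the Airy connection formulae yields the scattering factor $1-\Upsilon_\mathrm{lf}(\lambda\eps^{-1/6})$ in $\alpha^u_\mathrm{lf}$ and $W_\mathrm{lf}$ (in particular $\Upsilon_\mathrm{lf}(0)=0$, so at $\lambda=0$ the center mode transfers cleanly, consistent with persistence of the translation mode), while the strongly hyperbolic components are controlled by the now $(\eps,\lambda)$-uniform spectral gaps furnished by the blow-up. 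The $|\lambda\log|\lambda||$ errors arise from logarithmic corrections in expanding the Airy-based integrals near $\lambda=0$, matching the algebraic approach of the front to the fold from Proposition~\ref{prop:frontback}.

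The main obstacle is the central-chart analysis: extracting the precise coefficient $\Upsilon_\mathrm{lf}$ requires a careful Airy connection argument — identifying the combination of $\mathrm{Ai}$ and $\mathrm{Bi}$ selected by the incoming center manifold, computing the transfer of the center mode through the passage, and matching the blow-up chart solution to the Riccati-based trichotomies on either side — all while keeping the estimates uniform in $\lambda$ across the transition between the two regimes and uniform in $\eps$. A secondary difficulty is the bookkeeping: unlike the traveling-pulse analyses of~\cite{PBR,holzer2013existence}, the center direction along the wave train cannot be discarded, so one must simultaneously propagate a one-dimensional center evolution and the strongly hyperbolic directions through every chart and ensure that the resulting projections glue consistently to those of Propositions~\ref{prop:varred2} and~\ref{prop:slow}.
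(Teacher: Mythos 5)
Your overall strategy matches the paper's: couple the eigenvalue problem to the existence problem, blow up the fold including the spectral parameter, extract the Airy-type scattering coefficient $\Upsilon_\mathrm{lf}$ in the rescaling chart, and treat the two $\lambda$-regimes separately (Gronwall for $\Re(\lambda)\geq\eps^{1/5}$, fine matching for $|\Re(\lambda)|\leq\mu\eps^{1/6}$). However, three of your steps diverge from what actually works, and two of them are genuine gaps.

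First, your claim that in the entry and exit charts ``the Riccati transform (exactly as in the proof of Proposition~\ref{prop:slow}) yields an exponential trichotomy'' with a strongly expanding unstable direction is precisely what fails near the fold: the center and unstable spatial eigenvalues coalesce there, so no $(\eps,\lambda)$-uniform trichotomy exists on $\mathcal{I}_\mathrm{lf}$ — this is the stated obstruction motivating the entire section. The paper instead splits off only the strongly \emph{stable} $Z$-direction, via the normal-form transformation of Lemma~\ref{lem:fold_transf} (built from the invariance of $\mathcal{W}^\mathrm{u}(\Gamma_\eps)$ and its stable foliation, not from the Riccati transform), and then carries the full two-dimensional center-unstable $(X,Y)$-block as a unit through the charts $\mathcal{K}_1,\mathcal{K}_2,\mathcal{K}_3$ using fixed-point arguments in weighted norms adapted to the desingularized time. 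If your trichotomy existed, the blow-up would be unnecessary in those charts.

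Second, three charts do not cover the required range of $\lambda$. The proposition must hold for all $\lambda\in R_1(\mu)$ with $|\Re(\lambda)|\leq\mu\eps^{1/6}$, which includes $|\Im(\lambda)|$ up to $\mu\gg\eps^{1/6}$; for such $\lambda$ the rescaled parameter $\lambda\eps^{-1/6}$ leaves every compact set and the standard rescaling chart gives no uniform control. The paper introduces a fourth chart $\mathcal{K}_4$, rescaling by $\Im(\lambda)$ rather than by $\eps^{1/6}$, to handle the cone region $\Lambda_{\mathrm{c},\eps}(\mu,M)$; this is also where the $|\lambda\log|\lambda||$ error terms originate (from the $\mathcal{O}(\log|\lambda|)$ desingularized passage time to the section $\Sigma^\mathrm{out}_{34}$), not from ``logarithmic corrections in expanding the Airy-based integrals near $\lambda=0$'' as you assert.

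Third, on the mechanism producing $\Upsilon_\mathrm{lf}$: there is no $\mathrm{Ai}$--$\mathrm{Bi}$ connection problem. The organizing device is the ansatz $(X,Y)=\alpha\,(x_\eps',y_\eps')+(\tilde X,\tilde Y)$, subtracting the multiple of the derivative of the wave train (which solves the linearized problem at $\lambda=0$); the residual $\tilde X$-equation is first order with coefficient $-2x_R-\lambda^2/(\theta_\mathrm{lf}c^3)$ along the distinguished Riccati solution $x_R=\mathrm{Ai}'/\mathrm{Ai}$, and the explicit integrating factor $\mathrm{Ai}(s)^2\re^{\lambda^2 s/(\theta_\mathrm{lf}c^3\eps^{1/3})}$ together with $x_R'=(s\mathrm{Ai}(s)^2-\mathrm{Ai}'(s)^2)/\mathrm{Ai}(s)^2$ produces the integral defining $\Upsilon_\mathrm{lf}$ directly. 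The selection of $\mathrm{Ai}$ is forced by the exponential closeness of the wave train to $\mathcal{M}^{\lr,+}_\eps$, not by a connection formula; a genuinely second-order treatment would additionally have to rule out a $\mathrm{Bi}$ contribution. Your sketch of part (i) and of the tame estimate is otherwise consistent with the paper, though the projection estimates at $\xi^\mathrm{in}_{\mathrm{lf},\eps,\nu}$ require identifying $\ker P^\mathrm{s}_{\mathrm{lf},0,0,\nu}$ with $\mathrm{Sp}\{\Phi_\f,\Psi_{\f,\nu}\}$ via the tangent space of $\mathcal{W}^\mathrm{u}(\mathcal{M}^\rr_0)$, a step your appeal to roughness alone does not supply.
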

The proof of Proposition~\ref{prop:fold_bvp_oc_lower} will be given in~\S\ref{sec:mainfoldest}. Analogously, we have the following proposition concerning solutions to the eigenvalue problem near the upper fold.

\begin{proposition}\label{prop:fold_bvp_oc_upper}
There exists a continuous function $\eta \colon [0,\infty) \to [0,\infty)$ satisfying $\eta(0)=0$ such that, provided $0 < \epsilon \ll \mu\ll  \nu \ll 1$, there exist $\eps$- and $\mu$-independent constants $C_\nu,\vartheta_\nu > 0$ and complementary projections $P^\mathrm{cu}_{\mathrm{uf},\eps,\lambda,\nu}(\xi), P^\mathrm{s}_{\mathrm{uf},\eps,\lambda,\nu}(\xi) \in \C^{3 \times 3}$ for $\xi \in \mathcal{I}_\mathrm{uf}=\smash{[\xi_{\mathrm{uf},\eps,\nu}^\mathrm{in},\xi_{\mathrm{uf},\eps,\nu}^\mathrm{out}]}$ such that the following holds for $\lambda\in R_1(\mu)$. 
\begin{itemize}
\item[(i)] The projections obey the estimates
\begin{align*}
\left\|P^i_{\mathrm{uf},\eps,\lambda,\nu}\left( \xi_{\mathrm{uf},\eps,\nu}^\mathrm{in} \right) - \widetilde P_{\bb,\nu}^i\left( \tfrac{1}{\nu} \right)\right\| & \leq C_\nu \left(\epsilon^{\frac{2}{3}}+|\lambda|\right),
\\
\left\|P^i_{\mathrm{uf},\eps,\lambda,\nu}\left(\xi_{\mathrm{uf},\eps,\nu}^\mathrm{out}\right) - P_{\rr,\epsilon,\lambda,\nu}^i\left( \xi_{\mathrm{uf},\eps,\nu}^\mathrm{out}\right)\right\| & \leq  C_\nu \re^{-\vartheta_\nu/\eps}
\\
\left\|\mathbf{e}_3^\top\widetilde{P}_{\bb,\nu}^{\cc}\left(\tfrac{1}{\nu}\right) P^\mathrm{s}_{\mathrm{uf},\eps,\lambda,\nu}\left(\xi_{\mathrm{uf},\eps,\nu}^\mathrm{in}\right)\right\| &\leq C_\nu \eps,
\end{align*}
for $i = \mathrm{s},\mathrm{cu}$, where we denote
\begin{align*}
\widetilde P_{\bb,\nu}^\mathrm{cu}(\xi)=\widetilde P_{\bb,\nu}^{\cc}(\xi)+\widetilde P_{\bb,\nu}^{\uu}(\xi), \qquad
    P_{\rr,\epsilon,\lambda,\nu}^{\mathrm{cu}}(\xi) =  P_{\rr,\epsilon,\lambda,\nu}^{\cc}(\xi)+P_{\rr,\epsilon,\lambda,\nu}^{\uu}(\xi),
\end{align*}
and where $\mathbf{e}_3 = (0,0,1)^\top$ is the third standard basis vector.
\item[(ii)] For each
\begin{align*}
\gamma_\mathrm{uf}\in P^\mathrm{s}_{\mathrm{uf},\eps,\lambda,\nu}\left(\xi_{\mathrm{uf},\eps,\nu}^\mathrm{in}  \right) \left[\mathbb{C}^3\right], \qquad \beta_\mathrm{uf}=\begin{pmatrix}\beta_{1,\mathrm{uf}}\\ \beta_{2,\mathrm{uf}} \\ \eps \beta_{3,\mathrm{uf}}  \end{pmatrix}\in P^\mathrm{cu}_{\mathrm{uf},\eps,\lambda,\nu}\left(\xi_{\mathrm{uf},\eps,\nu}^\mathrm{out}  \right)\left[ \mathbb{C}^3\right],
\end{align*}
there exists a solution $\psi \colon \mathcal{I}_\mathrm{uf} \to \C^3$ to the eigenvalue problem~\eqref{eigenvalueproblem} subject to the boundary conditions
\begin{align*}
P^\mathrm{s}_{\mathrm{uf},\eps,\lambda,\nu}\left(\xi_{\mathrm{uf},\eps,\nu}^\mathrm{in} \right) \psi\left(\xi_{\mathrm{uf},\eps,\nu}^\mathrm{in} \right)&= \gamma_\mathrm{uf},\qquad
P^\mathrm{cu}_{\mathrm{uf},\eps,\lambda,\nu}\left(\xi_{\mathrm{uf},\eps,\nu}^\mathrm{out} \right)\psi\left(\xi_{\mathrm{uf},\eps,\nu}^\mathrm{out} \right) = \beta_\mathrm{uf},
\end{align*}
satisfying the estimate
\begin{align*}
\left\|P^\mathrm{s}_{\mathrm{uf},\eps,\lambda,\nu}\left(\xi_{\mathrm{uf},\eps,\nu}^\mathrm{out} \right)\psi\left(\xi_{\mathrm{uf},\eps,\nu}^\mathrm{out}\right) \right\|  &\leq  C_\nu \re^{-\frac{\vartheta_\nu}{\eps}}\|\gamma_\mathrm{uf}\|.
\end{align*}
Furthermore, if $\Re(\lambda) \geq \eps^{1/5}$, then we have 
\begin{align} \label{tame_estimate_upper}
\left\|P^\mathrm{cu}_{\mathrm{uf},\eps,\lambda,\nu}\left(\xi_{\mathrm{uf},\eps,\nu}^\mathrm{in}\right) \psi\left(\xi_{\mathrm{uf},\eps,\nu}^\mathrm{in}\right)\right\|  &\leq  C_\nu \exp\left(\frac{\nu \Re(\lambda)}{c\eps}\right) \|\beta_\mathrm{uf}\|,
\end{align}
while if $|\Re(\lambda)| \leq \mu \eps^{1/6}$, then it holds
\begin{align*}
P^\mathrm{cu}_{\mathrm{uf},\eps,\lambda,\nu}\left(\xi_{\mathrm{uf},\eps,\nu}^\mathrm{in}  \right) \psi\left(\xi_{\mathrm{uf},\eps,\nu}^\mathrm{in}\right)&= \begin{pmatrix} U_\mathrm{uf}\\ V_\mathrm{uf}\\ W_\mathrm{uf} \end{pmatrix}(\beta_{\mathrm{uf}}), 
\end{align*}
where $U_\mathrm{uf}, V_\mathrm{uf}$ and $W_\mathrm{uf}$ satisfy
\begin{align*}
\left\|\begin{pmatrix} U_\mathrm{uf}\\ V_\mathrm{uf} \end{pmatrix}(\beta_{\mathrm{uf}})-\alpha^u_\mathrm{uf}(\beta_\mathrm{uf}; \eps,\lambda)\begin{pmatrix} u_\eps'\\ v_\eps'\end{pmatrix}\left(\xi_{\mathrm{uf},\eps,\nu}^\mathrm{in}\right)\right\| &\leq  C_\nu\left(\|\beta_\mathrm{uf}\|+\left|\lambda \log |\lambda|\right|\cdot|\beta_{3,\mathrm{uf}}|\right), \\
\left|W_\mathrm{uf}(\beta_{\mathrm{uf}})-\eps \beta_{3,\mathrm{uf}} \frac{w_\eps'\left( \xi_{\mathrm{uf},\eps,\nu}^\mathrm{in}\right)}{w_\eps'\left(\xi_{\mathrm{uf},\eps,\nu}^\mathrm{out}\right)}\right|  &\leq  C_\nu \eps \|\beta_\mathrm{uf}\|\\
&\qquad +\,\left(C_\nu\eps \left|\lambda \log |\lambda|\right|+\eps \eta(\nu)\left|\Upsilon_\mathrm{uf}(\lambda \eps^{-1/6})\right|\right)|\beta_{3,\mathrm{uf}}|
\end{align*}
with
\begin{align*}
\left|\alpha^u_\mathrm{uf}(\beta_\mathrm{uf}; \eps,\lambda)
-\frac{\eps \beta_{3,\mathrm{uf}}}{w_\eps'\left(\xi_{\mathrm{uf},\eps,\nu}^\mathrm{out} \right)} \left(1- \Upsilon_\mathrm{uf}(\lambda \eps^{-1/6})\right)\right|&\leq \eta(\nu)\left|\Upsilon_\mathrm{uf}(\lambda \eps^{-1/6})\right| |\beta_{3,\mathrm{uf}}|.
\end{align*}
\end{itemize}
\end{proposition}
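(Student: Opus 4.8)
We sketch the plan, which runs entirely parallel to the proof of Proposition~\ref{prop:fold_bvp_oc_lower} (to be carried out in~\S\ref{sec:mainfoldest}): throughout, the front $\phi_\f$, the left branch $\mathcal M^\lr_\eps$, and $\theta_\mathrm{lf}$ are replaced by the back $\phi_\bb$, the right branch $\mathcal M^\rr_\eps$, and $\theta_\mathrm{uf}$.

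\textbf{Local normal form and coupled blow-up.} First I would use the $C^k$ coordinate change near the upper fold recalled in~\S\ref{sec:existence} to bring the traveling-wave equation~\eqref{TW} into the fold normal form~\eqref{eq:fold_normalform} (with $\zeta = \theta_\mathrm{uf}\xi$), in which the strongly contracting $z$-direction is explicitly peeled off, the critical manifold is a parabola in $\{z = 0\}$, and the incoming back reaches the fold vertex along the two-dimensional center manifold with algebraic decay, after which the wave train follows the extended slow manifold $\mathcal M^{\rr,+}_\eps$. Linearizing this system about $\phi_\eps$ gives the eigenvalue problem~\eqref{eigenvalueproblem} in the new coordinates; following the strategy of~\cite{PBR} outlined in the introduction, I would then adjoin $\lambda$ to the singular parameter $\eps$ before applying the geometric desingularization of~\cite{krupaszmolyan2001,CSosc} used for the existence problem in~\cite{CASCH}, now carrying the entire linearized flow along. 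Two charts are relevant: the entering chart, matched to $\phi_\bb$ near $\xi_{\mathrm{uf},\eps,\nu}^\mathrm{in}$, and the rescaling chart $x = \eps^{1/3}\tilde x$, $y = \eps^{2/3}\tilde y$, $\zeta = \eps^{-1/3}\tilde\zeta$, covering the fold passage.

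\textbf{The Airy-type reduced problem.} The heart of the matter is the rescaling chart. There the leading-order existence flow is the Riccati equation $\tilde x_{\tilde\zeta} = -\tilde x^2 + \tilde\zeta$, which the substitution $\tilde x = p'/p$ converts into the Airy equation $p'' = \tilde\zeta p$, with the wave-train branch corresponding to $p \propto \mathrm{Ai}$; this is why the fast transition at the fold is organized by the largest zero $-\Omega_0$ of $\mathrm{Ai}$, at which $\tilde x_* = \mathrm{Ai}'/\mathrm{Ai}$ becomes singular. Linearizing the full eigenvalue problem about the wave train in this chart and using the Riccati transformation~\cite{BDR} once more to remove the strongly stable $z$-direction, the center-unstable block reduces at leading order to a planar variable-coefficient system whose fundamental system and variation-of-constants kernel are built from $\mathrm{Ai}(\tilde\zeta)$, $\mathrm{Ai}'(\tilde\zeta)$ and the combination $s\,\mathrm{Ai}(s)^2 - \mathrm{Ai}'(s)^2$ appearing in~\eqref{eq:deflfuf}; the spectral parameter enters this reduced problem only through $\lambda^2/(\theta_\mathrm{uf}c^3\eps^{1/3}) = (\lambda\eps^{-1/6})^2/(\theta_\mathrm{uf}c^3)$, which both fixes $\eps^{1/6}$ as the relevant scale and produces the exponential weight in $\Upsilon_\mathrm{uf}$.

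\textbf{Matching and assembly.} Solving the inhomogeneous reduced problem in the rescaling chart by variation of constants against these Airy solutions, and matching to the entering chart (regular perturbation theory about $\phi_\bb$, using Proposition~\ref{prop:pointwise}) and to the slow-manifold trichotomy of Proposition~\ref{prop:slow} on the exit side, transfers the prescribed datum $\beta_\mathrm{uf} = (\beta_{1,\mathrm{uf}},\beta_{2,\mathrm{uf}},\eps\beta_{3,\mathrm{uf}})^\top$ at $\xi_{\mathrm{uf},\eps,\nu}^\mathrm{out}$ backward to $\xi_{\mathrm{uf},\eps,\nu}^\mathrm{in}$, yielding the stated formula: the slow component propagates with the ratio $w_\eps'(\xi_{\mathrm{uf},\eps,\nu}^\mathrm{in})/w_\eps'(\xi_{\mathrm{uf},\eps,\nu}^\mathrm{out})$ up to the indicated errors, and the fast translation mode $(u_\eps',v_\eps')^\top$ is excited with amplitude $\alpha^u_\mathrm{uf} = \frac{\eps\beta_{3,\mathrm{uf}}}{w_\eps'(\xi_{\mathrm{uf},\eps,\nu}^\mathrm{out})}(1 - \Upsilon_\mathrm{uf}(\lambda\eps^{-1/6}))$ up to $\eta(\nu)|\Upsilon_\mathrm{uf}|$, where $\eta(\nu) \to 0$ collects the losses from matching at the finite window $1/\nu$ and from the $\delta_0(\nu)$- and $\eps^{2/3}$-accuracy of the wave train near the fold supplied by Proposition~\ref{prop:pointwise}, and the logarithmic loss $|\lambda\log|\lambda||$ reflects the integrable, logarithmically weighted singularity of the reduced slow flow at the fold ($u_\rr(y) - \bar{u}_1 \sim \sqrt y$), which must remain consistent with the residual in the main formula~\eqref{eq:mainformapprox}. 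Part~(i) follows from the same matching: on the overlap near $\xi_{\mathrm{uf},\eps,\nu}^\mathrm{in}$ the eigenvalue problem~\eqref{eigenvalueproblem} is $(\eps^{2/3} + |\lambda|)$-close as a system to the reduced variational problem~\eqref{varred2} along the back, so by roughness the fold dichotomy projections are $(\eps^{2/3} + |\lambda|)$-close to those of Proposition~\ref{prop:varred2} at the matching point; on the exit side the fold dichotomy and the splitting of Proposition~\ref{prop:slow} describe the same system at a point lying $\gtrsim\nu/\eps$ inside their common domain, so they agree up to $\re^{-\vartheta_\nu/\eps}$, which also gives the exponentially small stable component there; and the third estimate records that the contracting fiber of the fold carries $O(\eps)$ slow content, as is manifest in~\eqref{eq:fold_normalform} where the $z$-direction decouples from the slow variable $y$ up to $O(\eps)$. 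Finally, for the tame estimate~\eqref{tame_estimate_upper} with $\Re(\lambda) \geq \eps^{1/5}$ — a range in which $\Re(\lambda)$ dominates both $\eps$ and the $O(\eps^{1/3})$-sized fold coefficients, yet which overlaps $\{|\Re(\lambda)| \leq \mu\eps^{1/6}\}$ for small $\eps$ — no Airy analysis is needed: reverting the conjugation $\Psi = \re^{-\lambda\xi/c}\check\Psi$, the center direction of $\check\Psi$ carries the genuine expansion rate $\tfrac1c(\eps\gamma + \Re(\lambda)) \gtrsim \Re(\lambda)/c$, so a Gronwall estimate over $\mathcal I_\mathrm{uf}$ together with the strong contraction of the $z$-direction yields the crude bound $C_\nu\re^{\nu\Re(\lambda)/(c\eps)}\|\beta_\mathrm{uf}\|$, which suffices because the main-formula analysis of~\S\ref{sec:criticalcurve}--\S\ref{sec:mainformula} already rules out spectrum once $\Re(\lambda) \gtrsim \eps^{1/6}$.

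\textbf{Main obstacle.} The crux is tracking the two-dimensional center-unstable subspace through the fold, where normal hyperbolicity fails and no $(\eps,\lambda)$-uniform trichotomy exists: only the one-dimensional strongly stable direction can be split off by a dichotomy, and the center/unstable splitting within the remaining block must be resolved by the blow-up itself. This forces one to desingularize finely enough to recognize the reduced scalar problem as an Airy equation perturbed through $\lambda^2\eps^{-1/3}$ — which requires carrying the eigenvalue problem alongside the existence problem, whose own fold passage is known only to $O(\eps^{2/3})$ by Proposition~\ref{prop:pointwise} — and then to extract $\Upsilon_\mathrm{uf}$ together with its $\eta(\nu)$- and $|\lambda\log|\lambda||$-errors uniformly in $\lambda$ across the transition $|\lambda| \sim \eps^{1/6}$. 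Since the upper-fold case differs from the lower-fold case only in the sign bookkeeping of the fold normal form and in the substitution $(u_1,\theta_\mathrm{lf},\phi_\f,\mathcal M^\lr) \mapsto (\bar{u}_1,\theta_\mathrm{uf},\phi_\bb,\mathcal M^\rr)$, no genuinely new difficulty arises once Proposition~\ref{prop:fold_bvp_oc_lower} is in hand.
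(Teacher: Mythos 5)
Your plan follows the paper's route exactly: couple the eigenvalue problem to the existence problem in the fold normal form of Lemma~\ref{lem:fold_transf}, peel off the strong stable $z$-direction, blow up the remaining center-unstable block with $\lambda$ adjoined to the singular parameter, recognize the Riccati/Airy structure that produces $\Upsilon_{\mathrm{uf}}$, and assemble the boundary-value problem by matching to Propositions~\ref{prop:varred2} and~\ref{prop:slow} at the two ends, with a weighted Gronwall argument for the tame estimate~\eqref{tame_estimate_upper}. Your closing observation is also the paper's: once the lower-fold case is proved, the upper fold follows by the substitution $(u_1,\theta_{\mathrm{lf}},\phi_{\f},\mathcal M^{\lr})\mapsto(\bar u_1,\theta_{\mathrm{uf}},\phi_{\bb},\mathcal M^{\rr})$, and the paper indeed disposes of Proposition~\ref{prop:fold_bvp_oc_upper} with the single remark that it is ``similar''.

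The one place where your sketch, taken literally, would fail is the chart inventory. The refined estimates must hold for all $\lambda\in R_1(\mu)$ with $|\Re(\lambda)|\le\mu\eps^{1/6}$, which allows $|\Im(\lambda)|$ up to the $\eps$-independent constant $\mu$. In your single rescaling chart the spectral parameter enters as $\lambda^2/(\theta_{\mathrm{uf}}c^3\eps^{1/3})$, which is unbounded as $\eps\to0$ in that range, so the regular-perturbation/variation-of-constants argument against the Airy kernels is not uniform there. The paper resolves this by splitting the $\lambda$-range into $\Lambda_{\mathrm{r},\eps}$ and $\Lambda_{\mathrm{c},\eps}$ and introducing a fourth chart $\mathcal K_4$ that rescales by $r_4=\Im(\lambda)$ rather than $\eps^{1/6}$ (\S\ref{sec:chartK4}); this detour is also the source of the $|\lambda\log|\lambda||$ error in the cone region, which you instead attribute to the square-root singularity of the reduced slow flow (the log actually comes from the $\mathcal O(\log|\lambda|)$ length of the desingularized transition in the entry chart, cf.\ Proposition~\ref{prop:K3_estimates4}). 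Relatedly, ``two charts'' undercounts even in the rectangle region: the exit along $\mathcal M^{\rr,+}_\eps$ inside the fold neighborhood occupies an $\mathcal O(\eps^{-1})$-long interval in its own directional chart ($\mathcal K_1$), and that is where the ratio $w_\eps'(\xi^{\mathrm{in}}_{\mathrm{uf},\eps,\nu})/w_\eps'(\xi^{\mathrm{out}}_{\mathrm{uf},\eps,\nu})$ and the $\re^{-\vartheta_\nu/\eps}$ contraction are actually produced. None of this changes the architecture, but a complete proof must include the extra scaling regime.
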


\subsection{Derivation of the main formula}\label{sec:criticalcurve}
We recall that $\lambda \in \C$ lies in the spectrum of the linearization $\El_\eps$ if and only if there exists $\rho \in [-\pi/L_\eps,\pi/L_\eps)$ such that the eigenvalue problem~\eqref{eigenvalueproblem} with the associated Floquet boundary condition~\eqref{eigenvalueproblemBCshift} admits a nontrivial solution. To access results from complex function theory, we complexify $\rho$ and consider $\rho$-values in a bounded $\eps$- and $\lambda$-independent open set $U \subset \C$ containing the interval $[-\pi/L_\eps,\pi/L_\eps)$. We employ Lin's method to construct nontrivial solutions to~\eqref{eigenvalueproblem}-\eqref{eigenvalueproblemBCshift} for $\rho \in U$ and $\lambda \in R_1(\mu)$. Specifically, we match solutions to the eigenvalue problem~\eqref{eigenvalueproblem} on the intervals $\mathcal{I}_\rr$, $\mathcal{I}_\lr$, $\mathcal{I}_{\lf}$, and $\mathcal{I}_{\uf}$, established in Propositions~\ref{prop:bvp_right},~\ref{prop:bvp_right},~\ref{prop:fold_bvp_oc_lower}, and~\ref{prop:fold_bvp_oc_upper}, respectively. The outcome of this matching procedure, which is given in the following result, leads to an implicit transcendental equation, which we call the ``main formula'', relating $\lambda,\eps$, and $\rho$. 

\begin{proposition}\label{prop:mainformula}
There exists a continuous function $\eta \colon [0,\infty) \to [0,\infty)$ satisfying $\eta(0)=0$ such that, provided $0 < \epsilon \ll \mu\ll  \nu \ll 1$, there exists an $\eps$- and $\mu$-independent constant $C_\nu > 0$
such that the eigenvalue problem~\eqref{eigenvalueproblem}-\eqref{eigenvalueproblemBC} admits a nontrivial solution for $\rho \in U$ and $\lambda \in R_{1,\eps}(\mu) \coloneqq  R_1(\mu) \cap \{\lambda \in \C : |\Re(\lambda)| \leq \mu \eps^{1/6}\}$ if and only if we have
\begin{align}\label{eq:mainformula}
\re^{-\left(\ri\rho - \frac{\lambda}{c}\right)L_\eps}&=\left(1 +  \frac{(u_1-u_2)\Upsilon_\mathrm{lf}(\lambda \eps^{-1/6})}{u_2 - \gamma f(u_2) - a}\right)\left(1 +  \frac{(\bar{u}_1-\bar{u}_2)\Upsilon_\mathrm{uf}(\lambda \eps^{-1/6})}{\bar{u}_2 - \gamma f(\bar{u}_2) - a}\right)+\mathcal{R}_\eps(\lambda),
\end{align}
where the residual $\mathcal{R}_\eps \colon R_{1,\eps}(\mu) \to \C$ obeys the bound
\begin{align}\label{eq:mainformula_residual}
\left|\mathcal{R}_\eps(\lambda, \eps)\right|\leq C_\nu\left(\eps^{\frac13}+|\lambda \log |\lambda||\right)+\eta(\nu)\left(|\Upsilon_\mathrm{uf}(\lambda \eps^{-1/6})| +|\Upsilon_\mathrm{lf}(\lambda \eps^{-1/6})|  \right).
\end{align}
Furthermore, if $\lambda\in R_1(\mu)$ satisfies $\Re(\lambda)\geq \eps^{1/5}$, the eigenvalue problem~\eqref{eigenvalueproblem}-\eqref{eigenvalueproblemBC} admits no nontrivial solution. 
\end{proposition}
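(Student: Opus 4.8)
The plan is to construct a nontrivial solution of~\eqref{eigenvalueproblem}-\eqref{eigenvalueproblemBCshift} — equivalently of~\eqref{eigenvalueproblem}-\eqref{eigenvalueproblemBC} — by Lin's method, gluing the four local solutions furnished by Propositions~\ref{prop:bvp_right},~\ref{prop:bvp_left},~\ref{prop:fold_bvp_oc_lower} and~\ref{prop:fold_bvp_oc_upper} on $\mathcal{I}_\rr$, $\mathcal{I}_\lr$, $\mathcal{I}_\lf$ and $\mathcal{I}_\uf$, and then reducing the resulting matching conditions to a single scalar equation. A nontrivial solution exists exactly when the free data $(\gamma_\rr,\alpha_\f,\beta_\f)$, $(\gamma_\lr,\alpha_\bb,\beta_\bb)$, $(\gamma_\lf,\beta_\lf)$ and $(\gamma_\uf,\beta_\uf)$ of those propositions can be chosen, not all zero, so that the four local solutions agree at the three interior junctions $\xi_{\mathrm{uf},\eps,\nu}^\mathrm{in}$, $\xi_{\mathrm{uf},\eps,\nu}^\mathrm{out}$, $\xi_{\mathrm{lf},\eps,\nu}^\mathrm{in}$ and satisfy the Floquet condition~\eqref{eigenvalueproblemBCshift} at $\xi_{\mathrm{lf},\eps,\nu}^{\mathrm{out},L}$. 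At each junction the trichotomy/dichotomy projections of the two adjacent intervals are close to one another — with correction $\mathcal{O}(\eps^{2/3}+|\lambda|)$ at the fold entrances by part~(i) of Propositions~\ref{prop:fold_bvp_oc_lower} and~\ref{prop:fold_bvp_oc_upper}, and correction $\mathcal{O}(\re^{-\vartheta_\nu/\eps})$ along the slow branches — so I would rewrite every junction condition as equality of the stable, center and unstable components in one common frame. Together with the Floquet condition this yields a homogeneous linear system in the free parameters, and the eigenvalue condition is that this system be singular.

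I would then eliminate the fast variables. The stable-matching blocks are close to the identity (with the corrections just described), hence invertible, so Gaussian elimination removes the stable seeds $\gamma_\rr,\gamma_\lr,\gamma_\lf,\gamma_\uf$ and the exponentially small unstable outputs, perturbing the surviving equations only by $\mathcal{O}(\re^{-\vartheta_\nu/\eps})$. After checking that $\beta_{3,\lf}=0$ forces all remaining data, hence $\psi$, to vanish — here estimate~\eqref{eq:foldprop_thirdrow} and its analogue control the $\mathbf{e}_3$-component of the stable projections — one is left with a scalar equation for the dominant slow unknown $\beta_{3,\lf}$, the $\mathbf{e}_3$-component of $\beta_\lf$ at $\xi_{\mathrm{lf},\eps,\nu}^{\mathrm{out},L}$, which is produced by following the $\mathbf{e}_3$-component once around the loop. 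Starting from $\beta_{3,\lf}$, Proposition~\ref{prop:fold_bvp_oc_lower}(ii) gives $W_\mathrm{lf}$ and $\alpha^u_\mathrm{lf}$ at $\xi_{\mathrm{lf},\eps,\nu}^\mathrm{in}$ in terms of $\Upsilon_\mathrm{lf}(\lambda\eps^{-1/6})$ and of $w_\eps'$ at the two endpoints of $\mathcal{I}_\lf$; matching at $\xi_{\mathrm{lf},\eps,\nu}^\mathrm{in}$ identifies $\beta_\f$ with $W_\mathrm{lf}$ and $\alpha_\f$ with $\alpha^u_\mathrm{lf}$ up to an $\mathcal{O}(|\beta_\f|)$ error, and substituting these into the combination $\beta_\f+\tfrac{\eps}{c}\big(u_\f(\tfrac1\nu)-u_2\big)\alpha_\f$ of the refined estimate~\eqref{matchingbetweenrefined} — using $w_\eps'=-\tfrac{\eps}{c}(u_\eps-\gamma w_\eps-a)$ near the fold and the identity $f(u_2)=f(u_1)$ (valid since $u_1$ is a double root of $f(\cdot)-f(u_1)$; likewise $f(\bar u_2)=f(\bar u_1)$) — makes the $u_\f(\tfrac1\nu)$-dependence cancel and leaves the factor $1+\tfrac{(u_1-u_2)\Upsilon_\mathrm{lf}}{u_2-\gamma f(u_1)-a}$ times a transport factor carrying $u_\lr(\nu)-\gamma f(u_\lr(\nu))-a$ in its denominator. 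Propagating this through $\mathcal{I}_\rr$ via Proposition~\ref{prop:bvp_right} (which multiplies by the slow-branch transport factor of Proposition~\ref{prop:slow}) and repeating the computation through the upper fold and $\mathcal{I}_\lr$ with Proposition~\ref{prop:fold_bvp_oc_upper}(ii), Proposition~\ref{prop:bvp_left} and~\eqref{eq:differenceu1u2} produces the second factor $1+\tfrac{(\bar u_1-\bar u_2)\Upsilon_\mathrm{uf}}{\bar u_2-\gamma f(\bar u_1)-a}$. The key book-keeping observation is that the $\nu$-dependent transport factors telescope exactly — each of $u_\lr(\nu)-\gamma f(u_\lr(\nu))-a$ and $u_\rr(\nu)-\gamma f(u_\rr(\nu))-a$ appears once in a numerator and once in a denominator — so the net transport contribution to the loop map equals $1$, and the only $\nu$-errors surviving are the ones attached to $\Upsilon_\mathrm{lf},\Upsilon_\mathrm{uf}$ in the fold estimates, which give the term $\eta(\nu)(|\Upsilon_\mathrm{uf}|+|\Upsilon_\mathrm{lf}|)$ in~\eqref{eq:mainformula_residual}. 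Finally, equating the third-component Floquet relation $\psi_3(\xi_{\mathrm{lf},\eps,\nu}^{\mathrm{out},0})=\re^{-(\ri\rho-\lambda/c)L_\eps}\psi_3(\xi_{\mathrm{lf},\eps,\nu}^{\mathrm{out},L})$, in which $\psi_3(\xi_{\mathrm{lf},\eps,\nu}^{\mathrm{out},L})=\eps\beta_{3,\lf}$ and $\psi_3(\xi_{\mathrm{lf},\eps,\nu}^{\mathrm{out},0})=\langle\mathbf{e}_3,P_{\lr,\eps,\lambda,\nu}^\cc\psi\rangle$ up to $\mathcal{O}(\re^{-\vartheta_\nu/\eps})$, with the loop computation and dividing through by $\eps\beta_{3,\lf}\neq0$, yields~\eqref{eq:mainformula}; the residual~\eqref{eq:mainformula_residual} then collects the $\mathcal{O}(\eps^{1/3}+|\lambda\log|\lambda||)$ errors from Propositions~\ref{prop:slow},~\ref{prop:fold_bvp_oc_lower} and~\ref{prop:fold_bvp_oc_upper} and from~\eqref{matchingbetweenrefined}, together with the $\eta(\nu)|\Upsilon|$-terms above.

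For the final assertion, when $\Re(\lambda)\geq\eps^{1/5}$ I would not use the main formula but the tame estimates~\eqref{tame_estimate_lower} and~\eqref{tame_estimate_upper}: chaining the bounds of Propositions~\ref{prop:bvp_right},~\ref{prop:bvp_left},~\ref{prop:fold_bvp_oc_lower} and~\ref{prop:fold_bvp_oc_upper} around the loop, each fold passage contributes at most a growth factor $\exp\!\big(\tfrac{\nu\Re(\lambda)}{c\eps}\big)$, while the Floquet condition contributes the decay $|\re^{(\ri\rho-\lambda/c)L_\eps}|=\re^{-\Re(\lambda)L_\eps/c}$ with $\eps L_\eps=L_0+\mathcal{O}(\eps^{1/3})$ by~\eqref{period}; since $\nu\ll L_0$, the product over the two fold passages and one Floquet step has modulus at most $\exp\!\big(-\tfrac{(L_0-2\nu)\Re(\lambda)}{2c\eps}\big)\le\tfrac12$ for $0<\eps\ll1$. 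Likewise, once $\mu$ is fixed small relative to $\vartheta_\nu$, the exponentially small factors $\re^{-\vartheta_\nu/\eps}$ collected along the slow branches outweigh the Floquet growth $\re^{\Re(\lambda)L_\eps/c}$ encountered in closing the loop on the stable seed $\gamma_\lr$. Hence the loop map on the vector of seeds is a strict contraction for $0<\eps\ll1$, which forces all seeds, and therefore $\psi$, to vanish.

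The main obstacle is this reduction together with the loop book-keeping: reconciling the several natural coordinate frames — the front/back reduced trichotomies of Proposition~\ref{prop:varred2}, the slow-branch trichotomies of Proposition~\ref{prop:slow}, and the fold-normal-form dichotomies of Propositions~\ref{prop:fold_bvp_oc_lower} and~\ref{prop:fold_bvp_oc_upper} — verifying that every exponentially small cross-term is genuinely negligible, and confirming that the algebraic cancellations (the telescoping of the transport factors, the vanishing of the $u_\f(\tfrac1\nu)$ and $u_\bb(\tfrac1\nu)$ dependence, and $f(u_2)=f(u_1)$, $f(\bar u_2)=f(\bar u_1)$) are exact, so that the residual stays no larger than $\mathcal{O}(\eps^{1/3}+|\lambda\log|\lambda||)+\eta(\nu)(|\Upsilon_\mathrm{uf}|+|\Upsilon_\mathrm{lf}|)$ as claimed.
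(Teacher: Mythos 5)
Your proposal follows essentially the same route as the paper's proof: Lin's method matching across the four intervals, elimination of the stable seeds up to exponentially small errors, tracking the $\mathbf{e}_3$-component around the loop so that the slow transport factors telescope and the $u_{\f}(\tfrac1\nu)$-, $u_{\bb}(\tfrac1\nu)$-dependence cancels up to $\eta(\nu)|\Upsilon|$-errors, and, for $\Re(\lambda)\geq\eps^{1/5}$, balancing the fold growth $\exp(\nu\Re(\lambda)/(c\eps))$ against the Floquet decay $\exp(-\Re(\lambda)L_\eps/c)$ (the paper phrases your contraction argument as the nonvanishing of a $4\times4$ determinant). The sketch is sound and matches the paper's argument.
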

\begin{proof} In this proof, $C\geq 1$ denotes a $\lambda$-, $\xi$-, $\nu$-, $\mu$-, and $\eps$-independent constant, which will be taken larger, if necessary. Moreover, $C_\nu \geq 1$ and $\vartheta_\nu > 0$ denote $\lambda$-, $\xi$-, $\mu$-, and $\eps$-independent constants, which will be taken larger and smaller, respectively, if necessary.

Combining the results of Propositions~\ref{prop:bvp_right},~\ref{prop:bvp_left},~\ref{prop:fold_bvp_oc_lower}, and~\ref{prop:fold_bvp_oc_upper}, we impose matching conditions at 
$\xi=\xi_{\mathrm{uf},\eps,\nu}^\mathrm{in},\xi_{\mathrm{uf},\eps,\nu}^\mathrm{out},\xi_{\mathrm{lf},\eps,\nu}^\mathrm{in}$, and
\begin{align*}
\xi=\xi_{\mathrm{lf},\eps,\nu}^{\mathrm{out},L}=\xi^L_{\eps,\nu}=\xi^0_{\eps,\nu}+L_\eps=\xi_{\mathrm{lf},\eps,\nu}^{\mathrm{out},0}+L_\eps
\end{align*}
to reduce the existence of a nontrivial solution $\psi(\xi)$ to the  eigenvalue problem~\eqref{eigenvalueproblem} satisfying the Floquet boundary condition~\eqref{eigenvalueproblemBCshift} for some $\rho \in U$ to a system of equations in the free variables $\alpha_\mathrm{f/b}, \beta_\mathrm{f/b}, \gamma_\mathrm{l/r},\gamma_\mathrm{uf/lf}$, and $\beta_\mathrm{uf/lf}$, where we define
\begin{align*}
\alpha_{\bb} \Phi_{\bb}\left(\tfrac{1}{\nu}\right)&=\widetilde P_{\bb,\nu}^{\uu}\left(\tfrac{1}{\nu}\right) \psi\left(\xi_{\mathrm{uf},\eps,\nu}^\mathrm{in}\right),\quad 
\beta_\bb \Psi_{\bb,\nu}\left(\tfrac{1}{\nu}\right)=\widetilde{P}_{\bb,\nu}^{\cc}\left(\tfrac{1}{\nu}\right)\psi\left(\xi_{\mathrm{uf},\eps,\nu}^\mathrm{in}\right), \quad \gamma_\mathrm{uf}=P^\mathrm{s}_{\mathrm{uf},\eps,\lambda,\nu}\left(\xi_{\mathrm{uf},\eps,\nu}^\mathrm{in} \right) \psi\left(\xi_{\mathrm{uf},\eps,\nu}^\mathrm{in}  \right)
\end{align*}
at $\xi = \xi_{\mathrm{uf},\eps,\nu}^\mathrm{in}$,
\begin{align*}
\beta_\mathrm{uf}&=\begin{pmatrix}  \beta_{1,\mathrm{uf}}\\\beta_{2,\mathrm{uf}}\\\eps \beta_{3,\mathrm{uf}}\end{pmatrix}=P^\mathrm{cu}_{\mathrm{uf},\eps,\lambda,\nu}\left(\xi_{\mathrm{uf},\eps,\nu}^\mathrm{out}  \right)\psi\left(\xi_{\mathrm{uf},\eps,\nu}^\mathrm{out}  \right) , \quad \gamma_{\rr}=P_{\rr,\epsilon,\lambda,\nu}^{\su}\left(\xi_{\mathrm{uf},\eps,\nu}^\mathrm{out}\right)\psi\left(\xi_{\mathrm{uf},\eps,\nu}^\mathrm{out}\right)
\end{align*}
at $\xi = \xi_{\mathrm{uf},\eps,\nu}^\mathrm{out}$,
\begin{align*}
\alpha_{\f} \Phi_{\f}\left(\tfrac{1}{\nu}\right)&= \widetilde{P}_{\f,\nu}^{\uu}\left(\tfrac{1}{\nu}\right) \psi\left(\xi_{\mathrm{lf},\eps,\nu}^\mathrm{in} \right), \quad
\beta_\f \Psi_{\f,\nu}\left(\tfrac{1}{\nu}\right)=\widetilde{P}_{\f,\nu}^{\cc}\left(\tfrac{1}{\nu}\right)\psi\left(\xi_{\mathrm{lf},\eps,\nu}^\mathrm{in} \right),\quad 
\gamma_\mathrm{lf}= P^\mathrm{s}_{\mathrm{lf},\eps,\lambda,\nu}\left(\xi_{\mathrm{lf},\eps,\nu}^\mathrm{in}  \right) \psi\left(\xi_{\mathrm{lf},\eps,\nu}^\mathrm{in} \right)
\end{align*}
at $\xi = \xi_{\mathrm{lf},\eps,\nu}^\mathrm{in}$, and
\begin{align*}
\beta_\mathrm{lf}^0&=\begin{pmatrix}  \beta_{1,\mathrm{lf}}^0\\\beta_{2,\mathrm{lf}}^0\\\eps \beta_{3,\mathrm{lf}}^0\end{pmatrix}=P^\mathrm{cu}_{\mathrm{lf},\eps,\lambda,\nu}\left(\xi_{\mathrm{lf},\eps,\nu}^{\mathrm{out},L}\right)\psi\left(\xi_{\mathrm{lf},\eps,\nu}^{\mathrm{out},0} \right), \quad \gamma_{\lr}^0 = P_{\lr,\epsilon,\lambda,\nu}^{\su}\left(\xi_{\mathrm{lf},\eps,\nu}^{\mathrm{out},0}\right)\psi\left(\xi_{\mathrm{lf},\eps,\nu}^{\mathrm{out},0}\right),\\
\beta_\mathrm{lf}^L&=\begin{pmatrix}  \beta_{1,\mathrm{lf}}^L\\\beta_{2,\mathrm{lf}}^L\\\eps \beta_{3,\mathrm{lf}}^L\end{pmatrix}=P^\mathrm{cu}_{\mathrm{lf},\eps,\lambda,\nu}\left(\xi_{\mathrm{lf},\eps,\nu}^{\mathrm{out},L}  \right)\psi\left(\xi_{\mathrm{lf},\eps,\nu}^{\mathrm{out},L} \right),\quad 
  \gamma_{\lr}^L = P_{\lr,\epsilon,\lambda,\nu}^{\su}\left(\xi_{\mathrm{lf},\eps,\nu}^{\mathrm{out},0}\right)\psi\left(\xi_{\mathrm{lf},\eps,\nu}^{\mathrm{out},L}\right)
\end{align*}
at $\xi = \xi_{\mathrm{lf},\eps,\nu}^{\mathrm{out},L} = \xi_{\mathrm{lf},\eps,\nu}^{\mathrm{out},0}+L_\eps$.
Moreover, we set
\begin{align} \label{betalr_def}
\beta_\rr = \widetilde P_{\f,\nu}^{\mathrm{cu}}\left(\tfrac{1}{\nu}\right) \psi\left(\xi_{\mathrm{lf},\eps,\nu}^\mathrm{in} \right), \qquad \beta_\lr = \widetilde P_{\bb,\nu}^{\mathrm{cu}}\left(\tfrac{1}{\nu}\right) \psi\left(\xi_{\mathrm{uf},\eps,\nu}^\mathrm{in}\right).
\end{align}
Clearly, it holds
\begin{align} \label{betalr_bounds}
\|\beta_{\rr}\| \leq C_\nu\left(|\alpha_\f| + |\beta_\f|\right), \qquad \|\beta_{\lr}\| \leq C_\nu\left(|\alpha_\bb| + |\beta_\bb|\right).
\end{align}
On the other hand, applying the projections $\widetilde P_{j,\nu}^{\mathrm{c}}\left(\tfrac{1}{\nu}\right)$ and $\widetilde P_{j,\nu}^{\mathrm{u}}\left(\tfrac{1}{\nu}\right)$ for $j = \f,\bb$ to~\eqref{betalr_def} and using Proposition~\ref{prop:varred2}, we establish
\begin{align} \label{betalr_bounds_2}
|\alpha_\f|,|\beta_\f| \leq C_\nu \|\beta_{\rr}\|, \qquad |\alpha_\bb|, |\beta_\bb| \leq C_\nu \|\beta_{\lr}\|.
\end{align}

We begin by solving for $\gamma_\mathrm{l/r},\gamma_\mathrm{up/dn}$ in terms of $\beta_\mathrm{l/r},\beta_\mathrm{up/dn}$. By Proposition~\ref{prop:fold_bvp_oc_lower}, we have that 
\begin{align*}
\left\|P^\mathrm{s}_{\mathrm{lf},\eps,\lambda,\nu}\left(\xi_{\mathrm{lf},\eps,\nu}^{\mathrm{out},L}  \right)\psi\left(\xi_{\mathrm{lf},\eps,\nu}^{\mathrm{out},L}  \right) \right\|  &\leq  C_\nu \re^{-\frac{\vartheta_\nu}{\eps}}\|\gamma_\mathrm{lf}\|,
\end{align*}
provided $0 < \eps, |\lambda| \ll \nu \ll 1$. Using the fact that 
\begin{align*}
\psi\left(\xi_{\mathrm{lf},\eps,\nu}^{\mathrm{out},L}  \right) &= P^\mathrm{cu}_{\mathrm{lf},\eps,\lambda,\nu}\left(\xi_{\mathrm{lf},\eps,\nu}^{\mathrm{out},L}  \right)\psi\left(\xi_{\mathrm{lf},\eps,\nu}^{\mathrm{out},L}\right)  +P^\mathrm{s}_{\mathrm{lf},\eps,\lambda,\nu}\left(\xi_{\mathrm{lf},\eps,\nu}^{\mathrm{out},L}  \right) \psi\left(\xi_{\mathrm{lf},\eps,\nu}^{\mathrm{out},L} \right)\\
&=\beta_\mathrm{lf}^L+P^\mathrm{s}_{\mathrm{lf},\eps,\lambda,\nu}\left(\xi_{\mathrm{lf},\eps,\nu}^{\mathrm{out},L}  \right) \psi\left(\xi_{\mathrm{lf},\eps,\nu}^{\mathrm{out},L}  \right),
\end{align*}
applying the projection $P_{\lr,\epsilon,\lambda,\nu}^{\su}\left(\xi_{\mathrm{lf},\eps,\nu}^{\mathrm{out},0}\right)$, and employing Propositions~\ref{prop:slow} and~\ref{prop:fold_bvp_oc_lower}, we obtain
\begin{align}
    \gamma_\mathrm{l}^L= F_1(\beta_\mathrm{lf}^L, \gamma_\mathrm{lf})\label{eq:gammaid1}
\end{align}
provided $0 < \eps, |\lambda| \ll \nu \ll 1$, where $F_1$ is a linear map satisfying
\begin{align}\label{eq:gammaests1}
\left\|F_1(\beta_\mathrm{lf}^L, \gamma_\mathrm{lf})\right\|\leq C_\nu \re^{-\vartheta_\nu/\eps}\left(\|\beta_\mathrm{lf}^L\|+ \|\gamma_\mathrm{lf}\|   \right).
\end{align}
Similarly, we establish
\begin{align}\label{eq:gammaid2}
    \gamma_\mathrm{r}= F_2(\beta_\mathrm{uf}, \gamma_\mathrm{uf}),
\end{align}
where $F_2$ is a linear map satisfying
\begin{align}\label{eq:gammaests2}
\left\|F_2(\beta_\mathrm{uf}, \gamma_\mathrm{uf})\right\|\leq C_\nu \re^{-\vartheta_\nu/\eps}\left(\|\beta_\mathrm{uf}\|+ \|\gamma_\mathrm{uf}\|   \right).
\end{align}
Next, using estimate~\eqref{betalr_bounds_2}, Propositions~\ref{prop:bvp_right} and~\ref{prop:bvp_left}, and the fact that $\gamma_\lr^0 = \re^{-\left(\ri\rho - \frac{\lambda}{c}\right)L_\eps}\gamma_\lr^L$ holds by the Floquet--Bloch boundary condition~\eqref{eigenvalueproblemBCshift}, we have
\begin{align}
\begin{split}
\left\|\widetilde P_{\f,\nu}^{\su}\left(\tfrac{1}{\nu}\right) \psi\left(\xi_{\mathrm{lf},\eps,\nu}^\mathrm{in} \right)\right\| &\leq C_\nu \left(\left(\epsilon^{\frac23} + |\lambda|\right)\|\beta_\rr\| + \re^{-\vartheta_\nu/\eps}\|\gamma_{\rr}\|\right),\\
\left\|\widetilde P_{\bb,\nu}^{\su}\left(\tfrac{1}{\nu}\right), \psi\left(\xi_{\mathrm{uf},\eps,\nu}^\mathrm{in}\right)\right\| &\leq C_\nu \left(\left(\epsilon^{\frac23} + |\lambda|\right)\|\beta_\lr\| + \re^{-\vartheta_\nu/\eps}\|\gamma_{\lr}^L\|\right),
\end{split} \label{stable_bounds_1}
\end{align}
where we used $0 < \eps, |\lambda| \ll \nu \ll 1$ and~\eqref{period}. Therefore, applying Proposition~\ref{prop:fold_bvp_oc_upper} and using~\eqref{eq:gammaid1} and~\eqref{eq:gammaests1}, we obtain
\begin{align} \label{eq:gammaid3}
\gamma_\mathrm{uf} = F_3(\beta_\mathrm{\lr}, \beta_\mathrm{lf}^L, \gamma_\mathrm{lf}),
\end{align}
provided $0 < \eps, |\lambda| \ll \nu \ll 1$, where $F_3$ is a linear map satisfying
\begin{align}\label{eq:gammaests3}
\left\|F_3(\beta_\lr, \beta_\mathrm{lf}^L, \gamma_\mathrm{lf})\right\|\leq C_\nu \left(\left(\epsilon^{\frac23} + |\lambda|\right)\|\beta_\lr\| +  \re^{-\vartheta_\nu/\eps}\left(\|\beta_\mathrm{lf}^L\|+ \|\gamma_\mathrm{lf}\|   \right)\right).
\end{align}
Similarly, we have
\begin{align} \label{eq:gammaid4}
\gamma_\mathrm{lf} = F_4(\beta_\mathrm{r}, \beta_\mathrm{uf}, \gamma_\mathrm{uf}),
\end{align}
where $F_4$ is a linear map satisfying
\begin{align}\label{eq:gammaests4}
\left\|F_4(\beta_\rr, \beta_\mathrm{uf}, \gamma_\mathrm{uf})\right\|\leq C_\nu \left(\left(\epsilon^{\frac23} + |\lambda|\right)\|\beta_\rr\|+  \re^{-\vartheta_\nu/\eps}\left(\|\beta_\mathrm{uf}\|+ \|\gamma_\mathrm{uf}\|   \right)\right).
\end{align}
Combining the estimates~\eqref{eq:gammaests1},~\eqref{eq:gammaests2},~\eqref{eq:gammaests3}, and~\eqref{eq:gammaests4}, we solve the system~\eqref{eq:gammaid1},~\eqref{eq:gammaid2},~\eqref{eq:gammaid3} and~\eqref{eq:gammaid4} of linear equations for $\gamma_\mathrm{uf}, \gamma_\mathrm{lf}$ and $\gamma_\mathrm{l}, \gamma_\mathrm{r}$ and obtain
\begin{align}\label{eq:gamma_solve_estimates}
\begin{split}
\gamma_\mathrm{uf} &= F_5\left(\beta_{\lr},\beta_\rr, \beta_\mathrm{uf}, \beta_\mathrm{lf}^L\right),\\
\gamma_\mathrm{lf} &= F_6\left(\beta_{\lr},\beta_\rr, \beta_\mathrm{uf}, \beta_\mathrm{lf}^L\right),\\
\gamma_\mathrm{l}^L &= F_7\left(\beta_{\lr},\beta_\rr, \beta_\mathrm{uf}, \beta_\mathrm{lf}^L\right),\\
\gamma_\mathrm{r} &= F_8\left(\beta_{\lr},\beta_\rr, \beta_\mathrm{uf}, \beta_\mathrm{lf}^L\right),
\end{split}
\end{align}
provided $0 < \eps, |\lambda| \ll \nu \ll 1$, where $F_5,F_6,F_7$ and $F_8$ are linear maps obeying
\begin{align*}
\|F_5\left(\beta_{\lr},\beta_\rr, \beta_\mathrm{uf}, \beta_\mathrm{lf}^L\right)\| &\leq C_\nu \left(\left(\epsilon^{\frac23} + |\lambda|\right)\|\beta_\lr\|+  \re^{-\vartheta_\nu/\eps}\left(\|\beta_\rr\|+\|\beta_\mathrm{uf}\|+ \|\beta_\mathrm{lf}^L\|   \right)\right), \\
\|F_6\left(\beta_{\lr},\beta_\rr, \beta_\mathrm{uf}, \beta_\mathrm{lf}^L\right)\| &\leq C_\nu \left(\left(\epsilon^{\frac23} + |\lambda|\right)\|\beta_\rr\| +  \re^{-\vartheta_\nu/\eps}\left(\|\beta_\lr\| + \|\beta_\mathrm{uf}\|+ \|\beta_\mathrm{lf}^L\|   \right)\right),
\end{align*}
and
\begin{align*}
\|F_j\left(\beta_{\lr},\beta_\rr, \beta_\mathrm{uf}, \beta_\mathrm{lf}^L\right)\| &\leq C_\nu \re^{-\vartheta_\nu/\eps}\left(\|\beta_\lr\| + \|\beta_\rr\| +\|\beta_\mathrm{uf}\|+ \|\beta_\mathrm{lf}^L  \|   \right)
\end{align*}
for $j=7,8$.

\paragraph{Precluding spectrum for $\Re(\lambda) \geq \eps^{1/5}$.} For $\Re(\lambda) \geq \eps^{1/5}$ the estimates~\eqref{tame_estimate_lower} and~\eqref{tame_estimate_upper} hold, providing a tame bound on the backward growth in the center-unstable direction of the solution $\psi(\xi)$ along the lower and upper fold points. We show that these estimates preclude the existence of a nontrivial solution to~\eqref{eigenvalueproblem}-\eqref{eigenvalueproblemBCshift} for any $\rho \in U$. More specifically, we derive a homogeneous linear system in the remaining free variables $\beta_{\mathrm{l/r}}$ and $\beta_{\mathrm{uf/lf}}$ and show that its determinant is nonzero. Consequently, it must hold $\beta_{\mathrm{l/r}}, \beta_{\mathrm{uf/lf}} =0$, implying $\gamma_{\mathrm{l/r}}, \gamma_{\mathrm{uf/lf}} = 0$ via~\eqref{eq:gamma_solve_estimates}. This, in turn, yields that $\psi(\xi)$ must be identically zero.

Provided $0 < \eps,|\lambda| \ll \nu \ll 1$ and $\Re(\lambda) \geq \eps^{1/5}$, Proposition~\ref{prop:fold_bvp_oc_upper} yields
\begin{align*}
\left\|P^\mathrm{cu}_{\mathrm{uf}}\left(\xi_{\mathrm{uf},\eps,\nu}^\mathrm{in}  \right) \psi\left(\xi_{\mathrm{uf},\eps,\nu}^\mathrm{in}\right)\right\| &\leq  C_\nu \re^{\frac{\nu \Re(\lambda)}{c\eps}} \|\beta_\mathrm{uf}\|. 
\end{align*}
Hence, applying Proposition~\ref{prop:fold_bvp_oc_upper} and using~\eqref{stable_bounds_1} and~\eqref{eq:gamma_solve_estimates}, we arrive at
\begin{align} \label{eq:betaid1}
\beta_\lr = G_1(\beta_\lr,\beta_{\rr},\beta_{\mathrm{uf}},\beta_{\mathrm{lf}}^L),
\end{align}
where $G_1$ is a linear map satisfying
\begin{align} \label{eq:betaest1}
\|G_1(\beta_\lr,\beta_{\rr},\beta_{\mathrm{uf}},\beta_{\mathrm{lf}}^L)\| \leq C_\nu \left(\re^{\frac{\nu \Re(\lambda)}{c\eps}} \|\beta_{\mathrm{uf}}\| + \left(\eps^{\frac{2}{3}} + |\lambda|\right) \|\beta_{\lr}\| + \re^{-\frac{\vartheta_\nu}{\eps}}\left(\|\beta_\rr\| + \|\beta_{\mathrm{lf}}^L\|\right)\right).
\end{align}
Similarly, we have
\begin{align} \label{eq:betaid2}
\beta_\rr = G_2(\beta_\lr,\beta_{\rr},\beta_{\mathrm{uf}},\beta_{\mathrm{lf}}^L),
\end{align}
where $G_2$ is a linear map obeying
\begin{align} \label{eq:betaest2}
\|G_2(\beta_\lr,\beta_{\rr},\beta_{\mathrm{uf}},\beta_{\mathrm{lf}}^L)\| \leq C_\nu \left(\re^{\frac{\nu \Re(\lambda)}{c\eps}} \|\beta_{\mathrm{lf}}^L\| + \left(\eps^{\frac{2}{3}} + |\lambda|\right)\|\beta_{\rr}\| + \re^{-\frac{\vartheta_\nu}{\eps}}\left(\|\beta_{\mathrm{l}}\| + \|\beta_{\mathrm{uf}}\|\right)\right).
\end{align}
Next, by estimate~\eqref{betalr_bounds_2}, Propositions~\ref{prop:bvp_right} and~\ref{prop:bvp_left}, and the fact that $\gamma_\lr^0 = \re^{-\left(\ri\rho - \frac{\lambda}{c}\right)L_\eps}\gamma_\lr^L$, we have
\begin{align*}
\left\|P_{\rr,\eps,\lambda,\nu}^{\mathrm{cu}}\left(\xi_{\mathrm{uf},\eps,\nu}^\mathrm{out}\right) \psi\left(\xi_{\mathrm{uf},\eps,\nu}^\mathrm{out}\right)\right\| &\leq C_\nu \left(\|\beta_\rr\| + \re^{-\vartheta_\nu/\eps}\|\gamma_{\rr}\|\right),\\
\left\| P_{\lr,\eps,\lambda,\nu}^{\mathrm{cu}}\left(\xi_{\mathrm{lf},\eps,\nu}^{\mathrm{out},0}\right) \psi\left(\xi_{\mathrm{lf},\eps,\nu}^{\mathrm{out},0}\right)\right\| &\leq C_\nu \left(\|\beta_\lr\| + \re^{-\vartheta_\nu/\eps}\|\gamma_{\lr}^L\|\right),
\end{align*}
where we used $0 < \eps, |\lambda| \ll \nu \ll 1$ and~\eqref{period}. So, applying Propositions~\ref{prop:slow},~\ref{prop:fold_bvp_oc_lower}, and~\ref{prop:fold_bvp_oc_upper} and recalling the identities $\smash{\beta_{\mathrm{lf}}^0 = \re^{-\left(\ri\rho -\frac{\lambda}{c}\right)L_\eps}\beta_{\mathrm{lf}}^L}$ and $\smash{\gamma_\lr^0 = \re^{-\left(\ri\rho -\frac{\lambda}{c}\right)L_\eps}\gamma_\lr^L}$, while using~\eqref{period},~\eqref{eq:gamma_solve_estimates}, $0 < \eps, |\lambda| \ll \nu \ll 1$ and $\Re(\lambda) \geq \eps^{1/5}$, we establish
\begin{align} \label{eq:betaid3}
\begin{split}
\beta_{\mathrm{uf}} &= G_3(\beta_\lr,\beta_{\rr},\beta_{\mathrm{uf}},\beta_{\mathrm{lf}}^L),\\
\beta_{\mathrm{lf}}^L &= G_4(\beta_\lr,\beta_{\rr},\beta_{\mathrm{uf}},\beta_{\mathrm{lf}}^L),
\end{split}
\end{align}
where $G_3$ and $G_4$ are linear maps satisfying
\begin{align} \label{eq:betaest3}
\begin{split}
\|G_3(\beta_\lr,\beta_{\rr},\beta_{\mathrm{uf}},\beta_{\mathrm{lf}}^L)\| &\leq C_\nu \left(\|\beta_\rr\| + \re^{-\frac{\vartheta_\nu}{\eps}}\left(\|\beta_{\mathrm{l}}\| + \|\beta_{\mathrm{uf}}\| + \|\beta_{\mathrm{lf}}^L\|\right)\right),\\
\|G_4(\beta_\lr,\beta_{\rr},\beta_{\mathrm{uf}},\beta_{\mathrm{lf}}^L)\| &\leq C_\nu \left(\re^{-\frac{\Re(\lambda)}{c} L_\eps} \|\beta_{\mathrm{l}}\| + \re^{-\frac{\vartheta_\nu}{\eps}}\left(\|\beta_{\mathrm{r}}\| + \|\beta_{\mathrm{uf}}\| + \|\beta_{\mathrm{lf}}^L\|\right)\right).
\end{split}
\end{align}
The equations~\eqref{eq:betaid1},~\eqref{eq:betaid2} and~\eqref{eq:betaid3} comprise a homogeneous system of four linear equations in the variables $\beta_{\lr/\rr}$ and $\beta_{\mathrm{uf/lf}}$ of the form
\begin{align} \label{betasys}
\begin{pmatrix}
1 + a_{1,\eps,\lambda,\nu} & b_{1,\eps,\lambda,\nu} & \re^{\frac{\nu\Re(\lambda)}{c\eps}} c_{1,\eps,\lambda,\nu} & b_{2,\eps,\lambda,\nu} \\ 
b_{3,\eps,\lambda,\nu} & 1 + a_{2,\eps,\lambda,\nu} & b_{4,\eps,\lambda,\nu} & \re^{\frac{\nu\Re(\lambda)}{c\eps}} c_{2,\eps,\lambda,\nu}\\
b_{5,\eps,\lambda,\nu} & c_{3,\eps,\lambda,\nu} & 1 + b_{6,\eps,\lambda,\nu} &
b_{7,\eps,\lambda,\nu} \\
\re^{-\frac{\Re(\lambda)}{c} L_\eps} c_{4,\eps,\lambda,\nu} & b_{8,\eps,\lambda,\nu} & b_{9,\eps,\lambda,\nu} &  1 + b_{10,\eps,\lambda,\nu}\end{pmatrix} \begin{pmatrix} \beta_\lr \\ \beta_{\rr} \\ \beta_{\mathrm{uf}} \\ \beta_\mathrm{lf} \end{pmatrix} = 0,
\end{align}
where, by estimates~\eqref{eq:betaest1},~\eqref{eq:betaest2} and~\eqref{eq:betaest3}, the coefficients obey
\begin{align*}
\left|a_{j,\eps,\lambda,\nu}\right| &\leq C_\nu\big(\eps^{\frac23} + |\lambda|\big), & & j = 1,2,\\
\left|b_{j,\eps,\lambda,\nu}\right| &\leq C_\nu \re^{-\frac{\vartheta_\nu}{\eps}}, & & j = 1,\ldots,10,\\
\left|c_{j,\eps,\lambda,\nu}\right| &\leq C_\nu,  & & j = 1,\ldots,4.
\end{align*}
So, combining the latter with estimate~\eqref{period}, one readily obtains that, provided $0 < \eps, |\lambda| \ll \nu \ll 1$ and $\Re(\lambda) \geq \eps^{1/5}$, the determinant $d_{\eps,\lambda,\nu}$ of the $(4\times4)$-matrix system~\eqref{betasys} can be approximated as
\begin{align*}
|d_{\eps,\lambda,\nu} - 1| \leq C_\nu\big(\eps^{\frac23} + |\lambda|\big),
\end{align*}
yielding $d_{\eps,\lambda,\nu} \neq 0$. This implies that $\beta_{\mathrm{l/r}}, \beta_{\mathrm{uf/lf}} = 0$ and, via~\eqref{eq:gamma_solve_estimates}, also that $\gamma_{\mathrm{l/r}}, \gamma_{\mathrm{uf/lf}} = 0$. Therefore, $\psi(\xi)$ vanishes identically. We conclude that~\eqref{eigenvalueproblem}-\eqref{eigenvalueproblemBCshift} possesses no nontrivial solution for any $\rho \in U$.

\paragraph{Matching at $\xi = \xi_{\mathrm{lf},\eps,\nu}^\mathrm{in} $ and $\xi = \xi_{\mathrm{uf},\eps,\nu}^\mathrm{in}$.} We continue with the derivation of the main formula~\eqref{eq:mainformula}. Having expressed the variables $\gamma_{\lr/\rr}$ and $\gamma_{\mathrm{uf}/\mathrm{lf}}$ in terms of $\alpha_{\f/\bb},\beta_{\f/\bb}$, and $\beta_{\mathrm{uf}/\mathrm{lf}}$ through~\eqref{eq:gamma_solve_estimates}, we proceed with solving for $\alpha_\mathrm{f}$ and $\beta_\mathrm{f}$ by matching at $\xi = \xi_{\mathrm{lf},\eps,\nu}^\mathrm{in}$. By Proposition~\ref{prop:fold_bvp_oc_lower}, we have that 
\begin{align*}
   P^\mathrm{cu}_{\mathrm{lf},\eps,\lambda,\nu}\left(\xi_{\mathrm{lf},\eps,\nu}^\mathrm{in}  \right) \psi\left(\xi_{\mathrm{lf},\eps,\nu}^\mathrm{in} \right)&= \begin{pmatrix} U_\mathrm{lf}\\ V_\mathrm{lf}\\ W_\mathrm{lf} \end{pmatrix}(\beta_\mathrm{lf}^L),
\end{align*}
provided $\lambda \in R_{1,\eps}(\mu)$ and $0 < \eps \ll \mu \ll \nu \ll 1$, where the linear maps $U_\mathrm{lf}, V_\mathrm{lf}, W_\mathrm{lf}$ satisfy
\begin{align} \label{eq:match1}
\begin{split}
\left\|\begin{pmatrix} U_\mathrm{lf}\\ V_\mathrm{lf} \end{pmatrix}(\beta_\mathrm{lf}^L)-\alpha^u_\mathrm{lf}(\beta_\mathrm{lf}^L; \eps,\lambda)\begin{pmatrix} u_\eps'\\ v_\eps'\end{pmatrix}\left(\xi_{\mathrm{lf},\eps,\nu}^\mathrm{in} \right)\right\| &\leq  C_\nu\left(\|\beta^L_\mathrm{lf}\|+\left|\lambda \log |\lambda|\right|\cdot|\beta^L_{3,\mathrm{lf}}|\right), \\
\left|W_\mathrm{lf}(\beta_\mathrm{lf}^L)-\eps \beta^L_{3,\mathrm{lf}} \frac{w_\eps'\left(\xi_{\mathrm{lf},\eps,\nu}^\mathrm{in}  \right)}{w_\eps'\left(\xi_{\mathrm{lf},\eps,\nu}^{\mathrm{out},L} \right)}\right|  &\leq \left(C_\nu\eps \left|\lambda \log |\lambda|\right|+\eps \eta(\nu)\left|\Upsilon_\mathrm{lf}(\lambda \eps^{-1/6})\right|\right)|\beta^L_{3,\mathrm{lf}}|\\
&\qquad +\, C_\nu \eps \|\beta^L_\mathrm{lf}\|
\end{split}
\end{align}
and where $\alpha^u_\mathrm{lf}$ and $\eta(\nu)$ are as in Proposition~\ref{prop:fold_bvp_oc_lower}. By the definition of $\beta_\mathrm{f}$, we have 
\begin{align*}
\beta_\f \Psi_{\f,\nu}\left(\tfrac{1}{\nu}\right)=\widetilde{P}_{\f,\nu}^{\cc}\left(\tfrac{1}{\nu}\right)\psi\left(\xi_{\mathrm{lf},\eps,\nu}^\mathrm{in} \right)
= \widetilde{P}_{\f,\nu}^{\cc}\left(\tfrac{1}{\nu}\right)\left(\begin{pmatrix} U_\mathrm{lf}\\ V_\mathrm{lf}\\ W_\mathrm{lf} \end{pmatrix}(\beta_{\mathrm{lf}}^L)+\gamma_\mathrm{lf}\right).
\end{align*}
To determine $\beta_\mathrm{f}$, we note that the third component of both $\Psi_{\f,\nu}\left(\tfrac{1}{\nu}\right)$ and $\widetilde{P}_{\f,\nu}^{\mathrm{c}}(\tfrac{1}{\nu})\mathbf{e}_3$ equals $1$ by Proposition~\ref{prop:varred2}, so we can ignore the first two components and write
\begin{align*}
 \begin{pmatrix} *\\ *\\\beta_\f \end{pmatrix}
&= \begin{pmatrix} *\\ *\\ W_\mathrm{lf}(\beta_{\mathrm{lf}}^L) \end{pmatrix}+\widetilde{P}_{\f,\nu}^{\cc}\left(\tfrac{1}{\nu}\right)\gamma_\mathrm{lf}.
\end{align*}
Hence, by using estimate~\eqref{eq:foldprop_thirdrow} in Proposition~\ref{prop:fold_bvp_oc_lower} and the bounds~\eqref{betalr_bounds},~\eqref{eq:gamma_solve_estimates}, and~\eqref{eq:match1}, we arrive at
\begin{align}\label{eq:betaf_Festimates}
\beta_\mathrm{f}=F_9(\alpha_\mathrm{f}, \beta_\mathrm{f},\alpha_\mathrm{b}, \beta_\mathrm{b}, \beta_\mathrm{uf}, \beta_\mathrm{lf}^L),
\end{align}
where the linear map $F_9$ satisfies
\begin{align*}
\begin{split}
&\left|F_9(\alpha_\mathrm{f}, \beta_\mathrm{f},\alpha_\mathrm{b}, \beta_\mathrm{b}, \beta_\mathrm{uf}, \beta_\mathrm{lf}^L)-\eps \beta^L_{3,\mathrm{lf}} \frac{w_\eps'\left(\xi_{\mathrm{lf},\eps,\nu}^\mathrm{in} \right)}{w_\eps'\left(\xi_{\mathrm{lf},\eps,\nu}^{\mathrm{out},L}\right)}\right|\leq\left(C_\nu\eps \left|\lambda \log |\lambda|\right|+\eps \eta(\nu)\left|\Upsilon_\mathrm{lf}(\lambda \eps^{-1/6})\right|\right)|\beta^L_{3,\mathrm{lf}}|\\
&\qquad\qquad  +C_\nu \eps \|\beta^L_\mathrm{lf}\| + C_\nu \left(\eps \left(\epsilon^{\frac{2}{3}} + |\lambda|\right)\left(|\alpha_\mathrm{f}|+|\beta_\mathrm{f}|\right)+  \re^{-\vartheta_\nu/\eps}\left(|\alpha_\mathrm{b}|+|\beta_\mathrm{b}|+\|\beta_\mathrm{uf}\|+ \|\beta_\mathrm{lf}^L\|   \right)\right).
\end{split}
\end{align*}
Similarly, using Proposition~\ref{prop:varred2}, we compute
\begin{align*}
\alpha_{\f} \Phi_{\f}\left(\tfrac{1}{\nu}\right)&=\widetilde{P}_{\f,\nu}^{\uu}\left(\tfrac{1}{\nu}\right) \psi\left(\xi_{\mathrm{lf},\eps,\nu}^\mathrm{in} \right)\\&= \widetilde{P}_{\f,\nu}^{\uu}\left(\tfrac{1}{\nu}\right)\begin{pmatrix} U_\mathrm{lf}\\ V_\mathrm{lf}\\ 0 \end{pmatrix} + \widetilde{P}_{\f,\nu}^{\uu}\left(\tfrac{1}{\nu}\right)P^\mathrm{s}_{\mathrm{lf},\eps,\lambda,\nu}\left(\xi_{\mathrm{lf},\eps,\nu}^\mathrm{in} \right)\psi\left(\xi_{\mathrm{lf},\eps,\nu}^\mathrm{in} \right)\\
&=\alpha^u_\mathrm{lf}(\beta_\mathrm{lf}^L; \eps,\lambda)\Phi_{\f}\left(\tfrac{1}{\nu}\right)+   \widetilde{P}_{\f,\nu}^{\uu}\left(\tfrac{1}{\nu}\right)\alpha^u_\mathrm{lf}(\beta_\mathrm{lf}^L; \eps,\lambda)\left(\begin{pmatrix} u_\eps'\left(\xi_{\mathrm{lf},\eps,\nu}^\mathrm{in} \right)\\ v_\eps'\left(\xi_{\mathrm{lf},\eps,\nu}^\mathrm{in} \right)\\0\end{pmatrix}-  \Phi_{\f}\left(\tfrac{1}{\nu}\right)\right)\\
&\qquad + \widetilde{P}_{\f,\nu}^{\uu}\left(\tfrac{1}{\nu}\right)\left(\begin{pmatrix} U_\mathrm{lf}\\ V_\mathrm{lf}\\ 0 \end{pmatrix} - \alpha^u_\mathrm{lf}(\beta_\mathrm{lf}^L; \eps,\lambda)\begin{pmatrix} u_\eps'\left(\xi_{\mathrm{lf},\eps,\nu}^\mathrm{in}\right)\\ v_\eps'\left(\xi_{\mathrm{lf},\eps,\nu}^\mathrm{in}\right)\\0\end{pmatrix} \right) +\widetilde{P}_{\f,\nu}^{\uu}\left(\tfrac{1}{\nu}\right)\gamma_\mathrm{lf}
\end{align*}
so that, by Propositions~\ref{prop:pointwise} and~\ref{prop:fold_bvp_oc_lower}, and estimates~\eqref{betalr_bounds},~\eqref{eq:gamma_solve_estimates}, and~\eqref{eq:match1}, we have
\begin{align}\label{eq:alphaf_Festimates}
\alpha_\mathrm{f}=F_{10}(\alpha_\mathrm{f}, \beta_\mathrm{f},\alpha_\mathrm{b}, \beta_\mathrm{b}, \beta_\mathrm{uf}, \beta_\mathrm{lf}^L),
\end{align}
provided $\lambda \in R_{1,\eps}(\mu)$ and $0 < \eps \ll \mu \ll \nu \ll 1$, where the linear map $F_{10}$ satisfies
\begin{align}
\begin{split}\label{eq:matching_alphaf_equation}
&\left|F_{10}(\alpha_\mathrm{f}, \beta_\mathrm{f},\alpha_\mathrm{b}, \beta_\mathrm{b}, \beta_\mathrm{uf}, \beta_\mathrm{lf}^L)-\alpha^u_\mathrm{lf}(\beta_\mathrm{lf}^L; \eps,\lambda)\right| \leq C_\nu \left(\eps^{\frac23}|\alpha^u_\mathrm{lf}(\beta_\mathrm{lf}^L; \eps,\lambda)|+\|\beta^L_\mathrm{lf}\|+ \left|\lambda \log |\lambda|\right||\beta^L_{3,\mathrm{lf}}|\right.\\
&\qquad \qquad \left.+ \left(\epsilon^{\frac{2}{3}} + |\lambda|\right)^2\left(|\alpha_\mathrm{f}|+|\beta_\mathrm{f}|\right) + \re^{-\vartheta_\nu/\eps}\left(|\alpha_\mathrm{b}|+|\beta_\mathrm{b}|+\|\beta_\mathrm{uf}\|  \right)\right).
\end{split}
\end{align}
Proposition~\ref{prop:pointwise} yields
\begin{align} \label{eq:wder_bound}
\begin{split}
\left|\eps^{-1} w_\eps'\left(\xi_{\mathrm{lf},\eps,\nu}^{\mathrm{out},L}\right) + \frac{1}{c} (u_{\lr}(\nu) - \gamma f(u_\lr(\nu)) - a)\right| &\leq C_\nu \eps^{\frac23}, \\
\left|\eps^{-1} w_\eps'\left(\xi_{\mathrm{lf},\eps,\nu}^{\mathrm{out},L}\right) + \frac{1}{c} (u_1 - \gamma f(u_1) - a)\right| &\leq C \delta_0(\nu).
\end{split}
\end{align}
Combining the latter with~\eqref{eq:matching_alphaf_equation} and Proposition~\ref{prop:fold_bvp_oc_lower}, while using the fact that $u_1 - \gamma f(u_1) - a \neq 0$, we obtain
\begin{align*}
\begin{split}
&\left|F_{10}(\alpha_\mathrm{f}, \beta_\mathrm{f},\alpha_\mathrm{b}, \beta_\mathrm{b}, \beta_\mathrm{uf}, \beta_\mathrm{lf}^L)+c \beta_{3,\mathrm{lf}}^L\frac{1-\Upsilon_\mathrm{lf}(\lambda\eps^{-1/6})}{u_{\lr}(\nu) - \gamma f(u_\lr(\nu)) - a}\right|\\
&\qquad \leq  \left(C_\nu \left(\eps^{\frac23}+\left|\lambda \log |\lambda|\right|\right)+ C\eta(\nu)\left|\Upsilon_\mathrm{lf}(\lambda\eps^{-1/6})\right|\right) \left|\beta_{3,\mathrm{lf}}^L\right|\\
&\qquad \qquad + \, C_\nu \left(\|\beta_\mathrm{lf}^L\|+\left(\epsilon^{\frac{2}{3}} + |\lambda|\right)^2\left(|\alpha_\mathrm{f}|+|\beta_\mathrm{f}|\right)+  \re^{-\vartheta_\nu/\eps}\left(|\alpha_\mathrm{b}|+|\beta_\mathrm{b}|+\|\beta_\mathrm{uf}\|   \right)\right),
\end{split}
\end{align*}
provided $\lambda \in R_{1,\eps}(\mu)$ and $0 < \eps \ll \mu \ll \nu \ll 1$.
Solving~\eqref{eq:betaf_Festimates} and~\eqref{eq:alphaf_Festimates} for $(\alpha_\f, \beta_\f)$ by the implicit function theorem and using~\eqref{eq:wder_bound} and Proposition~\ref{prop:pointwise}, we establish
\begin{align}
\begin{split}\label{eq:matching_f_solve}
\alpha_{\f}&=-c \beta_{3,\mathrm{lf}}^L\frac{1-\Upsilon_\mathrm{lf}(\lambda\eps^{-1/6})}{u_{\lr}(\nu) - \gamma f(u_\lr(\nu)) - a}+F_{11}(\alpha_\bb, \beta_\bb, \beta_\mathrm{uf}, \beta_\mathrm{lf}^L),\\
\beta_{\f}&= \eps \beta_{3,\mathrm{lf}}^L\frac{u_\f(\tfrac{1}{\nu}) - \gamma f(u_2) - a}{u_{\lr}(\nu) - \gamma f(u_\lr(\nu)) - a}+\eps F_{12}(\alpha_\bb, \beta_\bb, \beta_\mathrm{uf}, \beta_\mathrm{lf}^L),
\end{split}
\end{align} 
provided $\lambda \in R_{1,\eps}(\mu)$ and $0 < \eps \ll \mu \ll \nu \ll 1$, where the linear maps $F_{11}, F_{12}$ satisfy
\begin{align*}
\begin{split}
\left|F_j(\alpha_\bb, \beta_\bb, \beta_\mathrm{uf}, \beta_\mathrm{lf}^L)\right| &\leq C_\nu \left(\|\beta_\mathrm{lf}^L\|+  \re^{-\vartheta_\nu/\eps}\left(|\alpha_\mathrm{b}|+|\beta_\mathrm{b}|+\|\beta_\mathrm{uf}\|   \right)\right)\\
&\qquad +\, \left(C_\nu\left(\eps^{\frac23}+|\lambda \log |\lambda||\right)+C\eta(\nu)\left|\Upsilon_\mathrm{lf}(\lambda\eps^{-1/6})\right|\right)|\beta_{3,\mathrm{lf}}^L|.
\end{split}
\end{align*} 
for $j=11,12$. For the matching at $\xi = \xi_{\mathrm{uf},\eps,\nu}^\mathrm{in}$, we proceed similarly as above and solve for $(\alpha_\bb, \beta_\bb)$ in terms of the other variables, from which we obtain 
\begin{align}
\begin{split}\label{eq:matching_b_solve}
\alpha_{\bb}&=-c\beta_{3,\mathrm{uf}}\frac{1-\Upsilon_\mathrm{uf}(\lambda\eps^{-1/6})}{u_{\rr}(\nu) - \gamma f(u_\rr(\nu)) - a}+F_{13}(\alpha_\f, \beta_\f, \beta_\mathrm{uf}, \beta_\mathrm{lf}^L),\\
\beta_{\bb}&=\eps \beta_{3,\mathrm{lf}}^L\frac{u_\bb(\tfrac{1}{\nu}) - \gamma f(\bar{u}_2) - a}{u_{\rr}(\nu) - \gamma f(u_\rr(\nu)) - a}+\eps F_{14}(\alpha_\f, \beta_\f, \beta_\mathrm{uf}, \beta_\mathrm{lf}^L),
\end{split}
\end{align} 
where the linear maps $F_{13}, F_{14}$ satisfy
\begin{align*}
\begin{split}
\left|F_j(\alpha_\f, \beta_\f, \beta_\mathrm{uf}, \beta_\mathrm{lf}^L)\right| &\leq C_\nu \left(\|\beta_\mathrm{uf}\|+  \re^{-\vartheta_\nu/\eps}\left(|\alpha_\mathrm{f}|+|\beta_\mathrm{f}|+\|\beta_\mathrm{lf}^L\|   \right)\right)\\
&\qquad+\,\left(C_\nu\left(\eps^{\frac23}+|\lambda \log |\lambda||\right)+C\eta(\nu)\left|\Upsilon_\mathrm{uf}(\lambda\eps^{-1/6})\right|\right)|\beta_{3,\mathrm{uf}}|,
\end{split}
\end{align*}
for $j=13,14$.

\paragraph{Matching at $\xi = \xi_{\mathrm{uf},\eps,\nu}^\mathrm{out}$ and $\xi =\xi_{\mathrm{lf},\eps,\nu}^{\mathrm{out},L}$.}  By Proposition~\ref{prop:bvp_right}, at $\xi = \xi_{\mathrm{uf},\eps,\nu}^\mathrm{out}$ we have the estimates 
\begin{align} \label{eq:Pr_estimates}
\begin{split}
\left\|P_{\rr,\epsilon,\lambda,\nu}^{\uu}\left(\xi_{\mathrm{uf},\eps,\nu}^\mathrm{out}\right)\psi\left(\xi_{\mathrm{uf},\eps,\nu}^\mathrm{out}\right)\right\| &\leq C_\nu \re^{-\vartheta_\nu/\eps} \left(|\alpha_{\f}| + |\beta_{\f}| + \|\gamma_{\rr}\|\right),\\
\left\|P_{\rr,\epsilon,\lambda,\nu}^{\cc}\left(\xi_{\mathrm{uf},\eps,\nu}^\mathrm{out}\right)\psi\left(\xi_{\mathrm{uf},\eps,\nu}^\mathrm{out}\right)\right\| &\leq C_\nu \left(|\beta_{\f}| + \left(\epsilon^{\frac{2}{3}} + |\lambda|\right) |\alpha_{\f}| + \re^{-\vartheta_\nu/\eps}\|\gamma_{\rr}\|\right),
\end{split}
\end{align}
provided $0 < \eps, |\lambda| \ll \nu \ll 1$. 
Recalling the definition of $\beta_\mathrm{uf}$, we  write 
\begin{align} \label{eq:beta_decomp2}
\begin{split}
    \beta_\mathrm{uf} &=P_{\rr,\epsilon,\lambda,\nu}^{\mathrm{cu}}\left(\xi_{\mathrm{uf},\eps,\nu}^\mathrm{out}\right) \left(1- P^\mathrm{s}_{\mathrm{uf},\eps,\lambda,\nu}\left(\xi_{\mathrm{uf},\eps,\nu}^\mathrm{out} \right) \right)\psi\left(\xi_{\mathrm{uf},\eps,\nu}^\mathrm{out}\right) + \left[P^\mathrm{cu}_{\mathrm{uf},\eps,\lambda,\nu}\left(\xi_{\mathrm{uf},\eps,\nu}^\mathrm{out} \right)-P_{\rr,\epsilon,\lambda,\nu}^{\mathrm{cu}}\left(\xi_{\mathrm{uf},\eps,\nu}^\mathrm{out}  \right)\right] \beta_\mathrm{uf}\\
    &=P_{\rr,\epsilon,\lambda,\nu}^{\mathrm{cu}}\left(\xi_{\mathrm{uf},\eps,\nu}^\mathrm{out} \right) \psi\left(\xi_{\mathrm{uf},\eps,\nu}^\mathrm{out}\right) + \left[P^\mathrm{cu}_{\mathrm{uf},\eps,\lambda,\nu}\left(\xi_{\mathrm{uf},\eps,\nu}^\mathrm{out} \right)-P_{\rr,\epsilon,\lambda,\nu}^{\mathrm{cu}}\left(\xi_{\mathrm{uf},\eps,\nu}^\mathrm{out}  \right)\right] \beta_\mathrm{uf} \\
    &\qquad - \, P_{\rr,\epsilon,\lambda,\nu}^{\mathrm{u}}\left(\xi_{\mathrm{uf},\eps,\nu}^\mathrm{out}\right)\left(P^\mathrm{s}_{\mathrm{uf},\eps,\lambda,\nu}\left(\xi_{\mathrm{uf},\eps,\nu}^\mathrm{out}\right) - P_{\rr,\epsilon,\lambda,\nu}^{\mathrm{s}}\left(\xi_{\mathrm{uf},\eps,\nu}^\mathrm{out}  \right) \right)\left(\gamma_{\rr} + P_{\rr,\epsilon,\lambda,\nu}^{\mathrm{cu}}\left(\xi_{\mathrm{uf},\eps,\nu}^\mathrm{out}  \right) \psi\left(\xi_{\mathrm{uf},\eps,\nu}^\mathrm{out} \right)\right).
\end{split}
\end{align}
So, using Propositions~\ref{prop:slow} and~\ref{prop:fold_bvp_oc_upper} and applying estimates~\eqref{betalr_bounds},~\eqref{eq:gamma_solve_estimates}, and~\eqref{eq:Pr_estimates}, we have that
\begin{align}\label{eq:betaup_normest}
\|\beta_\mathrm{uf}\|\leq C_\nu \left(|\beta_{\f}| + \left(\epsilon^{\frac{2}{3}} + |\lambda|\right) |\alpha_{\f}|+\re^{-\vartheta_\nu/\eps}\left(|\alpha_\mathrm{b}|+|\beta_\mathrm{b}|+ \|\beta_\mathrm{lf}^L\|   \right)\right),
\end{align}
provided $\lambda \in R_{1,\eps}(\mu)$ and $0 < \eps \ll \mu \ll \nu \ll 1$. We obtain more detailed estimates by bounding the fast and slow components of $\beta_{\mathrm{uf}} = (\beta_{1,\mathrm{uf}},\beta_{2,\mathrm{uf}},\eps \beta_{3,\mathrm{uf}})^\top$ separately. To bound the fast components of $\beta_{\mathrm{uf}}$, we write
\begin{align} \label{eq:beta_decomp}
\begin{split}
    \beta_\mathrm{uf} &=  P_{\rr,\epsilon,\lambda,\nu}^{\mathrm{cu}}\left(\xi_{\mathrm{uf},\eps,\nu}^\mathrm{out} \right) \beta_\mathrm{uf}+\left[P^\mathrm{cu}_{\mathrm{uf},\eps,\lambda,\nu}\left(\xi_{\mathrm{uf},\eps,\nu}^\mathrm{out} \right)-P_{\rr,\epsilon,\lambda,\nu}^{\mathrm{cu}}\left(\xi_{\mathrm{uf},\eps,\nu}^\mathrm{out} \right)\right] \beta_\mathrm{uf}\\
    &=P_{\rr,\epsilon,\lambda,\nu}^{\mathrm{c}}\left(\xi_{\mathrm{uf},\eps,\nu}^\mathrm{out}  \right) \beta_\mathrm{uf}+ \left[P^\mathrm{cu}_{\mathrm{uf},\eps,\lambda,\nu}\left(\xi_{\mathrm{uf},\eps,\nu}^\mathrm{out} \right)-P_{\rr,\epsilon,\lambda,\nu}^{\mathrm{cu}}\left(\xi_{\mathrm{uf},\eps,\nu}^\mathrm{out}  \right)\right] \beta_\mathrm{uf}\\
    &\qquad + \, P_{\rr,\epsilon,\lambda,\nu}^{\mathrm{u}}\left(\xi_{\mathrm{uf},\eps,\nu}^\mathrm{out}\right) \left(1- P^\mathrm{s}_{\mathrm{uf},\eps,\lambda,\nu}\left(\xi_{\mathrm{uf},\eps,\nu}^\mathrm{out} \right) \right)\psi\left(\xi_{\mathrm{uf},\eps,\nu}^\mathrm{out}\right)\\
    &=P_{\rr,\epsilon,\lambda,\nu}^{\mathrm{c}}\left(\xi_{\mathrm{uf},\eps,\nu}^\mathrm{out} \right) \beta_\mathrm{uf} +\left[P^\mathrm{cu}_{\mathrm{uf},\eps,\lambda,\nu}\left(\xi_{\mathrm{uf},\eps,\nu}^\mathrm{out} \right)-P_{\rr,\epsilon,\lambda,\nu}^{\mathrm{cu}}\left(\xi_{\mathrm{uf},\eps,\nu}^\mathrm{out}  \right)\right] \beta_\mathrm{uf} +P_{\rr,\epsilon,\lambda,\nu}^{\mathrm{u}}\left(\xi_{\mathrm{uf},\eps,\nu}^\mathrm{out}\right)\psi\left(\xi_{\mathrm{uf},\eps,\nu}^\mathrm{out} \right)\\
    &\qquad - \, P_{\rr,\epsilon,\lambda,\nu}^{\mathrm{u}}\left(\xi_{\mathrm{uf},\eps,\nu}^\mathrm{out}\right)\left(P^\mathrm{s}_{\mathrm{uf},\eps,\lambda,\nu}\left(\xi_{\mathrm{uf},\eps,\nu}^\mathrm{out}\right) - P_{\rr,\epsilon,\lambda,\nu}^{\mathrm{s}}\left(\xi_{\mathrm{uf},\eps,\nu}^\mathrm{out}  \right) \right)\left(\gamma_{\rr} + P_{\rr,\epsilon,\lambda,\nu}^{\mathrm{cu}}\left(\xi_{\mathrm{uf},\eps,\nu}^\mathrm{out}  \right) \psi\left(\xi_{\mathrm{uf},\eps,\nu}^\mathrm{out} \right)\right).
\end{split}
\end{align}
Applying Proposition~\ref{prop:fold_bvp_oc_upper} and the bounds~\eqref{betalr_bounds},~\eqref{eq:gamma_solve_estimates},~\eqref{eq:Pr_estimates}, and~\eqref{eq:betaup_normest} to~\eqref{eq:beta_decomp}, as well as the estimates~\eqref{projest2} and~\eqref{specprojest} in Proposition~\ref{prop:slow} and its proof, we arrive at
\begin{align*}
    \beta_{1,\mathrm{uf}}&= F_{15}(\beta_\mathrm{uf},\alpha_\f, \alpha_\bb, \beta_\f, \beta_\bb, \beta_\mathrm{lf}^L),\\
    \beta_{2,\mathrm{uf}}&= F_{16}(\beta_\mathrm{uf},\alpha_\f, \alpha_\bb, \beta_\f, \beta_\bb, \beta_\mathrm{lf}^L),
\end{align*}
where the linear maps $F_j, j=15,16$ satisfy
\begin{align*}
    &|F_j(\beta_\mathrm{uf},\alpha_\f, \alpha_\bb, \beta_\f, \beta_\bb, \beta_\mathrm{lf}^L)|\\
    &\qquad \leq C_\nu \left(\eps|\beta_{3,\mathrm{uf}}|+\left(\eps^{\frac23}+|\lambda|\right)\left(|\beta_{1,\mathrm{uf}}|+|\beta_{2,\mathrm{uf}}|   \right)+\re^{-\vartheta_\nu/\eps}\left(|\alpha_\mathrm{f}|+|\beta_\mathrm{f}|+|\alpha_\mathrm{b}|+|\beta_\mathrm{b}|+ \|\beta_\mathrm{lf}^L  \|   \right)\right).
\end{align*}
Solving this system for $\beta_{1,\mathrm{uf}}$ and $\beta_{2,\mathrm{uf}}$, we obtain
\begin{align*}
    \beta_{1,\mathrm{uf}}&= F_{17}(\beta_{3,\mathrm{uf}},\alpha_\f, \alpha_\bb, \beta_\f, \beta_\bb, \beta_\mathrm{lf}^L),\\
    \beta_{2,\mathrm{uf}}&= F_{18}(\beta_{3,\mathrm{uf}},\alpha_\f, \alpha_\bb, \beta_\f, \beta_\bb, \beta_\mathrm{lf}^L),
\end{align*}
provided $\lambda \in R_{1,\eps}(\mu)$ and $0 < \eps \ll \mu \ll \nu \ll 1$, where the linear maps $F_j, j=17,18$ satisfy
\begin{align*}
    |F_j(\beta_\mathrm{uf},\alpha_\f, \alpha_\bb, \beta_\f, \beta_\bb, \beta_\mathrm{lf}^L)|\leq C_\nu \left(\eps|\beta_{3,\mathrm{uf}}|+\re^{-\vartheta_\nu/\eps}\left(|\alpha_\mathrm{f}|+|\beta_\mathrm{f}|+|\alpha_\mathrm{b}|+|\beta_\mathrm{b}|+ \|\beta_\mathrm{lf}^L  \|   \right)\right),
\end{align*}
whence we obtain the estimate
\begin{align} \label{eq:betauf_est}
    \|\beta_\mathrm{uf} \|&\leq C_\nu \left(\eps|\beta_{3,\mathrm{uf}}|+\re^{-\vartheta_\nu/\eps}\left(|\alpha_\mathrm{f}|+|\beta_\mathrm{f}|+|\alpha_\mathrm{b}|+|\beta_\mathrm{b}|+ \|\beta_\mathrm{lf}^L  \|   \right)\right).
\end{align}
Focusing on the third (slow) component in~\eqref{eq:beta_decomp2} and using Propositions~\ref{prop:slow},~\ref{prop:bvp_right}, and~\ref{prop:fold_bvp_oc_upper} and estimates~\eqref{betalr_bounds},~\eqref{eq:gamma_solve_estimates}, and~\eqref{eq:betauf_est}, we find that
\begin{align}\label{eq:matching_up_estimates}
\begin{split}
&\left|\eps\beta_{3,\mathrm{uf}} - \frac{u_{\rr}(\nu) - \gamma f(u_{\rr}(\nu)) - a}{u_2 - \gamma f(u_2) - a}\left(\beta_{\f} + \frac{\epsilon}{c} \left(u_{\f}(\tfrac{1}{\nu}) - u_2\right)\alpha_{\f}\right) \right|\\ 
&\qquad \leq C_\nu \left(\left(\epsilon^{\frac{1}{3}} + |\lambda|\right)\left(|\beta_{\f}| + \epsilon |\alpha_{\f}|\right) + \re^{-\vartheta_\nu/\eps}\left(\|\gamma_{\rr}\|+\|\beta_\mathrm{uf}\|\right)\right)\\
&\qquad \leq C_\nu\re^{-\vartheta_\nu/\eps}\left(|\alpha_\mathrm{b}|+|\beta_\mathrm{b}|+\|\beta_\mathrm{uf}\|+ \|\beta_\mathrm{lf}^L\|\right) + C_\nu \left(\epsilon^{\frac{1}{3}} + |\lambda|\right)\left(|\beta_{\f}| + \epsilon |\alpha_{\f}|\right),
\end{split}
\end{align}
provided $\lambda \in R_{1,\eps}(\mu)$ and $0 < \eps \ll \mu \ll \nu \ll 1$. 
At $\xi = \xi_{\mathrm{lf},\eps,\nu}^{\mathrm{out},L}$, proceeding similarly as above,
we obtain
\begin{align}\label{eq:betadn_normest}
\|\beta_\mathrm{lf}^0\|\leq C_\nu \left(\eps|\beta_{3,\mathrm{lf}}^0|+\re^{-\vartheta_\nu/\eps}\left(|\alpha_\mathrm{f}|+|\beta_\mathrm{f}|+|\alpha_\mathrm{b}|+|\beta_\mathrm{b}|+\|\beta_\mathrm{uf}  \|+ \|\beta_\mathrm{lf}^L  \|   \right)\right)
\end{align}
and
\begin{align}\label{eq:matching_dn_estimates}
\begin{split}
&\left| \eps \beta_{3,\mathrm{lf}}^0 - \frac{u_{\lr}(\nu) - \gamma f(u_{\lr}(\nu)) - a}{\bar{u}_2 - \gamma f(\bar{u}_2) - a}\left(\beta_{\bb} + \frac{\epsilon}{c} \left(u_{\bb}(\tfrac{1}{\nu}) - \bar{u}_2\right)\alpha_{\bb}\right) \right|\\
&\qquad \leq C_\nu \left(\left(\epsilon^{\frac{1}{3}} + |\lambda|\right)\left(|\beta_{\bb}| + \epsilon |\alpha_{\bb}|\right) + \re^{-\vartheta_\nu/\eps}\left(\|\gamma_{\lr}^0\|+\|\beta_\mathrm{lf}^L\|\right)\right)\\
&\qquad \leq C_\nu\re^{-\vartheta_\nu/\eps}\left(|\alpha_{\f}|+|\beta_{\f}|+\|\beta_\mathrm{uf}\|+ \|\beta_\mathrm{lf}^L\|\right) + C_\nu \left(\epsilon^{\frac{1}{3}} + |\lambda|\right)\left(|\beta_\mathrm{b}|+\eps|\alpha_\mathrm{b}|\right).
\end{split}
\end{align}
From~\eqref{eq:betaup_normest} and~\eqref{eq:betadn_normest}, and employing the Floquet condition~\eqref{eigenvalueproblemBCshift} and possibly shrinking $\vartheta_\nu > 0$, we arrive at
\begin{align}\begin{split}\label{eq:beta_updn_estnorm}
\|\beta_\mathrm{uf}\|&\leq C_\nu \left(\eps|\beta_{3,\mathrm{uf}}|+\re^{-\vartheta_\nu/\eps}\left(|\alpha_\mathrm{f}|+|\beta_\mathrm{f}|+|\alpha_\mathrm{b}|+|\beta_\mathrm{b}| +|\beta_{3,\mathrm{lf}}^L| \right)\right),\\
\|\beta_\mathrm{lf}^0\|&\leq C_\nu \left(\eps|\beta_{3,\mathrm{lf}}^0|+\re^{-\vartheta_\nu/\eps}\left(|\alpha_\mathrm{f}|+|\beta_\mathrm{f}|+|\alpha_\mathrm{b}|+|\beta_\mathrm{b}| +|\beta_{3,\mathrm{uf}}| \right)\right),\\
\|\beta_\mathrm{lf}^L\|&\leq C_\nu \left(\eps|\beta_{3,\mathrm{lf}}^L|+\re^{-\vartheta_\nu/\eps}\left(|\alpha_\mathrm{f}|+|\beta_\mathrm{f}|+|\alpha_\mathrm{b}|+|\beta_\mathrm{b}| +|\beta_{3,\mathrm{uf}}| \right)\right),
\end{split}
\end{align}
provided $\lambda \in R_{1,\eps}(\mu)$ and $0 < \eps \ll \mu \ll \nu \ll 1$. 

\paragraph{The main formula.} We begin by solving~\eqref{eq:matching_f_solve} and~\eqref{eq:matching_b_solve} for $\alpha_\f,\beta_\f, \alpha_\bb$, and $\beta_\bb$ in terms of $\beta_{3,\mathrm{uf}}$ and $\beta_{3,\mathrm{lf}}^L$. Using~\eqref{eq:beta_updn_estnorm}, we obtain
\begin{align*}
\alpha_{\f}+c \beta_{3,\mathrm{lf}}^L\frac{1-\Upsilon_\mathrm{lf}(\lambda\eps^{-1/6})}{u_{\lr}(\nu) - \gamma f(u_\lr(\nu)) - a} &= F_{19}(\beta_{3,\mathrm{uf}},\beta_{3,\mathrm{lf}}^L),\\ \beta_\mathrm{f}-\eps \beta_{3,\mathrm{lf}}^L\frac{u_\f(\tfrac{1}{\nu}) - \gamma f(u_2) - a}{u_{\lr}(\nu) - \gamma f(u_\lr(\nu)) - a} &= \eps F_{20}(\beta_{3,\mathrm{uf}},\beta_{3,\mathrm{lf}}^L),
\end{align*}
and
\begin{align*}
\alpha_{\bb}+c \beta_{3,\mathrm{uf}}\frac{1-\Upsilon_\mathrm{uf}(\lambda\eps^{-1/6})}{u_{\rr}(\nu) - \gamma f(u_\rr(\nu)) - a} &= F_{21}(\beta_{3,\mathrm{uf}},\beta_{3,\mathrm{lf}}^L),\\
\beta_\bb-\eps \beta_{3,\mathrm{lf}}^L\frac{u_\bb(\tfrac{1}{\nu}) - \gamma f(\bar{u}_2) - a}{u_{\rr}(\nu) - \gamma f(u_\rr(\nu)) - a} &= \eps F_{22}(\beta_{3,\mathrm{uf}},\beta_{3,\mathrm{lf}}^L),
\end{align*}
provided $\lambda \in R_{1,\eps}(\mu)$ and $0 < \eps \ll \mu \ll \nu \ll 1$, where $F_j$, $j = 19,20,21,22$ are linear maps satisfying
\begin{align*}
\left|F_{j}(\beta_{3,\mathrm{uf}},\beta_{3,\mathrm{lf}}^L)\right| &\leq 
\left(C_\nu\left(\eps^{\frac23}+|\lambda \log |\lambda||\right)+C\eta(\nu)\left|\Upsilon_\mathrm{uf}(\lambda\eps^{-1/6})\right|\right) |\beta_{3,\mathrm{lf}}^L| 
+ C_\nu \re^{-\vartheta_\nu/\eps}|\beta_{3,\mathrm{uf}}|
\end{align*}
for $j = 19,20$ and
\begin{align*}
\left|F_{j}(\beta_{3,\mathrm{uf}},\beta_{3,\mathrm{lf}}^L)\right| &\leq 
\left(C_\nu\left(\eps^{\frac23}+|\lambda \log |\lambda||\right)+C\eta(\nu)\left|\Upsilon_\mathrm{uf}(\lambda\eps^{-1/6})\right|\right) |\beta_{3,\mathrm{uf}}| 
+ C_\nu \re^{-\vartheta_\nu/\eps}|\beta_{3,\mathrm{lf}}^L|
\end{align*}
for $j = 21,22$. Employing these estimates in~\eqref{eq:matching_up_estimates} and~\eqref{eq:matching_dn_estimates}, applying Proposition~\ref{prop:pointwise}, using estimates~\eqref{eq:wder_bound} and~\eqref{eq:beta_updn_estnorm}, and dividing by $\eps$, we arrive at
\begin{align*}
&\left|\beta_\mathrm{3,uf} - \beta_{3,\mathrm{lf}}^L \frac{u_{\rr}(\nu) - \gamma f(u_{\rr}(\nu)) - a}{u_{\lr}(\nu) - \gamma f(u_\lr(\nu)) - a} \left(1 +  \frac{(u_1 -u_2)\Upsilon_\mathrm{lf}(\lambda\eps^{-1/6})}{u_2 - \gamma f(u_2) - a}\right)\right|\\
&\qquad\leq  \left(C_\nu\left(\eps^{\frac13}+|\lambda \log |\lambda||\right)+C\eta(\nu)\left|\Upsilon_\mathrm{uf}(\lambda\eps^{-1/6})\right|\right) |\beta_{3,\mathrm{lf}}^L| 
+ C_\nu \re^{-\vartheta_\nu/\eps}|\beta_{3,\mathrm{uf}}|.
\end{align*} 
Similarly, we obtain
\begin{align*}
&\left|\beta_\mathrm{3,lf}^0 - \beta_{3,\mathrm{uf}} \frac{u_{\lr}(\nu) - \gamma f(u_{\lr}(\nu)) - a}{u_{\rr}(\nu) - \gamma f(u_\rr(\nu)) - a} \left(1 +  \frac{(\bar{u}_1 -\bar{u}_2)\Upsilon_\mathrm{uf}(\lambda\eps^{-1/6})}{\bar{u}_2 - \gamma f(\bar{u}_2) - a}\right)\right|\\
&\qquad\leq  \left(C_\nu\left(\eps^{\frac13}+|\lambda \log |\lambda||\right)+C\eta(\nu)\left|\Upsilon_\mathrm{uf}(\lambda\eps^{-1/6})\right|\right) |\beta_{3,\mathrm{uf}}| 
+ C_\nu \re^{-\vartheta_\nu/\eps}|\beta_{3,\mathrm{lf}}^L|.
\end{align*} 
Using Proposition~\ref{prop:pointwise}, employing the Floquet condition~\eqref{eigenvalueproblemBCshift}, solving for $\beta_\mathrm{3,uf}$ in terms of $\beta_{3,\mathrm{lf}}^L$, and substituting into the second estimate, we factor out $\beta_\mathrm{3,lf}^L$ and deduce that
\begin{align*}
\re^{-\left(\ri\rho - \frac{\lambda}{c}\right)L_\eps}&=\left(1 +  \frac{(u_1-u_2)\Upsilon_\mathrm{lf}(\lambda\eps^{-1/6})}{u_2 - \gamma f(u_2) - a}\right)\left(1 +  \frac{(\bar{u}_1-\bar{u}_2)\Upsilon_\mathrm{uf}(\lambda\eps^{-1/6})}{\bar{u}_2 - \gamma f(\bar{u}_2) - a}\right)+\mathcal{R}_\eps(\lambda),
\end{align*}
provided $\lambda \in R_{1,\eps}(\mu)$ and $0 < \eps \ll \mu \ll \nu \ll 1$, where $\mathcal{R}_\eps(\lambda)$ is as in~\eqref{eq:mainformula_residual}.
\end{proof}

\subsection{Analysis of the main formula}\label{sec:mainformula}
We proceed by analyzing the main formula~\eqref{eq:mainformula} of Proposition~\ref{prop:mainformula} which controls the spectrum in the region $R_1(\mu)$. It is helpful to consider the following four subregions
\begin{align*}
    R_{1,1,\eps}(\mu)&\coloneqq B_0\left(\tfrac{1}{2}\mu \eps^{\frac16} \right)\\
        R_{1,2,\eps}(\varsigma, \mu,M)&\coloneqq \left\{\lambda\in R_1(\mu)\setminus R_{1,1,\eps}(\mu): |\Re(\lambda)|\leq \varsigma \eps^{\frac16}, |\Im(\lambda)|\leq M\eps^{\frac16} \right \}\\
     R_{1,3,\eps}(\varsigma,\mu, M)&\coloneqq \left\{\lambda\in R_1(\mu): |\Re(\lambda)|\leq \varsigma \eps^{\frac16}, |\Im(\lambda)|\geq M\eps^{\frac16} \right \},\\
     R_{1,4,\eps}(\mu)&\coloneqq \left\{\lambda\in R_1(\mu): \Re(\lambda)\geq \eps^{\frac15} \right \},
\end{align*}
where $0<\varsigma \ll \mu \ll 1 \ll M$ are chosen independently of $\eps > 0$; see Figure~\ref{fig:R1_regions}. The fact that there is no spectrum in the region $ R_{1,4,\eps}(\mu)$ follows immediately from Proposition~\ref{prop:mainformula}. Furthermore, the union of the remaining regions satisfies 
$$R_{1,1,\eps}(\mu)\cup R_{1,2,\eps}(\varsigma, \mu,M)\cup R_{1,3,\eps}(\varsigma,\mu, M) \subset R_{1,\eps}(\mu),$$ where $R_{1,\eps}(\mu)$ is defined in Proposition~\ref{prop:mainformula}, so that the main formula~\eqref{eq:mainformula} is valid throughout these three regions. We consider each region separately as the associated estimates are slightly different in each.

\begin{figure}
\centering
\includegraphics[width=0.55\linewidth]{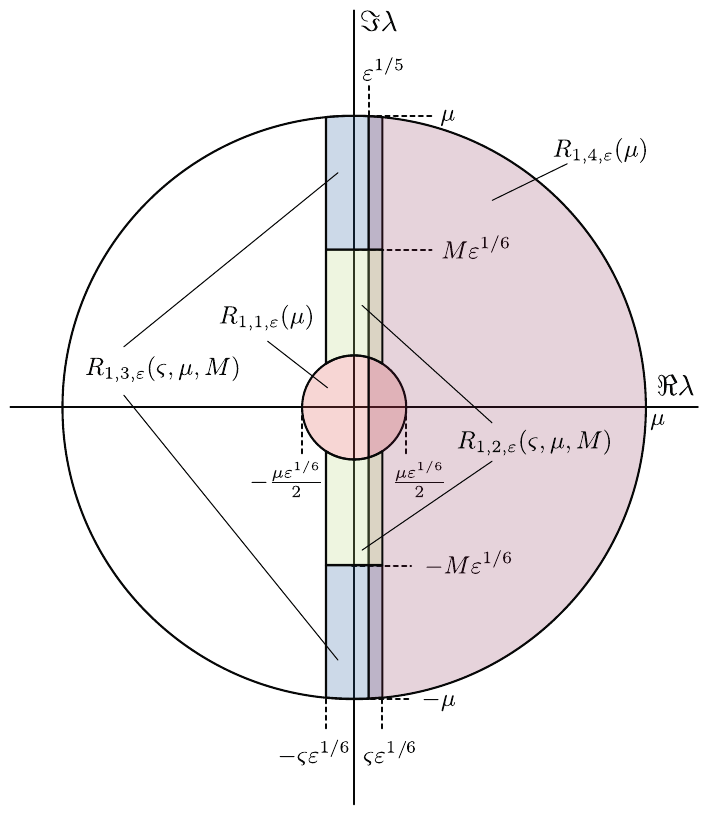}
\caption{Shown are the subregions $R_{1,1,\eps}(\mu),R_{1,2,\eps}(\varsigma,\mu,M), R_{1,3,\eps}(\varsigma,\mu,M),$ and $ R_{1,4,\eps}(\mu) $, of the ball $R_1(\mu)\subset \mathbb{C}$.}
\label{fig:R1_regions}
\end{figure}

\subsubsection{The region \texorpdfstring{$R_{1,1,\eps}(\mu)$}{R11}}
We begin with the analysis of the main formula in the region $R_{1,1,\eps}(\mu)$, which is the most delicate as it contains the critical spectral curve which touches the imaginary axis at the origin. In the following, we use analytic function theory to obtain an expansion for the critical curve and verify the quadratic tangency which is needed for diffusive spectral stability of the wave trains.
\begin{proposition}\label{prop:region_lambda1}
Let $0 < a < \frac12$, $0 < \gamma < \gamma_*(a)$, and $c > c_*(a)$. Fix $\delta>0$. Provided $0<\eps\ll\mu\ll1$, there exist an open interval $I_\eps \subset \R$ containing $0$ and a smooth function $\lambda_{\eps} \colon I_\eps \to \C$ such that $\lambda\in R_{1,1,\eps}(\mu)$ is a solution of the main formula~\eqref{eq:mainformula} for some $\rho \in \R$ if and only if we have $\lambda = \lambda_\eps(\tilde\rho)$ for some $\tilde\rho \in I_\eps$. In addition, it holds $\lambda_\eps(0) = \Re(\lambda_\eps'(0)) = 0$, $\Re(\lambda_\eps(\rho)) < 0$ for $\rho \in I_\eps \setminus \{0\}$, and
\begin{align}\label{eq:critical_lambda_prime}
 \left|\lambda_{\eps}'(0)- \ri c\right| \leq \delta, \qquad \left|\lambda_{\eps}''(0) + \frac{2\kappa c^3}{L_0} \eps^{\frac23}\right| \leq \delta \eps^{\frac23}
\end{align}
where $L_0 = L_{\rr} + L_{\lr} > 0$ is defined by~\eqref{periodexpr1} and~\eqref{periodexpr2}, and $\kappa>0$ is given by
\begin{align}\label{eq:quad_coeff}
\begin{split}
    \kappa &\coloneqq -\frac{2\Omega_0 (1-a+a^2)^{1/3}}{3c^2 }\left(\frac{1}{(u_1-\gamma f(u_1)-a)^{1/3}(u_2-\gamma f(u_2)-a)}\right.\\
    &\qquad \left. + \ \frac{1}{(\bar{u}_1-\gamma f(\bar{u}_1)-a)^{1/3}(\bar{u}_2-\gamma f(\bar{u}_2)-a)}\right),
\end{split}
\end{align}
  with $-\Omega_0 < 0$ denoting the largest zero of the Airy function $\mathrm{Ai}(z)$ (see Appendix~\ref{app:airy}).
\end{proposition}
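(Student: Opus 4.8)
\textbf{Proof strategy for Proposition~\ref{prop:region_lambda1}.}
The plan is to analyze the main formula~\eqref{eq:mainformula} in the small ball $R_{1,1,\eps}(\mu) = B_0(\tfrac12\mu\eps^{1/6})$ by introducing the rescaled spectral variable $z = \lambda\eps^{-1/6}$, so that $|z| \leq \tfrac12\mu$ on this region, and expanding both sides of~\eqref{eq:mainformula} to second order in $z$. First I would record the small-$z$ Taylor expansion of the entire functions $\Upsilon_\mathrm{lf}, \Upsilon_\mathrm{uf}$ from~\eqref{eq:deflfuf}: since each carries an explicit prefactor $z^2$, we have $\Upsilon_\mathrm{lf}(z) = \kappa_\mathrm{lf} z^2 + \mathcal{O}(z^4)$ and $\Upsilon_\mathrm{uf}(z) = \kappa_\mathrm{uf} z^2 + \mathcal{O}(z^4)$, where the coefficients $\kappa_\mathrm{lf}, \kappa_\mathrm{uf}$ are computed from the Airy integral $\int_{-\Omega_0}^\infty (s\mathrm{Ai}(s)^2 - \mathrm{Ai}'(s)^2)\,\de s$ evaluated at the endpoint, using $\mathrm{Ai}(-\Omega_0) = 0$ and the Airy-function identities collected in Appendix~\ref{app:airy}; this endpoint evaluation is what produces the factor $\Omega_0/(\theta_\mathrm{lf} c^3)$ (resp.\ with $\theta_\mathrm{uf}$) in $\kappa_\mathrm{lf}$ and, after combining with the coefficients $u_i - \gamma f(u_i) - a$ from~\eqref{eq:mainformula} and the explicit form of $\theta_\mathrm{lf}, \theta_\mathrm{uf}$ in~\eqref{eq:defthetas}, the constant $\kappa$ in~\eqref{eq:quad_coeff}. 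Substituting these expansions, the right-hand side of~\eqref{eq:mainformula} becomes $1 + \kappa_0 \lambda^2 \eps^{-1/3} + (\text{higher order} + \text{residual})$ for an explicit $\kappa_0 > 0$ built from $\kappa_\mathrm{lf}, \kappa_\mathrm{uf}$ and the differences $u_1 - u_2$, $\bar u_1 - \bar u_2 = -(u_1-u_2)$ (recall~\eqref{eq:differenceu1u2}), while the left-hand side is $\re^{-(\ri\rho - \lambda/c)L_\eps} = 1 - (\ri\rho - \tfrac{\lambda}{c})L_\eps + \tfrac12(\ri\rho - \tfrac{\lambda}{c})^2 L_\eps^2 + \cdots$.

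The second step is to apply the analytic implicit function theorem to the resulting equation $G(\lambda, \rho; \eps) = 0$, where $G$ is the difference of the two sides. The key point is that $G(0,0;\eps) = 0$ (both sides equal $1$ when $\lambda = \rho = 0$, which reflects translation invariance), and $\partial_\lambda G(0,0;\eps) = L_\eps/c \neq 0$ uniformly as $\eps \to 0$ since $L_\eps \sim L_0/\eps \to \infty$; more precisely one should work with the rescaled variable or divide through by $L_\eps$ first to obtain an $\eps$-uniform nondegeneracy. This yields a smooth solution curve $\lambda = \lambda_\eps(\rho)$ on an interval $I_\eps \ni 0$ with $\lambda_\eps(0) = 0$. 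Here I must be careful that the residual $\mathcal{R}_\eps(\lambda)$ in~\eqref{eq:mainformula}, bounded by~\eqref{eq:mainformula_residual}, is itself analytic in $\lambda$ (this follows from the construction via Lin's method, where all the matching maps depend analytically on $\lambda$, $\rho$ away from $\lambda = 0$; the $|\lambda\log|\lambda||$ term requires treating $\lambda \log \lambda$ as analytic on a slit neighborhood, or absorbing the logarithmic terms into the error estimates on derivatives rather than claiming full analyticity across $\lambda = 0$). I would then differentiate the relation $G(\lambda_\eps(\rho), \rho; \eps) = 0$ at $\rho = 0$. The first derivative gives $\lambda_\eps'(0) = \ri c + \mathcal{O}(\text{residual})$ — the $\ri c$ coming from matching the linear terms $-\ri\rho L_\eps$ on the left against $+\tfrac{\lambda}{c}L_\eps$ plus the $\eps^{-1/3}\lambda^2$ term, which is negligible at first order since $\lambda^2/\eps^{1/3} = \mathcal{O}(\eps^{1/3}|z|^2\lambda)$ when $\lambda = \mathcal{O}(\eps^{1/6})$ — establishing $\Re(\lambda_\eps'(0)) = 0$ and $|\lambda_\eps'(0) - \ri c| \leq \delta$.

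For the second derivative, I would expand to quadratic order: balancing $\tfrac12(\tfrac{\lambda}{c})^2 L_\eps^2 - \ri\rho\tfrac{\lambda}{c}L_\eps^2 - \tfrac12\rho^2 L_\eps^2 - \ri\rho L_\eps + \tfrac{\lambda}{c}L_\eps$ on the left against $\kappa_0\lambda^2\eps^{-1/3}$ on the right; after substituting $\lambda = \ri c\rho + \tfrac12\lambda_\eps''(0)\rho^2 + \cdots$ and using $L_\eps = L_0/\eps + \mathcal{O}(\eps^{-2/3})$ from~\eqref{period}, the $\rho^2$-coefficients yield $\tfrac12\lambda_\eps''(0)\cdot\tfrac{L_0}{c\eps} = -(\text{geometric } \rho^2 \text{ terms}) + \kappa_0(\ri c)^2\eps^{-1/3} + \cdots$. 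The leading balance is between $\lambda_\eps''(0)L_0/(c\eps)$ and $-\kappa_0 c^2\eps^{-1/3}$, giving $\lambda_\eps''(0) = -\kappa_0 c^3\eps^{2/3}/L_0 \cdot(\text{const}) = -2\kappa c^3\eps^{2/3}/L_0 + o(\eps^{2/3})$ once $\kappa_0$ is identified with $\kappa$ (the factor $2$ emerges because $\Upsilon_\mathrm{lf}$ and $\Upsilon_\mathrm{uf}$ contribute additively and the combination is arranged so that $u_1 - u_2$ and $\bar u_1 - \bar u_2$ enter symmetrically). One checks $\lambda_\eps''(0) \in \R$ (the formula is real because the $z^2$-coefficients $\kappa_\mathrm{lf}, \kappa_\mathrm{uf}$ are real and $(\ri c)^2 = -c^2 \in \R$), $\lambda_\eps''(0) < 0$ since $\kappa > 0$, and finally $\Re(\lambda_\eps(\rho)) < 0$ for $\rho \in I_\eps\setminus\{0\}$ by writing $\Re(\lambda_\eps(\rho)) = \tfrac12\lambda_\eps''(0)\rho^2 + \mathcal{O}(\rho^3)$ and shrinking $I_\eps$. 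The main obstacle I anticipate is the bookkeeping of the quadratic expansion: one has to keep simultaneously the geometric terms from expanding the exponential $\re^{\lambda L_\eps/c}$ (which are large, of size $L_\eps^2 \sim \eps^{-2}$, but must cancel among themselves at the orders below $\rho^2$ by the implicit function construction) and the fold-induced term $\kappa_0\lambda^2\eps^{-1/3}$ (of size $\eps^{-1/3}\cdot\eps^{1/3} = \mathcal{O}(1)$ relative to $\rho^2$), and to verify that after dividing by the common large factor the fold term is precisely the one that survives at the $\eps^{2/3}$ scale in $\lambda_\eps''(0)$ while all residuals from~\eqref{eq:mainformula_residual} — the $\eps^{1/3}$, the $|\lambda\log|\lambda||$, and the $\eta(\nu)|\Upsilon|$ terms — contribute only $o(\eps^{2/3})$ after two $\rho$-derivatives, which needs Cauchy estimates on a disk of radius $\sim\eps^{1/6}$ and the freedom to take $\nu$ small so that $\eta(\nu)$ is as small as the prescribed $\delta$.
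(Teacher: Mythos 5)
Your leading-order bookkeeping matches the paper's: expand $\Upsilon_{\mathrm{lf}},\Upsilon_{\mathrm{uf}}$ to order $z^2$ via the Airy identities (Proposition~\ref{prop:I0properties}), take the logarithm of~\eqref{eq:mainformula}, normalize by $L_0/L_\eps$, and balance $\ri\rho L_0-\tfrac{\lambda}{c}L_0$ against $\kappa\eps^{2/3}\lambda^2$; the identification of $\kappa$ and the sign of $\lambda_\eps''(0)$ come out correctly. The genuine gap is in how you extract derivative information at the origin from the residual bound~\eqref{eq:mainformula_residual}. You correctly see that Cauchy estimates on a disk of radius $\sim\eps^{1/6}$ are the right tool, but these require the error term to be \emph{analytic}, and your justification does not supply this: a function that is merely \emph{bounded} by $C|\lambda\log|\lambda||$ need not be analytic, the Lin's-method matching maps and the blow-up constructions near the folds have only been estimated rather than shown to depend analytically on $\lambda$, and ``analytic on a slit neighborhood'' is useless for Cauchy estimates centered at $\lambda=0$ (and would in any case not control two derivatives of $\lambda\log\lambda$ there). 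The paper closes this hole by a step you are missing entirely: solutions of~\eqref{eq:mainformula} are exactly the pairs for which $\re^{\ri\rho L_\eps}$ is an eigenvalue of the monodromy matrix $\T_\eps(L_\eps,0;\lambda)$, which is analytic in $\lambda$ by general ODE theory; one then proves these eigenvalues are algebraically simple throughout $B_0(\mu\eps^{1/6})$ (using that the periodic orbit is hyperbolic, so the monodromy matrix has three distinct Floquet multipliers at $\lambda=0$, together with a contradiction argument based on the uniqueness of $\rho$ in~\eqref{eq:mainformula}). Only then is the error function $h_{\eps,\nu}$ analytic and only then do the Cauchy estimates you invoke become legitimate.

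Three further points. First, the direction of solving matters for the ``if and only if'': a local implicit-function argument producing a curve $\lambda=\lambda_\eps(\rho)$ near $(0,0)$ does not show that \emph{every} solution of~\eqref{eq:mainformula} in $R_{1,1,\eps}(\mu)$ lies on that curve. The paper instead establishes the uniform bound $|\rho_{j_0,\eps}'(\lambda)+\ri c^{-1}|\leq C_\nu\eps^{5/6}$ on the whole ball, so that $\lambda\mapsto\rho$ is a bijection onto its open image and $\lambda_\eps$ is its analytic inverse. Second, for the same reason you cannot obtain $\Re(\lambda_\eps(\rho))<0$ by ``shrinking $I_\eps$'': that would destroy completeness of the curve. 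The paper gets strict negativity from strict convexity of $\omega\mapsto\Re\rho_{j_0,\eps}(\ri\omega)$ on the full range, which is again a consequence of the Cauchy bound on $\rho_{j_0,\eps}''$. Third, the exact identity $\Re(\lambda_\eps'(0))=0$ cannot follow from $|\lambda_\eps'(0)-\ri c|\leq\delta$; it requires invoking the real symmetry of the underlying eigenvalue problem, which the paper uses to pin down the phase of $\rho_{j_0,\eps}'(0)$ exactly.
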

\begin{proof} 
In this proof, $C\geq 1$ denotes a $\lambda$-, $\nu$-, $\mu$- and $\eps$-independent constant, which will be taken larger, if necessary. Moreover, $C_\nu \geq 1$ denotes a $\lambda$-, $\mu$-, and $\eps$-independent constant, which will be taken larger if necessary.

We use Proposition~\ref{prop:I0properties} to expand
\begin{align} \label{eq:Upsilon_exp}
\Upsilon_{j}(z) = I_0\left(-\frac{z^2}{\theta_{j} c^3}\right) = -\frac{2 \Omega_0}{3\theta_{j} c^3} \, z^2 + \mathcal{O}(z^4)
\end{align}
for $z \in \C$ with $|z| \ll 1$ and $j = \mathrm{uf},\mathrm{lf}$, where $I_0$ is given by~\eqref{eq:defI0}. On the other hand, Proposition~\ref{prop:pointwise} yields the expansion
\begin{align} \label{eq:L0_exp}
\frac{L_0}{L_\eps} = \eps + \mathcal{O}(\eps^\frac43),
\end{align}
for $0 < \eps \ll 1$. We take the principal complex logarithm of the main formula~\eqref{eq:mainformula}, multiply by $L_0/L_\eps$, and insert the expansions~\eqref{eq:Upsilon_exp} and~\eqref{eq:L0_exp}. Using the identities~\eqref{eq:differenceu1u2} and~\eqref{eq:defthetas} and the residual estimate~\eqref{eq:mainformula_residual}, we arrive at
\begin{align} \label{eq1}
\ri \rho L_0 - \frac{\lambda}{c} L_0 + \kappa\epsilon^{\frac23} \lambda^2 = h_{\epsilon,\nu}(\lambda)
\end{align}
for $\rho \in U$, $0 < \eps \ll \mu \ll \nu \ll 1$, and $\lambda \in B_0(\mu \epsilon^{1/6})$, where $U \subset \C$ is, as in~\S\ref{sec:criticalcurve}, a $\mu$-, $\eps$-, $\lambda$-, and $\nu$-independent bounded open set containing the interval $[-\pi/L_\eps,\pi/L_\eps)$ and the function $h_{\epsilon,\nu} \colon B_0(\mu \epsilon^{1/6}) \to \C$ enjoys the estimate
\begin{align} \label{est1}
\left|h_{\epsilon,\nu}(\lambda)\right| \leq C_\nu\left(\epsilon^{\frac43} + |\lambda|^4 \epsilon^{\frac13} + |\lambda| \epsilon |\log(\epsilon)|\right)+C\eta(\nu)\eps^{\frac23}|\lambda|^2.
\end{align}
By translational invariance, the eigenvalue problem~\eqref{eigenvalueproblem}-\eqref{eigenvalueproblemBC} admits the nontrivial solution $\Psi(\xi) = (u_\eps'(\xi),v_\eps'(\xi),w_\eps'(\xi))^\top$ at $(\lambda,\rho) = (0,0)$. Hence, $(\lambda,\rho) = (0,0)$ must be a solution of~\eqref{eq1} by Proposition~\ref{prop:mainformula}, implying $h_{\epsilon,\nu}(0) = 0$.

Next, we show that $h_{\eps,\nu}$ is an analytic function. First, we observe that~\eqref{eigenvalueproblem}-\eqref{eigenvalueproblemBC} has a nontrivial solution for $\lambda,\rho \in \C$ if and only if $\re^{\ri\rho L_\eps}$ is an eigenvalue of the monodromy matrix $\T_\eps(L_\eps,0;\lambda)$, where $\T_\eps(\xi,y;\lambda)$ denotes the evolution of system~\eqref{eigenvalueproblem}. By~\cite[Lemma~2.1.4]{KapitulaPromislow} the evolution $\T_\eps(\xi,y;\lambda)$ depends analytically on $\lambda \in \C$, since its coefficient matrix does. Denote by $\smash{\re^{\ri \rho_{1,\eps}(\lambda)L_\eps}}$, $\smash{\re^{\ri \rho_{2,\eps}(\lambda)L_\eps}}$ and $\smash{\re^{\ri \rho_{3,\eps}(\lambda)L_\eps}}$ the eigenvalues of $\T_\eps(L_\eps,0;\lambda)$. By analytic perturbation theory, cf.~\cite[Chapter~II.1]{Kato1995Perturbation}, $\rho_{1,\eps},\rho_{2,\eps}$ and $\rho_{3,\eps}$ can be chosen to be continuous functions and the matrix $\T_\eps(L_\eps,0;\lambda)$ has a constant number of distinct eigenvalues for all $\lambda \in \C$, except for a discrete set of exceptional points. 

Arguing by contradiction, we suppose that $\lambda_0 \in B_0(\mu \eps^{1/6})$ is a point where we have $\rho_{i,\eps}(\lambda_0) = \rho_{j,\eps}(\lambda_0) \in U$ for $i,j \in \{1,2,3\}$ with $i \neq j$. Then, using the continuity of $\rho_{i,\eps}$ and $\rho_{j,\eps}$ and applying Proposition~\ref{prop:mainformula}, we infer that there exists an open neighborhood $\mathcal{U}_\eps \subset B(\mu\eps^{1/6})$ of $\lambda_0$ such that the pairs $(\rho_{i,\eps}(\lambda),\lambda), (\rho_{j,\eps}(\lambda),\lambda)$ lie in $U \times \mathcal{U}_\eps$ and must obey the main formula~\eqref{eq:mainformula} for all $\lambda \in \mathcal{U}_\eps$. Hence, we have $\rho_{i,\eps}(\lambda) = \rho_{j,\eps}(\lambda)$ for each $\lambda \in \mathcal{U}_\eps$. Since the set of exceptional points is discrete in $\C$, this implies that there are at most two distinct eigenvalues of $\T_\eps(L_\eps,0;\lambda)$ for all $\lambda \in \C$. This can however not be true at $\lambda = 0$, since at $\lambda = 0$ the eigenvalue problem~\eqref{eigenvalueproblem} coincides with the variational equation about the periodic orbit $(u_\eps(\xi),v_\eps(\xi),w_\eps(\xi))^\top$ in~\eqref{eq:fast}, which is hyperbolic with a two-dimensional stable and a two-dimensional unstable manifold by~\cite[Proposition~4.7]{CASCH}. That is, the monodromy matrix $\T_\eps(L_\eps,0;0)$ possesses three distinct eigenvalues: one neutral eigenvalue $1$, one eigenvalue with modulus $> 1$ and one eigenvalue with modulus $< 1$. We conclude that for each $\lambda \in B_0(\mu \eps^{1/6})$ eigenvalues $\re^{\ri \rho L_\eps}$ of $\T_\eps(L_\eps,0;\lambda)$ with $\rho \in U$ are algebraically simple. Hence, by analytic perturbation theory, cf.~\cite[Chapter~II.1]{Kato1995Perturbation}, they depend analytically on $\lambda$. Since~\eqref{eq1} has a unique solution $\rho \in U$ for each $\lambda \in B_0(\mu \eps^{1/6})$ by estimate~\eqref{est2}, there must be a $j_0 \in \{1,2,3\}$ such that 
\begin{align*}
h_{\eps,\nu}(\lambda) = \ri \rho_{j_0,\eps}(\lambda) L_0 - \frac{\lambda}{c} L_0 + \kappa\epsilon^{\frac23} \lambda^2
\end{align*}
holds for each $\lambda \in B_0(\mu \eps^{1/6})$. We conclude that $h_{\eps,\nu}$ is analytic and $\rho_{j_0,\eps} \colon B_0(\mu \epsilon^{1/6}) \to \C$ is given by
\begin{align*}
\rho_{j_0,\eps}(\lambda) = -\ri\frac{\lambda}{c} + \ri \frac{1}{L_0}\left( \kappa \epsilon^{\frac23} \lambda^2 -  h_{\epsilon,\nu}(\lambda)\right).
\end{align*}
By estimate~\eqref{est1} it holds
\begin{align*}
 \sup_{\lambda \in B_0\left(\mu\epsilon^{1/6}\right)} \left|\kappa \epsilon^{\frac23} \lambda^2 - h_{\epsilon,\nu}(\lambda)\right| \leq C_\nu \epsilon
\end{align*}
for all $0 < \eps \ll \mu \ll \nu \ll 1$. Hence, using Cauchy estimates on the first derivative, we obtain 
\begin{align}
 \left|\rho_{j_0,\eps}'(\lambda) + \ri c^{-1}\right| \leq \frac{\sup\left\{\left|\kappa \epsilon^{\frac23} z^2 - h_{\epsilon,\nu}(z)\right| : z \in R_{1,1,\eps}(\mu)\right\}}{\frac12 \mu  \epsilon^{1/6}} \leq C_\nu  \epsilon^{\frac56} \label{est0}
\end{align}
for all $0 < \eps \ll \mu \ll \nu \ll 1$ and $\lambda \in R_{1,1,\eps}(\mu)$. Therefore, we find that, provided $0 < \eps \ll \mu \ll \nu \ll 1$, the analytic function $\rho_{j_0,\eps}$ maps the ball $R_{1,1,\eps}(\mu)$ bijectively onto its image $V_\eps$, which must be open by the open mapping theorem and contains the origin as $\rho_{j_0,\eps}(0) = 0$. By the holomorphic inverse function theorem, its inverse $\lambda_{\epsilon} \colon V_\eps \to R_{1,1,\eps}(\mu)$ must be analytic, too. Using that the interval $[-\pi/L_\eps,\pi/L_\eps)$ is contained in $U$, we conclude that there exists an open interval $I_\eps \subset \R$ such that $\lambda \in R_{1,1,\eps}(\mu)$ is a solution of the main formula~\eqref{eq:mainformula} for some $\rho \in \R $ if and only if we have $\lambda = \lambda_\eps(\tilde\rho)$ for some $\tilde\rho \in I_\eps$.

To obtain bounds on the spectral curve $\lambda_\eps|_{I_\eps}$, we start by bounding the second derivative of $\rho_{j_0,\eps}$ using Cauchy estimates. Thus, with the aid of estimate~\eqref{est1} we arrive at
\begin{align} \label{est2}
\begin{split}
 \left|\rho_{j_0,\eps}''(\lambda) - 2\ri\frac{\kappa}{L_0}\epsilon^{\frac23}\right| &\leq \frac{2! \sup\left\{\left|h_{\epsilon,\nu}(z)\right| : z \in R_{1,1,\eps}(\mu)\right\}}{\frac14 \mu^2 \epsilon^{1/3}} \\
 &\leq C_\nu\epsilon^{\frac23}\left(\mu^2 + \mu^{-2} \epsilon^{\frac13} + \mu^{-1} \epsilon^{\frac16}|\log(\epsilon)|\right)+C\eta(\nu)\eps^{\frac23}
\end{split}
\end{align}
for all $0 < \eps \ll \mu \ll \nu \ll 1$ and $\lambda \in R_{1,1,\eps}(\mu)$. So, provided $0 < \eps \ll \mu \ll \nu \ll 1$, we find that the real function $\smash{\rho^R_{\epsilon} \colon (-\tfrac12 \mu \epsilon^{1/6},\tfrac12 \mu \epsilon^{1/6}) \to \R}$ given by $\smash{\rho^R_{\epsilon}(\omega)} = \Re \rho_{j_0,\eps}(\ri \omega)$ is strictly convex. Moreover, by real symmetry of the problem, it must hold $\rho_{j_0,\epsilon}'(0) \in \R$, implying $\smash{\rho^R_{\epsilon}(0) = 0 = \left(\rho^R_{\epsilon}\right)'(0)}$. We conclude that $\smash{\rho^R_{\epsilon}(\omega)}$ attains a strict minimum at $\omega = 0$ and is thus strictly positive for all $\omega \in \smash{(-\tfrac12 \mu \epsilon^{1/6},\tfrac12 \mu \epsilon^{1/6})} \setminus \{0\}$. That is, the spectral curve $\lambda_\eps|_{I_\eps}$ does not touch the imaginary axis other than at the origin, at which we have
\begin{align}\label{eq:lambda_inverse_FT}
 \lambda_{\epsilon}(0) = 0, \qquad \lambda_{\epsilon}'(0) = \frac{1}{\rho_{j_0,\eps}'(0)}, \qquad \lambda_{\epsilon}''(0) = -\frac{\rho_{j_0,\eps}''(0)}{\left(\rho_{j_0,\eps}'(0)\right)^3}
\end{align}
by the inverse function theorem. Using estimate~\eqref{est2} and recalling $\rho_{j_0,\eps}'(0) \in \R$, we observe that $\smash{\lambda^R_{\epsilon}} \colon  I_\eps \to \R$ with $\smash{\lambda^R_{\epsilon}(\rho)} = \Re  \lambda_{\epsilon}(\rho)$ satisfies $\smash{ \left(\lambda^R_{\epsilon}\right)'(0) = 0}$ and $\smash{ \left(\lambda^R_{\epsilon}\right)''(0) < 0}$, as desired. Finally, using~\eqref{est0},~\eqref{est2}, and~\eqref{eq:lambda_inverse_FT}, we obtain~\eqref{eq:critical_lambda_prime}.
\end{proof}

\subsubsection{The region \texorpdfstring{$R_{1,2,\eps}(\varsigma, \mu,M)$}{R12}}

We preclude the existence of unstable spectrum in the region $R_{1,2,\eps}(\varsigma, \mu,M)$ by showing that any solution $\lambda \in R_{1,2,\eps}(\varsigma, \mu,M)$ to the main formula~\eqref{eq:mainformula} must have strictly negative real part. To this end, we use the Airy function expansions from Appendix~\ref{app:airy} and exploit monotonicity properties of the functions $\Upsilon_{\mathrm{uf}}$ and $\Upsilon_{\mathrm{lf}}$.

\begin{proposition}\label{prop:region_lambda2}
Let $0<a<\tfrac{1}{2}$ and $0<\gamma<\gamma_*(a)$. Fix $\mu > 0$. Provided $0 < \eps \ll \varsigma \ll 1$, any solution $\lambda\in R_{1,2,\eps}(\varsigma, \mu,\varsigma^{-1/2})$ of the main formula~\eqref{eq:mainformula} satisfies $\Re(\lambda)<0$. 
\end{proposition}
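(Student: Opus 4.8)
\textbf{Proof plan for Proposition~\ref{prop:region_lambda2}.}
The plan is to take the principal logarithm of the main formula~\eqref{eq:mainformula} and show that, in the regime $\lambda \in R_{1,2,\eps}(\varsigma,\mu,\varsigma^{-1/2})$, the real part of the logarithm of the right-hand side is strictly negative, while the left-hand side contributes $\tfrac{\Re(\lambda)}{c}L_\eps$, forcing $\Re(\lambda) < 0$. Concretely, writing $z = \lambda\eps^{-1/6}$, the region under consideration corresponds to $\tfrac12\mu \leq |z| \leq \varsigma^{-1/2}$ together with $|\Re(z)| \leq \varsigma$ and $|\Im(z)| \leq \varsigma^{-1/2}$; since $\varsigma \ll 1$, we have $z$ close to the imaginary axis with $|z|$ of order one (up to the slowly growing cutoff $\varsigma^{-1/2}$), so the leading behavior is governed by $z = \ri\omega$ with $\tfrac12\mu \leq |\omega| \leq \varsigma^{-1/2}$. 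Taking real parts in~\eqref{eq:mainformula} and using $|\mathcal{R}_\eps(\lambda)| \leq C_\nu(\eps^{1/3} + |\lambda\log|\lambda||) + \eta(\nu)(|\Upsilon_\mathrm{uf}| + |\Upsilon_\mathrm{lf}|)$, I would reduce the claim to showing that
\[
\Re\log\!\left(\left(1 + \tfrac{(u_1-u_2)\Upsilon_\mathrm{lf}(z)}{u_2-\gamma f(u_2)-a}\right)\!\left(1 + \tfrac{(\bar u_1-\bar u_2)\Upsilon_\mathrm{uf}(z)}{\bar u_2 - \gamma f(\bar u_2)-a}\right)\right) < 0
\]
for $z$ in the relevant range, with a margin that dominates the residual.

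The key computation is to understand $\Upsilon_\mathrm{lf}(\ri\omega)$ and $\Upsilon_\mathrm{uf}(\ri\omega)$ for real $\omega$. Plugging $z^2 = -\omega^2 < 0$ into~\eqref{eq:deflfuf}, the exponential factor $\exp\!\big(\tfrac{z^2}{\theta c^3}(s+\Omega_0)\big)$ becomes $\exp\!\big(-\tfrac{\omega^2}{\theta c^3}(s+\Omega_0)\big)$, a real decaying exponential on $s \geq -\Omega_0$, so $\Upsilon_j(\ri\omega)$ is real for all real $\omega$; moreover the integrand $s\,\mathrm{Ai}(s)^2 - \mathrm{Ai}'(s)^2$ and the sign structure (as catalogued in Appendix~\ref{app:airy}, which I would invoke via Proposition~\ref{prop:I0properties} and the identity $\Upsilon_j(z) = I_0(-z^2/(\theta_j c^3))$) determine that $\Upsilon_j(\ri\omega) < 0$ with $\Upsilon_j(\ri\omega) \to 0$ as $\omega \to 0$ and monotone behavior in $\omega^2$. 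Since $u_1 - u_2 = -(u_2 - u_1) = -\sqrt{1-a+a^2} < 0$ and $u_2 - \gamma f(u_2) - a \neq 0$ with the sign fixed by $0 < \gamma < \gamma_*(a)$ (and similarly for the upper fold, using $\bar u_1 - \bar u_2 = u_2 - u_1 > 0$ by~\eqref{eq:differenceu1u2}), each factor $1 + \tfrac{(u_1-u_2)\Upsilon_\mathrm{lf}}{u_2-\gamma f(u_2)-a}$ and $1 + \tfrac{(\bar u_1-\bar u_2)\Upsilon_\mathrm{uf}}{\bar u_2-\gamma f(\bar u_2)-a}$ is real and lies strictly \emph{inside} the interval $(-1,1)$, hence has modulus $< 1$, so the logarithm of the product has strictly negative real part. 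The crucial structural input here is $\theta_\mathrm{uf} > \theta_\mathrm{lf}$ in the FitzHugh--Nagumo parameter regime (as noted in~\S\ref{s:instability}), which ensures the two contributions cannot conspire to cancel; I would make this quantitative by showing the product of moduli is bounded away from $1$ uniformly on $\tfrac12\mu \leq |\omega| \leq \varsigma^{-1/2}$, with the lower bound on the gap depending only on $\mu$ (at the inner edge) and deteriorating only mildly at the outer edge where $\Upsilon_j \to$ some limit.

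Next I would handle the perturbation off the imaginary axis: for $z = \sigma + \ri\omega$ with $|\sigma| \leq \varsigma$, the entire functions $\Upsilon_j$ are Lipschitz on the compact set $|z| \leq \varsigma^{-1/2}$ (with constant depending on $\varsigma^{-1/2}$, which is acceptable since it is beaten by choosing $\eps \ll \varsigma$), so $|\Upsilon_j(\sigma + \ri\omega) - \Upsilon_j(\ri\omega)| \leq C(\varsigma)|\sigma| \leq C(\varsigma)\varsigma$, a small perturbation. Hence the modulus of each factor stays below $1 - c_0(\mu)/2$ for $\varsigma$ small, and taking logarithms gives $\Re\log(\text{RHS without }\mathcal{R}_\eps) \leq -c_1 < 0$ for some $c_1 = c_1(\mu) > 0$ (uniform down to the inner edge). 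Finally, absorbing $\mathcal{R}_\eps$: on $R_{1,2,\eps}$ we have $|\lambda| \leq \mu\eps^{1/6}$ so $|\lambda\log|\lambda|| = \mathcal{O}(\eps^{1/6}|\log\eps|) \to 0$, $\eps^{1/3}\to 0$, and $\eta(\nu)(|\Upsilon_\mathrm{uf}| + |\Upsilon_\mathrm{lf}|)$ can be made small by first fixing $\nu$ small (which fixes $\eta(\nu)$ small) — so after taking $\Re\log$ of both sides of~\eqref{eq:mainformula} and estimating $|\log(1 + w) - \log(1)|$ type corrections from $\mathcal{R}_\eps$ being small relative to the distance of the RHS from the boundary of the unit disk, we obtain $\tfrac{\Re(\lambda)}{c}L_\eps = \Re\log(\text{RHS}) \leq -c_1/2 < 0$, hence $\Re(\lambda) < 0$.

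The main obstacle, I expect, is the sign/monotonicity analysis of $\Upsilon_j(\ri\omega)$ over the \emph{entire} range $\tfrac12\mu \leq |\omega| \leq \varsigma^{-1/2}$ — not just for small $\omega$ where the quadratic expansion $\Upsilon_j(z) \approx -\tfrac{2\Omega_0}{3\theta_j c^3}z^2$ from~\eqref{eq:Upsilon_exp} controls things, but also for moderately large $\omega$ where one needs the large-$s$ Airy asymptotics and the genuine sign of the integral $\int_{-\Omega_0}^\infty e^{-\tau(s+\Omega_0)}(s\,\mathrm{Ai}(s)^2 - \mathrm{Ai}'(s)^2)\,\mathrm{d}s$ as a function of $\tau = \omega^2/(\theta_j c^3) > 0$. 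One must show this stays in $(-1,1)$ after multiplication by the $\mathcal{O}(1)$ constants $(u_i - u_j)/(u_j - \gamma f(u_j) - a)$; it is conceivable that near the outer edge $|\omega| \sim \varsigma^{-1/2}$ one of the factors approaches $-1$ or $1$, in which case the margin $c_1$ degrades, and one would need to track the rate carefully and verify it is still beaten by the vanishing residual — this interplay between the edge of $R_{1,2}$ and the transition into region $R_{1,3}$ (handled in Appendix~\ref{app:r3} via Sturm--Liouville) is the delicate seam of the argument. I would structure the proof to first establish a clean $|\Upsilon_j(\ri\omega)| < 1/(2\max_i |(u_i-u_j)/(u_j-\gamma f(u_j)-a)|)$-type bound on the whole range using the $I_0$-representation and its properties from Appendix~\ref{app:airy}, and only then do the perturbative and residual bookkeeping.
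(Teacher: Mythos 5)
Your overall strategy (reduce to the imaginary axis, show the modulus of the product on the right-hand side of~\eqref{eq:mainformula} is bounded away from $1$, then absorb the off-axis perturbation and the residual $\mathcal{R}_\eps$) matches the paper's, and your reduction steps are sound. However, the central step is argued incorrectly, in two ways.

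First, a sign error: for $z=\ri\omega$ with $\omega\in\R$ one has $\Upsilon_j(\ri\omega)=I_0\big(\omega^2/(\theta_j c^3)\big)$, and by Proposition~\ref{prop:I0properties} the function $I_0$ is strictly increasing on $[0,\infty)$ from $0$ to $1$; hence $\Upsilon_j(\ri\omega)>0$, not $<0$ as you claim. With the correct sign the factors $A_j(\omega)=1+\tfrac{(u_j-u_{j}')\Upsilon_j}{\cdots}$ decrease monotonically from $1$ toward the limits $\tfrac{u_1-\gamma f(u_1)-a}{u_2-\gamma f(u_2)-a}$ and $\tfrac{\bar u_1-\gamma f(\bar u_1)-a}{\bar u_2-\gamma f(\bar u_2)-a}$, respectively.

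Second, and more seriously, your key claim that \emph{each} factor lies strictly inside $(-1,1)$ is false. The paper's elementary bounds give only $A_{\mathrm{lf}}>-\tfrac12$ and $A_{\mathrm{uf}}>-2$; the upper-fold factor can and does dip below $-1$, so it can have modulus exceeding $1$. Consequently the product cannot be controlled factor by factor. This is exactly why the condition $\theta_{\mathrm{uf}}>\theta_{\mathrm{lf}}$ is essential and not merely a "non-cancellation" remark: the paper uses monotonicity of $I_0$ together with $\theta_{\mathrm{uf}}>\theta_{\mathrm{lf}}$ to bound the product $A_{\mathrm{lf}}(\omega)A_{\mathrm{uf}}(\omega)$ from below by a quadratic in the single quantity $I_0\big(\omega^2/(\theta_{\mathrm{uf}}c^3)\big)$, whose explicit minimum is computed to be $>-\tfrac13$; only the product, never each factor, is shown to have modulus uniformly below $1$. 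If your factor-wise claim were true, the ordering of $\theta_{\mathrm{uf}}$ and $\theta_{\mathrm{lf}}$ would be irrelevant, contradicting~\S\ref{s:instability}, where reversing this inequality in a modified system produces genuine instabilities precisely in this intermediate regime $|\lambda|\sim\eps^{1/6}$. Your worry about a degrading margin at the outer edge $|\omega|\sim\varsigma^{-1/2}$ is also moot once the product is bounded uniformly away from modulus $1$ on all of $[\tfrac14\mu,\infty)$, as the paper's monotonicity-plus-quadratic argument achieves; the residual is then absorbed by first fixing $\nu$ (hence $\eta(\nu)$) small and then $\varsigma$ small. Finally, taking absolute values of~\eqref{eq:mainformula} and comparing with $\re^{\Re(\lambda)L_\eps/c}\geq 1$ avoids the logarithm entirely and is safer, since the right-hand side is not a priori bounded away from $0$.
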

\begin{proof}
In this proof, $C\geq 1$ denotes a $\lambda$-, $\eps$-, $\nu$-, and $\varsigma$-independent constant, which will be taken larger, if necessary. Throughout the proof, we use the abbreviations $\lambda_r=\eps^{-1/6}\Re(\lambda)$ and $\lambda_i=\eps^{-1/6}\Im(\lambda)$. 

Provided $0 < \eps \ll \varsigma \ll 1$, we have, for $\lambda \in R_{1,2,\eps}(\varsigma, \mu,\varsigma^{-1/2})$, that $\tfrac{1}{4}\mu \leq |\lambda_i|\leq \varsigma^{-1/2} $ and $|\lambda_r|\leq \varsigma$ so that 
\begin{align*}
\left|\frac{\lambda^2}{\eps^{1/3}} + \lambda_i^2\right|\leq C \sqrt{\varsigma}, \qquad \Re\left(\frac{\lambda^2}{\eps^{1/3}}\right) \leq \varsigma^2.
\end{align*}
Hence, using the mean value theorem in combination with Proposition~\ref{prop:I0properties}, we estimate
\begin{align} \label{eq:Upsilonbounds}
\left|\Upsilon_j\left(\frac{\lambda}{\eps^{1/6}}\right) - I_0\left(\frac{\lambda_i^2}{\theta_j c^3}  \right)\right| = \left|I_0\left(-\frac{\lambda^2}{\theta_j c^3 \eps^{1/3}} \right) - I_0\left(\frac{\lambda_i^2}{\theta_j c^3}  \right)\right| \leq C\sqrt{\varsigma}, \qquad \left| I_0\left(\frac{\lambda_i^2}{\theta_j c^3}  \right)\right| \leq C
\end{align}
for $j = \mathrm{lf}, \mathrm{uf}$, provided $0 < \eps \ll \varsigma \ll 1$ and $\lambda \in R_{1,2,\eps}(\varsigma, \mu,\varsigma^{-1/2})$, where the function $I_0$ is given by~\eqref{eq:defI0}. Thus, taking absolute values in the main formula~\eqref{eq:mainformula} and using~\eqref{eq:Upsilonbounds} and Proposition~\ref{prop:mainformula}, we arrive at
\begin{align}\label{eq:mainformula_lambda2_abs}
\re^{\frac{\Re(\lambda)}{c}L_\eps}&\leq \left|\left(1 -  \frac{(u_2-u_1)I_0\left(\tfrac{\lambda_i^2}{\theta_\mathrm{lf}c^3} \right)}{u_2 - \gamma f(u_2) - a}\right)\left(1 - \frac{(\bar{u}_2-\bar{u}_1)I_0\left(\tfrac{\lambda_i^2}{\theta_\mathrm{uf}c^3} \right)}{\bar{u}_2 - \gamma f(\bar{u}_2) - a}\right)\right|+ C\eta(\nu)
\end{align}
provided $0 < \eps \ll \varsigma \ll \nu \ll 1$ and $\lambda \in R_{1,2,\eps}(\varsigma, \mu,\varsigma^{-1/2})$. We proceed by showing that the right-hand side of~\eqref{eq:mainformula_lambda2_abs} is strictly less than one for $\lambda \in R_{1,2,\eps}(\varsigma, \mu,\varsigma^{-1/2})$. By Proposition~\ref{prop:I0properties}, the function $I_0(z)$ is strictly increasing on $[0,\infty)$ with $I_0(0) = 0$ and $I_0(z)\to1$ as $z\to\infty$. Hence, using that
\begin{align*}
    \frac{u_2-u_1}{u_2 - \gamma f(u_2) - a}&>0, \qquad  \frac{\bar{u}_2-\bar{u}_1}{\bar{u}_2 - \gamma f(\bar{u}_2) - a} >0,
\end{align*}
we observe that the functions
\begin{align*}
A_\mathrm{lf}(z)\coloneqq 1 -  \frac{(u_2-u_1)I_0\left(\tfrac{z^2}{\theta_\mathrm{lf}c^3} \right)}{u_2 - \gamma f(u_2) - a}, \qquad A_\mathrm{uf}(z)\coloneqq 1 -  \frac{(\bar{u}_2-\bar{u}_1)I_0\left(\tfrac{z^2}{\theta_\mathrm{uf}c^3} \right)}{\bar{u}_2 - \gamma f(\bar{u}_2) - a}
\end{align*}
are strictly decreasing on $[0,\infty)$. Therefore, we have that
\begin{align*}
1 = A_{\mathrm{lf}}(0) > A_{\mathrm{lf}}\left(\tfrac14\mu\right) \geq A_{\mathrm{lf}}(z) \geq \lim_{w \to \infty} A_{\mathrm{lf}}(w) = \frac{u_1 - \gamma f(u_1) - a}{u_2 - \gamma f(u_2) - a}
\end{align*}
and
\begin{align*}
    1 = A_{\mathrm{uf}}(0) > A_{\mathrm{uf}}\left(\tfrac14\mu\right) \geq A_{\mathrm{uf}}(z) \geq \lim_{w \to \infty} A_{\mathrm{uf}}(w)=\frac{\bar{u}_1 - \gamma f(\bar{u}_1) - a}{\bar{u}_2 - \gamma f(\bar{u}_2) - a}
\end{align*}
for $z \geq \frac14\mu$, where we used that $f(u_1) = f(u_2)$ and $f(\bar{u}_1) = f(\bar{u}_2)$. Recalling the expressions~\eqref{defu1},~\eqref{defu1s}, and~\eqref{defu2} for $u_1,\bar{u}_1,u_2$, and $\bar{u}_2$ and using that $f(u_1) = f(u_2) < 0$, $f(\bar{u}_1) = f(\bar{u}_2) > 0$, $0 < a < \frac12$, and $\gamma > 0$, we establish
\begin{align*}
    0>\frac{u_1 - \gamma f(u_1) - a}{u_2 - \gamma f(u_2) - a}&> \frac{u_1 - a}{u_2 - a} > -\frac{1}{2}, \qquad 0>\frac{\bar{u}_1 - \gamma f(\bar{u}_1) - a}{\bar{u}_2 - \gamma f(\bar{u}_2) - a}>\frac{\bar{u}_1}{\bar{u}_2}> -2
\end{align*}
so that
\begin{align*}
A_\mathrm{lf}(z)&>-\frac{1}{2}, \qquad A_\mathrm{uf}(z) >-2
\end{align*}
for $z \geq 0$. Hence, if the product $|A_\mathrm{lf}(z)||A_\mathrm{uf}(z)| $ is ever equal to one for $z \geq \frac14\mu$, it must occur at a value of $z_* \geq \frac14\mu$ such that $A_\mathrm{lf}(z_*)\geq \tfrac{1}{2}>0$ and $A_\mathrm{uf}(z_*)\leq -1<0$, so that $A_\mathrm{lf}(z_*)A_\mathrm{uf}(z_*)=-1$. Let $z_* \geq \frac14\mu$ be such that $A_\mathrm{lf}(z_*)>0$ and $A_\mathrm{uf}(z_*)<0$. Then, there exists $0 < z_- < z_* < z_+$ such that $A_\mathrm{uf}(z_-)=0$ and $A_\mathrm{lf}(z_+)=0$, so that $A_\mathrm{lf}(z)A_\mathrm{uf}(z)$ achieves its minimum on the interval $(z_-,z_+)$. We show that this minimum is strictly greater than $-1$. For $z\in (z_-,z_+)$, we have
\begin{align*}
A_\mathrm{lf}(z)A_\mathrm{uf}(z)&=\left(1 -  \frac{(u_2-u_1)I_0\left(\tfrac{z^2}{\theta_\mathrm{lf}c^3} \right)}{u_2 - \gamma f(u_2) - a}\right)\left(1 - \frac{(\bar{u}_2-\bar{u}_1)I_0\left(\tfrac{z^2}{\theta_\mathrm{uf}c^3} \right)}{\bar{u}_2 - \gamma f(\bar{u}_2) - a}\right)\\
&>\left(1 -  \frac{(u_2-u_1)I_0\left(\tfrac{z^2}{\theta_\mathrm{uf}c^3} \right)}{u_2 - \gamma f(u_2) - a}\right)\left(1 - \frac{(\bar{u}_2-\bar{u}_1)I_0\left(\tfrac{z^2}{\theta_\mathrm{uf}c^3} \right)}{\bar{u}_2 - \gamma f(\bar{u}_2) - a}\right)
\end{align*}
due to the fact that $\theta_\mathrm{uf}>\theta_\mathrm{lf}$ for $0 < a < \frac12$ and $0 < \gamma < \gamma_*(a)$; see~\eqref{eq:defthetas}. The right-hand side of the latter is quadratic in $\smash{I_0(z^2/(\theta_\mathrm{uf}c^3))}$. Using~\eqref{eq:differenceu1u2}, one finds that it reaches its minimum when 
\begin{align*}
    (u_2-u_1)I_0\left(\frac{z^2}{\theta_\mathrm{uf}c^3} \right) = \frac{1}{2}\left( u_2 - \gamma f(u_2) - \bar{u}_2 + \gamma f(\bar{u}_2)\right),
\end{align*}
so that, using $0<a<\tfrac{1}{2}$ and $0<\gamma<\gamma_*(a)<6$, we have that
\begin{align*}
A_\mathrm{lf}(z)A_\mathrm{uf}(z)
&>\frac{\left(u_2 - \gamma f(u_2) - a +\bar{u}_2 - \gamma f(\bar{u}_2) - a\right)^2}{4(\bar{u}_2 - \gamma f(\bar{u}_2) - a)(u_2 - \gamma f(u_2) - a)}\\
&=-\frac{(1 - 2 a)^2 (9 + (-2 - a + a^2) \gamma)^2}{
 27 (9 + 2 (2 - 5 a + 5 a^2) \gamma + (-1 + a)^2 a^2 \gamma^2)} >-\frac{1}{3}.
\end{align*}
Therefore, the quantity $|A_\mathrm{lf}(z)A_\mathrm{uf}(z)|$ is uniformly bounded away from $1$ for $z \geq \tfrac{1}{4}\mu$. Thus, by taking $0 < \eps \ll \varsigma \ll \nu \ll 1$, we can ensure that the right-hand side of~\eqref{eq:mainformula_lambda2_abs} is strictly bounded away from $1$ for $\lambda \in R_{1,2,\eps}(\varsigma, \mu,\varsigma^{-1/2})$ and hence any solution of~\eqref{eq:mainformula} in this region must satisfy $\Re(\lambda)<0$.
\end{proof}

\subsubsection{The region \texorpdfstring{$R_{1,3,\eps}(\varsigma, \mu,M)$}{R13}} 

Finally, we prove that any solution $\lambda\in R_{1,3,\varepsilon}(\varsigma,\mu,M)$ of the main formula~\eqref{eq:mainformula} 
must have strictly negative real part. This follows by studying the limiting behavior of the functions $\Upsilon_{\mathrm{uf}}(z)$ and 
$\Upsilon_{\mathrm{lf}}(z)$ as $z \to \infty$.

\begin{proposition}\label{prop:region_lambda3}
Let $0<a<\tfrac{1}{2}$ and $0<\gamma<\gamma_*(a)$. Provided $0 < \eps \ll \varsigma, \mu, 1/M \ll 1$, any solution $\lambda\in R_{1,3,\eps}(\varsigma, \mu,M)$ of the main formula~\eqref{eq:mainformula} satisfies $\Re(\lambda)<0$. 
\end{proposition}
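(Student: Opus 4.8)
The plan is to take absolute values in the main formula~\eqref{eq:mainformula} of Proposition~\ref{prop:mainformula} and show that for $\lambda\in R_{1,3,\eps}(\varsigma,\mu,M)$ with $\Re(\lambda)\geq 0$, the right-hand side of the formula has modulus strictly less than $1$, while the left-hand side $\re^{-(\ri\rho-\lambda/c)L_\eps}$ has modulus $\re^{\Re(\lambda)L_\eps/c}\geq 1$, yielding a contradiction. The key new ingredient is the behavior of $\Upsilon_\mathrm{lf}(z), \Upsilon_\mathrm{uf}(z)$ as $z\to\infty$: for $\lambda$ in this region, $|\Im(\lambda)|\geq M\eps^{1/6}$ with $M$ large and $|\Re(\lambda)|\leq \varsigma\eps^{1/6}$, so $\lambda\eps^{-1/6}$ has large modulus and nearly imaginary argument, i.e.\ $-\lambda^2/(\theta_j c^3\eps^{1/3})$ has large modulus and lies (nearly) on the positive real axis. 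First I would record, using the integral representation~\eqref{eq:deflfuf} together with the Airy function asymptotics from Appendix~\ref{app:airy} (or equivalently the function $I_0$ from Proposition~\ref{prop:I0properties}), that $\Upsilon_j(z)\to 1$ as $|z|\to\infty$ along directions with $\Re(z^2)\leq 0$; quantitatively, $|\Upsilon_j(\lambda\eps^{-1/6})-1|$ can be made $\leq\eta_1(\varsigma,1/M)$ for an error function $\eta_1$ vanishing as $\varsigma,1/M\to 0$, uniformly in $\eps$ small. This is the step I expect to require the most care, since one must control the oscillatory integral $\int_{-\Omega_0}^\infty \re^{(z^2/\theta_j c^3)(s+\Omega_0)}(s\,\mathrm{Ai}(s)^2-\mathrm{Ai}'(s)^2)\,\de s$ for $z^2$ with large modulus and argument near $\pi$ (a near-resonant Laplace-type integral), establishing that the prefactor $z^2/(\theta_j c^3\mathrm{Ai}'(-\Omega_0)^2)$ exactly cancels against the decay of the integral to leave a limit of $1$.

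Granting the limit $\Upsilon_j\to 1$, the argument closes much as in the proof of Proposition~\ref{prop:region_lambda2}. Plugging $\Upsilon_j(\lambda\eps^{-1/6})\approx 1$ into~\eqref{eq:mainformula} and using~\eqref{eq:differenceu1u2}, the leading term of the right-hand side becomes
\begin{align*}
\left(1 + \frac{u_1-u_2}{u_2-\gamma f(u_2)-a}\right)\left(1 + \frac{\bar{u}_1-\bar{u}_2}{\bar{u}_2-\gamma f(\bar{u}_2)-a}\right) = \frac{u_1-\gamma f(u_1)-a}{u_2-\gamma f(u_2)-a}\cdot\frac{\bar{u}_1-\gamma f(\bar{u}_1)-a}{\bar{u}_2-\gamma f(\bar{u}_2)-a},
\end{align*}
using $f(u_1)=f(u_2)$ and $f(\bar u_1)=f(\bar u_2)$. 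As in the proof of Proposition~\ref{prop:region_lambda2}, the bounds $0<a<\tfrac12$ and $0<\gamma<\gamma_*(a)$ give $0>\tfrac{u_1-\gamma f(u_1)-a}{u_2-\gamma f(u_2)-a}>-\tfrac12$ and $0>\tfrac{\bar u_1-\gamma f(\bar u_1)-a}{\bar u_2-\gamma f(\bar u_2)-a}>-2$, so the product has modulus $<1$, in fact bounded by $1$ times some constant $<1$. Taking $0<\eps\ll\varsigma,\mu,1/M\ll 1$, the residual $\mathcal{R}_\eps(\lambda)$ in~\eqref{eq:mainformula_residual} is controlled by $C_\nu(\eps^{1/3}+|\lambda\log|\lambda||)+\eta(\nu)(|\Upsilon_\mathrm{uf}|+|\Upsilon_\mathrm{lf}|)$, which after first fixing $\nu$ small to control $\eta(\nu)$, then $\varsigma,\mu,1/M$ small, and finally $\eps$ small (noting $|\lambda|\leq\mu$ so $|\lambda\log|\lambda||$ is small) is likewise strictly less than the gap to $1$. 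Hence the modulus of the right-hand side of~\eqref{eq:mainformula} stays uniformly bounded below $1$ on $R_{1,3,\eps}(\varsigma,\mu,M)$.

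Therefore any solution $\lambda$ of~\eqref{eq:mainformula} in this region satisfies $\re^{\Re(\lambda)L_\eps/c}<1$, i.e.\ $\Re(\lambda)<0$, which is the claim. I would organize the write-up by: (i) stating and proving the asymptotic lemma $\Upsilon_j(z)\to 1$ as $z\to\infty$ with $\Re(z^2)\leq 0$, with an explicit modulus of continuity in terms of $\arg(z^2)$ and $|z|^{-1}$; (ii) substituting into the main formula and invoking the inequalities on the ratios $\tfrac{u_1-\gamma f(u_1)-a}{u_2-\gamma f(u_2)-a}$ etc.\ already established in the proof of Proposition~\ref{prop:region_lambda2}; (iii) absorbing $\mathcal R_\eps$ via the ordering of constants $0<\eps\ll\varsigma,\mu,1/M\ll\nu\ll 1$; (iv) concluding via the modulus of the Floquet factor. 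The main obstacle, as noted, is step (i): making the cancellation in the oscillatory Airy integral quantitative uniformly over the relevant cone of $z$-values near the imaginary axis, which I expect to handle by an integration-by-parts / stationary-phase-free estimate exploiting that $s\,\mathrm{Ai}(s)^2-\mathrm{Ai}'(s)^2$ has a clean derivative structure (it is, up to constants, the derivative of $\mathrm{Ai}(s)\mathrm{Ai}'(s)$-type combinations), so that a single integration by parts produces the cancelling $z^{-2}$ factor plus an integrable remainder.
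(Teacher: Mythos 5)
Your proposal is correct and follows essentially the same route as the paper: take moduli in the main formula, show $\Upsilon_j(\lambda\eps^{-1/6})\to 1$ in this region, and bound the resulting product of ratios strictly below $1$ so that the Floquet factor forces $\Re(\lambda)<0$. The obstacle you flag in step (i) is in fact already dispatched by Proposition~\ref{prop:I0properties}(iii): the single integration by parts you anticipate yields $I_0(z)=1-\mathrm{Ai}'(-\Omega_0)^{-2}\int_{-\Omega_0}^\infty \re^{-z(s+\Omega_0)}\mathrm{Ai}(s)^2\,\de s$, whose remaining integrand is nonnegative, so for $\Re\bigl(-\lambda^2/(\theta_jc^3\eps^{1/3})\bigr)\geq M^2/(2\theta_jc^3)$ one simply bounds $|\Upsilon_j(\lambda\eps^{-1/6})-1|\leq 1-I_0\bigl(M^2/(2\theta_jc^3)\bigr)\leq C/M^4$ — there is no oscillatory integral to control. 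The only other (cosmetic) difference is that the paper evaluates the limiting product explicitly as a rational function of $a,\gamma$ and bounds its modulus by $\tfrac12$ uniformly, whereas you reuse the factor-wise bounds from the proof of Proposition~\ref{prop:region_lambda2}; both give a gap to $1$ that is fixed for fixed $a,\gamma$, which suffices to absorb $\mathcal{R}_\eps$ and the $\Upsilon_j$-errors under the stated ordering of constants.
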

\begin{proof}
We proceed similarly to the proof of Proposition~\ref{prop:region_lambda2}. Again $C\geq 1$ denotes a $\lambda$-, $\eps$-, $\nu$-, $M$-, and $\varsigma$-independent constant, which will be taken larger, if necessary. Moreover, we use the abbreviations $\lambda_r=\eps^{-1/6}\Re(\lambda)$ and $\lambda_i=\eps^{-1/6}\Im(\lambda)$. 

Provided $0 < \eps \ll \varsigma, 1/M, \mu \ll 1$, we have, for $\lambda \in R_{1,3,\eps}(\varsigma, \mu,M)$, that $M\leq |\lambda_i|\leq \mu \eps^{-1/6} $ and $|\lambda_r|\leq \varsigma$ so that 
\begin{align*}
\Re\left(\frac{\lambda^2}{\eps^{1/3}}\right) =   \lambda_r^2-\lambda_i^2 \leq -\frac{M^2}{2}.
\end{align*}
Therefore, Proposition~\ref{prop:I0properties} yields
\begin{align*}
\left|\Upsilon_j(\lambda \eps^{-1/6}) - 1\right| &= \left|I_0\left(-\frac{\lambda^2}{\theta_j c^3 \eps^{1/3}}\right) - 1\right| \leq \frac{1}{\mathrm{Ai}'(-\Omega_0)^2}\int^\infty_{-\Omega_0}\re^{-\frac{M^2}{2\theta_j c^3}\left( s+\Omega_0\right)}\mathrm{Ai}(s)^2\mathrm{d}s\\ &= 1 - I_0\left(\frac{M^2}{2 \theta_j c^3} \right) \leq \frac{C}{M^4},
\end{align*}
for $j = \mathrm{uf},\mathrm{lf}$, provided $0 < \eps \ll \varsigma, 1/M, \mu \ll 1$ and $\lambda \in R_{1,3,\eps}(\varsigma, \mu,M)$, where the function $I_0$ is given by~\eqref{eq:defI0}. Thus, Proposition~\ref{prop:mainformula} implies
\begin{align}\label{eq:main_formula3}
\re^{\frac{\Re(\lambda)}{c} L_\eps}&= \left|\left(1 +  \frac{u_1-u_2}{u_2 - \gamma f(u_2) - a}\right)\left(1 +  \frac{\bar{u}_1-\bar{u}_2}{\bar{u}_2 - \gamma f(\bar{u}_2) - a}\right)\right| + C\eta(\nu),
\end{align}
provided $0 < \eps \ll \varsigma, \mu, 1/M \ll \nu \ll 1$ and $\lambda \in R_{1,3,\eps}(\varsigma, \mu,M)$.

We proceed by showing that the right-hand side of~\eqref{eq:main_formula3} is strictly less than one from which we can deduce that any solution $\lambda \in R_{1,3,\eps}(\varsigma, \mu,M)$ of~\eqref{eq:mainformula} must satisfy $\Re(\lambda)<0$. Recalling the formulae~\eqref{defu1},~\eqref{defu1s}, and~\eqref{defu2} for $u_1,\bar{u}_1,u_2$, and $\bar{u}_2$, we compute 
\begin{align*}
\left(1 +  \frac{u_1-u_2}{u_2 - \gamma f(u_2) - a}\right)\left(1 +  \frac{\bar{u}_1-\bar{u}_2}{\bar{u}_2 - \gamma f(\bar{u}_2) - a}\right)&= \frac{a(1 - a) \left(9 - \left(2 + 4 a - 4 a^2\right) \gamma + a(1 - a) \gamma^2\right)}{9 + 
 2 \left(2 - 5 a + 5 a^2\right) \gamma + (1-a)^2 a^2 \gamma^2}.
\end{align*}
We note that the prefactors in front of the $\gamma$-terms in both the numerator and denominator have no zeros in the region $0<a<\tfrac{1}{2}$, so every term has fixed sign. We also note that $\gamma_*(a)<6$ for $0<a<\tfrac{1}{2}$. Hence, we estimate
\begin{align*}
\frac{a(1 - a) \left(9 - \left(2 + 4 a - 4 a^2\right) \gamma + a(1 - a) \gamma^2\right)}{9 + 
 2 \left(2 - 5 a + 5 a^2\right) \gamma + (1-a)^2 a^2 \gamma^2}&\leq \frac{a(1 - a) \left(9 + a(1 - a) \gamma^2\right)}{9}\leq \frac{1}{2}
\end{align*}
and
\begin{align*}
\frac{a(1 - a) \left(9 - \left(2 + 4 a - 4 a^2\right) \gamma + a(1 - a) \gamma^2\right)}{9 + 
 2 \left(2 - 5 a + 5 a^2\right) \gamma + (1-a)^2 a^2 \gamma^2}&\geq -\frac{a(1 - a) \left(2 + 4 a - 4 a^2\right) \gamma}{9}\geq -\frac{1}{2},
\end{align*}
from which we deduce that the right-hand side of~\eqref{eq:main_formula3} is strictly less than $1$, provided $0 < \eps \ll \varsigma \ll \mu \ll \nu \ll 1 \ll M$ and $\lambda \in R_{1,3,\eps}(\varsigma, \mu,M)$, and hence $\Re(\lambda)<0$.
\end{proof}

\subsection{Proof of Proposition~\ref{prop:region_r1}}\label{sec:region_r1_proof}

The result follows by combining Propositions~\ref{prop:mainformula},~\ref{prop:region_lambda1},~\ref{prop:region_lambda2}, and~\ref{prop:region_lambda3}. 

\section{Eigenvalue problem near the fold points}\label{sec:folds}

The proof of Proposition~\ref{prop:region_r1}, particularly the analysis of the main formula in~\S\ref{sec:mainformula}, relies on the estimates of Propositions~\ref{prop:fold_bvp_oc_lower}-\ref{prop:fold_bvp_oc_upper} regarding the behavior of the eigenvalue problem~\eqref{eigenvalueproblem} on the intervals $\mathcal{I}_\mathrm{lf}$ and $\mathcal{I}_\mathrm{uf}$, along which the wave train passes through small neighborhoods of the lower left and upper right folds, respectively. In this section, we prove Propositions~\ref{prop:fold_bvp_oc_lower}-\ref{prop:fold_bvp_oc_upper} through a careful analysis of~\eqref{eigenvalueproblem} along these intervals. We develop an approach based on geometric desingularization techniques~\cite{CSosc, krupaszmolyan2001}, which we apply to the system obtained by coupling the linearized
eigenvalue problem to the existence problem for the wave trains~\cite{CASCH}.

The key challenge is the lack of a uniform $(\eps,\lambda)$-independent spectral gap between the center and unstable spatial eigenvalues as the wave train passes near the nonhyperbolic folds; it is therefore not possible to construct $(\eps,\lambda)$-uniform exponential trichotomies for the eigenvalue problem~\eqref{eigenvalueproblem} for small $\lambda\in R_1(\mu)$ on the intervals $\mathcal{I}_\mathrm{lf}$ and $\mathcal{I}_\mathrm{uf}$. A spectral gap is nevertheless present between the center-unstable and stable dynamics. Therefore, guided by the structure of the existence problem, as described in~\S\ref{sec:foldsetup}, we derive a change of coordinates which separates the two-dimensional (nonhyperbolic) center-unstable dynamics from the (hyperbolic) stable dynamics. Based on the structure of the resulting linearized problem, in~\S\ref{sec:toymodel} we propose a simplified model which identifies the key terms, particularly those capturing the leading-order effect of the eigenvalue parameter $\lambda$, from which we derive the expected leading-order behavior of the eigenvalue problem~\eqref{eigenvalueproblem} near the folds. Informed by the behavior of the toy model, in~\S\ref{sec:fold_blowup_CU} we return to the full problem, and we derive precise estimates for the behavior of the eigenvalue problem in the (nonhyperbolic) center-unstable space. Finally, we obtain tame estimates in~\S\ref{sec:fold_tame} for the dynamics in the case $\Re(\lambda)\geq \eps^{1/5}$, and complete the proof of Propositions~\ref{prop:fold_bvp_oc_lower}-\ref{prop:fold_bvp_oc_upper} in~\S\ref{sec:mainfoldest}. 

\paragraph{Notation.} Throughout this section, we use the notation $f=\mathcal{O}(g)$ to denote $|f|\leq C|g|$, where $C$ is a constant which can be taken independent of $0<\eps,|\lambda|\ll\nu\ll1$.

\subsection{Setup and local coordinates}\label{sec:foldsetup}
We study the eigenvalue problem~\eqref{eigenvalueproblem} in a neighborhood of the lower fold point $(u,w)=(u_1,f(u_1))$, which we rewrite as
\begin{align}\label{eq:fhn_linstab1r}
\begin{split}
\Psi_\xi = \left(A_0(\xi;\eps)+\lambda B\right)\Psi, \qquad A_0(\xi,\eps)\coloneqq \begin{pmatrix} 0& 1& 0\\ -f'(u_\eps(\xi)) & -c& 1\\ -\frac{\eps}{c} &0 & \frac{\eps\gamma}{c}  \end{pmatrix}, \qquad B\coloneqq \begin{pmatrix} -\frac{1}{c}& 0& 0\\ 1 & -\frac{1}{c}& 0\\ 0 &0 & 0  \end{pmatrix}.
\end{split}
\end{align}

We recall from~\S\ref{sec:existence} that in a neighborhood of the fold point, in the existence problem~\eqref{eq:fast}, there exists a smooth change of coordinates which reduces the nontrivial dynamics to a two-dimensional center manifold, in which one can employ blow-up desingularization techniques to track the wave-train solution around the nonhyperbolic fold. Here, we derive an analogous local coordinate system to analyze~\eqref{eq:fhn_linstab1r}.

The wave-train solution $U_\eps(\xi)=(u_\eps,u_\eps',w_\eps)(\xi)$ satisfies the first-order system~\eqref{eq:fast}, which we write in the form
\begin{align}\label{eq:existencefold_fo}
U_\xi&= \mathcal{F}(U),
\end{align}
where $U=(u,v,w)$, $\mathcal{F}(U) = \left(v,-cv-f(u)+w,-\eps/c(u-\gamma w-a)\right)$. As described in~\S\ref{sec:existence}, for any $k>0$, there exists a neighborhood of the origin $\mathcal{V}\subset \mathbb{R}^3$ and a $C^k$-change of coordinates $\mathcal{N}_\eps:\mathcal{V}\rightarrow \mathbb{R}^3$ such that the map $U= (u_1,0,f(u_1))^\top+\mathcal{N}_\eps(V)$ transforms~\eqref{eq:fast} to the system~\eqref{eq:fold_normalform}, in which the nonhyperbolic center dynamics on a local (non-unique) two-dimensional center manifold $\mathcal{W}^\mathrm{c}\coloneqq \{(x,y,z)\in\mathcal{V}:z=0\}$ are decoupled from the normally hyperbolic $z$-dynamics. Following the analysis in~\cite{CASCH}, tracking the two-dimensional unstable manifold $\mathcal{W}^\mathrm{u}(\Gamma_\eps)$ of the periodic orbit $\Gamma_\eps$ corresponding to the wave train into a neighborhood of the fold, $\mathcal{W}^\mathrm{u}(\Gamma_\eps)$ enters such a neighborhood aligned exponentially close in $\eps$ to the unstable manifold $\mathcal{W}^{\mathrm{u}}(\mathcal{M}^{\rr}_\eps)$ of the right slow manifold, and transverse to the strong stable fibers of the two-dimensional center manifold $\mathcal{W}^\mathrm{c}$; hence $\mathcal{W}^\mathrm{u}(\Gamma_\eps)$ aligns along $\mathcal{W}^\mathrm{c}$ and subsequently along the unstable manifold $\mathcal{W}^\mathrm{u}(\mathcal{M}^r_\eps)$ of the right slow manifold $\mathcal{M}^r_\eps$. In a neighborhood of the fold point, we exploit the invariance of the manifold $\mathcal{W}^\mathrm{u}(\Gamma_\eps)$ and choose coordinates relative to this manifold which will simplify the analysis for the existence and stability problems.

In particular, we note that $\mathcal{W}^\mathrm{u}(\Gamma_\eps)$ is itself a two-dimensional normally attracting locally invariant center-like manifold in a neighborhood of the fold point (and hence represents a choice of the non-unique center manifold $\mathcal{W}^\mathrm{c}$). Therefore, choosing coordinates relative to this manifold, and straightening the corresponding strong stable fibers, results in a vector field of the same form as~\eqref{eq:fold_normalform}, in which now the manifold $\mathcal{W}^\mathrm{u}(\Gamma_\eps)$ is represented by $z=0$. Slightly abusing notation, we continue to denote this coordinate transformation by $\mathcal{N}_\eps$. 

Summarizing the above discussion, there exist an $\eps$-independent neighborhood of the origin $\mathcal{V}\subset \mathbb{R}^3$ and a change of coordinates $\mathcal{N}_\eps:\mathcal{V}\rightarrow \mathbb{R}^3$ (see also e.g.~\cite{CSosc}) such that the map $U= (u_1,0,f(u_1))^\top+\mathcal{N}_\eps(V)$ transforms the existence problem~\eqref{eq:existencefold_fo} to the system
\begin{align}\label{eq:fold_existence_compact}
V_\xi&= \mathcal{G}(V),
\end{align}
where $V=(x,y,z)$ and
\begin{align*}
\mathcal{G}(V) &= \begin{pmatrix}g_1(x,y;\eps)\\ g_2(x,y;\eps)\\ g_3(x,y,z;\eps)  \end{pmatrix} = \begin{pmatrix}\frac{f''(u_1)}{2\beta_1c}x^2-\frac{\beta_1}{\beta_2}y+\mathcal{O}\left(\eps,xy,x^3\right)\\ -\frac{\eps \beta_2}{c^2}\left(u_1-\gamma f(u_1)-a-\frac{x}{\beta_1}-\frac{\gamma c y}{\beta_2}+\mathcal{O}\left(\eps, x^2,xy,y^2\right)  \right)\\ z\left(-c+\mathcal{O}\left(x,y,z,\eps \right)\right) \end{pmatrix}.
\end{align*}
The map $\mathcal{N}_\eps$ satisfies 
\begin{align*}
N_0\coloneqq  \mathcal{N}_0'(0)= \begin{pmatrix} -\frac{1}{\beta_1}& 0& 1\\ 0 &\frac{1}{\beta_2}& -c\\ 0& \frac{c}{\beta_2}& 0 \end{pmatrix}, \qquad N_0^{-1}\coloneqq  \begin{pmatrix} -\beta_1& -\frac{\beta_1}{c}& \frac{\beta_1}{c^2}\\ 0 &0& \frac{\beta_2}{c}\\ 0& -\frac{1}{c}& \frac{1}{c^2} \end{pmatrix},
\end{align*}
where
\begin{align*}
\beta_1 = (a^2-a+1)^{1/3}(u_1-\gamma f(u_1)-a)^{-1/3}, \qquad \beta_2=c(a^2-a+1)^{1/6}(u_1-\gamma f(u_1)-a)^{-2/3}.
\end{align*}
Thus, the linear map $\mathcal{N}_0'(0)$ transforms the system into its Jordan normal form for $\eps=0$. In the $(x,y,z)$-coordinates, $\mathcal{W}^\mathrm{u}(\Gamma_\eps)$ is given by the subspace $z=0$, and is parameterized by the $(x,y)$-coordinates. In these coordinates, the wave train $\Gamma_\eps(\xi)$ is given by $U_\eps(\xi)= (u_1,0,f(u_1))^\top+\mathcal{N}_\eps(V_\eps(\xi))$ where $V_\eps(\xi) = (x_\eps(\xi),y_\eps(\xi),0)$. Furthermore, we have that 
\begin{align}\label{eq:fold_linearized_transf_lambda0}
   \mathcal{N}_\eps'(V_\eps) = N_0+\left( n_{ij}(x,y;\eps)\right)_{i,j=1,2,3},
\end{align} where
\begin{align*}
n_{ij}(x,y;\eps) &= \mathcal{O}(|x|+|y|+\eps), \qquad i=1,2, j=1,2,3\\
n_{3j}(x,y;\eps) &= \mathcal{O}(\eps), \qquad j=1,2,3.
\end{align*}
As described in~\S\ref{sec:existence}, in this local coordinate system, for $\eps=0$ the critical manifold $\mathcal{M}_0$ is determined by the conditions $z=0$ and $g_1(x,y;0)=0$, locally taking the form of an upward facing parabola centered at the origin $(x,y)=(0,0)$, with the left and right branches of the parabola representing $\mathcal{M}^\lr_0$ and $\mathcal{M}^\mathrm{m}_0$, respectively; see Figure~\ref{fig:fold_singular}. the dynamics of~\eqref{eq:fold_existence_compact} in the subspace $z=0$ can be analyzed using blow-up desingularization techniques; in particular, by tracking the perturbed manifold $\mathcal{M}^{\lr}_\eps$ around the fold~\cite[\S4]{CSosc}, it is known that the extended manifold $\mathcal{M}^{\lr,+}_0$~\eqref{eq:ml_plus} obtained by appending the positive $x$-axis to $\mathcal{M}^{\lr}_0$ perturbs to a locally invariant manifold $\mathcal{M}^{\lr,+}_\eps$ which is $\mathcal{O}(\eps^{2/3})$-close in $C^0$ to $\mathcal{M}^{\lr,+}_0$ and $\mathcal{O}(\eps^{1/3})$-close in $C^1$ to $\mathcal{M}^{\lr,+}_0$. Furthermore, following the analysis in~\cite[Proposition 4.7]{CASCH}, tracking the periodic orbit $\Gamma_\eps$ backwards around the fold, we deduce that $\Gamma_\eps$ is $C^1$-$\mathcal{O}(\re^{-\vartheta_\nu/\eps})$-close to $\mathcal{M}^{\lr,+}_\eps$ in the neighborhood $\mathcal{V}$ for some $\vartheta_\nu>0$. Hence we can characterize the behavior of the wave train $\Gamma_\eps$ in these local coordinates in the same manner as the manifold $\mathcal{M}^{\lr,+}_\eps$, up to exponentially small errors. For sufficiently small $\eps_0>0$, we denote $\mathcal{M}^{\lr,+}\coloneqq \cup_{0\leq \eps\leq \eps_0}\mathcal{M}^{\lr,+}_\eps$

We have the following, due to~\cite{CSosc, CASCH}, Proposition~\ref{prop:pointwise}, and the above discussion. 
\begin{lemma}\label{lem:fold_existence}
There exists a continuous function $\eta \colon [0,\infty) \to [0,\infty)$ satisfying $\eta(0) = 0$ such that, provided $0<\eps\ll\nu\ll1$, there exist $\eps$-independent constants $C_\nu,\vartheta_\nu>0$ such that the following holds. On the interval $\mathcal{I}_\mathrm{lf}$ the periodic orbit $\Gamma_\eps$ of~\eqref{eq:fast} corresponding to the wave-train solution $(u_\eps,w_\eps)(\xi)$ is $C^1$-$\mathcal{O}(\re^{-\vartheta_\nu/\eps})$-close to $\mathcal{M}^{\lr,+}_\eps$, and can be represented by a solution $(x,y,z) = (x_\eps(\xi),y_\eps(\xi),0)$ of~\eqref{eq:fold_normalform} satisfying
\begin{align*}
    \sup_{\xi\in \mathcal{I}_\mathrm{lf}}|x_\eps(\xi)|+|y_\eps(\xi)| \leq \eta(\nu)
\end{align*}
and 
\begin{align*}
    (x_\eps, y_\eps)\left( \xi_{\mathrm{lf},\eps,\nu}^\mathrm{in} \right)&=(x_\mathrm{in}(\nu, \eps), y_\mathrm{in}(\nu, \eps)),\\
     (x_\eps, y_\eps)\left(\xi_{\mathrm{lf},\eps,\nu}^{\mathrm{out},L} \right)&=(x_\mathrm{out}(\nu, \eps), y_\mathrm{out}(\nu, \eps)),
\end{align*}
where $x_\mathrm{in/out},y_\mathrm{in/out}$ are continuous functions satisfying
\begin{align*}
    |x_\mathrm{in}(\nu, \eps)-x_\mathrm{in}(\nu,0)|,|y_\mathrm{in}(\nu,\eps)|, |x_\mathrm{out}(\nu, \eps)-x_\mathrm{out}(\nu,0)|,|y_\mathrm{out}(\nu,\eps)-y_\mathrm{out}(\nu,0)|&\leq C_\nu\eps^{2/3},\\
    0<x_\mathrm{in}(\nu,0)< \eta(\nu), \qquad 
    -\eta(\nu)<x_\mathrm{out}(\nu,0)<0<y_\mathrm{out}(\nu,0)&< \eta(\nu)
\end{align*}
and $g_1(x_\mathrm{out}(\nu,0),y_\mathrm{out}(\nu,0);0)=0$.
\end{lemma}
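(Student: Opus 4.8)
\textbf{Proof proposal for Lemma~\ref{lem:fold_existence}.}

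The plan is to combine the existing existence theory for the phase-wave trains near the fold points with the more refined pointwise estimates of Proposition~\ref{prop:pointwise}. First I would recall from~\S\ref{sec:existence} and~\cite[\S4]{CASCH} that in the $\eps$-independent neighborhood $\mathcal{V}$ the coordinate change $\mathcal{N}_\eps$ transforms~\eqref{eq:existencefold_fo} into the form~\eqref{eq:fold_existence_compact}, in which the invariant manifold $\mathcal{W}^\mathrm{u}(\Gamma_\eps) = \{z=0\}$ represents a choice of the local center manifold, so that the periodic orbit $\Gamma_\eps$ corresponds to a solution $(x_\eps(\xi),y_\eps(\xi),0)$. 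The statement that $\Gamma_\eps$ is $C^1$-$\mathcal{O}(\re^{-\vartheta_\nu/\eps})$-close to $\mathcal{M}^{\lr,+}_\eps$ on $\mathcal{I}_\mathrm{lf}$ is exactly the content of the tracking argument in~\cite[Proposition~4.7]{CASCH}: entering the neighborhood of the fold, $\mathcal{W}^\mathrm{u}(\Gamma_\eps)$ aligns exponentially close to $\mathcal{W}^\mathrm{u}(\mathcal{M}^\rr_\eps)$, and the strong-stable foliation then contracts $\Gamma_\eps$ onto $\mathcal{M}^{\lr,+}_\eps$ up to errors of size $\re^{-\vartheta_\nu/\eps}$; I would cite this directly rather than reproduce it.

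The bound $\sup_{\xi\in\mathcal{I}_\mathrm{lf}}(|x_\eps(\xi)|+|y_\eps(\xi)|)\leq \eta(\nu)$ is a restatement of the fact that, for $\xi\in\mathcal{I}_\mathrm{lf}=[\tfrac{1}{\nu}+\dots,\tfrac{\nu}{\eps}+\dots]$ (in the shifted variable), the wave train stays in a $\delta_0(\nu)$-neighborhood of the fold point $(u_1,0,f(u_1))$ by the fifth and sixth pointwise estimates in~\eqref{pointwise:front/back}; pulling this neighborhood back through the bi-Lipschitz map $\mathcal{N}_\eps$ (whose derivative is uniformly bounded on $\mathcal{V}$, cf.~\eqref{eq:fold_linearized_transf_lambda0}) yields $|x_\eps|+|y_\eps|=\mathcal{O}(\delta_0(\nu))$, and setting $\eta(\nu)$ to be a fixed multiple of $\delta_0(\nu)$ gives the claim with $\eta(0)=0$. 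For the endpoint values, at $\xi=\xi_{\mathrm{lf},\eps,\nu}^\mathrm{in}$ the first pointwise estimate in~\eqref{pointwise:front/back} gives $|u_\eps(\xi_{\mathrm{lf},\eps,\nu}^\mathrm{in})-u_\f(\tfrac1\nu)|, |u_\eps'(\xi_{\mathrm{lf},\eps,\nu}^\mathrm{in})-u_\f'(\tfrac1\nu)|, |w_\eps(\xi_{\mathrm{lf},\eps,\nu}^\mathrm{in})-f(u_1)|\leq C_\nu\eps^{2/3}$; applying $\mathcal{N}_\eps^{-1}$ and using that $\mathcal{N}_\eps$ is $C^1$-$\mathcal{O}(\eps)$-close to $\mathcal{N}_0$ converts this into $|x_\eps(\xi_{\mathrm{lf},\eps,\nu}^\mathrm{in})-x_\mathrm{in}(\nu,0)|, |y_\eps(\xi_{\mathrm{lf},\eps,\nu}^\mathrm{in})-y_\mathrm{in}(\nu,0)|\leq C_\nu\eps^{2/3}$, where $(x_\mathrm{in}(\nu,0),y_\mathrm{in}(\nu,0))$ are the images of the singular front data $(u_\f(\tfrac1\nu),u_\f'(\tfrac1\nu),f(u_1))$ under $\mathcal{N}_0^{-1}$; since $u_\f$ is monotonically decreasing with $u_\f(0)$ near $u_2>u_1$, for small $\nu$ one has $u_\f(\tfrac1\nu)$ slightly above $u_1$, which (using $N_0^{-1}$ and the $v=0$ condition) places $x_\mathrm{in}(\nu,0)$ strictly between $0$ and $\eta(\nu)$ and forces $y_\mathrm{in}(\nu,0)=0$. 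The same argument at $\xi_{\mathrm{lf},\eps,\nu}^{\mathrm{out},L}$, now using the fourth estimate in~\eqref{pointwise:front/back} together with the expansion $u_\lr(y)=u_1-\sqrt{2(a-u_1-\gamma f(u_1))y/(cf''(u_1))}+\smallO(\sqrt y)$ from the proof of Proposition~\ref{prop:pointwise}, shows that the exit point lies on the singular critical manifold: this yields $g_1(x_\mathrm{out}(\nu,0),y_\mathrm{out}(\nu,0);0)=0$, and the signs $x_\mathrm{out}(\nu,0)<0<y_\mathrm{out}(\nu,0)<\eta(\nu)$ follow because after passage through the fold the orbit sits on the left branch of the parabola $\{g_1=0,z=0\}$ at a height $y_\mathrm{out}$ of order $\nu$.

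The main obstacle is bookkeeping consistency between the two coordinate descriptions near the fold: the estimates in Proposition~\ref{prop:pointwise} are phrased in the original $(u,v,w)$ variables and on intervals parameterized so that $\xi=0$ sits on the front, whereas Lemma~\ref{lem:fold_existence} is phrased in the blown-down $(x,y,z)$ variables on the shifted interval $\mathcal{I}_\mathrm{lf}$; one must carefully match the interval endpoints $\xi_{\mathrm{lf},\eps,\nu}^\mathrm{in}$, $\xi_{\mathrm{lf},\eps,\nu}^{\mathrm{out},L}$ with the ranges appearing in~\eqref{pointwise:front/back} and verify that $\mathcal{N}_\eps^{-1}$ transfers the $\eps^{2/3}$-errors without loss. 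The only genuinely nontrivial analytic input—that the exit point of $\Gamma_\eps$ lands $\mathcal{O}(\eps^{2/3})$-close to the singular critical manifold $\mathcal{M}^{\lr,+}_0$, hence satisfies $g_1(\cdot,\cdot;0)=0$ in the limit—is precisely the blow-up result of~\cite[\S4]{CSosc} that $\mathcal{M}^{\lr,+}_\eps$ is $\mathcal{O}(\eps^{2/3})$-$C^0$-close to $\mathcal{M}^{\lr,+}_0$, combined with the $C^1$-$\mathcal{O}(\re^{-\vartheta_\nu/\eps})$ proximity of $\Gamma_\eps$ to $\mathcal{M}^{\lr,+}_\eps$ from~\cite[Proposition~4.7]{CASCH}; everything else is continuity and the uniform boundedness of $\mathcal{N}_\eps$ and its inverse on $\mathcal{V}$.
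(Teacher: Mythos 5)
Your proposal is correct and follows essentially the same route as the paper, which states this lemma without a separate proof as a direct consequence of the blow-up existence analysis in \cite{CSosc,CASCH} (in particular the $C^1$-exponential tracking of $\Gamma_\eps$ onto $\mathcal{M}^{\lr,+}_\eps$ and the $\mathcal{O}(\eps^{2/3})$-closeness of $\mathcal{M}^{\lr,+}_\eps$ to $\mathcal{M}^{\lr,+}_0$), the pointwise estimates of Proposition~\ref{prop:pointwise}, and the coordinate change $\mathcal{N}_\eps$ described in \S\ref{sec:foldsetup}. Your additional sign checks at the entry and exit points (using $N_0^{-1}$ and the monotonicity of $u_\f$) are consistent with the stated inequalities and fill in details the paper leaves implicit.
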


We now turn to the linearized stability problem~\eqref{eq:fhn_linstab1r}, which we transform to this same local coordinate system, in the following lemma.

\begin{lemma}\label{lem:fold_transf}
For any $k>0$ and $0 < \nu \ll 1$, there exist $\mu, \eps_0>0$ and a $C^k$-change of coordinates $R_1(\mu)\times (0,\eps_0)\times \mathcal{I}_\mathrm{lf}\times \mathbb{C}^3\to \mathbb{C}^3$ defined by $\Psi = N_{\eps,\lambda}(\xi)\Phi$, where $N_{\eps,\lambda}(\xi) = \mathcal{N}'_\eps(V_\eps(\xi))+\mathcal{O}(\lambda)$, and a rescaling $\zeta = \theta_\mathrm{lf}\xi$ which transforms the linearized equation~\eqref{eq:fhn_linstab1r} to the system 
\begin{align}
\begin{split}\label{eq:transformed_lin_fold_diag}
X_\zeta &=X\left(-2x-\frac{\lambda^2}{\theta_\mathrm{lf}c^3} +\mathcal{O}(x^2,y,\eps, \lambda x, \lambda^3)\right)+Y\left(1+\mathcal{O}(x,y,\eps,\lambda)\right),\\
Y_\zeta &=\mathcal{O}(\eps X,\eps Y),\\
Z_\zeta &=\left(-\frac{c}{\theta_\mathrm{lf}}+\mathcal{O}(x,y,\eps,\lambda)\right)Z,
\end{split}
\end{align}
where the wave train $(x,y)(\zeta)=(x_\eps,y_\eps)(\zeta/\theta_\mathrm{lf})$ satisfies the equation 
\begin{align*}
\begin{split}
x_\zeta &= -x^2+y+\mathcal{O}(xy, y^2, x^3,\eps),\\
y_\zeta &=\eps\left( 1+\mathcal{O}(x,y,\eps)\right)
\end{split}
\end{align*}
on the interval $\xi = \zeta/\theta_\mathrm{lf}\in \mathcal{I}_\mathrm{lf}$.
\end{lemma}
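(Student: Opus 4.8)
\textbf{Proof proposal for Lemma~\ref{lem:fold_transf}.}

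The plan is to build the coordinate change $N_{\eps,\lambda}(\xi)$ in two stages: first transfer the $\lambda=0$ problem to the fold normal form using the existence change of coordinates $\mathcal{N}_\eps$, then introduce a further near-identity correction that both incorporates the $\lambda B$ term and diagonalizes the resulting linear system into the center--center--stable block structure displayed in~\eqref{eq:transformed_lin_fold_diag}. Concretely, I would start by observing that the variational equation of the existence problem~\eqref{eq:fold_existence_compact} along the wave train $V_\eps(\xi)=(x_\eps,y_\eps,0)(\xi)$ is obtained by linearizing $\mathcal{G}$ at $V_\eps$. Differentiating the relation $U=(u_1,0,f(u_1))^\top+\mathcal{N}_\eps(V)$ shows that the substitution $\Psi=\mathcal{N}_\eps'(V_\eps(\xi))\Phi$ conjugates the $\lambda=0$ linearization $\Psi_\xi=A_0(\xi;\eps)\Psi$ of~\eqref{eq:fhn_linstab1r} to $\Phi_\xi=D\mathcal{G}(V_\eps(\xi))\Phi$, i.e.\ to the variational equation in the fold normal form coordinates. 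From the explicit form of $\mathcal{G}$ and the parabola structure $g_1=\tfrac{f''(u_1)}{2\beta_1 c}x^2-\tfrac{\beta_1}{\beta_2}y+\ldots$, after the rescaling $\zeta=\theta_\mathrm{lf}\xi$ and a diagonal rescaling of the components (absorbing the constants $\beta_1,\beta_2,f''(u_1)$, which is exactly how $\theta_\mathrm{lf}$ was chosen in the existence analysis), this variational system has the form $X_\zeta=X(-2x+\mathcal{O}(\cdots))+Y(1+\mathcal{O}(\cdots))$, $Y_\zeta=\mathcal{O}(\eps X,\eps Y)$, $Z_\zeta=(-c/\theta_\mathrm{lf}+\mathcal{O}(\cdots))Z$; this is~\eqref{eq:transformed_lin_fold_diag} with all $\lambda$-terms set to zero. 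The block-triangular structure (the $z$-equation decoupling, the $Y$-equation being $\mathcal{O}(\eps)$) is inherited directly from the block-triangular structure of $\mathcal{G}$.

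Next I would reinstate $\lambda$. In the $\Phi$-coordinates the full system~\eqref{eq:fhn_linstab1r} reads $\Phi_\xi=\bigl(D\mathcal{G}(V_\eps(\xi))+\lambda\, \mathcal{N}_\eps'(V_\eps)^{-1}B\,\mathcal{N}_\eps'(V_\eps)\bigr)\Phi$ plus terms from differentiating $\mathcal{N}_\eps'(V_\eps)$ in $\xi$ which are already accounted for in $D\mathcal{G}$. Using~\eqref{eq:fold_linearized_transf_lambda0} and the explicit $N_0, N_0^{-1}$, one computes $N_0^{-1}BN_0$ and finds that the dangerous contribution — the one of order $\lambda$ that does not decay — lands in the $XX$-entry. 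A near-identity transformation $\Phi\mapsto (I+\lambda M(\xi;\eps)+\lambda^2(\cdots))\Phi$ with $M$ chosen to remove off-diagonal $\lambda$-terms and to push the $\lambda$-dependence of the $XX$-coefficient into the canonical shape $-\lambda^2/(\theta_\mathrm{lf}c^3)$ then produces~\eqref{eq:transformed_lin_fold_diag}; here the key algebraic point, already visible in~\eqref{eq:fhn_linstab1r} since $B$ has a nilpotent $2\times2$ block $\begin{pmatrix}-1/c&0\\1&-1/c\end{pmatrix}$ coupling the fast pair, is that the scalar reduction on the slow/center manifold of the fast subsystem $A_\f(\xi;\eps,\lambda)$ contributes a term quadratic in $\lambda$ (this is the same $-\lambda^2/c^3$ that appears throughout the Riccati analysis of \S\ref{sec:slow_manifolds}, rescaled by $\theta_\mathrm{lf}$). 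Composing $\mathcal{N}_\eps'(V_\eps(\xi))$ with this correction and the diagonal rescaling gives the asserted $N_{\eps,\lambda}(\xi)=\mathcal{N}_\eps'(V_\eps(\xi))+\mathcal{O}(\lambda)$, which is $C^k$ in $(\lambda,\eps,\xi)$ since $\mathcal{N}_\eps$ is $C^k$ and $V_\eps$ is smooth; the smallness constraints $\mu,\eps_0$ are fixed so that this composition stays invertible on $R_1(\mu)\times(0,\eps_0)\times\mathcal{I}_\mathrm{lf}$ and so that the error terms genuinely are $\mathcal{O}(x^2,y,\eps,\lambda x,\lambda^3)$ etc.

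The main obstacle I anticipate is bookkeeping the precise \emph{form} of the error terms in the $X_\zeta$-equation — specifically, confirming that after all three steps the only $\mathcal{O}(\lambda)$ and $\mathcal{O}(\lambda^2)$ remnants are exactly $-\lambda^2/(\theta_\mathrm{lf}c^3)$ plus $\mathcal{O}(\lambda x,\lambda^3)$, with nothing of order $\lambda$ alone and nothing of order $\lambda^2 y$ or $\lambda^2\eps$ that would survive uncontrolled, and that the $Y_\zeta$-equation picks up no $\lambda$-dependence at leading order beyond $\mathcal{O}(\eps X,\eps Y)$ (this uses that the slow direction of $B$ is trivial, as $B$ has a zero third row and column). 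This requires carrying the expansion~\eqref{eq:fold_linearized_transf_lambda0} of $\mathcal{N}_\eps'(V_\eps)$ together with its inverse through the conjugation and verifying the cancellations; it is the step where the geometry of the existence problem (the parabola, the exponential closeness of $\Gamma_\eps$ to $\mathcal{M}^{\lr,+}_\eps$ from Lemma~\ref{lem:fold_existence}) is essential rather than cosmetic, because one needs $x_\eps,y_\eps$ to remain $\mathcal{O}(\eta(\nu))$-small on $\mathcal{I}_\mathrm{lf}$ for the Taylor remainders to be controlled uniformly. Everything else — the rescaling by $\theta_\mathrm{lf}$, the diagonalization of the $z$-block, and the reduction of the wave-train equation to $x_\zeta=-x^2+y+\mathcal{O}(xy,y^2,x^3,\eps)$, $y_\zeta=\eps(1+\mathcal{O}(x,y,\eps))$ — follows immediately from the stated form of $\mathcal{G}$ and is routine.
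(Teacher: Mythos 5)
Your first stage is exactly the paper's: the substitution $\Psi=\mathcal{N}_\eps'(V_\eps(\xi))\tilde\Phi$ conjugates~\eqref{eq:fhn_linstab1r} to $\tilde\Phi_\xi=\bigl(\bar A(x_\eps,y_\eps;\eps)+\lambda\bar B(x_\eps,y_\eps;\eps)\bigr)\tilde\Phi$ with $\bar A=\mathcal{G}'$ and $\bar B=[\mathcal{N}_\eps']^{-1}B\,\mathcal{N}_\eps'$, and at $\lambda=0$ the time rescaling $\zeta=\theta_\mathrm{lf}\xi$ already yields~\eqref{eq:transformed_lin_fold_diag}. The gap is in your second stage. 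You propose to remove the $\lambda$-coupling between the $(X,Y)$-block and the $Z$-block by a near-identity conjugation $\Phi\mapsto(I+\lambda M(\xi;\eps)+\cdots)\Phi$ with ``$M$ chosen to remove off-diagonal $\lambda$-terms.'' For a \emph{nonautonomous} system this is not an algebraic choice at each $\xi$: the homological equation for $M$ is itself an ODE along the trajectory, $M_\xi=[\bar A,M]+(\text{off-diagonal part of }\bar B)+\ldots$, and bounded solvability of that ODE on $\mathcal{I}_\mathrm{lf}$ (uniformly as $\eps\to0$, through the loss of normal hyperbolicity at the fold) is precisely the existence of a $\lambda$-dependent invariant splitting — which is the entire content of the lemma. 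Moreover, a truncation at any finite order in $\lambda$ leaves residual $Z$–$(X,Y)$ coupling, whereas~\eqref{eq:transformed_lin_fold_diag} asserts an \emph{exact} decoupling of the $Z$-equation.

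The paper closes this gap with a device you do not supply: it couples the eigenvalue problem to the existence problem as the nonlinear auxiliary system~\eqref{eq:eigenvalue_fold_aux}, $V_\xi=\mathcal{G}(V)$, $\tilde W_\xi=\mathcal{G}(V+\tilde W)-\mathcal{G}(V)+\lambda\bar B\tilde W$, whose linearization at $(V_\eps,0)$ reproduces the eigenvalue problem. At $\lambda=0$ the subspace $\tilde W_3=0$ is a normally attracting invariant manifold of this system; center-manifold theory gives its persistence for $\lambda\in R_1(\mu)$, and the diffeomorphism $\mathcal{H}_{\eps,\lambda}$ that flattens the perturbed manifold and straightens its strong stable fibers has derivative $H_{\eps,\lambda}(\xi)=D_W\mathcal{H}_{\eps,\lambda}(0,V_\eps(\xi))=I+\lambda(H_{ij})$ with the structure~\eqref{eq:fold_linearized_transf_lambdapart}. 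This is the $\mathcal{O}(\lambda)$ correction in $N_{\eps,\lambda}=\mathcal{N}_\eps'(V_\eps)H_{\eps,\lambda}$, and it is also where the coefficient $-\lambda^2/(\theta_\mathrm{lf}c^3)$ actually comes from: the $\mathcal{O}(\lambda)$ tilt of the invariant manifold ($H_{31}=1/(\beta_1c^2)+\ldots$, $H_{32}=1/(\beta_2c^3)+\ldots$) multiplies the $\mathcal{O}(\lambda)$ entry $\bar B_{13}=-1+\ldots$ coupling $X$ to $Z$, producing an $\mathcal{O}(\lambda^2)$ diagonal term in the $X$-equation. Your attribution of this term to the Riccati reduction of the fast subsystem gestures at the right mechanism but is not a computation, and without the invariant-manifold (or an equivalent exponential-dichotomy/Riccati) argument for the $Z$-splitting the proof as written does not go through.
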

\begin{proof}
We first transform $\Psi= \mathcal{N}'_\eps(V_\eps(\xi))\tilde{\Phi}$, where $\tilde{\Phi} = (\tilde{X},\tilde{Y},\tilde{Z})^\top$, which results in the system
\begin{align}\label{eq:transformed_lin_fold}
\begin{split}
\tilde{\Phi}_\xi &= \left(\bar{A}(x_\eps(\xi),y_\eps(\xi);\eps)+\lambda \bar{B}(x_\eps(\xi),y_\eps(\xi);\eps)  \right)\tilde{\Phi}, 
\end{split}
\end{align}
where $\bar{A}(x,y;\eps) = \mathcal{G}'\left((x,y,0)^\top\right)$ and
\begin{align*}
\bar{B}(x,y;\eps)&=\left[\mathcal{N}_\eps'\left((x,y,0)^\top\right)\right]^{-1}B\mathcal{N}_\eps'\left((x,y,0)^\top\right)\\
&= \begin{pmatrix} b_{11}(x,y,\eps)&  \frac{\beta_1}{\beta_2c^2}+b_{12}(x,y,\eps) & -1+b_{13}(x,y,\eps)\\ \eps b_{21}(x,y,\eps) &\eps b_{22}(x,y,\eps)&\eps b_{23}(x,y,\eps)\\ \frac{1}{\beta_1 c}+b_{31}(x,y,\eps)&\frac{1}{c^2\beta_2}+ b_{32}(x,y,\eps)& -\frac{2}{c}+b_{33}(x,y,\eps) \end{pmatrix}
\end{align*}
with $b_{ij}(x,y,\eps) = \mathcal{O}(x,y,\eps)$. Since the wave-train solution is confined to the invariant manifold $\mathcal{W}^\mathrm{u}(\Gamma_\eps)$ given by the subspace $z=0$, we can restrict our attention to this subspace and note that solutions of the eigenvalue problem~\eqref{eq:transformed_lin_fold} are given by solutions of the system 
\begin{align}\label{eq:transformed_lin_fold_red}
\begin{split}
\tilde{\Phi}_\xi &= \left(\bar{A}(x,y;\eps)+\lambda \bar{B}(x,y;\eps)  \right)\tilde{\Phi}, 
\end{split}
\end{align}
where $(x,y)=\left(x_\eps(\xi),y_\eps(\xi)\right)$ satisfies
\begin{align*}
\begin{split}
x_\xi &= g_1(x,y;\eps),\\
y_\xi&= g_2(x,y;\eps).
\end{split}
\end{align*}This reduction effectively factors out the local hyperbolic dynamics in the existence problem.

We now aim to similarly factor out the corresponding hyperbolic dynamics in the linearized $\tilde{\Phi}$-variables. Note that when $(x,y)=(x_\eps(\xi),y_\eps(\xi))$ and $\lambda=0$, the derivative $\tilde{\Phi} = V_\eps'(\xi)$ of the wave train satisfies the linearized equation $\tilde{\Phi}_\xi = \bar{A}(x,y;\eps)\tilde{\Phi}$, which has a normally attracting invariant subspace given by $\{\tilde{Z}=0\}$, foliated by strong stable fibers. In general, this manifold (and its stable foliation) persists as a  normally attracting invariant manifold in~\eqref{eq:transformed_lin_fold_red} for small $|\lambda|$. We therefore seek a coordinate transformation $(\tilde{X},\tilde{Y},\tilde{Z})\mapsto \Phi=(X,Y,Z)$, linear in the variables $\tilde{\Phi}=(\tilde{X},\tilde{Y},\tilde{Z})$, which shifts this manifold to the subspace $\{Z=0\}$ for small $|\lambda|$, and straightens its stable fibers. The goal of this transformation is to preserve the linearity of the vector field in the linearized coordinates, while block diagonalizing the system in order to separate the strong stable hyperbolic dynamics from the non-hyperbolic dynamics near the fold.

To achieve this, we first consider the alternative system
\begin{align}
\begin{split}\label{eq:eigenvalue_fold_aux}
V_\xi&= \mathcal{G}(V),\\
\tilde{W}_\xi &= \mathcal{G}(V+\tilde{W})-\mathcal{G}(V)+\lambda \bar{B}(x,y;\eps)\tilde{W}, 
\end{split}
\end{align}
where $V=(x,y,z)$, and $\tilde{W}=(\tilde{W}_1,\tilde{W}_2,\tilde{W}_3)$.
Note that linearizing the $\tilde{W}$-equation about the solution $(V,\tilde{W}) = (V_\eps,0)$ results in precisely the eigenvalue problem~\eqref{eq:transformed_lin_fold}. Considering~\eqref{eq:eigenvalue_fold_aux} in the invariant subspace $z=0$, when $\lambda=0$, there exists a normally attracting invariant submanifold given by the subspace $\tilde{W}_3=0$. For $\lambda \in R_1(\mu)$, for $\mu>0$ sufficiently small, we therefore obtain a $\lambda$-dependent local center manifold of~\eqref{eq:eigenvalue_fold_aux} which is contained in the subspace $z=0$~\cite{chow1994center}. We now perform a near-identity coordinate transformation $\tilde{W}= \mathcal{H}_{\eps,\lambda}(W,V) = W+\mathcal{O}(\lambda)$, where $W=(W_1,W_2,W_3)$ which transforms this center manifold to the subspace $W_3=0$ and straightens its strong stable fibers. A short computation shows that this transformation satisfies 
\begin{align}\label{eq:fold_linearized_transf_lambdapart}
    D_W\mathcal{H}_{\eps,\lambda}(0,V_\eps)= I+\lambda \left( \mathcal{H}_{ij}(x,y;\eps,\lambda)\right)_{i,j=1,2,3},
\end{align} where
\begin{align*}
H_{1j}(x,y;\eps,\lambda) &= \mathcal{O}\left(|x|+|y|+|\eps|+|\lambda|\right),\\
H_{2j}(x,y;\eps,\lambda) &= \mathcal{O}(\eps),\\
H_{31}(x,y;\eps,\lambda) &=\frac{1}{\beta_1c^2}+\mathcal{O}\left(|x|+|y|+|\eps|+|\lambda|\right),\\
H_{32}(x,y;\eps,\lambda) &= \frac{1}{\beta_2c^3}+\mathcal{O}\left(|x|+|y|+|\eps|+|\lambda|\right),\\
H_{33}(x,y;\eps,\lambda) &= 0
\end{align*}
for $j = 1,2,3$.
Linearizing the resulting system about the solution $(V,W) = (V_\eps,0)$ reduces to the linearized equations
\begin{align}
\begin{split}\label{eq:eq:transformed_lin_fold_diag_xi}
X_\xi &=\theta_\mathrm{lf}X\left(-2x-\frac{\lambda^2}{\theta_\mathrm{lf}c^3} +\mathcal{O}(x^2,y,\eps, \lambda x, \lambda^3)\right)+\theta_\mathrm{lf}Y\left(1+\mathcal{O}(x,y,\eps,\lambda)\right),\\
Y_\xi  &=\mathcal{O}(\eps X,\eps Y),\\
Z_\xi  &=\left(-c+\mathcal{O}(x,y,\eps,\lambda)\right)Z,
\end{split}
\end{align}
where
\begin{align*}
-\frac{f''(u_1)}{2\beta_1c} = -\frac{\beta_1}{\beta_2} = \theta_\mathrm{lf} \coloneqq  -\frac{(a^2-a+1)^{1/6}(u_1-\gamma f(u_1)-a)^{1/3}}{c}>0.
\end{align*} 
The corresponding transformation to obtain~\eqref{eq:eq:transformed_lin_fold_diag_xi} directly from~\eqref{eq:transformed_lin_fold} is therefore achieved by setting
\begin{align*}
\tilde{\Phi} &= \begin{pmatrix} X\\Y\\ Z\end{pmatrix} = H_{\eps,\lambda}(\xi)\Phi,
\end{align*}
where the matrix $H_{\eps,\lambda}(\xi) \coloneqq  D_W\mathcal{H}_{\eps,\lambda}(0,V_\eps(\xi))$. Returning to~\eqref{eq:fhn_linstab1r}, we therefore obtain a $C^k$-change of coordinates $\Psi=N_{\eps,\lambda}(\xi) \Phi$, linear in $\Phi$, where
\begin{align}\label{eq:fold_linearized_transf_def}
    N_{\eps,\lambda}(\xi) \coloneqq  \mathcal{N}_\eps'(V_\eps(\xi)) H_{\eps,\lambda}(\xi)
\end{align}
and a rescaling $\zeta = \theta_\mathrm{lf}\xi$ which transforms~\eqref{eq:fhn_linstab1r} to~\eqref{eq:transformed_lin_fold_diag}.
\end{proof}

Results analogous to Lemma~\ref{lem:fold_existence} and Lemma~\ref{lem:fold_transf} hold for the wave train $\Gamma_\eps$ and~\eqref{eq:fhn_linstab1r} near the upper fold $(u,w)=(\bar{u}_1, f(\bar{u}_1))$ on the interval $\mathcal{I}_\mathrm{uf}$.

\subsection{A simplified system}\label{sec:toymodel}
Motivated by the results of Lemma~\ref{lem:fold_transf}, we first ignore higher order terms and consider a simpler toy model for the eigenvalue problem in the $XYZ$-coordinates near the fold
\begin{align}\label{eq:stability_fold_toy}
\begin{split}
X_\zeta &= -2xX+Y-\frac{\lambda^2}{\theta_\mathrm{lf}c^3}X, \\
Y_\zeta &= 0,\\
Z_\zeta &= -\frac{c}{\theta_\mathrm{lf}}Z.
\end{split}
\end{align}
The wave-train solution $(x,y)(\zeta)=(x_\eps,y_\eps)(\zeta/\theta_\mathrm{lf})$ is assumed to satisfy the corresponding existence problem with higher order terms ignored
\begin{align}\label{eq:existence_fold_toy}
\begin{split}
x_\xi &= -x^2+y,\\
y_\xi &= \eps.
\end{split}
\end{align}
We abbreviate $\xi_\mathrm{in} = \xi_{\mathrm{lf},\eps,\nu}^\mathrm{in}$, $\xi_\mathrm{out} = \xi_{\mathrm{lf},\eps,\nu}^{\mathrm{out},L}$ and define $\zeta_\mathrm{in} = \theta_\mathrm{lf}\xi_\mathrm{in}, \zeta_\mathrm{out} = \theta_\mathrm{lf}\xi_\mathrm{out}$, and we let $x_\mathrm{in}\coloneqq x(\zeta_\mathrm{in}), y_\mathrm{out}=y(\zeta_\mathrm{out})$, noting that $x_\mathrm{in}, y_\mathrm{out}>0$ are bounded away from zero independently of $\eps$ by Lemma~\ref{lem:fold_existence}. We consider the problem of solving~\eqref{eq:stability_fold_toy} subject to the boundary conditions
\begin{align*}
    X\left(\zeta_\mathrm{out}\right) &= X_\mathrm{out}, \qquad Y\left(\zeta_\mathrm{out}\right) = \eps Y_\mathrm{out}, \qquad 
Z\left(\zeta_\mathrm{in}\right)= Z_\mathrm{in},
\end{align*}
where $X_\mathrm{out}, Y_\mathrm{out}, Z_\mathrm{in}\in \mathbb{C}$. The choice of scaling the $Y$-coordinate by $\eps$ is for convenience (see below). From the form of the equations, we immediately see that $Y(\zeta) \equiv \eps Y_\mathrm{out}$, while $Z(\zeta)=\smash{Z_\mathrm{in} \re^{-c(\zeta-\zeta_\mathrm{in})/\theta_{\mathrm{lf}}}}$; in particular the stable $Z$-dynamics are completely decoupled from the center-unstable $XY$-dynamics, in which $Y$ remains constant. Hence it remains to determine $X(\zeta)$.

By the discussion in~\S\ref{sec:foldsetup}, the wave-train solution is exponentially close to the perturbed manifold $\mathcal{M}^{\mathrm{l},+}_\eps$ obtained by tracking the perturbed critical manifold $\mathcal{M}^\mathrm{l}_0$ backwards around the fold. In the simplified system~\eqref{eq:existence_fold_toy}, $\mathcal{M}^\mathrm{l}_0$  is simply given by the branch of the parabola $y=x^2$ in the region $x<0$. Thus $x(\zeta)$ is described by the equation
\begin{align*}
    x_\zeta = -x^2 +y_\mathrm{out}+\eps(\zeta-\zeta_\mathrm{out}).
\end{align*}
Up to exponentially small errors, we obtain that
\begin{align*}
\begin{split}
    x(\zeta) &= \eps^{1/3} x_R\left(\frac{y_\mathrm{out}}{\eps^{2/3}}+\eps^{1/3}\left(\zeta-\zeta_\mathrm{out}\right)\right),\\
    y(\zeta) &= y_\mathrm{out}+\eps(\zeta-\zeta_\mathrm{out})
    \end{split}
\end{align*}
is the solution of~\eqref{eq:existence_fold_toy} corresponding to $\mathcal{M}^{\mathrm{l},+}_\eps$; here the bijective function $x_R \colon (-\Omega_0,\infty) \to \R$ is the unique monotonically increasing solution of the Riccati equation
\begin{align}\label{eq:riccati0}
x_s = -x^2+s
\end{align}
that satisfies
\begin{align}
\begin{split}\label{eq:riccati_conditions}
s&\sim x_R^2 +\frac{1}{2x_R}+\mathcal{O}\left( \frac{1}{x_R^3} \right),\qquad x_R\to-\infty,\\
s&\sim -\Omega_0+\frac{1}{x_R}+\mathcal{O}\left( \frac{1}{x_R^3} \right),\qquad x_R\to\infty,
\end{split}
\end{align}
where $\Omega_0$ is the smallest positive zero of $\bar{J}(s)\coloneqq \smash{J_{-\frac{1}{3}}(2s^{3/2}/3)+J_{\frac{1}{3}}(2s^{3/2}/3)}$, where $\smash{J_{\pm\frac{1}{3}}}$ are Bessel functions of the first kind~\cite{krupaszmolyan2001}. By writing $x=\tfrac{\phi_s}{\phi}$, we can re-express~\eqref{eq:riccati0} as the Airy equation
\begin{align*}
    \phi_{ss} = s\phi,
\end{align*}
where the conditions~\eqref{eq:riccati_conditions} ensure that $\phi(s) = \mathrm{Ai}(s)$, the Airy function satisfying $\mathrm{Ai}(s)\to0$ as $s\to\infty$, where $-\Omega_0$ is the first  zero of $\mathrm{Ai}(s)$.

We note that $Y(\zeta) = \eps Y_\mathrm{out} = Y_\mathrm{out} y'(\zeta)$, and set
\begin{align*}
    X(\zeta) = Y_\mathrm{out} x'(\zeta) +\tilde{X}(\zeta),
\end{align*}
which results in the equation
\begin{align*}
\tilde{X}_\zeta &= -2x\tilde{X}-\frac{\lambda^2}{\theta_\mathrm{lf}c^3}\tilde{X}-\frac{\lambda^2}{\theta_\mathrm{lf}c^3}Y_\mathrm{out} x'(\zeta),
\end{align*}
from which we obtain the solution
\begin{align*}
\tilde{X}(\zeta) &= X_\mathrm{out}\re^{-\int_{\zeta_\mathrm{out}}^\zeta2x(\tilde{\zeta})+\frac{\lambda^2}{\theta_\mathrm{lf}c^3}\mathrm{d}\tilde{\zeta} } -\frac{\lambda^2}{\theta_\mathrm{lf}c^3}Y_\mathrm{out} \int_{\zeta_\mathrm{out}}^\zeta \re^{\int_{-\tilde{\zeta}}^\zeta2x(\bar{\zeta})+\frac{\lambda^2}{\theta_\mathrm{lf}c^3}\mathrm{d}\bar{\zeta} }x'(\tilde{\zeta})\mathrm{d}\tilde{\zeta}.
\end{align*}
 Letting $s\coloneqq \frac{y_\mathrm{out}}{\eps^{2/3}}+\eps^{1/3}(\zeta-\zeta_\mathrm{out})$ and similarly for $\tilde{\zeta}, \bar{\zeta}$, with
\begin{align}\label{eq:toy_sbounds}
(s_\mathrm{in},s_\mathrm{out})\coloneqq \left(\frac{y_\mathrm{out}}{\eps^{2/3}}+\eps^{1/3}(\zeta_\mathrm{in}-\zeta_\mathrm{out}), \frac{y_\mathrm{out}}{\eps^{2/3}}\right),
\end{align}
and using the fact that $x_R(s) = \tfrac{\mathrm{Ai}'(s)}{\mathrm{Ai}(s)}$, we have that
\begin{align*}
\tilde{X}(\zeta_\mathrm{in}) &= X_\mathrm{out}\frac{\mathrm{Ai}(s_\mathrm{out})^2}{\mathrm{Ai}(s_\mathrm{in})^2}\re^{\frac{\lambda^2}{\eps^{1/3}\theta_\mathrm{lf}c^3}\left(s_\mathrm{out}-s_\mathrm{in}\right)} \\
&\qquad +\frac{\lambda^2\eps^{1/3}}{\theta_\mathrm{lf}c^3 \mathrm{Ai}(s_\mathrm{in})^2}Y_\mathrm{out} \int_{s_\mathrm{in}}^{s_\mathrm{out}} \re^{\frac{\lambda^2}{\eps^{1/3}\theta_\mathrm{lf}c^3 }\left(s-s_\mathrm{in}  \right) }\left(s\mathrm{Ai}(s)^2-\mathrm{Ai}'(s)^2\right)\mathrm{d}s.
\end{align*}
Using~\eqref{eq:riccati_conditions} and~\eqref{eq:toy_sbounds}, Lemma~\ref{lem:fold_existence}, and properties of the Airy function $\mathrm{Ai}$ (see Lemma~\ref{lem:airyfunction}\ref{lem:airybound}) we have that 
\begin{align*}
    |s_\mathrm{in}+\Omega_0|\leq C_\nu\eps^{1/3}
\end{align*}
and
\begin{align*}
\tilde{X}(\zeta_\mathrm{in}) &= X_\mathrm{out}\frac{x'(\zeta_\mathrm{in})}{x'(\zeta_\mathrm{out})}\frac{s_\mathrm{out}\mathrm{Ai}(s_\mathrm{out})^2-\mathrm{Ai}'(s_\mathrm{out})^2 }{s_\mathrm{in}\mathrm{Ai}(s_\mathrm{in})^2-\mathrm{Ai}'(s_\mathrm{in})^2 }\re^{\frac{\lambda^2}{\eps^{1/3}\theta_\mathrm{lf}c^3}\left(s_\mathrm{out}-s_\mathrm{in}\right)} \\
&\qquad +\frac{\lambda^2Y_\mathrm{out}x'(\zeta_\mathrm{in})}{\eps^{1/3}\theta_\mathrm{lf}c^3 \left(s_\mathrm{in}\mathrm{Ai}(s_\mathrm{in})^2-\mathrm{Ai}'(s_\mathrm{in})^2  \right)} \int_{s_\mathrm{in}}^{s_\mathrm{out}} \re^{\frac{\lambda^2}{\eps^{1/3}\theta_\mathrm{lf}c^3 }\left(s-s_\mathrm{in}  \right) }\left(s\mathrm{Ai}(s)^2-\mathrm{Ai}'(s)^2\right)\mathrm{d}s,
\end{align*}
We deduce that, for $\lambda$ satisfying $|\lambda \eps^{-1/6}|\ll1$ or $\Re (\lambda^2)<0$ (or equivalently, $|\Re (\lambda)|< |\Im (\lambda)|$), there exist $C_\nu, \vartheta_\nu>0$ such that
\begin{align*}
\left|\tilde{X}(\zeta_\mathrm{in})+\Upsilon_\mathrm{lf}\left(\lambda\eps^{-1/6}\right)Y_\mathrm{out}x'(\zeta_\mathrm{in})\right| &\leq  C_\nu \left(\re^{-\vartheta_\nu/\eps} |X_\mathrm{out}|+|\lambda|^2|Y_\mathrm{out}|\right)\\
&\qquad +\, C_\nu (\eps^{1/3}+|\lambda|^2)\left|I_\mathrm{lf}\left(\lambda\eps^{-1/6}\right)Y_\mathrm{out}x'(\zeta_\mathrm{in})\right|,
\end{align*}
where
\begin{align*}
    \Upsilon_\mathrm{lf}(z)\coloneqq \frac{z^2}{\theta_\mathrm{lf}c^3\mathrm{Ai}'(-\Omega_0)^2 } \int_{-\Omega_0}^{\infty} \re^{\frac{z^2}{\theta_\mathrm{lf}c^3 }\left(s+\Omega_0\right) }\left(s\mathrm{Ai}(s)^2-\mathrm{Ai}'(s)^2\right)\mathrm{d}s.
\end{align*}
The properties of the function $\Upsilon_\mathrm{lf}(z)$ are described in Appendix~\ref{app:airy}.

Returning to the boundary value problem associated with~\eqref{eq:stability_fold_toy}, given exit conditions $X_\mathrm{out}, Y_\mathrm{out}\in \mathbb{C}$, we therefore obtain a solution of~\eqref{eq:stability_fold_toy} satisfying $Y(\zeta)\equiv \eps Y_\mathrm{out}$, $X(\zeta_\mathrm{out})=X_\mathrm{out}$, and corresponding entry conditions at $\zeta=\zeta_\mathrm{in}$
\begin{align}
\begin{split}\label{eq:toy_solution}
X(\zeta_\mathrm{in}) &= Y_\mathrm{out}x'(\zeta_\mathrm{in})\left(1-\Upsilon_\mathrm{lf}\left(\lambda \eps^{-1/6}\right)   \right) + \text{h.o.t.}\\
Y(\zeta_\mathrm{in}) &= Y_\mathrm{out}\eps = Y_\mathrm{out}y'(\zeta_\mathrm{in}).
\end{split}
\end{align}
In effect, at $\zeta=\zeta_\mathrm{in}$ the corresponding solution to the linearized problem~\eqref{eq:stability_fold_toy} resembles a multiple of the derivative $(x',y')(\zeta)$ (which solves~\eqref{eq:stability_fold_toy} for $\lambda=0$), up to a leading-order correction in $X(\zeta)$ described by the function $\Upsilon_\mathrm{lf}\left(\lambda\eps^{-1/6}\right)$ due to the presence of the $\lambda^2 X$ term in~\eqref{eq:stability_fold_toy}. In the next section, we return to~\eqref{eq:transformed_lin_fold_diag} and focus on the center-unstable $XY$-dynamics. Guided by~\eqref{eq:toy_solution}, we solve an analogous boundary value problem in the full $XY$ dynamics of~\eqref{eq:transformed_lin_fold_diag} including all higher-order terms, and we precisely characterize the resulting estimates on the solution.

\subsection{Blow-up analysis of the center-unstable
\texorpdfstring{$XY$}{XY}-dynamics}\label{sec:fold_blowup_CU}
Guided by the analysis in~\S\ref{sec:toymodel}, we now consider the full $XY$-dynamics. Coupling the existence problem to the eigenvalue problem through the fold, rescaling $\theta_\mathrm{lf}\xi =\zeta$, and restricting to the invariant subspace $Z=0$, we have the following
\begin{align}
\begin{split}\label{eq:stability_fold_XY}
x_\zeta &= -x^2+y+\mathcal{O}(xy, y^2, x^3,\eps),\\
y_\zeta &= \eps\left( 1+\mathcal{O}(x,y,\eps)\right),\\
X_\zeta &=X\left(-2x-\frac{\lambda^2}{\theta_\mathrm{lf}c^3} +\mathcal{O}(x^2,y,\eps, \lambda x, \lambda^3)\right)+Y\left(1+\mathcal{O}(x,y,\eps,\lambda)\right),\\
Y_\zeta &=\mathcal{O}(\eps X,\eps Y).
\end{split}
\end{align}
Here, we note that the wave-train solution $(x_\eps,y_\eps)(\zeta/\theta_\mathrm{lf})$ satisfies the first two equations on the rescaled interval $\zeta\in [\zeta_\mathrm{in}, \zeta_\mathrm{out}]$ corresponding to $\xi\in \mathcal{I}_\mathrm{lf}$,  where we abbreviate $\smash{\xi_\mathrm{in}:=\xi_{\mathrm{lf},\eps,\nu}^\mathrm{in},  \xi_\mathrm{out}:=\xi_{\mathrm{lf},\eps,\nu}^{\mathrm{out},L}}$ and write $\zeta_\mathrm{in}\smash{=\theta_\mathrm{lf}\xi_\mathrm{in}, \zeta_\mathrm{out}=\theta_\mathrm{lf}\xi_\mathrm{out}}$. The derivative $(x,y)'(\zeta)$ satisfies the linearized equations in the last two component of~\eqref{eq:stability_fold_XY} when taking $(x,y)(\zeta) = (x_\eps,y_\eps)(\zeta/\theta_\mathrm{lf})$ and $\lambda=0$. Guided by the results of~\S\ref{sec:toymodel}, for $0<\mu<M$, we define $$\Lambda_\eps(\mu,M) =\Lambda_{\mathrm{r},\eps}(\mu,M)\cup \Lambda_{\mathrm{c},\eps}(\mu,M),$$ where
\begin{align*}
    \Lambda_{\mathrm{r},\eps}(\mu,M)&=\left\{\lambda\in \mathbb{C}: |\Re(\lambda)|\leq \mu \eps^{1/6}, |\Im(\lambda)|\leq M \eps^{1/6} \right \},\\
     \Lambda_{\mathrm{c},\eps}(\mu,M)&=\left\{\lambda\in \mathbb{C}: |\Re(\lambda)|\leq \mu M^{-1} |\Im(\lambda)|, M \eps^{1/6}\leq |\Im(\lambda)|\leq \mu \right\}
\end{align*}
define rectangular and (partial) cone-shaped regions, respectively, near $\lambda=0$. See Figure~\ref{fig:fold_regions} for a depiction of the regions $\Lambda_{\mathrm{r},\eps}(\mu,M)\cup \Lambda_{\mathrm{c},\eps}(\mu,M) = \Lambda_\eps(\mu,M)$. Note that the union $\Lambda_\eps(\mu,M)$ contains the subregion $R_{1,\eps}(\mu)=\left\{\lambda \in R_1(\mu): |\Re(\lambda)|\leq \mu \eps^{1/6}\right\}$ of Proposition~\ref{prop:mainformula}, and in particular contains the union of the three subregions $R_{1,i,\eps}, i=1,2,3$ analyzed in~\S\ref{sec:mainformula}. We recall the function
\begin{align*}
\Upsilon_\mathrm{lf}(z)=\frac{z^2}{\theta_\mathrm{lf}c^3}\frac{1}{\mathrm{Ai}'(-\Omega_0)^2}\int^\infty_{-\Omega_0}\re^{\frac{z^2}{\theta_\mathrm{lf} c^3}\left( s+\Omega_0\right)   } \left(s\mathrm{Ai}(s)^2-\mathrm{Ai}'(s)^2\right)\mathrm{d}s,
\end{align*}
where $\mathrm{Ai}(s)$ denotes the Airy function and where $-\Omega_0$ is largest zero of $\mathrm{Ai}(s)$, or equivalently $\Omega_0$ is the smallest positive zero of $\bar{J}(z)\coloneqq \smash{J_{-\frac{1}{3}}(2z^{3/2}/3)+J_{\frac{1}{3}}(2z^{3/2}/3)}$, where $\smash{J_{\pm\frac{1}{3}}}$ are Bessel functions of the first kind. We have the following.

\begin{proposition}\label{prop:fold_bvp}
Fix $M>0$. There exists a continuous function $\eta \colon [0,\infty) \to [0,\infty)$ with $\eta(0) = 0$  such that, provided $0 < \eps \ll \nu \ll \mu \ll 1$, for each $(X_\mathrm{out},Y_\mathrm{out})\in \mathbb{C}^2$ and $\lambda\in \Lambda_\eps(\mu,M)$, there exists a solution $\psi_\mathrm{lf}^0=(x,y,X_\mathrm{lf}^0,Y_\mathrm{lf}^0) \colon \left[\zeta_\mathrm{in}, \zeta_\mathrm{out} \right] \to \R^2 \times \C^2$ of~\eqref{eq:stability_fold_XY} with $(x,y)(\zeta) = (x_\eps,y_\eps)(\zeta/\theta_{\mathrm{lf}})$, which satisfies
\begin{align*}
X_\mathrm{lf}^0\left(\zeta_\mathrm{out}\right) &= X_\mathrm{out},\\
 Y_\mathrm{lf}^0\left(\zeta_\mathrm{out}\right) &= \eps Y_\mathrm{out}.
\end{align*}
Moreover, there exist $(\eps,\lambda)$-independent constants $C_\nu, \vartheta_\nu > 0$ such that the solution satisfies the following estimates
\begin{align*}
\left|X_\mathrm{lf}^0\left(\zeta_\mathrm{in}\right)-\alpha^x_{\mathrm{lf},\nu}(Y_\mathrm{out}; \eps,\lambda)x_\eps'\left(\zeta_\mathrm{in}/\theta_\mathrm{lf}\right)\right| &\leq  \mathcal{R}^x_{\mathrm{lf},\nu}(X_\mathrm{out}, Y_\mathrm{out};\eps, \lambda), \\
\left|Y_\mathrm{lf}^0\left(\zeta_\mathrm{in}\right)-\eps Y_\mathrm{out}\frac{y_\eps'\left(\zeta_\mathrm{in}/\theta_\mathrm{lf}\right)}{y_\eps'\left(\zeta_\mathrm{out}/\theta_\mathrm{lf}\right)}\right|  &\leq  \mathcal{R}^y_{\mathrm{lf},\nu}(X_\mathrm{out}, Y_\mathrm{out};\eps,\lambda) , 
\end{align*}
where $\alpha^x_{\mathrm{lf},\nu}$ and $\mathcal{R}^x_{\mathrm{lf},\nu}, \mathcal{R}^y_{\mathrm{lf},\nu}$ satisfy
\begin{align*}
\alpha_{\mathrm{lf},\nu}^x(Y_\mathrm{out}; \eps,\lambda)
&=\frac{\eps Y_\mathrm{out}}{y_\eps'\left(\zeta_\mathrm{out}/\theta_\mathrm{lf}\right)}\left(1- \Upsilon_\mathrm{lf}\left(\lambda \eps^{-1/6}\right)+\mathcal{O}\left(\eta(\nu)\Upsilon_\mathrm{lf}\left(\lambda \eps^{-1/6}\right)\right)  \right),\\
\mathcal{R}^x_{\mathrm{lf},\nu}(X_\mathrm{out}, Y_\mathrm{out}; \eps,\lambda)&=\begin{cases}C_\nu\left(\left|X_\mathrm{out}\right|+ \left(\eps+|\lambda \log \eps|\right)\left|Y_\mathrm{out}\right|\right),& \lambda \in \Lambda_{\mathrm{r},\eps}(\mu, M),\\
C_\nu\left(|X_\mathrm{out}|+(\eps+| \lambda \log |\lambda||)|Y_\mathrm{out}|\right), &\lambda \in \Lambda_{\mathrm{c},\eps}(\mu, M),
\end{cases}\\
\mathcal{R}^y_{\mathrm{lf},\nu}(X_\mathrm{out}, Y_\mathrm{out}; \eps,\lambda)&=\begin{cases} C_\nu \eps\left|X_\mathrm{out}\right|+ \left( C_\nu\eps \left(\eps+|\lambda \log \eps|\right) +\eps \eta(\nu)\left|\Upsilon_\mathrm{lf}\left(\lambda \eps^{-1/6}\right)\right|\right)\left|Y_\mathrm{out}\right|, & \lambda \in \Lambda_{\mathrm{r},\eps}(\mu, M),\\
C_\nu \eps |X_\mathrm{out}|+(C_\nu\eps  \left(\eps+|\lambda \log |\lambda||\right)+\eps \eta(\nu))|Y_\mathrm{out}|, & \lambda \in \Lambda_{\mathrm{c},\eps}(\mu, M),
\end{cases}
\end{align*}
\end{proposition}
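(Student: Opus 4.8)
\textbf{Proof strategy for Proposition~\ref{prop:fold_bvp}.} The plan is to follow the toy-model computation of \S\ref{sec:toymodel} but now carrying all the higher-order terms in~\eqref{eq:stability_fold_XY}, using geometric desingularization (blow-up) of the fold to control the passage uniformly in $\lambda\in\Lambda_\eps(\mu,M)$. The main conceptual point, already isolated in \S\ref{sec:toymodel}, is that the $XY$-subsystem is a nonautonomous linear equation whose $\lambda=0$ solution is exactly the derivative $(x_\eps',y_\eps')$ of the wave train. I would therefore first split off this known solution: write $Y_\mathrm{lf}^0(\zeta) = \eps Y_\mathrm{out}\,\tfrac{y_\eps'(\zeta/\theta_\mathrm{lf})}{y_\eps'(\zeta_\mathrm{out}/\theta_\mathrm{lf})} + \widetilde Y(\zeta)$, and analogously decompose $X_\mathrm{lf}^0$ into a ``derivative part'' $\alpha(\zeta)x_\eps'(\zeta/\theta_\mathrm{lf})$ plus a remainder $\widetilde X(\zeta)$, where $\alpha(\zeta)$ absorbs the slowly varying coefficient. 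Substituting into~\eqref{eq:stability_fold_XY} one obtains an inhomogeneous linear system for $(\widetilde X,\widetilde Y)$ whose forcing is of two types: (i) genuinely small terms of size $\mathcal{O}(\eps)$ or $\mathcal{O}(\eta(\nu))$ coming from the higher-order corrections to the fold normal form, and (ii) the crucial $\tfrac{\lambda^2}{\theta_\mathrm{lf}c^3}$-term multiplying the derivative part $\alpha(\zeta)x_\eps'$, which is responsible for the $\Upsilon_\mathrm{lf}$-contribution and is \emph{not} small when $\lambda\sim\eps^{1/6}$.

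The core step is then to solve this inhomogeneous $XY$-system. After desingularization, the relevant scale is $s = \tfrac{y_\eps}{\eps^{2/3}} + \eps^{1/3}(\zeta-\zeta_\mathrm{out})$ (up to higher-order corrections to $y_\eps$), which ranges over a neighborhood of $[-\Omega_0,\infty)$ by Lemma~\ref{lem:fold_existence}, with $s_\mathrm{in}+\Omega_0 = \mathcal{O}(\eps^{1/3})$ and $s_\mathrm{out} = \eps^{-2/3}y_\mathrm{out}\to\infty$. In the rescaled variable, the homogeneous flow of the $X$-equation is governed (to leading order) by $\re^{-2\int x}$, which in terms of the Airy function is $\mathrm{Ai}(s)^2$ after using $x_\eps\sim\eps^{1/3}\tfrac{\mathrm{Ai}'(s)}{\mathrm{Ai}(s)}$; the factor $s\,\mathrm{Ai}(s)^2-\mathrm{Ai}'(s)^2$ in $\Upsilon_\mathrm{lf}$ arises exactly as in the toy computation, being (up to a sign) the Wronskian-type quantity $x_\eps'$ in the blown-up coordinates. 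The variation-of-constants integral for $\widetilde X(\zeta_\mathrm{in})$ then produces $\Upsilon_\mathrm{lf}(\lambda\eps^{-1/6})\,Y_\mathrm{out}x_\eps'(\zeta_\mathrm{in}/\theta_\mathrm{lf})$ as its leading term, and the remaining forcing contributes $\mathcal{R}^x_{\mathrm{lf},\nu}$. The different shapes of $\mathcal{R}^x_{\mathrm{lf},\nu}$ in the rectangular region $\Lambda_{\mathrm{r},\eps}$ versus the cone $\Lambda_{\mathrm{c},\eps}$ come from bounding $\int \re^{\frac{\lambda^2}{\eps^{1/3}\theta_\mathrm{lf}c^3}(s-s_\mathrm{in})}$-type integrals: when $|\lambda|\lesssim\eps^{1/6}$ the exponent is $\mathcal{O}(1)$ over the whole $s$-range so one picks up a factor $|\log\eps|$ from the length of the interval; when $\Re(\lambda^2)<0$ (the cone) the exponential is decaying and the integral is controlled by $|\log|\lambda||$ instead, reflecting where the main contribution concentrates. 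The $\mathcal{O}(\eta(\nu))$ term in $\alpha^x_{\mathrm{lf},\nu}$ records that the higher-order normal-form corrections distort the coefficient in front of $\Upsilon_\mathrm{lf}$ by an amount controlled by the size $\eta(\nu)$ of the fold neighborhood.

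The $Y$-component estimate is the easier part: since $Y_\zeta=\mathcal{O}(\eps X,\eps Y)$, the constancy of $Y$ in the toy model is broken only at order $\eps$, so after the derivative splitting $\widetilde Y$ satisfies an equation with forcing $\mathcal{O}(\eps(|X|+|\widetilde Y|))$, and a Gronwall argument over an interval of length $\mathcal{O}(\nu/\eps)$ gives $|\widetilde Y(\zeta_\mathrm{in})| \lesssim \eps\sup|X| + \eps\,\mathcal{O}(\eta(\nu)+|\lambda\log\cdot|)|Y_\mathrm{out}|$, which after inserting the $X$-bound yields $\mathcal{R}^y_{\mathrm{lf},\nu}$ (the extra factor $\eps$ relative to $\mathcal{R}^x_{\mathrm{lf},\nu}$ and the appearance of $|\Upsilon_\mathrm{lf}|$ there come from feeding in the $X$-estimate). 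I would set this up rigorously via a fixed-point/contraction argument on $C^0\big([\zeta_\mathrm{in},\zeta_\mathrm{out}],\C^2\big)$ for the pair $(\widetilde X,\widetilde Y)$, using the explicit leading-order integrating factors from the Airy/Riccati structure and treating all $\mathcal{O}(\cdot)$ terms as perturbations; the boundary data $(X_\mathrm{out},\eps Y_\mathrm{out})$ enter as the initial condition at $\zeta_\mathrm{out}$, and existence of the solution is then automatic. I expect the main obstacle to be making the blow-up/Airy identifications precise \emph{with} the higher-order terms present --- in particular showing that the higher-order corrections to $y_\eps$ and to the $X$-equation's coefficients only perturb $s$, the integrating factor, and the kernel $s\,\mathrm{Ai}(s)^2-\mathrm{Ai}'(s)^2$ by controlled amounts (of relative size $\eta(\nu)$ or $\eps^{1/3}$), so that the leading $\Upsilon_\mathrm{lf}$-term survives intact while everything else lands in the residuals; this requires carefully combining the $C^1$-$\mathcal{O}(\eps^{1/3})$ closeness of $\mathcal{M}^{\lr,+}_\eps$ to its singular limit (\S\ref{sec:existence}) with uniform bounds on the Airy-type integrals across both the rectangular and conical parameter regions.
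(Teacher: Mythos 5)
Your strategy coincides with the paper's own route: the paper likewise splits off the derivative solution, writing $(X,Y)=(\alpha X_\eps,\alpha Y_\eps)+(\tilde X,\tilde Y)$ in~\eqref{eq:fold_centerXY_dynamics_alpha}, isolates the $\lambda^2$-forcing acting on the derivative part as the source of $\Upsilon_\mathrm{lf}$, and closes the remainder by fixed-point arguments. However, two places in your plan conceal the actual difficulties, and as written the argument would not go through without supplying the missing structure.

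First, a single contraction on $C^0([\zeta_\mathrm{in},\zeta_\mathrm{out}],\C^2)$ driven by "the Airy integrating factor" does not close. The backward homogeneous $X$-flow from $\zeta_\mathrm{out}$ to $\zeta_\mathrm{in}$ is the product of an exponentially small factor $\re^{-\vartheta_\nu/\eps}$ (accumulated along the left slow manifold, where $-2x>0$ over a $\zeta$-interval of length $\mathcal{O}(\eps^{-1})$) with an algebraically large factor of order $\mathrm{Ai}(s_\mathrm{in})^{-2}\sim\eps^{-2/3}$ (since $s_\mathrm{in}\to-\Omega_0$, a simple zero of $\mathrm{Ai}$); moreover, near both endpoints the corrections $\mathcal{O}(xy,y^2,x^3)$ are of the \emph{same} order as $-x^2+y$ itself (both $\mathcal{O}(\eta(\nu)^2)$, small in $\nu$ but not in $\eps$), so the global Airy parametrization $x\sim\eps^{1/3}\mathrm{Ai}'(s)/\mathrm{Ai}(s)$ is simply not valid there. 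The paper (\S\ref{sec:fold_bvp_proof}) therefore runs separate fixed-point arguments in the blow-up charts $\mathcal{K}_1,\mathcal{K}_2,\mathcal{K}_3$ with chart-adapted weighted norms (e.g.\ the norms $\|\tilde X_1\|_{\eps_1}+\|Y_1\|_{r_1\eps_1}$ and the $\kappa$-shifted exponential weight in Proposition~\ref{prop:K1_estimates}) and then matches at the chart boundaries; any "global weight" you construct to make a single contraction work is essentially this chart decomposition in disguise, and you have not specified it.

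Second, your treatment of the cone $\Lambda_{\mathrm{c},\eps}(\mu,M)$ is a prediction rather than an argument. There $|\lambda\eps^{-1/6}|$ is unbounded, so the rescaled spectral parameter in the central chart blows up and the "regular perturbation of the Riccati equation" mechanism that produces $\Upsilon_\mathrm{lf}$ for $\lambda\in\Lambda_{\mathrm{r},\eps}$ is lost. The paper introduces an additional chart $\mathcal{K}_4$ (\S\ref{sec:chartK4}), rescaling by $r_4=\Im(\lambda)$ instead of $\eps^{1/6}$, and must further split into $\eps_4\geq\delta_4$ (regular perturbation with bounded transition time) and $0<\eps_4\leq\delta_4$ (a secondary slow passage through a fold, Proposition~\ref{prop:K4_estimates2}, where the leading term is extracted by integration by parts against $\re^{(\lambda_4+\ri)^2(s-z_4)/(\theta_\mathrm{lf}c^3)}$ together with the slope bound $-y_4'/x_4'\gtrsim\eps_4$). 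Your observation that $\Re(\lambda^2)<0$ makes the model integral concentrate near $s=-\Omega_0$ and yields the $|\lambda\log|\lambda||$ residual correctly anticipates the answer, but without this extra rescaling you have no control of the full nonautonomous system in that regime.
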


\begin{figure}
\centering
\includegraphics[width=0.45\linewidth]{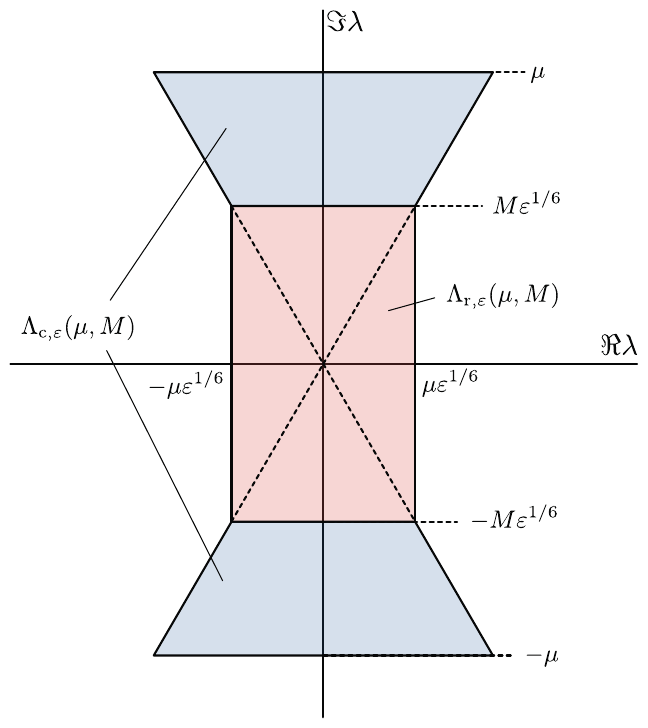}\hspace{0.04\linewidth }\includegraphics[width=0.49\linewidth]{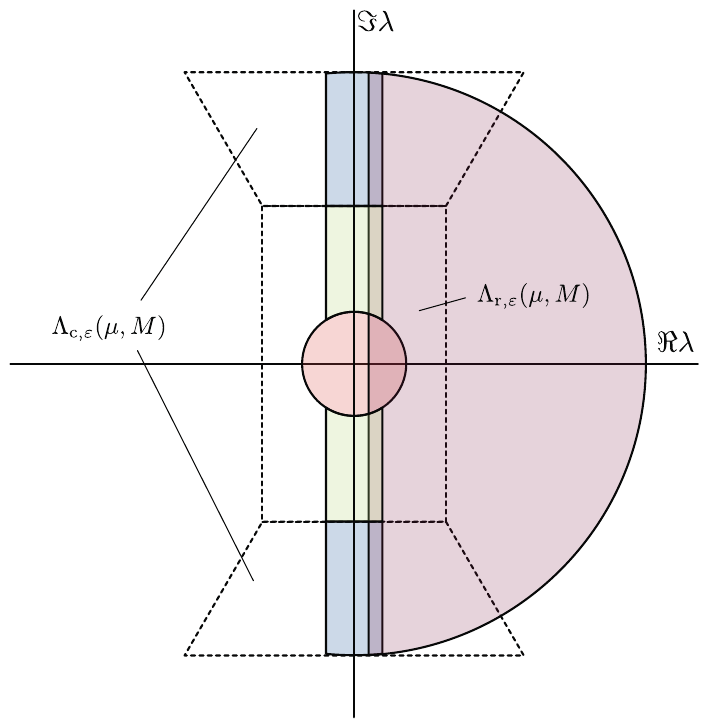}
\caption{(Left) Depicted are the regions $\Lambda_{\mathrm{r},\eps}(\mu, M)\cup\Lambda_{\mathrm{c},\eps}(\mu,M) = \Lambda_\eps(\mu,M)$. (Right) Shown are the regions $\Lambda_{\mathrm{r},\eps}(\mu, M)$ and $ \Lambda_{\mathrm{c},\eps}(\mu,M)$ overlaid on the (unlabeled) regions $R_{1,1,\eps}(\mu),R_{1,2,\eps}(\varsigma,\mu,M), R_{1,3,\eps}(\varsigma,\mu,M),$ and $ R_{1,4,\eps}(\mu) $ from~\S\ref{sec:mainformula}; compare with Figure~\ref{fig:R1_regions}.}
\label{fig:fold_regions}
\end{figure}

To solve the full eigenvalue problem~\eqref{eq:stability_fold_XY} when $(x,y)(\zeta) = (x_\eps,y_\eps)(\zeta/\theta_\mathrm{lf})$ and $\lambda\neq0$, we use blow-up methods as in the existence problem; see~\S\ref{sec:existence_overview}. In the existence problem, as with the blow-up of a canonical fold point~\cite{krupaszmolyan2001}, the strategy is to append the equation $\eps_\zeta=0$ and perform a quasi-homogeneous spherical blow-up transformation for the three coordinates $(x,y,\eps)$ given by
\begin{align*}
    (\bar{x},\bar{y},\bar{\eps},\bar{r})\mapsto (x,y,\eps)=(\bar{r}\bar{x}, \bar{r}^2\bar{y}, \bar{r}^3 \bar{\eps}),
\end{align*}
where $(\bar{x},\bar{y},\bar{\eps})\in S^2$. Here, we perform the same procedure for the coupled existence-stability problem~\eqref{eq:stability_fold_XY}, which necessitates simultaneously applying a blow-up transformation to $(X,Y,\lambda)$. Based on the analysis of~\S\ref{sec:toymodel}, we expect the linearized solution $(X,Y)$ to behave like the derivative $(x_\zeta, y_\zeta)$ of the solution to the existence problem, hence the weights for $(X,Y)$ are chosen accordingly. Again inspired by the analysis in~\S\ref{sec:toymodel}, we further choose the weight for $\lambda$ to retain the anticipated leading-order $\lambda^2 X$-term in the $X$-equation in~\eqref{eq:stability_fold_XY}. We correspondingly adjust the scalings  from the existence problem to avoid fractional powers of the scaling variable in each chart, due to the appearance of $\lambda^2$ in the linearized equations. This results in the blow-up transformation
\begin{align*}
    \left(\bar{x},\bar{y},\bar{\eps}, \bar{X}, \bar{Y}, \bar{\lambda} \bar{r}\right)\mapsto \left(x,y,\eps, X, Y, \lambda\right)=\left(\bar{r}^2\bar{x}, \bar{r}^4\bar{y}, \bar{r}^6 \bar{\eps}, \bar{r}^4 \bar{X},\bar{r}^6 \bar{Y}, \bar{r}\bar{\lambda} \right),
\end{align*}
where $(\bar{x},\bar{y},\bar{\eps})\in S^2$ and $(\bar{X}, \bar{Y}, \bar{\lambda})\in \mathbb{C}^3$.

The solution must then be tracked through the three coordinate charts $\mathcal{K}_i,i=1,2,3$ as in the existence problem; see~\cite{CASCH,CSosc,krupaszmolyan2001}.  An additional chart $\mathcal{K}_4$ will be needed to capture values of $\lambda \in \Lambda_{\mathrm{c},\eps}(\mu,M)$, corresponding to values of $\lambda$ outside a small neighborhood of $\mathcal{O}(\eps^{1/6})$ up to small $\mathcal{O}(1)$ values of $\Im(\lambda)$, independent of $\eps$.

\begin{figure}
\centering
\includegraphics[width=0.6\linewidth]{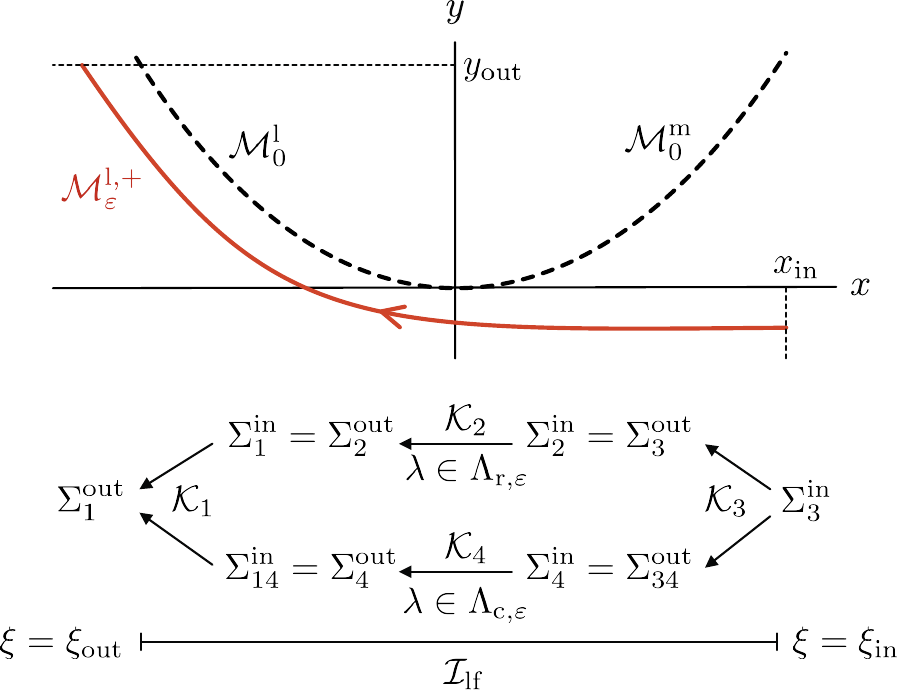}
\caption{Shown is the setup for the proof of Proposition~\ref{prop:fold_bvp}, outlining the charts $\mathcal{K}_i,i =1,2,3,4$ where the solution is tracked to solve the boundary value problem for the center-unstable $(XY)$-dynamics on the interval $\xi\in\mathcal{I}_\mathrm{lf}$. The sections $\Sigma_*^{\mathrm{in}/\mathrm{out}}$ represent the entry/exit sections for solutions passing through the charts $\mathcal{K}_i, i=1,2,3,4,$ which are analyzed in~\S\ref{sec:chartK1}--\S\ref{sec:chartK3}.  \label{fig:foldcharts}}
\end{figure}

In the chart $\mathcal{K}_1$, the transformation takes the form
\begin{align}\label{eq:chartK1_coords}
x=r_1^2x_1, \quad y = r_1^4,\quad \eps = r_1^6\eps_1, \quad X = r_1^4X_1, \quad Y=r_1^6Y_1, \quad \lambda = r_1\lambda_1.
\end{align}
 Similarly in the chart $\mathcal{K}_2$, the transformation takes the form
\begin{align}\label{eq:chartK2_coords}
x=r_2^2x_2, \quad y = r_2^4y_2,\quad \eps = r_2^6, \quad X = r_2^4X_2, \quad Y=r_2^6Y_2, \quad \lambda = r_2\lambda_2
\end{align}
and in the chart $\mathcal{K}_3$, the transformation takes the form
\begin{align}\label{eq:chartK3_coords}
x=r_3^2, \quad y = r_3^4y_3,\quad \eps = r_3^6\eps_3, \quad X = r_3^4X_3, \quad Y=r_3^6Y_3, \quad \lambda = r_3\lambda_3.
\end{align}
Finally in the chart $\mathcal{K}_4$, we have the transformation
\begin{align}\label{eq:chartK4_coords}
x_4=r_4^2x_4, \quad y = r_4^4y_4,\quad \eps = r_4^6\eps_4, \quad X = r_4^4X_4, \quad Y=r_4^6Y_4, \quad \lambda = r_4(\lambda_4+\ri).
\end{align}
Note that this chart amounts to a rescaling by the real quantity $r_4\coloneqq \Im(\lambda)$.

Based on the existence analysis in~\cite{CASCH} (and the corresponding theory for passage through a generic fold point~\cite{krupaszmolyan2001}), the wave train $(x,y)(\zeta) = (x_\eps,y_\eps)(\zeta/\theta_\mathrm{lf})$ passes through each of the three charts, $\mathcal{K}_i,i=1,2,3$. We denote by 
\begin{align*}
I_i : = [\zeta_{\mathrm{in},j}, \zeta_{\mathrm{out},j}],\qquad i=1,2,3
\end{align*}
the $\zeta$-intervals over which the wave train lies within each chart. Since we will refer to the geometric setup for known results concerning passage through a nondegenerate fold point~\cite{krupaszmolyan2001, CSosc}, we have chosen the labelling of charts to correspond to the notation in those works, though due to the direction of the flow (see Figures~\ref{fig:fold_existence} and~\ref{fig:foldcharts}), the wave train actually passes through each of the charts in the order $\mathcal{K}_3\rightarrow \mathcal{K}_2 \rightarrow \mathcal{K}_1$, so that  that $\zeta_{\mathrm{in},3}=\zeta_\mathrm{in}$ and $\zeta_{\mathrm{out},1}=\zeta_\mathrm{out}$ and $[\zeta_\mathrm{in}, \zeta_\mathrm{out}] = I_3\cup I_2\cup I_1$. For values of $\lambda\in\Lambda_{\mathrm{r},\eps}(\mu,M)$, the eigenvalue problem can be analyzed using these three charts; however, values of $\lambda\in \Lambda_{\mathrm{c},\eps}(\mu,M)$ will require a detour via the chart $\mathcal{K}_4$, instead of the chart $\mathcal{K}_2$; see Figure~\ref{fig:foldcharts}.

We will also make use of the fact that the derivative of the wave train, which we denote by $(X_\eps,Y_\eps)(\zeta)\coloneqq (x,y)'(\zeta)$ where $(x,y)(\zeta) = (x_\eps,y_\eps)(\zeta/\theta_\mathrm{lf})$ satisfies the linearized equations when $\lambda=0$. Motivated by the analysis in~\S\ref{sec:toymodel} and the anticipated form of the solution~\eqref{eq:toy_solution}, we write $(X,Y) = (\alpha X_\eps,\alpha Y_\eps)+(\tilde{X},\tilde{Y})$ for $\alpha$ to be determined, and obtain the system
\begin{align}
\begin{split}\label{eq:fold_centerXY_dynamics_alpha}
x_\zeta &= -x^2+y+\mathcal{O}(xy, y^2, x^3,\eps),\\
y_\zeta &= \eps\left( 1+\mathcal{O}(x,y,\eps)\right),\\
X_\zeta &=X\left(-2x-\frac{\lambda^2}{\theta_\mathrm{lf}c^3} +\mathcal{O}(x^2,y,\eps, \lambda x, \lambda^3)\right)+Y\left(1+\mathcal{O}(x,y,\eps,\lambda)\right),\\
&\qquad +\alpha X_\eps\left(-\frac{\lambda^2}{\theta_\mathrm{lf}c^3}+\mathcal{O}\left(\lambda (|x|+|y|+|\eps|+|\lambda|^2  \right)\right)+\mathcal{O}(\alpha \lambda Y_\eps),\\
Y_\zeta &=\mathcal{O}(\eps X,\eps Y, \alpha \eps \lambda X_\eps, \alpha \eps \lambda Y_\eps),
\end{split}
\end{align}
where we have dropped the tildes on $(X,Y)$. 

In the remainder of this section, we analyze~\eqref{eq:fold_centerXY_dynamics_alpha} by deriving estimates satisfied by solutions of boundary value problems in each of the charts $\mathcal{K}_1$~(\S\ref{sec:chartK1}), $\mathcal{K}_2$~(\S\ref{sec:chartK2}), $\mathcal{K}_4$~(\S\ref{sec:chartK4}), and $\mathcal{K}_3$~(\S\ref{sec:chartK3}). These solutions are then matched to obtain a solution of~\eqref{eq:stability_fold_XY} satisfying the estimates of Proposition~\ref{prop:fold_bvp}, the proof of which is presented in~\S\ref{sec:fold_bvp_proof}. 

\subsubsection{Chart \texorpdfstring{$\mathcal{K}_1$}{K1}}\label{sec:chartK1}
We begin in the chart $\mathcal{K}_1$, which upon applying the coordinate transformation~\eqref{eq:chartK1_coords}, results in the system 
\begin{align*}
\begin{split}
(r_1)_\zeta &= \frac{1}{4}r_1^3\eps_1F_1(x_1,r_1,\eps_1),\\
(x_1)_\zeta &= r_1^2\left(1-x_1^2-\frac{1}{2}\eps_1x_1+\mathcal{O}(r_1^2)\right),\\
(\eps_1)_\zeta &= -\frac{3}{2}r_1^2\eps_1^2F_1(x_1,r_1,\eps_1),\\
(X_1)_\zeta &=r_1^2X_1\left(-2x_1-\frac{\lambda_1^2}{\theta_\mathrm{lf}c^3}-\eps_1 +\mathcal{O}(r_1\lambda_1, r_1^2)\right)+r_1^2Y_1\left(1+\mathcal{O}(r_1\lambda_1, r_1^2)\right)\\
&\qquad +\alpha \left(\frac{1}{2}r_1^2\eps_1F_1(x_1,r_1,\eps_1)x_1+(x_1)_\zeta \right)\left(-\frac{\lambda_1^2}{\theta_\mathrm{lf}c^3} +\mathcal{O}(r_1\lambda_1)\right)+ \mathcal{O}(\alpha r_1^3\eps_1\lambda_1),\\
(Y_1)_\zeta &=-\frac{3}{2}r_1^2\eps_1Y_1F_1(x_1,r_1,\eps_1)+\mathcal{O}(r_1^4\eps_1X_1,r_1^6\eps_1Y_1, \alpha r_1\eps_1\lambda_1X_\eps,\alpha r_1\eps_1\lambda_1Y_\eps), \\
(\lambda_1)_\zeta&= -\frac{1}{4}r_1^2\eps_1\lambda_1F_1(x_1,r_1,\eps_1),
\end{split}
\end{align*}
where $F_1(x_1,r_1,\eps_1)=1+\mathcal{O}(r_1^2)$. We now desingularize the system via the rescaling $\mathrm{d}z_1 = r_1^2 \mathrm{d} \zeta$ and obtain
\begin{align*}
\begin{split}
r_1' &= \frac{1}{4}r_1\eps_1F_1(x_1,r_1,\eps_1),\\
x_1' &=1-x_1^2-\frac{1}{2}\eps_1x_1+\mathcal{O}(r_1^2),\\
\eps_1' &= -\frac{3}{2}\eps_1^2F_1(x_1,r_1,\eps_1),\\
X_1' &=X_1\left(-2x_1-\frac{\lambda_1^2}{\theta_\mathrm{lf}c^3}-\eps_1 +\mathcal{O}(r_1\lambda_1, r_1^2)\right)+Y_1\left(1+\mathcal{O}(r_1\lambda_1, r_1^2)\right)\\
&\qquad +\alpha\left(\frac{1}{2}\eps_1F_1(x_1,r_1,\eps_1)x_1+x_1' \right)\left(-\frac{\lambda_1^2}{\theta_\mathrm{lf}c^3} +\mathcal{O}(r_1\lambda_1)\right)+ \mathcal{O}(\alpha r_1\eps_1\lambda_1),\\
Y_1' &=-\frac{3}{2}\eps_1Y_1F_1(x_1,r_1,\eps_1)+\mathcal{O}(r_1^2\eps_1X_1,r_1^4\eps_1Y_1, \alpha r_1^{-1}\eps_1\lambda_1X_\eps,\alpha r_1^{-1}\eps_1\lambda_1Y_\eps),\\
\lambda_1'&= -\frac{1}{4}\eps_1\lambda_1F_1(x_1,r_1,\eps_1),
\end{split}
\end{align*}
where $'=\frac{\mathrm{d}}{\mathrm{d}z_1}$. Focusing first on the existence problem in the coordinates $(r_1,x_1,\eps_1)$, we rescrict attention to the flow between the sections $\Sigma_1^\mathrm{in}$ and $\Sigma_1^\mathrm{out}$ defined by
\begin{align*}
\Sigma_1^\mathrm{in}&=\{|x_1+1|\leq \bar{x}_1, 0\leq r_1\leq r_{1,\eps}(\delta), \eps_1= \delta  \},\\
\Sigma_1^\mathrm{out}&=\{|x_1+1|\leq \bar{x}_1, r_1= r_{1,\eps}(\eps_1), 0\leq\eps_1\leq \delta \},
\end{align*}
where $r_{1,\eps}(\eps_1)^4 = y_\mathrm{out}(\nu,\eps)$ satisfies $| r_{1,\eps}(\eps_1)-\bar{r}_{1}|\leq C_\nu\eps_1^{2/3}$ and where $\bar{r}_{1}^4:= r_{1,\eps}(0)^4= y_\mathrm{out}(\nu,0)>0$ by Lemma~\ref{lem:fold_existence}, and $\delta, \bar{x}_1>0$  are sufficiently small constants. In the $\mathcal{K}_1$-coordinates, the wave train is represented by a solution $(x_{1,\eps}, r_{1,\eps},\eps_{1,\eps})(z_1)$ which reaches these sections at values of $z_1=z_1^\mathrm{in},z_1^\mathrm{out}$, respectively. The fixed point $(x_1,r_1,\eps_1)=(-1,0,0)$ admits a center-unstable manifold $M_1^+$ given by the graph
\begin{align*}
    M_1^+ = \left\{x_1 =x_1^+(r_1,\eps_1)= -1+\mathcal{O}\left(r_1^2,\eps_1\right), 0\leq r_1\leq r_{1,\eps}(\eps_1),  0\leq\eps_1\leq \delta \right\}.
\end{align*}
The manifold $M_1^+$ is the representation of $\mathcal{M}^{\lr,+}$ in the $\mathcal{K}_1$-coordinates; see e.g.~\cite[Proposition 4.2]{CSosc}. Therefore, by Lemma~\ref{lem:fold_existence}, in between the sections $\Sigma_1^\mathrm{in}, \Sigma_1^\mathrm{out}$, the solution $(x_{1,\eps}, r_{1,\eps},\eps_{1,\eps})(z_1)$ remains $\mathcal{O}(\re^{-\vartheta_\nu/\eps})$-close to $M_1^+$. The corresponding eigenvalue problem can therefore be written as
\begin{align}
\begin{split}\label{eq:k1_transf}
X_1' &=X_1\left(2-\frac{\lambda_1^2}{\theta_\mathrm{lf}c^3} +\mathcal{O}\left(r_1\lambda_1, r_1^2, \eps_1\right)\right)+Y_1\left(1+\mathcal{O}\left(r_1\lambda_1, r_1^2\right)\right)\\
&\qquad +\alpha \left(-\frac{1}{2}\eps_1+\mathcal{O}\left(r_1^2\eps_1, \eps_1^2\right)\right)\left(-\frac{\lambda_1^2}{\theta_\mathrm{lf}c^3} +\mathcal{O}(r_1\lambda_1)\right)+ \mathcal{O}(\alpha r_1\eps_1\lambda_1),\\
Y_1' &=-\frac{3}{2}\eps_1Y_1\bar{F}_1(r_1,\eps_1)+\mathcal{O}(r_1^2\eps_1X_1,r_1^4\eps_1Y_1, \alpha r_1^3\eps_1^2\lambda_1),\\
\lambda_1'&= -\frac{1}{4}\eps_1\lambda_1\bar{F}_1(r_1,\eps_1)
\end{split}
\end{align}
over the interval $z_1\in[z_1^\mathrm{in}, z_1^\mathrm{out}]$, where $\bar{F}_1(r_1,\eps_1)=F_1\left(x_1^+(r_1,\eps_1),r_1,\eps_1\right)+\mathcal{O}(\re^{-\vartheta_\nu/(r_1^6\eps_1)})$. We consider~\eqref{eq:k1_transf} on the invariant set $r_1=Y_1=0$
\begin{align*}
\begin{split}
\eps_1' &= -\frac{3}{2}\eps_1^2,\\
X_1' &=X_1\left(2-\frac{\lambda_1^2}{\theta_\mathrm{lf}c^3} +\mathcal{O}(\eps_1)\right)+\alpha\frac{\lambda_1^2\eps_1}{2\theta_\mathrm{lf}c^3} \left(1+\mathcal{O}(\eps_1)\right),\\
\lambda_1'&= -\frac{1}{4}\eps_1\lambda_1.
\end{split}
\end{align*}
Fix $\mu>0$ sufficiently small. Given a solution of the $(\eps_1,\lambda_1)$-subsystem restricted to the region 
\begin{align*}
    \lambda_1\in \Lambda_1(\delta, \mu, M)\coloneqq \left\{\lambda_1\in \mathbb{C}: |\Re(\lambda_1)| \leq\mu \delta^{1/6}, |\Im(\lambda_1)| \leq  M \delta^{1/6}  \right\},
\end{align*}
there is a unique solution $X_1 = X_1^*=\mathcal{O}\left( \alpha \lambda_1^2\eps_1 \right)$ which is bounded as $z_1\to\infty$ given by
\begin{align}
\begin{split}\label{eq:k1_x1star}
X_1^*(z_1) &=\frac{\alpha}{2\theta_\mathrm{lf}c^3}\int_\infty^{z_1}\exp\left(\int_s^{z_1}2-\frac{\lambda_1(\bar{s})^2}{\theta_\mathrm{lf}c^3} +\mathcal{O}(\eps_1(\bar{s}))\mathrm{d}\bar{s}\right) \lambda_1(s)^2\eps_1(s) \left(1+\mathcal{O}(\eps_1(s))\right)\mathrm{d}s,
\end{split}
\end{align}
where we note that $\left|X_1^*(z_1^\mathrm{out})\right|\leq C_{\bar{r}_1}|\alpha \eps \lambda|$. We set $X_1 = X_1^*+\tilde{X}_1$ in the full system~\eqref{eq:k1_transf}, which results in the system
\begin{align}
\begin{split}\label{eq:k1_transf_tilde}
\tilde{X}_1' &=\tilde{X}_1h_{1,X}(r_1,\eps_1,\lambda_1)+Y_1h_{1,Y}(r_1,\eps_1,\lambda_1)+ \alpha h_{1,\alpha}(r_1,\eps_1,\lambda_1),\\
Y_1' &=-\frac{3}{2}\eps_1Y_1\bar{F}_1(r_1,\eps_1)+\eps_1\tilde{X}_1g_{1,X}(r_1,\eps_1,\lambda_1)+\eps_1Y_1g_{1,Y}(r_1,\eps_1,\lambda_1)+\eps_1\alpha g_{1,\alpha}(r_1,\eps_1,\lambda_1),\\
\lambda_1'&= -\frac{1}{4}\eps_1\lambda_1\bar{F}_1(r_1,\eps_1),
\end{split}
\end{align}
where
\begin{align*}
h_{1,X}(r_1,\eps_1,\lambda_1)&=2-\frac{\lambda_1^2}{\theta_\mathrm{lf}c^3} +\mathcal{O}(r_1\lambda_1, r_1^2, \eps_1),\\
h_{1,Y}(r_1,\eps_1,\lambda_1)&= 1+\mathcal{O}(r_1\lambda_1, r_1^2),\\
h_{1,\alpha}(r_1,\eps_1,\lambda_1)&=\mathcal{O}(r_1\eps_1\lambda_1),\\
g_{1,X}(r_1,\eps_1,\lambda_1)&=\mathcal{O}(r_1^2),\\
g_{1,Y}(r_1,\eps_1,\lambda_1)&=\mathcal{O}(r_1^4),\\
g_{1,\alpha}(r_1,\eps_1,\lambda_1)&=\mathcal{O}(r_1^2\eps_1\lambda_1^2,  r_1^3\eps_1\lambda_1).
\end{align*}

We have the following.

\begin{proposition}\label{prop:K1_estimates}
Consider~\eqref{eq:k1_transf} with $(x_1,r_1,\eps_1)=(x_{1,\eps}, r_{1,\eps},\eps_{1,\eps})(z_1)$, and fix $M>0$. There exists $C, C_{\bar{r}_1}, \theta_{\bar{r}_1},\mu>0$ such that the following holds. Given $X_1^\mathrm{out}\in \mathbb{C}$ and any $\lambda\in \mathbb{C}$ such that $\lambda_1(z_\mathrm{in})=:\lambda_{1,0}\in\Lambda_1(\delta,\mu,M)$, there exists a solution $(X_1,Y_1,\lambda_1): [z_1^\mathrm{in}, z_1^\mathrm{out}]\to \mathbb{C}^2\times \Lambda_1(\delta, \mu,M)$ of~\eqref{eq:k1_transf} satisfying 
\begin{align*}
X_1(z_1^\mathrm{out}) = X_1^*(z_1^\mathrm{out})+X_1^\mathrm{out}, \qquad Y_1(z_1^\mathrm{in}) = 0,
\end{align*}
as well as the estimates
\begin{align*}
\left|X_1(z_1^\mathrm{in}) - X_1^*(z_1^\mathrm{in})\right|\leq C_{\bar{r}_1} \left(|X^\mathrm{out}_1|\re^{-\theta_{\bar{r}_1}/\eps}+ |\alpha \lambda| \right), \qquad |Y_1(z_1^\mathrm{out})| \leq C\frac{\eps}{\bar{r}_1^4}|X^\mathrm{out}_1|+ C_{\bar{r}_1} |\alpha \eps \lambda|.
\end{align*}
\end{proposition}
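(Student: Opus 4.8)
\textbf{Proof plan for Proposition~\ref{prop:K1_estimates}.}
The plan is to solve the boundary value problem for the reduced system~\eqref{eq:k1_transf_tilde} in the variables $(\tilde X_1, Y_1)$ (with $X_1 = X_1^* + \tilde X_1$ and $X_1^*$ the bounded particular solution~\eqref{eq:k1_x1star}), viewing the interval $[z_1^\mathrm{in}, z_1^\mathrm{out}]$ as a passage along the center-unstable manifold $M_1^+$ where the $X_1$-direction is exponentially expanding and the $Y_1$-direction is only algebraically/weakly varying because it is multiplied throughout by $\eps_1 = \mathcal{O}(\eps)$. First I would record the relevant scalar evolution bounds: along the wave-train solution $(x_{1,\eps}, r_{1,\eps}, \eps_{1,\eps})(z_1)$ we have $r_{1,\eps} = \mathcal{O}(\bar r_1)$, $\eps_{1,\eps}(z_1) \le \delta$ decaying like $1/z_1$, and crucially $\eps_{1,\eps}(z_1^\mathrm{out}) = \mathcal{O}(\eps/\bar r_1^6)$, while the factor $h_{1,X} = 2 + \mathcal{O}(\lambda_1, \eps_1)$ stays bounded away from zero with positive real part, so that the $\tilde X_1$-evolution $\T^{X}(z_1, s)$ between $z_1^\mathrm{in}$ and $z_1^\mathrm{out}$ satisfies a two-sided bound of the form $c_1 \re^{2(z_1 - s)} \le |\T^X(z_1,s)| \le c_2 \re^{3(z_1 - s)}$ (possibly after absorbing the $\lambda_1^2$ term, which is $\mathcal{O}(\mu^2\delta^{1/3})$ hence negligible). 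Since $z_1^\mathrm{out} - z_1^\mathrm{in} \sim \theta_{\bar r_1}/\eps$ in the desingularized time (the whole passage through $\mathcal{K}_1$), backward integration of the expanding $X_1$-direction from $z_1^\mathrm{out}$ to $z_1^\mathrm{in}$ \emph{contracts} by a factor $\re^{-\theta_{\bar r_1}/\eps}$, which is the source of the exponentially small coefficient on $|X_1^\mathrm{out}|$ in the conclusion.

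The matching structure is the standard Lin-type one: we impose $\tilde X_1(z_1^\mathrm{out}) = X_1^\mathrm{out}$ (exit datum in the unstable direction) and $Y_1(z_1^\mathrm{in}) = 0$ (entry datum in the weakly-varying direction, consistent with the wave train lying on $M_1^+$), and solve via variation of constants: $Y_1$ is obtained by forward integration from $z_1^\mathrm{in}$ with $Y_1(z_1^\mathrm{in})=0$, and $\tilde X_1$ by backward integration from $z_1^\mathrm{out}$ with $\tilde X_1(z_1^\mathrm{out}) = X_1^\mathrm{out}$. This gives a coupled fixed-point problem on $C^0([z_1^\mathrm{in},z_1^\mathrm{out}])^2$. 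The key observation making the contraction work is the asymmetry: in the $\tilde X_1$-equation the coupling to $Y_1$ has coefficient $h_{1,Y} = \mathcal{O}(1)$ but is integrated backward against the contracting evolution, so it contributes $\mathcal{O}(\sup|Y_1|)$; in the $Y_1$-equation every term carries a factor $\eps_1$, and integrating $\eps_1 g_{1,X} \tilde X_1$ forward picks up $\int \eps_1(s)\,\mathcal{O}(r_1^2)\, |\tilde X_1(s)|\,\re^{(\text{something})}\mathrm{d}s$; bounding $\eps_1(s) \le \delta$ and using $\int_{z_1^\mathrm{in}}^{z_1^\mathrm{out}} \eps_1(s)\,\mathrm{d}s = \mathcal{O}(1)$ (since $\eps_1 \sim 1/z_1$ and the log is absorbed — or more sharply, using the decay structure and $\eps_1(z_1^\mathrm{out}) = \mathcal{O}(\eps/\bar r_1^6)$ together with the exact identity $\eps_1' = -\tfrac32\eps_1^2\bar F_1$ which integrates cleanly) one gets the $C\eps/\bar r_1^4$ prefactor on $|X_1^\mathrm{out}|$ in the bound for $|Y_1(z_1^\mathrm{out})|$. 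The $\alpha$-forcing terms $h_{1,\alpha} = \mathcal{O}(r_1\eps_1\lambda_1)$ and $\eps_1 g_{1,\alpha} = \mathcal{O}(r_1^2\eps_1^2\lambda_1^2, r_1^3\eps_1^2\lambda_1)$ are integrated against the same evolutions and, using $|\lambda_1| = \mathcal{O}(|\lambda|\eps^{-1/6})$ (from $\lambda = r_1\lambda_1$ with $r_1 \sim \eps^{1/6}$... wait, $r_1^6\eps_1 = \eps$ so $r_1 \sim (\eps/\delta)^{1/6}$, hence $\lambda_1 = \lambda/r_1 \sim \lambda\eps^{-1/6}$) and $\eps_1 = \mathcal{O}(\eps)$ at the exit, produce exactly the $|\alpha\lambda|$ and $|\alpha\eps\lambda|$ terms. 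The fixed-point map is a contraction for $0 < \eps \ll \mu \ll 1$ because the dangerous self-coupling terms all carry either an $\eps_1$ factor or a contracting exponential.

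The main obstacle I anticipate is the bookkeeping of the $X_1^*$ subtraction: $X_1^*$ was defined only on the invariant subspace $\{r_1 = Y_1 = 0\}$, so when we substitute $X_1 = X_1^* + \tilde X_1$ into the \emph{full} system~\eqref{eq:k1_transf} we generate extra error terms of the form (coefficients evaluated on the full flow) $-$ (coefficients on the invariant subspace), applied to $X_1^*$; these are $\mathcal{O}(r_1 + Y_1)\cdot X_1^*$. One must check that these are genuinely subsumed into $h_{1,\alpha}$, $g_{1,\alpha}$ (they are, since $|X_1^*| = \mathcal{O}(\alpha\lambda_1^2\eps_1)$ and $r_1 = \mathcal{O}(\bar r_1)$, $Y_1$ itself is part of the fixed-point unknown so this coupling is handled in the contraction estimate, not as a forcing) — this is the one place where the structure has to be verified carefully rather than quoted. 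A secondary subtlety is that the exponential rate constant $\theta_{\bar r_1}$ in the conclusion refers to the \emph{original} time $\zeta$, not the desingularized time $z_1$; since $\mathrm{d}z_1 = r_1^2\,\mathrm{d}\zeta$ and $r_1^2 = \mathcal{O}(1)$ over the relevant range, the passage length in $z_1$ is comparable to that in $\zeta$, which in turn is $\mathcal{O}(\nu/\eps)$ by Lemma~\ref{lem:fold_existence} and the definition of $\mathcal{I}_\mathrm{lf}$; tracking this conversion gives the $\re^{-\theta_{\bar r_1}/\eps}$ with an $\eps$-independent (but $\bar r_1$-dependent) rate. Finally, one must verify $\lambda_1$ stays in $\Lambda_1(\delta,\mu,M)$ along the flow, which is immediate from the explicit equation $\lambda_1' = -\tfrac14\eps_1\lambda_1\bar F_1$: both $|\Re\lambda_1|$ and $|\Im\lambda_1|$ are non-increasing in forward $z_1$ up to the bounded factor $\bar F_1$, so the region is forward-invariant, and backward along the wave train $\lambda_1$ only grows by a bounded multiplicative constant over the $\mathcal{O}(1)$-in-$\eps_1$-integral passage.
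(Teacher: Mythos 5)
Your plan is correct and follows essentially the same route as the paper: the substitution $X_1 = X_1^* + \tilde X_1$, the boundary conditions $\tilde X_1(z_1^\mathrm{out}) = X_1^\mathrm{out}$, $Y_1(z_1^\mathrm{in}) = 0$, the variation-of-constants/contraction argument exploiting the backward contraction in the expanding $\tilde X_1$-direction over the $\mathcal{O}(\bar r_1^6/\eps)$-long passage, and the exact identity $\eps_1' = -\tfrac32\eps_1^2\bar F_1$ to control the $Y_1$-evolution are all exactly the paper's ingredients. The only (inessential) difference is that the paper splits the problem into an inhomogeneous piece with zero exit datum and a homogeneous piece with exit datum $X_1^\mathrm{out}$, solved as two separate fixed points in differently weighted norms, whereas you propose a single coupled fixed point; either works.
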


\begin{proof}
We focus on solving the system~\eqref{eq:k1_transf_tilde}. We first construct a solution to the inhomogeneous equation satisfying the boundary conditions $\tilde{X}_1(z_1^\mathrm{out}) = 0, Y_1(z_1^\mathrm{in})=0$. Using the fact that
\begin{align}\int_s^{z_1} -\frac{3}{2}\eps_1(\tilde{s})\bar{F}_1(r_1(\tilde{s},\eps_1(\tilde{s}) \de s = \int_s^{z_1} \eps_1'(\tilde{s})/\eps_1(\tilde{s}) \de \tilde{s} = \log(\eps_1(z_1)) - \log(\eps_1(s))
\end{align}
so that
\begin{align}\exp\left(\int_s^{z_1} -\frac{3}{2}\eps_1(\tilde{s})\bar{F}_1(r_1(\tilde{s}),\eps_1(\tilde{s})) \de \tilde{s} \right) = \frac{\eps_1(z_1)}{\eps_1(s)},
\end{align}
 we can write the solution of the inhomogeneous equation satisfying $\tilde{X}_1(z_1^\mathrm{out}) = 0, Y_1(z_1^\mathrm{in})=0$ as an integral equation
\begin{align}
\begin{split}\label{eq:k1_transf_int_ih}
\tilde{X}_1(z_1) &=\int_{z_1^\mathrm{out}}^{z_1} \re^{\beta_1(z_1,s)}\left(Y_1(s)h_{1,Y}(s)+\alpha h_{1,\alpha}(s)\right) \mathrm{d}s,\\
Y_1(z_1) &=\eps_1(z_1)\int_{z_1^\mathrm{in}}^{z_1}\tilde{X}_1(s)g_{1,X}(s)+Y_1(s)g_{1,Y}(s)+\alpha g_{1,\alpha}(s)\mathrm{d}s.
\end{split}
\end{align}
where
\begin{align*}
\beta_1(z_1,z_0) &= \int_{z_0}^{z_1}h_{1,X}(s)\mathrm{d}s
\end{align*}
and we denote
\begin{align*}
    g_{1,*}(s)&\coloneqq g_{1,*}(r_1(s),\eps_1(s),\lambda_1(s)),\\
    h_{1,*}(s)&\coloneqq h_{1,*}(r_1(s),\eps_1(s),\lambda_1(s))
\end{align*}
for $*=X,Y,\alpha$. We consider the integral equation~\eqref{eq:k1_transf_int_ih} on the space $\tilde{X}_1,Y_1\in C([z_1^\mathrm{in}, z_1^\mathrm{out}])$ with the norm
\begin{align*}
    \|(\tilde{X}_1,Y_1)\|_{1,I} = \|\tilde{X}_1\|_{\eps_1}+\|Y_1\|_{r_1\eps_1}= \sup_{z_1\in [z_1^\mathrm{in}, z_1^\mathrm{out}]}\left|\eps_1(z_1)^{-1}\tilde{X}_1(z_1)  \right| +\sup_{z_1\in [z_1^\mathrm{in}, z_1^\mathrm{out}]}\left|(r_1(z_1)\eps_1(z_1))^{-1}Y_1(z_1)  \right|,
\end{align*}
Using a fixed-point argument, the equation~\eqref{eq:k1_transf_int_ih} admits a unique solution $(\tilde{X}_1^I,Y_1^I)(z_1)$ satisfying the estimates 
\begin{align*}
\begin{split}
\left|\tilde{X}_1^I(z_1^\mathrm{in})\right| &\leq C_{\bar{r}_1} |\alpha \lambda|, \qquad \left|Y_1^I(z_1^\mathrm{out})\right| \leq C_{\bar{r}_1}|\alpha \eps \lambda|.
\end{split}
\end{align*}
We next construct a solution of the homogeneous equation, given by~\eqref{eq:k1_transf_tilde} with $\alpha=0$, satisfying the boundary conditions $\tilde{X}_1(z_1^\mathrm{out}) = X_1^\mathrm{out}, Y_1(z_1^\mathrm{in})=0$, as a solution of the integral equation
\begin{align*}
\begin{split}
\tilde{X}_1(z_1) &=X^\mathrm{out}_1\re^{\beta_1(z_1,z_1^\mathrm{out})}+\int_{z_1^\mathrm{out}}^{z_1} \re^{\beta_1(t,s)}Y_1(s)h_{1,Y}(s)\mathrm{d}s,\\
Y_1(z_1) &=\eps_1(z_1)\int_{z_1^\mathrm{in}}^{z_1}\tilde{X}_1(s)g_{1,X}(s)+Y_1g_{1,Y}(s)\mathrm{d}s.
\end{split}
\end{align*}
Similarly, for small $\kappa>0$ fixed independently of $\bar{r}_1,\delta, \eps, \lambda$, a fixed point argument on the space $\tilde{X}_1,Y_1\in C([z_1^\mathrm{in}, z_1^\mathrm{out}])$ with the norm 
\begin{align*}
    \|(\tilde{X}_1,Y_1)\|_{1,\kappa} = \sup_{z_1\in [z_1^\mathrm{in}, z_1^\mathrm{out}]}\left|\re^{\beta_1^\kappa(z_1^\mathrm{out},z_1)}\tilde{X}_1(z_1)  \right| +\sup_{z_1\in [z_1^\mathrm{in}, z_1^\mathrm{out}]}\left|\eps_1(z_1)^{-1}\re^{\beta_1^\kappa(z_1^\mathrm{out},z_1)}Y_1(z_1)  \right|,
\end{align*}
where
\begin{align*}
\beta_1^\kappa(z_1,z_0) &= \int_{z_0}^{z_1}(h_{1,X}(s)-\kappa)\mathrm{d}s.
\end{align*}
yields a solution  $(\tilde{X}_1^H,Y_1^H)(z_1)$ satisfying the estimates
\begin{align*}
|\tilde{X}_1^H(z_1)|\leq C|X_1^\mathrm{out}| \re^{\beta_1^\kappa(z_1,z_1^\mathrm{out})}, \qquad  |Y_1^H(z_1)|\leq C|X_1^\mathrm{out}|\eps_1(z_1) \re^{\beta_1^\kappa(z_1,z_1^\mathrm{out})},
\end{align*}
and in particular we find that
\begin{align*}
|Y_1^H(z_1^\mathrm{out})| &\leq C\frac{\eps}{\bar{r}_1^4}\left|X_1^\mathrm{out}\right|.
\end{align*}

Thus, we obtain a solution of~\eqref{eq:k1_transf} given by
\begin{align*}
\begin{split}
X_1(z_1) &=X_1^*(z_1)+\tilde{X}_1^I(z_1)+\tilde{X}_1^H(z_1),\\
Y_1(z_1) &=Y_1^I(z_1)+Y_1^H(z_1)
\end{split}
\end{align*}
satisfying
\begin{align*}
\begin{split}
X_1(z_1^\mathrm{out})&=X_1^*(z_1^\mathrm{out})+\tilde{X}_1^I(z_1^\mathrm{out})+\tilde{X}_1^H(z_1^\mathrm{out})+X^\mathrm{out}_1= X_1^*(z_1^\mathrm{out})+X^\mathrm{out}_1,\\
\left|Y_1(z_1^\mathrm{out})\right| &=\left|Y_1^I(z_1^\mathrm{out})+Y_1^H(z_1^\mathrm{out})\right|\leq C\frac{\eps}{\bar{r}_1^4}|X^\mathrm{out}_1|+ C_{\bar{r}_1} |\alpha \eps \lambda|,
\end{split}
\end{align*}
and 
\begin{align*}
\begin{split}
\left|X_1(z_1^\mathrm{in})-X_1^*(z_1^\mathrm{in})\right| &=\left|\tilde{X}_1^I(z_1^\mathrm{in})+\tilde{X}_1^H(z_1^\mathrm{in})\right|\leq C_{\bar{r}_1} \left(|X^\mathrm{out}_1|\re^{-\vartheta_\nu/\eps}+ |\alpha \lambda| \right),\\
Y_1(z_1^\mathrm{in}) &=Y_1^I(z_1^\mathrm{in})+Y_1^H(z_1^\mathrm{in})= 0,
\end{split}
\end{align*}
as claimed.
\end{proof}

We consider the condition on $\lambda$ present in Proposition~\ref{prop:K1_estimates}. Note that since both the real and imaginary parts of $\lambda_1$ decrease strictly in absolute value on the interval $[z_1^\mathrm{in}, z_1^\mathrm{out}]$, $\lambda_1(z_1)$ remains in the set $\Lambda_1(\delta,\mu,M)$ for $z_1\in [z_1^\mathrm{in}, z_1^\mathrm{out}]$. This restricts consideration to values of $\lambda$ satisfying $|\Re(\lambda)| \leq\mu \eps^{1/6}$ and $ |\Im(\lambda)| \leq M \eps^{1/6}$, where $\mu$ may need to be taken small; that is, we restrict to values of $\lambda\in \Lambda_{\mathrm{r},\eps}(\mu,M)$. The solutions constructed in Proposition~\ref{prop:K1_estimates} for such values of $\lambda$ can be matched with solutions from chart $\mathcal{K}_2$.

In order to extend the argument to values of $\lambda\in \Lambda_{\mathrm{c},\eps}(\mu,M)$ (that is, small, but $\eps$-independent values of $|\Im(\lambda)|$), we must consider solutions for which $\lambda_1(z_1)$ lies outside of $\Lambda_1(\delta,\mu, M)$ for some part of the interval $z_1\in[z_1^\mathrm{in}, z_1^\mathrm{out}]$. The corresponding solutions will satisfy slightly different estimates and will instead be matched with solutions from the chart $\mathcal{K}_4$. We focus on orbits which enter $\mathcal{K}_1$  via the section
\begin{align*}
  \Sigma^\mathrm{in}_{14} =\left\{\Im(\lambda_1) = M \delta^{1/6}\right\},
\end{align*} 
in particular, those solutions for which $\Im\lambda_1(z_{14}^\mathrm{in})=M \delta^{1/6}$ and $|\Re \lambda_1(z_{14}^\mathrm{in})|\leq \mu \delta^{1/6}$ for some $z_1^\mathrm{in}<z_{14}^\mathrm{in}<z_1^\mathrm{out}$. 
These solutions will be matched with solutions from the chart $\mathcal{K}_4$ (the case $\Im(\lambda_1) =-M\delta^{1/6}$ is similar), while those which enter via the boundary $|\Re(\lambda_1)| = \mu \delta^{1/6}$ are not relevant. We have the following

\begin{proposition}\label{prop:K1_estimates4}
Consider~\eqref{eq:k1_transf} with $(x_1,r_1,\eps_1)=(x_{1,\eps}, r_{1,\eps},\eps_{1,\eps})(z_1)$, and fix $M>0$. There exist $C, C_{\bar{r}_1},\mu>0$ such that the following holds. Given $X_1^\mathrm{out}\in \mathbb{C}$ and any $\lambda\in \Lambda_{\mathrm{c},\eps}(\mu, M)$ such that $\Im\lambda_1(z_{14}^\mathrm{in})=M \delta^{1/6}$ and $|\Re \lambda_1(z_{14}^\mathrm{in})|\leq \mu \delta^{1/6}$ for some $z_1^\mathrm{in}<z_{14}^\mathrm{in}<z_1^\mathrm{out}$, there exists a solution $(X_1,Y_1,\lambda_1): [z_{14}^\mathrm{in}, z_1^\mathrm{out}]\to \mathbb{C}^2\times \Lambda_1(\delta,\mu,M)$ of~\eqref{eq:k1_transf} satisfying 
\begin{align*}
X_1(z_1^\mathrm{out}) = X_1^*(z_1^\mathrm{out})+X_1^\mathrm{out} , \qquad Y_1(z_{14}^\mathrm{in}) = 0,
\end{align*}
as well as the estimates
\begin{align*}
\left|X_1(z_{14}^\mathrm{in}) - X_1^*(z_{14}^\mathrm{in})\right|\leq C_{\bar{r}_1} \left(|X^\mathrm{out}_1|+ |\alpha \lambda| \right), \qquad |Y_1(z_1^\mathrm{out})| \leq C\frac{\eps}{\bar{r}_1^4}|X^\mathrm{out}_1|+ C_{\bar{r}_1} |\alpha \eps \lambda|.
\end{align*}
\end{proposition}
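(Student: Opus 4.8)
The statement is the analogue of Proposition~\ref{prop:K1_estimates} for $\lambda\in\Lambda_{\mathrm{c},\eps}(\mu,M)$, i.e.\ for $\eps$-independent but small values of $|\Im(\lambda)|$. The key structural difference is that the entry point into chart $\mathcal{K}_1$ is no longer the section $\eps_1=\delta$ but rather the section $\Sigma^{\mathrm{in}}_{14}=\{\Im(\lambda_1)=M\delta^{1/6}\}$, where the solution arrives after a detour through chart $\mathcal{K}_4$; on the sub-interval $[z_{14}^{\mathrm{in}},z_1^{\mathrm{out}}]$ the coordinate $\lambda_1(z_1)$ now \emph{does} remain in $\Lambda_1(\delta,\mu,M)$, since by construction $|\Im\lambda_1(z_{14}^{\mathrm{in}})|=M\delta^{1/6}$, $|\Re\lambda_1(z_{14}^{\mathrm{in}})|\le\mu\delta^{1/6}$, and both $|\Re\lambda_1|$ and $|\Im\lambda_1|$ are strictly decreasing along the flow of $\lambda_1'=-\tfrac14\eps_1\lambda_1\bar F_1$. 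Thus the desingularized system~\eqref{eq:k1_transf_tilde}, the special solution $X_1^*$ from~\eqref{eq:k1_x1star}, and the decomposition $X_1=X_1^*+\tilde X_1$ are all available verbatim on the shorter interval. First I would set up the two integral equations (inhomogeneous part with $\tilde X_1(z_1^{\mathrm{out}})=0$, $Y_1(z_{14}^{\mathrm{in}})=0$; homogeneous part with $\tilde X_1(z_1^{\mathrm{out}})=X_1^{\mathrm{out}}$, $Y_1(z_{14}^{\mathrm{in}})=0$) exactly as in the proof of Proposition~\ref{prop:K1_estimates}, and solve them by the same fixed-point arguments in the norms $\|\cdot\|_{1,I}$ and $\|\cdot\|_{1,\kappa}$.

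The one point requiring care is the quantitative behavior of the factor $\re^{\beta_1^\kappa(z_1,z_1^{\mathrm{out}})}$ controlling the backward growth of the homogeneous solution. In the $\Lambda_{\mathrm{r},\eps}$ case one had $\Re(h_{1,X})\approx 2-\Re(\lambda_1^2)/(\theta_{\mathrm{lf}}c^3)\approx 2$ on the whole interval, and the length of the interval was $\mathcal{O}(1/\eps)$ in $z_1$, which produced the exponentially small factor $\re^{-\vartheta_\nu/\eps}$ in the estimate $|X_1(z_1^{\mathrm{in}})-X_1^*(z_1^{\mathrm{in}})|$. On the shorter interval $[z_{14}^{\mathrm{in}},z_1^{\mathrm{out}}]$ we cannot guarantee this exponential smallness: $\eps_1$ only drops from its value at $z_{14}^{\mathrm{in}}$ (which is $\mathcal{O}(1)$, not $\mathcal{O}(\delta)$, since we enter via $\Im(\lambda_1)=M\delta^{1/6}$ rather than via $\eps_1=\delta$) down to $\eps$, so the interval length in $z_1$ is merely $\mathcal{O}(1)$ — and hence $\re^{\beta_1^\kappa(z_{14}^{\mathrm{in}},z_1^{\mathrm{out}})}$ is only $\mathcal{O}(1)$, not exponentially small. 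This is precisely why the statement of the proposition replaces $|X^{\mathrm{out}}_1|\re^{-\theta_{\bar r_1}/\eps}$ by the weaker $|X^{\mathrm{out}}_1|$. So the proof proceeds identically but with the $\re^{-\vartheta_\nu/\eps}$ bounds systematically relaxed to $\mathcal{O}(1)$ bounds on $[z_{14}^{\mathrm{in}},z_1^{\mathrm{out}}]$; the $Y_1$-estimates, which rely on the prefactor $\eps_1(z_1^{\mathrm{out}})=\eps$ and not on interval length, carry over unchanged, giving $|Y_1(z_1^{\mathrm{out}})|\le C\eps\bar r_1^{-4}|X_1^{\mathrm{out}}|+C_{\bar r_1}|\alpha\eps\lambda|$.

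Concretely, the steps are: (i) record that $\lambda_1$ stays in $\Lambda_1(\delta,\mu,M)$ on $[z_{14}^{\mathrm{in}},z_1^{\mathrm{out}}]$ by monotonicity, so~\eqref{eq:k1_transf_tilde} and~\eqref{eq:k1_x1star} are valid there and $|X_1^*(z_1^{\mathrm{out}})|\le C_{\bar r_1}|\alpha\eps\lambda|$; (ii) solve the inhomogeneous integral equation by contraction in $\|\cdot\|_{1,I}$, obtaining $|\tilde X_1^I(z_{14}^{\mathrm{in}})|\le C_{\bar r_1}|\alpha\lambda|$ and $|Y_1^I(z_1^{\mathrm{out}})|\le C_{\bar r_1}|\alpha\eps\lambda|$, using that the convolution kernels $h_{1,\alpha},g_{1,\alpha}$ carry factors of $\eps_1\lambda_1$ and that $\int\eps_1\,\mathrm{d}z_1=\mathcal{O}(1)$; (iii) solve the homogeneous integral equation by contraction in $\|\cdot\|_{1,\kappa}$, obtaining $|\tilde X_1^H(z_{14}^{\mathrm{in}})|\le C_{\bar r_1}|X_1^{\mathrm{out}}|$ and $|Y_1^H(z_1^{\mathrm{out}})|\le C\eps\bar r_1^{-4}|X_1^{\mathrm{out}}|$; (iv) set $X_1=X_1^*+\tilde X_1^I+\tilde X_1^H$, $Y_1=Y_1^I+Y_1^H$, check the boundary conditions, and collect the estimates. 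I expect the only genuine obstacle to be bookkeeping in step (iii): one must verify that dropping the exponential smallness of the interval does not spoil the fixed-point contraction — it does not, because $\kappa$ is fixed independently of $\delta,\eps,\lambda$ and $\Re(h_{1,X}-\kappa)$ remains bounded away from zero (indeed close to $2-\kappa>0$) throughout $[z_{14}^{\mathrm{in}},z_1^{\mathrm{out}}]$, using $|\Re(\lambda_1^2)|\le\mu^2\delta^{1/3}\ll 1$ on $\Lambda_1(\delta,\mu,M)$ with $\mu$ small; the $Y_1$ feedback term carries a factor $\eps_1$ and is therefore harmless.
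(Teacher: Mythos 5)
Your proposal is correct and matches the paper's approach exactly: the paper's own proof of this proposition consists of the single remark that the argument is similar to that of Proposition~\ref{prop:K1_estimates}, and your elaboration — same decomposition $X_1=X_1^*+\tilde X_1$, same two fixed-point problems in the norms $\|\cdot\|_{1,I}$ and $\|\cdot\|_{1,\kappa}$ on the shortened interval, with the exponential factor $\re^{-\theta_{\bar r_1}/\eps}$ relaxed to $1$ because $\beta_1^\kappa(z_{14}^{\mathrm{in}},z_1^{\mathrm{out}})\le 0$ regardless of interval length — is precisely what that remark entails. One minor imprecision: the length of $[z_{14}^{\mathrm{in}},z_1^{\mathrm{out}}]$ is not always $\mathcal{O}(1)$ (it ranges from near $0$ up to $\mathcal{O}(1/\eps)$ depending on $\Im(\lambda)$), but since your argument only uses nonpositivity of the backward-decay exponent rather than a quantitative lower bound on the interval, this does not affect the proof.
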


\begin{proof}
The argument is similar to the proof of Proposition~\ref{prop:K1_estimates}. 
\end{proof}

\subsubsection{Chart \texorpdfstring{$\mathcal{K}_2$}{K2}}\label{sec:chartK2}
We use the change of coordinates~\eqref{eq:chartK2_coords} and set $z_2 = r_2^2\zeta$. Denoting $'=\frac{\mathrm{d}}{\mathrm{d}z_2}$, we arrive at the system
\begin{align}
\begin{split}\label{eq:K2eqn}
x_2' &= -x_2^2+y_2+\mathcal{O}(r_2^2),\\
y_2' &= 1+\mathcal{O}(r_2^2),\\
X_2' &=X_2\left(-2x_2-\frac{\lambda_2^2}{\theta_\mathrm{lf}c^3} +\mathcal{O}(r_2^2,r_2\lambda_2)\right)+Y_2\left(1+\mathcal{O}(r_2^2,r_2\lambda_2)\right)\\
&\qquad +\alpha\left( -x_2^2+y_2+\mathcal{O}(r_2^2)\right)\left(-\frac{\lambda_2^2}{\theta_\mathrm{lf}c^3} +\mathcal{O}(r_2\lambda_2)\right)+\mathcal{O}(\alpha r_2\lambda_2), \\
Y_2' &=\mathcal{O}(r_2^2X_2, r_2^4Y_2, \alpha r_2^3\lambda_2),
\end{split}
\end{align}
which we consider on the interval $z_2\in [z_2^\mathrm{in}, z_2^\mathrm{out}]$, on which the wave-train solution is represented by a solution $(x_{\eps,2}, y_{\eps,2})$ in the $\mathcal{K}_2$-coordinates which traverses between the sections
\begin{align*}
\Sigma_2^\mathrm{in}&=\left\{x_2=\delta^{-1/3}\right\},\\
\Sigma_2^\mathrm{out}&=\left\{y_2=\delta^{-2/3}\right\}.
\end{align*}
We note that $\lambda_2=\lambda_1\delta^{-1/6}$ so that $\lambda_2$ satisfies $|\Re(\lambda_2)| \leq\mu, |\Im(\lambda_2)| \leq M$. When $r_2=0$, we arrive at the system 
\begin{align}
\begin{split}\label{eq:K2eqn_0}
x_2' &= -x_2^2+y_2,\\
y_2' &= 1,\\
X_2' &=X_2\left(-2x_2-\frac{\lambda_2^2}{\theta_\mathrm{lf}c^3}\right)+Y_2 -\alpha\frac{\lambda_2^2}{\theta_\mathrm{lf}c^3}\left( -x_2^2+y_2\right), \\
Y_2' &=0,
\end{split}
\end{align}
which is a rescaled version of the toy model which was analyzed in~\S\ref{sec:toymodel}. The first pair of equations have dynamics organized by the unique solution $x_R$ of the Riccati equation
\begin{align}\label{eq:riccati}
x' = -x^2+z_2,
\end{align}
satisfying 
\begin{align*}
\begin{split}
z_2&\sim x_R^2 +\frac{1}{2x_R}+\mathcal{O}\left( \frac{1}{x_R^4} \right),\qquad x_R\to-\infty,\\
z_2&\sim -\Omega_0+\frac{1}{x_R}+\mathcal{O}\left( \frac{1}{x_R^3} \right),\qquad x_R\to\infty,
\end{split}
\end{align*}
where $\Omega_0$ is the smallest positive zero of $\bar{J}(z)\coloneqq \smash{J_{-\frac{1}{3}}(2z^{3/2}/3)+J_{\frac{1}{3}}(2z^{3/2}/3)}$, where $\smash{J_{\pm\frac{1}{3}}}$ are Bessel functions of the first kind. For small $r_2>0$, this solution perturbs (in a regular fashion) to a solution of the $(x_2,y_2)$-system in~\eqref{eq:K2eqn} satisfying $(x_2,y_2)(z_2)=(x_R(z_2),z_2)+\mathcal{O}(r_2^2)$ corresponding to a slice of the manifold $\mathcal{M}^{\lr,+}$ in the $\mathcal{K}_2$-coordinates; see~\cite[Remark 4.3]{CSosc}. By Lemma~\ref{lem:fold_existence}, the wave-train solution is exponentially close to this solution, and is thus represented in the $\mathcal{K}_2$-coordinates by a solution $(x_{\eps,2}, y_{\eps,2})=(x_R(z_2),z_2)+\mathcal{O}(r_2^2)$ for small $r_2$ on the interval $z_2\in[z_2^\mathrm{in},z_2^\mathrm{out}]$. Hence in the limit $r_2\to0$, we identify the wave-train solution $(x_{\eps,2}, y_{\eps,2})$ in the $\mathcal{K}_2$-coordinates with the solution $(x_2,y_2) = (x_R(z_2),z_2)$, and we write the linearized problem as 
\begin{align*}
\begin{split}
X_2' &=X_2\left(-2x_R-\frac{\lambda_2^2}{\theta_\mathrm{lf}c^3}\right)+Y_2 -\alpha\frac{\lambda_2^2}{\theta_\mathrm{lf}c^3}x_R', \\
Y_2' &=0,
\end{split}
\end{align*}
When $Y_2=0$, the $X_2$-equation has a unique solution $X_2=X_2^*(z_2)$ which is bounded as $z_2\to\infty$, given by 
\begin{align*}
\begin{split}
X_2^*(z_2)&=-\alpha\frac{\lambda_2^2}{\theta_\mathrm{lf},c^3}\int_\infty^{z_2}\exp\left(\int_s^{z_2}-2x_R(\bar{s})-\frac{\lambda_2^2}{\theta_\mathrm{lf}c^3}\mathrm{d}\bar{s}  \right)x_R'(s)\mathrm{d}s,
\end{split}
\end{align*}
which corresponds to the solution $X_1^*$ from the chart $\mathcal{K}_1$, now represented in the $\mathcal{K}_2$-coordinates. We set $X_2 = X_2^*+\tilde{X}_2$ which results in the equation
\begin{align}
\begin{split}\label{eq:K2eqn_tilde}
x_2' &= -x_2^2+y_2+\mathcal{O}(r_2^2),\\
y_2' &= 1+\mathcal{O}(r_2^2),\\
\tilde{X}_2' &=\tilde{X}_2\left(-2x_2-\frac{\lambda_2^2}{\theta_\mathrm{lf}c^3} +\mathcal{O}(r_2^2,r_2\lambda_2)\right)+Y_2\left(1+\mathcal{O}(r_2^2,r_2\lambda_2)\right)+\mathcal{O}(\alpha r_2\lambda_2), \\
Y_2' &=\mathcal{O}\left(r_2^2\tilde{X}_2, r_2^4Y_2, \alpha r_2^3\lambda_2, \alpha r_2^2\lambda_2^2\right).
\end{split}
\end{align}

We have the following.

\begin{proposition}\label{prop:K2_estimates}
Consider~\eqref{eq:K2eqn} with $(x_2,y_2)=(x_{2,\eps}, y_{2,\eps})(z_2)$. Given $(X_2^\mathrm{out}, Y_2^\mathrm{out})\in \mathbb{C}^2$ and any $\lambda_2\in\mathbb{C}$ satisfying $|\Re(\lambda_2)| <\mu, |\Im(\lambda_2)| <M$, there exists a solution $(X_2,Y_2): [z_2^\mathrm{in}, z_2^\mathrm{out}]\to\mathbb{C}^2$ of~\eqref{eq:K2eqn} satisfying 
\begin{align*}
X_2(z_2^\mathrm{out}) = X_2^\mathrm{out}+X_2^*(z_2^\mathrm{out}), \qquad Y_2(z_2^\mathrm{out}) = 0,
\end{align*}
as well as the estimates
\begin{align*}
X_2(z_2^\mathrm{in}) &= X_2^\mathrm{out}\exp\left(\int_{z_2^\mathrm{out}}^{z_2^\mathrm{in}}-2x_R(\tilde{z})-\frac{\lambda_2^2}{\theta_\mathrm{lf}c^3}\mathrm{d}\tilde{z}\right)+X_2^*(z_2^\mathrm{in}) +\mathcal{O}\left(r_2^2|X_2^\mathrm{out}|, r_2\lambda_2|X_2^\mathrm{out}|, \alpha r_2\lambda_2 \right), \\
Y_2(z_2^\mathrm{in}) &= \mathcal{O}\left(r_2^2|X_2^\mathrm{out}|, \alpha r_2^2\lambda_2^2, \alpha r_2^3\lambda_2\right),
\end{align*}
for all sufficiently small $r_2>0$.
\end{proposition}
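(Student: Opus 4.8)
\textbf{Proof sketch for Proposition~\ref{prop:K2_estimates}.}
The plan is to solve the boundary-value problem for~\eqref{eq:K2eqn_tilde} by a variation-of-constants/fixed-point argument, treating the $r_2^2$- and $r_2\lambda_2$-terms as regular perturbations of the limiting system~\eqref{eq:K2eqn_0}. Throughout, the reference solution $(x_{2,\eps},y_{2,\eps})(z_2) = (x_R(z_2),z_2) + \mathcal{O}(r_2^2)$ is available from Lemma~\ref{lem:fold_existence} and the discussion following~\eqref{eq:K2eqn_0}, and the particular solution $X_2^*(z_2) = \mathcal{O}(\alpha\lambda_2^2)$ has already been isolated, so the remaining unknowns $(\tilde X_2, Y_2)$ satisfy the inhomogeneous linear system~\eqref{eq:K2eqn_tilde} with inhomogeneities of size $\mathcal{O}(\alpha r_2\lambda_2)$ and $\mathcal{O}(\alpha r_2^2\lambda_2^2, \alpha r_2^3\lambda_2)$. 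The key structural observation, inherited from the toy-model analysis of~\S\ref{sec:toymodel}, is that the $\tilde X_2$-direction is \emph{exponentially contracting backward} in $z_2$ along $x_R$: since $x_R(z_2) \to +\infty$ as $z_2 \to \infty$ and $x_R$ is monotone, the principal-part evolution $\exp\!\left(\int_s^{z_2} -2x_R(\tilde z) - \tfrac{\lambda_2^2}{\theta_\mathrm{lf}c^3}\, \de\tilde z\right)$ is uniformly bounded for $z_2 \le s$ (with $\Re(\lambda_2^2)$ only a bounded perturbation of the dominant $-2x_R$ term on the compact-in-$r_2^{-1}$ interval $[z_2^\mathrm{in}, z_2^\mathrm{out}]$), while $Y_2$ is nearly constant (its derivative is $\mathcal{O}(r_2^2)$).

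First I would set up the integral formulation: impose $\tilde X_2(z_2^\mathrm{out}) = X_2^\mathrm{out}$ and $Y_2(z_2^\mathrm{out}) = 0$, and write
\begin{align*}
\tilde X_2(z_2) &= X_2^\mathrm{out}\,\mathcal{E}(z_2, z_2^\mathrm{out}) + \int_{z_2^\mathrm{out}}^{z_2} \mathcal{E}(z_2,s)\Big[ Y_2(s)\big(1 + \mathcal{O}(r_2^2, r_2\lambda_2)\big) + \mathcal{O}(\alpha r_2\lambda_2)\Big]\,\de s,\\
Y_2(z_2) &= \int_{z_2^\mathrm{out}}^{z_2} \mathcal{O}\!\left(r_2^2\tilde X_2(s), r_2^4 Y_2(s), \alpha r_2^3\lambda_2, \alpha r_2^2\lambda_2^2\right)\de s,
\end{align*}
where $\mathcal{E}(z_2,s) = \exp\!\big(\int_s^{z_2} (-2x_2(\tilde z) - \tfrac{\lambda_2^2}{\theta_\mathrm{lf}c^3} + \mathcal{O}(r_2^2, r_2\lambda_2))\,\de\tilde z\big)$. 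I would run a contraction mapping on $C([z_2^\mathrm{in}, z_2^\mathrm{out}])^2$ with a weighted sup-norm in which $\tilde X_2$ carries the backward-growth weight of $\mathcal{E}$ and $Y_2$ is measured in the plain sup-norm; the smallness that makes the map a contraction comes from the factor $r_2^2$ multiplying $\tilde X_2$ inside the $Y_2$-equation, combined with the length of the integration interval, which is $\mathcal{O}(r_2^{-2})$ in the original $\zeta$-variable but $\mathcal{O}(1)$ — more precisely logarithmically large, like $\log\delta$ — in $z_2$ after the Riccati rescaling. (Here one uses that $z_2^\mathrm{in}$ corresponds to $x_2 = \delta^{-1/3}$ where $x_R \sim \delta^{-1/3}$, and $z_2^\mathrm{out}$ to $y_2 = \delta^{-2/3}$, so $z_2^\mathrm{out} - z_2^\mathrm{in} = \mathcal{O}(\delta^{-2/3})$, a $\delta$- but not $\eps$- or $r_2$-dependent constant.) Once the fixed point exists, evaluating the two integral equations at $z_2 = z_2^\mathrm{in}$ and bookkeeping the $\mathcal{O}$-terms produces exactly the claimed expressions: the leading part of $X_2(z_2^\mathrm{in})$ is $X_2^\mathrm{out}\exp\!\big(\int_{z_2^\mathrm{out}}^{z_2^\mathrm{in}} -2x_R(\tilde z) - \tfrac{\lambda_2^2}{\theta_\mathrm{lf}c^3}\,\de\tilde z\big) + X_2^*(z_2^\mathrm{in})$, with all corrections collected into $\mathcal{O}(r_2^2|X_2^\mathrm{out}|, r_2\lambda_2|X_2^\mathrm{out}|, \alpha r_2\lambda_2)$, and $Y_2(z_2^\mathrm{in}) = \mathcal{O}(r_2^2|X_2^\mathrm{out}|, \alpha r_2^2\lambda_2^2, \alpha r_2^3\lambda_2)$ because $Y_2$ starts at $0$ at $z_2^\mathrm{out}$ and its derivative is of that size.

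The main obstacle is controlling the evolution operator $\mathcal{E}(z_2,s)$ uniformly: one needs that the $-\lambda_2^2/(\theta_\mathrm{lf}c^3)$ term and the $\mathcal{O}(r_2\lambda_2)$ perturbation do not destroy the backward boundedness furnished by $-2\int x_R$. Since $|\lambda_2| \le \sqrt{\mu^2 + M^2}$ is $\mathcal{O}(1)$ and the interval length in $z_2$ is an ($r_2$-independent) constant depending only on $\delta$, the extra exponential factor $\exp(|\lambda_2|^2 |z_2 - s|/(\theta_\mathrm{lf}c^3))$ is bounded by a $\delta$-dependent constant, so this is a matter of careful constant-tracking rather than a genuine difficulty; the only real care needed is to ensure the constants are uniform in $r_2 \to 0$ and in $\lambda_2$ over the specified box, which follows from the regular (in $r_2$) dependence of the right-hand side of~\eqref{eq:K2eqn} and the fact that $x_R$ and its relevant integrals are fixed, $r_2$-independent objects. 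A secondary technical point is that $x_R(z_2^\mathrm{in}) \sim \delta^{-1/3}$ is large, so $\int_{z_2^\mathrm{out}}^{z_2^\mathrm{in}} -2x_R$ is a large \emph{negative} quantity and $\mathcal{E}(z_2^\mathrm{in}, z_2^\mathrm{out})$ is correspondingly small; this is harmless — it simply means the leading $X_2^\mathrm{out}$-contribution to $X_2(z_2^\mathrm{in})$ is exponentially small in $\delta^{-1/3}$ — but one should keep it in the statement in the stated form rather than absorbing it, since it is needed for the matching with chart $\mathcal{K}_1$ in~\S\ref{sec:chartK1}.
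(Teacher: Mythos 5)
Your proposal is correct and takes essentially the same route as the paper: the paper's proof of this proposition is a one‑liner invoking ``a regular perturbation argument applied to~\eqref{eq:K2eqn_tilde} for sufficiently small $r_2$,'' resting on exactly the two facts you isolate, namely $(x_{2,\eps},y_{2,\eps})(z_2)=(x_R(z_2),z_2)+\mathcal{O}(r_2^2)$ and the $r_2$- and $\lambda_2$-independent bound on $|z_2^{\mathrm{in}}-z_2^{\mathrm{out}}|$; your variation-of-constants and contraction setup is just the explicit implementation of that argument.

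One slip in your justification of the backward boundedness of $\mathcal{E}$ should be fixed, though it does not affect the conclusion: since $x_R=\mathrm{Ai}'/\mathrm{Ai}$, one has $x_R(z_2)\to-\infty$ (not $+\infty$) as $z_2\to\infty$ and $x_R(z_2)\to+\infty$ as $z_2\to-\Omega_0^{+}$, so the backward evolution from $z_2^{\mathrm{out}}$ contracts through the portion of the interval where $x_R<0$ and \emph{expands} near $z_2^{\mathrm{in}}$ where $x_R>0$; the expansion is controlled by $\mathcal{E}(z_2,s)=\bigl(\mathrm{Ai}(s)/\mathrm{Ai}(z_2)\bigr)^2\re^{-\lambda_2^2(z_2-s)/(\theta_{\mathrm{lf}}c^3)}\le C\delta^{-2/3}\re^{C_\delta}$ for $z_2\le s$, a $\delta$-dependent but $r_2$- and $\lambda_2$-independent constant, which is all the contraction argument needs (and your final paragraph already uses the correct picture, since it is the large negative value of $\int_{z_2^{\mathrm{out}}}^{z_2^{\mathrm{in}}}-2x_R$, dominated by the stretch near $z_2^{\mathrm{out}}$ where $x_R\approx-\sqrt{z_2}$, that makes $\mathcal{E}(z_2^{\mathrm{in}},z_2^{\mathrm{out}})$ exponentially small). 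Likewise, $|z_2^{\mathrm{out}}-z_2^{\mathrm{in}}|=\mathcal{O}(\delta^{-2/3})$ as in your parenthetical, not $\mathcal{O}(\log\delta)$ as in the phrase preceding it.
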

\begin{proof}
Using the fact that $(x_{2,\eps},y_{2,\eps})(z_2) = (x_R(z_2),-z_2)+\mathcal{O}(r_2^2)$, and the fact that the transition time $|z_2^\mathrm{in}-z_2^\mathrm{out}|$ is bounded independently of $r_2,\lambda_2$, the estimates follow from a regular perturbation argument applied to~\eqref{eq:K2eqn_tilde} for sufficiently small $r_2$.
\end{proof}

\subsubsection{Chart \texorpdfstring{$\mathcal{K}_4$}{K4}}\label{sec:chartK4}
We transform to the $\mathcal{K}_4$-coordinates~\eqref{eq:chartK4_coords}, desingularize through the scaling $\mathrm{d}z_4 = r_4^2\mathrm{d}\zeta$, and denote ${}^\prime = \frac{\mathrm{d}}{\mathrm{d}z_4}$, to obtain the system
\begin{align}
\begin{split}\label{eq:K4eqn}
x_4' &= F_4(x_4,y_4,\eps_4,r_4),\\
y_4' &= \eps_4\left(1+\mathcal{O}(r_4^2)\right),\\
X_4' &=X_4\left(-2x_4-\frac{(\lambda_4+\ri)^2}{\theta_\mathrm{lf}c^3} +\mathcal{O}(r_4)\right)+Y_4\left(1+\mathcal{O}(r_4)\right)\\
&\qquad +\alpha F_4(x_4,y_4,\eps_4,r_4)\left(-\frac{(\lambda_4+\ri)^2}{\theta_\mathrm{lf}c^3} +\mathcal{O}(r_4)\right)+\mathcal{O}(\alpha r_4\eps_4), \\
Y_4' &=\mathcal{O}(r_4^2\eps_4X_4, r_4^4\eps_4Y_4, \alpha r_4^3\eps_4),
\end{split}
\end{align}
where $F_4(x_4,y_4,\eps_4,r_4) = -x_4^2+y_4+\mathcal{O}(r_4^2)$. The chart $\mathcal{K}_4$ concerns values of $\eps_4\leq M^{-6}$. We fix a small constant $\delta_4>0$ and first consider~\eqref{eq:K4eqn} for values of $\delta_4\leq \eps_4\leq M^{-6}$ for sufficiently small values of $r_4$ on the interval $[z_4^\mathrm{in}, z_4^\mathrm{out}]$ encompassing the transition between the sections 
\begin{align*}
\Sigma_4^\mathrm{in}&=\left\{x_4 = M^{-2}\delta^{-1/3}  \right\}, \qquad \Sigma_4^\mathrm{out}=\left\{y_4 = M^{-4}\delta^{-2/3}  \right\}.
\end{align*}
Note that the section $\Sigma_4^\mathrm{out}$ corresponds to the section $\Sigma^\mathrm{in}_{14}$ from the chart $\mathcal{K}_1$; see~\S\ref{sec:chartK1}. Setting $r_4=0$, we obtain the system
\begin{align*}
\begin{split}
x_4' &= -x_4^2+y_4,\\
y_4' &= \eps_4,\\
X_4' &=X_4\left(-2x_4-\frac{(\lambda_4+\ri)^2}{\theta_\mathrm{lf}c^3}\right)+Y_4-\alpha\frac{(\lambda_4+\ri)^2}{\theta_\mathrm{lf}c^3} \left(-x_4^2+y_4\right),\\
Y_4' &=0,
\end{split}
\end{align*}
which corresponds to the system~\eqref{eq:K2eqn_0} transformed to the $\mathcal{K}_4$-coordinates. The $(x_4,y_4)$ system corresponds to a rescaled Riccati equation~\eqref{eq:riccati} given by
\begin{align*}
x_4' &= -x_4^2+\eps_4z_4,
\end{align*}
which admits the rescaled distinguished solution $x_4(z_4)=\eps_4^{1/3}x_R(\eps_4^{1/3}z_4)$. Similarly to the analysis in the $\mathcal{K}_2$ in~\S\ref{sec:chartK2}, the wave-train solution can be represented by a solution $(x_4,y_4)=(x_{4,\eps}, y_{4,\eps})(z_4)$ of~\eqref{eq:K4eqn} satisfying $\smash{(x_{4,\eps}, y_{4,\eps})(z_4)=(\eps_4^{1/3}x_R(\eps_4^{1/3}z_4), \eps_4z_4)+\mathcal{O}(r_4^2)}$.

When $Y_4=0$ we can construct the unique solution $X_4 = X_4^*(z_4)$ given by
\begin{align*}
X_4^*(z_4)\coloneqq  -\alpha\frac{(\lambda_4+\ri)^2}{\theta_\mathrm{lf}c^3}\int_\infty^{z_4}\exp\left( \int_s^{z_4} -2\eps_4^{1/3}x_R(\eps_4^{1/3}\tilde{s})-\frac{(\lambda_4+\ri)^2}{\theta_\mathrm{lf}c^3} \mathrm{d}\tilde{s} \right) \eps_4^{2/3}x_R'\big(\eps_4^{1/3}s\big)\mathrm{d}s,
\end{align*}
which corresponds to the solution $X_1^*$ from the chart $\mathcal{K}_1$.
We set $X_4 = X_4^*+\tilde{X}_4$ which results in the equation
\begin{align}
\begin{split}\label{eq:K4eqn_tilde}
x_4' &= F_4(x_4,y_4,\eps_4,r_4),\\
y_4' &= \eps_4\left(1+\mathcal{O}(r_4^2)\right),\\
\tilde{X}_4' &=\tilde{X}_4\left(-2x_4-\frac{(\lambda_4+\ri)^2}{\theta_\mathrm{lf}c^3} +\mathcal{O}(r_4)\right)+Y_4\left(1+\mathcal{O}(r_4)\right)+\mathcal{O}(\alpha r_4), \\
Y_4' &=\mathcal{O}(r_4^2\eps_4\tilde{X}_4, r_4^4\eps_4Y_4, \alpha r_4^2).
\end{split}
\end{align}

We have the following.

\begin{proposition}\label{prop:K4_estimates}
Consider~\eqref{eq:K4eqn} with $(x_4,y_4)=(x_{4,\eps}, y_{4,\eps})(z_4)$, and fix $M>0$ and $\delta_4>0$ sufficiently small. For all sufficiently small $r_4>0$, the following holds. Given $(X_4^\mathrm{out}, Y_4^\mathrm{out})\in \mathbb{C}^2$, any $\eps_4$ satisfying $\delta_4\leq \eps_4\leq M^{-6}$ and any sufficiently small $\lambda_4\in\mathbb{R}$, there exists a solution $(X_4,Y_4): [z_4^\mathrm{in}, z_4^\mathrm{out}]\to\mathbb{C}^2$ of~\eqref{eq:K4eqn} satisfying 
\begin{align*}
X_4(z_4^\mathrm{out}) = X_4^\mathrm{out}+X_4^*(z_4^\mathrm{out}), \qquad Y_4(z_4^\mathrm{out}) = 0,
\end{align*}
as well as the estimates
\begin{align*}
X_4(z_4^\mathrm{in}) &= X_4^\mathrm{out}\exp\left( \int_{z_4^\mathrm{out}}^{z_4^\mathrm{in}} -2\eps_4^{1/3}x_R(\eps_4^{1/3}\tilde{s})-\frac{(\lambda_4+\ri)^2}{\theta_\mathrm{lf}c^3} \mathrm{d}\tilde{s} \right)+X_4^*(z_4^\mathrm{in}) +\mathcal{O}\left(r_4|X_4^\mathrm{out}|, \alpha r_4 \right), \\
Y_4(z_4^\mathrm{in}) &= \mathcal{O}\left(r_4^2|X_4^\mathrm{out}|, \alpha r_4^2\right).
\end{align*}

\end{proposition}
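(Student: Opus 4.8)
\textbf{Proof proposal for Proposition~\ref{prop:K4_estimates}.}
The plan is to treat~\eqref{eq:K4eqn_tilde} on the fixed compact interval $[z_4^{\mathrm{in}},z_4^{\mathrm{out}}]$ as a regular perturbation (in the parameter $r_4$) of the decoupled limiting problem obtained by setting $r_4 = 0$, exactly as was done for chart $\mathcal{K}_2$ in Proposition~\ref{prop:K2_estimates}. The key structural point is that, by Lemma~\ref{lem:fold_existence} and the discussion in~\S\ref{sec:chartK4}, the wave-train solution in the $\mathcal{K}_4$-coordinates is $(x_{4,\eps},y_{4,\eps})(z_4)=(\eps_4^{1/3}x_R(\eps_4^{1/3}z_4),\eps_4 z_4)+\mathcal{O}(r_4^2)$, so that the coefficients of the linear $\tilde X_4,Y_4$-system are $\mathcal{O}(r_4)$-close to those of the limiting system, and the transition time $|z_4^{\mathrm{in}}-z_4^{\mathrm{out}}|$ is bounded independently of $r_4$ and $\lambda_4$ once $\delta_4\le\eps_4\le M^{-6}$ is fixed (since the $(x_4,y_4)$-flow between the sections $\Sigma_4^{\mathrm{in}}$ and $\Sigma_4^{\mathrm{out}}$ is a regular flow on a compact region away from degeneracies for $\eps_4$ in this range). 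Because $\eps_4$ is bounded \emph{below} by $\delta_4$ here, there is no singular behaviour to resolve: unlike chart $\mathcal{K}_1$, the only smallness being exploited is that of $r_4$.

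First I would pin down the limiting dynamics. With $r_4=0$ and $Y_4=0$, the $\tilde X_4$-equation is the scalar linear inhomogeneous ODE whose bounded-as-$z_4\to\infty$ particular solution is precisely $X_4^*(z_4)$ defined in~\S\ref{sec:chartK4}; subtracting it yields the homogeneous decay $\exp\bigl(\int_{z_4^{\mathrm{out}}}^{z_4}(-2\eps_4^{1/3}x_R(\eps_4^{1/3}\tilde s)-(\lambda_4+\ri)^2/(\theta_\mathrm{lf}c^3))\,\de\tilde s\bigr)$ for the part carrying the exit datum $X_4^{\mathrm{out}}$, while $Y_4\equiv 0$ in the limit. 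Here one uses $\Re\bigl((\lambda_4+\ri)^2\bigr)=\lambda_4^2-1<0$ for $|\lambda_4|<1$ to see that the $(\lambda_4+\ri)^2$-contribution is benign (in fact contractive), and the integrability of $x_R'$ against the exponential kernel (from the asymptotics of $x_R$ recalled in~\S\ref{sec:toymodel}) to see that $X_4^*$ is well-defined and $\mathcal{O}(\alpha)$. Then I would set up the integral-equation formulation of~\eqref{eq:K4eqn_tilde} on $C([z_4^{\mathrm{in}},z_4^{\mathrm{out}}])^2$ with the natural sup-norms (an unweighted norm suffices since the interval is compact), using the variation-of-constants representation anchored at $z_4^{\mathrm{out}}$ for $\tilde X_4$ and at $z_4^{\mathrm{out}}$ for $Y_4$, and run a contraction-mapping argument: the off-diagonal and $r_4$-dependent terms all carry a factor of $r_4$ (or $\alpha r_4$), so for $r_4$ small the map is a uniform contraction with fixed point depending linearly on $(X_4^{\mathrm{out}},Y_4^{\mathrm{out}},\alpha)$. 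Reading off the values of the fixed point at $z_4^{\mathrm{in}}$ and comparing to the limiting solution gives the two displayed estimates, with the $\mathcal{O}(r_4|X_4^{\mathrm{out}}|,\alpha r_4)$ and $\mathcal{O}(r_4^2|X_4^{\mathrm{out}}|,\alpha r_4^2)$ remainders coming respectively from the $\mathcal{O}(r_4)$-perturbations in the $\tilde X_4$-equation and the $\mathcal{O}(r_4^2\eps_4)$-size of the $Y_4$-equation's right-hand side integrated over a bounded interval.

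The only mild subtlety — and the step I would be most careful about — is bookkeeping the dependence of all constants on $M$ and $\delta_4$ versus $r_4$: the bound on the transition time $|z_4^{\mathrm{in}}-z_4^{\mathrm{out}}|$, the size of $\|x_R\|$ and $\|x_R'\|$ on the relevant (compact, $\eps_4$-dependent-length-but-uniformly-bounded) range, and the constant in the contraction all depend on $M$ and $\delta_4$ but must be uniform in $r_4$, $\lambda_4$, and $\eps_4\in[\delta_4,M^{-6}]$; this is exactly the reason the proposition fixes $M$ and $\delta_4$ first and then asks for $r_4$ small. Since all of this is a routine compact-interval regular-perturbation argument, I would simply state, as in the paper's proof of Proposition~\ref{prop:K2_estimates}, that ``the estimates follow from a regular perturbation argument applied to~\eqref{eq:K4eqn_tilde} for sufficiently small $r_4$,'' after recording the identification of $(x_{4,\eps},y_{4,\eps})$ with $(\eps_4^{1/3}x_R(\eps_4^{1/3}z_4),\eps_4 z_4)$ in the limit and the explicit form of $X_4^*$. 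There is no genuine obstacle here; the work has already been front-loaded into the choice of blow-up weights and charts.
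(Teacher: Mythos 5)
Your proposal is correct and follows essentially the same route as the paper, which likewise identifies $(x_{4,\eps},y_{4,\eps})$ with $(\eps_4^{1/3}x_R(\eps_4^{1/3}z_4),\eps_4 z_4)$ in the limit $r_4\to 0$, uses the lower bound $\eps_4\geq\delta_4$ to bound the transition time uniformly, and concludes by a regular perturbation argument applied to~\eqref{eq:K4eqn_tilde} for $r_4$ small. Your added detail on the contraction setup and the bookkeeping of constants in $M$ and $\delta_4$ is consistent with, and merely elaborates, the paper's one-line proof.
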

\begin{proof}
Using the fact that $(x_{4,\eps},y_{4,\eps})(z_2) = (\eps^{1/3}x_R(\eps^{1/3}z_4),\eps_4z_4)$ when $r_4=0$, and the fact that the transition time $|z_4^\mathrm{in}-z_4^\mathrm{out}|$ is bounded independently of $r_4$, the estimates follow from a regular perturbation argument applied to~\eqref{eq:K4eqn_tilde} for sufficiently small $r_4$.
\end{proof}

Proposition~\ref{prop:K4_estimates} relies on a perturbation argument for sufficiently small $r_4$; in particular, one needs to be able to bound (independently of $r_4$) the transition time, which behaves as $\eps_4^{-1}$. For small values of $0<\eps_4\leq \delta_4\ll1$, we note that the existence problem in the $\mathcal{K}_4$-coordinates
\begin{align*}
\begin{split}
x_4' &= F_4(x_4,y_4,\eps_4,r_4),\\
y_4' &= \eps_4\left(1+\mathcal{O}(r_4^2)\right)
\end{split}
\end{align*}
amounts to slow passage through a fold point (in reverse time) with small parameter $\eps_4$. In the limit $\eps_4\to0$, for sufficiently small $r_4>0$ the critical manifold $F_4(x_4,y_4,0,r_4)=0$ in the region $|x_4|\leq M^{-2}\delta^{-1/3}, |y_4|\leq M^{-4}\delta^{-2/3}$ takes the form of an upward-facing parabola. The union of the left branch of this parabola with the positive $x$-axis corresponds to the manifold $\mathcal{M}^{\lr,+}_0$ in the $\mathcal{K}_4$-coordinates. This manifold perturbs for small $\eps_4>0$ using blow-up desingularization techniques, much like the system~\eqref{eq:fold_normalform}. The manifold $\mathcal{M}^{\lr,+}$ therefore has a natural representation in these coordinates as the corresponding union of these manifolds for small $\eps_4$. By Lemma~\ref{lem:fold_existence}, the wave train is again represented by a solution $(x_4,y_4)=(x_{4,\eps}, y_{4,\eps})(z_4)$ which is exponentially close to $\mathcal{M}^{\lr,+}$ in these coordinates. We have the following.
\begin{proposition}\label{prop:K4_estimates2}
Consider~\eqref{eq:K4eqn} with $(x_4,y_4)=(x_{4,\eps}, y_{4,\eps})(z_4)$. There exist $\delta_4, \theta_4>0$ such that for all $0<\eps_4\leq \delta_4$ and all  sufficiently small $r_4,\lambda_4>0$, the following holds. Given $X_4^\mathrm{out}\in \mathbb{C}^2$, there exists a solution $(X_4,Y_4): [z_4^\mathrm{in}, z_4^\mathrm{out}]\to\mathbb{C}^2$ of~\eqref{eq:K4eqn} satisfying 
\begin{align*}
X_4(z_4^\mathrm{out}) = X_4^\mathrm{out}, \qquad Y_4(z_4^\mathrm{out}) = 0,
\end{align*}
as well as the estimates
\begin{align*}
X_4(z_4^\mathrm{in}) &= X_4^\dagger(z_4^\mathrm{in};r_4)+\mathcal{O}\left(\alpha r_4\eps_4, r_4^2\eps_4|X_4^\mathrm{out}|,\re^{-\theta_4/\eps_4}|X_4^\mathrm{out}| \right), \\
Y_4(z_4^\mathrm{in}) &= \mathcal{O}\left(\alpha r_4^2\eps_4, r_4^2\eps_4|X_4^\mathrm{out}|\right),
\end{align*}
for all $0<\eps_4\leq \delta_4$, where $X_4^\dagger(z_4;r_4)$ is a solution which satisfies
\begin{align*}
    X_4^\dagger(z_4^\mathrm{in};r_4) &=\alpha x_{4,\eps}'(z_4^\mathrm{in})\left(-1+\mathcal{O}\left(r_4,\eps_4^{1/3}\right)\right
)+ \mathcal{O}(\alpha r_4 \eps_4).
\end{align*}
\end{proposition}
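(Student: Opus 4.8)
The plan is to prove Proposition~\ref{prop:K4_estimates2} by treating the desingularized system~\eqref{eq:K4eqn} in the regime of small $\eps_4$, where the $(x_4,y_4)$-subsystem is precisely a slow passage through a fold (in reverse $z_4$-time) with singular parameter $\eps_4$. The key structural observation is that, when $\lambda_4 = r_4 = 0$, the $X_4$-equation reduces to $X_4' = X_4(-2x_4 + 1/(\theta_\mathrm{lf}c^3))$, which is (up to a constant shift from the $\lambda = \ri r_4$ rescaling) the variational equation of the fold-passage vector field along the critical/slow manifold. Consequently, the derivative $x_{4,\eps}'(z_4)$ of the wave-train solution is itself (close to) a bounded solution of the associated homogeneous problem, and the $\alpha$-forced term $-\alpha(\lambda_4+\ri)^2/(\theta_\mathrm{lf}c^3)\cdot F_4(x_4,y_4,\eps_4,r_4) = -\alpha(\lambda_4+\ri)^2/(\theta_\mathrm{lf}c^3)\cdot x_{4,\eps}'$ is a multiple of $x_{4,\eps}'$. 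This suggests writing $X_4 = \alpha c_4 x_{4,\eps}' + \widehat{X}_4$ for a suitable scalar $c_4 = c_4(\lambda_4, r_4)$ chosen so that the leading forcing is absorbed, leaving a remainder $\widehat{X}_4$ that decays or stays bounded on the $\eps_4^{-1}$-long interval.

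First I would set up the boundary-value problem on the interval $[z_4^\mathrm{in}, z_4^\mathrm{out}]$ with exit data $X_4(z_4^\mathrm{out}) = X_4^\mathrm{out}$, $Y_4(z_4^\mathrm{out}) = 0$, splitting the solution into a homogeneous part carrying $X_4^\mathrm{out}$ and an inhomogeneous ($\alpha$-forced) part. For the homogeneous part: the $X_4$-equation along the fold passage has (after the $-2x_4$ term is integrated against $x_{4,\eps}'$) exponentially contracting behavior in the relevant direction of integration, which is where the $\re^{-\theta_4/\eps_4}$ factor comes from — this is the standard ``the slow manifold attracts along its stable foliation'' statement, and one can extract it from the blow-up analysis of the fold in~\cite{CSosc, krupaszmolyan2001} applied to the $X_4$-direction, or more directly by noting that along $\mathcal{M}^{\lr,+}_\eps$ the variational coefficient has a definite sign giving contraction of order $\exp(-\vartheta/\eps_4)$ across the passage. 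For the inhomogeneous part: I would define $X_4^\dagger(z_4; r_4)$ to be the particular solution obtained by the ansatz $X_4^\dagger = \alpha c_4 x_{4,\eps}' + \widehat{X}_4$ with $c_4 = -(\lambda_4+\ri)^2/(\theta_\mathrm{lf}c^3) + \mathcal{O}(r_4)$, so that $\widehat{X}_4$ solves a variational equation with a right-hand side that is $\mathcal{O}(\alpha r_4 \eps_4)$ (the $r_4$-corrections to $F_4$ and the vector field, times the slow drift $\eps_4$), yielding $\widehat{X}_4(z_4^\mathrm{in}) = \mathcal{O}(\alpha r_4 \eps_4)$ by the same contraction estimate. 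Since $(\lambda_4+\ri)^2 = -1 + 2\ri\lambda_4 + \lambda_4^2 = -1 + \mathcal{O}(\lambda_4)$ for small real $\lambda_4$, one gets $c_4 = 1/(\theta_\mathrm{lf}c^3) + \mathcal{O}(r_4, \lambda_4)$; matching this with the relation $-f''(u_1)/(2\beta_1 c) = \theta_\mathrm{lf}$ and the normalization of $\theta_\mathrm{lf}$ recovers the claimed $X_4^\dagger(z_4^\mathrm{in}; r_4) = \alpha x_{4,\eps}'(z_4^\mathrm{in})(-1 + \mathcal{O}(r_4, \eps_4^{1/3})) + \mathcal{O}(\alpha r_4 \eps_4)$; here the $\eps_4^{1/3}$ arises because $x_{4,\eps}'$ itself is only $\mathcal{O}(\eps_4^{1/3})$-close to the exact variational solution $x_R'$ near the turning region, exactly as in Lemma~\ref{lem:fold_existence}.

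The $Y_4$-estimates follow by feeding the $X_4$-bounds into the $Y_4$-equation, whose right-hand side is $\mathcal{O}(r_4^2\eps_4 X_4, r_4^4\eps_4 Y_4, \alpha r_4^3 \eps_4)$, so $Y_4(z_4^\mathrm{in})$ is the integral of an $\mathcal{O}(r_4^2\eps_4|X_4^\mathrm{out}|, \alpha r_4^2 \eps_4)$ quantity over an interval of length $\mathcal{O}(\eps_4^{-1})$ — the $\eps_4$'s cancel, giving $Y_4(z_4^\mathrm{in}) = \mathcal{O}(r_4^2|X_4^\mathrm{out}|, \alpha r_4^2 \eps_4)$. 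Wait — one must be careful: the stated bound is $\mathcal{O}(\alpha r_4^2\eps_4, r_4^2\eps_4|X_4^\mathrm{out}|)$, keeping a factor $\eps_4$ on both; this is because the $X_4$-contribution entering $Y_4$ is, after the contraction, itself small except near the entry section, so one gains an extra $\eps_4$ from the localization of where $X_4$ is $\mathcal{O}(1)\cdot|X_4^\mathrm{out}|$. I would make this precise by integrating $Y_4' = \mathcal{O}(r_4^2\eps_4 X_4)$ using the exponential envelope $|X_4(z_4)| \le C\re^{-\vartheta(z_4^\mathrm{out} - z_4)}|X_4^\mathrm{out}| + \text{(forced part)}$, so that $\int r_4^2\eps_4 |X_4|\,dz_4 \le C r_4^2\eps_4 \cdot \eps_4^{-1} \cdot (\text{forced}) + C r_4^2\eps_4 \cdot \vartheta^{-1}|X_4^\mathrm{out}|$, and the forced part is $\mathcal{O}(\alpha r_4 \eps_4)$ giving the $\alpha r_4^3\eps_4$-scale contribution which is subsumed. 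The main obstacle I anticipate is obtaining the contraction estimate uniformly on the full $\mathcal{O}(\eps_4^{-1})$-long interval while $r_4 \to 0$, i.e., controlling the interplay between the blow-up of the transition time and the regular-perturbation error in $r_4$: for $\delta_4 \le \eps_4$ (Proposition~\ref{prop:K4_estimates}) one may simply invoke regular perturbation, but for $0 < \eps_4 \le \delta_4$ one genuinely needs the slow-passage-through-fold structure, and the cleanest route is to port the manifold $\mathcal{M}^{\lr,+}$ and its stable foliation into the $\mathcal{K}_4$-chart and quote the $C^0$-$\mathcal{O}(\eps_4^{2/3})$ / $C^1$-$\mathcal{O}(\eps_4^{1/3})$ persistence from~\cite{CSosc} together with Lemma~\ref{lem:fold_existence}, reading off the contraction rate $\theta_4 > 0$ from the normal hyperbolicity of that foliation. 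Once that is in hand, the remaining arguments are all Gronwall-type estimates and variation-of-constants on intervals whose length is controlled after desingularization.
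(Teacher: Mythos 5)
Your overall architecture matches the paper's: split the solution into a homogeneous part carrying $X_4^\mathrm{out}$ (contracted by $\re^{-\theta_4/\eps_4}$ across the passage) and an $\alpha$-forced particular solution $X_4^\dagger$, evaluate $X_4^\dagger(z_4^\mathrm{in};r_4)$ to leading order, and then feed the $X_4$-bounds into the $Y_4$-equation. However, the central computation --- the leading-order evaluation of $X_4^\dagger(z_4^\mathrm{in};r_4)$, which is the whole content of the proposition --- does not go through as you describe. The ansatz $X_4^\dagger = \alpha c_4 x_{4,\eps}' + \widehat X_4$ with a \emph{constant} $c_4$ cannot reduce the forcing for $\widehat X_4$ to $\mathcal{O}(\alpha r_4\eps_4)$: applying the $X_4$-operator to $\alpha c_4 x_{4,\eps}'$ and using $x_{4,\eps}'' = -2x_{4,\eps}x_{4,\eps}' + y_{4,\eps}' + \mathcal{O}(r_4^2\eps_4)$ leaves a residual containing $\alpha c_4 y_{4,\eps}' = \mathcal{O}(\alpha\eps_4)$ --- the slow drift, with no factor of $r_4$ --- in addition to a mismatch $\alpha(c_4+1)x_{4,\eps}'(\lambda_4+\ri)^2/(\theta_\mathrm{lf}c^3)$ that vanishes only for $c_4=-1$, not for your $c_4 = -(\lambda_4+\ri)^2/(\theta_\mathrm{lf}c^3)$. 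Over the $\mathcal{O}(\eps_4^{-1})$-long interval an $\mathcal{O}(\alpha\eps_4)$ forcing can accumulate to $\mathcal{O}(\alpha)$ unless one exploits the decay of the Green's function, at which point the decomposition buys nothing; the paper instead evaluates the variation-of-constants integral directly, writing $h_{4,X} = x_{4,\eps}''/x_{4,\eps}' - y_{4,\eps}'/x_{4,\eps}' - (\lambda_4+\ri)^2/(\theta_\mathrm{lf}c^3) + \mathcal{O}(r_4)$, factoring $x_{4,\eps}'(z_4)/x_{4,\eps}'(s)$ out of $\re^{\beta_4(z_4,s)}$, and integrating by parts in the exponential.

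Moreover, your identification of the coefficient $-1$ rests on a false identity: you obtain $c_4 = 1/(\theta_\mathrm{lf}c^3) + \mathcal{O}(r_4,\lambda_4)$ and claim that ``the normalization of $\theta_\mathrm{lf}$'' turns this into $-1$, but $\theta_\mathrm{lf},c>0$, so $\theta_\mathrm{lf}c^3 \neq -1$ and no normalization makes it so. The $-1$ actually arises because the forcing coefficient $-(\lambda_4+\ri)^2/(\theta_\mathrm{lf}c^3)$ is exactly the negative of the rate in the exponential $\exp\left((\lambda_4+\ri)^2(s-z_4)/(\theta_\mathrm{lf}c^3)\right)$, so the integral evaluates to $-1$ up to exponentially small boundary terms (since $\Re\left((\lambda_4+\ri)^2\right) = \lambda_4^2-1<0$), independently of the value of $\theta_\mathrm{lf}c^3$. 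Relatedly, the $\mathcal{O}(\eps_4^{1/3})$ correction is not an approximation error between $x_{4,\eps}'$ and $x_R'$; it is the contribution of the slope term $y_{4,\eps}'/x_{4,\eps}'$ in the exponent of the Green's function, which the exponential decay localizes to an $\mathcal{O}(1)$ neighborhood of $z_4^\mathrm{in}$ where $|y_{4,\eps}'/x_{4,\eps}'|\leq C\eps_4^{1/3}$, and which is negligible elsewhere thanks to the lower bound $-y_{4,\eps}'/x_{4,\eps}'\geq C\eps_4$. These are genuine gaps in the key step; the surrounding Gronwall/fixed-point scaffolding and the $Y_4$-estimate are otherwise in line with the paper.
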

\begin{proof} We rewrite~\eqref{eq:K4eqn} as
\begin{align*}
\begin{split}
x_4' &= F_4(x_4,y_4,\eps_4,r_4),\\
y_4' &= \eps_4\left(1+\mathcal{O}(r_4^2)\right),\\
X_4' &=X_4h_{4,X}(x_4,y_4, r_4,\eps_4,\lambda_4)+Y_4h_{4,Y}(x_4,y_4, r_4,\eps_4,\lambda_4)\\
&\qquad +\alpha F_4(x_4,y_4,\eps_4,r_4)\left(-\frac{(\lambda_4+\ri)^2}{\theta_\mathrm{lf}c^3} +\mathcal{O}(r_4)\right)+\alpha h_{4,\alpha}(x_4,y_4, r_4,\eps_4,\lambda_4), \\
Y_4' &=X_4g_{4,X}(x_4,y_4, r_4,\eps_4,\lambda_4)+Y_4g_{4,Y}(x_4,y_4, r_4,\eps_4,\lambda_4)+\alpha g_{4,\alpha}(x_4,y_4, r_4,\eps_4,\lambda_4).
\end{split}
\end{align*}
where 
\begin{align*}
h_{4,X}(x_4,y_4, r_4,\eps_4,\lambda_4)&=-2x_4-\frac{(\lambda_4+\ri)^2}{\theta_\mathrm{lf}c^3} +\mathcal{O}(r_4),\\
h_{4,Y}(x_4,y_4, r_4,\eps_4,\lambda_4)&= 1+\mathcal{O}(r_4),\\
h_{4,\alpha}(x_4,y_4, r_4,\eps_4,\lambda_4)&= \mathcal{O}( r_4\eps_4),\\
g_{4,X}(x_4,y_4, r_4,\eps_4,\lambda_4)&=\mathcal{O}(r_4^2\eps_4),\\
g_{4,Y}(x_4,y_4, r_4,\eps_4,\lambda_4)&=\mathcal{O}(r_4^4\eps_4),\\
g_{4,\alpha}(x_4,y_4, r_4,\eps_4,\lambda_4)&=\mathcal{O}(r_4^3\eps_4).
\end{align*}
We set $h_{4,*}(s) = h_{4,*}(x_4(s),y_4(s), r_4,\eps_4,\lambda_4)$ and $g_{4,*}(s) = g_{4,*}(x_4(s),y_4(s), r_4,\eps_4,\lambda_4)$ for $* = X,Y,\alpha$, and we define
\begin{align*}
\beta_4(z_4, z_0) \coloneqq  \int_{z_0}^{z_4}h_{4,X}(s)\mathrm{d}s.
\end{align*}
We set $X_4 = X_4^\dagger+\tilde{X}_4$, where
\begin{align*}
\begin{split}
X_4^\dagger(z_4;r_4) &=\int_{z_4^\mathrm{out}}^{z_4} \re^{\beta_4(z_4,s)}\left(\alpha x_{4,\eps}'(s)\left(-\frac{(\lambda_4+\ri)^2}{\theta_\mathrm{lf}c^3}+\mathcal{O}(r_4)\right)+\alpha h_{4,\alpha}(s)\right)\mathrm{d}s,
\end{split}
\end{align*}
and we rewrite the eigenvalue problem as the integral equation
\begin{align}
\begin{split}\label{eq:k4_int}
\tilde{X}_4(z_4) &=X_4^\mathrm{out}\re^{\beta_4(z_4,z_4^\mathrm{out}})+\int_{z_4^\mathrm{out}}^{z_4} \re^{\beta_4(z_4,s)}Y_4h_{4,Y}(s)\mathrm{d}s,\\
Y_4(z_4) &=\int_{z_4^\mathrm{out}}^{z_4}\left(X_4^\dagger(s;r_4)+\tilde{X}_4\right)g_{4,X}(s) + Y_4(s) g_{4,Y}(s) +\alpha g_{4,\alpha}(s)\mathrm{d}s.
\end{split}
\end{align}
We first estimate $X_4^\dagger(z_4;r_4)$ by noting that 
\begin{align*}
h_{4,X}(s)&=-2x_{4,\eps}(s)-\frac{(\lambda_4+\ri)^2}{\theta_\mathrm{lf}c^3} +\mathcal{O}(r_4)\\
&=\frac{x_{4,\eps}''(s)}{x_{4,\eps}'(s)}-\frac{y_{4,\eps}'(s)}{x_{4,\eps}'(s)}-\frac{(\lambda_4+\ri)^2}{\theta_\mathrm{lf}c^3} +\mathcal{O}(r_4),
\end{align*}
so that 
\begin{align*}
\begin{split}
X_4^\dagger(z_4;r_4) &=\alpha x_{4,\eps}'(z_4)\left(-\frac{(\lambda_4+\ri)^2}{\theta_\mathrm{lf}c^3}+\mathcal{O}(r_4)\right
)\int_{z_4^\mathrm{out}}^{z_4} e^{\tilde{\beta}_4(z_4,s)}\mathrm{d}s+ \mathcal{O}(\alpha r_4 \eps_4),
\end{split}
\end{align*}
where 
\begin{align*}
   \tilde{\beta}_4(z_4,s)\coloneqq \beta_4(z_4, s)- \int_s^{z_4}\frac{x_{4,\eps}''(\tilde{s})}{x_{4,\eps}'(\tilde{s})}\mathrm{d}\tilde{s}=\int_s^{z_4} -\frac{y_{4,\eps}'(\tilde{s})}{x_{4,\eps}'(\tilde{s})}-\frac{(\lambda_4+\ri)^2}{\theta_\mathrm{lf}c^3} +\mathcal{O}(r_4)\mathrm{d}\tilde{s}.
\end{align*}
We note that the first term of the integrand represents the slope of the graph formed by the solution $(x_4,y_4)=(x_{4,\eps}, y_{4,\eps})(z_4)$ which satisfies
\begin{align}\label{eq:K4_slope_bound}
  -\frac{\mathrm{d}y_4}{\mathrm{d}x_4}(\tilde{s})= -\frac{y_{4,\eps}'(\tilde{s})}{x_{4,\eps}'(\tilde{s})} \geq C \eps_4,
\end{align}
see for example~\cite[Remark 4.1]{CSosc}. Integrating by parts  -- where we integrate the term $\smash{\exp\left({\frac{(\lambda_4+\ri)^2}{\theta_\mathrm{lf}c^3}(s-z_4)}\right)}$ -- we obtain
\begin{align}
\begin{split}\label{eq:x4star_est_parts}
X_4^\dagger(z_4^\mathrm{in};r_4) &=\alpha x_{4,\eps}'(z_4^\mathrm{in})\left(-1+\mathcal{O}\left(r_4,\eps_4\right)\right
)+ \mathcal{O}(\alpha r_4 \eps_4)\\
&\qquad +\alpha x_{4,\eps}'(z_4^\mathrm{in})\int_{z_4^\mathrm{out}}^{z_4^\mathrm{in}} \left(\frac{y_{4,\eps}'(\tilde{s})}{x_{4,\eps}'(\tilde{s})} +\mathcal{O}(r_4) \right)e^{\tilde{\beta}_4(z_4^\mathrm{in},s)}\mathrm{d}s.
\end{split}
\end{align}
Finally, following the estimates in~\cite[\S4.7]{CSosc}, we find that 
\begin{align*}
    \left|\frac{y_{4,\eps}'(\tilde{s})}{x_{4,\eps}'(\tilde{s})}\right|\leq C\eps_4^{1/3}
\end{align*}
on an interval of width $\smash{\mathcal{O}(\eps_4^{-1/3})}$ near $\tilde{s}=\smash{z_4^\mathrm{in}}$; on the remainder of the integration interval, the integrand is exponentially small in $\smash{\eps_4^{-1/3}}$ due to the exponential term and~\eqref{eq:K4_slope_bound}, from which we obtain 
\begin{align}
\begin{split}\label{eq:x4star_est2}
X_4^\dagger(z_4^\mathrm{in};r_4) &=\alpha x_{4,\eps}'(z_4^\mathrm{in})\left(-1+\mathcal{O}\left(r_4,\eps_4^{1/3}\right)\right
)+ \mathcal{O}(\alpha r_4 \eps_4).
\end{split}
\end{align}
 The result then follows by applying a fixed-point argument to the integral equation~\eqref{eq:k4_int} on the space $\tilde{X}_4,Y_4\in C([z_4^\mathrm{in}, z_4^\mathrm{out}])$.
\end{proof}

\subsubsection{Chart \texorpdfstring{$\mathcal{K}_3$}{K3}} \label{sec:chartK3}

We transform to the $\mathcal{K}_3$-coordinates~\eqref{eq:chartK3_coords}, desingularize via $\mathrm{d}z_3 = r_3^2\mathrm{d}\zeta$, and denote $'=\frac{\mathrm{d}}{\mathrm{d}z_3}$, which results in the system
\begin{align*}
\begin{split}
r_3' &= -\frac{1}{2}r_3F_3(r_3,y_3,\eps_3),\\
y_3' &= 2y_3F_3(r_3,y_3,\eps_3)+\eps_3\left(1+\mathcal{O}(r_3^2)\right),\\
\eps_3' &= 3\eps_3F_3(r_3,y_3,\eps_3),\\
X_3' &=2X_3F_3(r_3,y_3,\eps_3)+X_3\left(-2-\frac{\lambda_3^2}{\theta_\mathrm{lf}c^3} +\mathcal{O}(r_3^2, r_3\lambda_3)\right)+Y_3\left(1+\mathcal{O}(r_3^2, r_3\lambda_3)\right)\\
&\qquad +\alpha \left(\frac{\lambda_3^2}{\theta_\mathrm{lf}c^3} +\mathcal{O}(r_3\lambda_3)\right)F_3(r_3,y_3,\eps_3)+ \mathcal{O}(\alpha r_3\eps_3\lambda_3),\\
Y_3' &=3Y_3F_3(r_3,y_3,\eps_3)+\mathcal{O}(r_3^2\eps_3X_3,r_3^4\eps_3Y_3,\alpha r_3\lambda_3 \epsilon_3 ),\\
\lambda_3' &= \frac{1}{2}\lambda_3F_3(r_3,y_3,\eps_3),
\end{split}
\end{align*}
where $F_3(r_3,y_3,\eps_3) = 1-y_3+\mathcal{O}(r_3^2)$. Considering first the existence problem in the $(r_3,y_3,\eps_3)$-coordinates, we restrict attention to the flow between the sections $\Sigma_3^\mathrm{in}$ and $\Sigma_3^\mathrm{out}$ defined by
\begin{align*}
\Sigma_3^\mathrm{in}&=\{r_3=r_{3,\eps}(\eps_3), |y_3|\leq \bar{y}_3,0\leq\eps_3\leq \delta  \},\\
\Sigma_3^\mathrm{out}&=\{0\leq r_3\leq r_{3,\eps}(\delta), |y_3|\leq \bar{y}_3, \eps_3= \delta \},
\end{align*}
where by Lemma~\ref{lem:fold_existence}, $r_{3,\eps}(\eps_3)^2 = x_\mathrm{in}(\nu,\eps)$ satisfies $| \smash{r_{3,\eps}(\eps_3)}-\bar{r}_{3}|\leq C_\nu\smash{\eps_3^{2/3}}$ and where $\smash{\bar{r}_{3}^2}:= r_{3,\eps}(0)^2= x_\mathrm{in}(\nu,0)$, and $\bar{y}_3$ is a small constant. The wave train is represented in the $\mathcal{K}_3$-coordinates by a solution $(r_{3,\eps},y_{3,\eps},\eps_{3,\eps})(z_3)$, which reaches these sections at $z=\smash{z_3^\mathrm{in},z_3^\mathrm{out}}$, respectively. Using~\cite[Lemma 2.10]{krupaszmolyan2001} and the form of the equations, for sufficiently small $\bar{r}_3$, the manifold $\mathcal{M}^{\lr,+}$ can be represented in the $\mathcal{K}_3$-coordinates as a graph $y_3 = -\smash{\Omega_0\eps_3^{2/3}+\mathcal{O}(r_3^2\eps_3\log \eps_3, \eps_3)}$, and thus by Lemma~\ref{lem:fold_existence}, the wave train lies exponentially close to this graph.

We therefore consider the corresponding eigenvalue problem
\begin{align}
\begin{split}\label{eq:k3_transf}
X_3' &=2X_3F_3(r_3,y_3,\eps_3)+X_3\left(-2-\frac{\lambda_3^2}{\theta_\mathrm{lf}c^3} +\mathcal{O}(r_3^2, r_3\lambda_3)\right)+Y_3\left(1+\mathcal{O}(r_3^2, r_3\lambda_3)\right)\\
&\qquad +\alpha \left(\frac{\lambda_3^2}{\theta_\mathrm{lf}c^3} +\mathcal{O}(r_3\lambda_3)\right)F_3(r_3,y_3,\eps_3)+ \mathcal{O}(\alpha r_3\eps_3\lambda_3),\\
Y_3' &=3Y_3F_3(r_3,y_3,\eps_3)+\mathcal{O}(r_3^2\eps_3X_3,r_3^4\eps_3Y_3, \alpha r_3\lambda_3 \epsilon_3),\\
\lambda_3' &= \frac{1}{2}\lambda_3F_3(r_3,y_3,\eps_3)
\end{split}
\end{align}
over the interval $z_3\in[z_3^\mathrm{in}, z_3^\mathrm{out}]$, for values of $\lambda_3\in \Lambda_3(\delta,\mu,M) = \{\lambda_3\in \mathbb{C}: |\Re(\lambda_3)|\leq  \mu \delta^{1/6}, |\Im(\lambda_3)|\leq  M \delta^{1/6}\}$. We first consider solutions which depart $\mathcal{K}_3$ via the section $\Sigma^\mathrm{out}_3$; these solutions will be matched with solutions from the chart $\mathcal{K}_2$. Rescaling $\mathrm{d}\tilde{z}_3 = F_3(r_3,y_3,\eps_3) \mathrm{d}z_3$ and denoting the corresponding transformed interval by $\tilde{z}_3\in[\tilde{z}_3^\mathrm{in}, \tilde{z}_3^\mathrm{out}]$, we first consider the system on the invariant subspace $r_3=0$
\begin{align*}
\begin{split}
\dot{y}_3 &= 2y_3+\frac{\eps_3}{1-y_3},\\
\dot{\eps}_3 &= 3\eps_3,\\
\dot{X}_3 &=X_3\frac{\left(-\frac{\lambda_3^2}{\theta_\mathrm{lf}c^3}-2y_3\right)}{1-y_3}+\frac{Y_3}{1-y_3}+\alpha\frac{\lambda_3^2}{\theta_\mathrm{lf}c^3},\\
\dot{Y}_3 &=3Y_3,\\
\dot{\lambda}_3 &= \frac{1}{2}\lambda_3,
\end{split}
\end{align*}
where $\cdot=\tfrac{\mathrm{d}}{\mathrm{d}\tilde{z}_3}$. In the subspace $Y_3=0$, let $X_3=X_3^*$ be a solution to this equation which satisfies $\lim_{\tilde{z}_3\to-\infty}\smash{X_3^*(\tilde{z}_3)=X_3^{\infty}}$. We use the fact that $\lambda_3=\smash{\lambda_2 \eps_3^{1/6}}$ and $y_3 = \smash{-\Omega_0\eps_3^{2/3}+\mathcal{O}(r_3^2\eps_3\log \eps_3, \eps_3)}$ to find that in the subspace $r_3=0$,
\begin{align}
    \frac{\mathrm{d}X_3}{\mathrm{d}\eps_3} = \frac{\dot{X}_3}{\dot{\eps}_3} = \frac{X_3}{3\eps_3^{2/3}}\left(-\frac{\lambda_2^2}{\theta_\mathrm{lf}c^3}+\mathcal{O}\left(\eps_3^{1/3}, \lambda_2^2\eps_3^{2/3}\right)\right)+\alpha\frac{\lambda_2^2}{3\eps_3^{2/3}\theta_\mathrm{lf}c^3},
\end{align}
so that $X_3^*$ admits the expansion in terms of $\eps_3$ as
\begin{align*}
X_3^* &= X_3^{\infty}\re^{-\frac{\lambda_2^2}{\theta_{\mathrm{lf}}c^3}\eps_3^{1/3}}\left(1+\mathcal{O}\left(\eps_3^{2/3}, \lambda_2^2\eps_3\right)\right)+\alpha\left(1-\re^{-\frac{\lambda_2^2}{\theta_{\mathrm{lf}}c^3}\eps_3^{1/3}}\right)\left(1+\mathcal{O}\left(\eps_3^{2/3}, \lambda_2^2\eps_3\right)\right)\\
&= X_3^{\infty}\re^{-\frac{\lambda_3^2}{\theta_{\mathrm{lf}}c^3}}\left(1+\mathcal{O}\left(\eps_3^{2/3}\right)\right)+\alpha\left(1-\re^{-\frac{\lambda_3^2}{\theta_{\mathrm{lf}}c^3}}\right)\left(1+\mathcal{O}\left(\eps_3^{2/3}\right)\right),
\end{align*}
as $\eps_3\to0$. 
In order to determine the value of $X^\infty_3$ such that this solution corresponds to the distinguished solution $X_2^*$ from the chart $\mathcal{K}_2$, we transform $X_2^*$ to the chart $\mathcal{K}_3$, where $X_2^*$ admits the expansion in $\eps_3$
\begin{align*}
X_3 = \alpha\frac{\lambda_2^2}{\theta_\mathrm{lf}c^3}\frac{1}{\mathrm{Ai}'(-\Omega_0)^2}\left(1+\mathcal{O}\left(\eps_3^{1/3}\right)\right)\int_\infty^{-\Omega_0}\re^{\frac{\lambda_2^2}{\theta_\mathrm{lf} c^3}\left( s+\Omega_0\right)   } \left(\mathrm{Ai}'(s)^2-s\mathrm{Ai}(s)^2\right)\mathrm{d}s,
\end{align*}
from which we find that $X_3^*$ and $X_2^*$ correspond to the same solution when \begin{align*}
X^\infty_3=\alpha \Upsilon_\mathrm{lf}(\lambda_2)=\alpha\frac{\lambda_2^2}{\theta_\mathrm{lf}c^3}\frac{1}{\mathrm{Ai}'(-\Omega_0)^2}\int_\infty^{-\Omega_0}\re^{\frac{\lambda_2^2}{\theta_\mathrm{lf} c^3}\left( s+\Omega_0\right)   } \left(\mathrm{Ai}'(s)^2-s\mathrm{Ai}(s)^2\right)\mathrm{d}s.
\end{align*}
We note that $|\Upsilon_\mathrm{lf}(\lambda_2)|$ is well-defined and uniformly bounded in the region $|\Re(\lambda_2)|<\mu, |\Im(\lambda_2)|<M$ for sufficiently small $\mu>0$; see Appendix~\ref{app:airy}.

Setting $X_3 = X_3^*+\tilde{X}_3$, we obtain the system
\begin{align}
\begin{split}\label{eq:k3_transf_tilde}
\dot{\tilde{X}}_3 &=\tilde{X}_3h_{3,X}(r_3,\eps_3,y_3,\lambda_3)+Y_3h_{3,Y}(r_3,\eps_3,y_3,\lambda_3)+\alpha h_{3,\alpha}(r_3,\eps_3,y_3,\lambda_3),\\
\dot{Y}_3 &=3Y_3+\eps_3\left( \tilde{X}_3 g_{3,X}(r_3,\eps_3,y_3,\lambda_3) + Y_3 g_{3,Y}(r_3,\eps_3,y_3,\lambda_3) +\alpha g_{3,\alpha}(r_3,\eps_3,y_3,\lambda_3) \right),\\
\dot{\lambda}_3 &= \frac{1}{2}\lambda_3,
\end{split}
\end{align}
where
\begin{align*}
h_{3,X}(r_3,\eps_3,y_3,\lambda_3)&=-\frac{\lambda_3^2}{\theta_\mathrm{lf}c^3} +\mathcal{O}\left(r_3^2, r_3\lambda_3, y_3\right),\\
h_{3,Y}(r_3,\eps_3,y_3,\lambda_3)&= 1+\mathcal{O}\left(r_3^2, r_3\lambda_3, y_3\right),\\
h_{3,\alpha}(r_3,\eps_3,y_3,\lambda_3)&=\mathcal{O}\left(r_3\lambda_3,r_3^2 \Upsilon_\mathrm{lf}(\lambda_2)\right),\\
g_{3,X}(r_3,\eps_3,y_3,\lambda_3)&=\mathcal{O}\left(r_3^2\right),\\
g_{3,Y}(r_3,\eps_3,y_3,\lambda_3)&=\mathcal{O}\left(r_3^4\right),\\
g_{3,\alpha}(r_3,\eps_3,y_3,\lambda_3)&=\mathcal{O}\left(r_3\lambda_3 ,  r_3^2\Upsilon_\mathrm{lf}(\lambda_2)\right).
\end{align*}
We have the following.
\begin{proposition}\label{prop:K3_estimates}
Consider~\eqref{eq:k3_transf} with $(r_3,y_3,\eps_3)=(r_{3,\eps},y_{3,\eps},\eps_{3,\eps})(z_3)$, and fix $M>0$. There exist $C, C_{\bar{r}_3},\mu>0$ such that the following holds. Given $(X_3^\mathrm{in}, Y_3^\mathrm{out})\in \mathbb{C}^2$ and any $\lambda\in \Lambda_{\mathrm{r},\eps}(\mu,M)$, there exists a solution $(X_3,Y_3,\lambda_3): [z_3^\mathrm{in}, z_3^\mathrm{out}]\to\mathbb{C}^2\times \Lambda_3(\delta,\mu,M)$ of~\eqref{eq:k3_transf} satisfying 
\begin{align*}
X_3(z_3^\mathrm{out}) = X_3^*(z_3^\mathrm{out})+\tilde{X}_3^\mathrm{out}, \qquad Y_3(z_3^\mathrm{out}) = Y_3^\mathrm{out},
\end{align*}
as well as the estimates
\begin{align*}
\left|X_3(z_3^\mathrm{in})-\Upsilon_\mathrm{lf}(\lambda_2)F_3(r_3^\mathrm{in},y_3^\mathrm{in},\eps_3^\mathrm{in})\right| &\leq C_{\bar{r}_3}\left(\left|\tilde{X}_3^\mathrm{out}\right| +\left|Y_3^\mathrm{out}\right|+ |\alpha \lambda \log \eps|\right)+C|\alpha \bar{r}_3^2 \Upsilon_\mathrm{lf}(\lambda_2)|,\\
 \left|Y_3(z_3^\mathrm{in})\right| &\leq C_{\bar{r}_3} \eps \left(|\tilde{X}_3^\mathrm{out}| +|Y_3^\mathrm{out}|+ |\alpha \lambda \log \eps|\right)+C\frac{\eps }{\bar{r}_3^4}\left|\alpha \Upsilon_\mathrm{lf}(\lambda_2)\right|,
\end{align*}
where $(r_3^\mathrm{in},y_3^\mathrm{in},\eps_3^\mathrm{in}):= (r_3,y_3,\eps_3)(z_3^\mathrm{in})$.

\end{proposition}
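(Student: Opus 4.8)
\textbf{Proof proposal for Proposition~\ref{prop:K3_estimates}.}

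The plan is to solve the system~\eqref{eq:k3_transf_tilde} with the boundary conditions $\tilde X_3(z_3^\mathrm{out}) = \tilde X_3^\mathrm{out}$ and $Y_3(z_3^\mathrm{out}) = Y_3^\mathrm{out}$ (so that $X_3(z_3^\mathrm{out}) = X_3^*(z_3^\mathrm{out}) + \tilde X_3^\mathrm{out}$ by the decomposition $X_3 = X_3^* + \tilde X_3$), and then read off the behavior at $z_3 = z_3^\mathrm{in}$. The structure here mirrors Proposition~\ref{prop:K1_estimates}: the $X_3$-direction is center-unstable with leading linear part $-\lambda_3^2/(\theta_\mathrm{lf}c^3)$, which is small in the relevant region $\lambda_3 \in \Lambda_3(\delta,\mu,M)$; the $Y_3$-direction is strongly \emph{expanding} with leading linear rate $3$ (in the rescaled time $\tilde z_3$ where $\mathrm d\tilde z_3 = F_3\,\mathrm d z_3$, with $F_3 = 1 - y_3 + \mathcal O(r_3^2) \approx 1$), so the natural way to impose a boundary condition on $Y_3$ is at the \emph{exit} section $\Sigma_3^\mathrm{out}$ and integrate backward in $\tilde z_3$ to control $Y_3(z_3^\mathrm{in})$, which then picks up a factor $\re^{-3(\tilde z_3^\mathrm{out} - \tilde z_3^\mathrm{in})} = \mathcal O(\eps)$ coming from the relation $\eps_3' = 3\eps_3 F_3$, i.e.\ $\eps_3(z_3^\mathrm{out})/\eps_3(z_3^\mathrm{in}) = \eps_3^\mathrm{out}/\eps_3^\mathrm{in}$ with $\eps_3^\mathrm{in} = \mathcal O(\eps)$ and $\eps_3^\mathrm{out} = \delta$. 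For $X_3$, the small linear coefficient means the homogeneous evolution from $z_3^\mathrm{out}$ back to $z_3^\mathrm{in}$ is bounded above and below independently of $\eps,\lambda$; this is exactly what produces the leading term $\Upsilon_\mathrm{lf}(\lambda_2)F_3(r_3^\mathrm{in},y_3^\mathrm{in},\eps_3^\mathrm{in})$ in the estimate, since on the invariant subspace $r_3 = Y_3 = 0$ the solution $X_3^*$ was already arranged (via the choice $X_3^\infty = \alpha\Upsilon_\mathrm{lf}(\lambda_2)$) to match $X_2^*$ from chart $\mathcal K_2$, and near $\Sigma_3^\mathrm{in}$ we have $X_3^*(z_3^\mathrm{in}) \approx X_3^\infty F_3$ up to $\mathcal O(\eps_3^{2/3})$ corrections absorbed into the residual.

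Concretely, I would first rescale time to $\tilde z_3$ as in the text, write the variation-of-constants representation for~\eqref{eq:k3_transf_tilde}: $\tilde X_3$ via forward integration from $z_3^\mathrm{out}$ using the scalar evolution $\re^{\beta_3(\tilde z_3,s)}$ with $\beta_3(\tilde z_3,s) = \int_s^{\tilde z_3} h_{3,X}\,\mathrm d\tilde s$, and $Y_3$ via the integral equation $Y_3(\tilde z_3) = \re^{3(\tilde z_3 - \tilde z_3^\mathrm{out})}Y_3^\mathrm{out} + \int_{\tilde z_3^\mathrm{out}}^{\tilde z_3} \re^{3(\tilde z_3 - s)}\eps_3(s)\bigl(\tilde X_3 g_{3,X} + Y_3 g_{3,Y} + \alpha g_{3,\alpha}\bigr)\,\mathrm d s$, where the key point is that the $Y_3$-forcing carries an explicit $\eps_3(s)$ prefactor. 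Then I would set up a fixed-point argument on the Banach space $C([z_3^\mathrm{in},z_3^\mathrm{out}];\C^2)$ with a weighted norm of the form $\|\tilde X_3\|_{\eps_3,\kappa} + \|Y_3\|_{\eps_3,\kappa}$ — analogous to the norms $\|\cdot\|_{1,I}$ and $\|\cdot\|_{1,\kappa}$ used in Proposition~\ref{prop:K1_estimates} — chosen so that the strongly expanding $Y_3$-direction is tamed by integrating from the exit section and the contraction constant is uniform in $0 < \eps \ll \nu \ll \mu \ll 1$ and $\lambda_3 \in \Lambda_3(\delta,\mu,M)$. The contributions of the source terms $\alpha h_{3,\alpha}$, $\alpha g_{3,\alpha}$ (of order $r_3\lambda_3$ and $r_3^2\Upsilon_\mathrm{lf}(\lambda_2)$), together with the $\mathcal O(\alpha\lambda\log\eps)$ term coming from integrating the $r_3\lambda_3$-contribution across a transition time that is logarithmically large in $\eps$ (the $\eps_3$-variable shrinks geometrically but the $\tilde z_3$-interval has length $\sim |\log\eps|$), are bookkept to produce the stated residuals; the factor $\bar r_3^2$ in front of $\alpha\Upsilon_\mathrm{lf}(\lambda_2)$ and the $\eps/\bar r_3^4$ in the $Y_3$-estimate arise exactly as in Proposition~\ref{prop:K1_estimates}, from the $r_3^2$-weight on the $\alpha$-terms and the $\eps$-contraction in $Y_3$ respectively. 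Finally, using Lemma~\ref{lem:fold_existence} to control $(r_{3,\eps},y_{3,\eps},\eps_{3,\eps})(z_3^\mathrm{in})$ (in particular $\eps_3^\mathrm{in} = \mathcal O(\eps)$, $r_3^\mathrm{in} = \bar r_3 + \mathcal O(\eps^{2/3})$, and $y_3^\mathrm{in} = -\Omega_0(\eps_3^\mathrm{in})^{2/3} + \cdots$) and the matching identity $X_3^\infty = \alpha\Upsilon_\mathrm{lf}(\lambda_2)$ together with the expansion of $X_3^*$ in $\eps_3$ derived just before the proposition, I would evaluate at $z_3^\mathrm{in}$ and collect terms to obtain the two claimed inequalities.

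The main obstacle I anticipate is setting up the weighted fixed-point space so that the contraction is uniform despite the strong \emph{expansion} in the $Y_3$-direction (rate $+3$), which is the opposite sign from the contracting $Y_1$-direction in chart $\mathcal K_1$; one must integrate the $Y_3$-equation from the exit section $\Sigma_3^\mathrm{out}$ toward $\Sigma_3^\mathrm{in}$ and exploit that the forcing on $Y_3$ is uniformly multiplied by $\eps_3$, so that the backward evolution factor $\re^{-3(\tilde z_3^\mathrm{out} - \tilde z_3)}$ and the $\eps_3$-prefactor combine to give the net $\mathcal O(\eps)$ gain. Getting the coupling between $\tilde X_3$ (integrated forward from the exit) and $Y_3$ (effectively integrated backward from the exit) to close in a single fixed-point iteration, with constants that do not blow up as the transition time grows like $|\log\eps|$, is the delicate part; the logarithmic growth of the interval is precisely why the residual contains $|\alpha\lambda\log\eps|$ rather than $|\alpha\lambda|$, and one must check that no worse logarithmic factors enter the $\Upsilon_\mathrm{lf}(\lambda_2)$-dependent terms, which requires the $r_3^2$-smallness of the relevant source coefficients to beat the $|\log\eps|$. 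Everything else is a direct adaptation of the chart-$\mathcal K_1$ analysis and should go through routinely.
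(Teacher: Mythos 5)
Your proposal follows essentially the same route as the paper's proof: the decomposition $X_3 = X_3^* + \tilde X_3$, the rescaled time $\tilde z_3$, the variation-of-constants formulation with the $Y_3$-component anchored at the exit section so that the homogeneous factor $\re^{3(\tilde z_3 - \tilde z_3^{\mathrm{out}})} = \eps_3(\tilde z_3)/\delta$ supplies the $\mathcal O(\eps)$ gain, a fixed-point argument in the weighted norm $\sup|\tilde X_3| + \sup|\eps_3^{-1}Y_3|$, and the evaluation of $X_3^*(\tilde z_3^{\mathrm{in}}) \approx X_3^\infty F_3$ with $X_3^\infty = \alpha\Upsilon_{\mathrm{lf}}(\lambda_2)$. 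The bookkeeping you describe for the $|\log\eps|$ transition time, the $\bar r_3^2$-weight on the $\alpha$-source terms, and the $\eps/\bar r_3^4$ factor matches the paper's argument, so no gap remains.
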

\begin{proof}
Using the relation $X_3 = X_3^*+\tilde{X}_3$, we solve~\eqref{eq:k3_transf_tilde} subject to the boundary conditions
$\tilde{X}_3(\tilde{z}_3^\mathrm{out})=\tilde{X}_3^\mathrm{out}$ and $Y_3(\tilde{z}_3^\mathrm{out}) = Y_3^\mathrm{out}$. 
We write~\eqref{eq:k3_transf_tilde} as the corresponding integral equation
\begin{align}
\begin{split}\label{eq:k3_transf_bar_int}
\tilde{X}_3(\tilde{z}_3) &=\tilde{X}_3^\mathrm{out}\re^{\beta_3(\tilde{z}_3,\tilde{z}_3^\mathrm{out}})+\int_{\tilde{z}_3^\mathrm{out}}^{\tilde{z}_3} \re^{\beta_3(\tilde{z}_3,s)}\left(Y_3h_{3,Y}(s)+\alpha h_{3,\alpha}(s)\right)\mathrm{d}s,\\
Y_3(\tilde{z}_3) &=Y_3^\mathrm{out}\frac{\eps_3(\tilde{z}_3)}{\delta}+\eps_3(\tilde{z}_3)\int_{\tilde{z}_3^\mathrm{out}}^{z_3}\tilde{X}_3(s) g_{3,X}(s) + Y_3(s) g_{3,Y}(s) +\alpha g_{3,\alpha}(s)\mathrm{d}s,
\end{split}
\end{align}
where we used the fact that $\eps_3(\tilde{z}_3^\mathrm{out}) = \delta$, and where
\begin{align*}
\beta_3(\tilde{z}_3, \tilde{z}_0) = \int_{\tilde{z}_0}^{\tilde{z}_3}h_{3,X}(s)\mathrm{d}s.
\end{align*}
We note that since $\lambda_3$ remains in the set $\Lambda_3(\delta,\mu,M)$ by assumption; by taking $\mu$ sufficiently small, we can ensure that $|\re^{\beta_3(\tilde{z}_3,\tilde{z}_0)}|$ is uniformly bounded, independently of $M,\delta,\bar{r}_3, \mu$, whenever $\tilde{z}_3^\mathrm{in}\leq \tilde{z}_3\leq \tilde{z}_0\leq \tilde{z}_3^\mathrm{out}$.

Considering~\eqref{eq:k3_transf_bar_int} as a fixed point equation on the space $\tilde{X}_3,Y_3\in C([\tilde{z}_3^\mathrm{in}, \tilde{z}_3^\mathrm{out}])$ with the norm 
\begin{align*}
    \|(\tilde{X}_3,Y_3)\| = \sup_{\tilde{z}_3\in [\tilde{z}_3^\mathrm{in}, \tilde{z}_3^\mathrm{out}]}\left|\tilde{X}_3(\tilde{z}_3)  \right| +\sup_{\tilde{z}_3\in [\tilde{z}_3^\mathrm{in}, \tilde{z}_3^\mathrm{out}]}\left|\eps_3(\tilde{z}_3)^{-1}Y_3(\tilde{z}_3)  \right|,
\end{align*}
we obtain a solution satisfying the estimates 
\begin{align*}
|\tilde{X}_3(\tilde{z}_3^\mathrm{in})| &\leq C_{\bar{r}_3} \left(|\tilde{X}_3^\mathrm{out}|+ |Y_3^\mathrm{out}|+ |\alpha \lambda \log \eps|\right)+C\bar{r}_3^2 |\alpha \Upsilon_\mathrm{lf}(\lambda_2)|,\\
|Y_3(\tilde{z}_3^\mathrm{in})| &\leq C_{\bar{r}_3} \eps \left(|\tilde{X}_3^\mathrm{out}|+ |Y_3^\mathrm{out}|+ |\alpha \lambda \log \eps|\right)+C\frac{\eps}{\bar{r}_3^4} |\alpha \Upsilon_\mathrm{lf}(\lambda_2)|.
\end{align*}
Noting that
\begin{align*}
X_3^*(\tilde{z}_3^\mathrm{in}) &=X_3^{\infty}F_3(r_3^\mathrm{in},\eps_3^\mathrm{in},y_3^\mathrm{in})+\mathcal{O}\left(X_3^\infty (\eps_3^\mathrm{in})^{2/3}, X_3^\infty (\lambda_3^\mathrm{in})^2\right),
\end{align*}
where $\eps_3^\mathrm{in}= \eps/(r_3^\mathrm{in})^6$ and $\lambda_3^\mathrm{in}= \lambda/r_3^\mathrm{in}$
and expressing the solution in terms of the original independent variable $z_3$, we therefore obtain a solution of~\eqref{eq:k3_transf} satisfying
\begin{align*}
\left|X_3(z_3^\mathrm{in})-\alpha \Upsilon_\mathrm{lf}(\lambda_2)F_3(r_3^\mathrm{in},y_3^\mathrm{in},\eps_3^\mathrm{in})\right| &= \left|X_3^*(z_3^\mathrm{in})+\tilde{X}_3(z_3^\mathrm{in})-\alpha \Upsilon_\mathrm{lf}(\lambda_2)F_3(r_3^\mathrm{in},y_3^\mathrm{in},\eps_3^\mathrm{in})\right|\\
&\leq C_{\bar{r}_3} \left(|\tilde{X}_3^\mathrm{out}|+ |Y_3^\mathrm{out}|+ |\alpha \lambda \log \eps|\right)+C\bar{r}_3^2|\alpha \Upsilon_\mathrm{lf}(\lambda_2)|,
\end{align*}
as claimed.
\end{proof}

We separately consider solutions which depart $\mathcal{K}_3$ via the section $\smash{\Sigma_{34}^\mathrm{out}}=\{\Im(\lambda_3)=M\delta^{1/6}\}$, corresponding to the section $\Sigma_{4}^\mathrm{in}$ from the chart $\mathcal{K}_4$; the case $\Im(\lambda_3)=-\smash{M\delta^{1/6}}$ is similar. 
 We split into the cases covered by Propositions~\ref{prop:K4_estimates} and~\ref{prop:K4_estimates2} separately, corresponding to orbits meeting $\Sigma_{34}^\mathrm{out}$ for values of $\delta_4 M^6 \leq \eps_3/\delta \leq 1$ and $0<\eps_3/\delta\leq \delta_4 M^6$, respectively, where we obtain slightly different estimates in each case. 

The case of $\delta_4 M^6\leq \eps_3/\delta \leq 1$, governed by Proposition~\ref{prop:K4_estimates}, is nearly identical to the argument above. Transforming the solution $X_4^*$ from $\mathcal{K}_4$ to $\mathcal{K}_3$ results in the expansion
\begin{align*}
X_3 = \alpha\frac{(\lambda_4+\ri)^2}{\eps_4^{1/3}\theta_\mathrm{lf}c^3}\frac{1}{\mathrm{Ai}'(-\Omega_0)^2}\left(1+\mathcal{O}(\lambda_3^2\eps_4^{1/3})\right)\int_\infty^{-\Omega_0}\re^{\frac{(\lambda_4+\ri)^2}{\eps_4^{1/3}\theta_\mathrm{lf} c^3}\left( s+\Omega_0\right)   } \left(\mathrm{Ai}'(s)^2-s\mathrm{Ai}(s)^2\right)\mathrm{d}s.
\end{align*}
This solution corresponds to the solution $X_3^*$ upon taking $X_3^\infty = \alpha \Upsilon_\mathrm{lf}((\lambda_4+\ri)\eps_4^{-1/6})$, now in the limit $\lambda_3\to 0$. This essentially extends the definition of $X_3^*$ above to values of $\lambda_2$ satisfying $|\Im(\lambda_2)|\leq \smash{\delta_4^{-1/6}}$, noting that $\Upsilon_\mathrm{lf}((\lambda_4+\ri)\smash{\eps_4^{-1/6}})$ is well defined as $\eps_4\to0$ provided $|\lambda_4|$ is sufficiently small.

On the other hand, for the case $0<\eps_3/\delta\leq \delta_4 M^6$, we recall the estimate~\eqref{eq:x4star_est2} satisfied by $\smash{X_4^\dagger(z_4^\mathrm{in};r_4)}$ from the proof of Proposition~\ref{prop:K4_estimates2}. In particular, we see that when $r_4=0$, $\smash{X_4^\dagger}$ corresponds to a solution in the chart $\mathcal{K}_3$ satisfying
\begin{align*}
    X_3=\alpha \left(1+\mathcal{O}\left(\lambda_3^2\eps_4^{2/3}, \eps_4^{1/3}\right)\right),
\end{align*}
which in turn corresponds to the solution $X_3^*$ in $\mathcal{K}_3$ upon taking
\begin{align*}
    X_3^\infty = \alpha \left(1+\mathcal{O}\left(\eps_4^{1/3}\right)\right)=\alpha \Upsilon_\mathrm{lf}\left((\lambda_4+\ri)\eps_4^{-1/6}\right)\left(1+\mathcal{O}\left(\eps_4^{1/3}\right)\right).
\end{align*}
 Analogously to Proposition~\ref{prop:K3_estimates}, we have the following.
\begin{proposition}\label{prop:K3_estimates4}
Consider~\eqref{eq:k3_transf} with $(r_3,y_3,\eps_3)=(r_{3,\eps},y_{3,\eps},\eps_{3,\eps})(z_3)$, and fix $M>0$. There exist $C_{\bar{r}_3}, C, \mu>0$ such that the following holds. Given $(X_3^\mathrm{in}, Y_3^\mathrm{out})\in \mathbb{C}^2$ and any $\lambda \in \Lambda_{\mathrm{c},\eps}(\mu,M)$, there exists a solution $(X_3,Y_3,\lambda_3): [z_3^\mathrm{in}, z_{34}^\mathrm{out}]\to\mathbb{C}^3$ of~\eqref{eq:k3_transf} satisfying 
\begin{align*}
X_3(z_{34}^\mathrm{out}) = X_3^*(z_{34}^\mathrm{out})+\tilde{X}_3^\mathrm{out}, \qquad Y_3(z_{34}^\mathrm{out}) = \eps_3(z_{34}^\mathrm{out})Y_3^\mathrm{out},
\end{align*}
as well as the estimates
\begin{align*}
\left|X_3(z_3^\mathrm{in})-\alpha \Upsilon_\mathrm{lf}(\lambda\eps^{-1/6}) F_3(r_3^\mathrm{in},y_3^\mathrm{in},\eps_3^\mathrm{in})\right| &\leq C_{\bar{r}_3}\left(|\tilde{X}_3^\mathrm{out}|+ |Y_3^\mathrm{out}| +|\alpha \lambda \log |\lambda||\right)+C\bar{r}_3^2 |\alpha| \left|\Upsilon_\mathrm{lf}(\lambda\eps^{-1/6})\right|,\\
 \left|Y_3(z_3^\mathrm{in})\right| &\leq C_{\bar{r}_3}\eps \left(|\tilde{X}_3^\mathrm{out}| +|Y_3^\mathrm{out}|+ |\alpha \lambda \log |\lambda||\right)+C \frac{\eps}{\bar{r}_3^4}|\alpha|,
\end{align*}
where $(r_3^\mathrm{in},y_3^\mathrm{in},\eps_3^\mathrm{in}):= (r_3,y_3,\eps_3)(z_3^\mathrm{in})$.
\end{proposition}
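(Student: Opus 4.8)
\textbf{Proof proposal for Proposition~\ref{prop:K3_estimates4}.}

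The plan is to mirror, essentially verbatim, the proof of Proposition~\ref{prop:K3_estimates}, but with the distinguished solution $X_3^*$ in the chart $\mathcal{K}_3$ now understood in the \emph{limit} $\lambda_3 \to 0$, i.e.\ with its bounded limiting value at $\tilde z_3 \to -\infty$ prescribed by matching to the relevant solution from chart $\mathcal{K}_4$ rather than from chart $\mathcal{K}_2$. Concretely, I would first fix the correct asymptotic constant $X_3^\infty$: by the two computations immediately preceding the statement, the solution $X_4^*$ (in the regime $\delta_4 M^6 \le \eps_3/\delta \le 1$) and the solution $X_4^\dagger$ of Proposition~\ref{prop:K4_estimates2} (in the regime $0<\eps_3/\delta\le\delta_4 M^6$) each correspond, under the transition map $\mathcal{K}_4 \to \mathcal{K}_3$, to the solution $X_3^*$ with $X_3^\infty = \alpha\,\Upsilon_\mathrm{lf}\bigl((\lambda_4+\ri)\eps_4^{-1/6}\bigr) = \alpha\,\Upsilon_\mathrm{lf}(\lambda\eps^{-1/6})$, the last equality because $r_4 = \Im(\lambda)$ and $\eps_4 = \eps r_4^{-6}$ so that $(\lambda_4+\ri) r_4 = \lambda$ and $r_4 \eps_4^{1/6} = \eps^{1/6}$. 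The function $\Upsilon_\mathrm{lf}(z)$ is entire (Appendix~\ref{app:airy}), hence $X_3^\infty$ is well-defined and $|\Upsilon_\mathrm{lf}(\lambda\eps^{-1/6})|$ is the natural quantity controlling the size of the inhomogeneity along $\mathcal{K}_3$ in this cone-shaped parameter regime, just as $|\Upsilon_\mathrm{lf}(\lambda_2)|$ was in Proposition~\ref{prop:K3_estimates}.

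Next I would set $X_3 = X_3^* + \tilde X_3$ exactly as before, arriving at the integral-equation formulation~\eqref{eq:k3_transf_bar_int} with the endpoint now at $\tilde z_{34}^\mathrm{out}$ (where $\Im(\lambda_3) = M\delta^{1/6}$) rather than $\tilde z_3^\mathrm{out}$, and with the nonautonomous coefficients $h_{3,*}, g_{3,*}$ as listed after~\eqref{eq:k3_transf_tilde}, in which the term $r_3^2\Upsilon_\mathrm{lf}(\lambda_2)$ is replaced by $r_3^2\Upsilon_\mathrm{lf}(\lambda\eps^{-1/6})$. The key structural point, as in the earlier proof, is that $\lambda_3(\tilde z_3)$ remains in $\Lambda_3(\delta,\mu,M)$ along the relevant interval — here this is guaranteed because along $\mathcal{K}_3$ both $|\Re(\lambda_3)|$ and $|\Im(\lambda_3)|$ are strictly monotone in $\tilde z_3$ and the exit section $\Sigma_{34}^\mathrm{out}$ is exactly $\{|\Im(\lambda_3)| = M\delta^{1/6}\}$ — so that by taking $\mu$ small we retain the uniform bound on $|\re^{\beta_3(\tilde z_3,\tilde z_0)}|$ for $\tilde z_3^\mathrm{in} \le \tilde z_3 \le \tilde z_0 \le \tilde z_{34}^\mathrm{out}$. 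A contraction-mapping argument on $C([\tilde z_3^\mathrm{in}, \tilde z_{34}^\mathrm{out}])^2$ with the weighted norm $\sup|\tilde X_3| + \sup|\eps_3^{-1}Y_3|$ then yields $\tilde X_3, Y_3$ with the bounds $|\tilde X_3(\tilde z_3^\mathrm{in})| \le C_{\bar r_3}(|\tilde X_3^\mathrm{out}| + |Y_3^\mathrm{out}| + |\alpha\lambda\log|\lambda||) + C\bar r_3^2|\alpha\,\Upsilon_\mathrm{lf}(\lambda\eps^{-1/6})|$ and the corresponding $\eps$-weighted bound for $Y_3$; the $|\alpha\lambda\log|\lambda||$ term (rather than $|\alpha\lambda\log\eps|$) arises because in the cone regime $\Lambda_{\mathrm{c},\eps}(\mu,M)$ the transition time through $\mathcal{K}_3$ is governed by $\log|\lambda|$ rather than $\log\eps$, matching the dichotomy already present in the residual $\mathcal{R}^x_{\mathrm{lf},\nu}$ of Proposition~\ref{prop:fold_bvp}. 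Finally, reverting to the original independent variable $z_3$ and using the expansion $X_3^*(z_3^\mathrm{in}) = X_3^\infty F_3(r_3^\mathrm{in},\eps_3^\mathrm{in},y_3^\mathrm{in}) + \mathcal{O}\bigl(X_3^\infty(\eps_3^\mathrm{in})^{2/3}, X_3^\infty(\lambda_3^\mathrm{in})^2\bigr)$ together with $|\eps_3^\mathrm{in}| = \eps/(r_3^\mathrm{in})^6 \lesssim \eta(\nu)^{\!-6}\eps$ and $|\lambda_3^\mathrm{in}|^2 = |\lambda|^2/(r_3^\mathrm{in})^2 \le \mu^2$, absorbs the $\mathcal{O}$-corrections into the stated error, giving the two displayed estimates.

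The main obstacle I anticipate is \emph{not} in the contraction argument itself — which is routine once the coefficient structure and weighted norm are in place — but in the bookkeeping at the $\mathcal{K}_3 \to \mathcal{K}_4$ matching: one must verify that the two distinct sub-regimes $\delta_4 M^6 \le \eps_3/\delta \le 1$ and $0 < \eps_3/\delta \le \delta_4 M^6$ produce the \emph{same} limiting constant $X_3^\infty = \alpha\,\Upsilon_\mathrm{lf}(\lambda\eps^{-1/6})$ up to $\mathcal{O}(\eps_4^{1/3})$ errors, so that a single clean statement holds uniformly over $\lambda \in \Lambda_{\mathrm{c},\eps}(\mu,M)$. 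In the first sub-regime this follows from transporting the explicit integral formula for $X_4^*$ into $\mathcal{K}_3$ and recognizing the Airy integral defining $\Upsilon_\mathrm{lf}$; in the second it follows from the estimate~\eqref{eq:x4star_est2} for $X_4^\dagger(z_4^\mathrm{in};r_4)$, which at $r_4 = 0$ gives $X_3 = \alpha(1 + \mathcal{O}(\eps_4^{1/3}))$ and the identity $\Upsilon_\mathrm{lf}((\lambda_4+\ri)\eps_4^{-1/6}) = 1 + \mathcal{O}(\eps_4^{1/3})$ valid as $\eps_4 \to 0$ for $|\lambda_4|$ small (a consequence of the large-argument asymptotics of $\Upsilon_\mathrm{lf}$, Appendix~\ref{app:airy}, since $\Re((\lambda_4+\ri)^2) < 0$). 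Once this compatibility is checked, the remainder is a transcription of the Proposition~\ref{prop:K3_estimates} proof with the obvious substitutions.
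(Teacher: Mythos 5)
Your proposal is correct and follows essentially the same route as the paper: the identification of the limiting constant $X_3^\infty = \alpha\,\Upsilon_\mathrm{lf}(\lambda\eps^{-1/6})$ in the two sub-regimes $\delta_4 M^6 \le \eps_3/\delta \le 1$ and $0<\eps_3/\delta\le\delta_4 M^6$ (which the paper carries out in the discussion preceding the proposition rather than inside the proof), followed by the substitution $X_3 = X_3^* + \tilde X_3$, the same weighted contraction argument on $[\tilde z_3^\mathrm{in},\tilde z_{34}^\mathrm{out}]$ with the $\mathcal{O}(\log|\lambda|)$ interval length producing the $|\alpha\lambda\log|\lambda||$ term, and the final expansion of $X_3^*(z_3^\mathrm{in})$ in terms of $F_3$. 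The only cosmetic difference is that the paper absorbs $|\Upsilon_\mathrm{lf}|$ into the $\mathcal{O}(r_3\lambda_3, r_3^2)$ bounds on $h_{3,\alpha},g_{3,\alpha}$ using its uniform boundedness in this regime, whereas you keep it explicit; both yield the stated estimates.
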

\begin{proof}
We again rewrite the system~\eqref{eq:k3_transf_tilde} as an integral equation
\begin{align*}
\begin{split}
\tilde{X}_3(\tilde{z}_3) &=\tilde{X}_3^\mathrm{out}\re^{\beta_3(\tilde{z}_3,\tilde{z}_{34}^\mathrm{out}})+\int_{\tilde{z}_{34}^\mathrm{out}}^{\tilde{z}_3} \re^{\beta_3(z_3,s)}\left(Y_3h_{3,Y}(s)+\alpha h_{3,\alpha}(s)\right)\mathrm{d}s,\\
Y_3(z_3) &=Y_3^\mathrm{out}\eps_3(\tilde{z}_3)+\eps_3(\tilde{z}_3)\int_{\tilde{z}_{34}^\mathrm{out}}^{\tilde{z}_3}\bar{X}_3(s) g_{3,X}(s) + Y_3(s) g_{3,Y}(s) +\alpha g_{3,\alpha}(s)\mathrm{d}s
\end{split}
\end{align*}
on the transformed interval $\tilde{z}_3\in [\tilde{z}_3^\mathrm{in}, \tilde{z}_{34}^\mathrm{out}]$
where the functions $g_*, h_*, \beta_3$ are as in~\eqref{eq:k3_transf_tilde} except that $h_{3,\alpha}, g_{3,\alpha}$ now satisfy the slightly modified estimates
\begin{align*}
h_{3,\alpha}(r_3,\eps_3,y_3,\lambda_3)&=\mathcal{O}(r_3\lambda_3 ,  r_3^2),\\
    g_{3,\alpha}(r_3,\eps_3,y_3,\lambda_3)&=\mathcal{O}(r_3\lambda_3 ,  r_3^2).
\end{align*}
Continuing as in the proof of Proposition~\ref{prop:K3_estimates}, the corresponding solution now satisfies the estimates
\begin{align*}
|\tilde{X}_3(\tilde{z}_3^\mathrm{in})| &\leq C_{\bar{r}_3} \left(|\tilde{X}_3^\mathrm{out}|+ |Y_3^\mathrm{out}|+ |\alpha \lambda \log |\lambda||\right)+C\bar{r}_3^2 \left|\alpha \Upsilon_\mathrm{lf}\left((\lambda_4+\ri)\eps_4^{-1/6}\right)\right|,\\
|Y_3(\tilde{z}_3^\mathrm{in})| &\leq C_{\bar{r}_3} \eps \left(|\tilde{X}_3^\mathrm{out}|+ |Y_3^\mathrm{out}|+ |\alpha \lambda \log |\lambda||\right)+C\frac{\eps}{\bar{r}_3^4} \left|\alpha \Upsilon_\mathrm{lf}\left((\lambda_4+\ri)\eps_4^{-1/6}\right)\right|,
\end{align*}
where we note that the interval $[\tilde{z}_3^\mathrm{in}, \tilde{z}_{34}^\mathrm{out}]$ is now of length $\mathcal{O}(\log |\lambda|)$. The result follows as in the proof of Proposition~\ref{prop:K3_estimates}.
\end{proof}

\subsubsection{Proof of Proposition~\ref{prop:fold_bvp}}\label{sec:fold_bvp_proof}
We can now complete the proof of Proposition~\ref{prop:fold_bvp}, concerning the estimates for the eigenvalue problem through the fold.

\begin{proof}[Proof of Proposition~\ref{prop:fold_bvp}]
We combine the results of Propositions~\ref{prop:K1_estimates}-\ref{prop:K3_estimates4}. For values of $\lambda\in \Lambda_{\mathrm{r},\eps}(\mu, M)$, solutions are tracked through the chart $\mathcal{K}_2$, while for $\lambda\in \Lambda_{\mathrm{c},\eps}(\mu, M)$ we use the chart $\mathcal{K}_4$. We first consider the former.

We begin by matching the solutions in the charts $\mathcal{K}_1,\mathcal{K}_2$. By Proposition~\ref{prop:K1_estimates}, in the section $\Sigma_1^\mathrm{in}$, the solution $(X_1, Y_1)$ satisfies
\begin{align*}
\left|X_1(z_1^\mathrm{in}) - X_1^*(z_1^\mathrm{in})\right|\leq C_{\bar{r}_1}\left(|X_1^\mathrm{out}|\re^{-\theta_{\bar{r}_1}/\eps}+ |\alpha \lambda|\right), \qquad Y_1(z_1^\mathrm{in})=0,
\end{align*} 
which, transformed into the $\mathcal{K}_2$-coordinates, corresponds to a solution satisfying
\begin{align*}
\left|X_2(z_2^\mathrm{out}) - X_2^*(z_2^\mathrm{out})\right|\leq C_{\bar{r}_1}\left(|X_1^\mathrm{out}|\re^{-\theta_{\bar{r}_1}/\eps}+ |\alpha \lambda|\right), \qquad Y_2(z_2^\mathrm{out})=0
\end{align*} 
in the section $\Sigma_2^\mathrm{out}$. Thus, we can match with a solution from Proposition~\ref{prop:K2_estimates} by choosing $X_2^\mathrm{out}$ appropriately, satisfying 
\begin{align*}
\left|X_2^\mathrm{out}\right|\leq C_{\bar{r}_1}\left(|X_1^\mathrm{out}|\re^{-\theta_{\bar{r}_1}/\eps}+ |\alpha \lambda|\right).
\end{align*}
Recalling Proposition~\ref{prop:K2_estimates}, and using the fact that the transition time $|z_2^\mathrm{in}-z_2^\mathrm{out}|$ is bounded independently of $r_2,\lambda_2$ to bound the exponential factor in the solution $X_2(z_2^\mathrm{in})$,  the solution therefore satisfies the estimates
\begin{align*}
\begin{split}
\left|X_2(z_2^\mathrm{in}) - X_2^*(z_2^\mathrm{in})\right|&\leq C_{\bar{r}_1}\left(|X_1^\mathrm{out}|\re^{-\theta_{\bar{r}_1}/\eps}+ |\alpha r_2\lambda_2|\right),\\
\left|Y_2(z_2^\mathrm{in})\right|&\leq C_{\bar{r}_1}\left(|X_1^\mathrm{out}|\re^{-\theta_{\bar{r}_1}/\eps}+ |\alpha r_2\lambda_2|\left(|r_2|+|\lambda_2|\right)\right)
\end{split}
\end{align*} 
in the section $\Sigma_2^\mathrm{in}$, where the constant $C_{\bar{r}_1}$ may be taken larger if necessary.  
Transforming into the $\mathcal{K}_3$-coordinates, in the section $\Sigma_3^\mathrm{out}$, this corresponds to a solution satisfying
\begin{align*}
\begin{split}
\left|X_3(z_3^\mathrm{out}) - X_3^*(z_3^\mathrm{out})\right|&\leq C_{\bar{r}_1}\left(|X_1^\mathrm{out}|\re^{-\theta_{\bar{r}_1}/\eps}+ |\alpha r_2\lambda_2|\right),\\
\left|Y_3(z_3^\mathrm{out})\right|&\leq C_{\bar{r}_1}\left(|X_1^\mathrm{out}|\re^{-\theta_{\bar{r}_1}/\eps}+ |\alpha r_2\lambda_2|\left(|r_2|+|\lambda_2|\right)\right),
\end{split}
\end{align*}
which in turn corresponds to a solution of Proposition~\ref{prop:K3_estimates} for appropriate choice of $\tilde{X}_3^\mathrm{out},Y_3^\mathrm{out}\in \mathbb{C}$, satisfying
\begin{align*}
\begin{split}
\left|\tilde{X}_3^\mathrm{out}\right| &\leq C_{\bar{r}_1}\left(|X_1^\mathrm{out}|\re^{-\theta_{\bar{r}_1}/\eps}+ |\alpha \lambda|\right),\\
\left|Y_3^\mathrm{out}\right|&\leq C_{\bar{r}_1}\left(|X_1^\mathrm{out}|\re^{-\theta_{\bar{r}_1}/\eps}+\left|\alpha \eps^{1/3}\lambda\right|+ \left|\alpha \lambda^2\right|\right).
\end{split}
\end{align*}
In the section $\Sigma_3^\mathrm{in}$, this solution therefore satisfies the estimates
\begin{align*}
\begin{split}
\left|X_3(z_3^\mathrm{in})-\alpha \Upsilon_\mathrm{lf}(\lambda_2)F_3(r_3^\mathrm{in},y_3^\mathrm{in},\eps_3^\mathrm{in})\right| &\leq C_{\bar{r}_1, \bar{r}_3}\left(\left|X_1^\mathrm{out}\right|\re^{-\theta_{\bar{r}_1}/\eps} + |\alpha \lambda \log \eps|\right)+C|\alpha \bar{r}_3^2 \Upsilon_\mathrm{lf}(\lambda_2)|,\\
 \left|Y_3(z_3^\mathrm{in})\right| &\leq C_{\bar{r}_1, \bar{r}_3} \eps \left(\left|X_1^\mathrm{out}\right|\re^{-\theta_{\bar{r}_1}/\eps}+ |\alpha \lambda \log \eps|\right)+C\frac{\eps }{\bar{r}_3^4}|\alpha \Upsilon_\mathrm{lf}(\lambda_2)|,
 \end{split}
\end{align*}
which, transformed into the original (blow-down) coordinates, satisfies
\begin{align}
\begin{split}\label{fold:match_lambdar_blowdown}
\left|\bar{r}_3^4 X_3(z_3^\mathrm{in})+\alpha_3(\alpha; \lambda,\eps)X_\eps(\xi_\mathrm{in})\right| &\leq C_{\bar{r}_1, \bar{r}_3}\left(\left|X_1^\mathrm{out}\right|\re^{-\theta_{\bar{r}_1}/\eps}\right),\\
\left|\bar{r}_3^6Y_3(z_3^\mathrm{in})\right| &\leq C_{\bar{r}_1, \bar{r}_3} \eps \left(\left|X_1^\mathrm{out}\right|\re^{-\theta_{\bar{r}_1}/\eps}+ |\alpha \lambda \log \eps|\right)+C\bar{r}_3^2 \eps|\alpha \Upsilon_\mathrm{lf}(\lambda \eps^{-1/6})|.
\end{split}
\end{align}
where
\begin{align*}
\left|\alpha_3(\alpha; \lambda,\eps)- \alpha \Upsilon_\mathrm{lf}(\lambda \eps^{-1/6})\right|\leq C_{\bar{r}_1, \bar{r}_3}|\alpha \lambda \log \eps|+C|\alpha \bar{r}_3^2 \Upsilon_\mathrm{lf}(\lambda\eps^{-1/6})|.
\end{align*}
Finally, we match the full solution with the exit conditions at $\zeta=\zeta_\mathrm{out}$ by transforming from the $\mathcal{K}_1$-coordinates in the section $\Sigma_1^\mathrm{out}$, namely 
\begin{align*}
\begin{split}
X_\mathrm{out}&=\alpha X_\eps(\zeta_\mathrm{out})+\bar{r}_1^4 X_1^\mathrm{out}+R_1^x\left(\alpha, X_1^\mathrm{out};\lambda,\eps\right),\\
\eps Y_\mathrm{out}&=\alpha Y_\eps(\zeta_\mathrm{out})+\eps R_1^y\left(\alpha,X_1^\mathrm{out};\lambda,\eps\right),
\end{split}
\end{align*}
where 
\begin{align*}
    \left|R_1^x\left(\alpha, X_1^\mathrm{out};\lambda,\eps\right)\right|&\leq C_{\bar{r}_1}|\alpha \eps\lambda|,\\
    \left|R_1^y\left(\alpha,X_1^\mathrm{out};\lambda,\eps\right)\right|&\leq C_{\bar{r}_1}|\alpha\lambda| +C\left| X_1^\mathrm{out} \right|.
\end{align*}
From the equation for $Y_\mathrm{out}$, we find that
\begin{align}\label{eq:K1solve-start}
Y_\mathrm{out}&=\alpha \frac{Y_\eps(\zeta_\mathrm{out})}{\eps}+R_1^y\left(\alpha,X_1^\mathrm{out};\lambda,\eps\right),
\end{align}
which, noting that $Y_\eps(\zeta_\mathrm{out})=\eps(1+\mathcal{O}(\bar{r}_1,\eps))$ can be solved for $\alpha$, satisfying the estimate
\begin{align*}
\left|\alpha-\frac{\eps Y_\mathrm{out}}{Y_\eps(\zeta_\mathrm{out})}\right|\leq C_{\bar{r}_1}\left(\left|X_1^\mathrm{out}\right|+|\lambda|\left|Y_\mathrm{out}\right|\right).
\end{align*}
Substituting into the equation for $X_\mathrm{out}$, we find that 
\begin{align*}
X_\mathrm{out}&=\bar{r}_1^4 X_1^\mathrm{out}+\tilde{R}_1^x\left( X_1^\mathrm{out}, Y_\mathrm{out};\lambda,\eps\right),
\end{align*}
where 
\begin{align*}
    \left|\tilde{R}_1^x\left( X_1^\mathrm{out}, Y_\mathrm{out};\lambda,\eps\right)\right|&\leq C_{\bar{r}_1} \eps\left(\left|X_1^\mathrm{out}\right|+\left|Y_\mathrm{out}\right|  \right),
\end{align*}
which we can solve for $X_1^\mathrm{out}$ satisfying the estimate
\begin{align} \label{eq:K1solve-end}
\left|X_1^\mathrm{out}-\frac{X_\mathrm{out}}{\bar{r}_1^4}\right|\leq  C_{\bar{r}_1} \eps\left(\left|X_1^\mathrm{out}\right|+\left|Y_\mathrm{out}\right|  \right).
\end{align}
Substituting these estimates into~\eqref{fold:match_lambdar_blowdown}, at $\zeta=\zeta_\mathrm{in}$ we obtain the following
\begin{align*}
\begin{split}
\left|X(\zeta_\mathrm{in})-\alpha X_\eps(\zeta_\mathrm{in})+\alpha_3(\alpha; \lambda,\eps)X_\eps(\xi_\mathrm{in})\right| &\leq  C_{\bar{r}_1, \bar{r}_3}\left(\left|X_1^\mathrm{out}\right|\re^{-\theta_{\bar{r}_1}/\eps}\right),\\
\left|Y(\zeta_\mathrm{in})-\alpha Y_\eps(\zeta_\mathrm{in})\right| &\leq C_{\bar{r}_1, \bar{r}_3} \eps \left(\left|X_1^\mathrm{out}\right|\re^{-\theta_{\bar{r}_1}/\eps}+ |\alpha \lambda \log \eps|\right)+C\bar{r}_3^2 \eps|\alpha \Upsilon_\mathrm{lf}(\lambda \eps^{-1/6})|.
\end{split}
\end{align*}
so that
\begin{align*}
\begin{split}
\left|X(\zeta_\mathrm{in})- \alpha_{\mathrm{lf},\bar{r}_1,\bar{r}_3}^x(Y_\mathrm{out}; \eps,\lambda)X_\eps(\zeta_\mathrm{in})\right| &\leq C_{\bar{r}_1, \bar{r}_3} \left(\left|X_\mathrm{out}\right|+ \left(\eps+|\lambda \log \eps|\right)\left|Y_\mathrm{out}\right|\right),\\
\left|Y(\zeta_\mathrm{in})-\frac{\eps Y_\eps(\zeta_\mathrm{in}) }{Y_\eps(\zeta_\mathrm{out})} Y_\mathrm{out}\right| &\leq C_{\bar{r}_1, \bar{r}_3} \eps \left(\left|X_\mathrm{out}\right|+ \left(\eps+|\lambda \log \eps|\right)\left|Y_\mathrm{out}\right|\right)+C\bar{r}_3^2 \eps\left| \Upsilon_\mathrm{lf}(\lambda \eps^{-1/6})\right||Y_\mathrm{out}|.
\end{split}
\end{align*}
where
\begin{align}\label{eq:alphalf-preestimate}
\alpha_{\mathrm{lf},\bar{r}_1,\bar{r}_3}^x(Y_\mathrm{out}; \eps,\lambda)
&=\frac{\eps Y_\mathrm{out}}{Y_\eps(\zeta_\mathrm{out})}\left(1- \Upsilon_\mathrm{lf}(\lambda \eps^{1/6})\left(1+\mathcal{O}\left(\bar{r}_3^2\right)\right) \right),
\end{align}
where we note that the quantities $\bar{r}_1, \bar{r}_3$ satisfy $\bar{r}_1^4 = y_\mathrm{out}(\nu,\eps)$ and $\bar{r}_3^2 = x_\mathrm{in}(\nu,\eps)$ and hence can be taken smaller as $\nu\to0$; see Lemma~\ref{lem:fold_existence}. Therefore, we obtain the desired estimate
\begin{align}\label{eq:finalestimate-lambdar}
\begin{split}
\left|X(\zeta_\mathrm{in})- \alpha_{\mathrm{lf},\nu}^x(Y_\mathrm{out}; \eps,\lambda)x_\eps'\left(\xi_{\mathrm{lf},\eps,\nu}^\mathrm{in}\right)\right| &\leq C_\nu \left(\left|X_\mathrm{out}\right|+ \left(\eps+|\lambda \log \eps|\right)\left|Y_\mathrm{out}\right|\right),\\
\left|Y(\zeta_\mathrm{in})-\eps Y_\mathrm{out}\frac{ y_\eps'\left(\xi_{\mathrm{lf},\eps,\nu}^\mathrm{in}\right) }{y_\eps'\left(\xi_{\mathrm{lf},\eps,\nu}^{\mathrm{out},L}\right)} \right| &\leq C_\nu \eps \left(\left|X_\mathrm{out}\right|+ \left(\eps+|\lambda \log \eps|\right)\left|Y_\mathrm{out}\right|\right)\\
&\qquad \, +\eta(\nu) \eps\left| \Upsilon_\mathrm{lf}(\lambda \eps^{-1/6})\right||Y_\mathrm{out}|,
\end{split}
\end{align}
for $\lambda\in \Lambda_{\mathrm{r},\eps}(\mu,M)$ and $\eta(\nu)$ satisfying $\eta(\nu)\to0$ as $\nu\to0$, where
\begin{align}\label{eq:alphalf-estimate}
\alpha_{\mathrm{lf},\nu}^x(Y_\mathrm{out}; \eps,\lambda)
&=\frac{\eps Y_\mathrm{out}}{y_\eps'\left(\xi_{\mathrm{lf},\eps,\nu}^{\mathrm{out},L}\right)}\left(1- \Upsilon_\mathrm{lf}(\lambda \eps^{1/6})\left(1+\mathcal{O}\left(\eta(\nu)\right)\right) \right),
\end{align}
and we used the relation
\begin{align*}
    \left(X_\eps, Y_\eps  \right)(\zeta) = \frac{1}{\theta_\mathrm{lf}}\left(x_\eps,y_\eps   \right)'\left( \frac{\zeta}{\theta_\mathrm{lf}}  \right)
\end{align*}
and $\zeta_\mathrm{in} = \theta_\mathrm{lf}\xi_{\mathrm{lf},\eps,\nu}^\mathrm{in}$, $\zeta_\mathrm{out} = \theta_\mathrm{lf}\xi_{\mathrm{lf},\eps,\nu}^{\mathrm{out},L}$.

Next, we consider values of $\lambda\in\Lambda_{\mathrm{c},\eps}(\mu,M)$, for which solutions must pass through the chart $\mathcal{K}_4$. We first match the solutions in the charts $\mathcal{K}_1,\mathcal{K}_4$. By Proposition~\ref{prop:K1_estimates4}, in the section $\Sigma_{14}^\mathrm{in}$, the solution $(X_1, Y_1)$ satisfies
\begin{align*}
\left|X_1(z_1^\mathrm{in}) - X_1^*(z_1^\mathrm{in})\right|\leq C_{\bar{r}_1}\left(|X_1^\mathrm{out}|+ |\alpha \lambda|\right), \qquad Y_1(z_1^\mathrm{in})=0,
\end{align*} 
which, transformed into the $\mathcal{K}_4$-coordinates, corresponds to a solution satisfying
\begin{align*}
\left|X_4(z_4^\mathrm{out}) - X_4^*(z_4^\mathrm{out})\right|\leq C_{\bar{r}_1}\left(|X_1^\mathrm{out}|+ |\alpha \lambda|\right), \qquad Y_4(z_4^\mathrm{out})=0
\end{align*} 
in the section $\Sigma_4^\mathrm{out}$. Transforming to the $\mathcal{K}_4$-coordinates, we either match with a solution from Proposition~\ref{prop:K4_estimates} or Proposition~\ref{prop:K4_estimates2}, depending on whether $\eps_4\geq \delta_4$ or $\eps_4\leq \delta_4$. 

\paragraph{Case $ \delta_4\leq \eps_4\leq M^{-6}$: } In this case, we use Proposition~\ref{prop:K4_estimates}, matching with a solution satisfying
\begin{align*}
\left|X_4^\mathrm{out}\right|\leq C_{\bar{r}_1}\left(|X_1^\mathrm{out}|+ |\alpha \lambda|\right).
\end{align*}
Using the fact that the transition time $|z_4^\mathrm{in}-z_4^\mathrm{out}|$ is bounded independently of $r_4$ and $\eps_4\geq \delta_4$ to bound the exponential factor for the solution $X_4(z_4^\mathrm{in})$ in Proposition~\ref{prop:K4_estimates}, the solution therefore satisfies the estimates
\begin{align*}
\begin{split}
\left|X_4(z_4^\mathrm{in}) -  X_4^*(z_4^\mathrm{in})\right|&\leq C_{\bar{r}_1}\left(\left( \re^{-\theta_4/\eps_4}+|r_4| \right)|X_1^\mathrm{out}|+|\alpha r_4|\right), \\
\left|Y_4(z_4^\mathrm{in})\right| &\leq C_{\bar{r}_1}|r_4|^2\left(|X_1^\mathrm{out}|+ |\alpha| \right),
\end{split}
\end{align*} 
Transforming into the $\mathcal{K}_3$-coordinates, in the section $\Sigma_{34}^\mathrm{out}$, this solution satisfies the estimates
\begin{align*}
\begin{split}
\left|X_3(z_{34}^\mathrm{out}) - X_3^*(z_{34}^\mathrm{out})\right|&\leq C_{\bar{r}_1}\left(\left( \re^{-\theta_4/\eps_4}+|r_4| \right)|X_1^\mathrm{out}|+|\alpha r_4|\right),\\
\left|Y_3(z_{34}^\mathrm{out})\right|&=C_{\bar{r}_1}|r_4|^2\left(|X_1^\mathrm{out}|+ |\alpha| \right),
\end{split}
\end{align*}
which in turn corresponds to a solution of Proposition~\ref{prop:K3_estimates4} by taking appropriate $\tilde{X}_3^\mathrm{out}, Y_3^\mathrm{out}$, which satisfy the estimates
\begin{align*}
\begin{split}
\left|\tilde{X}_3^\mathrm{out}\right| &\leq C_{\bar{r}_1}\left(\left( \re^{-\theta_4/\eps_4}+|r_4| \right)|X_1^\mathrm{out}|+|\alpha r_4|\right),\\
\left|Y_3^\mathrm{out}\right|&\leq C_{\bar{r}_1}|r_4|^2\left(|X_1^\mathrm{out}|+ |\alpha| \right).
\end{split}
\end{align*}
In the section $\Sigma_3^\mathrm{in}$, this solution therefore satisfies the estimates
\begin{align}
\begin{split}\label{eq:k3finalests}
\left|X_3(z_3^\mathrm{in})-\alpha \Upsilon_\mathrm{lf}(\lambda\eps^{-1/6}) F_3(r_3^\mathrm{in},y_3^\mathrm{in},\eps_3^\mathrm{in})\right| &\leq C_{\bar{r}_1, \bar{r}_3}\left(|X_1^\mathrm{out}| +|\alpha \lambda \log |\lambda||\right)+C\bar{r}_3^2 |\alpha| \left|\Upsilon_\mathrm{lf}(\lambda\eps^{-1/6})\right|,\\
 \left|Y_3(z_3^\mathrm{in})\right| &\leq C_{\bar{r}_1, \bar{r}_3}\eps \left(|X_1^\mathrm{out}|+ |\alpha \lambda \log |\lambda||\right)+C \frac{\eps}{\bar{r}_3^4}|\alpha|.
\end{split}
\end{align}
\paragraph{Case $0<\eps_4\leq \delta_4$:} We proceed similarly as above, though we match instead with a solution from Proposition~\ref{prop:K4_estimates2} by choosing appropriate $X_4^\mathrm{out}$, which satisfies
\begin{align*}
\left|X_4^\mathrm{out}\right|\leq C_{\bar{r}_1}\left(|X_1^\mathrm{out}|+ \left(|\eps_4|+|\lambda|\right)|\alpha|\right),
\end{align*}
where we used~\eqref{eq:k1_x1star} to estimate $X_1^*(z_1^\mathrm{in})$ in the $\mathcal{K}_4$-coordinates. By Proposition~\ref{prop:K4_estimates2}, this solution therefore satisfies the estimates
\begin{align*}
\begin{split}
\left|X_4(z_4^\mathrm{in})- X_4^\dagger(z_4^\mathrm{in};r_4)\right|&\leq C_{\bar{r}_1} \left(\left(\re^{-\theta_4/\eps_4}  +r_4^2\eps_4\right)|X_1^\mathrm{out}| + \left(\re^{-\theta_4/\eps_4}  +r_4\eps_4\right)|\alpha|\right), \\
\left|Y_4(z_4^\mathrm{in})\right| &\leq C_{\bar{r}_1} r_4^2\eps_4\left(|X_1^\mathrm{out}| + |\alpha|\right)
\end{split}
\end{align*} 
in the section $\Sigma_4^\mathrm{in}$. Transforming into the $\mathcal{K}_3$-coordinates, in the section $\Sigma_{34}^\mathrm{out}$, this corresponds to a solution satisfying
\begin{align*}
\begin{split}
\left|X_3(z_{34}^\mathrm{out}) - X_3^*(z_{34}^\mathrm{out})\right|&\leq C_{\bar{r}_1} \left(\left(\re^{-\theta_4/\eps_4}  +r_4^2\eps_4\right)|X_1^\mathrm{out}| + \left(\re^{-\theta_4/\eps_4}  +r_4\right)|\alpha|\right), \\
\left|Y_3(z_{34}^\mathrm{out})\right|&\leq C_{\bar{r}_1} r_4^2\eps_4\left(|X_1^\mathrm{out}| + |\alpha|\right),
\end{split}
\end{align*}
which in turn corresponds to a solution of Proposition~\ref{prop:K3_estimates} by choosing $\tilde{X}_3^\mathrm{out}, Y_3^\mathrm{out}$ appropriately, satisfying
\begin{align*}
\begin{split}
\left|\tilde{X}_3^\mathrm{out}\right| &\leq  C_{\bar{r}_1} \left(\left(\re^{-\theta_4/\eps_4}  +r_4^2\eps_4\right)|X_1^\mathrm{out}| + \left(\re^{-\theta_4/\eps_4}  +r_4\right)|\alpha|\right),\\
\left|Y_3^\mathrm{out}\right|&\leq C_{\bar{r}_1} r_4^2\eps_4\left(|X_1^\mathrm{out}| + |\alpha|\right).
\end{split}
\end{align*}
In the section $\Sigma_3^\mathrm{in}$, this solution therefore satisfies the estimates~\eqref{eq:k3finalests}, where the correction term $\re^{-\theta_4/\eps_4}|\alpha|$ can be absorbed into the final term in the first inequality of~\eqref{eq:k3finalests} by taking $\delta_4$ sufficiently small relative to $\bar{r}_1, \bar{r}_3$, and using Proposition~\ref{prop:I0properties}.

Taking into account the estimates~\eqref{eq:k3finalests} from the two cases, we have a unified estimate for the solution in $\Sigma_3^\mathrm{in}$ for all values of $0<\eps_4\leq M^{-6}$ (and therefore all $\lambda\in\Lambda_{\mathrm{c},\eps}(\mu,M)$ for $\mu>0$ sufficiently small). Transformed into the original (blow-down) coordinates, the solution therefore satisfies
\begin{align}
\begin{split}\label{eq:K4finalmatch-blowdown}
\left|\bar{r}_3^4X_3(z_3^\mathrm{in})+ \alpha_3(\alpha; \lambda,\eps)X_\eps(\xi_\mathrm{in})\right| &\leq C_{\bar{r}_1, \bar{r}_3}|X_1^\mathrm{out}|,\\
 \left|\bar{r}_3^6Y_3(z_3^\mathrm{in})\right| &\leq C_{\bar{r}_1, \bar{r}_3}\eps \left(|X_1^\mathrm{out}|+ |\alpha \lambda \log |\lambda||\right)+C \bar{r}_3^2\eps |\alpha|,
\end{split}
\end{align}
where
\begin{align*}
\left|\alpha_3(\alpha; \lambda,\eps)- \alpha \Upsilon_\mathrm{lf}(\lambda \eps^{-1/6})\right|\leq C_{\bar{r}_1, \bar{r}_3}|\alpha \lambda \log |\lambda||+C|\alpha \bar{r}_3^2 \Upsilon_\mathrm{lf}(\lambda\eps^{-1/6})|.
\end{align*}
Finally, we match the full solution with the exit conditions at $\zeta=\zeta_\mathrm{out}$ by transforming from the $\mathcal{K}_1$ coodinates in the section $\Sigma_1^\mathrm{out}$, namely 
\begin{align*}
\begin{split}
X_\mathrm{out}&=\alpha X_\eps(\zeta_\mathrm{out})+\bar{r}_1^4 X_1^\mathrm{out}+R_1^x\left(\alpha, X_1^\mathrm{out};\lambda,\eps\right),\\
\eps Y_\mathrm{out}&=\alpha Y_\eps(\zeta_\mathrm{out})+\eps R_1^y\left(\alpha,X_1^\mathrm{out};\lambda,\eps\right),
\end{split}
\end{align*}
where 
\begin{align*}
    \left|R_1^x\left(\alpha, X_1^\mathrm{out};\lambda,\eps\right)\right|&\leq C_{\bar{r}_1}|\alpha \eps\lambda|,\\
    \left|R_1^y\left(\alpha,X_1^\mathrm{out};\lambda,\eps\right)\right|&\leq C_{\bar{r}_1}|\alpha\lambda| +C\left| X_1^\mathrm{out} \right|.
\end{align*}
Proceeding as in~\eqref{eq:K1solve-start}--\eqref{eq:K1solve-end}, we solve to find $\alpha, X_1^\mathrm{out},$ satisfying the estimates
\begin{align*}
\left|\alpha-\frac{\eps Y_\mathrm{out}}{Y_\eps(\zeta_\mathrm{out})}\right|&\leq C_{\bar{r}_1}\left(\left|X_1^\mathrm{out}\right|+|\lambda|\left|Y_\mathrm{out}\right|\right),\\
\left|X_1^\mathrm{out}-\frac{X_\mathrm{out}}{\bar{r}_1^4}\right|&\leq  C_{\bar{r}_1} \eps\left(\left|X_1^\mathrm{out}\right|+\left|Y_\mathrm{out}\right|  \right).
\end{align*}
Substituting these estimates at $\zeta=\zeta_\mathrm{in}$ using~\eqref{eq:K4finalmatch-blowdown}, we obtain the following 
\begin{align*}
\begin{split}
\left|X(\zeta_\mathrm{in})-\alpha X_\eps(\zeta_\mathrm{in})+\alpha_3(\alpha; \lambda,\eps)X_\eps(\zeta_\mathrm{in})\right| &\leq C_{\bar{r}_1, \bar{r}_3}|X_1^\mathrm{out}|,\\
\left|Y(\zeta_\mathrm{in})-\alpha Y_\eps(\zeta_\mathrm{in})\right| &\leq C_{\bar{r}_1, \bar{r}_3} \eps \left(\left|X_1^\mathrm{out}\right|+ |\alpha \lambda \log |\lambda||\right)+C\bar{r}_3^2 \eps|\alpha \Upsilon_\mathrm{lf}(\lambda \eps^{-1/6})|,
\end{split}
\end{align*}
so that
\begin{align*}
\begin{split}
\left|X(\zeta_\mathrm{in})- \alpha_{\mathrm{lf},\bar{r}_1,\bar{r}_3}^x(Y_\mathrm{out}; \eps,\lambda)X_\eps(\zeta_\mathrm{in})\right| &\leq C_{\bar{r}_1, \bar{r}_3}\left(|X_\mathrm{out}|+\left(\eps +|\lambda|\log|\lambda||\right) |Y_\mathrm{out}|\right),\\
\left|Y(\zeta_\mathrm{in})-\frac{\eps Y_\eps(\zeta_\mathrm{in})}{Y_\eps(\zeta_\mathrm{out})}Y_\mathrm{out}\right| &\leq C_{\bar{r}_1, \bar{r}_3} \eps  |X_\mathrm{out}|+\left(C_{\bar{r}_1, \bar{r}_3}\eps|\lambda|\log|\lambda||+C\bar{r}_3^2 \eps \right) |Y_\mathrm{out}|,
\end{split}
\end{align*}
where $\alpha_{\mathrm{lf},\bar{r}_1,\bar{r}_3}^x(Y_\mathrm{out}; \eps,\lambda)$ satisfies~\eqref{eq:alphalf-preestimate}. Again we note that the quantities $\bar{r}_1, \bar{r}_3$ satisfy $\smash{\bar{r}_1^4 = y_\mathrm{out}}(\nu,\eps)$ and $\smash{\bar{r}_3^2 = x_\mathrm{in}}(\nu,\eps)$ and hence can be taken smaller as $\nu\to0$. Analogously to~\eqref{eq:finalestimate-lambdar}, we obtain the desired estimate
\begin{align*}
\begin{split}
\left|X(\zeta_\mathrm{in})- \alpha_{\mathrm{lf},\nu}^x(Y_\mathrm{out}; \eps,\lambda)x_\eps'\left(\xi_{\mathrm{lf},\eps,\nu}^\mathrm{in}\right)\right| &\leq C_\nu\left(|X_\mathrm{out}|+\left(\eps +|\lambda|\log|\lambda||\right) |Y_\mathrm{out}|\right),\\
\left|Y(\zeta_\mathrm{in})-\eps Y_\mathrm{out}\frac{ y_\eps'\left(\xi_{\mathrm{lf},\eps,\nu}^\mathrm{in}\right) }{y_\eps'\left(\xi_{\mathrm{lf},\eps,\nu}^{\mathrm{out},L}\right)} \right| &\leq C_\nu \eps  |X_\mathrm{out}|+\left(C_\nu\eps|\lambda|\log|\lambda||+\eta(\nu) \eps \right) |Y_\mathrm{out}|,
\end{split}
\end{align*}
for $\lambda\in \Lambda_{\mathrm{c},\eps}(\mu,M)$, where $\alpha_{\mathrm{lf},\nu}^x(Y_\mathrm{out}; \eps,\lambda)$ satisfies~\eqref{eq:alphalf-estimate}.
\end{proof}

\subsection{A tame estimate on the center-unstable \texorpdfstring{$(X,Y)$}{(X,Y)}-dynamics for \texorpdfstring{$\Re(\lambda) \geq \eps^{1/5}$}{Re(lambda)>= eps\^(1/5)}}\label{sec:fold_tame}

The center-unstable $(X,Y)$-dynamics in the transformed eigenvalue problem~\eqref{eq:transformed_lin_fold_diag} near the fold are given by
\begin{align} \label{centerunstable}
\Psi_\zeta = \left(\tilde{A}(\zeta;\eps,\lambda) + \mathcal{O}(\eps)\right) \Psi, \qquad \tilde{A}(\zeta;\eps,\lambda) = \begin{pmatrix} a_1(\zeta;\eps) + \tilde{a}_1(\zeta;\eps,\lambda) & a_2(\zeta;\eps,\lambda) \\ 0 & 0 \end{pmatrix}
\end{align}
with $\zeta \in [\zeta_{\mathrm{in}},\zeta_{\mathrm{out}}]$, where we have
\begin{align*}
a_1(\zeta;\eps) &= -2x_\eps(\zeta) +\mathcal{O}(x_\eps(\zeta)^2,y_\eps(\zeta)), \qquad  \tilde{a}_1(\zeta;\eps,\lambda) = - \frac{\lambda^2}{\theta_\mathrm{lf}c^3} + \mathcal{O}(\lambda x_\eps(\zeta),\lambda y_\eps(\zeta), \lambda^3), \end{align*}
and
\begin{align*}
a_2(\zeta;\eps,\lambda) = 1 +\mathcal{O}(x_\eps(\zeta),y_\eps(\zeta),\lambda).
\end{align*}
We study~\eqref{centerunstable} in the regime $0 < \eps, |\lambda| \ll \nu \ll 1$ and $\Re(\lambda) \geq \eps^{1/5}$. The leading-order dynamics of system~\eqref{centerunstable} is given by the upper triangular system
\begin{align} \label{uppertriangularcenterunstable}
\Psi_\zeta = \tilde{A}(\zeta;\eps,\lambda) \Psi,
\end{align}
whose evolution can be explicitly determined in terms of the coefficient functions $a_1$, $\tilde{a}_1$ and $a_2$. Thus, by bounding these coefficient functions, we establish a tame estimate on the backward growth of the evolution of~\eqref{uppertriangularcenterunstable}. This estimate can be transferred to the full system~\eqref{centerunstable} with the aid of the variation of constants formula.

The following lemma provides bounds on the coefficient functions $a_1$ and $\tilde{a}_1$. 

\begin{lemma} \label{lem:auxiliary_green_rectangle}
Provided $0 < \eps, |\lambda| \ll \nu \ll 1$, there exist an $\eps$- and $\lambda$-independent constant $C_\nu > 0$, a $\nu$-, $\lambda$- and $\eps$-independent constant $C> 0$, and a point $\zeta_\mathrm{middle} \in [\zeta_{\mathrm{in}},\zeta_{\mathrm{out}}]$ such that $|\zeta_\mathrm{middle} - \zeta_{\mathrm{in}}| \leq C_\nu\eps^{-1/3}$ and $1 \leq C_\nu\eps |\zeta_\mathrm{out} - \zeta_{\mathrm{middle}}|$, and the coefficients $a_1(\zeta;\eps)$ and $\tilde{a}_1(\zeta;\eps,\lambda)$ in~\eqref{centerunstable} obey
\begin{align*}
\left|\Re\left(\tilde{a}_1(\zeta;\eps,\lambda)\right) + \frac{1}{\theta_\mathrm{lf}c^3}\left(\Re(\lambda)^2 - \Im(\lambda)^2\right)\right| \leq C \left(|\Re(\lambda)| +|\Im(\lambda)|^2\right)\left(|x_\eps(\zeta)|+|y_\eps(\zeta)|\right) + C_\nu |\lambda|^3
\end{align*}
for $\zeta \in [\zeta_\mathrm{in},\zeta_\mathrm{out}]$ and
\begin{align*}
a_1(\zeta;\eps) \geq -C_\nu \eps^{1/3}
\end{align*}
for $\zeta \in [\zeta_\mathrm{middle},\zeta_{\mathrm{out}}]$.
\end{lemma}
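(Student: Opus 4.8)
\textbf{Proof proposal for Lemma~\ref{lem:auxiliary_green_rectangle}.}

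The plan is to extract the two asserted bounds from the explicit form of the coefficient functions $a_1(\zeta;\eps)$ and $\tilde a_1(\zeta;\eps,\lambda)$ together with the pointwise description of the wave train near the fold provided by Lemma~\ref{lem:fold_existence} and Proposition~\ref{prop:pointwise}. The bound on $\Re(\tilde a_1)$ is the easy one: writing $\lambda = \Re(\lambda) + \ri\,\Im(\lambda)$ we have $\lambda^2 = \Re(\lambda)^2 - \Im(\lambda)^2 + 2\ri\,\Re(\lambda)\Im(\lambda)$, so $\Re(\lambda^2) = \Re(\lambda)^2 - \Im(\lambda)^2$ and the leading term $-\lambda^2/(\theta_\mathrm{lf}c^3)$ contributes exactly $-(\Re(\lambda)^2-\Im(\lambda)^2)/(\theta_\mathrm{lf}c^3)$ to $\Re(\tilde a_1)$. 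The correction terms in $\tilde a_1$ are $\mathcal{O}(\lambda x_\eps(\zeta),\lambda y_\eps(\zeta),\lambda^3)$; taking real parts and using $|\Re(\lambda x_\eps)| \le |\lambda|\,|x_\eps| \le (|\Re(\lambda)| + |\Im(\lambda)|)|x_\eps|$, and crudely bounding $|\Im(\lambda)| \le |\Im(\lambda)|^2 + 1$ on the relevant range (or, more carefully, splitting according to whether $|\Im(\lambda)| \le 1$), one absorbs these into $C(|\Re(\lambda)| + |\Im(\lambda)|^2)(|x_\eps(\zeta)| + |y_\eps(\zeta)|)$; the cubic term $\mathcal{O}(\lambda^3)$ is bounded by $C_\nu|\lambda|^3$ since $|\lambda| \ll \nu$ makes the implicit constant $\nu$-dependent but $\eps,\lambda$-independent. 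This holds for all $\zeta \in [\zeta_\mathrm{in},\zeta_\mathrm{out}]$ with no further structure needed.

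For the second bound, recall that $a_1(\zeta;\eps) = -2x_\eps(\zeta) + \mathcal{O}(x_\eps(\zeta)^2, y_\eps(\zeta))$, so $a_1(\zeta;\eps) \ge -C_\nu\eps^{1/3}$ will follow once we show that $x_\eps(\zeta)$ is bounded above by $C_\nu\eps^{1/3}$ on a suitable subinterval $[\zeta_\mathrm{middle},\zeta_\mathrm{out}]$ and that $y_\eps(\zeta)$ is correspondingly small there. The key point is that, by Lemma~\ref{lem:fold_existence}, the wave train in the fold coordinates is exponentially close to the extended manifold $\mathcal{M}^{\lr,+}_\eps$, which in turn tracks the blown-up singular object: in the scaled chart $\mathcal{K}_2$ (or equivalently in the original rescaled time $\zeta$) the wave train is described by $x_\eps \approx \eps^{1/3}x_R(s)$ with $s = y_\mathrm{out}/\eps^{2/3} + \eps^{1/3}(\zeta-\zeta_\mathrm{out})$, exactly as in the toy-model computation of~\S\ref{sec:toymodel}. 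The Riccati solution $x_R$ is monotonically increasing from $-\infty$ (at $s = -\infty$, i.e.\ far past the fold) through $0$ and then to $+\infty$ as $s \to \infty$; its first zero is at $s = -\Omega_0$. Since the wave train \emph{enters} the fold region along $\mathcal{M}^{\lr}_\eps$ with $x_\eps > 0$ (so $s$ is past $-\Omega_0$) and moves backward in $\zeta$ toward decreasing $s$, I define $\zeta_\mathrm{middle}$ to be the point where $s$ reaches $-\Omega_0$, i.e.\ where $x_\eps$ crosses zero; for $\zeta \in [\zeta_\mathrm{middle},\zeta_\mathrm{out}]$ we then have $x_\eps(\zeta) \ge 0$ up to $\mathcal{O}(\eps^{1/3})$ errors from the manifold approximation, so $-2x_\eps(\zeta) \le 0$ plus an $\mathcal{O}(\eps^{1/3})$ correction, and the quadratic/$y$ corrections are $\mathcal{O}(\eps^{2/3})$, giving $a_1(\zeta;\eps) \ge -C_\nu\eps^{1/3}$.

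The interval-width estimates are then geometry of this Riccati picture: $|\zeta_\mathrm{middle} - \zeta_\mathrm{in}| \le C_\nu\eps^{-1/3}$ because $\zeta_\mathrm{in}$ corresponds to $x_\eps = \mathcal{O}(\eta(\nu))$, i.e.\ $s_\mathrm{in} = \mathcal{O}(\eta(\nu)\eps^{-2/3})$, while $\zeta_\mathrm{middle}$ corresponds to $s = -\Omega_0 = \mathcal{O}(1)$, and $\zeta$ and $s$ are related by $ds = \eps^{1/3}\,d\zeta$, so the $\zeta$-distance is $\mathcal{O}(\eta(\nu)\eps^{-1})$ — here I would be careful: we actually want width $\mathcal{O}(\eps^{-1/3})$, which forces $\zeta_\mathrm{middle}$ to be chosen not at the zero of $x_\eps$ but at the point where $s$ reaches a fixed \emph{$\eps$-independent} value of order $\eta(\nu)^{-\text{something}}$ that still keeps $x_\eps$ at scale $\eps^{1/3}$; concretely, take $\zeta_\mathrm{middle}$ where $x_\eps(\zeta) = K\eps^{1/3}$ for a large but fixed constant $K$, so that $s_\mathrm{middle} \approx K^2$ is $\mathcal{O}(1)$ and the $\zeta$-gap from $\zeta_\mathrm{in}$ is $\mathcal{O}((s_\mathrm{in} - s_\mathrm{middle})\eps^{-1/3}) = \mathcal{O}(\eta(\nu)\eps^{-1/3})$, absorbable into $C_\nu\eps^{-1/3}$; and then for $\zeta \in [\zeta_\mathrm{middle},\zeta_\mathrm{out}]$ one has $x_\eps(\zeta) \le K\eps^{1/3}$, giving $a_1 \ge -2K\eps^{1/3} - \mathcal{O}(\eps^{2/3}) \ge -C_\nu\eps^{1/3}$ as claimed. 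The lower bound $1 \le C_\nu\eps|\zeta_\mathrm{out} - \zeta_\mathrm{middle}|$ holds because $\zeta_\mathrm{out}$ corresponds to $y_\eps = y_\mathrm{out}(\nu,\eps)$ of order $\eta(\nu)$ while $y_\eps$ evolves at rate $\eps$ (the $y$-equation is $y_\zeta = \eps(1 + \mathcal{O}(\cdots))$), so $|\zeta_\mathrm{out} - \zeta_\mathrm{middle}| \gtrsim \eta(\nu)\eps^{-1}$, whence $\eps|\zeta_\mathrm{out}-\zeta_\mathrm{middle}| \gtrsim \eta(\nu) \gtrsim 1/C_\nu$. The main obstacle is bookkeeping: one must pin down $\zeta_\mathrm{middle}$ so that \emph{simultaneously} the $x_\eps$-bound is $\mathcal{O}(\eps^{1/3})$, the left gap is $\mathcal{O}(\eps^{-1/3})$ (not $\mathcal{O}(\eps^{-1})$), and the right gap is $\gtrsim \eps^{-1}$ — these pull in different directions and the resolution is precisely the $\eps^{1/3}$-vs-$\eps^{2/3}$ scaling separation inherent in slow passage through the fold, which is already encoded in Lemma~\ref{lem:fold_existence} and the chart $\mathcal{K}_2$ analysis of~\S\ref{sec:chartK2}.
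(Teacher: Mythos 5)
Your overall strategy is the paper's: read the $\tilde a_1$ bound off the explicit coefficient from Lemma~\ref{lem:fold_transf}, and place $\zeta_\mathrm{middle}$ at the $\mathcal{K}_3$-to-$\mathcal{K}_2$ transition scale (the paper literally takes the entry point of chart $\mathcal{K}_2$), with the two interval-length estimates coming from the $\mathcal{O}(\eps^{-1/3})$ passage time through $\mathcal{K}_{2,3}$ versus the $\mathcal{O}(\eps^{-1})$ time in $\mathcal{K}_1$. However, two of your steps do not deliver the stated bounds. The first concerns $\Re(\tilde a_1)$: bounding $|\Re(\mathcal{O}(\lambda x_\eps))|$ by $|\lambda||x_\eps|\le(|\Re(\lambda)|+|\Im(\lambda)|)|x_\eps|$ and then invoking $|\Im(\lambda)|\le|\Im(\lambda)|^2+1$ leaves a term $|x_\eps|+|y_\eps|$ with no power of $\lambda$, which is not majorized by $C(|\Re(\lambda)|+|\Im(\lambda)|^2)(|x_\eps|+|y_\eps|)+C_\nu|\lambda|^3$; and even without the ``$+1$'' device you retain $|\Im(\lambda)|(|x_\eps|+|y_\eps|)$, while $|\Im(\lambda)|\not\le C(|\Re(\lambda)|+|\Im(\lambda)|^2)$ for small purely imaginary $\lambda$. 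The sharp $|\Im(\lambda)|^2$ is not cosmetic: it is exactly what makes $\Re(\tilde a_1+\lambda/c)\ge 3\Re(\lambda)/(4c)$ go through in Proposition~\ref{prop:tameestimatecenterunstable}, because the dangerous part is only compensated if the $\Im(\lambda)$-dependence of the remainder enters at second order. The missing ingredient is real symmetry: the transformation of Lemma~\ref{lem:fold_transf} is built from the real existence problem and satisfies $\tilde a_1(\zeta;\eps,\bar\lambda)=\overline{\tilde a_1(\zeta;\eps,\lambda)}$, so the Taylor coefficients in $\lambda$ of the $\mathcal{O}(\lambda x_\eps,\lambda y_\eps)$ corrections are \emph{real}; taking real parts then turns the linear-in-$\lambda$ part into $\Re(\lambda)$ times a real function, and the quadratic and higher parts contribute $\mathcal{O}((\Re(\lambda)^2+\Im(\lambda)^2)(|x_\eps|+|y_\eps|))$, which is admissible.

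The second gap is your assertion that ``the quadratic/$y$ corrections are $\mathcal{O}(\eps^{2/3})$'' on $[\zeta_\mathrm{middle},\zeta_\mathrm{out}]$. This is false near $\zeta_\mathrm{out}$: by Lemma~\ref{lem:fold_existence}, $y_\eps(\zeta_\mathrm{out})=y_\mathrm{out}(\nu,\eps)$ is a positive \emph{$\eps$-independent} quantity of size $\eta(\nu)$, so the $\mathcal{O}(y_\eps)$ term in $a_1$ is $\mathcal{O}(\eta(\nu))$ there and does not by itself respect $a_1\ge-C_\nu\eps^{1/3}$. The estimate survives only through a case split: on the portion of the interval in chart $\mathcal{K}_1$ (along the left slow manifold) one has $x_\eps<0$ and $|y_\eps|\le C_\nu|x_\eps|^2$, so $a_1=-2x_\eps+\mathcal{O}(x_\eps^2)=|x_\eps|(2-\mathcal{O}(|x_\eps|))\ge0$; only on the $\mathcal{K}_2$ portion does one use $|x_\eps|\le C_\nu\eps^{1/3}$ together with $|y_\eps|\le C_\nu\eps^{2/3}$. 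Your proposal needs this dichotomy. (Minor but worth correcting: $x_R=\mathrm{Ai}'/\mathrm{Ai}$ vanishes at the first zero of $\mathrm{Ai}'$, not at $-\Omega_0$, where it instead blows up; $x_\eps$ decreases in $\zeta$ from $x_\mathrm{in}>0$ to $x_\mathrm{out}<0$; and it is $s_\mathrm{out}=y_\mathrm{out}/\eps^{2/3}$ that is large while $s_\mathrm{in}\approx-\Omega_0$, the opposite of what you write — though your final choice $x_\eps(\zeta_\mathrm{middle})=K\eps^{1/3}$ is a workable alternative to the paper's chart boundary.)
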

\begin{proof}
    The bound on the coefficient $\tilde{a}_1$ can be obtained directly from the coordinate transformation of Lemma~\ref{lem:fold_transf} by taking real parts and using Lemma~\ref{lem:fold_existence}.

    For the bound on $a_1$, we note that the point $\zeta_\mathrm{middle}\in[\zeta_{\mathrm{in}},\zeta_{\mathrm{out}}]$ can be taken as the entry point of the chart $\mathcal{K}_2$. The estimates on $\zeta_\mathrm{middle}$ follow from the fact that the time spent in the chart $\mathcal{K}_1$ is of $\mathcal{O}(\eps^{-1})$, and the time spent in the charts $\mathcal{K}_{2,3}$ is of $\mathcal{O}(\eps^{-1/3})$. For $\xi\in [\zeta_{\mathrm{middle}},\zeta_{\mathrm{out}}]$, the wave train is thus captured by either the chart $\mathcal{K}_1$ or $\mathcal{K}_2$, in which case we can bound $x_\eps(\zeta)\leq C_\nu \eps^{1/3}$. Furthermore, in the chart $\mathcal{K}_1$, we have that $|y_\eps(\zeta)|\leq C_\nu |x_\eps(\zeta)|^2$, while in the chart $\mathcal{K}_2$, we have $|y_\eps(\zeta)|\leq C_\nu \eps^{2/3}$, hence the result follows. 
\end{proof}

We are now ready to establish the tame estimate on the backward growth of solutions to~\eqref{centerunstable} on $[\zeta_\mathrm{in},\zeta_{\mathrm{out}}]$.

\begin{proposition} \label{prop:tameestimatecenterunstable}
Let $0 < \eps, |\lambda| \ll \nu \ll 1$ and $\Re(\lambda) \geq \eps^{1/5}$. Then, for each $\Psi_{\mathrm{out}} \in \C^2$, the solution $\Psi \colon [\zeta_\mathrm{in},\zeta_{\mathrm{out}}] \to \C^2$ with initial condition $\Psi(\zeta_\mathrm{out}) = \Psi_{\mathrm{out}}$ obeys
\begin{align*}
\left\| \Psi(\zeta_\mathrm{in})\right\| \leq \re^{\frac{\Re(\lambda)}{c} (\zeta_\mathrm{out}-\zeta_\mathrm{in})} \|\Psi_{\mathrm{out}}\|.
\end{align*}
\end{proposition}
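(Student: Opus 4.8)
The idea is to treat the full center--unstable system~\eqref{centerunstable} as a perturbation, of order $\mathcal{O}(\eps)$, of the explicitly solvable upper triangular system~\eqref{uppertriangularcenterunstable}, and to obtain the claimed tame bound on the backward evolution from $\zeta_\mathrm{out}$ to $\zeta_\mathrm{in}$ by first bounding the triangular evolution and then absorbing the $\mathcal{O}(\eps)$ correction via a Gr\"onwall/variation-of-constants argument. First I would write down the evolution $\widetilde{\T}(\zeta,z)$ of~\eqref{uppertriangularcenterunstable} explicitly: since the matrix is upper triangular with zero bottom-right entry, the second component is constant and the first component solves a scalar linear ODE driven by $a_1 + \tilde a_1$ and forced by $a_2$ times the (constant) second component. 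Thus $\widetilde{\T}(\zeta,z)$ has entries given by $\exp\big(\int_z^\zeta (a_1+\tilde a_1)\big)$, $1$, $0$, and an off-diagonal integral term $\int_z^\zeta \exp\big(\int_s^\zeta(a_1+\tilde a_1)\big)a_2(s)\,\de s$. The key quantity to control is therefore $\Re\int_z^\zeta (a_1+\tilde a_1)$ for $\zeta_\mathrm{in} \le z \le \zeta \le \zeta_\mathrm{out}$ (remember we are integrating backward, so in the growth estimate the relevant sign is that of $-\Re(a_1+\tilde a_1)$ accumulated over $[\zeta_\mathrm{in},\zeta_\mathrm{out}]$).

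\textbf{Controlling the accumulated real part.} Using Lemma~\ref{lem:auxiliary_green_rectangle}, I would split $[\zeta_\mathrm{in},\zeta_\mathrm{out}]$ at the point $\zeta_\mathrm{middle}$. On $[\zeta_\mathrm{in},\zeta_\mathrm{middle}]$ the interval has length $\mathcal{O}(\eps^{-1/3})$; there $a_1$ is $\mathcal{O}(1)$ in absolute value (the wave train passes the fold) but the contribution to $\int \Re(a_1+\tilde a_1)$ is at worst $\mathcal{O}(\eps^{-1/3})$, whereas we are trying to beat $\tfrac{\Re(\lambda)}{c}(\zeta_\mathrm{out}-\zeta_\mathrm{in}) \gtrsim \eps^{-4/5}$ since $\Re(\lambda) \ge \eps^{1/5}$ and $\zeta_\mathrm{out}-\zeta_\mathrm{in} \sim \eps^{-1}$; so this short piece is absorbed comfortably. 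On $[\zeta_\mathrm{middle},\zeta_\mathrm{out}]$ Lemma~\ref{lem:auxiliary_green_rectangle} gives $a_1 \ge -C_\nu\eps^{1/3}$, so $-\int a_1 \le C_\nu\eps^{1/3}|\zeta_\mathrm{out}-\zeta_\mathrm{middle}| \le C_\nu\eps^{1/3}\cdot\eps^{-1} = C_\nu\eps^{-2/3}$, again negligible against $\eps^{-4/5}$. The decisive term is $-\int_{\zeta_\mathrm{in}}^{\zeta_\mathrm{out}} \Re(\tilde a_1)$: by Lemma~\ref{lem:auxiliary_green_rectangle}, $\Re(\tilde a_1) = -\tfrac{1}{\theta_\mathrm{lf}c^3}(\Re\lambda^2 - \Im\lambda^2) + (\text{error})$, and I would argue $\Re(\lambda)^2 - \Im(\lambda)^2$ need not be sign-definite, so I cannot rely on this term alone being good --- instead I use that the \emph{whole} leading coefficient of $\Psi$ in the blow-down coordinates comes from the relation $\Psi = \re^{-\lambda\xi/c}\check\Psi$ in the original scaling of~\S\ref{sec:spectral_setup}, which contributes precisely the factor $\re^{\Re(\lambda)(\zeta_\mathrm{out}-\zeta_\mathrm{in})/c}$ (up to the $\theta_\mathrm{lf}$-rescaling $\zeta = \theta_\mathrm{lf}\xi$). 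Concretely: the matrix $A_\f(\xi;\eps,\lambda)$ in~\eqref{eigenvalueproblem} carries a diagonal $-\lambda/c$ built in, which is exactly what localizes the dominant exponential; after the fold change of coordinates of Lemma~\ref{lem:fold_transf}, tracking the trace of the resulting $(X,Y)$-block one finds the accumulated real part of the diagonal equals $\tfrac{\Re(\lambda)}{c}(\zeta_\mathrm{out}-\zeta_\mathrm{in})$ plus terms controlled by the estimates above and by the smallness of $|\lambda|$ and the length $\sim\eps^{-1}$. The remaining error terms in Lemma~\ref{lem:auxiliary_green_rectangle} --- namely $C(|\Re\lambda| + |\Im\lambda|^2)(|x_\eps|+|y_\eps|)$ and $C_\nu|\lambda|^3$ --- integrate to $\mathcal{O}(|\lambda|\cdot\eta(\nu)\cdot\eps^{-1/3}) + \mathcal{O}(|\lambda|^3\eps^{-1})$, both of which are $o(\eps^{-4/5})$ when $0 < \eps, |\lambda| \ll \nu \ll 1$ (here one uses $|x_\eps|+|y_\eps| \le \eta(\nu)$ away from the charts plus the localized $\mathcal{O}(\eps^{-1/3})$ contribution inside the charts from Lemma~\ref{lem:fold_existence}).

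\textbf{Absorbing the off-diagonal and the $\mathcal{O}(\eps)$ perturbation.} Once $|\Re\int_z^\zeta(a_1+\tilde a_1)|$ is bounded so that $\|\widetilde{\T}(\zeta,z)\| \le \re^{\Re(\lambda)(\zeta_\mathrm{out}-\zeta_\mathrm{in})/c - \text{(something small)}}$ uniformly in $\zeta_\mathrm{in}\le z\le\zeta\le\zeta_\mathrm{out}$ --- the off-diagonal integral term is bounded similarly since $a_2 = 1 + \mathcal{O}(x_\eps,y_\eps,\lambda)$ is bounded and the interval times the exponential is still controlled --- I would pass to the full system~\eqref{centerunstable} by writing $\Psi(\zeta) = \widetilde{\T}(\zeta,\zeta_\mathrm{out})\Psi_\mathrm{out} + \int_{\zeta_\mathrm{out}}^{\zeta}\widetilde{\T}(\zeta,s)\mathcal{O}(\eps)\Psi(s)\,\de s$ and applying a Gr\"onwall inequality (with the orientation reversed, since $\zeta < \zeta_\mathrm{out}$). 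The integrating factor picks up $\exp(C_\nu\eps|\zeta_\mathrm{out}-\zeta_\mathrm{in}|) = \exp(\mathcal{O}(1))$, which multiplies into the exponent only an $\mathcal{O}(1)$ amount --- again negligible against the gain we have in reserve (we built a small negative slack $-(\text{something})$ of order at least $\eps^{-2/3}$ into the triangular estimate, which dominates all $\mathcal{O}(1)$ and $o(\eps^{-4/5})$ corrections). Collecting everything, $\|\Psi(\zeta_\mathrm{in})\| \le \re^{\Re(\lambda)(\zeta_\mathrm{out}-\zeta_\mathrm{in})/c}\|\Psi_\mathrm{out}\|$, as claimed. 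The main obstacle I anticipate is the sign issue in $\Re(\lambda^2)$: one must be careful that the constructed bound genuinely has the stated clean exponential $\re^{\Re(\lambda)(\zeta_\mathrm{out}-\zeta_\mathrm{in})/c}$ and not something like $\re^{|\lambda|(\zeta_\mathrm{out}-\zeta_\mathrm{in})/c}$ --- this is why it is essential to keep track that the $-\lambda/c$ built into $A_\f$ (equivalently, the original rescaling $\Psi = \re^{-\lambda\xi/c}\check\Psi$) is the true source of the dominant growth, and the $\lambda^2$-terms inside the fold charts are genuinely lower order on the $\mathcal{O}(\eps^{-1/3})$ chart windows and contribute only subdominantly on the long $\mathcal{O}(\eps^{-1})$ chart $\mathcal{K}_1$ because there $|x_\eps|\le C_\nu\eps^{1/3}$ keeps the whole coefficient small.
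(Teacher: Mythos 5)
Your overall architecture matches the paper's: explicit triangular evolution, splitting at $\zeta_{\mathrm{middle}}$, variation of constants/Gr\"onwall for the $\mathcal{O}(\eps)$ perturbation, and using the large slack accumulated on the long interval $[\zeta_{\mathrm{middle}},\zeta_{\mathrm{out}}]$ to absorb the Gr\"onwall blow-up on the short interval $[\zeta_{\mathrm{in}},\zeta_{\mathrm{middle}}]$. (The paper implements the comparison by weighting first, setting $\Phi(\zeta)=\re^{\frac{\lambda}{c}(\zeta-\zeta_{\mathrm{out}})}\Psi(\zeta)$, which is cosmetically different from your ``compare the accumulated integral against the target exponential'' but equivalent.) However, your treatment of the decisive $\lambda$-dependent terms contains a genuine error. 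You claim that ``the matrix $A_{\f}$ carries a diagonal $-\lambda/c$ built in'' and that ``tracking the trace of the resulting $(X,Y)$-block one finds the accumulated real part of the diagonal equals $\frac{\Re(\lambda)}{c}(\zeta_{\mathrm{out}}-\zeta_{\mathrm{in}})$.'' This is false: after the fold transformation of Lemma~\ref{lem:fold_transf}, the linear-in-$\lambda$ diagonal contribution to the center--unstable block is only $\mathcal{O}(\lambda(|x|+|y|+\eps))$ (see the entries $b_{11}$ and $\eps b_{22}$ of $\bar B$), and the $-2\lambda/c$ trace contribution of $\lambda B$ sits entirely in the strong-stable $Z$-direction. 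The coefficient $\tilde a_1$ starts at order $\lambda^2$, exactly as recorded in~\eqref{centerunstable}. The $\frac{\Re(\lambda)}{c}$ per unit length is not present in the system; it is the \emph{allowance} granted by the statement, which you must earn by comparison — and the correct mechanism is the pointwise bound $\Re\bigl(\tilde a_1(\zeta;\eps,\lambda)+\tfrac{\lambda}{c}\bigr)\geq\tfrac{3\Re(\lambda)}{4c}$ (estimate~\eqref{tildea1bound} in the paper), which holds because $\Re(\lambda)^2/(\theta_{\mathrm{lf}}c^3)\ll\Re(\lambda)/c$ for $|\lambda|\ll1$, while the $+\Im(\lambda)^2/(\theta_{\mathrm{lf}}c^3)$ contribution is favorable and absorbs both the $C_\nu|\lambda|^3$ term and the $\Im(\lambda)^2$-proportional error from Lemma~\ref{lem:auxiliary_green_rectangle}.

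Relatedly, your absolute accounting of the residuals as $o(\eps^{-4/5})$ does not survive in the full parameter regime of the proposition. The quantities $\Re(\lambda)^2|\zeta_{\mathrm{out}}-\zeta_{\mathrm{in}}|$, $|\lambda|^3|\zeta_{\mathrm{out}}-\zeta_{\mathrm{in}}|$ and $\Im(\lambda)^2\eta(\nu)|\zeta_{\mathrm{out}}-\zeta_{\mathrm{in}}|$ scale like $|\lambda|^2\eps^{-1}$ or $|\lambda|^3\eps^{-1}$, and since $|\lambda|$ is only required to be small in an $\eps$-independent way (it can be of order $\nu^2$, say, with $\Re(\lambda)$ as small as $\eps^{1/5}$), these are in general much larger than $\eps^{-4/5}$. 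They are controlled only by pointwise domination against $\Re(\lambda)/c+\Im(\lambda)^2/(\theta_{\mathrm{lf}}c^3)$, not by being small in absolute terms. A minor additional point: the Gr\"onwall factor for the $\mathcal{O}(\eps)$ perturbation is not $\exp(\mathcal{O}(1))$ once one accounts for the $\Re(\lambda)^{-1}$ prefactor coming from the off-diagonal entry of the triangular evolution; the paper handles this with a weighted fixed-point argument whose contraction constant is $\mathcal{O}(\eps/\Re(\lambda)^2)$, which is where $\Re(\lambda)\geq\eps^{1/5}\gg\eps^{1/2}$ is used. These corrections are still absorbed by the slack, but not for the reason you give.
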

\begin{proof}
Since $\Psi(\zeta)$ solves~\eqref{centerunstable} with initial condition $\Psi(\zeta_\mathrm{out}) = \Psi_\mathrm{out}$, we find that $\Phi(\zeta) = \re^{\frac{\lambda}{c} (\zeta - \zeta_{\mathrm{out}})} \Psi(\zeta)$ is a solution with initial condition $\Phi(\zeta_\mathrm{out}) = \Psi_\mathrm{out}$ to the weighted problem
\begin{align} \label{weightedcenterunstable}
\Phi_\zeta = \left(\tilde{A}(\zeta;\eps,\lambda) + \mathcal{O}(\eps) + \frac{\lambda}{c}\right) \Phi,
\end{align}
whose coefficient matrix may be written as
\begin{align*}
\begin{pmatrix} a_1(\zeta;\eps) + \tilde{a}_1(\zeta;\eps,\lambda) + \frac{\lambda}{c} & a_2(\zeta;\eps,\lambda) \\ 
0 & \frac{\lambda}{c}\end{pmatrix} + \eps \tilde{B}(\zeta;\eps,\lambda),
\end{align*}
where the matrix $\tilde{B}(\zeta;\eps,\lambda)$ and the coefficient $a_2(\zeta;\eps,\lambda)$ are bounded on $[\zeta_\mathrm{in},\zeta_\mathrm{out}]$ by an $\eps$-, $\lambda$- and $\nu$-independent constant $C_0 > 0$. Provided $0 < \eps,|\lambda| \ll \nu \ll 1$ and $\Re(\lambda) \geq 0$, Lemma~\ref{lem:fold_existence} and Lemma~\ref{lem:auxiliary_green_rectangle} yield
\begin{align} \label{tildea1bound}
\Re\left(\tilde{a}_1(\zeta;\eps,\lambda) + \frac{\lambda}{c}\right) \geq \frac{3\Re(\lambda)}{4c}.
\end{align}
for $\zeta \in [\zeta_\mathrm{in},\zeta_\mathrm{out}]$.
The evolution $\mathcal{\tilde{T}}_{\eps,\lambda}(\zeta,y)$ of system
\begin{align*}
\Psi_\zeta = \left(\tilde{A}(\zeta;\eps,\lambda) + \frac{\lambda}{c}\right) \Psi,
\end{align*}
reads
\begin{align*}
\mathcal{\tilde{T}}_{\eps,\lambda}(\zeta,y) = \begin{pmatrix}
\re^{\left(a_1(\zeta;\eps) + \tilde{a}_1(\zeta;\eps,\lambda) + \frac{\lambda}{c}\right)(\zeta - y)} & \displaystyle \int_y^\zeta \re^{\left(a_1(\zeta;\eps) + \tilde{a}_1(\zeta;\eps,\lambda) + \frac{\lambda}{c}\right)(\zeta - z)} a_2(z;\eps,\lambda) \re^{\frac{\lambda}{c}(z-y)} d z\\
0 & \re^{\frac{\lambda}{c}(\zeta-y)} 
\end{pmatrix}
\end{align*}
for $\zeta,y \in [\zeta_\mathrm{in},\zeta_\mathrm{out}]$. Hence, combining~\eqref{tildea1bound} with Lemma~\ref{lem:auxiliary_green_rectangle}, we establish, provided $0 < \eps, |\lambda| \ll \nu \ll 1$ and $\Re(\lambda) \geq \eps^{1/5}$, a $\nu$-, $\eps$- and $\lambda$-independent constant $C_1 > 0$ such that
\begin{align} \label{evolboundcenterunstable}
\left\|\mathcal{\tilde{T}}_{\eps,\lambda}(\zeta,y)\right\| \leq \frac{C_1}{\Re(\lambda)} \re^{\frac{\Re(\lambda)}{2c} (\zeta - y)}
\end{align}
holds for $\zeta,y \in [\zeta_\mathrm{middle},\zeta_\mathrm{out}]$ with $\zeta \leq y$. We express the solution $\Phi(\zeta)$ to~\eqref{weightedcenterunstable} through the variation of constants formula
\begin{align} \label{varconstcenterunstable}
\Phi(\zeta) = \mathcal{\tilde{T}}_{\eps,\lambda}(\zeta,\zeta_\mathrm{out}) \Psi_\mathrm{out} + \eps \int_{\zeta_\mathrm{out}}^\zeta \mathcal{\tilde{T}}_{\eps,\lambda}(\zeta,y) \tilde{B}(y;\eps,\lambda) \Phi(y) dy
\end{align}
for $\zeta \in [\zeta_\mathrm{middle},\zeta_\mathrm{out}]$. Setting
\begin{align*}
\eta(\zeta) = \sup_{y \in [\zeta,\zeta_\mathrm{out}]} \|\Phi(y)\| \re^{-\frac{\Re(\lambda)}{4c} (y - \zeta_\mathrm{out}) }
\end{align*}
we bound~\eqref{varconstcenterunstable} with the aid of~\eqref{evolboundcenterunstable} and obtain
\begin{align*}
\|\Phi(\zeta)\| \leq \frac{C_1}{\Re(\lambda)} \re^{\frac{\Re(\lambda)}{2c} (\zeta - \zeta_\mathrm{out})} \|\Psi_{\mathrm{out}}\| + \frac{4C_0C_1\eps}{\Re(\lambda)^2} \eta(\zeta) \re^{\frac{\Re(\lambda)}{4c} (\zeta - \zeta_\mathrm{out})}
\end{align*}
for $\zeta \in [\zeta_\mathrm{middle},\zeta_\mathrm{out}]$. Provided $0 < \eps, |\lambda| \ll \nu \ll 1$ and $\Re(\lambda) \geq \eps^{1/5}$, the latter implies
\begin{align*}
\eta(\zeta) \leq 2\frac{C_1}{\Re(\lambda)} \|\Psi_{\mathrm{out}}\|
\end{align*}
for $\zeta \in [\zeta_\mathrm{middle},\zeta_\mathrm{out}]$. Hence, provided $0 < \eps, |\lambda| \ll \nu \ll 1$ and $\Re(\lambda) \geq \eps^{1/5}$, we establish
\begin{align*}
\|\Phi(\zeta_{\mathrm{middle}})\| \leq 2 C_1 \eps^{-1/5} \re^{\frac{\Re(\lambda)}{4c} (\zeta_\mathrm{middle}-\zeta_\mathrm{out})} \|\Psi_{\mathrm{out}}\|.
\end{align*}
Therefore, observing that the coefficient matrix of~\eqref{weightedcenterunstable} is bounded on $[\zeta_\mathrm{in},\zeta_\mathrm{out}]$ by an $\eps$-, $\nu$- and $\lambda$-independent constant $C_2 > 0$, we apply Lemma~\ref{lem:auxiliary_green_rectangle} and Grönwall's inequality to infer
\begin{align*}
\|\Phi(\zeta_{\mathrm{in}})\| \leq \re^{C_2 |\zeta_{\mathrm{middle}} - \zeta_\mathrm{in}|} \|\Phi(\zeta_{\mathrm{middle}})\| \leq  \|\Psi_{\mathrm{out}}\|,
\end{align*}
provided $0 < \eps, |\lambda| \ll \nu \ll 1$ and $\Re(\lambda) \geq \eps^{1/5}$. So, reverting to the original $\Psi$-coordinate, the result follows.
\end{proof}

\subsection{Proof of Propositions~\ref{prop:fold_bvp_oc_lower} and~\ref{prop:fold_bvp_oc_upper}}\label{sec:mainfoldest}
Incorporating the (hyperbolic) $Z$-dynamics in~\eqref{eq:stability_fold_XY}, we have the following linearized problem
\begin{align}
\begin{split}\label{eq:fold_linearized_diag}
X_\zeta &=X\left(-2x-\frac{\lambda^2}{\theta_\mathrm{lf}c^3} +\mathcal{O}(x^2,y,\eps, \lambda x, \lambda^3)\right)+Y\left(1+\mathcal{O}(x,y,\eps,\lambda)\right),\\
Y_\zeta &=\mathcal{O}(\eps X,\eps Y),\\
Z_\zeta &=\left(-\frac{c}{\theta_\mathrm{lf}}+\mathcal{O}(x,y,\eps,\lambda)\right)Z.
\end{split}
\end{align}
We have the following.
\begin{proposition}\label{prop:fold_bvp_Z}
Fix $M>0$ and $\nu>0$. There exists $\mu>0$ such that for each $(X_\mathrm{out},Y_\mathrm{out}, Z_\mathrm{in})\in \mathbb{C}^3$ and $\lambda\in \Lambda_\eps(\mu,M)$, there exists a solution $\psi_\mathrm{uf}=(X_\mathrm{lf},Y_\mathrm{lf},Z_\mathrm{lf}):\mathcal{I}_\mathrm{lf}\to \mathbb{C}^3$ of~\eqref{eq:fold_linearized_diag} which satisfies
\begin{align*}
X_\mathrm{lf}\left(\frac{\nu}{\eps}\right) &= X_\mathrm{out}, \qquad Y_\mathrm{lf}\left(\frac{\nu}{\eps}\right) = \eps Y_\mathrm{out}, \qquad 
 Z_\mathrm{lf}\left(\frac{1}{\nu}\right) = Z_\mathrm{in}
\end{align*}
Moreover, $(X_\mathrm{lf},Y_\mathrm{lf})=(X_\mathrm{lf}^0,Y_\mathrm{lf}^0)$, the corresponding solution guaranteed by Proposition~\ref{prop:fold_bvp}, and there exist $(\eps,\lambda)$-independent constants $C_\nu, \vartheta_\nu$ such that the solution satisfies
\begin{align*}
\left| Z_\mathrm{lf}\left(\xi_{\mathrm{lf},\eps,\nu}^{\mathrm{out},L}\right) \right| \leq C_\nu |Z_\mathrm{in}| \re^{-\vartheta_\nu/\eps}.
\end{align*}
\end{proposition}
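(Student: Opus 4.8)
The plan is to exploit the triangular block structure of the transformed system~\eqref{eq:fold_linearized_diag}: the scalar $Z$-equation decouples completely from the center-unstable $(X,Y)$-dynamics, and, conversely, the $(X,Y)$-equations do not involve $Z$. Hence a solution $(X_\mathrm{lf},Y_\mathrm{lf},Z_\mathrm{lf})$ of~\eqref{eq:fold_linearized_diag} on $\mathcal{I}_\mathrm{lf}$ can be assembled from solutions of the two subsystems constructed independently. First I would apply Proposition~\ref{prop:fold_bvp} with the prescribed data $(X_\mathrm{out},Y_\mathrm{out})$ and $\lambda\in\Lambda_\eps(\mu,M)$ --- noting that the $(X,Y)$-part of~\eqref{eq:fold_linearized_diag} is exactly system~\eqref{eq:stability_fold_XY} along the wave train $(x,y)(\zeta)=(x_\eps,y_\eps)(\zeta/\theta_\mathrm{lf})$ --- to obtain the solution $(X_\mathrm{lf}^0,Y_\mathrm{lf}^0)$ satisfying the center-unstable boundary conditions $X_\mathrm{lf}^0=X_\mathrm{out}$ and $Y_\mathrm{lf}^0=\eps Y_\mathrm{out}$ at the outgoing endpoint $\xi_{\mathrm{lf},\eps,\nu}^{\mathrm{out},L}$. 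Setting $(X_\mathrm{lf},Y_\mathrm{lf}):=(X_\mathrm{lf}^0,Y_\mathrm{lf}^0)$ then immediately yields the claimed identification together with the first two boundary conditions.

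It remains to produce $Z_\mathrm{lf}$ and the exponential bound. The $Z$-equation in~\eqref{eq:fold_linearized_diag} is a scalar linear ODE $Z_\zeta=\big(-\tfrac{c}{\theta_\mathrm{lf}}+\mathfrak{r}(\zeta)\big)Z$ with $\mathfrak{r}(\zeta)=\mathcal{O}\big(x_\eps(\zeta/\theta_\mathrm{lf}),y_\eps(\zeta/\theta_\mathrm{lf}),\eps,\lambda\big)$, so I would integrate it forward in $\zeta$ from the incoming endpoint $\xi_{\mathrm{lf},\eps,\nu}^\mathrm{in}$ with initial value $Z_\mathrm{in}$, obtaining $Z_\mathrm{lf}(\zeta)=Z_\mathrm{in}\exp\!\big(\int_{\zeta_\mathrm{in}}^{\zeta}\big(-\tfrac{c}{\theta_\mathrm{lf}}+\mathfrak{r}(\tilde\zeta)\big)\de\tilde\zeta\big)$; since $Z$ does not enter the $(X,Y)$-equations either, the triple $(X_\mathrm{lf},Y_\mathrm{lf},Z_\mathrm{lf})$ is a genuine solution of~\eqref{eq:fold_linearized_diag} with all three boundary conditions. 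The crucial estimate is the uniform negativity of the exponent over the whole fold passage: by Lemma~\ref{lem:fold_existence} the wave train satisfies $|x_\eps(\xi)|+|y_\eps(\xi)|\leq\eta(\nu)$ throughout $\mathcal{I}_\mathrm{lf}$ with $\eta(\nu)\to0$, while $\eps\ll1$ and $|\lambda|\leq C_M\mu$ for $\lambda\in\Lambda_\eps(\mu,M)$ (indeed $|\lambda|\lesssim\eps^{1/6}$ on the rectangular part $\Lambda_{\mathrm{r},\eps}(\mu,M)$). Therefore, taking $\nu$ and $\mu$ small and then $\eps$ small, one has $|\mathfrak{r}(\zeta)|\leq\tfrac{c}{2\theta_\mathrm{lf}}$ and hence $\Re\big(-\tfrac{c}{\theta_\mathrm{lf}}+\mathfrak{r}(\zeta)\big)\leq-\tfrac{c}{2\theta_\mathrm{lf}}$ for all $\zeta$ in the rescaled interval. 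As the length of $\mathcal{I}_\mathrm{lf}$ in the $\zeta$-variable equals $\theta_\mathrm{lf}\big(\tfrac{\nu}{\eps}-\tfrac{1}{\nu}\big)\geq\tfrac{\theta_\mathrm{lf}\nu}{2\eps}$ for $\eps$ small, integrating the explicit solution over the interval yields $\big|Z_\mathrm{lf}(\xi_{\mathrm{lf},\eps,\nu}^{\mathrm{out},L})\big|\leq|Z_\mathrm{in}|\,\re^{-\vartheta_\nu/\eps}$ with $\vartheta_\nu:=c\nu/4$ and $C_\nu:=1$ (any convenient larger constant works).

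The argument at the upper fold, i.e.\ the $Z$-analogue entering Proposition~\ref{prop:fold_bvp_oc_upper}, is identical, using the analogues of Proposition~\ref{prop:fold_bvp} and Lemma~\ref{lem:fold_existence} near $(u,w)=(\bar{u}_1,f(\bar{u}_1))$. The only step needing genuine care rather than routine bookkeeping is the uniform control of the coefficient $-\tfrac{c}{\theta_\mathrm{lf}}+\mathfrak{r}(\zeta)$ over the entire passage through the fold: a priori the wave train might drift away from the fold point, making $\mathfrak{r}$ large, but this is precisely excluded by the $C^1$-closeness of $\Gamma_\eps$ to $\mathcal{M}^{\lr,+}_\eps$ and the resulting estimate $|x_\eps|+|y_\eps|\leq\eta(\nu)$ of Lemma~\ref{lem:fold_existence}. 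Converting the $\zeta$-decay into the stated $\re^{-\vartheta_\nu/\eps}$ bound then amounts only to correctly accounting for the $\theta_\mathrm{lf}$-rescaling and the $\mathcal{O}(\nu/\eps)$ length of the fold interval, and choosing the implicit constants $(\mu,\nu,\eps)$ compatibly with the hypotheses of Proposition~\ref{prop:fold_bvp}.
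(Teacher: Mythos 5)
Your proposal is correct and follows essentially the same route as the paper, which deduces the result directly from Proposition~\ref{prop:fold_bvp} together with the block-diagonal structure of~\eqref{eq:fold_linearized_diag}; your explicit forward integration of the decoupled scalar $Z$-equation, using the bound $|x_\eps|+|y_\eps|\leq\eta(\nu)$ from Lemma~\ref{lem:fold_existence} and the $\mathcal{O}(\nu/\eps)$ length of the fold interval to obtain the $\re^{-\vartheta_\nu/\eps}$ decay, is exactly the intended (and in the paper omitted) bookkeeping.
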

\begin{proof}
The results follows from Proposition~\ref{prop:fold_bvp} and the block-diagonal form of the equation~\eqref{eq:fold_linearized_diag}.
\end{proof}
We are now able to complete the proof of Proposition~\ref{prop:fold_bvp_oc_lower}. The proof of Proposition~\ref{prop:fold_bvp_oc_upper} is similar.
\begin{proof}[Proof of Proposition~\ref{prop:fold_bvp_oc_lower} ]
In light of Proposition~\ref{prop:fold_bvp_Z}, it remains to reframe the results in terms of the original $(U,V,W)$-coordinates. Through the fold, we define projections $\smash{Q^{\mathrm{cu/s}}_\mathrm{uf}(\xi)}$ for the system~\eqref{eq:fold_linearized_diag}, given by
\begin{align}\label{eq:foldQ_projections}
Q^\mathrm{cu}_\mathrm{lf}(\xi) = \begin{pmatrix} I_2 & 0\\ 0 & 0 \end{pmatrix}, \qquad Q^\mathrm{s}_\mathrm{lf}(\xi) = \begin{pmatrix} 0 & 0\\ 0 & 1 \end{pmatrix}.
\end{align}
To obtain these projections in the original coordinates, we recall the transformation of Lemma~\ref{lem:fold_transf} and obtain corresponding projections $\smash{P^{\mathrm{cu/s}}_{\mathrm{lf}, \eps, \lambda,\nu}(\xi)}$ as
\begin{align}\label{eq:fold_dich_proj}
P^{\mathrm{cu/s}}_{\mathrm{lf}, \eps, \lambda,\nu}(\xi)&\coloneqq N_{\eps,\lambda}(\xi)Q^\mathrm{cu/s}_\mathrm{lf}(\xi)N_{\eps,\lambda}(\xi)^{-1}.
\end{align}
For (i), we note that near $\smash{\xi = \xi_{\mathrm{lf},\eps,\nu}^{\mathrm{out},L}}=L_\eps+\tfrac{\nu}{\eps}$, when the wave-train solution is close to the left slow manifold $\mathcal{M}^\mathrm{l}_\eps$, the projections $\smash{P^i_{\mathrm{lf}, \eps, \lambda,\nu}, P^i_{\mathrm{l},\eps,\lambda,\nu}}, i=\mathrm{s,cu},$ can both be extended in such a way that they are each well defined on an overlapping interval of width $\tfrac{\nu}{2\eps}$. As the projections $\smash{P^i_{\mathrm{lf}, \eps, \lambda,\nu}, P^i_{\mathrm{l},\eps,\lambda,\nu}}$ along the wave train while it lies inside and outside an arbitrary small neighborhood of the fold point, respectively, one can extend the definition of the projections $\smash{P^i_{\mathrm{l},\eps,\lambda,\nu}(\xi)}$ to $\xi =\tfrac{3\nu}{4\eps}$, and the projections $\smash{P^i_{\mathrm{lf}, \eps, \lambda,\nu}(\xi)}$ to $\xi =L_\eps+\tfrac{5\nu}{4\eps}$, where we note that $\xi = \tfrac{\nu}{\eps}$ and $\xi = L_\eps+ \tfrac{\nu}{\eps}$ can be identified by periodicity. Therefore, applying~\cite[Lemma~B.3]{BengeldeRijk2025}, the estimate~\eqref{eq:foldproj_lower_est2} holds. 

In order to describe the behavior of the projections $P^i_{\mathrm{lf},0,\lambda,\nu}$ as $\eps\to0$, 
we denote
\begin{align*}
P^i_{\mathrm{lf},0,\lambda,\nu}\left(\tfrac{1}{\nu} \right) \coloneqq  \lim_{\eps\to0}P^i_{\mathrm{lf},\eps,\lambda,\nu}\left(L_\eps+\tfrac{1}{\nu} \right).
\end{align*}For the estimate~\eqref{eq:foldproj_lower_est1}, we first note that by construction and the pointwise estimates of Proposition~\ref{prop:pointwise}, we have that 
\begin{align*}
\left\|P^i_{\mathrm{lf},\eps,\lambda,\nu}\left(L_\eps+\tfrac{1}{\nu} \right) - P^i_{\mathrm{lf},0,0,\nu}\left(\tfrac{1}{\nu} \right)\right\| & \leq C_\nu \left(\epsilon^{\frac{2}{3}}+|\lambda|\right).
\end{align*}
We now claim that $P^i_{\mathrm{lf},0,0,\nu}\left( \tfrac{1}{\nu} \right)=P^i_{\f,\nu}\left( \tfrac{1}{\nu} \right)$. To see this, we first recall from Proposition~\ref{prop:varred2} that 
\begin{align*}
  \ker  P^\mathrm{s}_{\f,\nu}\left( \tfrac{1}{\nu} \right) = \mathrm{Ran}   P^\mathrm{cu}_{\f,\nu}\left( \tfrac{1}{\nu} \right)= \mathrm{Sp}\left\{\Phi_\f(\tfrac{1}{\nu}), \Psi_{\f,\nu}(\tfrac{1}{\nu}) \right\},
\end{align*}
where the solutions $\Phi_\f(\tfrac{1}{\nu}), \Psi_{\f,\nu}(\tfrac{1}{\nu})$ are both bounded as $\xi\to-\infty$. Furthermore, since $P^\mathrm{s}_{\f,\nu}\left(\frac1{\nu}\right)$ has rank $1$, we have that 
\begin{align*}
  \mathrm{Ran}  P^\mathrm{s}_{\f,\nu}\left( \tfrac{1}{\nu} \right) = \mathrm{Sp}\left\{\Psi_{\f,*}(\tfrac{1}{\nu}) \right\},
\end{align*}
where $\smash{\Psi_{\f,*}}$ is the unique (up to scalar multiple) solution of~\eqref{varred2} which decays exponentially as $\xi\to\infty$ and is unbounded as $\xi\to-\infty$.

Inspecting~\eqref{eq:eq:transformed_lin_fold_diag_xi} in the limit $\eps\to0$, we see that the one dimensional subspace $X=Y=0$ uniquely captures all solutions which decay exponentially in $\xi$, hence \begin{align*}
   \mathrm{Ran}  P^\mathrm{s}_{\mathrm{lf},0,0,\nu}\left(\tfrac{1}{\nu}\right)= \mathrm{Ran}  P^\mathrm{s}_{\f,\nu}\left( \tfrac{1}{\nu} \right).
\end{align*}
By construction (see Lemma~\ref{lem:fold_transf}), when $\lambda=0$
\begin{align*}
  \mathrm{Ran}  P^\mathrm{cu}_{\mathrm{lf},\eps,0,\nu}\left(L_\eps+ \tfrac{1}{\nu} \right) = \ker P^\mathrm{s}_{\mathrm{lf},\eps,0,\nu}\left(L_\eps+\tfrac{1}{\nu} \right)= T_{\Gamma_\eps\left(\frac{1}{\nu}\right)} \mathcal{W}^\mathrm{u}(\Gamma_\eps),
\end{align*}
where $\smash{T_{\Gamma_\eps(1/\nu)} \mathcal{W}^\mathrm{u}(\Gamma_\eps)}$ denotes the tangent space of the unstable manifold $\mathcal{W}^\mathrm{u}(\Gamma_\eps)$ of the periodic orbit $\Gamma_\eps$ at $\xi=\tfrac{1}{\nu}$. Furthermore, it follows from the proof of~\cite[Proposition 4.7]{CASCH} that the manifolds $\mathcal{W}^\mathrm{u}(\Gamma_\eps)$ and $\mathcal{W}^\mathrm{u}(\mathcal{M}^\mathrm{r}_0)$ are $C^1$-$\mathcal{O}(\eps)$-close upon entering a neighborhood of the upper fold. Thus we have that 
\begin{align*}
  \mathrm{Ran}  P^\mathrm{cu}_{\mathrm{lf},0,0,\nu}\left( \tfrac{1}{\nu} \right) = \ker P^\mathrm{s}_{\mathrm{lf},0,0,\nu}\left(\tfrac{1}{\nu} \right)= T_{\left(u_\f\left(\frac{1}{\nu}\right),v_\f\left(\frac{1}{\nu}\right), f(u_1)\right)} \mathcal{W}^\mathrm{u}(\mathcal{M}^\mathrm{r}_0).
\end{align*}
This space is two dimensional and must consist of solutions which are bounded as $\xi\to-\infty$. Since the reduced fast front $\smash{\left(u_\f\left(\tfrac{1}{\nu}\right),v_\f\left(\tfrac{1}{\nu}\right), f(u_1)\right))^\top}$ lies in $\mathcal{W}^\mathrm{u}(\mathcal{M}^\mathrm{r}_0)$, we have that the derivative $\Phi_\f(\xi)$ of the reduced front solution must lie in $T_{\left(u_\f\left(\xi\right),v_\f\left(\xi\right), f(u_1)\right)} \mathcal{W}^\mathrm{u}(\mathcal{M}^\mathrm{r}_0)$ at $\xi=\tfrac{1}{\nu}$. As the solution $\Psi_{\f,*}$ is unbounded as $\xi\to-\infty$, the second solution in this space must be a linear combination of  $\Phi_\f, \Psi_{\f,\nu}$. Hence 
\begin{align*}
\ker P^\mathrm{s}_{\mathrm{lf},0,0,\nu}\left(\tfrac{1}{\nu} \right)= \ker  P^\mathrm{s}_{\f,\nu}\left( \tfrac{1}{\nu} \right),
\end{align*}
and $\smash{P^i_{\mathrm{lf},0,0,\nu}\left( \tfrac{1}{\nu} \right)=P^i_{\f,\nu}\left( \tfrac{1}{\nu} \right), i=\mathrm{s, cu}}$, as required, which completes the proof of estimate~\eqref{eq:foldproj_lower_est1}.

For the estimate~\eqref{eq:foldprop_thirdrow}, we directly apply the definition of the projection $\smash{\widetilde{P}_{\f,\nu}^{\cc}\left(\tfrac{1}{\nu}\right)}$ from Proposition~\ref{prop:varred2} to~\eqref{eq:fold_dich_proj}, from which we note by~\eqref{eq:foldQ_projections} that only the third row of $\smash{P^\mathrm{s}_{\mathrm{lf},\eps,\lambda,\nu}(\xi^\mathrm{in}_{\mathrm{lf},\eps,\nu})}$ is relevant in determining~\eqref{eq:foldprop_thirdrow}. The result follows upon examination of~\eqref{eq:fold_linearized_transf_def}, noting the structure of the third row of $\mathcal{N}_\eps'(V_\eps(\xi))$ via~\eqref{eq:fold_linearized_transf_lambda0} and the second row of $D_W\mathcal{H}_{\eps,\lambda}(0,V_\eps)$ via~\eqref{eq:fold_linearized_transf_lambdapart}.

Finally, we note that the transformation $N_{\eps,\lambda}(\xi)$ given by~\eqref{eq:fold_linearized_transf_def} is $(\eps,\lambda,\mu)$-uniformly bounded for $\xi \in \mathcal{I}_\mathrm{lf}$ and $\lambda \in R_1(\mu)$, and
\begin{align}\label{eq:deriv_fold_transf}
U_\eps'(\xi) = \mathcal{N}_\eps'(V_\eps(\xi))V_\eps'(\xi)= \mathcal{N}_\eps'(V_\eps(\xi)) \begin{pmatrix}x_\eps'(\xi)\\ y_\eps'(\xi)\\0 \end{pmatrix},
\end{align}
so that 
\begin{align}\label{eq:deriv_fold_transf_lambda}
N_{\eps,\lambda}(\xi)\begin{pmatrix}x_\eps'(\xi)\\ y_\eps'(\xi)\\0 \end{pmatrix}=U_\eps'(\xi)+ \begin{pmatrix}\mathcal{O}(|\lambda||U_\eps'(\xi)|)\\ \mathcal{O}(|\lambda||U_\eps'(\xi)|)\\\mathcal{O}(\eps|\lambda||U_\eps'(\xi)|) \end{pmatrix}.
\end{align}
 Interpreting the results of Proposition~\ref{prop:tameestimatecenterunstable} in the region $\Re(\lambda)\leq \eps^{1/5}$ and Proposition~\ref{prop:fold_bvp_Z} in the region $|\Re(\lambda)| \leq \mu \eps^{1/6}$ in terms of the projections $\smash{P^\mathrm{cu/s}_{\mathrm{lf},\eps,\lambda,\nu}(\xi)}$, the result follows upon converting back to the $(U,V,W)$-coordinates and making use of~\eqref{eq:deriv_fold_transf}-\eqref{eq:deriv_fold_transf_lambda}.
\end{proof}

\section{The region \texorpdfstring{$R_2(\mu,\varpi,\varrho)$}{R2}}\label{sec:R2}

Let $\varrho > 0$ be as in Proposition~\ref{prop: regionR3} and $\mu > 0$ as in Proposition~\ref{prop:region_r1}. In this section, we prove Proposition~\ref{prop:region_r2} by analyzing the spectrum of $\El_\eps$ in the compact intermediate region $R_2(\mu,\varpi,\varrho)$, where $\varpi > 0$ is a sufficiently small $\eps$-independent constant. In particular, we prove that all spectrum of $\El_\eps$ in $R_2(\mu,\varpi,\varrho)$ must lie in the open left-half plane. To this end, it suffices to show, as outlined in~\S\ref{sec:spectral_setup}, that the eigenvalue problem~\eqref{fulleigenvalueproblem_unscaled}-\eqref{Floquet_BC} admits no nontrivial solution for $\lambda \in R_2(\mu,\varpi,\varrho)$ with $\Re(\lambda) \geq 0$. 

We proceed as follows. First, we establish that the fast subsystem
\begin{align} \label{fastsubsystemR2}
\check\Psi_\xi = \check A_\f(\xi;\eps,\lambda) \check\Psi, \qquad \check A_i(\xi;\eps,\lambda) = \begin{pmatrix} 0 & 1 \\ \lambda - f'(u_\eps(\xi)) & -c \end{pmatrix},
\end{align}
of~\eqref{fulleigenvalueproblem_unscaled} admits an exponential dichotomy on $\R$ by relating it to the spectral problem
\begin{align} \label{SturmLiouville}
u_{\xi\xi} + c u_{\xi} + f'(u_i(\xi))u = \lambda u
\end{align}
associated with the traveling front (or back) solution $u_i(x - ct)$, $i = \f,\bb$ to the Fisher--KPP-type equation 
\begin{align*}
u_t = u_{xx} + f(u).
\end{align*}
Next, we divide $R_2(\mu,\varpi,\varrho)$ into two subregions
\begin{align*}
R_{2,1}(\mu,\varpi,\varrho) &= \{\lambda \in \C : |\Re(\lambda)| \leq \varpi, \mu \leq |\lambda| \leq \varrho\},\\
R_{2,2}(\mu,\varpi,\varrho) &= \{\lambda \in \C : \Re(\lambda) \geq \varpi, \mu \leq |\lambda| \leq \varrho\}.
\end{align*}

For $\lambda \in R_{2,1}(\mu,\varpi,\varrho)$ we show that the exponential dichotomy of~\eqref{fastsubsystemR2} is inherited by the rescaled system~\eqref{fastsub}, which allows for a block diagonalization of the full eigenvalue problem with the aid of the Riccati transform. As in the proof of Proposition~\ref{prop:slow}, the dynamics in the slow component of the diagonalized eigenvalue problem can be computed to leading-order. The Floquet boundary condition~\eqref{eigenvalueproblemBC} then leads to an equation for $\lambda$ which has no solutions $\lambda \in R_{2,1}(\mu,\varpi,\varrho)$ with $\Re(\lambda) \geq 0$, provided $\varpi > 0$ is sufficiently small. 

Given a fixed $\varpi > 0$, roughness results yield that for $\lambda \in R_{2,2}(\mu,\varpi,\varrho)$ the exponential dichotomy on $\R$ of~\eqref{fastsubsystemR2} transfers to the full eigenvalue problem~\eqref{fulleigenvalueproblem_unscaled}. This prohibits nontrivial solutions to~\eqref{fulleigenvalueproblem_unscaled} to fulfill the Floquet boundary condition~\eqref{Floquet_BC} and, thus, we find that $\El_\eps$ has no spectrum in $R_{2,2}(\mu,\varpi,\varrho)$.

\subsection{The fast subsystem}

We obtain an exponential dichotomy on $\R$ for the fast subsystem~\eqref{fastsubsystemR2} for each $\lambda \in R_2(\mu,\varpi,\varrho)$. First, we establish exponential dichotomies for~\eqref{fastsubsystemR2} along the front and the back of the wave train by perturbing from the reduced fast subsystems given by
\begin{align} \label{reducedfastR2}
\check\Psi_\xi = \check{\mathcal{A}}(u_i(\xi);\lambda) \check\Psi, \qquad \check{\mathcal{A}}(u;\lambda) = \begin{pmatrix} 0 & 1 \\ \lambda - f'(u) & -c \end{pmatrix}
\end{align}
for $i = \f,\bb$. System~\eqref{reducedfastR2} is the first-order formulation of the Sturm--Liouville problem~\eqref{SturmLiouville}, which admits, provided $\varpi > 0$ is sufficiently small, no nontrivial bounded solutions for $\lambda \in R_2(\mu,\varpi,\varrho)$. This yields an exponential dichotomy on $\R$ of~\eqref{reducedfastR2}, which can be transferred to~\eqref{fastsubsystemR2} using roughness results. On the other hand, the coefficient matrix of~\eqref{fastsubsystemR2} is pointwise hyperbolic and varies slowly along the left and right branch of the critical manifold for each $\lambda \in R_2(\mu,\varpi,\varrho)$. As in the proof of Proposition~\ref{prop:slow}, this yields exponential dichotomies of~\eqref{reducedfastR2} along these branches. Finally, pasting the exponential dichotomies on the various intervals together, we establish an exponential dichotomy on $\R$ for~\eqref{fastsubsystemR2}.

\begin{proposition} \label{prop:expdi fast R2}
Fix constants $\mu,\varrho > 0$ with $\mu < \varrho$. Provided $0 < \eps \ll \varpi \ll 1$, system~\eqref{fastsubsystemR2} possesses for each $\lambda \in R_2(\mu,\varpi,\varrho)$ an exponential dichotomy on $\R$ with $\eps$- and $\lambda$-independent constants.
\end{proposition}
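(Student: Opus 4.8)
The plan is to build the dichotomy for the $L_\eps$-periodic system~\eqref{fastsubsystemR2} piecewise along one period and then paste the pieces together, exploiting that on each piece Proposition~\ref{prop:pointwise} controls $\check A_\f(\cdot;\eps,\lambda)$. Concretely, I would decompose $[0,L_\eps]$ into the front window $\smash{[\tfrac{\log\eps}{\nu},\tfrac{1}{\nu}]}$ (shifted appropriately), the left-branch window of length $\mathcal O(\eps^{-1})$, the back window, and the right-branch window, where the left- and right-branch windows also contain the slow passages near the lower and upper folds; cf.~the intervals of Proposition~\ref{prop:pointwise}. On each window I would produce an exponential dichotomy of~\eqref{fastsubsystemR2} with rank-one projections and $\eps$- and $\lambda$-uniform constants, and then glue.

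\paragraph{Front and back windows.}
Here $\check A_\f(\xi;\eps,\lambda)$ is $\mathcal O(\eps^{2/3})$-close (by Proposition~\ref{prop:pointwise}) to the coefficient matrix $\check{\mathcal A}(u_\f(\xi);\lambda)$ of the reduced system~\eqref{reducedfastR2}. I would first show that~\eqref{reducedfastR2} itself has an exponential dichotomy on all of $\R$ with $\lambda$-uniform constants for $\lambda\in R_2(\mu,\varpi,\varrho)$. Since $\check{\mathcal A}(u_\f(\xi);\lambda)$ converges exponentially as $\xi\to-\infty$ to $\check{\mathcal A}(u_2;\lambda)$ and algebraically as $\xi\to+\infty$ to $\check{\mathcal A}(u_1;\lambda)$, and since a short computation shows both asymptotic matrices are hyperbolic with exactly one stable and one unstable eigenvalue once $\Re(\lambda)\ge-\varpi$ and $|\lambda|\ge\mu$ with $\varpi$ small (their non-hyperbolicity loci are the dispersion parabolas $\Re\lambda=f'(u_2)-\Im(\lambda)^2/c^2$ and $\Re\lambda=-\Im(\lambda)^2/c^2$, both disjoint from $R_2$ for $\varpi$ small), the operator $\partial_\xi-\check{\mathcal A}(u_\f(\cdot);\lambda)$ is Fredholm of index $0$ on $\R$. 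Moreover $\lambda=0$ is excluded and, by the standard spectral properties of the Fisher--KPP front, the only bounded solution of~\eqref{SturmLiouville} that is not exponentially localized is the translational resonance at $\lambda=0$; hence~\eqref{reducedfastR2} has no nontrivial bounded solution for $\lambda\in R_2$ and therefore an exponential dichotomy on $\R$. Roughness (e.g.~\cite[Proposition~5.1]{COP}, together with~\cite[Lemma~3.4]{PAL} for the algebraic tail) then transfers this dichotomy to~\eqref{fastsubsystemR2} on the front window with constants that stay $\eps$- and $\lambda$-uniform, because the perturbation is $\mathcal O(\eps^{2/3})$ and the reduced dichotomy is uniform over the compact set $R_2$. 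The back is handled identically via the reduced back system and the back estimates of Proposition~\ref{prop:pointwise}.

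\paragraph{Slow branches and fold passages.}
Away from the front and back --- i.e.\ along the left and right branches of the critical manifold, including the slow passages near both folds --- Proposition~\ref{prop:pointwise} shows $\check A_\f(\xi;\eps,\lambda)$ is $\mathcal O(\eps^{2/3})$-close to the slowly varying matrix $\check{\mathcal A}(u_{\lr}(\eps\xi);\lambda)$, resp.\ $\check{\mathcal A}(u_{\rr}(\eps\xi);\lambda)$, whose $\xi$-derivative is $\mathcal O(\eps^{2/3})$. Here $u_{\lr},u_{\rr}$ take values on the outer branches of the cubic, where $f'\le0$, so $\check{\mathcal A}(u_{\lr/\rr}(y);\lambda)$ is pointwise hyperbolic with one stable and one unstable eigenvalue and an $\eps$-independent spectral gap for every $\lambda\in R_2$ with $\varpi$ small; crucially this persists at the fold values $u_1,\bar u_1$ precisely because $|\lambda|\ge\mu$. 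As in the proof of Proposition~\ref{prop:slow}, combining the slowly-varying result~\cite[Proposition~6.1]{COP} with the roughness result~\cite[Proposition~5.1]{COP} then yields exponential dichotomies of~\eqref{fastsubsystemR2} on these windows with $\eps$- and $\lambda$-uniform constants and rank-one projections close to the corresponding spectral projections of $\check A_\f$. Finally I would paste the dichotomies on consecutive windows: at each junction the one-dimensional ``unstable from the left'' and ``stable from the right'' subspaces are transverse, since both are $\mathcal O(\eps^{2/3})$-close to the distinct unstable and stable eigendirections of essentially the same hyperbolic matrix $\check{\mathcal A}(u_\eps(\cdot);\lambda)$, which yields an exponential dichotomy on $\R$ with the asserted uniform constants.

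\paragraph{Main obstacle.}
The delicate point is making the constants genuinely $\eps$-uniform across the fold passages: near a fold the spectral gap of $\check{\mathcal A}(u_\eps(\xi);\lambda)$ degenerates to size $\mathcal O(\mu)$ as $\lambda$ approaches the imaginary axis at $|\lambda|=\mu$, while the fold window has length $\mathcal O(\eps^{-1})$, so one must verify that the corresponding weak growth over the fold occurs in the favorable direction and does not enter the dichotomy constant. This is controlled by the sign of the weak spatial eigenvalue $\nu_+(\xi)\approx(\lambda-f'(u_\eps(\xi)))/c$, whose real part stays positive along the fold because $-f'(u_\eps(\xi))\ge0$ there and $\Re(\lambda)\ge-\varpi$ with $\varpi\ll\mu^2$; quantifying this carefully --- i.e.\ choosing $\varpi$ small relative to $\mu$ and $c$ so that $R_2$ avoids the dispersion parabolas and the fold matrices stay $1$-$1$ hyperbolic --- is the crux of the argument.
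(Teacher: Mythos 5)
Your proposal follows essentially the same route as the paper: an exponential dichotomy on $\R$ for the reduced front/back systems obtained by excluding bounded solutions of the Fisher--KPP-type Sturm--Liouville problem~\eqref{SturmLiouville} (the paper does this via the positivity of $u_i'$ and Sturm theory), roughness to transfer it to~\eqref{fastsubsystemR2} on the front/back windows, a slowly-varying/pointwise-hyperbolic argument on the outer branches including the fold passages (where $|\lambda|\ge\mu$ supplies the uniform spectral gap, exactly as you identify in your final paragraph), and transversal pasting of the pieces. The only points to tighten are that Proposition~\ref{prop:pointwise} gives only $\delta_0(\nu)$-closeness, not $\mathcal{O}(\eps^{2/3})$, to the slow reduced matrix across the fold windows---which still suffices for the perturbation argument---and that after pasting the windows over one period you should invoke periodicity (e.g.~\cite[Theorem~1]{Palmer1987}) to extend the dichotomy from a bounded interval to all of $\R$ with uniform constants.
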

\begin{proof}
We start by establishing an exponential dichotomy for~\eqref{reducedfastR2} on $\R$. By Proposition~\ref{prop:frontback} there exist $\lambda$-independent constants $C,\upsilon > 0$ such that
\begin{align} \label{asympmatrixestR2} 
\begin{split}
\left\|\check{\mathcal{A}}(u_\f(-\xi);\lambda) - \check{\mathcal{A}}(u_2;\lambda)\right\|, \left\|\check{\mathcal{A}}(u_\bb(-\xi);\lambda) - \check{\mathcal{A}}(\bar{u}_2;\lambda)\right\| &\leq C\re^{-\upsilon \xi}, \\
\left\|\check{\mathcal{A}}(u_\f(\xi);\lambda) - \check{\mathcal{A}}(u_1;\lambda)\right\|, \left\|\check{\mathcal{A}}(u_\bb(\xi);\lambda) - \check{\mathcal{A}}(\bar{u}_1;\lambda)\right\| &\leq \frac{C}{1+\xi},\end{split}
\end{align}
for $\xi \geq 0$ and $\lambda \in R_2(\mu,\varpi,\varrho)$. Using that $f'(u_2),f'(\bar{u}_2) < 0$, $f'(u_1), f'(\bar{u}_1) = 0$ and $c > c_*(a) > 0$, we infer, provided $\varpi > 0$ is sufficiently small, that the asymptotic matrices $\check{\mathcal{A}}(u_j;\lambda)$ and $\check{\mathcal{A}}(\bar{u}_j;\lambda)$ are for each $\lambda \in R_2(\mu,\varpi,\varrho)$ and $j = 1,2$ hyperbolic. Hence, by~\cite[Lemma~3.4]{PAL} system~\eqref{reducedfastR2} admits exponential dichotomies on both $(-\infty,0]$ and $[0,\infty)$ for $i = \f,\bb$. Fix $i \in \{\f,\bb\}$. Since $u_i'$ is a solution to the Sturm--Liouville problem~\eqref{SturmLiouville} at $\lambda = 0$ which possesses no zeros by Proposition~\ref{prop:frontback}, it follows from~\cite[p.~344 and Theorem~5.5]{Sattinger} that~\eqref{SturmLiouville} admits, provided $\varpi > 0$ is sufficiently small, no nontrivial bounded solutions for all $\lambda \in R_2(\mu,\varpi,\varrho)$. Therefore, the first-order formulation~\eqref{reducedfastR2} also has no nontrivial bounded solutions for all $\lambda \in R_2(\mu,\varpi,\varrho)$. Hence,~\cite[Proposition~2.1]{PAL} implies that~\eqref{reducedfastR2} has for all $\lambda \in R_2(\mu,\varpi,\varrho)$ an exponential dichotomy on $\R$ with projections $\check{P}_i(\xi;\lambda)$. Since $R_2(\mu,\varpi,\varrho)$ is compact and $\check{\mathcal{A}}(u;\lambda)$ depends continuously on $\lambda$, the constants associated with this exponential dichotomy can be chosen independent of $\lambda$ by roughness of exponential dichotomies, cf.~\cite[Proposition~5.1]{COP}. Finally,~\cite[Lemma~3.4]{PAL} and its proof in conjunction with estimate~\eqref{asympmatrixestR2} yield $\lambda$-independent constants $K,\alpha_0 > 0$ such that
\begin{align} \label{projest1R2} 
\begin{split}
\left\|\check{P}_\f(-\xi;\lambda) - \check{\mathcal P}(u_2;\lambda)\right\|,\left\|\check{P}_\bb(-\xi;\lambda) - \check{\mathcal P}(\bar{u}_2;\lambda)\right\| &\leq K\re^{-\alpha_0 \xi}, \\ 
\left\|\check{P}_\f(\xi;\lambda) - \check{\mathcal P}(u_1;\lambda)\right\|, \left\|\check{P}_\bb(\xi;\lambda) - \check{\mathcal P}(\bar{u}_1;\lambda)\right\| &\leq \frac{K}{1+\xi},
\end{split}
\end{align}
for $\xi \geq 0$ and $\lambda \in R_2(\mu,\varpi,\varrho)$, where $\check{\mathcal{P}}(u;\lambda)$ is the spectral projection onto the stable eigenspace of the matrix $\check{\mathcal{A}}(u;\lambda)$.

We transfer the exponential dichotomy of~\eqref{reducedfastR2} on $\R$ to exponential dichotomies of~\eqref{fastsubsystemR2} on the intervals $\mathcal{I}_\f = \smash{[\frac{\log(\eps)}{\chi},\frac{1}{\chi}]}$ and $\mathcal{I}_\bb = \smash{[L_{\lr,\eps} + \frac{\log(\eps)}{\chi},L_{\lr,\eps} + \frac{1}{\chi}]}$. Provided $0 < \eps \ll \chi \ll 1$, Proposition~\ref{prop:pointwise} in combination with roughness results, cf.~\cite[Proposition~5.1]{COP}, implies that system~\eqref{fastsubsystemR2} admits for each $\lambda \in R_2(\mu,\varpi,\varrho)$ an exponential dichotomy on $\mathcal{I}_i$ with $\lambda$- and $\eps$-independent constants and projections $\check{Q}_{i,\eps}(\xi;\lambda)$ for $i = \f,\bb$. In addition, there exists a $\lambda$- and $\eps$-independent constant $C_\chi > 0$ such that the estimates
\begin{align} \label{projest2R2}
\begin{split}
\left\|\check{Q}_{\f,\eps}(\xi;\lambda) -\check{P}_\f(\xi;\lambda)\right\| &\leq C_\chi \eps^{\frac23}, \qquad \xi \in \mathcal{I}_\f,\\
\left\|\check{Q}_{\bb,\eps}(\xi;\lambda) -\check{P}_\bb(\xi-L_{\lr,\eps};\lambda)\right\| &\leq C_\chi \eps^{\frac23}, \qquad \xi \in \mathcal{I}_\bb
\end{split}
\end{align}
hold for $\lambda \in R_2(\mu,\varpi,\varrho)$.

Next, we establish exponential dichotomies for~\eqref{fastsubsystemR2} on the intervals $\mathcal{J}_{\lr} = \smash{[\frac{1}{\chi},L_{\lr,\eps} + \frac{\log(\eps)}{\chi}]}$ and $\mathcal{J}_{\rr} = \smash{[\frac{1}{\chi}-L_{r,\eps},\frac{\log(\eps)}{\chi}]}$. First, provided $0 < \eps \ll \tilde{\chi} \ll 1$, Proposition~\ref{prop:pointwise} yields that the coefficient matrix $\check{A}_{\f}(\xi;\eps,\lambda)$ of~\eqref{fastsubsystemR2} is hyperbolic with $\xi$-, $\lambda$-, $\eps$- and $\tilde{\chi}$-independent spectral gap and is bounded by a $\xi$-, $\lambda$-, $\eps$- and $\tilde{\chi}$-independent constant for each $\xi \in \mathcal{J}_{\lr,\tilde{\chi}} \cup \mathcal{J}_{\rr,\tilde{\chi}}$ and $\lambda \in R_2(\mu,\varpi,\varrho)$. Second, there exists a $\xi$-, $\lambda$-, $\eps$- and $\tilde{\chi}$-independent constant $C_0 > 0$ such that $\|\partial_\xi \check{A}_\f(\xi,\eps,\lambda)\| \leq C_0\delta_0(\tilde{\chi})$ for $\xi \in \mathcal{J}_{\lr,\tilde{\chi}} \cup \mathcal{J}_{\rr,\tilde{\chi}}$ and $\lambda \in R_2(\mu,\varpi,\varrho)$ by Proposition~\ref{prop:pointwise}. Noting that $\{\xi + y : x \in \mathcal{J}_{i,\chi}, y \in [-1,1]\} \subset 
\mathcal{J}_{i,2\chi}$ for $0 < \eps \ll \chi \ll 1$, the latter two observations in combination with~\cite[Proposition~A.3]{BDR2} imply, provided  $0 < \eps \ll \chi \ll 1$, that system~\eqref{fastsubsystemR2} has for $\lambda \in R_2(\mu,\varpi,\varrho)$ and $i = \lr,\rr$ an exponential dichotomy on $\mathcal{J}_{i,\chi}$ with $\lambda$-, $\eps$- and $\chi$-independent constants and projections $\check{Q}_{i,\eps}(\xi;\lambda)$ satisfying
\begin{align} \label{projest3R2}
\left\|\check{Q}_{i,\eps}(\xi;\lambda) - \check{\mathcal P}(u_\eps(\xi);\lambda)\right\| \leq C_1 \delta_0(2\chi), \qquad \xi \in \mathcal{J}_{i,\chi},
\end{align}
where $C_1 > 0$ is a $\xi$-, $\eps$-, $\lambda$- and $\chi$-independent constant.

Since~\eqref{fastsubsystemR2} is $L_\eps$-periodic with $L_\eps = L_{\lr,\eps}+L_{\rr,\eps}$, any exponential dichotomy of~\eqref{fastsubsystemR2} on an interval $\mathcal{I}$ yields an exponential dichotomy on an $L_\eps$-translate of $\mathcal{I}$. Our aim is to paste the exponential dichotomies of~\eqref{fastsubsystemR2} on the intervals $\mathcal{J}_{\rr}$, $\mathcal{I}_{\f}$, $\mathcal{J}_{\lr}$ and $\mathcal{I}_{\bb}$ and their $L_\eps$-translates together with the aid of~\cite[Lemma~B.2]{BengeldeRijk2025} to establish an exponential dichotomy for~\eqref{fastsubsystemR2} on a double periodicity interval of length $2L_\eps$. This, in turn, yields the desired exponential dichotomy of~\eqref{fastsubsystemR2} on $\R$ by an application of~\cite[Theorem~1]{Palmer1987}. To this end, we combine estimates~\eqref{projest1R2},~\eqref{projest2R2}, and~\eqref{projest3R2} with Proposition~\ref{prop:pointwise} and deduce, provided $0 < \eps \ll \chi \ll 1$, that
\begin{align} \label{projest4R2}
\begin{split}
&\left\|\check{Q}_{\f,\eps}\left(\tfrac{\log(\eps)}{\chi};\lambda\right) - \check{Q}_{\rr,\eps}\left(\tfrac{\log(\eps)}{\chi};\lambda\right)\right\|, \left\|\check{Q}_{\f,\eps}\left(\tfrac{1}{\chi};\lambda\right) - \check{Q}_{\lr,\eps}\left(\tfrac{1}{\chi};\lambda\right)\right\|, \\
&\left\|\check{Q}_{\bb,\eps}\left(L_{\lr,\eps} + \tfrac{\log(\eps)}{\chi};\lambda\right) - \check{Q}_{\lr,\eps}\left(L_{\lr,\eps} + \tfrac{\log(\eps)}{\chi};\lambda\right)\right\|, \left\|\check{Q}_{\bb,\eps}\left(L_{\lr,\eps} + \tfrac{1}{\chi};\lambda\right) - \check{Q}_{\rr,\eps}\left(\tfrac{1}{\chi} - L_{\rr,\eps};\lambda\right)\right\| < \frac12 
\end{split}
\end{align}
for $\lambda \in R_2(\mu,\varpi,\varrho)$. Therefore, applying~\cite[Lemma~A.1]{BengeldeRijk2025} we deduce that $\smash{\check{Q}_{\f,\eps}(\frac{1}{\chi} \log(\eps);\lambda)[\C^n]}$ and $\smash{\ker(\check{Q}_{\rr,\eps}(\tfrac{1}{\chi} \log(\eps);\lambda))}$ are complementary subspaces for $\lambda \in R_2(\mu,\varpi,\varrho)$. In particular, it implies that the projection onto $\smash{\check{Q}_{\f,\eps}(\frac{1}{\chi} \log(\eps){\chi};\lambda)[\C^n]}$ along $\smash{\ker(\check{Q}_{\rr,\eps}(\frac{1}{\chi} \log(\eps);\lambda))}$ is well-defined and can be bounded by an $\eps$-, $\chi$- and $\lambda$-independent constant for $\lambda \in R_2(\mu,\varpi,\varrho)$, because the constants of the exponential dichotomy of~\eqref{fastsubsystemR2} on $\mathcal{I}_\f$ are independent of $\eps$, $\chi$ and $\lambda$. Applying~\cite[Lemma~B.2]{BengeldeRijk2025} we find, provided $0 < \eps \ll \chi \ll 1$, that~\eqref{fastsubsystemR2} possesses for each $\lambda \in R_2(\mu,\varpi,\varrho)$ an exponential dichotomy on $\mathcal{J}_{\rr} \cup \mathcal{I}_\f$ with $\eps$-, $\chi$- and $\lambda$-independent constants and projections $\check{\mathcal{Q}}_{1,\eps}(\xi;\lambda)$ satisfying 
\begin{align}  \label{projest5R2}
\left\|\check{\mathcal{Q}}_{1,\eps}(\tfrac{1}{\chi};\lambda) - \check{Q}_{\f,\eps}\left(\tfrac{1}{\chi};\lambda\right)\right\| \leq \chi.
\end{align}
Combining~\eqref{projest4R2} with~\eqref{projest5R2} we obtain, provided $0 < \eps \ll \chi \ll 1$, that the estimate
\begin{align}\label{projest6R2}
\left\|\check{\mathcal{Q}}_{1,\eps}(\tfrac{1}{\chi};\lambda) - \check{Q}_{\lr,\eps}\left(\tfrac{1}{\chi};\lambda\right)\right\| < \frac12 + \chi < 1
\end{align}
holds for $\lambda \in R_2(\mu,\varpi,\varrho)$. Continuing analogously as before, we can use estimate~\eqref{projest6R2} and~\cite[Lemmas~A.1 and B.2]{BengeldeRijk2025} to prove that, provided $0 < \eps \ll \chi \ll 1$, system~\eqref{fastsubsystemR2} admits for each $\lambda \in R_2(\mu,\varpi,\varrho)$ an exponential dichotomy on $\mathcal{J}_{\rr} \cup \mathcal{I}_\f \cup \mathcal{J}_{\lr}$ with $\eps$-, $\chi$- and $\lambda$-independent constants. Proceeding inductively, while using the estimates~\eqref{projest4R2}, we thus find that system~\eqref{fastsubsystemR2} has for each $\lambda \in R_2(\mu,\varpi,\varrho)$ an exponential dichotomy on an interval of length $2L_\eps$ with $\eps$-, $\chi$- and $\lambda$-independent constants. Hence, using that the $L_\eps$-periodic coefficient matrix of~\eqref{fastsubsystemR2} can be bounded by an $\eps$- and $\lambda$-independent constant for $\lambda \in R_2(\mu,\varpi,\varrho)$ and $\xi \in \R$ by Proposition~\ref{prop:pointwise}, system~\eqref{fastsubsystemR2} has by~\cite[Theorem~1]{Palmer1987} an exponential dichotomy on $\R$ for each $\lambda \in R_2(\mu,\varpi,\varrho)$ with $\lambda$- and $\eps$-independent constants.
\end{proof}

\subsection{The region \texorpdfstring{$R_{2,1}(\mu,\varpi,\varrho)$}{R21}}

The exponential dichotomy of the fast subsystem~\eqref{fastsubsystemR2}, established in Proposition~\ref{prop:expdi fast R2}, allows us to apply the Riccati transformation to block diagonalize the rescaled eigenvalue problem~\eqref{eigenvalueproblem}. By computing the scalar dynamics in the one-dimensional slow component of the diagonalized eigenvalue problem to leading order and employing the Floquet boundary condition~\eqref{eigenvalueproblemBC}, we preclude the existence of any spectrum of nonnegative real part in the region $R_{2,1}(\mu,\varpi,\varrho)$. 

\begin{proposition}\label{prop:region_r21}
Let $0 < a < \frac12$. Fix $0 < \gamma < \gamma_*(a)$ and $c > c_*(a)$. Take constants $\mu,\varrho > 0$ with $\mu < \varrho$. Then, provided $0 < \eps \ll \varpi \ll 1$, the linearization $\El_\eps$ of~\eqref{FHN} about $\phi_\eps(\xi)$ possesses no spectrum of nonnegative real part in the compact set $R_{2,1}(\mu,\varpi,\varrho)$. 
\end{proposition}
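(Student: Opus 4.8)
<br>

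The plan is to leverage the exponential dichotomy of the fast subsystem~\eqref{fastsubsystemR2} on $\R$, established in Proposition~\ref{prop:expdi fast R2}, to block diagonalize the rescaled eigenvalue problem~\eqref{eigenvalueproblem} via the Riccati transformation, and then to extract a scalar equation for $\lambda$ from the Floquet boundary condition~\eqref{eigenvalueproblemBC} which cannot be satisfied by $\lambda \in R_{2,1}(\mu,\varpi,\varrho)$ with $\Re(\lambda) \geq 0$. First, I would observe that for $\lambda \in R_{2,1}(\mu,\varpi,\varrho)$ the fast subsystem~\eqref{fastsubsystemR2} of the \emph{unscaled} problem~\eqref{fulleigenvalueproblem_unscaled} coincides with the fast subsystem~\eqref{fastsub} of the \emph{rescaled} problem~\eqref{eigenvalueproblem}, since the rescaling $\Psi = \re^{-\frac{\lambda}{c}\xi}\check\Psi$ acts only on the full $(3\times3)$-system by shifting the slow block, not the fast $(2\times2)$-block $A_\f$. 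Hence Proposition~\ref{prop:expdi fast R2} directly provides an exponential dichotomy on $\R$ for~\eqref{fastsub} with $\eps$- and $\lambda$-independent constants. Applying the Riccati transform~\cite[Theorem~5.1]{BDR} as in the proof of Proposition~\ref{prop:slow}, I would obtain a block-diagonal system, with a hyperbolic fast $(2\times2)$-block and a scalar slow equation $\Phi_\xi = \eps(A_s + B_1 U_{\eps,\lambda}(\xi))\Phi$, whose coefficient functions $U_{\eps,\lambda}, S_{\eps,\lambda}$ are bounded uniformly in $\eps$ and $\lambda$.

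Next, I would compute the slow scalar dynamics to leading order. Exactly as in~\eqref{approxid0}--\eqref{approxid2}, on the hyperbolic portions of the critical manifold the leading-order value of $U_{\eps,\lambda}(\xi)$ is $(f'(u_i(\xi)))^{-1}$ (or the analogous expression along the slow branches), so the slow evolution over one period factors, up to $\mathcal{O}(\eps^{1/3} + \varpi)$ corrections, as a product of the contributions along the left branch, the back, the right branch, and the front. Along the front and back, the reduced fast subsystem~\eqref{reducedfastR2} is the first-order formulation of the Fisher--KPP--type Sturm--Liouville problem~\eqref{SturmLiouville}, which by~\cite{Sattinger} has no nontrivial bounded solutions for $\lambda \in R_2(\mu,\varpi,\varrho)$ once $\varpi$ is small; this is precisely what makes the fast block invertible there and lets the Riccati coordinates extend across the layers. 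Integrating the slow equation and using the reduced flow~\eqref{eq:reduced} as in~\eqref{approxid2}, together with the pointwise estimates of Proposition~\ref{prop:pointwise} for the proximity of the wave train to its singular limit, I would arrive at an explicit leading-order expression for the slow Floquet multiplier. The Floquet boundary condition~\eqref{eigenvalueproblemBC} then forces $\re^{(\ri\rho - \lambda/c)L_\eps}$ to equal this slow multiplier (the fast multipliers being bounded away from the unit circle by hyperbolicity, so they cannot contribute for any $\rho \in \R$), yielding a transcendental equation for $\lambda$. Taking absolute values and using $|\re^{-\lambda L_\eps/c}| = \re^{-\Re(\lambda)L_\eps/c}$, I would show that for $\Re(\lambda) \geq 0$ the left-hand side has modulus $\geq 1$ while the slow multiplier has modulus strictly less than $1$ on the relevant range $\mu \leq |\lambda| \leq \varrho$, $|\Re(\lambda)|\leq\varpi$ — this is the analogue of the argument in Propositions~\ref{prop:region_lambda2}--\ref{prop:region_lambda3}, but now with the genuinely hyperbolic (rather than nonhyperbolic fold) layer contributions, so the estimate is cleaner.

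The main obstacle, I expect, is twofold. First, one must ensure the Riccati transform can be carried \emph{uniformly across the whole period}, including the layers, which requires the exponential dichotomy of~\eqref{fastsub} on all of $\R$ (not just on the hyperbolic branches) — this is exactly the content of Proposition~\ref{prop:expdi fast R2}, obtained by the delicate pasting argument there, so I would invoke it directly rather than reprove it. Second, one must verify that the leading-order slow multiplier is strictly inside the unit disk for \emph{all} $\lambda$ in the annular region with $\Re(\lambda) \geq 0$; here the key quantitative input is that the product of the slow contributions along the left and right branches, evaluated using the explicit integrals~\eqref{periodexpr1}--\eqref{periodexpr2} and the signs $u_j - \gamma f(u_j) - a \neq 0$ guaranteed by $0 < \gamma < \gamma_*(a)$, stays bounded away from the unit circle, with the $\lambda$-dependence entering only through controlled $\mathcal{O}(|\lambda|)$ perturbations that cannot rescue a multiplier of modulus $< 1$ on a compact set bounded away from $\lambda = 0$. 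Finally, for $\lambda \in R_{2,2}(\mu,\varpi,\varrho)$ — treated in the next subsection of the paper — the argument is simpler: roughness transfers the exponential dichotomy of~\eqref{fastsubsystemR2} on $\R$ to the \emph{full} three-dimensional eigenvalue problem~\eqref{fulleigenvalueproblem_unscaled} (the slow block being a bounded perturbation of size $\mathcal{O}(\eps)$, which cannot close the $\mathcal{O}(\varpi)$ spectral gap), so~\eqref{fulleigenvalueproblem_unscaled} has no nontrivial bounded solution and hence no solution satisfying the Floquet condition~\eqref{Floquet_BC}. Combining the two subregions completes the proof of Proposition~\ref{prop:region_r2}.
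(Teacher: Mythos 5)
Your overall strategy is the same as the paper's: invoke the exponential dichotomy on $\R$ of the fast subsystem from Proposition~\ref{prop:expdi fast R2}, block diagonalize the rescaled eigenvalue problem~\eqref{eigenvalueproblem} via the Riccati transform, reduce the Floquet condition to a scalar equation for the slow multiplier, and derive a contradiction for $\Re(\lambda)\geq 0$. However, two concrete steps in your plan would fail as written.

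First, the claim that the fast subsystem~\eqref{fastsubsystemR2} of the unscaled problem coincides with the fast block~\eqref{fastsub} of the rescaled problem is false: the substitution $\Psi = \re^{-\lambda\xi/c}\check\Psi$ shifts the \emph{entire} coefficient matrix by $-\tfrac{\lambda}{c}I_3$, so that $A_\f = \check A_\f - \tfrac{\lambda}{c}I_2$. The dichotomy of~\eqref{fastsub} is therefore not automatic; it survives only because $|\Re(\lambda)|\leq\varpi$ on $R_{2,1}(\mu,\varpi,\varrho)$, with the rate degraded from $\alpha$ to $\alpha - \tfrac{\varpi}{c}$. This is precisely the reason the region $R_2$ is split into $R_{2,1}$ and $R_{2,2}$ at all: on $R_{2,2}$, where $\Re(\lambda)$ may be as large as $\varrho$, the shift can destroy hyperbolicity of the rescaled fast block, which is why the unscaled system is used there instead. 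Your premise erases the rationale for the decomposition you are relying on.

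Second, your endgame has both inequalities backwards and points to the wrong quantitative inputs. For $\Re(\lambda)\geq 0$ the Floquet factor $\re^{(\ri\rho-\lambda/c)L_\eps}$ has modulus $\re^{-\Re(\lambda)L_\eps/c}\leq 1$, not $\geq 1$, and the slow multiplier $\exp\bigl(\int_0^{L_\eps}\eps(A_s+B_1U_{\eps,\lambda})\,\de\xi\bigr)$ has modulus strictly \emph{greater} than $1$, not less: its log-modulus is, to leading order, $\tfrac{\gamma}{c}(L_\lr+L_\rr) - \sum_i\int \bigl(f'(u_i(y))-\Re(\lambda)+\tfrac{\Re(\lambda)^2-\Im(\lambda)^2}{c^2}\bigr)\,\bigl(c\,|f'(u_i(y))-\lambda+\tfrac{\lambda^2}{c^2}|^2\bigr)^{-1}\de y$, which is bounded below by a positive constant because $f'\leq 0$ on the outer branches, $\gamma,c>0$, and $\Im(\lambda)^2>\Re(\lambda)^2$ throughout $R_{2,1}$ (as $|\Re(\lambda)|\leq\varpi\ll\mu\leq|\lambda|$). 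A contradiction still results, but the inequality you propose to establish (slow multiplier of modulus $<1$) is false, so the attempt to prove it would fail. Relatedly, the leading order of $U_{\eps,\lambda}$ here is $(f'(u_i)-\lambda+\lambda^2/c^2)^{-1}$, not $(f'(u_i))^{-1}$: since $\mu\leq|\lambda|\leq\varrho$, the $\lambda$-dependence is an $\mathcal{O}(1)$ effect rather than a small perturbation, and the explicit period integrals~\eqref{periodexpr1}--\eqref{periodexpr2} and the signs of $u_j-\gamma f(u_j)-a$, which drive the $R_1$ analysis, play no role; the sign argument needs only $f'\leq 0$, $\gamma,c>0$, and the geometry of $R_{2,1}$.
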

\begin{proof}
As outlined in~\S\ref{sec:spectral_setup}, it suffices to show that the eigenvalue problem~\eqref{eigenvalueproblem}-\eqref{eigenvalueproblemBC} admits no nontrivial solution for $\rho \in \R$ and $\lambda \in R_{2,1}(\mu,\varpi,\varrho)$ with $\Re(\lambda) \geq 0$. As in the proof of Proposition~\ref{prop:expdi fast R2}, we adopt the notation $\mathcal{J}_{\lr} = \smash{[\frac{1}{\chi},L_{\lr,\eps} + \frac{\log(\eps)}{\chi}]}$ and $\mathcal{J}_{\rr} = \smash{[\frac{1}{\chi}-L_{r,\eps},\frac{\log(\eps)}{\chi}]}$. Moreover, throughout the proof we denote by $C \geq 1$ any $\eps$-, $\lambda$-, $\chi$-, $\varpi$- and $\xi$-independent constant.

First, Proposition~\ref{prop:pointwise} yields, provided $0 < \eps \ll \chi \ll 1$, that we have
\begin{align} \label{matrixappR2}
\begin{split}
\left\|A_{\f}(\xi;\epsilon,\lambda) - \check{A}_{\lr}(\epsilon \xi;\lambda)\right\| &\leq C\delta_0(\chi), \qquad \xi \in \mathcal{J}_{\lr},\\
\left\|A_{\f}(\xi;\epsilon,\lambda) - \check{A}_{\rr}(\epsilon \xi + L_{\rr,\eps};\lambda)\right\| &\leq C\delta_0(\chi), \qquad \xi \in \mathcal{J}_{\rr}
\end{split}
\end{align}
for $\lambda \in R_{2,1}(\mu,\varpi,\varrho)$, where we denote
\begin{align*}
\check{A}_{i}(y;\lambda) = \begin{pmatrix} -\frac{\lambda}{c} & 1 \\
\lambda - f'(u_i(y)) & -\frac{\lambda}{c}\end{pmatrix}
\end{align*}
for $i = \lr,\rr$. Subsequently, we observe that the spectral projection of 
\begin{align*} \begin{pmatrix} \check{A}_{i}(y;\lambda) & B_0 \\ 0_{1 \times 2} & 0 \end{pmatrix} \end{align*}
onto its center eigenspace is given by
\begin{align*}
\check{\mathcal Q}_i^{\cc}(y;\lambda) = \begin{pmatrix} 0 & 0 & \left(f'(u_{i}(y)) - \lambda + \frac{\lambda^2}{c^2}\right)^{-1} \\ 0 & 0 & \frac{\lambda}{c}\left(f'(u_{i}(y)) - \lambda + \frac{\lambda^2}{c^2}\right)^{-1} \\ 0 & 0 & 1\end{pmatrix}.
\end{align*}
for $y \in [0,L_{i} + 1]$, $\lambda \in R_{2,1}(\mu,\varpi,\varrho)$ and $i = \lr,\rr$. By estimate~\eqref{matrixappR2} we have
\begin{align} \label{specprojestR2}
\begin{split}
\left\|\mathcal{P}^{\cc}_{\epsilon,\lambda}(\xi) - \check{\mathcal Q}^{\cc}_{\lr}(\epsilon \xi;\lambda)\right\| \leq C\delta_0(\chi), \qquad \xi \in \mathcal{J}_{\lr},\\
\left\|\mathcal{P}^{\cc}_{\epsilon,\lambda}(\xi) - \check{\mathcal Q}^{\cc}_{\rr}(\epsilon \xi + L_{\rr,\eps};\lambda)\right\| \leq C\delta_0(\chi), \qquad \xi \in \mathcal{J}_{\rr},
\end{split}
\end{align}
for $\lambda \in R_{2,1}(\mu,\varpi,\varrho)$, where $\mathcal{P}^{\cc}_{\epsilon,\lambda}(\xi)$ is the spectral projection of the coefficient matrix $A(\xi;\eps,\lambda)$ of~\eqref{eigenvalueproblem} onto its center subspace.

Proposition~\ref{prop:expdi fast R2} yields, provided $0 < \eps \ll \varpi \ll 1$, that the fast subsystem~\eqref{fastsubsystemR2} has for each $\lambda \in R_{2,1}(\mu,\varpi,\varrho)$ an exponential dichotomy on $\R$ with $\eps$- and $\lambda$-independent constants $K,\alpha > 0$. Therefore, provided $0 < \eps \ll \varpi \ll 1$, the rescaled fast subsystem~\eqref{fastsub} has for each $\lambda \in R_{2,1}(\mu,\varpi,\varrho)$ also an exponential dichotomy on $\R$ with $\eps$- and $\lambda$-independent constants $K,\alpha - \frac{\varpi}{c} > 0$. In addition, the $L_\eps$-periodic coefficient matrix of~\eqref{eigenvalueproblem} can be bounded on $\R$ by an $\eps$- and $\lambda$-independent constant for $\lambda \in R_{2,1}(\mu,\varpi,\varrho)$ by Proposition~\ref{prop:pointwise}. So, proceeding as in the proof of Proposition~\ref{prop:slow}, we apply the Riccati transformation, cf.~\cite[Theorem~5.1]{BDR}, to yield continuous $L_\eps$-periodic matrix functions $H_{\eps,\lambda} \colon \R \to \C^{3 \times 3}$ and $U_{\eps,\lambda} \colon \R \to \C^{2 \times 1}$ such that $H_{\eps,\lambda}(\xi)$ is invertible for each $\xi \in \R$ and $U_{\eps,\lambda}$ can be bounded on $\R$ by a $\lambda$- and $\eps$-independent constant. Moreover, if $\Psi(\xi)$ is a solution to~\eqref{eigenvalueproblem}, then $\Phi(\xi) = H_{\epsilon,\lambda}(\xi)\Psi(\xi)$ obeys the diagonalized system
\begin{align} \label{blocksystemR2}
\begin{split}
\Phi_\xi = \begin{pmatrix} A_\f(\xi;\eps,\lambda) - \eps U_{\eps,\lambda}(\xi) B_1  & 0_{2 \times 1} \\
0_{1 \times 2} & \eps A_\su + \eps B_1 U_{\eps,\lambda}(\xi)
           \end{pmatrix} \Phi,
\end{split}
\end{align}
for $\xi \in \R$ and $\lambda \in R_{2,1}(\mu,\varpi,\varrho)$. Analogously as the derivation of estimate~\eqref{Uapprox} in the proof of Proposition~\ref{prop:slow}, one establishes the estimates
\begin{align} \label{approxR2lowerblock}
\begin{split}
\left\|U_{\eps,\lambda}(\xi) - \begin{pmatrix} \left(f'(u_{\lr}(\epsilon \xi)) - \lambda + \frac{\lambda^2}{c^2}\right)^{-1} \\ \frac{\lambda}{c} \left(f'(u_{\lr}(\epsilon \xi)) - \lambda + \frac{\lambda^2}{c^2}\right)^{-1} \end{pmatrix}\right\| \leq C \delta_0(\chi), \qquad \xi \in I_{\lr,\chi},\\
\left\|U_{\eps,\lambda}(\xi) - \begin{pmatrix} \left(f'(u_{\rr}(\epsilon \xi + L_{\rr,\eps})) - \lambda + \frac{\lambda^2}{c^2}\right)^{-1} \\ \frac{\lambda}{c} \left(f'(u_{\rr}(\epsilon \xi + L_{\rr,\eps})) - \lambda + \frac{\lambda^2}{c^2}\right)^{-1} \end{pmatrix}\right\| \leq C \delta_0(\chi), \qquad \xi \in I_{\rr,\chi}
\end{split}
\end{align}
for $\lambda \in R_{2,1}(\mu,\varpi,\varrho)$, where we used~\eqref{specprojestR2} instead of~\eqref{specprojest}. Furthermore, by roughness results, cf.~\cite[Theorem~5.1]{COP}, the fast block system
\begin{align} \label{fastsubR21}
\Phi_\xi = \left(A_\f(\xi;\eps,\lambda) - \eps U_{\eps,\lambda}(\xi) B_1\right)\Phi
\end{align}
admits for $\lambda \in R_{2,1}(\mu,\varpi,\varrho)$ an exponential dichotomy on $\R$ with $\lambda$- and $\eps$-independent constants.

Now suppose that $\Psi(\xi)$ is a nontrivial solution to the boundary-value problem~\eqref{eigenvalueproblem}-\eqref{eigenvalueproblemBC} for some $\rho \in \R$ and $\lambda \in  R_{2,1}(\mu,\varpi,\varrho)$. Then, $\Phi(\xi) = H_{\eps,\lambda}(\xi)\Psi(\xi)$ solves~\eqref{blocksystemR2} and obeys
\begin{align}\label{scalarR2}
\Phi(L_\eps + \xi) = \re^{\left(\ri\rho - \frac{\lambda}{c}\right) L_\eps} \Phi(\xi)
\end{align}
for all $\xi \in \R$, where we used that $H_{\eps,\lambda}(0) = H_{\eps,\lambda}(L_\eps)$ is invertible. The first two components of $\Phi(\xi)$ solve the fast block system~\eqref{fastsubR21}, which has an exponential dichotomy on $\R$ with $\lambda$- and $\eps$-independent constants. Therefore, provided $0 < \eps \ll \varpi \ll 1$, the condition~\eqref{scalarR2} yields that $\Phi(\xi)$ must vanish identically in its first two components. Since the last component $\phi(\xi)$ of $\Phi(\xi)$ solves the scalar problem
\begin{align*}
\phi_\xi = \eps \left(A_\su + B_1 U_{\eps,\lambda}(\xi)\right) \phi,
\end{align*}
it must be nonzero for all $\xi \in \R$ and obeys
\begin{align*}
\phi(L_\eps) = \exp\left(\int_0^{L_\eps} \eps \left(A_\su + B_1 U_{\eps,\lambda}(\xi)\right) d\xi\right)\phi(0).
\end{align*}
Substituting the latter into the last component of~\eqref{scalarR2} for $\xi = 0$, dividing by $\phi(0) \neq 0$, taking the complex logarithm on both sides and equating real parts yields
\begin{align*}
\Re\left(\frac{\lambda L_\eps}{c} + \int_0^{L_\eps} \eps \left(A_\su + B_1 U_{\eps,\lambda}(\xi)\right)  d\xi\right) = 0.
\end{align*}
Hence, using the approximations~\eqref{period} and~\eqref{approxR2lowerblock} and the fact that $U_{\eps,\lambda}$ is bounded on $\R$ by an $\eps$- and $\lambda$-independent constant, we infer that, provided $0 < \eps \ll \chi \ll \varpi \ll 1$, the estimate
\begin{align*}
&\left|\Re(\lambda)\frac{L_\eps}{c} + \frac{\gamma}{c} \left(L_\lr + L_\rr\right) - \int_0^{L_\lr} \frac{f'(u_{\lr}(y)) - \Re(\lambda) + \frac{\Re(\lambda)^2 - \Im(\lambda)^2}{c^2} }{c\left|f'(u_{\lr}(y)) - \lambda + \frac{\lambda}{c^2}\right|^2} \de y - \int_0^{L_\rr} \frac{f'(u_{\rr}(y)) - \Re(\lambda) + \frac{\Re(\lambda)^2 - \Im(\lambda)^2}{c^2} }{c\left|f'(u_{\rr}(y)) - \lambda + \frac{\lambda}{c^2}\right|^2} \de y \right|\\ 
&\qquad \leq C \delta_0(\chi)
\end{align*}
holds. Therefore, upon recalling that we have $\gamma,c > 0$ and $f'(u_i(y)) \leq 0$ for $y \in [0,L_i]$ and $i = \lr,\rr$, we deduce that it must hold $\Re(\lambda) < 0$, provided $0 < \eps \ll \chi \ll \varpi \ll 1$, which concludes the proof.
\end{proof}

\subsection{The region \texorpdfstring{$R_{2,2}(\mu,\varpi,\varrho)$}{R22}}

We now show that the linearization $\El_\eps$ possesses no spectrum in the compact region $R_{2,2}(\mu,\varpi,\varrho)$ for fixed $\varrho,\varpi,\mu > 0$. The result follows by establishing an exponential dichotomy for the full eigenvalue problem~\eqref{fulleigenvalueproblem_unscaled} on $\R$.

\begin{proposition}\label{prop:region_r22}
Let $0 < a < \frac12$. Fix $0 < \gamma < \gamma_*(a)$ and $c > c_*(a)$. Take constants $\mu,\varrho,\varpi > 0$ with $\mu < \varrho$. Then, provided $0 < \eps \ll 1$, there is no spectrum of the linearization $\El_\eps$ of~\eqref{FHN} about $\phi_\eps(\xi)$ in the region $R_{2,2}(\mu,\varpi,\varrho)$. 
\end{proposition}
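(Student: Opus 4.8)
The plan is to establish an exponential dichotomy on $\R$ for the full (unscaled) eigenvalue problem~\eqref{fulleigenvalueproblem_unscaled} for every $\lambda \in R_{2,2}(\mu,\varpi,\varrho)$, with constants uniform in $\eps$, and then conclude by the standard Floquet argument. Since $\varpi,\mu,\varrho$ are now fixed and $\eps$-independent, $\lambda$ ranges over the fixed compact set $R_{2,2}(\mu,\varpi,\varrho)$, on which $\Re(\lambda) \geq \varpi > 0$. First I would invoke Proposition~\ref{prop:expdi fast R2}, which provides, for each such $\lambda$, an exponential dichotomy of the fast subsystem~\eqref{fastsubsystemR2} on $\R$ with $\eps$- and $\lambda$-independent constants and rank-one projections. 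The key structural observation is that~\eqref{fulleigenvalueproblem_unscaled} is block upper triangular up to an $\mathcal{O}(\eps)$ remainder: one writes $\check A(\xi;\eps,\lambda) = \check A_\triangle(\xi;\eps,\lambda) + \eps \check B$, where
\begin{align*}
\check A_\triangle(\xi;\eps,\lambda) = \begin{pmatrix} \check A_\f(\xi;\eps,\lambda) & B_0 \\ 0_{1\times 2} & \tfrac{\lambda}{c}\end{pmatrix}, \qquad \check B = \tfrac{1}{c}\begin{pmatrix} 0 & 0 & 0 \\ 0 & 0 & 0 \\ -1 & 0 & \gamma\end{pmatrix}.
\end{align*}
Only the fast-to-slow coupling and the slow self-interaction are $\mathcal{O}(\eps)$, whereas the slow-to-fast coupling $B_0$ is $\mathcal{O}(1)$ and must be retained inside the unperturbed system $\check A_\triangle$ (otherwise the perturbation would not be small and roughness would fail).

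Next I would show that the triangular system $\check\Psi_\xi = \check A_\triangle(\xi;\eps,\lambda)\check\Psi$ has an exponential dichotomy on $\R$ with $\eps$- and $\lambda$-uniform constants. The scalar equation $\check w_\xi = \tfrac{\lambda}{c}\check w$ carries the trivial exponential dichotomy on $\R$ with empty stable part and one-dimensional unstable part, with growth rate $\Re(\lambda)/c \in [\varpi/c,\varrho/c]$ bounded away from zero uniformly in $\eps$ and $\lambda$. Since both diagonal blocks of the block upper-triangular system admit exponential dichotomies on $\R$ with uniform constants, the full triangular system does as well: its (rank-one) stable subspace is the lift of the stable subspace of $\check A_\f$, while its (rank-two) unstable subspace is obtained by combining the unstable subspace of $\check A_\f$ with the graph-type lift of the $\check w$-direction constructed via variation of constants against the dichotomy of $\check A_\f$. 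Uniformity of the resulting constants follows from the uniform statement of Proposition~\ref{prop:expdi fast R2}, boundedness of the fixed matrix $B_0$, and compactness of $R_{2,2}(\mu,\varpi,\varrho)$. Then, because $\|\eps\check B\| = \mathcal{O}(\eps)$, the roughness theorem for exponential dichotomies, cf.~\cite[Proposition~5.1]{COP}, transfers this dichotomy to~\eqref{fulleigenvalueproblem_unscaled} for all sufficiently small $\eps$, uniformly in $\lambda \in R_{2,2}(\mu,\varpi,\varrho)$.

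Finally, a system possessing an exponential dichotomy on $\R$ admits no nontrivial bounded solution. Since $\check A(\xi;\eps,\lambda)$ is $L_\eps$-periodic, any solution of~\eqref{fulleigenvalueproblem_unscaled} satisfying the Floquet boundary condition~\eqref{Floquet_BC} with $\rho \in \R$ obeys $\check\Psi(\xi + L_\eps) = \re^{\ri\rho L_\eps}\check\Psi(\xi)$, so $\|\check\Psi\|$ is $L_\eps$-periodic and hence bounded, forcing $\check\Psi \equiv 0$. Therefore~\eqref{fulleigenvalueproblem_unscaled}-\eqref{Floquet_BC} has no nontrivial solution for $\lambda \in R_{2,2}(\mu,\varpi,\varrho)$, and $\El_\eps$ has no spectrum there. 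The main obstacle is essentially bookkeeping: in contrast to $R_1(\mu)$ (fold blow-up) and $R_{2,1}(\mu,\varpi,\varrho)$ (Riccati plus a delicate Floquet computation), here $\lambda$ stays uniformly bounded away from the imaginary axis, so there is no accumulation of essential spectrum; the only point requiring genuine care is ensuring that all dichotomy constants are uniform in $\eps$ (and in $\lambda$ over the compact set) so that the $\mathcal{O}(\eps)$ roughness perturbation is controllable, which is already guaranteed by Proposition~\ref{prop:expdi fast R2} and preserved by the triangular-composition and roughness steps.
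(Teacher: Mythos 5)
Your argument is correct, and it reaches the conclusion by a genuinely different decomposition at the key step. The paper does not keep the $\mathcal{O}(1)$ coupling $B_0$ inside the comparison system; instead it conjugates~\eqref{fulleigenvalueproblem_unscaled} by $S_\eps = \mathrm{diag}(1,1,\sqrt{\eps})$, which turns \emph{both} off-diagonal couplings into $\mathcal{O}(\sqrt{\eps})$ terms simultaneously. The comparison system is then purely block \emph{diagonal} (fast block $\check A_\f$ from Proposition~\ref{prop:expdi fast R2}, scalar block $\lambda/c$ with $\Re(\lambda)\geq\varpi$), and a single application of roughness with an $\mathcal{O}(\sqrt{\eps})$ perturbation finishes the job. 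Your route instead retains $B_0$ in an upper-triangular comparison system, constructs its dichotomy by hand via the variation-of-constants lift of the $\check w$-direction, and then perturbs by the genuinely $\mathcal{O}(\eps)$ remainder. This is sound — it is the same mechanism the paper uses in Proposition~\ref{prop:varred2} to build the exponential trichotomy of the reduced variational problem — but one point deserves a sentence in a full write-up: because $\Re(\lambda)/c$ may exceed the dichotomy rate $\alpha$ of $\check A_\f$ over the compact set $R_{2,2}(\mu,\varpi,\varrho)$, the standard two-sided integral $\int_t^\infty \check T^{\uu}(t,s)B_0\check w(s)\,\de s$ need not converge, so the backward-decaying lift has to be built with the unstable part integrated from a finite base point; the resulting rate is $\min(\alpha,\Re(\lambda)/c)-\delta\geq\min(\alpha,\varpi/c)-\delta>0$, uniformly. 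Your observation that treating the whole off-diagonal part as a perturbation of the diagonal system "would fail" is exactly the obstruction the paper's $\sqrt{\eps}$-rescaling is designed to remove; what your approach buys is that no rescaling is needed, at the cost of the explicit triangular construction, while the paper's trick buys a one-line roughness argument at the cost of an $\mathcal{O}(\sqrt{\eps})$ rather than $\mathcal{O}(\eps)$ perturbation (which is harmless here). The concluding Floquet step is identical in both arguments.
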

\begin{proof}
Suppose $\check{\Psi}(\xi)$ is a solution to~\eqref{fulleigenvalueproblem_unscaled}-\eqref{Floquet_BC} with $\rho \in \R$. Then, it follows by standard Floquet theory, cf.~\cite{Gardner1993}, that
\begin{align*}
\check\Psi(L_\eps + \xi) = \re^{\ri \rho L_\eps} \check\Psi(\xi)
\end{align*}
holds for all $\xi \in \R$. Consider the invertible matrix
\begin{align*}
S_\eps = \begin{pmatrix} I_2 & 0_{2 \times 1} \\ 0_{1 \times 2} & \sqrt{\eps}\end{pmatrix}.
\end{align*}  
We find that $\check\Phi(\xi) = S_\eps \check{\Psi}(\xi)$ solves the rescaled problem
\begin{align} \label{rescaledR22}
\check\Phi_\xi = \begin{pmatrix} \check{A}_\f(\xi;\eps,\lambda) & \sqrt{\eps} B_0 \\ \sqrt{\eps} B_1 & \frac{1}{c}\left(\eps \gamma + \lambda\right) \end{pmatrix} \check\Phi
\end{align}
and obeys
\begin{align} \label{BCR22}
\check\Phi(L_\eps + \xi) = \re^{\ri \rho L_\eps} \check\Phi(\xi)
\end{align}
for $\xi \in \R$. Clearly, provided $0 < \eps \ll 1$, the diagonal system
\begin{align*}
\Phi_\xi = \begin{pmatrix} \check{A}_\f(\xi;\eps,\lambda) & 0 \\ 0 & \frac{\lambda}{c} \end{pmatrix} \Phi
\end{align*}
admits for each $\lambda \in R_{2,2}(\mu,\varpi,\varrho)$ an exponential dichotomy on $\R$ with $\lambda$- and $\eps$-independent constants by Proposition~\ref{prop:expdi fast R2}. By roughness of exponential dichotomies, cf.~\cite[Proposition~5.1]{COP}, it follows, provided $0 < \eps \ll 1$, that~\eqref{rescaledR22} has for each $\lambda \in R_{2,2}(\mu,\varpi,\varrho)$ an exponential dichotomy on $\R$ with $\lambda$- and $\eps$-independent constants. Combining the latter with~\eqref{BCR22} implies that $\check\Phi$ must be identically $0$. Therefore, the eigenvalue problem~\eqref{fulleigenvalueproblem_unscaled}-\eqref{Floquet_BC} does not admit a nontrivial solution for each $\lambda \in R_{2,2}(\mu,\varpi,\varrho)$ and $\rho \in \R$. By the exposition in~\S\ref{sec:spectral_setup} this implies that $\El_\eps$ does not possess spectrum in $R_{2,2}(\mu,\varpi,\varrho)$.
\end{proof}
\subsection{Proof of Proposition~\ref{prop:region_r2}}
    Choosing $\varrho$ as in Proposition~\ref{prop: regionR3} and $\mu$ as in Proposition~\ref{prop:region_r1} and taking $\varpi$ sufficiently small, the result follows from Propositions~\ref{prop:region_r21} and~\ref{prop:region_r22}.

\appendix

\section{The region \texorpdfstring{$R_{3,\eps}(\varrho)$}{R3}}\label{app:r3}
In this section, we prove Proposition~\ref{prop: regionR3} concerning spectrum in the region $R_{3,\eps}(\varrho)$.
\begin{proof}[Proof of Proposition~\ref{prop: regionR3}]
The principal part of $\El_\eps$ is the diagonal diffusion-advection operator
\begin{align*}
    L_0 = \begin{pmatrix}
   \partial_{\xi \xi} + c \partial_\xi & 0 \\
     0 & c \partial_\xi 
     \end{pmatrix}
\end{align*}
acting on $Y \coloneqq  L^2(\R,\C) \times L^2(\R,\C)$ with dense domain $H^2 (\R,\C) \times H^1 (\R,\C)$. The skew-adjoint operator $A_\alpha = \alpha \partial_\xi$ on the Hilbert space $L^2(\R)$ with domain $H^1(\R)$ generates by Stone's Theorem, cf.~\cite[Theorem~II.3.24]{EngelNagel}, a unitary group for any $\alpha \in \R \setminus \{0\}$. Consequently, by~\cite[Corollary~II.4.9]{EngelNagel} and its proof, the square $\partial_{\xi\xi} = A_1^2 \colon H^2(\R) \subset L^2(\R) \to L^2(\R)$ is sectorial. In particular, there exists a constant $C_0 \geq 1$ such that any $\lambda \in \C \setminus \{0\}$ with $\Re(\lambda) \geq 0$ lies in the resolvent set of $A_1^2$ and we have
\begin{align} \label{e:resolv1}
\left\|\left(A_1^2 - \lambda\right)^{-1}\right\|_{L^2 \to H^2} \leq C_0\left(1 + \frac{1}{|\lambda|}\right).
\end{align}
On the other hand, by Sobolev interpolation there exists for any $\delta > 0$ a constant $C_\delta > 0$ such that $\|u'\|_{L^2} \leq \delta\|u''\|_{L^2} + C_\delta \|u\|_{L^2}$ for all $u \in H^2(\R)$. Hence, for any $\delta > 0$ the operator $A_c$ is $A_1^2$-bounded with $A_1^2$-bound $\delta$. Therefore, using that $A_1^2$ is sectorial, the proof of~\cite[Theorem~III.2.10]{EngelNagel} yields constants $C_1,r_1>0$ such that any $\lambda \in \C$ with $\Re(\lambda) \geq 0$ and $|\lambda|>r_1$ lies in the resolvent set of $A_1^2 + A_c=\partial_{\xi\xi} + c\partial_\xi$ and we have
\begin{align} \label{e:resolv3}
    \left\|\left(A_1^2 + A_c - \lambda\right)^{-1}\right\|_{L^2 \to L^2} &\leq \frac{C_1}{|\lambda|}.
\end{align}
Moreover, since $A_c$ generates a unitary group,~\cite[Corollary~II.3.7]{EngelNagel} implies that any $\lambda \in \C$ with $\Re(\lambda) > 0$ lies in the resolvent set of $A_c$ and it holds
\begin{align} \label{e:resolv2}
\left\|\left(A_c - \lambda\right)^{-1}\right\|_{L^2 \to L^2} &\leq \frac{1}{\Re(\lambda)}.
\end{align}

The residual operator $\El_\eps - L_0 \colon Y \to Y$ is bounded by an $\eps$-independent constant by Proposition~\ref{prop:pointwise}. Combining the latter and estimates~\eqref{e:resolv3} and~\eqref{e:resolv2} with~\cite[Theorem~IV.1.16]{Kato1995Perturbation} yields an $\eps$-independent constant $\varrho_1 > 0$ such that $\El_\eps - \lambda$ is invertible for all $\lambda \in \C$ with $\Re(\lambda) > \varrho_1$. 

We now consider the eigenvalue problem 
\begin{align} \label{e:eigenvalueproblemR3}
(\El_\eps - b - \ri \kappa \Omega)\begin{pmatrix} u \\ w\end{pmatrix} = 0 
\end{align}
with $b \in [-\frac{3}{4} \eps \gamma,\varrho_1]$, $\kappa \in \{\pm 1\}$, and $\Omega \geq 1$. Inspired by the analysis in~\cite[Appendix~A]{FHNpulled}, we multiply~\eqref{e:eigenvalueproblemR3} from the left with the diagonal matrix $\mathrm{diag}(\Omega^{-1},\eps^{-1}\Omega^{-1/2})$ and introduce the rescaled spatial variables $X = \sqrt{\Omega} \xi$ and $W = \eps^{-1} w$. We arrive at
\begin{align*}
\left(\El_{1,\eps,\mu} + \El_{2,\eps,\mu}\right) \begin{pmatrix} u \\ W\end{pmatrix} = 0,
\end{align*}
where we denote $\mu = \Omega^{-1/2} \in (0,1)$ and the operators $\El_{1,\eps,\mu} \colon H^2(\R) \times H^1(\R) \subset Y \to Y$ and $\El_{2,\eps,\mu} \colon H^1(\R) \times L^2(\R) \subset Y \to Y$ are given by
\begin{align*}
\El_{1,\eps,\mu} = \begin{pmatrix}
A_1^2 - i \kappa & 0 \\ 0 & A_c - (b + \eps\gamma) \mu - \frac{i \kappa}{\mu}
\end{pmatrix}, \qquad \El_{2,\eps,\mu} = \begin{pmatrix} A_{\mu c} + \mu^2\left(f'(u_\eps(\mu X)) - b\right) & -\eps\mu^2 \\ \mu & 0 \end{pmatrix}.
\end{align*}
Using that $b \in [-\frac{3}{4}\eps\gamma,\varrho_1]$, it follows from~\eqref{e:resolv1} and~\eqref{e:resolv2} that $\El_{1,\eps,\mu}$ is invertible and we find a $\mu$- and $\eps$-independent constant $C_0 > 0$ such that
\begin{align*}
\left\|\El_{2,\eps,\mu} \El_{1,\eps,\mu}^{-1}\right\|_{L^2 \to L^2} \leq C_0\mu
\end{align*}
for $\mu \in (0,1)$. Hence, there exists an $\eps$-independent constant $\mu_0 > 0$ such that, if $\mu \in (0,\mu_0)$, then the operator $I +\El_{2,\eps,\mu} \El_{1,\eps,\mu}^{-1}$ may be inverted using Neumann series. Therefore, $(I +\El_{2,\eps,\mu} \El_{1,\eps,\mu}^{-1})\El_{1,\eps,\mu} = \El_{1,\eps,\mu} + \El_{2,\eps,\mu}$ is invertible for $\mu \in (0,\mu_0)$. Udoing the rescaling, we infer that there exists an $\eps$-independent constant $\varrho_2 > 0$ such that $\El_\eps - b - \ri \kappa \Omega$ is invertible for $b \in [-\frac{3}{4}\eps\gamma,\varrho_1]$, $\kappa \in \{\pm 1\}$, and $\Omega > \varrho_2$, which concludes the proof.
\end{proof}

\section{Exponential di- and trichotomies} \label{appexpdi}

Exponential di- and trichotomies play a central role in the spectral analysis of linear differential operators and can be used to characterize invertibility and Fredholm properties~\cite{COP,SandstedeReview,PAL,Palmer2}. 

A linear nonautonomous ordinary differential equation admits an exponential dichotomy when it has a fundamental set of solutions that exhibit exponential decay in forward or backward time.

\begin{definition} {\upshape
Let $n \in \N_{0}$, $J \subset \R$ an interval, and $A \in C(J,\C^{n \times n})$. Denote by $T(x,y)$ the evolution operator of
\begin{align} \phi_x = A(x)\phi, \qquad \phi \in \C^n. \label{linsys}\end{align}
Equation~\eqref{linsys} has \emph{an exponential dichotomy on $J$ with constants $K,\alpha > 0$ and projections $P(x) \in \C^{n \times n}$} if for all $x,y \in J$ it holds
\begin{itemize}
 \item $P(x)T(x,y) = T(x,y) P(y)$;
 \item $\|T(x,y)P(y)\| \leq K\re^{-\alpha(x-y)}$ for $x \geq y$;
 \item $\|T(x,y)(I_n-P(y))\| \leq K\re^{-\alpha(y-x)}$ for $y \geq x$.
\end{itemize}}
\end{definition}

Exponential trichotomies describe linear systems that, in addition to exhibiting exponential decay in forward and backward time, possess a central subspace corresponding to bounded or neutral dynamics.

\begin{definition}{\upshape
Let $n \in \mathbb{N}_{> 0}$, $J \subset \R$ an interval, and $A \in C(J,\C^{n \times n})$. Denote by $T(x,y)$ the evolution operator of~\eqref{linsys}. Equation~\eqref{linsys} has an \emph{exponential trichotomy on $J$ with constants $K,\alpha > 0$ and projections $P^{\uu}(x),P^{\su}(x),P^{\cc}(x)\in \C^{n \times n}$} if for all $x,y \in J$ it holds
\begin{itemize}
 \item $P^{\uu}(x) + P^{\su}(x) + P^{\cc}(x) = I$;
 \item $P^{\su,\uu,\cc}(x)T(x,y) = T(x,y) P^{\su,\uu,\cc}(y)$;
 \item $\|T(x,y)P^{\su}(y)\|, \|T(y,x)P^{\uu}(x)\|  \leq K\re^{-\alpha(x-y)}$ for $x \geq y$;
 \item $\|T(x,y)P^{\cc}(y)\| \leq K$.
\end{itemize}
}\end{definition}

\section{Properties of the Airy function \texorpdfstring{$\mathrm{Ai}(s)$}{ }}\label{app:airy}
In this section, we collect several facts concerning the Airy function $\mathrm{Ai}(s)$. We begin with the following result from~\cite[\S9]{handbook}.
\begin{lemma}\label{lem:airyfunction}
The Airy function $\mathrm{Ai} \colon \R \to \R$ satisfies the properties:
\begin{enumerate}
    \item $\mathrm{Ai}''(s)=s\mathrm{Ai}(s)$ for $s\in\mathbb{R}$
    \item $\mathrm{Ai}(s)$ has an infinite number of zeros, all of which are negative, the largest of which is simple and denoted by $-\Omega_0<0$.
    \item \label{lem:airybound} There exists a constant $C>0$ such that 
    \begin{align*}
        0\leq \mathrm{Ai}(s) \leq C\re^{-\frac{2}{3}|s|^{3/2}}, \qquad |\mathrm{Ai}'(s)|\leq C \left(1 +|s|^{\frac14}\right)\re^{-\frac{2}{3}|s|^{3/2}}
    \end{align*}
    for $s\in [-\Omega_0,\infty)$.
\end{enumerate}
\end{lemma}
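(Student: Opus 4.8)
The statement to prove is Lemma~\ref{lem:airyfunction}, which collects three standard facts about the Airy function $\mathrm{Ai}(s)$. The plan is to deduce each item from the classical theory of the Airy equation; all three are contained in the Digital Library of Mathematical Functions reference~\cite{handbook}, so in principle a single citation suffices, but I would record a brief argument for completeness.

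For item (i), the plan is simply to recall that $\mathrm{Ai}$ is by definition one of the two linearly independent solutions of the Airy equation $y'' = s y$, normalized by its known integral representation $\mathrm{Ai}(s) = \frac{1}{\pi}\int_0^\infty \cos\!\left(\frac{t^3}{3} + st\right)\mathrm{d}t$; differentiating twice under the integral sign and integrating by parts yields the ODE. For item (ii), the key input is Sturm oscillation theory: on any half-line $[s_0,\infty)$ the potential $s$ is positive and increasing for $s_0 \geq 0$, so no nontrivial solution can have two zeros there (a positive potential forces convexity away from the axis), hence all zeros of $\mathrm{Ai}$ lie in $(-\infty,0)$; on $(-\infty,0)$ the potential is negative, giving oscillatory behavior and infinitely many zeros, which by the WKB/Bessel asymptotics accumulate only at $-\infty$. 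Simplicity of each zero is automatic for a second-order linear ODE: if $\mathrm{Ai}(-\Omega_0) = \mathrm{Ai}'(-\Omega_0) = 0$, then $\mathrm{Ai} \equiv 0$ by uniqueness, a contradiction. I would then cite~\cite{handbook} for the precise enumeration and the fact that $-\Omega_0$ is the largest.

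For item (iii), the plan is to invoke the classical uniform asymptotic expansions of $\mathrm{Ai}(s)$ and $\mathrm{Ai}'(s)$ as $s \to +\infty$, namely $\mathrm{Ai}(s) \sim \frac{1}{2\sqrt{\pi}} s^{-1/4} \re^{-\frac23 s^{3/2}}$ and $\mathrm{Ai}'(s) \sim -\frac{1}{2\sqrt{\pi}} s^{1/4} \re^{-\frac23 s^{3/2}}$, see~\cite[\S9.7]{handbook}. These give $0 \leq \mathrm{Ai}(s) \leq C \re^{-\frac23 s^{3/2}}$ and $|\mathrm{Ai}'(s)| \leq C(1 + s^{1/4})\re^{-\frac23 s^{3/2}}$ for $s$ bounded away from $0$; on the compact interval $[-\Omega_0, s_1]$ for any fixed $s_1 > 0$, both $\mathrm{Ai}$ and $\mathrm{Ai}'$ are continuous hence bounded, and the nonnegativity $\mathrm{Ai}(s) \geq 0$ on $[-\Omega_0,\infty)$ follows since $-\Omega_0$ is the largest zero and $\mathrm{Ai}(s) \to 0^+$ as $s \to \infty$. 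Combining the two regimes and adjusting the constant $C$ yields the claimed bounds on all of $[-\Omega_0,\infty)$.

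The main obstacle is essentially cosmetic rather than mathematical: none of these facts are deep, but writing self-contained proofs of the asymptotic expansions in item (iii) would require either a saddle-point analysis of the integral representation or the connection formulas for Bessel functions of order $\pm\frac13$, which is a substantial digression. The cleanest route is therefore to state the lemma with a direct citation to~\cite[\S9]{handbook}, supplementing only the short uniqueness argument for simplicity of the zeros and the sign argument for nonnegativity on $[-\Omega_0,\infty)$, both of which are one-line consequences of the ODE structure.
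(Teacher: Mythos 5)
Your proposal matches the paper exactly: the paper gives no proof of this lemma at all, simply stating it as a result from~\cite[\S9]{handbook}, which is precisely the route you ultimately recommend. The supplementary arguments you sketch (Sturm oscillation for the zeros, uniqueness for simplicity, the classical asymptotics for the bounds) are all correct standard facts, so including or omitting them is purely a matter of taste.
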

Using Lemma~\ref{lem:airyfunction}, we arrive at the following result.
\begin{proposition}\label{prop:I0properties}
Consider the entire function $I_0 \colon \C \to \C$ given by
\begin{align} \label{eq:defI0}
I_0(z)=\frac{z}{\mathrm{Ai}'(-\Omega_0)^2}\int^\infty_{-\Omega_0}\re^{-z\left( s+\Omega_0\right)   } \left(\mathrm{Ai}'(s)^2-s\mathrm{Ai}(s)^2\right)\mathrm{d}s,
\end{align}
where $-\Omega_0 < 0$ is largest zero of the Airy function $\mathrm{Ai}(s)$. There exists a constant $C> 0$ such that the following hold:
\begin{enumerate}
\item $I_0(z)= \tfrac{2\Omega_0}{3}z+\mathcal{O}(|z|^2)$ for $z \in \C$ with $|z| \ll 1$.
\item $I_0(0)=0, I_0'(z)>0$ for $z\in \R$, and $I_0(z)=1+\mathcal{O}\left(\tfrac{1}{z^2}\right)$ for $z \in \R$ with $z\gg 1$.
\item $\displaystyle I_0(z)=1 -\frac{1}{\mathrm{Ai}'(-\Omega_0)^2}\int^\infty_{-\Omega_0}\re^{-z\left( s+\Omega_0\right)}\mathrm{Ai}(s)^2\mathrm{d}s$ for each $z \in \C$.
\item $|I_0'(z)| \leq C$ for all $z \in \C$ with $\Re(z) \geq -1$.
\end{enumerate}
\end{proposition}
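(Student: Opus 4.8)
The plan is to derive all four properties directly from the integral representation~\eqref{eq:defI0} together with the Airy bounds of Lemma~\ref{lem:airyfunction}. The integrand $s \mapsto \mathrm{Ai}'(s)^2 - s\,\mathrm{Ai}(s)^2$ decays like $\re^{-\frac43 s^{3/2}}$ on $[-\Omega_0,\infty)$ by Lemma~\ref{lem:airyfunction}\ref{lem:airybound}, so the integral $\int_{-\Omega_0}^\infty \re^{-z(s+\Omega_0)}(\mathrm{Ai}'(s)^2 - s\,\mathrm{Ai}(s)^2)\,\de s$ converges absolutely and locally uniformly for $z$ in any half-plane $\Re(z) \geq -R$; hence $I_0$ extends to an entire function (differentiation under the integral sign is justified by dominated convergence). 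First I would record the key antiderivative identity: using $\mathrm{Ai}''(s) = s\,\mathrm{Ai}(s)$, one checks that $\frac{\de}{\de s}\!\left(\mathrm{Ai}'(s)^2 - s\,\mathrm{Ai}(s)^2\right) = 2\mathrm{Ai}'(s)\mathrm{Ai}''(s) - \mathrm{Ai}(s)^2 - 2s\,\mathrm{Ai}(s)\mathrm{Ai}'(s) = -\mathrm{Ai}(s)^2$. This single computation is the engine behind the whole proposition.

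For property (iii), I would integrate by parts in~\eqref{eq:defI0}, integrating the factor $\re^{-z(s+\Omega_0)}$ and differentiating $\mathrm{Ai}'(s)^2 - s\,\mathrm{Ai}(s)^2$. The boundary term at $s = \infty$ vanishes by the exponential decay, and at $s = -\Omega_0$ one uses $\mathrm{Ai}(-\Omega_0) = 0$ to get the boundary contribution $z \cdot \frac{1}{z}\cdot \frac{\mathrm{Ai}'(-\Omega_0)^2}{\mathrm{Ai}'(-\Omega_0)^2} = 1$; the remaining integral is $-\frac{1}{\mathrm{Ai}'(-\Omega_0)^2}\int_{-\Omega_0}^\infty \re^{-z(s+\Omega_0)}\mathrm{Ai}(s)^2\,\de s$ by the antiderivative identity, giving exactly the claimed formula (valid for all $z\in\C$ by analytic continuation). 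Property (i) then follows by Taylor-expanding from~\eqref{eq:defI0}: $I_0(0)=0$ is immediate, and $I_0'(0) = \frac{1}{\mathrm{Ai}'(-\Omega_0)^2}\int_{-\Omega_0}^\infty(\mathrm{Ai}'(s)^2 - s\,\mathrm{Ai}(s)^2)\,\de s = \frac{1}{\mathrm{Ai}'(-\Omega_0)^2}\big[\mathrm{Ai}'(s)^2 - s\,\mathrm{Ai}(s)^2\big]_{-\Omega_0}^\infty \cdot(-1)$... more carefully, using the formula from (iii) at $z\to 0$: $I_0'(0) = \Omega_0 + \frac{1}{\mathrm{Ai}'(-\Omega_0)^2}\int_{-\Omega_0}^\infty (s+\Omega_0)\mathrm{Ai}(s)^2\,\de s$; evaluating $\int_{-\Omega_0}^\infty(s+\Omega_0)\mathrm{Ai}(s)^2\de s$ via integration by parts against the antiderivative $-(\mathrm{Ai}'(s)^2 - s\,\mathrm{Ai}(s)^2)$ yields $\frac{1}{\mathrm{Ai}'(-\Omega_0)^2}\int_{-\Omega_0}^\infty \mathrm{Ai}(s)^2\de s$, and a further known identity gives $I_0'(0) = \tfrac23\Omega_0$; I would double-check the precise constant against a standard Airy moment identity. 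For property (iv), the formula in (iii) gives $I_0'(z) = \frac{1}{\mathrm{Ai}'(-\Omega_0)^2}\int_{-\Omega_0}^\infty (s+\Omega_0)\re^{-z(s+\Omega_0)}\mathrm{Ai}(s)^2\,\de s$, and for $\Re(z)\geq -1$ one bounds $|\re^{-z(s+\Omega_0)}| \leq \re^{(s+\Omega_0)}$, so $|I_0'(z)| \leq \frac{1}{\mathrm{Ai}'(-\Omega_0)^2}\int_{-\Omega_0}^\infty (s+\Omega_0)\re^{s+\Omega_0}\mathrm{Ai}(s)^2\,\de s =: C < \infty$ by the super-exponential decay of $\mathrm{Ai}(s)^2$.

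Property (ii) is the main obstacle, specifically showing $I_0'(z) > 0$ for real $z$. From the representation in (iii), $I_0'(z) = \frac{1}{\mathrm{Ai}'(-\Omega_0)^2}\int_{-\Omega_0}^\infty (s+\Omega_0)\re^{-z(s+\Omega_0)}\mathrm{Ai}(s)^2\,\de s$, and since the integrand is strictly positive for $s > -\Omega_0$ (as $-\Omega_0$ is the \emph{largest} zero of $\mathrm{Ai}$, so $\mathrm{Ai}(s) \neq 0$ on $(-\Omega_0,\infty)$) and real, the integral is strictly positive for every real $z$ — this is in fact the cleanest route and sidesteps any delicate estimate. Then $I_0(0)=0$ plus strict monotonicity gives the sign structure, and the limit $I_0(z) \to 1$ as $z\to+\infty$ with the rate $1 + \mathcal{O}(z^{-2})$ follows from (iii) by a Watson's-lemma / Laplace-type expansion of $\int_{-\Omega_0}^\infty \re^{-z(s+\Omega_0)}\mathrm{Ai}(s)^2\,\de s$ near $s = -\Omega_0$: since $\mathrm{Ai}(s)^2 = \mathrm{Ai}'(-\Omega_0)^2(s+\Omega_0)^2 + O((s+\Omega_0)^3)$ there, the leading term of the integral is $\mathrm{Ai}'(-\Omega_0)^2 \cdot 2 z^{-3}$, so $1 - I_0(z) = O(z^{-2})$ after multiplying by $\frac{1}{\mathrm{Ai}'(-\Omega_0)^2}$... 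I would track the powers carefully to confirm the stated $\mathcal{O}(z^{-2})$. The one genuinely non-routine point is verifying that $\mathrm{Ai}(s)$ has a double zero structure at $-\Omega_0$ in the sense needed (it has a simple zero, so $\mathrm{Ai}(s)^2$ vanishes to second order), which follows from Lemma~\ref{lem:airyfunction}, and then extracting the correct asymptotic order; everything else reduces to the antiderivative identity and the decay bounds already in hand.
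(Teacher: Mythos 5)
Your proposal is correct and follows essentially the same route as the paper: the antiderivative identity $\tfrac{\de}{\de s}\bigl(\mathrm{Ai}'(s)^2 - s\,\mathrm{Ai}(s)^2\bigr) = -\mathrm{Ai}(s)^2$, integration by parts for (iii), strict positivity of the integrand in $I_0'$ for the monotonicity in (ii), the trivial bound $|\re^{-z(s+\Omega_0)}|\le \re^{s+\Omega_0}$ for (iv), and further integration by parts (the paper) versus Watson's lemma (you, which even yields the sharper $\mathcal{O}(z^{-3})$) for the large-$z$ asymptotics. The only step you defer — the constant $I_0'(0)=\tfrac{2\Omega_0}{3}$ — is exactly the computation the paper carries out by repeated integration by parts, reducing to $\int_{-\Omega_0}^\infty \mathrm{Ai}'(s)^2\,\de s = \tfrac{\Omega_0}{3}\mathrm{Ai}'(-\Omega_0)^2$, so nothing essential is missing.
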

\begin{proof}
Using integration by parts and Lemma~\ref{lem:airyfunction}, we establish
\begin{align*}
\int^\infty_{-\Omega_0} \mathrm{Ai}'(s)^2-s\mathrm{Ai}(s)^2\mathrm{d}s &= 2\int^\infty_{-\Omega_0} \mathrm{Ai}'(s)^2\mathrm{d}s =2s\mathrm{Ai}'(s)^2 \Big|^\infty_{-\Omega_0}-4\int^\infty_{-\Omega_0} s^2\mathrm{Ai}'(s)\mathrm{Ai}(s)\mathrm{d}s\\
&=2\Omega_0\mathrm{Ai}'(-\Omega_0)^2 +4\int^\infty_{-\Omega_0} s\mathrm{Ai}(s)^2\mathrm{d}s =2\Omega_0\mathrm{Ai}'(-\Omega_0)^2 -4\int^\infty_{-\Omega_0} \mathrm{Ai}'(s)^2\mathrm{d}s
\end{align*}
so that
\begin{align*}
    \frac{1}{\mathrm{Ai}'(-\Omega_0)^2}\int^\infty_{-\Omega_0} \mathrm{Ai}'(s)^2-s\mathrm{Ai}(s)^2\mathrm{d}s = \frac{2\Omega_0}{3}.
\end{align*}
Together with Lemma~\ref{lem:airyfunction}, this implies $I_0(0) = 0$ and $I_0'(0) = 2\Omega_0/3$, 
which completes the proof of (i) and the first statement in (ii). For the remaining claims, we use integration by parts to obtain
\begin{align}    \label{eq:I0_IBP}
    \begin{split}
    I_0(z)&= -\frac{1}{\mathrm{Ai}'(-\Omega_0)^2}\int^\infty_{-\Omega_0}\frac{\mathrm{d}}{\mathrm{d}s}\left[\re^{-z\left( s+\Omega_0\right)}\right] \left(\mathrm{Ai}'(s)^2-s\mathrm{Ai}(s)^2\right)\mathrm{d}s \\
    &=-\frac{1}{\mathrm{Ai}'(-\Omega_0)^2}\re^{-z\left( s+\Omega_0\right)}\left(\mathrm{Ai}'(s)^2-s\mathrm{Ai}(s)^2\right)\Big|^\infty_{-\Omega_0} -\frac{1}{\mathrm{Ai}'(-\Omega_0)^2}\int^\infty_{-\Omega_0}\re^{-z\left( s+\Omega_0\right)}\mathrm{Ai}(s)^2\mathrm{d}s \\
    &=1 -\frac{1}{\mathrm{Ai}'(-\Omega_0)^2}\int^\infty_{-\Omega_0}\re^{-z\left( s+\Omega_0\right)}\mathrm{Ai}(s)^2\mathrm{d}s
    \end{split}
\end{align}
for $z\in \C$, which establishes (iii). Differentiating~\eqref{eq:I0_IBP} and using Lemma~\ref{lem:airyfunction}, we have
\begin{align*}
I_0'(z)&= \frac{1}{\mathrm{Ai}'(-\Omega_0)^2}\int^\infty_{-\Omega_0}\re^{-z\left( s+\Omega_0\right)}\left( s+\Omega_0\right)\mathrm{Ai}(s)^2\mathrm{d}s
\end{align*}
for $z \in \C$, implying $I_0'(z) > 0$ for $z \in \R$. Taking absolute values, we find
\begin{align*}
|I_0'(z)| &\leq \frac{1}{\mathrm{Ai}'(-\Omega_0)^2}\int^\infty_{-\Omega_0}\re^{s+\Omega_0}\left( s+\Omega_0\right)\mathrm{Ai}(s)^2\mathrm{d}s
\end{align*}
for all $z \in \C$ with $\Re(z) \geq -1$, which yields (iv) by Lemma~\ref{lem:airyfunction}. Finally, for $z \in \R$ with $z\gg1$, integrating by parts and using Lemma~\ref{lem:airyfunction}, from~\eqref{eq:I0_IBP} we obtain
\begin{align*}
    I_0(z)&= 1-\frac{2}{\mathrm{Ai}'(-\Omega_0)^2 z}\int^\infty_{-\Omega_0}\re^{-z\left( s+\Omega_0\right)}\mathrm{Ai}(s)\mathrm{Ai}'(s)\mathrm{d}s\\
    &= 1-\frac{2}{\mathrm{Ai}'(-\Omega_0)^2 z^2}\int^\infty_{-\Omega_0}\re^{-z\left( s+\Omega_0\right)}\left(\mathrm{Ai}'(s)^2+s\mathrm{Ai}(s)^2\right)\mathrm{d}s = 1+\mathcal{O}\left( \frac{1}{z^2}\right),
\end{align*}
as claimed.
\end{proof}
\section{Numerical continuation of spectra}\label{s:num}
In this section, we present some details on the numerical computation of spectra used to obtain the plots in Figure~\ref{fig:lambda_coeff_cont}. To verify the (negative) quadratic tangency of the critical spectral curve at the origin and its asymptotic scaling, we compute the leading quadratic coefficient $\lambda_\eps''(0)$ numerically and compare this with the predicted leading-order behavior~\eqref{eq:r1_critical_curve_estimates}. We follow~\cite[\S4]{DSSS} to derive an expression for the coefficient $\lambda_\eps''(0)$. We recall that the wave-train solution $\phi_\eps(\xi;c) = (u_\eps,w_\eps)(\xi;c)$ is an $L_\eps(c)$-periodic solution of the traveling wave ODE~\eqref{eq:FHN_twode}. As $L_\eps$ is a monotonically increasing function of $c$, we can equivalently parameterize the solution $\phi_\eps$ and the wave speed $c$ as functions of the period $L_\eps$. Defining the spatial wavenumber $\ell = \frac{2\pi}{L_\eps}$, and setting $\omega(\ell)\coloneqq  \ell c(\ell)$, we have that $(u_*,w_*)(\theta;\ell)=(u_\eps,w_\eps)(\theta/\ell;c(\ell))$ is a $2\pi$-periodic solution of the equation
\begin{align}
\begin{split}\label{eq:twode_rescaled}
0 &= \ell^2 u_{\theta\theta} + f(u) - w + \omega u_\theta,\\
0 &=\epsilon(u-\gamma w - a) + \omega w_\theta.
\end{split}
\end{align}
As the translation eigenvalue is simple by Theorem~\ref{thm:spectral_stability}, the derivative $(\partial_\theta u_*, \partial_\theta w_*)$ thus spans the kernel of the operator
\begin{align*}
\El_* = \begin{pmatrix} \ell^2 \partial_{\theta\theta} + f'(u_*(\theta;\ell))u +\omega\partial_\theta & -1 \\ \epsilon & -\eps \gamma w + \omega \partial_\theta\end{pmatrix}.
\end{align*}
The adjoint operator
\begin{align*}
\El_*^\mathrm{ad} = \begin{pmatrix} \ell^2 \partial_{\theta\theta} + f'(u_*(\theta;\ell))u -\omega\partial_\theta & \eps \\ -1 & -\eps \gamma w - \omega \partial_\theta\end{pmatrix}
\end{align*}
has a nontrivial solution, which we denote by $(u_\mathrm{ad},w_\mathrm{ad})(\theta;\ell)$. Differentiating~\eqref{eq:twode_rescaled} with respect to $\ell$, we find that $(\partial_\ell u_*, \partial_\ell w_*)$ satisfies the equation
\begin{align}\label{eq:u_star_ell_equation}
\El_* \begin{pmatrix} \partial_\ell u_*\\ \partial_\ell u_* \end{pmatrix} = \begin{pmatrix} -2\ell\partial_{\theta\theta}u_*  -\omega'(\ell)\partial_\theta u_* \\ -\omega'(\ell)\partial_\theta w_*\end{pmatrix},
\end{align}
where, upon taking the $L^2$ inner product of both sides with $(u_\mathrm{ad},w_\mathrm{ad})(\theta;\ell)$, we find that 
\begin{align*}
    \omega'(\ell) = -\frac{\left\langle\begin{pmatrix} -2\ell\partial_{\theta\theta}u_*\\ 0 \end{pmatrix}, \begin{pmatrix} u_\mathrm{ad}\\ w_\mathrm{ad} \end{pmatrix}\right\rangle}{\left\langle\begin{pmatrix} \partial_\theta u_*\\ \partial_\theta w_* \end{pmatrix}, \begin{pmatrix} u_\mathrm{ad}\\ w_\mathrm{ad} \end{pmatrix}\right\rangle}.
\end{align*}
Following the discussion in~\cite[\S4.2]{DSSS}, by shifting in $\theta$, we can arrange for the solution $(\partial_\ell u_*, \partial_\ell w_*)$ of~\eqref{eq:u_star_ell_equation} to satisfy
\begin{align}\label{eq:adjoint_normalization_condition}
\left\langle\begin{pmatrix} \partial_\ell u_*\\ \partial_\ell w_* \end{pmatrix}, \begin{pmatrix} u_\mathrm{ad}\\ w_\mathrm{ad} \end{pmatrix}\right\rangle =0.
\end{align}
Turning to the eigenvalue problem~\eqref{eq:Floquet_eigenvalue_problem}, the critical spectral curve $\lambda_\eps(\rho)$ satisfies the reformulated eigenvalue problem
\begin{align}\label{eq:eigenvalue_problem_rescaled}
   \El_{\rho,*} \begin{pmatrix} u_\rho\\ w_\rho\end{pmatrix} = \lambda_\eps(\rho) \begin{pmatrix} u_\rho\\ w_\rho\end{pmatrix}
\end{align} for $\rho\in \left[-\tfrac{\pi}{L_\eps}, \tfrac{\pi}{L_\eps} \right)$, 
where the operator
\begin{align*}
    \El_{\rho,*} =  \begin{pmatrix} \ell^2\big( \partial_\theta+\ri \frac{\rho}{\ell}\big)^2+ \omega\big(\partial_\theta+\ri\frac{\rho}{\ell }\big) + f'(u_*(\theta;\ell)) & - 1 \\ \epsilon & \omega\big(\partial_\theta+\ri\frac{\rho}{\ell }\big) - \eps \gamma \end{pmatrix}, 
\end{align*}
and $(u_\rho, w_\rho) = (\partial_\theta u_*, \partial_\theta w_*)$ at $\rho=0$. By differentiating~\eqref{eq:eigenvalue_problem_rescaled} with respect to $\rho$, and following~\cite[\S4.2]{DSSS}, we obtain
\begin{align}\label{eq:lambda_coeff_numerical}
    \lambda_\eps''(0) = \frac{\left\langle\begin{pmatrix} 4\ell\partial_{\theta}\partial_\ell u_*+2\partial_\theta u_*\\ 0 \end{pmatrix}, \begin{pmatrix} u_\mathrm{ad}\\ w_\mathrm{ad} \end{pmatrix}\right\rangle}{\left\langle\begin{pmatrix} \partial_\theta u_*\\ \partial_\theta w_* \end{pmatrix}, \begin{pmatrix} u_\mathrm{ad}\\ w_\mathrm{ad} \end{pmatrix}\right\rangle}.
\end{align}
We compute~\eqref{eq:lambda_coeff_numerical} numerically in AUTO by solving for $(\partial_\theta u_*, \partial_\theta w_*)$ and $(u_\mathrm{ad},w_\mathrm{ad})$ as the solutions of 
\begin{align*}
    \El_* \begin{pmatrix} u\\ w\end{pmatrix} =0,
\end{align*}
and 
\begin{align*}
    \El_*^\mathrm{ad} \begin{pmatrix} u\\ w\end{pmatrix} =0,
\end{align*}
respectively, up to normalization, and for $(\partial_\ell u_*, \partial_\ell w_*)$ as the solution of~\eqref{eq:u_star_ell_equation}
subject to~\eqref{eq:adjoint_normalization_condition}. The inner products in~\eqref{eq:lambda_coeff_numerical} can then be evaluated numerically to determine $\lambda_\eps''(0)$. Figure~\ref{fig:lambda_coeff_cont} depicts the results of numerical continuation of $\lambda_\eps''(0)$ for decreasing $\eps$ for a wave-train solution of~\eqref{eq:FHN_pde} for $a=0.2, \gamma=1$, and wave speed $c=2$. We see good agreement between the numerically computed expression~\eqref{eq:lambda_coeff_numerical} and the leading-order coefficient $-k\eps^{2/3}$ obtained analytically in Proposition~\ref{prop:region_r1}. A log-log plot of the difference between these two expressions suggests that the error is indeed higher order: While we have not carried out the higher order analysis in the proof of Proposition~\ref{prop:region_r1} necessary to capture the error, we conjecture that this error scales approximately as $\sim \eps \log\eps$. This would be in line with the fact that the next order error term in the $\eps^{2/3}$ bifurcation delay associated with slow passage through the fold is of order $\eps \log \eps$~\cite{krupaszmolyan2001}. This scaling appears to be corroborated by Figure~\ref{fig:lambda_coeff_cont}. 

\section{Direct numerical simulations}\label{s:dns}

Direct simulations for Figures~\ref{f:random} and Figure~\ref{f:deff} were implemented using a fourth order  exponential time differencing scheme~\cite{coxmatthews} after spectral discretization, evaluating the linear part via fast Fourier transform. Typical discretization parameters were $dx=0.1$ and $dt=0.1$.  Domain sizes were chosen so that the domain fit 20 repeat phase waves and 60 repeat trigger waves. The scheme was implemented in Matlab and computations were carried out on an Nvidia Quadro GV100 GPU.  We found wave trains by first simulating in a fundamental period with a spatial relaxation type initial profile until changes in time were small. The result was used to initialize a Newton method with the same spectral discretization and a comoving frame derivative with speed approximately from direct simulations. The resulting wave train was then repeated 20 times for phase waves or 60 times for trigger waves to yield an exact equilibrium in the large domain. We simulated the system in the large domain in both steady and comoving frames with initial perturbations random or localized in space (Figures~\ref{f:random} and~\ref{f:deff}, respectively).  We found the size of the perturbation $u_\mathrm{pert}$ in the first component of the solution $u_*$ by finding the closest  perfect wave-train solution. For this, we constructed the family of translates of wave trains $u_\mathrm{wt}(\cdot+\xi_0)$ using spectral interpolation and then finding the minimum location $\xi_0^*=\mathrm{argmin}_{\xi_0}\int_\xi |u_*(\xi,t)-u_\mathrm{wt}(\xi+\xi_0)|^{0.1}d\xi$. The exponent $0.1$ used for finding the minimum location effectively penalizes a distribution of the mismatch across the domain, so that the error is localized in a region where it is actually large. The perturbation shown in Figure~\ref{f:deff} and used to compute the width of the region with amplitude larger than $10^{-5}$  is $u_*(t,\xi)-u_\mathrm{wt}(\xi+\xi_0)$. 

\bibliographystyle{abbrv}
\bibliography{references}

\end{document}